\documentclass[10pt,reqno]{amsart} 

\usepackage{float} 
\usepackage{appendix}
\usepackage{soul}

\usepackage{hhline}
\usepackage[left=2cm, right=2cm, top=2.5cm, bottom=2.5cm]{geometry}
\usepackage{epsfig,amssymb,amsmath,version}
\usepackage{amssymb,version,graphicx,fancybox,mathrsfs}

\usepackage{enumitem}
 \usepackage{bbm}
\usepackage{url,hyperref}
\usepackage{subfigure}
\usepackage{color}
\usepackage{stmaryrd}
\usepackage{multirow}
\usepackage{booktabs,siunitx}
\usepackage{booktabs}
\usepackage{multicol,epstopdf}
\usepackage{xypic}
\usepackage[]{graphicx}
\usepackage[all]{xy}
\usepackage{tikz,ifthen}
\usetikzlibrary{positioning, math, decorations.markings, arrows.meta}
\usetikzlibrary{calc,shapes,cd}
\usetikzlibrary{knots} 
\usepgfmodule{decorations}
\allowdisplaybreaks
\usepackage[
backend=biber,
style=alphabetic,
sorting=nyt
]{biblatex}
\renewbibmacro*{in:}{%
  \ifentrytype{article}
    {}
    {\printtext{\bibstring{in}\intitlepunct}}%
}

\DeclareFieldFormat[article]{title}{#1}

\tikzset{knotarrow/.pic={ \draw[edge, <-] (0,0) -- +(-.001,0);}}

\tikzset{edge/.style={line width=0.8}}
\tikzset{wall/.style={very thick}}

\tikzset{->-/.style n args={2}{decoration={markings, mark=at position #1 with {\arrow{#2}}}, postaction={decorate}}}

\tikzset{-o-/.code 2 args={\ifstreqF{#2}{} 
{\ifstreqTF{#2}{>}
   {\pgfkeysalso{decoration={markings,mark=at position #1 with {\arrow[scale=0.8]{#2}}}
                    ,postaction={decorate}}
    }
   {\ifstreqTF{#2}{<}
       {\pgfkeysalso{decoration={markings,mark=at position #1 with {\arrow[scale=0.8]{#2}}}
                    ,postaction={decorate}}
        }
       {\pgfkeysalso{decoration={markings,
                    mark=at position #1 with
                    {\draw[black, fill={#2}] circle[radius=2pt];}}
                    ,postaction={decorate}}
        }
     }
  }}}

\DeclareFieldFormat[article]{title}{#1}

\allowdisplaybreaks[4]

\def\dS{\widetilde{\cS}_\omega(\fS)}

\newtheorem{thm}{Theorem}[section]
\newtheorem{theorem}{Theorem}[section]
\newtheorem{lemma}[theorem]{Lemma}
\newtheorem{definition}[theorem]{Definition}
\newtheorem{corollary}[theorem]{Corollary}
\newtheorem{proposition}[theorem]{Proposition}
\newtheorem{example}[theorem]{Example}
\newtheorem{remark}[theorem]{Remark}
\newtheorem{conjecture}[theorem]{Conjecture}

\def\cS{\mathscr S}

\newcommand{\wideoverunder}[2]{ 
\!\begin{array}{c} 
\scriptstyle{#1}\\[-.1in] 
-\!\!\!-\!\!\!-\!\!\!-\!\!\!-\!\!\!-\\[-.1in] 
\scriptstyle{#2} 
\end{array} 
\! 
}

\newcommand{\bp}{\begin{proposition}}
\newcommand{\ep}{\end{proposition}}
\newcommand{\bpr}{\begin{proof}}
\newcommand{\epr}{\end{proof}}

\newcommand{\bt}{\begin{theorem}}
\newcommand{\et}{\end{theorem}}

\newcommand{\bl}{\begin{lemma}}
\newcommand{\el}{\end{lemma}}

\newcommand{\bcr}{\begin{corollary}}
\newcommand{\ecr}{\end{corollary}}

\newcommand{\be}{\begin{equation}}
\newcommand{\ee}{\end{equation}}

\newcommand{\bes}{\begin{equation*}}
\newcommand{\ees}{\end{equation*}}

\newcommand{\ba}{\begin{align}}
\newcommand{\ea}{\end{align}}

\newcommand{\bas}{\begin{align*}}
\newcommand{\eas}{\end{align*}}

\newcommand{\gaa}{\mathsf{g}}

\DeclareMathOperator{\skeleton}{\textbf{sk}}
\DeclareMathOperator{\im}{\mathrm{Im}}

\DeclareMathOperator{\Int}{\mathrm{Int}}

\DeclareMathOperator{\sgn}{\mathrm{sgn}}

\def\fS{\mathfrak{S}}

\graphicspath{{./Figures/} }

\newcommand{\blue}[1]{{\color{blue}#1}}

\renewbibmacro{in}{}
\begin{document}

\title{A mutation invariant for skew-symmetrizable matrices}

\author[Min Huang]{Min Huang}
\address{Min Huang, School of Mathematics (Zhuhai), Sun Yat-sen University, Zhuhai, China.}
\email{huangm97@mail.sysu.edu.cn}

\author[Qiling Ma]{Qiling Ma}
\address{Qiling Ma, School of Mathematics (Zhuhai), Sun Yat-sen University, Zhuhai, China.}
\email{maqling@mail2.sysu.edu.cn}

\keywords{}

 \maketitle

\begin{abstract}
Matrix mutation of skew-symmetrizable matrices is foundational in cluster algebra theory.  
Effective mutation invariants are essential for determining whether two matrices lie in the same mutation class.  
Casals~\cite{Casals} introduced a binary mutation invariant for skew-symmetric matrices.  
In this paper, we extend Casals' construction to the skew-symmetrizable setting.  
When the skew-symmetrizer $d_1,\dots, d_n$ is pairwise coprime, we obtain two distinct extensions of this invariant.
\end{abstract}

\newcommand{\ca}{{\cev{a}  }}

\def\BZ{\mathbb Z}
\def\Id{\mathrm{Id}}
\def\Mat{\mathrm{Mat}}
\def\BN{\mathbb N}

\def \cb {\color{blue}}
\def \cred {\color{red}}
\def \cbf {\color{blue}\bf}
\def \credf {\color{red}\bf}
\definecolor{ligreen}{rgb}{0.0, 0.3, 0.0}
\def \cg {\color{ligreen}}
\def \cgf {\color{ligreen}\bf}
\definecolor{darkblue}{rgb}{0.0, 0.0, 0.55}
\def \dbf {\color{darkblue}\bf}
\definecolor{anti-flashwhite}{rgb}{0.55, 0.57, 0.68}
\def \afw {\color{anti-flashwhite}}
\def\cF{\mathbb F}
\def\Z{\mathbb Z}
\def\cP{\mathcal P}
\def\embed{\hookrightarrow}
\def\pr{\mathrm{pr}}
\def\cV{\mathcal V}
\def\ot{\otimes}
\def\buu{{\mathbf u}}


\def \ri {{\rm i}}
\newcommand{\bs}[1]{\boldsymbol{#1}}
\newcommand{\cev}[1]{\reflectbox{\ensuremath{\vec{\reflectbox{\ensuremath{#1}}}}}}
\def\bS{\bar \fS}
\def\cE{\mathcal E}
\def\fB{\mathfrak B}
\def\cR{\mathcal R}
\def\cY{\mathcal Y}
\def\cS{\mathscr S}
\def\rS{\overline{\cS}_\omega}

\def\fS{\mathfrak{S}}

\def\MN {(M)}
\def\cN {\mathcal{N}}
\def\SL{{\rm SL}_n}

\def\bP{\mathbb P}
\def\bR{\mathbb R}

\def\SS{\cS_{\omega}(\fS)}
\def\rdS{\overline \cS_{\omega}(\fS)}
\def\rdP{\overline \cS_{\omega}(\mathbb{P}_4)}

\def\Vm{\mathcal V_{\text{mut}}}
\def\Vc{\mathcal V}

\newcommand{\beq}{\begin{equation}}
	\newcommand{\eeq}{\end{equation}}

\section{Introduction and Main result}\label{sec-intro}

Matrix mutation for skew-symmetrizable integer matrices was introduced by Fomin and Zelevinsky~\cite{FZ} and is foundational in cluster algebra theory.  
Two skew-symmetrizable integer matrices are said to be \emph{mutation equivalent} if one can be obtained from the other by a finite sequence of mutations, possibly followed by a simultaneous permutation of rows and columns.  

Determining whether two given $n \times n$ skew-symmetrizable matrices are mutation equivalent is an important open problem; see e.g.~\cite{F},\cite[Problem~2.8.2]{FWZ}.  
A standard approach to this problem is to construct \emph{mutation invariants}—quantities that remain unchanged under mutation. Beineke, Br\"ustle, and Hille~\cite{BBH} introduced the \emph{Markov invariant} for $3 \times 3$ skew-symmetric matrices, which essentially characterizes those that are mutation-acyclic.  
Casals~\cite{Casals} used the determinant of the symmetrized matrix to define a binary mutation invariant, referred to as the $\delta$-invariant.  
Fomin and Neville~\cite{FN} introduced the notion of a \emph{totally proper quiver} (further studied by Neville~\cite{N}), which yields powerful mutation invariants.
For additional examples of mutation invariants, we refer the reader to \cite{ABBS,E,E2,FN1,M,S1,S2,S3}.  
However, very few mutation invariants are known for general skew-symmetrizable matrices. 

Let $M_n(\mathbb{Z})$ denote the set of $n \times n$ integer matrices.  
A matrix $B = (b_{ij}) \in M_n(\mathbb{Z})$ is called \emph{skew-symmetrizable} if there exists a diagonal matrix 
$D = \operatorname{diag}(d_1, \dots, d_n)$ with positive integers $d_i$ such that $DB$ is skew-symmetric.  
Such a matrix $D$ is called a \emph{skew-symmetrizer} of $B$.

Following~\cite[Section~2.7]{FWZ}, fix an index $k \in [n] := \{1, \dots, n\}$, and define three matrices $J_{k,n}, E_{k,n}, F_{k,n} \in M_n(\mathbb{Z})$ as follows:
\begin{itemize}
    \item $J_{k,n}$ is the $n \times n$ diagonal matrix whose diagonal entries are all $1$, except for the $(k,k)$-entry, which equals $-1$;
    
    \item $E_{k,n} = (e_{ij})$ is the $n \times n$ matrix given by
    \[
        e_{ik} = \max(0, -b_{ik}) \quad \text{for all } i \in [n], \qquad e_{ij} = 0 \text{ if } j \ne k.
    \]
    \item $F_{k,n} = (f_{ij})$ is the $n \times n$ matrix given by
    \[
        f_{ik} = \max(0, b_{ki}) \quad \text{for all } i \in [n], \qquad f_{ij} = 0 \text{ if } i \ne k.
    \]
\end{itemize}
Set $M_k := J_{k,n} + E_{k,n}, N_k:= J_{k,n}+F_{k,n}$; note that $M_k, N_k$ depends on both $k$ and $B$.  
The \emph{mutation} of $B$ at $k$ is then defined as
\[
    \mu_k(B) := M_k \, B \, N_k.
\]
Note that $N_k=M_k^{\!\top}$ in case $B$ is skew-symmetric.

It is well known that $\mu_k(B)$ is again skew-symmetrizable, and in fact admits the same skew-symmetrizer $D$ as $B$. Consequently, mutation can be iterated. Moreover, mutation is involutive: $\mu_k(\mu_k(B)) = B$ for all $k \in [n]$. 

We adopt the following definition from~\cite[Definition~2.8.1]{FWZ}:

\begin{definition}\label{def:mutation}
Two skew-symmetrizable matrices $B, B' \in M_n(\mathbb{Z})$ are \emph{mutation equivalent} if one can be obtained from the other by a finite sequence of mutations, possibly followed by a simultaneous permutation of rows and columns.
\end{definition}

Let $B = (b_{ij}) \in M_n(\mathbb{Z})$ be a skew-symmetrizable integer matrix.  
Following~\cite[Definition~2.7.8]{FWZ}, we define its \emph{symmetrization} to be the symmetric matrix 
$\mathfrak{S}(B) = (s_{ij}) \in M_n(\mathbb{R})$ given by
\begin{equation*}
    s_{ij} =
    \begin{cases}
        2, & \text{if } i = j,\\
        \operatorname{sgn}(b_{ij}) \sqrt{|b_{ij} b_{ji}|}, & \text{if } i < j, \\
        \operatorname{sgn}(b_{ji}) \sqrt{|b_{ij} b_{ji}|}, & \text{if } i > j.
    \end{cases}
\end{equation*}
where $\operatorname{sgn}(x)$ denotes the sign of $x$, with the convention $\operatorname{sgn}(0) = 0$.

\begin{lemma}\label{lem:integer}
Let $B \in M_n(\mathbb{Z})$ be skew-symmetrizable. Then $\det(\mathfrak{S}(B)) \in \mathbb{Z}$.
\end{lemma}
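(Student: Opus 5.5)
Expand the determinant with the Leibniz formula,
\[
\det(\mathfrak{S}(B))=\sum_{\sigma\in S_n}\operatorname{sgn}(\sigma)\prod_{i=1}^n s_{i\sigma(i)},
\]
and show that each summand is an integer. Decompose $\sigma$ into disjoint cycles. The product then factors as a product over cycles. Fixed points contribute $s_{ii}=2$. A $2$-cycle $(i\,j)$ contributes $s_{ij}s_{ji}=s_{ij}^2=|b_{ij}b_{ji}|\in\mathbb{Z}$. It therefore suffices to treat a cycle $(i_1\,i_2\,\cdots\,i_m)$ with $m\ge 3$. Its contribution is $\prod_{t} s_{i_t i_{t+1}}$ (indices mod $m$), which equals a sign times $\prod_t \sqrt{|b_{i_ti_{t+1}}b_{i_{t+1}i_t}|}$. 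It suffices to show that $\prod_t |b_{i_ti_{t+1}}b_{i_{t+1}i_t}|$ is a perfect square of an integer.

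The key step uses the skew-symmetrizer: $d_ib_{ij}=-d_jb_{ji}$. Hence $|b_{ji}|=\frac{d_i}{d_j}|b_{ij}|$, and so
\[
|b_{ij}b_{ji}|=\frac{d_i}{d_j}\,b_{ij}^2 .
\]
Taking the product around the cycle, the ratios telescope:
\[
\prod_{t=1}^m \frac{d_{i_t}}{d_{i_{t+1}}}=1.
\]
Therefore $\prod_t|b_{i_ti_{t+1}}b_{i_{t+1}i_t}|=\bigl(\prod_t b_{i_ti_{t+1}}\bigr)^2$. The cycle's contribution is thus $\pm\prod_t |b_{i_ti_{t+1}}|\in\mathbb{Z}$, since the square root of the square of an integer is its absolute value. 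If any $b_{i_ti_{t+1}}=0$, then $b_{i_{t+1}i_t}=0$ as well (again by skew-symmetrizability), so the term simply vanishes. The same telescoping argument also covers $2$-cycles and fixed points uniformly.

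Summing integer terms gives $\det(\mathfrak{S}(B))\in\mathbb{Z}$. The only real point is the telescoping identity. I do not expect any serious obstacle beyond correctly tracking that signs play no role in integrality: they are always $\pm1$.
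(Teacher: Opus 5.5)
Your proof is correct and is essentially the paper's argument: both expand $\det(\mathfrak{S}(B))$ by the Leibniz formula and show that each term has absolute value $\prod_i |b_{i\tau(i)}|\in\mathbb{Z}$ (with the convention $b_{ii}:=2$), because the symmetrizer ratios $d_i/d_{\tau(i)}$ cancel over the permutation. The only difference is that the paper does this cancellation globally, multiplying through by $\prod_i d_i$ and reindexing, rather than cycle by cycle, so your cycle decomposition is harmless but unnecessary.
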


We prove Lemma \ref{lem:integer} in Section \ref{subsec:proof1}.

We generalize Casals' $\delta$-invariant to all skew-symmetrizable integer matrices as follows.

\begin{definition}
The \emph{$\delta$-invariant} of a skew-symmetrizable matrix $B \in M_n(\mathbb{Z})$ is defined as
\[
    \delta(B) := \det(\mathfrak{S}(B)) \pmod 4,
\]
i.e., the reduction of $\det(\mathfrak{S}(B))$ modulo $4$.
\end{definition}

We conjecture that it is also binary for any fixed positive integer $n$ and any fixed skew-symmetrizer $d_1, \dots, d_n$. See Remark~ \ref{rem:binary}.

\begin{lemma}\label{lem:permutation}
  Let $B\in M_n(\Z)$ be an $n\times n$ skew-symmetrizable matrix. Then, for any permutation matrix $P\in M_n(\Z)$, we have 
  $$\delta(PBP^{\!\top})\equiv \delta(B) \pmod{4}.$$
\end{lemma}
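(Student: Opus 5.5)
The plan is to show that symmetrization is compatible with simultaneous permutation, up to a diagonal sign change. Determinants are invariant under both conjugation by a permutation matrix and conjugation by a diagonal $\pm1$ matrix, so this will give equality of $\det(\mathfrak{S}(\cdot))$ exactly, not just modulo $4$.

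Let $P$ correspond to a permutation $\sigma$, so that $B' := PBP^{\!\top}$ has entries $b'_{ij} = b_{\sigma(i)\sigma(j)}$ (with the appropriate convention for $\sigma$ versus $\sigma^{-1}$). First I note that $B'$ is again skew-symmetrizable, with skew-symmetrizer $PDP^{\!\top}$. Hence $\mathfrak{S}(B')$ and $\delta(B')$ are defined, and by Lemma~\ref{lem:integer} the determinant is an integer.

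The key observation concerns signs. Skew-symmetrizability ($d_ib_{ij} = -d_jb_{ji}$ with $d_i>0$) forces $\operatorname{sgn}(b_{ji}) = -\operatorname{sgn}(b_{ij})$. Consequently the definition of $\mathfrak{S}(B)$ can be rewritten uniformly. Set $\epsilon_{ij} = 1$ if $i<j$, $\epsilon_{ij}=-1$ if $i>j$, and $\epsilon_{ii}=0$. Then for $i \neq j$,
\[
s_{ij} = \epsilon_{ij}\,\operatorname{sgn}(b_{ij})\sqrt{|b_{ij}b_{ji}|}.
\]
Define the auxiliary skew-symmetric real matrix $A(B) = (a_{ij})$ by $a_{ij} = \operatorname{sgn}(b_{ij})\sqrt{|b_{ij}b_{ji}|}$, which is permutation-equivariant: $A(B') = P A(B) P^{\!\top}$. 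The only non-equivariant ingredient in $\mathfrak{S}$ is the ordering sign $\epsilon_{ij}$. Comparing, $\mathfrak{S}(B')_{ij}$ and $(P\mathfrak{S}(B)P^{\!\top})_{ij}$ agree up to the factor $\epsilon_{ij}\epsilon_{\sigma(i)\sigma(j)}$, i.e.\ they differ exactly on the off-diagonal entries at pairs $\{i,j\}$ inverted by $\sigma$.

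The main obstacle is that flipping the signs of the entries at an arbitrary set of inverted pairs is not obviously a conjugation by a diagonal $\pm1$ matrix. The pattern of inversions is not generally of the form $\eta_i\eta_j$; for a $3$-cycle, for instance, the inversion pattern is not a cut. So I would not try to handle a general $\sigma$ directly. Instead I reduce to adjacent transpositions $\sigma=(k\ k{+}1)$, since every permutation matrix is a product of these. For $\sigma = (k\ k{+}1)$ the only inverted pair is $\{k,k+1\}$, so $\mathfrak{S}(B')$ equals $P\mathfrak{S}(B)P^{\!\top}$ with just the $(k,k+1)$ and $(k+1,k)$ entries negated.

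Now split into two cases. If $b_{k,k+1}=0$, these entries vanish and there is nothing to check. Otherwise the entry is nonzero, and negating it is not a diagonal conjugation. In that case I would compare the determinants directly. Expanding $\det$ as a sum over permutations, I would check whether the negated entry $x = s_{k,k+1}$ appears only through $x^2$, which happens in the $n=2$ case: $\det = 4 - x^2$. In general, however, $\det$ also contains terms linear in $x$ coming from cycles through the edge $k$–$(k+1)$, so exact equality may fail and the argument must genuinely use reduction mod $4$.

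This is where I expect the real work to lie. I would write $\det \mathfrak{S}(B) = f + x\,g$, where $x\,g$ collects the terms odd in $x$, and aim to show $2xg \equiv 0 \pmod 4$. The idea is that each such term comes from a cycle of length at least $3$ in the permutation expansion. Every cycle term appears together with its reverse cycle, and since $\mathfrak{S}$ is symmetric the two contribute equally, giving a factor of $2$. The remaining factor $x\cdot(\text{other cycle entries})$ is a product of square roots around a cycle. By the argument of Lemma~\ref{lem:integer} (cycle products of $\sqrt{|b_{ij}b_{ji}|}$ are integers, using $d_ib_{ij}=-d_jb_{ji}$), each such product is an integer. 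Hence $2xg \in 4\mathbb{Z}$, which gives the congruence $\delta(PBP^{\!\top})\equiv\delta(B) \pmod 4$.
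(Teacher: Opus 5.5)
Your argument is correct and is essentially the paper's proof. You reduce to a simple transposition $s_k$, observe that $\mathfrak{S}(PBP^{\!\top})$ is $P\,\mathfrak{S}(B)\,P^{\!\top}$ with the $(k,k+1)$ and $(k+1,k)$ entries negated (Lemma~\ref{lem:permutation2}), and show that the part of the determinant odd in that entry is twice an integer; your cycle-reversal pairing and the integrality of each permutation term are the permutation-expansion form of the paper's use of $\det(S_{i,j})=\det(S_{j,i})$ and Lemma~\ref{lem:integerdet} inside Lemma~\ref{lem:det4}. You should delete your opening claim that $\det\mathfrak{S}$ is preserved exactly, since it is false: for the skew-symmetric $B$ with $b_{12}=b_{13}=b_{23}=1$, swapping indices $1,2$ changes $\det\mathfrak{S}$ from $4$ to $0$, as your later paragraphs already recognize.
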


We prove Lemma \ref{lem:permutation} in Section \ref{subsec:proof2}.

The main result of this article is the following:

\begin{thm}\label{thm:main}
Let $B \in M_n(\mathbb{Z})$ be a skew-symmetric integer matrix. If $B' \in M_n(\mathbb{Z})$ is mutation equivalent to $B$, then
\[
    \delta(B') \equiv \delta(B) \pmod{4}.
\]
\end{thm}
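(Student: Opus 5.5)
The plan is to reduce to a single mutation. Then I would show that the symmetrization of the mutated matrix is congruent modulo $4$, entrywise, to a unimodular congruence transform of $\mathfrak{S}(B)$. Since $\det$ is an integer polynomial in the entries, entrywise congruence modulo $4$ gives congruence of determinants modulo $4$.

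\textbf{Step 1 (reduction).} Mutation commutes with relabelling: $\mu_{\sigma(k)}(PBP^{\top})=P\mu_k(B)P^{\top}$ for the permutation matrix $P$ of $\sigma$. By Lemma~\ref{lem:permutation}, $\delta$ is unchanged by simultaneous permutation. So it suffices to prove $\delta(\mu_k(B))\equiv\delta(B)\pmod 4$ for one mutation, and we may relabel freely, for instance to put $k$ first.

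\textbf{Step 2 (explicit form).} Since $B$ is skew-symmetric, $b_{ji}=-b_{ij}$. Hence $s_{ij}=b_{ij}$ for $i<j$ and $s_{ij}=b_{ji}$ for $i>j$. Thus $\mathfrak{S}(B)=2I+U+U^{\top}$, where $U$ is the strictly upper triangular part of $B$. Write $B'=\mu_k(B)$ and $S'=\mathfrak{S}(B')$. The entries of $S'$ change as follows:
\begin{itemize}
\item entries in row and column $k$ only change sign;
\item for $i,j\neq k$, $s'_{ij}=s_{ij}\pm\varepsilon_{ij}\,b_{ik}b_{kj}$, where $\varepsilon_{ij}\in\{0,1\}$ records whether $b_{ik}$ and $b_{kj}$ have the same sign;
\item the overall sign in the previous item is fixed by the order of $i$ and $j$.
\end{itemize}

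\textbf{Step 3 (congruence).} First conjugate by $J_{k,n}$ to absorb the sign flips in row and column $k$; this leaves the determinant unchanged. Then look for a unipotent integer matrix $M=I+\sum_{i\neq k}c_iE_{ik}$, so $\det M=1$, with each $c_i\in\{0,\pm s_{ik}\}$ chosen according to the sign of $b_{ik}$. The idea is that $c_i$ is nonzero only for, say, $b_{ik}>0$, mimicking $[b_{ik}]_+$. For $T=M\,\mathfrak{S}(B)\,M^{\top}$ one computes
\[
t_{ij}=s_{ij}+c_is_{kj}+c_js_{ik}+2c_ic_j\quad(i\neq j),\qquad t_{ii}=2+2c_i(s_{ik}+c_i).
\]
The diagonal entries are $\equiv 2\pmod 4$ automatically: with $c_i=\pm s_{ik}$, the factor $c_i(s_{ik}+c_i)$ equals $0$ or $2s_{ik}^2$, which is even. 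So everything reduces to checking, for $i,j\neq k$,
\[
c_is_{kj}+c_js_{ik}+2c_ic_j\equiv \pm\varepsilon_{ij}\,b_{ik}b_{kj}\pmod 4 .
\]

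\textbf{Main obstacle.} The left-hand side is a symmetric bilinear expression, whereas the mutation correction appears only when $b_{ik}$ and $b_{kj}$ have equal signs and carries order-dependent signs. I would split into cases by the signs of $b_{ik}$ and $b_{kj}$ and by the relative order of $i,j,k$ (put $k$ first via Step 1 to remove one of these). The aim is to show that in the cases where the sides differ, the difference is $2b_{ik}b_{kj}$ or $b_{ik}b_{kj}-(-b_{ik}b_{kj})$. Such a difference is $\equiv 0\pmod 4$ only if $b_{ik}b_{kj}$ is even. Where it is not, the $2c_ic_j$ term should compensate, and I would adjust the choice of $c_i$ (for instance $c_i=-s_{ik}$ for incoming arrows and $0$ for outgoing ones) until every case matches.

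If no choice of $c$ works uniformly, the fallback is the expansion $\det(A+2I)\equiv\det A+2\sum_i\det A_{\hat i\hat i}\pmod 4$, with $A=U+U^{\top}$. One would then track $\det A$ modulo $4$ (a Pfaffian-type parity argument) and the principal minors $\det A_{\hat i\hat i}$ modulo $2$ (which depend only on $B$ modulo $2$ and transform by the modulo-$2$ congruence) separately.
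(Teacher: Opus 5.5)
\textbf{There is a genuine gap in Step~3.} Your framework matches the paper's: reduce to one mutation, then compare $\mathfrak{S}(\mu_k B)$ with $N\,\mathfrak{S}(B)\,N^{\top}$, where $N$ is a sign flip at $k$ plus a shear in column $k$. The paper uses $S''=(J+E)\mathfrak{S}(B)(J+E)^{\top}$. The problem is that you insist on \emph{entrywise} congruence modulo $4$, and matrices of your shape cannot achieve it in general.

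You also skipped row $k$ of $T$. There $t_{kj}=s_{kj}+2c_j$, so matching $s'_{kj}=-s_{kj}$ modulo $4$ forces $c_j\equiv s_{kj}\pmod 2$ for every $j$. If you conjugate by $J_{k,n}$ first, it forces every $c_j$ to be even instead. Either way, for $i,j\neq k$ you then get $t_{ij}\equiv s_{ij}+c_is_{kj}+c_js_{ik}\equiv s_{ij}\pmod 2$. But $s'_{ij}\equiv s_{ij}+s_{ik}s_{jk}\pmod 2$ whenever $b_{ik}b_{jk}<0$.

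Concrete counterexample: take $n=3$, $k=1$, $b_{12}=1$, $b_{13}=-1$, $b_{23}=0$. Then $s'_{23}=-1$. The row-$1$ constraints give $c_2\equiv c_3\pmod 2$, so $t_{23}=\pm(c_3-c_2)+2c_2c_3$ is even. Your ``main obstacle'' is therefore a parity obstruction that no choice of the $c_i$ removes. The term $2c_ic_j$ is even and cannot repair a parity mismatch.

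\textbf{The missing idea:} entrywise congruence modulo $4$ is much more than you need. Let $S$ be a symmetric integer matrix and $S'=S+2Y$, with $Y$ symmetric, integral, and with even diagonal. Expanding row by row gives $\det S'\equiv \det S+2\sum_{i,j}Y_{ij}C_{ij}\pmod 4$, where $C$ is the (symmetric) cofactor matrix of $S$. Moreover $2\sum_{i,j}Y_{ij}C_{ij}=4\sum_{i<j}Y_{ij}C_{ij}+2\sum_i Y_{ii}C_{ii}\equiv 0\pmod 4$. This is the integral case of the paper's Lemma~\ref{lem:det4}. So it suffices to match off-diagonal entries modulo $2$ and diagonal entries modulo $4$.

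With that relaxation, take $c_i=-s_{ik}$ if $b_{ik}<0$ and $c_i=0$ otherwise; this is the paper's $E$. Then:
\begin{itemize}
\item $t_{ii}=2$ for all $i$;
\item $t_{kj}-s'_{kj}=2(s_{kj}+c_j)$ is even;
\item $c_is_{jk}+c_js_{ik}$ is congruent modulo $2$ to $s_{ik}s_{jk}$ if $b_{ik}b_{jk}<0$, and to $0$ otherwise, which is exactly the parity of the mutation correction.
\end{itemize}
So the argument closes.

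Your fallback does not replace this. In the example above, $\det A$ goes from $0$ to $2$ and $\sum_i\det A_{\hat i\hat i}$ goes from $-2$ to $-3$. The two pieces are not separately invariant, so you would still need a cancellation argument, and the proposal does not provide one.
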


We prove Theorem \ref{thm:main} in Section \ref{subsec:proof3}.

\begin{remark}\label{rem:alt-sym}
Suppose the skew-symmetrizer $d_1, \dots, d_n$ satisfies $\gcd(d_i, d_j)=1$ for all $i \neq j$.  
One may then define an alternative symmetrization $\mathfrak{S}'(B)=(s'_{ij})$ by
  \begin{equation*}
    s'_{ij} =
    \begin{cases}
        2d_i, & \text{if } i = j,\\
        \operatorname{sgn}(b_{ij}) \sqrt{|b_{ij} b_{ji}|}, & \text{if } i < j, \\
        \operatorname{sgn}(b_{ji}) \sqrt{|b_{ij} b_{ji}|}, & \text{if } i > j.
    \end{cases} 
  \end{equation*}
In this case, one has $\det(\mathfrak{S}'(B))\in (\prod_{i=1}^nd_i)\Z$, and the quantity $\delta'(B):= \det(\mathfrak{S}'(B)) \pmod {4\prod_{i=1}^nd_i}$ defines another mutation invariant. Since the proof is similar to that of Theorem~\ref{thm:main}, we omit it.
\end{remark}

\begin{example}
Let  $ x_1, x_2, x_3 \in \mathbb{Z} $ , and consider the skew-symmetrizable matrix
$$
B(x_1,x_2,x_3) =
\begin{pmatrix}
0 & 2x_1 & 3x_2 \\
-x_1 & 0 & 3x_3 \\
-x_2 & -2x_3 & 0 
\end{pmatrix},
$$
which admits the skew-symmetrizer  $ (d_1,d_2,d_3) = (1,2,3) $ .

The standard symmetrization $\mathfrak{S}(B)$ and alternative symmetrization $\mathfrak{S}'(B)$ (as in Remark~\ref{rem:alt-sym}) are given by
$$
\mathfrak{S}(B) =
\begin{pmatrix}
2 & \sqrt{2}\,x_1 & \sqrt{3}\,x_2 \\
\sqrt{2}\,x_1 & 2 & \sqrt{6}\,x_3 \\
\sqrt{3}\,x_2 & \sqrt{6}\,x_3 & 2 
\end{pmatrix}, \qquad
\mathfrak{S}'(B) =
\begin{pmatrix}
2 & \sqrt{2}\,x_1 & \sqrt{3}\,x_2 \\
\sqrt{2}\,x_1 & 4 & \sqrt{6}\,x_3 \\
\sqrt{3}\,x_2 & \sqrt{6}\,x_3 & 6 
\end{pmatrix},
$$
and one computes
$$
\delta(B(x_1,x_2,x_3)) \equiv 2x_2^2 \pmod{4}, \qquad \delta'(B(x_1,x_2,x_3)) \equiv 12\bigl(x_1^2 + x_2^2 + x_3^2 + x_1x_2x_3\bigr) \pmod{24}.
$$

In particular, we have
$$
\delta(B(1,0,1))=\delta(B(1,0,0)) \equiv 0 \pmod{4},\quad \delta(B(1,1,0))\neq \delta(B(1,0,0))
$$
but
$$
\delta'(B(1,0,1))\neq  \delta'(B(1,0,0)), \quad \delta(B(1,1,0))=\delta(B(1,0,1)) \equiv 0 \pmod{24}.
$$

Thus, the two invariants $\delta$ and $\delta'$ can distinguish matrices that either invariant alone cannot.
\end{example}

\section{Proof of the Main Result}\label{sec:proof}

In this section, we prove Lemmas~\ref{lem:integer}, \ref{lem:permutation} and Theorem~\ref{thm:main}.

\subsection{Proof of Lemma~\ref{lem:integer} and a sufficient condition for integrality of the determinant}\label{subsec:proof1}

For convenience, we adopt the convention that $b_{ii} = 2$ for all $i \in [n]$.  
Under this convention, the entries of the symmetrized matrix $\mathfrak{S}(B) = (s_{ij})$ are given by
\begin{equation*}
    s_{ij} =
    \begin{cases}
        \operatorname{sgn}(b_{ij}) \sqrt{|b_{ij} b_{ji}|}, & \text{if } i \leq j, \\
        \operatorname{sgn}(b_{ji}) \sqrt{|b_{ij} b_{ji}|}, & \text{if } i > j.
    \end{cases}
\end{equation*}

The following lemma records a well-known property of skew-symmetrizable matrices.

\begin{lemma}\label{lem:sym}
  Let $B = (b_{ij}) \in M_n(\mathbb{Z})$ be a skew-symmetrizable matrix, and let $D = \operatorname{diag}(d_1, \dots, d_n)$ with $d_i \in \mathbb{Z}_{>0}$ be a skew-symmetrizer, so that $DB$ is skew-symmetric.  
  Then for any permutation $\tau \in S_n$, we have
  \begin{equation*}
    \bigl| b_{1\,\tau(1)}\, b_{2\,\tau(2)} \cdots b_{n\,\tau(n)} \bigr|
    =
    \bigl| b_{\tau(1)\,1}\, b_{\tau(2)\,2} \cdots b_{\tau(n)\,n} \bigr|,
  \end{equation*}
  where we use the convention $b_{ii} = 2$ for all $i$.
\end{lemma}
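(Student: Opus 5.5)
We use the skew-symmetry of $DB$, i.e. $d_i b_{ij} = -d_j b_{ji}$ for all $i\neq j$. Under the convention $b_{ii}=2$ the diagonal relation $d_i b_{ii} = d_i b_{ii}$ holds trivially, so in every case
\[
|d_i b_{ij}| = |d_j b_{ji}| \qquad \text{for all } i,j\in[n].
\]
This single identity is the only input. The argument then multiplies it over the graph of $\tau$ and cancels the $d$'s.

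First, fix $\tau\in S_n$. For each $i\in[n]$, apply the identity to the pair $(i,\tau(i))$ to get $d_i\,|b_{i\,\tau(i)}| = d_{\tau(i)}\,|b_{\tau(i)\,i}|$.

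Next, take the product over $i=1,\dots,n$. The left side becomes $\bigl(\prod_i d_i\bigr)\,\bigl|\prod_i b_{i\,\tau(i)}\bigr|$. The right side becomes $\bigl(\prod_i d_{\tau(i)}\bigr)\,\bigl|\prod_i b_{\tau(i)\,i}\bigr|$.

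Since $\tau$ is a bijection of $[n]$, we have $\prod_i d_{\tau(i)}=\prod_i d_i$. This common factor is a positive integer, so dividing it out gives
\[
\bigl|b_{1\,\tau(1)}\cdots b_{n\,\tau(n)}\bigr| = \bigl|b_{\tau(1)\,1}\cdots b_{\tau(n)\,n}\bigr|,
\]
which is the claim.

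No step is a real obstacle. The only points to check are that the diagonal convention $b_{ii}=2$ is consistent with the identity, which it is since both sides are literally equal, and that the $d_i$ are nonzero, which holds because they are positive. Zero entries $b_{ij}=0$ cause no trouble either: skew-symmetrizability forces $b_{ji}=0$ as well, so the identity reads $0=0$.
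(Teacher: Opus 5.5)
Your proposal is correct and follows the paper's proof: it derives $|d_i b_{ij}| = |d_j b_{ji}|$ for all $i,j$ (the diagonal case holding trivially under $b_{ii}=2$), multiplies it over the pairs $(i,\tau(i))$, and cancels the common positive factor $\prod_i d_i = \prod_i d_{\tau(i)}$. Your remark about zero entries is a harmless extra check, since the identity already covers that case.
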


\begin{proof}
Since $DB$ is skew-symmetric, we have $d_i b_{ij} = -d_j b_{ji}$ for all $i,j$, and hence
\[
|d_i b_{ij}| = |d_j b_{ji}|.
\]
This identity also holds when $i = j$: under our convention $b_{ii} = 2$, both sides equal $2d_i$.

Now consider the product
\[
\prod_{i=1}^n |d_i b_{i\,\tau(i)}|
= \prod_{i=1}^n |d_{\tau(i)} b_{\tau(i)\,i}|
= \prod_{j=1}^n |d_j b_{j\,\tau^{-1}(j)}|,
\]
where we made the change of index $j = \tau(i)$. Since $\tau$ is a permutation, the sets $\{\tau(1),\dots,\tau(n)\}$ and $\{1,\dots,n\}$ coincide, so
\[
\Bigl( \prod_{i=1}^n d_i \Bigr) \cdot \bigl| b_{1\,\tau(1)} \cdots b_{n\,\tau(n)} \bigr|
=
\Bigl( \prod_{i=1}^n d_i \Bigr) \cdot \bigl| b_{\tau(1)\,1} \cdots b_{\tau(n)\,n} \bigr|.
\]
Because each $d_i$ is a positive integer, the common factor $\prod_{i=1}^n d_i > 0$ can be cancelled, yielding
\begin{equation*}
    \bigl| b_{1\,\tau(1)} \cdots b_{n\,\tau(n)} \bigr|
    =
    \bigl| b_{\tau(1)\,1} \cdots b_{\tau(n)\,n} \bigr|,
\end{equation*}
as claimed.
\end{proof}

\medskip

\begin{proof}[{Proof of Lemma~\ref{lem:integer}}]
For each $\tau \in S_n$, we have
\[
\begin{aligned}
|s_{1\,\tau(1)} s_{2\,\tau(2)} \cdots s_{n\,\tau(n)}|
&= \Bigl| \sqrt{|b_{1\,\tau(1)} b_{\tau(1)\,1}|} \,
          \sqrt{|b_{2\,\tau(2)} b_{\tau(2)\,2}|} \cdots
          \sqrt{|b_{n\,\tau(n)} b_{\tau(n)\,n}|} \Bigr| \\
&= \sqrt{ \bigl| b_{1\,\tau(1)} \cdots b_{n\,\tau(n)} \bigr| \cdot
         \bigl| b_{\tau(1)\,1} \cdots b_{\tau(n)\,n} \bigr| }.
\end{aligned}
\]
By Lemma~\ref{lem:sym}, the two products inside the absolute values are equal in magnitude; hence
\begin{equation}\label{eq:integer}
|s_{1\,\tau(1)} \cdots s_{n\,\tau(n)}|
= \bigl| b_{1\,\tau(1)} \cdots b_{n\,\tau(n)} \bigr| \in \mathbb{Z}.
\end{equation}

Consequently, every term in the expansion of $\det(\mathfrak{S}(B))$ is an integer, and therefore
\[
\det(\mathfrak{S}(B)) = \sum_{\tau \in S_n} (-1)^{\ell(\tau)} \,
s_{1\,\tau(1)} s_{2\,\tau(2)} \cdots s_{n\,\tau(n)} \in \mathbb{Z}.
\]
This completes the proof.
\end{proof}

\medskip

As a consequence of the argument in Lemma~\ref{lem:sym}, we obtain the following general sufficient condition for a real symmetric matrix to have an integer determinant—a fact that will be useful in the sequel.

\begin{lemma}\label{lem:integerdet}
  Let $S = (s_{ij}) \in M_n(\mathbb{R})$ be a symmetric matrix.  
  Suppose that for every pair $(i,j)$ with $i \leq j$, the entry $s_{ij}$ can be expressed as
  \[s_{ij} = \sum_{t \in T_{ij}}\epsilon(t)\sqrt{\,|b^t_{ij} b^t_{ji}|\,},\]
  where $T_{ij}$ is a finite index set and $\epsilon(t), b^t_{ij}, b^t_{ji} \in \mathbb{Z}$ for all $t \in T_{ij}$.  
  Moreover, assume there exist positive integers $d_1, \dots, d_n \in \mathbb{Z}_{>0}$ (called skew-symmetrizer in the sequel) such that
    $d_i b^t_{ij} = -\, d_j b^t_{ji}$
  for all $i, j \in [n]$ and all $t \in T_{ij}$.  

  Then $\det(S) \in \mathbb{Z}$. Furthermore, for all $i, j \in [n]$ and all $t \in T_{ij}$,
  \[
    \sqrt{\,|b^t_{ij} b^t_{ji}|\,}\, \det(S_{i,j}) \in \mathbb{Z},
  \]
  where $S_{i,j}$ denotes the $(n-1) \times (n-1)$ submatrix of $S$ obtained by deleting the $i$-th row and $j$-th column.
\end{lemma}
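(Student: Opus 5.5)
Expand $\det(S)$ by the Leibniz formula and then distribute each entry $s_{ij}$ into its finite sum. This writes $\det(S)$ as a finite signed sum of monomials
\[
\sqrt{|b^{t_1}_{1\tau(1)}b^{t_1}_{\tau(1)1}|}\cdots\sqrt{|b^{t_n}_{n\tau(n)}b^{t_n}_{\tau(n)n}|},
\]
one for each $\tau\in S_n$ and each choice of $t_i\in T_{i\tau(i)}$. For $i>j$ we use $s_{ij}=s_{ji}$ and the expansion indexed by $T_{ji}$; since the square root is symmetric in its two entries, it is harmless to regard $T_{ij}:=T_{ji}$ with $b^t_{ij},b^t_{ji}$ unchanged. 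It therefore suffices to show that each monomial is an integer.

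To do this, rerun the proof of Lemma~\ref{lem:sym} with the chosen labels $t_i$ in place of a single matrix $B$. For each factor the hypothesis $d_ib^{t}_{ij}=-d_jb^{t}_{ji}$ gives $|d_ib^t_{ij}|=|d_jb^t_{ji}|$. Multiply over $i$ and reindex by $j=\tau(i)$. Each $d_j$ then appears exactly once on each side, and cancelling $\prod d_i$ gives
\[
\prod_i|b^{t_i}_{i\tau(i)}|=\prod_i|b^{t_i}_{\tau(i)i}|.
\]
The labels travel with the factors under the reindexing, which is the only point needing care. As in \eqref{eq:integer}, the monomial's absolute value is the square root of the product of these two equal integers, so it equals $\prod_i|b^{t_i}_{i\tau(i)}|\in\mathbb Z$. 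Summing with the integer signs $\epsilon(t_i)$ and $(-1)^{\ell(\tau)}$ yields $\det(S)\in\mathbb Z$.

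For the second claim, fix $i,j$ and $t_0\in T_{ij}$, and expand $\det(S_{i,j})$ likewise. A term of $\det(S_{i,j})$ corresponds to a bijection $\sigma:[n]\setminus\{i\}\to[n]\setminus\{j\}$ together with labels $t_k$. Extend $\sigma$ to a permutation $\tau$ of $[n]$ by setting $\tau(i)=j$, and assign the label $t_0$ to the pair $(i,j)$. Then $\sqrt{|b^{t_0}_{ij}b^{t_0}_{ji}|}$ times this term is, up to sign, exactly a monomial of the type treated above for $\tau$. The label convention is consistent because $t_0\in T_{ij}$ and the hypothesis holds for it. By the first part each such product is an integer, so the sum $\sqrt{|b^{t_0}_{ij}b^{t_0}_{ji}|}\det(S_{i,j})$ is an integer as well.

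The main obstacle is bookkeeping rather than substance. One must check that the cancellation argument of Lemma~\ref{lem:sym} still works when different factors come from different index sets $T$. The key observation is that the identity $|d_kb^t_{kl}|=|d_lb^t_{lk}|$ is applied factor by factor, and each factor keeps its own label. No relation between different $t$'s is needed; only the diagonal-type weights $d_k$ must match up, and they do because $\tau$ is a bijection.
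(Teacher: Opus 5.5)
Your proposal is correct and follows the paper's route. The paper omits this proof as identical to that of Lemma~\ref{lem:integer}: expand by Leibniz and show each term is an integer by the cancellation argument of Lemma~\ref{lem:sym}, applied factor by factor with the mixed labels $t_i$. Your second claim, obtained by extending the bijection to $\tau$ with $\tau(i)=j$ and label $t_0$, reduces correctly to the same termwise statement.
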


Since the proof of Lemma~\ref{lem:integerdet} is identical to that of Lemma~\ref{lem:integer}, we omit it.

\begin{remark}\label{rem:subinteger}
  Under the hypotheses of Lemma~\ref{lem:integerdet}, let $S' = (s'_{ij})$ be any symmetric matrix obtained by choosing, for each pair $(i,j)$ with $i \leq j$, a non-empty subset $T'_{ij} \subseteq T_{ij}$ and setting
  \[
    s'_{ij} = \sum_{t \in T'_{ij}} \epsilon(t) \sqrt{\,|b^t_{ij} b^t_{ji}|\,}.
  \]
  Then $S'$ also satisfies the hypotheses of Lemma~\ref{lem:integerdet}, and consequently $\det(S') \in \mathbb{Z}$.
\end{remark}

\subsection{A Lemma on determinate of matrices}

%

For any $i,j \in [n]$, let $\varepsilon_{ij}$ denote the elementary matrix whose entries are all zero except for the $(i,j)$-entry, which equals $1$.

\begin{lemma}\label{lem:det4}
  Let $S = (s_{ij}) \in M_n(\mathbb{R})$ be a real symmetric matrix satisfying the hypotheses of Lemma~\ref{lem:integerdet}, with a fixed skew-symmetrizer $d_1, \dots, d_n \in \mathbb{Z}_{>0}$.  
  Let $i, j \in [n]$ be distinct indices, and let $a =\epsilon \sqrt{|b_{ij} b_{ji}|}$ for some integers $\epsilon, b_{ij}, b_{ji}$ such that
    $d_i b_{ij} = -\, d_j b_{ji}$.
  Then
  \[
    \det\bigl( S - 2a (\varepsilon_{ij} + \varepsilon_{ji}) \bigr)
    \equiv \det(S) \pmod{4}.
  \]
\end{lemma}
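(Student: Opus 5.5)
Since the determinant is multilinear, we change the $(i,j)$ and $(j,i)$ entries in two steps and expand. Write $S' = S - 2a(\varepsilon_{ij}+\varepsilon_{ji})$. Expanding by linearity in row $i$ and then in row $j$ gives
\[
\det(S') = \det(S) - 2a\,\det(S^{(i)}) - 2a\,\det(S^{(j)}) + 4a^2\,\det(S^{(ij)}),
\]
where $S^{(i)}$ is $S$ with row $i$ replaced by the unit row $e_j^{\top}$, $S^{(j)}$ is $S$ with row $j$ replaced by $e_i^{\top}$, and $S^{(ij)}$ has both replacements. Cofactor expansion turns these into signed minors. We have $\det(S^{(i)}) = (-1)^{i+j}\det(S_{i,j})$ and $\det(S^{(j)}) = (-1)^{i+j}\det(S_{j,i})$. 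By symmetry of $S$, $S_{j,i} = S_{i,j}^{\top}$, so the two minors are equal. Finally $\det(S^{(ij)}) = \pm \det(S_{\{i,j\},\{j,i\}})$, the $(n-2)\times(n-2)$ minor with rows $i,j$ and columns $i,j$ deleted. Hence
\[
\det(S') = \det(S) - 4(-1)^{i+j} a\,\det(S_{i,j}) \pm 4a^2 \det(S_{\{i,j\},\{i,j\}}).
\]

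It then suffices to show that $a\det(S_{i,j})\in\mathbb{Z}$ and $a^2\det(S_{\{i,j\},\{i,j\}})\in\mathbb{Z}$.

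For the first term, enlarge the index set: set $T'_{ij}=T_{ij}\sqcup\{t_0\}$ with $b^{t_0}_{ij}=b_{ij}$, $b^{t_0}_{ji}=b_{ji}$ and $\epsilon(t_0)=\epsilon$. The matrix $S'' = S + a(\varepsilon_{ij}+\varepsilon_{ji})$ still satisfies the hypotheses of Lemma~\ref{lem:integerdet}, because $d_ib_{ij}=-d_jb_{ji}$. The minor $S''_{i,j}$ equals $S_{i,j}$, since row $i$ and column $j$ are deleted and the $(j,i)$ entry lies in row $j$, column $i$, which survive. So I expect to check that the second assertion of Lemma~\ref{lem:integerdet} concerns the full minor $S_{i,j}$ and not just the $t$-component. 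The statement does refer to the full minor, so applying it to $S''$ with $t=t_0$ gives $a\det(S_{i,j}) = \pm\sqrt{|b_{ij}b_{ji}|}\det(S''_{i,j})\in\mathbb{Z}$.

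For the second term, $a^2=|b_{ij}b_{ji}|\in\mathbb{Z}$. The principal submatrix $S_{\{i,j\},\{i,j\}}$ satisfies the hypotheses of Lemma~\ref{lem:integerdet} with the restricted skew-symmetrizer, so its determinant is an integer.

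The main obstacle is justifying the integrality $a\det(S_{i,j})\in\mathbb{Z}$, since Lemma~\ref{lem:integerdet}'s proof was omitted. If needed, I would verify it directly by the permutation-expansion argument of Lemma~\ref{lem:sym}. Each term of $\det(S_{i,j})$, multiplied by $\sqrt{|b_{ij}b_{ji}|}$, is a product over a full permutation $\tau$ of $[n]$ with $\tau(i)=j$, with the $(i,j)$ slot filled by $b_{ij}$. The same cancellation of $\prod d_k$ shows that $\prod_k|b^{t_k}_{k\tau(k)}| = \prod_k |b^{t_k}_{\tau(k)k}|$, so the square root of their product is an integer. The subtle point is that the reverse factors must use the same index $t_k$ on the transposed entry. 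This holds because $s_{ij}=s_{ji}$ share the same decomposition $T_{ij}$.
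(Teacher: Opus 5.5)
Your expansion is the paper's own. The paper also expands along rows $i$ and $j$ and obtains $\det(S)-4(-1)^{i+j}a\det(S_{i,j})-4a^2\det(S_{\{i,j\},\{i,j\}})$; the sign you left as $\pm$ is $-$. It then uses $a\det(S_{i,j})\in\mathbb{Z}$ and the integrality of the principal minor, exactly as you plan.

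\textbf{A false step in the enlargement trick.} The symmetric matrix $S''=S+a(\varepsilon_{ij}+\varepsilon_{ji})$ has $(j,i)$ entry $s_{ji}+a$. As you yourself observe, row $j$ and column $i$ survive in the minor $S''_{i,j}$, so $S''_{i,j}\neq S_{i,j}$ in general. Expanding in that one changed entry gives
$\det(S''_{i,j})=\det(S_{i,j})\pm a\det(S_{\{i,j\},\{i,j\}})$.
Either of two repairs works:
\begin{itemize}
\item Adjoin the extra component $t_0$ with $\epsilon(t_0)=0$ instead. The matrix is then still $S$, but its decomposition now contains the pair $(b_{ij},b_{ji})$, and Lemma~\ref{lem:integerdet} at $t=t_0$ gives $\sqrt{|b_{ij}b_{ji}|}\det(S_{i,j})\in\mathbb{Z}$ directly.
\item Keep $S''$ and absorb the correction term, using $\sqrt{|b_{ij}b_{ji}|}\,a=\epsilon|b_{ij}b_{ji}|\in\mathbb{Z}$ and $\det(S_{\{i,j\},\{i,j\}})\in\mathbb{Z}$.
\end{itemize}
Two harmless slips: $a^2=\epsilon^2|b_{ij}b_{ji}|$, not $|b_{ij}b_{ji}|$, and the ``$\pm$'' in your formula for $a\det(S_{i,j})$ should be $\epsilon$.

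\textbf{Your fallback already suffices.} The direct permutation-expansion argument is correct and self-contained. After expanding each entry into components, every term of $\sqrt{|b_{ij}b_{ji}|}\det(S_{i,j})$ becomes a product over a permutation $\tau$ with $\tau(i)=j$ of factors $\sqrt{|b^{t_k}_{k\tau(k)}b^{t_k}_{\tau(k)k}|}$. The cancellation of $\prod_k d_k$ then makes each such product an integer.

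This argument also fills a point the paper glosses over. The paper simply cites Lemma~\ref{lem:integerdet} for $a\det(S_{i,j})\in\mathbb{Z}$, but that lemma literally covers only pairs $(b^t_{ij},b^t_{ji})$ with $t\in T_{ij}$. The $a$ in the statement comes from an arbitrary pair satisfying $d_ib_{ij}=-d_jb_{ji}$. So with either repair, or with the fallback alone, your proof is the paper's proof with that step made precise.
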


\begin{proof}
We expand the determinant of the rank-two perturbation $S - 2a(\varepsilon_{ij} + \varepsilon_{ji})$ using the Laplace expansion along rows $i$ and $j$. This yields
\begin{equation}\label{eq:det1}
\det\bigl( S - 2a (\varepsilon_{ij} + \varepsilon_{ji}) \bigr)= \det(S)
   - (-1)^{i+j} 2a \bigl( \det(S_{i,j}) + \det(S_{j,i}) \bigr) - 4a^2 \det\bigl( S_{\{i,j\},\{i,j\}} \bigr),
\end{equation}
where $S_{i,j}$ denotes the submatrix obtained by deleting row $i$ and column $j$, and $S_{\{i,j\},\{i,j\}}$ is the $(n-2)\times(n-2)$ principal submatrix obtained by removing rows and columns $i$ and $j$.

Since $S$ is symmetric, we have $S_{j,i} = (S_{i,j})^\top$, and therefore
\[
\det(S_{i,j}) = \det(S_{j,i}).
\]

By Lemma~\ref{lem:integerdet}, the product $a\, \det(S_{i,j})$ is an integer. Consequently,
\begin{equation}\label{eq:det2}
(-1)^{i+j} 2a \bigl( \det(S_{i,j}) + \det(S_{j,i}) \bigr)
= (-1)^{i+j} 4a \det(S_{i,j}) \equiv 0 \pmod{4}.
\end{equation}

Moreover, the principal submatrix $S_{\{i,j\},\{i,j\}}$ satisfies the hypotheses of Lemma~\ref{lem:integerdet} (with the symmetrizing weights $\{d_k\}_{k \ne i,j}$), so by Lemma~\ref{lem:integerdet}, we have $\det\bigl(S_{\{i,j\},\{i,j\}}\bigr) \in \mathbb{Z}$. Since $a^2 =\epsilon^2 |b_{ij} b_{ji}| \in \mathbb{Z}$, it follows that
\begin{equation}\label{eq:det3}
4a^2 \det\bigl( S_{\{i,j\},\{i,j\}} \bigr) \equiv 0 \pmod{4}.
\end{equation}

Combining \eqref{eq:det1}, \eqref{eq:det2}, and \eqref{eq:det3}, we conclude that
\[
\det\bigl( S - 2a (\varepsilon_{ij} + \varepsilon_{ji}) \bigr)
\equiv \det(S) \pmod{4},
\]
as claimed.
\end{proof}


\subsection{Proof of Lemma \ref{lem:permutation}}\label{subsec:proof2}

The following lemma is an analogue of \cite[Lemma~2.5]{Casals}; it can be verified by a straightforward computation.

\begin{lemma}\label{lem:permutation2}
  Let $P \in M_n(\mathbb{Z})$ be the permutation matrix corresponding to the simple transposition $s_k$ for some $k \in [n-1]$.  
  Then for any skew-symmetrizable matrix $B \in M_n(\mathbb{Z})$, we have
  \[
    P\,\mathfrak{S}(B)\,P^{\!\top} - \mathfrak{S}(P B P^{\!\top})
    = 2\,\operatorname{sgn}(b_{k,k+1}) \sqrt{|b_{k,k+1} b_{k+1,k}|}\,(\varepsilon_{k,k+1} + \varepsilon_{k+1,k}).
  \]
\end{lemma}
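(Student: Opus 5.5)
The plan is a direct entrywise comparison of the two symmetric matrices. Let $\sigma = s_k$ be the transposition of $k$ and $k+1$, so that $(PXP^{\!\top})_{ij} = x_{\sigma(i)\sigma(j)}$ for any $X \in M_n(\mathbb{R})$. In particular $B' := PBP^{\!\top}$ has entries $b'_{ij} = b_{\sigma(i)\sigma(j)}$, and $(P\,\mathfrak{S}(B)\,P^{\!\top})_{ij} = s_{\sigma(i)\sigma(j)}$. Both sides of the claimed identity are symmetric. Also, the diagonal entries of $P\,\mathfrak{S}(B)\,P^{\!\top}$ and of $\mathfrak{S}(B')$ both equal $2$. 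So it suffices to compare the $(i,j)$-entries with $i<j$.

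\textbf{Magnitudes.} For any $i<j$ the two entries have the same absolute value. Indeed,
\[
\sqrt{|b'_{ij}b'_{ji}|} = \sqrt{|b_{\sigma(i)\sigma(j)}\,b_{\sigma(j)\sigma(i)}|},
\]
which is exactly the magnitude of $s_{\sigma(i)\sigma(j)}$. Only the signs need checking.

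\textbf{Signs when $\sigma$ preserves the order.} Here I use the key observation that $\sigma$ preserves the relative order of $i<j$ unless $\{i,j\} = \{k,k+1\}$. If $\sigma(i)<\sigma(j)$, then by definition
\[
s_{\sigma(i)\sigma(j)} = \operatorname{sgn}(b_{\sigma(i)\sigma(j)})\sqrt{\cdots} = \operatorname{sgn}(b'_{ij})\sqrt{\cdots} = \mathfrak{S}(B')_{ij}.
\]
So these entries of the difference vanish.

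\textbf{The remaining entry $(i,j)=(k,k+1)$.} Here $\sigma(i)=k+1>\sigma(j)=k$. By the ``$i>j$'' branch of the definition,
\[
(P\,\mathfrak{S}(B)\,P^{\!\top})_{k,k+1} = s_{k+1,k} = \operatorname{sgn}(b_{k,k+1})\sqrt{|b_{k,k+1}b_{k+1,k}|}.
\]
On the other hand,
\[
\mathfrak{S}(B')_{k,k+1} = \operatorname{sgn}(b'_{k,k+1})\sqrt{\cdots} = \operatorname{sgn}(b_{k+1,k})\sqrt{\cdots}.
\]
Since $d_k b_{k,k+1} = -d_{k+1}b_{k+1,k}$ with $d_k,d_{k+1}>0$, we get $\operatorname{sgn}(b_{k+1,k}) = -\operatorname{sgn}(b_{k,k+1})$; this includes the case where both vanish. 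Hence the difference at $(k,k+1)$ is $2\operatorname{sgn}(b_{k,k+1})\sqrt{|b_{k,k+1}b_{k+1,k}|}$. By symmetry the same value appears at $(k+1,k)$, which gives the stated formula.

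There is no real obstacle here. The only point requiring care is the sign bookkeeping at the swapped pair, and in particular using skew-symmetrizability to conclude that $b_{k,k+1}$ and $b_{k+1,k}$ have opposite signs.
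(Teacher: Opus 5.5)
Your proof is correct and is the straightforward entrywise computation the paper alludes to; the paper itself gives no details beyond calling it a direct check. The key points are that $s_k$ preserves the order of every pair $i<j$ except $\{k,k+1\}$, and that on that pair $\operatorname{sgn}(b_{k+1,k})=-\operatorname{sgn}(b_{k,k+1})$ by skew-symmetrizability, which produces the factor $2$.
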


\begin{proof}[Proof of Lemma~\ref{lem:permutation}]
  Since every permutation can be written as a product of adjacent (simple) transpositions, it suffices to verify the congruence
  \[
    \delta(B) \equiv \delta(P B P^{\!\top}) \pmod{4}
  \]
  in the case where $P$ is the permutation matrix corresponding to the simple transposition $s_k = (k\; k+1)$ for some $k \in [n-1]$.


  Fix a skew-symmetrizer $d_1, \dots, d_n \in \mathbb{Z}_{>0}$ for $B$.  
  The matrix $S := \mathfrak{S}(P B P^{\!\top})$ satisfies the hypotheses of Lemma~\ref{lem:integerdet} with the permuted skew-symmetrizer
  $(d_1, \dots, d_{k-1}, d_{k+1}, d_k, d_{k+2}, \dots, d_n)$.

  Set 
  \[
    a := -\,\operatorname{sgn}(b_{k,k+1}) \sqrt{|b_{k,k+1} b_{k+1,k}|}.
  \]
  The relation $d_{k+1} b_{k+1,k} = -d_k b_{k,k+1}$ ensures that the pair $(S, a)$ satisfies the assumptions of Lemma~\ref{lem:det4}.  
  Consequently,
  \begin{equation}\label{eq:detmod1}
    \det\Bigl( S + 2\,\operatorname{sgn}(b_{k,k+1}) \sqrt{|b_{k,k+1} b_{k+1,k}|}\,(\varepsilon_{k,k+1} + \varepsilon_{k+1,k}) \Bigr)
    \equiv \det(S) \pmod{4}.
  \end{equation}

  Now observe that
  \[
  \begin{aligned}
    \delta(B)
    &= \det(\mathfrak{S}(B))
     = \det(P\, \mathfrak{S}(B)\, P^{\!\top}) \\
    &\overset{\text{Lemma~\ref{lem:permutation2}}}{=}
      \det\Bigl( \mathfrak{S}(P B P^{\!\top}) + 2\,\operatorname{sgn}(b_{k,k+1}) \sqrt{|b_{k,k+1} b_{k+1,k}|}\,(\varepsilon_{k,k+1} + \varepsilon_{k+1,k}) \Bigr) \\
    &\overset{\eqref{eq:detmod1}}{\equiv}
      \det\bigl( \mathfrak{S}(P B P^{\!\top}) \bigr) \pmod{4} \\
    &= \delta(P B P^{\!\top}),
  \end{aligned}
  \]
  as required.
\end{proof}

\subsection{Proof of Theorem \ref{thm:main}}\label{subsec:proof3}

\begin{proof}
By Lemma~\ref{lem:permutation}, it suffices to consider the case where $B' = \mu_k(B) = (b'_{ij})$. In this case, we aim to prove
\begin{equation}\label{eq:detmod2}
  \det(\mathfrak{S}(\mu_k(B))) \equiv \det(\mathfrak{S}(B)) \pmod{4}.
\end{equation}

Applying a simultaneous permutation of rows and columns (which preserves the determinant modulo~4 by Lemma~\ref{lem:permutation}), we may assume without loss of generality that
\[
b_{ik} \geq 0 \quad \text{for all } i \leq k, \qquad 
b_{ik} < 0 \quad \text{for all } i > k.
\]
Denote $\mathfrak{S}(B) = (s_{ij})$ and $\mathfrak{S}(\mu_k(B)) = (s'_{ij})$. Under this assumption, we have $s_{ik} = \sqrt{|b_{ik} b_{ki}|}$ for all $i$.

The entries of the mutated matrix $B' = \mu_k(B)$ are given by
\[
b'_{ij} =
\begin{cases}
 -b_{ij}, & \text{if } i = k \text{ or } j = k, \\
 b_{ij} + [-b_{ik}]_+ b_{kj} + b_{ik} [b_{kj}]_+, & \text{otherwise},
\end{cases}
\]
but under our sign convention (all $b_{ik} \geq 0$ for $i \leq k$, $<0$ for $i > k$), this simplifies to
\[
b'_{ij} =
\begin{cases}
 -b_{ij}, & \text{if } i = k \text{ or } j = k, \\
 b_{ij} + b_{ik} b_{kj}, & \text{if } i < k < j, \\
 b_{ij} - b_{ik} b_{kj}, & \text{if } j < k < i, \\
 b_{ij}, & \text{otherwise}.
\end{cases}
\]

In particular, when $i < k < j$, a direct computation yields
\begin{align*}
(s'_{ij})^2 &= -b'_{ij} b'_{ji} \\
&= -(b_{ij} + b_{ik} b_{kj})(b_{ji} - b_{jk} b_{ki}) \\
&= -b_{ij} b_{ji} + b_{ij} b_{jk} b_{ki} - b_{ik} b_{kj} b_{ji} + b_{ik} b_{ki} b_{jk} b_{kj} \\
&= s_{ij}^2 + 2 b_{ij} b_{jk} b_{ki} + s_{ik}^2 s_{jk}^2 \\
&= \bigl(s_{ij} + s_{ik} s_{jk}\bigr)^2.
\end{align*}

Therefore, the entries of $\mathfrak{S}(B') = (s'_{ij})$ satisfy
\begin{equation}\label{eq:s'}
s'_{ij} =
\begin{cases}
 -s_{ij}, & \text{if } i = k < j \text{ or } i < k = j, \\
 s_{ij} + s_{ik} s_{jk} \text{ or } -s_{ij} - s_{ik} s_{jk}, & \text{if } i < k < j, \\
 s_{ij}, & \text{otherwise}.
\end{cases}
\end{equation}

Now define the matrices
\[
J = I_n - 2\varepsilon_{k,k}, \qquad
E = (e_{ij})_{1 \le i,j \le n},
\]
where $I_n$ is the $n \times n$ identity matrix, and $E$ is given by
\[
e_{ij} =
\begin{cases}
 -s_{ik}, & \text{if } j = k \text{ and } i > k, \\
 0, & \text{otherwise}.
\end{cases}
\]

Set
\[
S'' := (J + E)\, \mathfrak{S}(B)\, (J + E)^\top = (s''_{ij}).
\]
A straightforward computation shows that $S''$ is symmetric and that its entries satisfy
\begin{equation}\label{eq:s''}
s''_{ij} =
\begin{cases}
 -s_{ij}, & \text{if } i < k = j, \\
 s_{ij} - s_{ik} s_{jk}, & \text{if } i < k < j, \\
 s_{ij}, & \text{otherwise}.
\end{cases}
\end{equation}

Let $T = S'' - \mathfrak{S}(B') = (t_{ij})$. Then $T$ is symmetric, and from \eqref{eq:s'} and \eqref{eq:s''}, we obtain for all $i \leq j$:
\[
t_{ij} = s''_{ij} - s'_{ij} =
\begin{cases}
 2s_{ij}, & \text{if } i=k<j, \\
 -2 s_{ik} s_{kj} \text{ or } 2s_{ij}, & \text{if } i < k < j, \\
 0, & \text{otherwise}.
\end{cases}
\]

Fix a skew-symmetrizer $d_1, \dots, d_n \in \mathbb{Z}_{>0}$ for $B$. This same tuple also serves as a skew-symmetrizer for $B'$, since mutation preserves the skew-symmetrizable property with the same weights.

We now verify that each nonzero entry $t_{ij}, (i\leq j)$ satisfies the hypotheses of Lemma~\ref{lem:det4}.  

- If $t_{ij} = 2s_{ij}$, then $s_{ij} = \operatorname{sgn}(b_{ij}) \sqrt{|b_{ij} b_{ji}|}$, and $d_i b_{ij} = -d_j b_{ji}$ holds by assumption.  

- If $t_{ij} = -2 s_{ik} s_{kj}$, then
  \[
  s_{ik} s_{kj} = \sqrt{(-b_{ik} b_{ki})(-b_{kj} b_{jk})}
  = \sqrt{|(b_{ik} b_{kj})(-b_{jk} b_{ki})|},
  \]
  and using $d_i b_{ik} = -d_k b_{ki}$ and $d_k b_{kj} = -d_j b_{jk}$, we deduce
  \[
  d_i (b_{ik} b_{kj}) = (-d_k b_{ki}) b_{kj} = d_j (b_{jk} b_{ki})=-d_j(-b_{jk} b_{ki}),
  \]
  so the product $b_{ik} b_{kj}$ and $-b_{jk} b_{ki}$ are related by the same symmetrizer. Hence the term $s_{ik} s_{kj}$ arises from a pair of integers satisfying the skew-symmetrizer condition.

Therefore, the difference $T = S'' - \mathfrak{S}(B')$ is a sum of rank-two updates of the form handled in Lemma~\ref{lem:det4}. Applying Lemma~\ref{lem:det4} repeatedly (once for each nonzero off-diagonal entry of $T$), we conclude that
\[
\det(S'') \equiv \det(\mathfrak{S}(B')) \pmod{4}.
\]

Finally, since $\det(J + E) = \det(J) = -1$, we have
\[
\det(S'') = \det(J + E)^2 \det(\mathfrak{S}(B)) = \det(\mathfrak{S}(B)).
\]

Combining the above, we obtain
\[
\det(\mathfrak{S}(B)) = \det(S'') \equiv \det(\mathfrak{S}(B')) \pmod{4},
\]
which proves \eqref{eq:detmod2}. The proof is complete.
\end{proof}

Analogues of Lemmas~\ref{lem:integerdet} and~\ref{lem:det4} remain valid in this more general setting.

\begin{remark}\label{rmk:mod2}
  Let $S=(s_{ij}) \in M_n(\mathbb{R})$ be a real $n\times n$ matrix such that each entry $s_{ij} $  can be written as   
$$s_{ij} = \sum_{t \in T_{ij}} \epsilon(t) \sqrt{\,|b^t_{ij} b^t_{ji}|\,},$$
where $T_{ij}$ is a finite index set and  $\epsilon(t), b^t_{ij}, b^t_{ji} \in \mathbb{Z}$ for all $t\in T_{ij}$. Suppose further that there exist positive integers  $ d_1, \dots, d_n \in \mathbb{Z}_{>0} $  satisfying 
$$d_i b^t_{ij} = d_j b^t_{ji}
$$
for all $i,j\in [n]$ and all $t\in T_{ij}$. Then $\det(S)\in \mathbb{Z}$.

Moreover, for any fixed $i,j\in [n]$, any integers $b'_{ij}, b'_{ji} \in \mathbb{Z}$ with $d_ib'_{ij}=d_j b'_{ji} $, and any $ \epsilon \in \mathbb{Z}$, we have
$$    \det\bigl( S - 2\epsilon \sqrt{|b'_{ij} b'_{ji}|} \, \varepsilon_{ij} \bigr)
    \equiv \det(S) \pmod{2}.
$$
\end{remark}

\begin{remark}\label{rem:binary}
For any fixed $n$, Casals~\cite{Casals} showed that in the skew-symmetric case, the $\delta$-invariant takes only two possible values; more precisely,
$$
\delta(B) \in 
\begin{cases}
\{0,2\}, & \text{if } n \text{ is odd}, \\
\{0,1\}, & \text{if } n \equiv 0 \pmod{4}, \\
\{0,3\}, & \text{if } n \equiv 2 \pmod{4}.
\end{cases}
$$

In fact, as observed in Remark~ \ref{rmk:mod2}, the same dichotomy holds more generally for skew-symmetrizable matrices when $n$ is odd.

We also conjecture that for any fixed $n$ and any choice of skew-symmetrizer $d_1, \dots, d_n$, the invariant $\delta$ is binary. This is supported by several explicit computations.
\end{remark}
%
%

\printbibliography

\end{document}

\subsection{(Quantum) cluster algebras}
Cluster algebras, introduced by Fomin and Zelevinsky \cite{FZ}, are a class of commutative algebras equipped with a distinguished set of generators known as cluster variables. These variables are grouped into collections called clusters, which are related by certain birational transformations known as mutations (\S\ref{sec-mutation-classical}). In \cite{BFZ}, Berenstein, Fomin, and Zelevinsky introduced the notion of upper cluster algebras, defined as the intersection of the Laurent polynomial rings associated with each cluster. From a geometric perspective, upper cluster algebras are often more natural objects to consider. Cluster variables are rational functions by construction. In \cite{FZ}, Fomin and Zelevinsky proved that they are Laurent polynomials of initial cluster variables, known as the Laurent phenomenon. Laurent polynomials were proven to have non-negative coefficients, known as positivity, see \cite{LS,GHKK,D}.

The quantum analogues of cluster algebras and upper cluster algebras (Definition \ref{def-quan-cluster-algebra}) were introduced by Berenstein and Zelevinsky in \cite{BZ}, where they also showed that the Laurent phenomenon admits a quantum analogue in this setting.

For a (quantum) cluster algebra $\mathscr A$ and its upper cluster algebra $\mathscr U$, the Laurent phenomenon ensures that $\mathscr A\subseteq \mathscr U$. However, in general, $\mathscr A\neq \mathscr U$. The problem of whether a cluster algebra coincides with its upper cluster algebra was posed by Berenstein, Fomin, and Zelevinsky and has been studied in \cite{BFZ}. This question has attracted considerable attention since its inception; see, for example, \cite{M,M1,CLS,GY,SW,L,MW,IOS,CGGLS} and references therein. If $\mathscr A=\mathscr U$, then the cluster algebra $\mathscr A$ enjoys several desirable properties, including the existence of a generic basis and a theta basis \cite{CKQ,Q,GLS,GHKK,GLS1}. 

The search for (quantum) cluster algebra and upper (quantum) cluster algebra structures on important algebraic and geometric objects has drawn significant interest; see, for example, \cite{BFZ,FZ,PT,GLS2,S,SSW,SW,CK,G,L1} and the references therein. Cluster algebras and cluster varieties have also been constructed in the context of Teichmüller theory \cite{FG06,FST,GSV,GS19}, and are closely related to skein theory \cite{muller2016skein,ishibashi2023skein,LY22,LY23,KimWang,Kim21,IY1}.

\subsection{Reduced stated ${\rm SL}_n$-skein algebras and quantum trace maps}\label{intro-sec-reduced}

A  \emph{pb surface} $\fS$ is obtained from a compact oriented surface $\overline{\fS}$ by removing finitely many points, which are called \emph{punctures}, such that every boundary component of $\fS$ is diffeomorphic to an open interval. 
An embedding $c:(0,1)\rightarrow \fS$ is called an \emph{ideal arc} if both
$\bar c(0)$ and $\bar c(1)$ are punctures, where $\bar c\colon [0,1] \to \overline{\fS}$ is the `closure' of $c$.
For any positive integer $k$, we use $\mathbb P_k$ to denote the pb surface obtained from the closed disk by removing $k$ punctures from the boundary. 

The \emph{stated ${\rm SL}_n$–skein algebra} $\cS_\omega(\fS)$ of a pb surface $\fS$  
is the quotient of the $R$–module freely generated by the set of isotopy classes of stated $n$–webs  
(Definition~\ref{def-n-web}) in $\fS \times (-1,1)$, subject to the relations \eqref{w.cross}–\eqref{wzh.eight}.  
For two stated $n$-webs $\alpha,\beta$, their product $\alpha\beta$ is defined by stacking $\alpha$ over $\beta$.
The stated ${\rm SL}_n$–skein algebra was introduced by L{\^e} and Sikora \cite{LS21} as a generalization of the ${\rm SL}_n$–skein algebra  
\cite{Sik05} to the stated setting, extending the stated ${\rm SL}_2$– and ${\rm SL}_3$–skein algebras  \cite{le2018triangular,higgins2020triangular} to the ${\rm SL}_n$ case.  
The \emph{reduced stated ${\rm SL}_n$–skein algebra} $\overline{\cS}_\omega(\fS)$ \cite{LY23},  
which is the main focus of this paper, is defined as the quotient of $\cS_\omega(\fS)$  
by the two–sided ideal generated by all bad arcs (see Figure~\ref{Fig;badarc}).

\vspace{0.2cm}

Let $\fS$ be a triangulable pb surface (see \S\ref{sec-traceX}),  
and let $\lambda$ be a triangulation of $\fS$—that is, a maximal collection of pairwise disjoint, non-isotopic ideal arcs in $\fS$  
(note that self-folded triangles are not allowed; see \S\ref{sec-traceX}).  
Cutting $\fS$ along all ideal arcs that are not isotopic to any component of $\partial\fS$ yields a collection of triangles (copies of $\mathbb P_3$),  
denoted by $\mathbb F_\lambda$.  
For each $\tau=\mathbb P_3\in \mathbb F_\lambda$, there is a weighted quiver $\Gamma_\tau$ inside $\tau$ (see Figure~\ref{Fig;coord_ijk})  
whose arrows have weight $1$, except those lying on the boundary, which have weight $\tfrac{1}{2}$.  
The vertices of $\Gamma_\tau$ are called \emph{small vertices}.  
By gluing the triangles in $\bigsqcup_{\tau\in \mathbb F_\lambda}\tau$ back together to recover $\fS$,  
we obtain a quiver $\Gamma_\lambda$ on $\fS$: whenever two edges are identified, the small vertices on these edges are identified as well,  
and any pair of arrows of equal weight but opposite direction between two vertices cancel each other.
We denote by $V_\lambda$ the vertex set of $\Gamma_\lambda$; each element of $V_\lambda$ is called a \emph{small vertex}.

Let $Q_\lambda\colon V_\lambda\times V_\lambda\rightarrow \tfrac{1}{2}\mathbb Z$ be the signed adjacency matrix of $\Gamma_\lambda$ (see \eqref{eq-def-Q-lambda-re}).  
The \emph{$n$-th root Fock–Goncharov algebra} is defined by
\[
\mathcal{Z}_{\omega}(\fS,\lambda)
=  R \langle
Z_v^{\pm 1},\, v \in V_\lambda \rangle \big/ \bigl(
Z_v Z_{v'}= \omega^{\, 2 Q_\lambda(v,v')} Z_{v'} Z_v \;\; \text{for } v,v'\in V_\lambda \bigr).
\]
The $\mathcal X$-version quantum trace, established in \cite{LY23}, is the algebra homomorphism
\begin{align*}
    {\rm tr}_\lambda\colon \overline{\cS}_\omega(\fS)
    \longrightarrow \mathcal{Z}_{\hat\omega}(\fS,\lambda)
    \qquad \text{(see \cite{BW11,LY22,Kim20} for $n=2,3$).}
\end{align*}
L{\^e} and Yu introduced a subalgebra \cite{LY23}, called the \emph{balanced Fock–Goncharov algebra},
\begin{align}
\mathcal{Z}_{\omega}^{\rm bl}(\fS,\lambda)
= \operatorname{span}_R\{Z^{\bf k}\mid {\bf k}\in\mathcal B_\lambda\}
\subset \mathcal{Z}_\omega(\fS,\lambda),
\end{align}
where $\mathcal B_\lambda$ is the subgroup of $\mathbb Z^{V_\lambda}$ defined in \eqref{B_lambda}.
It was proved in \cite{LY23} that $\operatorname{im} {\rm tr}_\lambda \subset \mathcal{Z}_{\omega}^{\rm bl}(\fS,\lambda)$.

\vspace{0.2cm}

It is well known that $\mathcal{Z}_{\omega}^{\rm bl}(\fS,\lambda)$ is an Ore domain \cite{Cohn}.  
We denote its skew-field of fractions by ${\rm Frac}\bigl(\mathcal{Z}_{\omega}^{\rm bl}(\fS,\lambda)\bigr)$.
Let $\lambda'$ be another triangulation of $\fS$.  
Kim and the second author established an isomorphism \cite{KimWang}
\begin{align*}
    \Theta_{\lambda\lambda'}^\omega
    \colon {\rm Frac}\bigl(\mathcal{Z}_{\omega}^{\rm bl}(\fS,\lambda')\bigr)
    \longrightarrow
    {\rm Frac}\bigl(\mathcal{Z}_{\omega}^{\rm bl}(\fS,\lambda)\bigr)
\end{align*}
using a sequence of generalized quantum $\mathcal X$-mutations (see \eqref{eq-Theta2}, \eqref{eq-Theta-change2}, and Proposition~\ref{Prop-rest-bal}).  
See \cite{LY23} for a different construction of $\Theta_{\lambda\lambda'}$, and \cite{Kim21} for the case ${\rm SL}_3$.
It was shown in \cite{KimWang} that the following diagram commutes (Theorem~\ref{thm-main-compatibility}):
\begin{align}
        \label{intro-eq-compability-tr-mutation}
        \xymatrix{
        & \overline{\cS}_\omega(\fS) \ar[dl]_{{\rm tr}_{\lambda'}} \ar[dr]^{{\rm tr}_\lambda} & \\
        {\rm Frac}\bigl(\mathcal{Z}^{\rm bl}_\omega(\fS,\lambda')\bigr)
        \ar[rr]_-{\Theta^\omega_{\lambda\lambda'}} & &
        {\rm Frac}\bigl(\mathcal{Z}^{\rm bl}_\omega(\fS,\lambda)\bigr)
        }.
\end{align}

Assume that $\fS$ has no interior punctures.  
There is another antisymmetric matrix \eqref{eq-anti-matric-P-def}
\begin{align*}
    P_\lambda\colon V_\lambda\times V_\lambda\rightarrow n\mathbb Z
\end{align*}
associated with the triangulation $\lambda$ of $\fS$, which equals $-\overline{\mathsf P}_\lambda$ defined in \cite[Equations~(163) and (205)]{LY23}.
Put $\Pi_\lambda=\frac{1}{n}P_\lambda$.
The \emph{$\mathcal A$-version quantum torus} of $(\fS,\lambda)$ is defined by
\begin{equation*}
\mathcal{A}_{\omega}(\fS,\lambda)
= R \langle 
A_v^{\pm 1},\, v \in V_\lambda \rangle \big/ \bigl(
A_v A_{v'}= \xi^{\Pi_\lambda(v,v')} A_{v'} A_v
\text{ for } v,v'\in V_\lambda \bigr),
\end{equation*}
where $\xi=\omega^n$.
There exists an algebra isomorphism \cite{LY23} (see \eqref{def-A-bal-isomor})
\begin{align}
 \psi_\lambda \colon \mathcal{A}_{\omega}(\fS,\lambda)\longrightarrow
\mathcal{Z}_{\omega}^{\rm bl}(\fS,\lambda).
\end{align}
Moreover, there is an algebra homomorphism \cite{LY23}
\[
{\rm tr}_\lambda^A\colon
\overline{\cS}_\omega(\fS)\longrightarrow \mathcal{A}_{\omega}(\fS,\lambda)
\]
with the following properties:
\begin{enumerate}[label={\rm (\alph*)}]\itemsep0.3em
    \item\label{intro-thm-trace-A-a}
    $\mathcal{A}_{\omega}^{+}(\fS,\lambda)\subset\operatorname{im} {\rm tr}_\lambda^A\subset \mathcal{A}_{\omega}(\fS,\lambda)$,  
    where $\mathcal{A}_{\omega}^{+}(\fS,\lambda)$ is the $R$-subalgebra of $\mathcal{A}_{\omega}(\fS,\lambda)$ generated by $A^{\bf k}$ for ${\bf k}\in\mathbb N^{V_\lambda}$.  
    Here $A^{\bf k}$ is the Laurent monomial defined using the Weyl-ordered product (see \eqref{eq-A-power}).

    \item \label{intro-thm-trace-A-b}
    If $n=2,3$, or $n>3$ and $\fS$ is a polygon (i.e., $\fS=\mathbb P_k$ for some $k>2$), then ${\rm tr}_\lambda^A$ is injective.

    \item\label{intro-thm-trace-A-c}
    The following diagram commutes:
    \begin{align}
        \label{intro-eq-compability-tr-A-X-diag}
        \xymatrix{
        &  \overline{\cS}_\omega(\fS) \ar[dl]_{{\rm tr}_{\lambda}^A} \ar[dr]^{{\rm tr}_\lambda} & \\
        \mathcal{A}_{\omega}(\fS,\lambda) \ar[rr]_-{\psi_\lambda} & &
        \mathcal{Z}_\omega^{\rm bl}(\fS,\lambda)
        }
    \end{align}
\end{enumerate}


\subsection{The ${\rm SL}_n$ quantum cluster structure}
Let $\fS$ be a triangulable pb surface without interior punctures, and let $\lambda$ be a triangulation of $\fS$.  
Properties \ref{intro-thm-trace-A-a} and \ref{intro-thm-trace-A-b} of ${\rm tr}_\lambda^A$ imply that $\overline{\cS}_\omega(\fS)$ is an Ore domain when $n=2,3$.  We denote its skew-field of fractions by ${\rm Frac}\bigl(\overline{\cS}_\omega(\fS)\bigr)$.  
It was established in \cite{muller2016skein,ishibashi2023skein,LY22} that there is a natural quantum seed (see Definition~\ref{def-quantum-seed}) inside ${\rm Frac}\bigl(\overline{\cS}_\omega(\fS)\bigr)$ for $n=2,3$.  
In this paper, we generalize this construction to arbitrary ${\rm SL}_n$.

To achieve this, we require the injectivity of ${\rm tr}_\lambda^A$ (or ${\rm tr}_\lambda$), which has not been proved for general $n$ but is expected to hold.  
We therefore define the \emph{projected stated ${\rm SL}_n$–skein algebra} (Definition~\ref{def-key-algebra}) by
\[
\widetilde{\cS}_\omega(\fS):=
\overline{\cS}_\omega(\fS)\big/ \ker {\rm tr}_\lambda^A=
\overline{\cS}_\omega(\fS)\big/ \ker {\rm tr}_\lambda.
\]
With this definition, we can show that 
${\rm tr}_\lambda^A\colon \widetilde{\cS}_\omega(\fS)\to \mathcal{A}_{\omega}(\fS,\lambda)$ is injective and that $\widetilde{\cS}_\omega(\fS)$ is an Ore domain (Lemma~\ref{lem-basic-lem}).  
We then regard $\widetilde{\cS}_\omega(\fS)$ as a subalgebra of $\mathcal{A}_{\omega}(\fS,\lambda)$ via ${\rm tr}_\lambda^A$.  
Property \ref{intro-thm-trace-A-a} of ${\rm tr}_\lambda^A$ further implies
\begin{align}\label{intro-identity-Frac}
    {\rm Frac}\bigl(\widetilde{\cS}_\omega(\fS)\bigr)
= {\rm Frac}\bigl(\mathcal{A}_{\omega}(\fS,\lambda)\bigr).
\end{align}
Property \ref{intro-thm-trace-A-b} of ${\rm tr}_\lambda^A$ shows that 
$\widetilde{\cS}_\omega(\fS)=\overline{\cS}_\omega(\fS)$
when $n=2,3$, or $n>3$ and $\fS$ is a polygon.

\vspace{0.2cm}

To construct the quantum seed inside ${\rm Frac}\bigl(\widetilde{\cS}_\omega(\fS)\bigr)$, we first specify the vertex set $\mathcal V$ and the mutable vertex set $\mathcal V_{\rm mut}$ (see \S\ref{sec-mutation-classical}).  
Set $\mathcal V = V_\lambda$, and let $\mathcal V_{\rm mut} = \mathring{V}_\lambda$ be the subset of vertices of $V_\lambda$ lying in the interior of $\fS$.  
With the identification in \eqref{intro-identity-Frac}, the triple $(Q_\lambda,\Pi_\lambda,M_\lambda)$ forms a quantum seed (Definition~\ref{def-quantum-seed}) inside the skew-field ${\rm Frac}\bigl(\widetilde{\cS}_\omega(\fS)\bigr)$ (Lemma~\ref{lem-seed-skein}), where  
\begin{align*}
    M_\lambda \colon \mathbb Z^{V_\lambda} \longrightarrow {\rm Frac}\bigl(\widetilde{\cS}_\omega(\fS)\bigr), 
    \qquad {\bf t} \longmapsto A^{\bf t}.
\end{align*}

For each $i\in \mathcal V$, define ${\bf e}_i\in \mathbb Z^{\mathcal V}$ by  
\begin{align*}
    {\bf e}_i(v)=\delta_{i,v}\quad \text{for } v\in \mathcal V.
\end{align*}
The map $M_\lambda$ is then uniquely determined by the elements $A_i = M_\lambda({\bf e}_i)$, which we call the \emph{(quantum) cluster variables}.  
A cluster variable $A_i$ is \emph{frozen} if $i\in \mathcal V \setminus \mathcal V_{\rm mut}$, and \emph{exchangeable} otherwise.  
These notions apply to any quantum seed (see \S\ref{sec-mutation-quantum}), although we state them here only for $(Q_\lambda,\Pi_\lambda,M_\lambda)$.

For each $k\in \mathcal V_{\rm mut}$, the quantum $\mathcal A$-mutation $\mu_{k,A}$ produces a new quantum seed
\[
(Q',\Pi',M') = \mu_{k,A}\bigl(Q_\lambda,\Pi_\lambda,M_\lambda\bigr)
\]
(see \S\ref{sec-mutation-quantum}).  
Let $\mathsf S_{\fS,\lambda}$ denote the collection of quantum seeds in ${\rm Frac}\bigl(\widetilde{\cS}_\omega(\fS)\bigr)$ obtained from $(Q_\lambda,\Pi_\lambda,M_\lambda)$ by finitely many quantum $\mathcal A$-mutations.  
Define
\[
\mathscr{A}_{\fS,\lambda} := \mathscr{A}_{\mathsf S_{\fS,\lambda}},
\qquad
\mathscr{U}_{\fS,\lambda} := \mathscr{U}_{\mathsf S_{\fS,\lambda}}
\]
as in Definition~\ref{def-quan-cluster-algebra}.

\vspace{0.2cm}

The following theorem asserts that this quantum cluster structure inside ${\rm Frac}\bigl(\widetilde{\cS}_\omega(\fS)\bigr)$ is independent of the choice of triangulation $\lambda$.

\begin{theorem}[Theorem~\ref{thm-main-1}]\label{intro-naturality}
    Let $\fS$ be a triangulable pb surface without interior punctures, and let $\lambda$, $\lambda'$ be two triangulations of $\fS$. Then we have   $$\mathsf{S}_{\fS,\lambda}=\mathsf{S}_{\fS,\lambda'},\;
    \mathscr{A}_{\fS,\lambda}=
    \mathscr{A}_{\fS,\lambda'},\text{ and }
    \mathscr{U}_{\fS,\lambda}=
    \mathscr{U}_{\fS,\lambda'}\quad \text{(see Definition~\ref{def-tri-quantum}).}$$
\end{theorem}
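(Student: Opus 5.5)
The plan is to reduce to a single flip and then identify the change of coordinates for that flip with a sequence of quantum $\mathcal A$-mutations. Any two triangulations $\lambda,\lambda'$ of $\fS$ are connected by a finite sequence of flips, so it suffices to treat the case where $\lambda'$ is obtained from $\lambda$ by flipping one ideal arc $e$. The relation ``$\mathsf S_{\fS,\lambda}=\mathsf S_{\fS,\lambda'}$'' is an equivalence relation, since both sets are mutation classes inside the common skew-field ${\rm Frac}\bigl(\widetilde{\cS}_\omega(\fS)\bigr)$. Moreover, once the seed sets agree, the equalities $\mathscr A_{\fS,\lambda}=\mathscr A_{\fS,\lambda'}$ and $\mathscr U_{\fS,\lambda}=\mathscr U_{\fS,\lambda'}$ follow immediately from Definition~\ref{def-quan-cluster-algebra}. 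So the whole content is the following claim: the seed $(Q_{\lambda'},\Pi_{\lambda'},M_{\lambda'})$, transported into ${\rm Frac}(\widetilde{\cS}_\omega(\fS))$, lies in $\mathsf S_{\fS,\lambda}$.

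For a single flip I would use the explicit sequence of generalized quantum $\mathcal X$-mutations $\mu_{k_1},\dots,\mu_{k_N}$ from \cite{KimWang} that defines $\Theta^\omega_{\lambda\lambda'}$ (the standard $\frac{n(n-1)(n+1)}{6}$-step ``octahedron'' sequence on the quadrilateral containing $e$). Every mutated vertex lies in the interior of that quadrilateral, hence in $\mathring V_\lambda=\mathcal V_{\rm mut}$. Apply the same sequence of quantum $\mathcal A$-mutations to $(Q_\lambda,\Pi_\lambda,M_\lambda)$. I then need to check three things.

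\begin{enumerate}
\item The resulting exchange matrix equals $Q_{\lambda'}$, up to the obvious relabeling of small vertices. This is a combinatorial statement about the quivers and is already implicit in \cite{KimWang}.
\item The resulting compatibility matrix equals $\Pi_{\lambda'}$. This should follow from the compatibility relation $Q\Pi=$ (diagonal) between $Q_\lambda$ and $\Pi_\lambda$, which is preserved by mutation, together with the fact that $\Pi$ is determined by $Q$ on the relevant block. Alternatively, one can compute directly in the quadrilateral $\mathbb P_4$.
\item The resulting cluster variables $A'_v$ coincide with ${\rm tr}^A_{\lambda'}$ of the corresponding skein elements, viewed in ${\rm Frac}(\mathcal A_\omega(\fS,\lambda))$ via ${\rm tr}^A_\lambda$.
\end{enumerate}

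For (3) I would combine the commutative diagrams \eqref{intro-eq-compability-tr-mutation} and \eqref{intro-eq-compability-tr-A-X-diag}. The isomorphisms $\psi_\lambda,\psi_{\lambda'}$ convert $\mathcal A$-tori into balanced $\mathcal X$-tori. The key point is that $\psi_{\lambda}^{-1}\circ\Theta^\omega_{\lambda\lambda'}\circ\psi_{\lambda'}$ equals the composition of the quantum $\mathcal A$-mutations. Mutation by mutation, this is the familiar fact that the monomial map ${\bf k}\mapsto Q{\bf k}$ (here encoded in $\psi$) intertwines $\mathcal A$-mutation and $\mathcal X$-mutation; this is Proposition~\ref{Prop-rest-bal} and the identities \eqref{eq-Theta2}, \eqref{eq-Theta-change2}. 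Given that identity, we have ${\rm tr}^A_{\lambda}=\psi_\lambda^{-1}{\rm tr}_\lambda=\psi_\lambda^{-1}\Theta^\omega_{\lambda\lambda'}{\rm tr}_{\lambda'}=(\text{mutations})\circ{\rm tr}^A_{\lambda'}$. Since $\widetilde{\cS}_\omega(\fS)$ is embedded by either trace (Lemma~\ref{lem-basic-lem}, with the same kernel for $\lambda,\lambda'$ by this compatibility), the mutated seed is exactly $M_{\lambda'}$ under \eqref{intro-identity-Frac}.

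The main obstacle is step (3), specifically the intertwining claim. The $\mathcal X$-mutation used in \cite{KimWang} is a \emph{generalized} one on the $n$-th root balanced torus, and $\psi_\lambda$ involves the half-integer weights and the balanced lattice $\mathcal B_\lambda$. Because of this, the standard Berenstein–Zelevinsky argument must be checked carefully: the Weyl-ordering and $\omega$-power conventions must match so that $\psi$ commutes with each single mutation, not merely up to a scalar. I would first verify this for one mutation at a vertex $k$, using that the $k$-th row of $Q_\lambda$ is integral at mutable vertices and that $\Pi_\lambda Q_\lambda$ is a multiple of the identity on mutable columns. I would then induct along the sequence, noting that the compatibility property of $(Q,\Pi)$ is preserved at each step.
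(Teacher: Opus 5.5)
Your outline has the same architecture as the paper's proof: reduce to one flip, run the Fock--Goncharov--Shen sequence $v_1,\dots,v_r$ on the $\mathcal A$-seed, and transport Kim--Wang's identity $\mathrm{tr}_\lambda=\Theta^\omega_{\lambda\lambda'}\circ\mathrm{tr}_{\lambda'}$ through $\psi_\lambda,\psi_{\lambda'}$. However, the step that carries the whole argument is not supplied. The identity $\psi_\lambda^{-1}\circ\Theta^\omega_{\lambda\lambda'}\circ\psi_{\lambda'}=\mu_{v_1,A}\circ\cdots\circ\mu_{v_r,A}$ is not contained in anything you cite. Proposition~\ref{Prop-rest-bal} only says that $\Theta^\omega_{\lambda\lambda'}$ maps ${\rm Frac}$ of the balanced subalgebra for $\lambda'$ onto that for $\lambda$, and \eqref{eq-Theta2}, \eqref{eq-Theta-change2} are just the definition of $\Theta^\omega$; none of these involves $\mathcal A$-mutations. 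This intertwining is precisely the new content of the paper's proof (Proposition~\ref{prop-comp} and the lemmas before it). Your plan for it (check that $\psi$ commutes with each single mutation, then induct while tracking $(Q,\Pi)$) cannot be run as stated. The intermediate seeds $\mu_{v_i}\cdots\mu_{v_1}(\mathcal D_\lambda)$, $0<i<r$, come from no triangulation, so there is no $\psi$, no $K$ and no balanced lattice there. The naive substitute $Z^{\mathbf k}\mapsto A^{\frac1n\mathbf kQ}$ is in general neither integral nor invertible, because of the half-weight arrows on boundary edges.

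The missing device is to mutate, by the same rule as $Q_\lambda$, the integral matrix $H_\lambda$ that agrees with $Q_\lambda$ except on pairs of vertices lying on a common boundary edge. One then shows three things. First, $H^{(i)}$ still agrees with $Q^{(i)}$ off such pairs. Second, $H^{(i)}=E_\epsilon H^{(i-1)}F_\epsilon$ with $E_\epsilon^2=F_\epsilon^2=I$ produces $K^{(i)}$ with $H^{(i)}K^{(i)}=nI$. Third, by the Berenstein--Zelevinsky identities for $E_{\pm}$, $H^{(i)}\Pi^{(i)}(H^{(i)})^T=2nQ^{(i)}$. This gives intermediate balanced subalgebras, spanned by $Z^{\mathbf k}$ with $\mathbf kH^{(i)}\in(n\mathbb Z)^{V_\lambda}$ and preserved by $\nu^\omega_k$. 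It also gives algebra isomorphisms $\varphi^{(i)}\colon Z^{\mathbf k}\mapsto A^{\frac1n\mathbf kH^{(i)}}$ onto $\mathbb T(w^{(i)})$, where $w^{(i)}=\mu_{v_i,A}\cdots\mu_{v_1,A}(w_\lambda)$. The one-step compatibility is then checked separately for the monomial parts $\nu'_k,\mu'_{k,A}$ and for the automorphism parts $\nu^{\sharp\omega}_k,\mu^\sharp_{k,A}$; this is where your remarks about the row $Q(k,*)$ and $d_k=2n$ are used. Finally, you need $H^{(r)}=H_{\lambda'}$, which holds because the flip sequence never changes arrows between boundary vertices; this makes $\varphi^{(r)}=\psi_{\lambda'}^{-1}$. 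With this in place your item (2) becomes unnecessary: $\Pi'=\Pi_{\lambda'}$ follows once the mutated cluster variables are shown to equal the $\mathsf g_{v,\lambda'}$. That matters, because your proposed justification, that $\Pi$ is determined by $Q$, is false. Compatibility with $Q$ fixes $\Pi$ only up to skew forms supported on the kernel of $Q(\cdot,\mathcal V_{\rm mut})^T$, whose dimension equals the number of frozen vertices.
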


It is well known that any two triangulations $\lambda$ and $\lambda'$ are related by a sequence of flips.  
Thus we may assume that $\lambda'$ is obtained from $\lambda$ by a single flip.  
To prove Theorem~\ref{intro-naturality}, it suffices to show that the quantum seeds
\[
(Q_\lambda,\Pi_\lambda,M_\lambda)
\quad\text{and}\quad
(Q_{\lambda'},\Pi_{\lambda'},M_{\lambda'})
\]
are related by a sequence of quantum $\mathcal A$–mutations.

Indeed, there exists a sequence of vertices  
$v_1,v_2,\ldots,v_r \in \mathcal V_{\rm mut}$ \cite{FG06,GS19} (see Figure~\ref{Fig;mutation_sequence_for_flip}) such that
\begin{align*}
   Q_{\lambda'}
   = \mu_{v_r}\,\cdots\,\mu_{v_2}\,\mu_{v_1}(Q_\lambda),
\end{align*}
where each $\mu_{v_i}$ denotes the matrix mutation defined in \eqref{eq-mutation-Q}, and $r = \tfrac{1}{6}(n^3-n)$.  
Hence it remains to show that  
\begin{align}\label{intro-mutation-sequence-flip}
   (Q_{\lambda'},\Pi_{\lambda'},M_{\lambda'})
   = \mu_{v_r,A}\,\cdots\,\mu_{v_2,A}\,\mu_{v_1,A}
     \bigl(Q_\lambda,\Pi_\lambda,M_\lambda\bigr).
\end{align}
For small $n$, this identity can be verified by direct computation, as carried out for $n=2,3$ in \cite{muller2016skein,ishibashi2023skein}.  
However, for general ${\rm SL}_n$ the number of required mutations makes such a calculation infeasible.

To overcome this, we decompose each quantum $\mathcal A$–mutation into two steps (Lemma~\ref{lem-decom-A}) and establish the compatibilities  
(Diagrams~\eqref{eq-com-step-one} and \eqref{eq-com-step-one}) between these steps and the corresponding two steps of the generalized $\mathcal X$–mutation  
(see \eqref{eq-quantum-mutation_Z} and \eqref{lem-def-nu-sharp}).  
This yields the compatibility (Proposition~\ref{prop-comp}) between the quantum $\mathcal A$–mutation and the generalized $\mathcal X$–mutation $\nu_{k}^\omega$ (see \eqref{eq-def-quantum-mutation-X}).  
Combining this compatibility with \eqref{intro-eq-compability-tr-mutation} and \eqref{intro-eq-compability-tr-A-X-diag} proves \eqref{intro-mutation-sequence-flip}.

An immediate consequence of these arguments is that the composition of quantum $\mathcal A$–mutations
\[
\mu_{v_1,A}\circ\cdots\circ\mu_{v_r,A}\colon
{\rm Frac}\bigl(\mathcal A_{\omega}(\fS,\lambda')\bigr)\longrightarrow
{\rm Frac}\bigl(\mathcal A_{\omega}(\fS,\lambda)\bigr)
\]
establishes the naturality of the $\mathcal A$–quantum trace maps  
(see Corollary~\ref{thm-naturality-sln-cluster}), coinciding with $\overline{\Psi}_{\lambda\lambda'}^A$ in \cite[Theorem~14.1]{LY23}.


\subsection{The inclusion of the ${\rm SL}_n$-skein algebra into the ${\rm SL}_n$ quantum (upper) cluster algebra}
Under the assumption of Theorem~\ref{intro-naturality}, we write  
\begin{align*}
    \mathscr{A}_{\omega}(\fS)&:=\mathscr{A}_{\fS,\lambda},\qquad
    \text{called the \emph{${\rm SL}_n$ quantum  cluster algebra} of $\fS$,}\\
    \mathscr{U}_{\omega}(\fS)&:=\mathscr{U}_{\fS,\lambda},\qquad \text{called the \emph{${\rm SL}_n$ quantum upper cluster algebra} of $\fS$,}
\end{align*}
since these algebras are independent of the choice of triangulation $\lambda$ by Theorem~\ref{intro-naturality}.

Assume that each component of $\fS$ has at least two punctures.  
A properly embedded oriented arc in $\fS$ is called an \emph{essential arc}  
if its endpoints lie on two distinct components of $\partial \fS$.  
Lemma~\ref{lem-essential-arc} shows that the algebra $\widetilde{\cS}_\omega(\fS)$ is generated by finitely many stated essential arcs.

The following theorem is the main result of this paper.  
It expresses every stated essential arc as a Weyl-ordered product of an exchangeable cluster variable and a Laurent monomial in frozen cluster variables.  
Consequently,
\[
\widetilde{\cS}_\omega(\fS)\subset \mathscr{A}_{\omega}(\fS).
\]

\begin{figure}[h]
    \centering
    \includegraphics[width=350pt]{new-tau-v1.pdf}
    \caption{(A) The triangle $\tau$ with edges labeled $e_1,e_2,e_3$, and a distinguished vertex labeled $v_1$ (note that $e_1 \neq e_3$).  
(B) The labeling of the small vertices in $V_\lambda \cap \tau$ for $n=4$.  
(C) An alternative labeling of the small vertices in $V_\lambda \cap \tau$ for $n=4$.
}\label{Fig;tau-v1}
\end{figure}

\begin{theorem}[Theorem \ref{thm-skein-inclusion-A}]\label{intro-thm-skein-inclusion-A}
Let $\fS$ be a pb surface without interior punctures. We require that every component of $\fS$ contains at least two punctures.
    Then we have $$\widetilde{\cS}_\omega(\fS)\subset \mathscr A_{\omega}(\fS).$$
 Moreover, each stated essential arc is the Weyl-ordered product (see \eqref{Weyl-A}) of an exchangeable cluster variable (i.e., a non-frozen cluster variable) and a Laurent monomial in frozen cluster variables (Definition~\ref{def-quan-cluster-algebra}). To be precise, we have the following:
\begin{enumerate}[label={\rm (\alph*)}]\itemsep0,3em
     \item Let $1 \le j \le i \le n$, and let $C_{ij}$ be the stated corner arc represented by the red arc in Figure~\ref{Fig;tau-v1}(A), oriented counterclockwise around $v_1$.  
Let $\lambda$ be a triangulation of $\fS$ that contains the ideal arcs $e_1,e_2,e_3$ shown in Figure~\ref{Fig;tau-v1}(A).  
For any $j,k$ with $1 \le j < k$, define  
\begin{align}\label{intro-eq-mukj}
\mu_{(k;j)}=\mu_{k j}\cdots\mu_{k1},
\end{align}
where the small vertices $i j \in V_\lambda$ are labeled as in Figure~\ref{Fig;tau-v1}(B).
Then we have 
     \begin{align}\label{intro-eq-Cij}
         C_{ij}=\begin{cases}
[A_{i1}\cdot A_{i0}^{-1}\cdot A_{11}^{-1}] & \mbox{ if $j=1$},\\
[A_{i0}^{-1}\cdot A_{i-1,0}] & \mbox{ if $j=i$},\\
[\mu_{(j;j-1)}\cdots \mu_{(i-2;j-1)} \mu_{(i-1;j-1)} (A_{j,j-1})\cdot  A_{i0}^{-1}\cdot A_{jj}^{-1}] & \mbox{ if $1<j<i$},
\end{cases}
     \end{align}
   \item Let $1 \le j \le i \le n$, and let $\overline C_{ij}$ be the stated corner arc represented by the red arc in Figure~\ref{Fig;tau-v1}(A), oriented clockwise around $v_1$.  
Let $\lambda$ be a triangulation of $\fS$ that contains the ideal arcs $e_1,e_2,e_3$ shown in Figure~\ref{Fig;tau-v1}(A).  
For any $k,t$ such that $k+t<n$, we denote 
\begin{align}\label{intro-eq-bar-mukj}
\overline\mu_{(k;t)}=\mu_{\overline{k,k+1}} \cdots \mu_{\overline{k,k+t}},
\end{align}
where the small vertices $\overline{ij} \in V_\lambda$ are labeled as in Figure~\ref{Fig;tau-v1}(C).
Then we have
     \begin{align}\label{intro-eq-bar-Cij}
         \overline C_{ij}=\begin{cases}
[\overline A_{jj}^{-1}\cdot \overline A_{j-1,j}] & \mbox{ if $i=n$},\\
 [\overline A_{in}^{-1}\cdot \overline A_{i-1,n}] & \mbox{ if $i=j$},\\
[\overline \mu_{(j;n-i)}\cdots \overline \mu_{(i-2;n-i)} \overline \mu_{(i-1;n-i)} (\overline A_{j,j+1})\cdot \overline A_{jj}^{-1}\cdot \overline A_{in}^{-1}] & \mbox{ if $1\leq j<i<n$},
\end{cases}
     \end{align}
where $\overline A_{ij} = A_{\overline{ij}}\in\mathcal A_\omega(\fS,\lambda)$.

\item For any $1\leq i,j\leq n$, let $D_{ij}$ be the stated essential arc represented by the red arc in Figure~\ref{Fig;essential-P4}(A).
Let $\lambda$ be a triangulation of $\fS$ that contains the ideal arcs $c_1,c_2,c_3,c_4,c_5$ shown in Figure~\ref{Fig;essential-P4}(A) (we allow $c_1=c_3$).
We label the small vertices in $V_\lambda$
contained in the quadrilateral bounded by $c_1\cup c_2\cup c_3\cup c_4$ as Figure~\ref{Fig;essential-P4}(B).

For any $j>1$, denote 
$$\overline \mu^{\diamondsuit}_j=\bar \mu_{(1;n-j)}\cdots \bar \mu_{(j-2;n-j)}\bar \mu_{(j-1;n-j)},$$
where $\overline{\mu}_{(k;t)}$ is defined as in \eqref{intro-eq-bar-mukj}.
For any $i,j$ with $i\geq j>1$, denote $$\mu^{\diamondsuit}_{(i;j-1)}=\left(\mu_{(2;1)}\mu_{(3;2)}\cdots\mu_{(j-1;j-2)}\right)\circ \left(\mu_{(j;j-1)}\mu_{(j+1;j-1)}\cdots \mu_{(i-1;j-1)}\right),$$
where $\mu_{(k;j)}$ is defined as in
\eqref{intro-eq-mukj}.
Then we have
   \begin{align}\label{into-eq-Dij}
       D_{ij}=
    \begin{cases}
        [A_{i1}\cdot A_{i0}^{-1}\cdot \overline A_{1n}^{-1}]
        & \mbox{ if $i\geq j=1$,}\\
        [\overline \mu^{\diamondsuit}_{j}(\overline A_{12})\cdot A_{10}^{-1}\cdot \overline A_{jn}^{-1} ] & \mbox{ if $j>i=1$,}\vspace{1.5mm}\\
        [\left(\mu_{j-1,j-1}\cdots\mu_{22}\mu_{11}\right)\circ \overline \mu^{\diamondsuit}_j\circ  \mu^{\diamondsuit}_{(i;j-1)}(A_{j-1,j-1}) \cdot A_{i0}^{-1}\cdot\overline A_{jn}^{-1}] & \mbox{ if $i\geq j>1$,}\vspace{1.5mm}\\
        [\left(\mu_{i-1,i-1}\cdots\mu_{22}\mu_{11}\right)\circ \overline \mu^{\diamondsuit}_j\circ \mu^{\diamondsuit}_{(i;i-1)}(A_{i-1,i-1}) \cdot A_{i0}^{-1}\cdot\overline A_{jn}^{-1}] & \mbox{ if $j>i>1$}.
    \end{cases}
   \end{align}
 \end{enumerate}

\end{theorem}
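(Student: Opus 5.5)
The plan is to reduce everything to computations inside the quantum torus $\mathcal A_\omega(\fS,\lambda)$. By Lemma~\ref{lem-basic-lem}, ${\rm tr}^A_\lambda$ is injective on $\widetilde{\cS}_\omega(\fS)$, so an identity between skein elements can be checked after applying ${\rm tr}^A_\lambda$. Lemma~\ref{lem-essential-arc} says $\widetilde{\cS}_\omega(\fS)$ is generated by stated essential arcs. Hence the inclusion $\widetilde{\cS}_\omega(\fS)\subset\mathscr A_\omega(\fS)$ follows once (a)--(c) are proved. The reason is that every exchangeable cluster variable obtained by mutation lies in $\mathscr A_\omega(\fS)$, as do the frozen variables and their inverses. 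By Theorem~\ref{intro-naturality}, we may freely choose a convenient triangulation $\lambda$ containing the prescribed arcs.

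For (a) and (b), I would first localize to the triangle $\tau$. The cutting/splitting homomorphism is compatible with ${\rm tr}^A_\lambda$, so the image of a corner arc is determined by the $\mathcal A$-trace on $\mathbb P_3$. The boundary cases $j=1$ and $j=i$ (respectively $i=n$ and $i=j$) should follow directly from the explicit formula for ${\rm tr}^A$ of a stated corner arc in \cite{LY23}. In these cases the image is a single Weyl-ordered monomial, and I would read off its exponent vector to match $[A_{i1}A_{i0}^{-1}A_{11}^{-1}]$, respectively $[A_{i0}^{-1}A_{i-1,0}]$.

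For $1<j<i$ I would induct on $i-j$. The step is to find a skein relation among corner arcs with nearby states (a quantum Plücker/height-exchange type relation obtained from the defining relations \eqref{w.cross}--\eqref{wzh.eight}). I would then show that this relation, after multiplying by suitable frozen monomials, is exactly the quantum exchange relation for the last mutation in the sequence $\mu_{(j;j-1)}\cdots\mu_{(i-1;j-1)}$. Carrying this out requires tracking the mutated exchange matrix at each intermediate seed; these are $Q_\lambda$ restricted to $\tau$, mutated along rows of Figure~\ref{Fig;tau-v1}(B).

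Alternatively, I would use Proposition~\ref{prop-comp} to transport the computation to the $\mathcal X$-side, where $\nu^\omega_k$ and ${\rm tr}_\lambda$ are explicit. This should be the more efficient route to verifying that the mutated variable has the predicted trace. Part (b) is the mirror image, obtained by reversing orientation, using the labeling of Figure~\ref{Fig;tau-v1}(C).

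For (c), I would choose the triangulation so that $D_{ij}$ becomes a corner arc after flipping the diagonal of the quadrilateral bounded by $c_1\cup c_2\cup c_3\cup c_4$. I would then apply (a) or (b) in the flipped triangulation $\lambda'$ and pull back to $\lambda$ along the flip mutation sequence of \eqref{intro-mutation-sequence-flip}. The content is to show that the composite sequence simplifies to the stated $(\mu_{j-1,j-1}\cdots\mu_{11})\circ\overline\mu^{\diamondsuit}_j\circ\mu^{\diamondsuit}_{(i;j-1)}$, with the frozen factors $A_{i0}^{-1}\overline A_{jn}^{-1}$. Mutations in the flip sequence that do not involve the relevant variable can be discarded or commuted away.

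I expect the main obstacle to be the inductive step in (a): identifying precisely the skein relation that matches each exchange relation, including the powers of $\omega$ in the Weyl ordering. Second to that is the bookkeeping in (c), namely reconciling the flip sequence with the displayed mutation words. I would try the $\mathcal X$-side compatibility first, since it avoids explicit $q$-power tracking.
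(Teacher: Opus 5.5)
Your top-level reduction is fine: generation by stated essential arcs, injectivity of ${\rm tr}^A_\lambda$ on $\widetilde{\cS}_\omega(\fS)$, and checking identities in ${\rm Frac}(\mathcal A_\omega(\fS,\lambda))$. The plan for (c), however, fails, and the plan for (a) lacks its key idea.

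\textbf{Part (c).} The diagonal $c_5$ joins $v_1=c_1\cap c_2$ to $\overline v_1=c_3\cap c_4$; this is forced by $v_{ii}=\overline v_{ii}$. The arc $D_{ij}$ is a corner arc at $v_1$ in the triangle bounded by $c_2,c_3,c_5$, followed by a corner arc at $\overline v_1$ in the triangle bounded by $c_1,c_4,c_5$. Indeed $\mathbb S_{c_5}(D_{ij})=\sum_k C_{ik}\otimes\overline C_{jk}$. So $D_{ij}$ joins the \emph{opposite} sides $c_2$ and $c_4$ of the quadrilateral and meets both diagonals. Flipping $c_5$ therefore does not make it a corner arc.

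More generally, a corner arc joins two boundary edges sharing a puncture. For $\fS=\mathbb P_4$, or for $c_1=c_3$ (an annulus with $c_2$ and $c_4$ on different boundary circles), $D_{ij}$ is a corner arc in no triangulation. Hence (c) cannot be reduced to (a) and (b).

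The paper instead cuts along $c_5$ and uses injectivity of the splitting map. After $\overline\mu^{\diamondsuit}_j\circ\mu^{\diamondsuit}_{(i;j-1)}$, the diagonal vertices $v_{11},\dots,v_{j-1,j-1}$ form a linearly oriented $A_{j-1}$ chain with explicitly known outside neighbours. The type-$A$ expansion of the final variable then has exactly $j$ terms. These are matched one by one with the $j$ summands $C_{ik}\otimes\overline C_{jk}$, using (a), (b) and the splitting images of the intermediate variables. You could run this comparison in your framework with the $\mathcal A$-side splitting map, but it has to be done; the flip does not do it.

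\textbf{Part (a).} An induction on $i-j$ through a relation among corner arcs at $v_1$ does not close. The exchange relation of the last mutation $\mu_{j,j-1}$ involves the current variables at $v_{j,j-2}$ and $v_{j+1,j-1}$. These are intermediate cluster variables, not frozen multiples of corner arcs at $v_1$.

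Already for $n=4$ and $(i,j)=(4,2)$ one has $\mu_{21}\mu_{31}(A_{21})=[\mu_{31}(A_{31})A_{11}A_{21}^{-1}]+[A_{10}A_{22}A_{41}A_{21}^{-1}]$. Here $\mu_{31}(A_{31})$, whose only mutable denominator is $A_{31}$, is not a frozen multiple of any $C_{i'j'}$. Even the base case $i=j+1$ is a word $\mu_{(j;j-1)}$ of length $j-1$. What is missing is an identification of \emph{all} intermediate variables.

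The paper obtains this by induction on $n$:
\begin{enumerate}
\item Embed the rank-$(n-1)$ vertices via $(a,b,c)\mapsto(a,b,c+1)$.
\item Split $\mathsf P(\mathbb P_3,v_1,i,j)$ bijectively into the rank-$(n-1)$ path sets from $i-1$ to $j-1$ and from $i-1$ to $j$.
\item Identify the variables at $v_{j,j-2}$ and $v_{j+1,j-1}$ with the two corresponding partial path sums.
\end{enumerate}
The exchange relation at $v_{j,j-1}$ is then exactly this decomposition of the path sum of $C_{ij}$. Moving to the $\mathcal X$-side only changes coordinates and does not supply this identification.

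\textbf{Localization.} Your localization to $\tau$ handles only the trace side. The cluster variables in (a)--(c) live in the seed of $\fS$, whose Weyl orderings use $\Pi_\lambda$ of $\fS$. Relating them to the $\mathbb P_3$ and $\mathbb P_4$ computations needs its own argument; the paper uses the injective splitting maps on $\mathscr U_\omega$ for this.
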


\begin{figure}[h]
    \centering
    \includegraphics[width=220pt]{new-essential-la-P4.pdf}
    \caption{(A) The picture for an essential arc, in red. (B) The labeling for small vertices contained in the quadrilateral bounded by $c_1\cup c_2\cup c_3\cup c_4$ for $n=4$.}\label{Fig;essential-P4}
\end{figure}

Then we outline the proof of
Theorem~\ref{intro-thm-skein-inclusion-A}.

\vspace{0.2cm}

First, we establish the following Theorem, which states a weaker inclusion $\widetilde{\cS}_\omega(\fS)\subset \mathscr U_{\omega}(\fS).$

\begin{theorem}[Theorem~\ref{thm-main-3}]\label{intro-upper-inclusion}
    Let $\fS$ be a triangulable pb surface without interior punctures. Then
    $$
    \dS \subset \mathscr{U}_{\omega}(\fS).
    $$
\end{theorem}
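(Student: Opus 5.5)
The plan is to use the standard criterion for membership in the quantum upper cluster algebra: an element lies in $\mathscr U_\omega(\fS)$ as soon as it is a Laurent polynomial in the initial seed and in each of its one-step mutations. I expect this criterion (the quantum ``upper bound'' statement of Berenstein--Zelevinsky, valid for seeds whose exchange matrix has full rank) to apply to $(Q_\lambda,\Pi_\lambda,M_\lambda)$. By Theorem~\ref{intro-naturality}, $\mathscr U_\omega(\fS)$ does not depend on $\lambda$, so we may fix any triangulation $\lambda$. It then suffices to show two things for every $x\in\dS$. First, $x$ lies in the quantum torus $\mathcal A_\omega(\fS,\lambda)$. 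Second, for each $k\in\mathring V_\lambda$, $x$ lies in the quantum torus of the mutated seed $\mu_{k,A}(Q_\lambda,\Pi_\lambda,M_\lambda)$. The first point is immediate, since $\dS$ embeds into $\mathcal A_\omega(\fS,\lambda)$ through ${\rm tr}^A_\lambda$ (Lemma~\ref{lem-basic-lem}).

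For the second point, fix $k\in\mathring V_\lambda$. I would check that the seed $\mu_{k,A}(Q_\lambda,\Pi_\lambda,M_\lambda)$ also arises from a ``triangulation-like'' datum that admits its own $\mathcal A$-quantum trace into its own torus, compatible with ${\rm tr}^A_\lambda$. The tool is the compatibility of the quantum $\mathcal A$-mutation with the generalized $\mathcal X$-mutation $\nu^\omega_k$ (Proposition~\ref{prop-comp}), together with diagrams \eqref{intro-eq-compability-tr-mutation} and \eqref{intro-eq-compability-tr-A-X-diag}.

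Concretely, each interior small vertex $k$ lies either in the interior of a triangle or on an interior edge. I would realize it as one of the vertices $v_i$ occurring in the flip sequence of some quadrilateral. Via the $\mathcal X$ side, I would then show that the image of $\dS$ in ${\rm Frac}$ has Laurent expansion in the mutated variables. To do this, I would use that $\dS$ is generated by stated arcs (Lemma~\ref{lem-essential-arc}), whose images under ${\rm tr}_\lambda$ are computed by state sums over the triangles. By choosing the quadrilateral and the position of $k$ suitably, after cutting along the relevant ideal arcs, the mutation at $k$ only affects a local piece (a triangle or quadrilateral).

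If this reduction is unavailable for some $k$, the fallback is a direct argument. Write ${\rm tr}^A_\lambda(x)=\sum_{\bf t} c_{\bf t}A^{\bf t}$. Substitute $A_k = A'^{-1}_k\bigl(A^{[{\bf b}_k]_+}+A^{[-{\bf b}_k]_+}\bigr)$, with the appropriate Weyl ordering. We then need to show that every term with a negative power of $A_k$ is divisible, in the Ore sense, by the corresponding power of the binomial.

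The main obstacle is precisely this Laurentness in the once-mutated seed at every mutable vertex, since vertices deep inside a triangle are not attached to any flip. My first attempt is to exploit the balanced (``$\mathcal X$-side'') description. The generalized mutation $\nu^\omega_k$ factors as a monomial change followed by conjugation by a quantum dilogarithm (Lemma~\ref{lem-decom-A}). Laurentness of ${\rm tr}_\lambda(x)$ after $\nu_k^\omega$ should then follow from the fact that the state-sum expansion of a stated arc is invariant under the automorphism induced by this conjugation. This is because the arc is a ``positive'' element, in the sense that its $\mathcal X$-coordinates are polynomial in the variables adjacent to $k$ in the required pattern.

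I expect that verifying this polynomiality for arbitrary $n$, as a local computation in one triangle $\mathbb P_3$ using the explicit trace formula of \cite{LY23}, is where the real work lies.
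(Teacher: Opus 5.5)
Your outer frame matches the paper's proof. You reduce via the Berenstein--Zelevinsky upper bound (Theorem~\ref{thm-upper-w-upper}) to showing $\dS\subset\mathbb T(w_\lambda)$ and $\dS\subset\mathbb T(w_\lambda^{(k)})$ for every mutable $k$. You get the first from injectivity of ${\rm tr}^A_\lambda$, and you transport the second to the $\mathcal X$ side with Proposition~\ref{prop-comp}, i.e.\ you must show $\nu_k^\omega({\rm tr}_\lambda(\beta))\in\mathcal Z^{\rm bl}_\omega(\mathcal D_\lambda^{(k)})$. That last claim is the whole content of the theorem, and your proposal leaves it unproved.

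\begin{itemize}
\item The flip-sequence route does not supply it. Even for a vertex on an edge that is mutated first, the known compatibility of traces with flips (Theorem~\ref{thm-main-compatibility}) only gives Laurentness in the fully flipped seed $w_{\lambda'}$, not in $\mu_{k,A}(w_\lambda)$.
\item There is no a priori ``triangulation-like'' $\mathcal A$-trace for $\mu_{k,A}(w_\lambda)$.
\item The heuristic that the state sum is ``invariant'' under ${\rm Ad}_{\Psi^q(X_k)}$ because the arc is ``positive'' is false as stated: the trace generally does not commute with $X_k$. What is actually needed is that, after the monomial change $\nu_k'$, the factors $F^q(X_k,m)$ produce a Laurent polynomial. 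That is exactly the divisibility you defer in your fallback.
\end{itemize}

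The paper (Proposition~\ref{prop-key-bal-polynomial}) fills this with three ingredients your sketch lacks.

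\begin{itemize}
\item \textbf{Localize by cutting.} Instead of choosing generators of $\dS$ (Lemma~\ref{lem-essential-arc} needs two punctures per component, which the theorem does not assume), cut $\fS$ along every edge of $\lambda$ except the one containing $k$, if any. Then $k$ lies in the interior of a single $\mathbb P_3$ or $\mathbb P_4$. By Theorem~\ref{thm-trace-cut} and the commutation of splitting with $\nu_k^\omega$ \cite{KimWang},
\[
\mathcal S_E^{(k)}\bigl(\nu_k^\omega({\rm tr}_\lambda(\beta))\bigr)=\nu_k^\omega\bigl({\rm tr}_{\lambda_E}(\mathbb S_E(\beta))\bigr)
\]
for arbitrary $\beta$, and $\nu_k^\omega$ acts only on the polygon tensor factor.
\item \textbf{Local Laurentness.} $\overline{\cS}_\omega(\mathbb P_3)$ is generated by corner arcs, and $\overline{\cS}_\omega(\mathbb P_4)$ by the arcs $a,b,c$. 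Their traces have the form $\omega^{m}Z^{-\mathbf k}\sum_\gamma Z^{n\mathbf k^\gamma}$. The prefactor is fixed by $\nu_k^{\sharp\omega}\circ\nu_k'$ up to a monomial, using Lemma~\ref{lem-commute}. The path sum $\sum_\gamma X^{\mathbf k^\gamma}$ stays in the torus under $\mu_k^q$ by the network result \cite[Proposition~4.2]{schrader2017continuous}. A computation in one triangle is not enough here, since vertices on interior edges need the quadrilateral.
\item \textbf{Descent.} To return from $\fS'\sqcup\mathbb P$ to $\fS$, use that the image of $\mathcal S_E^{(k)}$ is a group subalgebra and apply Lemma~\ref{lem-frac-subalgebra}.
\end{itemize}
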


When $n=2$ or $3$, the algebra $\dS$ admits a generating set of so-called \emph{elementary webs} (this generating set is not known for $n>3$).  
Each elementary web is readily verified—by straightforward calculations—to be a cluster variable.  
Using this generating set, one can establish Theorems~\ref{intro-thm-skein-inclusion-A} and \ref{intro-upper-inclusion} following the arguments of \cite{muller2016skein,ishibashi2023skein}.
For general ${\rm SL}_n$ with $n>4$, this approach is no longer available.  
Instead, we develop new, more constructive methods to prove Theorems~\ref{intro-thm-skein-inclusion-A} and \ref{intro-upper-inclusion},  
which not only yield the desired results but also reveal several interesting phenomena.

Let $e$ be an ideal arc of $\fS$ lying entirely in the interior of $\fS$, and let $\fS'$ be the pb surface obtained by cutting $\fS$ along $e$.  
Following \cite{LS21} (see \S\ref{sub-splitting}), there is an algebra homomorphism, called the \emph{splitting homomorphism},
\[
\mathbb{S}_e : \overline{\mathscr{S}}_\omega(\fS) \longrightarrow \overline{\mathscr{S}}_\omega(\fS'),
\]
which, when $\fS'$ is triangulable, further induces an injective algebra homomorphism (Lemma~\ref{lem-basic-lem}(c)):
\begin{align}\label{intro-splitting-eq}
    \mathbb{S}_e : \widetilde{\mathscr{S}}_\omega(\fS) \longrightarrow \widetilde{\mathscr{S}}_\omega(\fS').
\end{align}

Let $\lambda$ be a triangulation of $\fS$.  
To show $\dS \subset \mathscr{U}_{\omega}(\fS)$,
by Theorem~\ref{intro-upper-inclusion} (\cite[Theorem~5.1]{BZ}), it is enough to prove
\[
    \dS \subset \mathbb{T}_k \qquad \text{for every } k\in \mathcal V_{\rm mut},
\]
where $\mathbb{T}_k$ denotes the quantum torus associated with the quantum seed
\(
\mu_{k,A}\bigl(Q_\lambda,\Pi_\lambda,M_\lambda\bigr)
\)
(see \eqref{eq-T-w}).  
For $n=2$ this inclusion is immediate because
\(
\mathbb{T}_k = \mathcal{A}_\omega(\fS,\lambda'),
\)
where $\lambda'$ is obtained from $\lambda$ by flipping the ideal arc corresponding to $k$.
Combining with established results in \cite{schrader2017continuous,LY23,KimWang},  
we prove the inclusion
\(
\dS \subset \mathbb{T}_k
\)
by cutting out a subsurface of type $\mathbb{P}_3$ or $\mathbb{P}_4$ whose interior contains the vertex $k$.

\vspace{0.2cm}

Then, we construct the splitting homomorphism for $\mathscr{U}_\omega(\fS)$, which is compatible with the one in \eqref{intro-splitting-eq}.

\begin{theorem}[Theorem~\ref{thm:splitU}]\label{intro-thm:splitU}
Let a triangulable pb surface $\fS$ and edge $e$ be an ideal arc in $\lambda$, and let $\fS'$ be the pb surface obtained by cutting $\fS$ along $e'$.
Assume that $\fS$ contains no interior punctures.
Then there exists an injective algebra homomorphism $$\mathbb S_e^U:\mathscr{U}_{\omega}(\fS)\to \mathscr{U}_{\omega}(\fS'),$$ which is compatible with $\mathbb S_e$ in the sense that the following diagram commutes:
    $$\centerline{\xymatrix{
  &\widetilde{\mathscr S}_\omega(\fS) \ar@{^{(}->}[rrr]^{\mathbb S_e}\ar@{^{(}->}[d]  &&& \widetilde{\mathscr S}_\omega(\fS') \ar@{^{(}->}[d]          \\
  &  \mathscr U_{\omega}(\fS) \ar@{^{(}->}[rrr]^{\mathbb S_e^U}  &&& \mathscr U_{\omega}(\fS').}}$$ 
\end{theorem}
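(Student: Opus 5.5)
The plan is to build $\mathbb S_e^U$ at the level of quantum tori and then pass to intersections. Let $\lambda$ be a triangulation of $\fS$ containing $e$. Cutting along $e$ gives a triangulation $\lambda'$ of $\fS'$. The small vertices of $\lambda'$ are those of $\lambda$, except that each small vertex $v$ lying on $e$ is replaced by two copies $v',v''$ on the two new boundary edges.

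\textbf{Step 1: a monomial map of $\mathcal A$-tori.} I would define a linear map $L\colon\mathbb Z^{V_\lambda}\to\mathbb Z^{V_{\lambda'}}$ by ${\bf e}_u\mapsto{\bf e}_u$ for $u\notin e$ and ${\bf e}_v\mapsto{\bf e}_{v'}+{\bf e}_{v''}$ for $v\in e$. This induces $\phi\colon\mathcal A_\omega(\fS,\lambda)\to\mathcal A_\omega(\fS',\lambda')$, $A^{\bf t}\mapsto A^{L{\bf t}}$. For $\phi$ to be an algebra map one needs $L^{\top}\Pi_{\lambda'}L=\Pi_\lambda$. I would check this locally, triangle by triangle, from the definition of $P_\lambda$: it is a sum of contributions of the triangles in $\mathbb F_\lambda$, and $\lambda,\lambda'$ have the same triangles. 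The map $\phi$ is injective because $L$ is injective.

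Next, I would show that $\phi$ intertwines the splitting homomorphism. Precisely, ${\rm tr}^A_{\lambda'}\circ\mathbb S_e=\phi\circ{\rm tr}^A_\lambda$. This should follow from the corresponding compatibility of ${\rm tr}_\lambda$ with splitting on the $\mathcal X$-side (from \cite{LY23}), which sends $Z_v\mapsto[Z_{v'}Z_{v''}]$. It should also use \eqref{intro-eq-compability-tr-A-X-diag} and the fact that $\psi_\lambda$ is a monomial map compatible with $L$. Hence $\phi$ restricts to $\mathbb S_e$ on $\widetilde{\cS}_\omega(\fS)$.

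\textbf{Step 2: compatibility with mutations away from $e$.} Observe that $\mathcal V_{\rm mut}(\fS')=\mathcal V_{\rm mut}(\fS)\setminus\{v\in e\}$, since the vertices on $e$ become frozen in $\fS'$. For $k\in\mathcal V_{\rm mut}(\fS')$, I would verify $L(Q_\lambda{\bf e}_k)=Q_{\lambda'}{\bf e}_k$. Arrows from $k$ to a vertex $v$ on $e$ come from the triangle on one side of $e$ only, unless $k$ itself is on $e$, which is excluded. So the weight splits correctly between $v'$ and $v''$, because the boundary arrows of weight $\tfrac12$ combine. This is exactly the condition for $\phi$ to send the exchange relation of $\mu_{k,A}$ to that of $\mu_{k,A}$ on $\fS'$.

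Mutation preserves both identities $L^{\top}\Pi L=\Pi$ and $L(Q{\bf e}_k)=Q'{\bf e}_k$, using the standard compatibility of quantum seeds $\Pi Q=$ diagonal on mutable columns. So by induction, for every sequence ${\bf k}$ of mutations at vertices not on $e$, $\phi$ extends to the fraction fields. There it maps the torus $\mathbb T_{\mu_{\bf k}(\lambda)}$ into $\mathbb T'_{\mu_{\bf k}(\lambda')}$, and these seeds are mutation-compatible.

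\textbf{Step 3: passage to upper cluster algebras.} Each seed of $\fS'$ is of the form $\mu_{\bf k}(Q_{\lambda'},\Pi_{\lambda'},M_{\lambda'})$ with ${\bf k}$ avoiding $e$. Hence
\[
\phi(\mathscr U_\omega(\fS))\subset\bigcap_{\bf k}\phi\bigl(\mathbb T_{\mu_{\bf k}}\bigr)\subset\bigcap_{\bf k}\mathbb T'_{\mu_{\bf k}}=\mathscr U_\omega(\fS').
\]
The first inclusion uses that $\mathscr U_\omega(\fS)$ is contained in the intersection over all its seeds, in particular over those avoiding $e$. Define $\mathbb S_e^U:=\phi|_{\mathscr U_\omega(\fS)}$. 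It is injective because $\phi$ is, and the diagram commutes by Step 1.

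\textbf{Main obstacle.} I expect the main obstacle to be the identity $L^{\top}\Pi_{\lambda'}L=\Pi_\lambda$ together with the trace compatibility in Step 1. Both require unwinding the explicit formula for $P_\lambda$ near $e$. I would first try to reduce both to the single-triangle case by locality of $P_\lambda$ and $Q_\lambda$.
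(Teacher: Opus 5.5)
\textbf{There is a genuine gap.} It lies in Step 1: the monomial map $L$ you write down (${\bf e}_u\mapsto{\bf e}_u$ off $e$, ${\bf e}_v\mapsto{\bf e}_{v'}+{\bf e}_{v''}$ on $e$) is not the splitting map, and neither identity you rely on holds for it.

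The elements $\gaa_v$ are built from the elongated graphs $\widetilde Y_v$, whose legs keep turning left into neighbouring triangles. For $v\notin e$ lying on the curves $c_i'$, $c_i''$ of Figure~\ref{Fig;cutting-e}, these legs cross $e$, and after cutting the crossing pieces are exactly $\gaa_{u'}$, $\gaa_{u''}$. This is Proposition~\ref{prop:image}: $\mathbb S_e(\gaa_v)=[\gaa_v\prod_{u\in\mathcal V^1_v}\gaa_{u'}\prod_{u\in\mathcal V^2_v}\gaa_{u''}]$, not $\gaa_v$. Equivalently, $K_\lambda$, and hence $\psi_\lambda$ and $P_\lambda=2K_\lambda Q_\lambda K_\lambda^{\top}$, is not local in the triangles. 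So both your ``locality'' argument for $L^{\top}\Pi_{\lambda'}L=\Pi_\lambda$ and your claim that $\psi_\lambda$ is compatible with $L$ break down.

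This already fails for $n=2$, $\fS=\mathbb P_4$ and $e$ the diagonal. Let $x\subset\Delta_2$ and $z\subset\Delta_1$ be the two boundary edges at the endpoint $p_1$ of $e$. Then $\mathcal V^1_x=\{s_1\}$ and $\mathcal V^1_z=\mathcal V^2_z=\emptyset$, so $\mathbb S_e(\gaa_x)=[\gaa_x\gaa_{s_1'}]$ and $\mathbb S_e(\gaa_z)=\gaa_z$. Hence $\Pi_\lambda(x,z)=\Pi_{\Delta_1}(s_1',z)=\pm1$, since for $n=2$ one has $\Pi_{\mathbb P_3}=Q_{\mathbb P_3}$, a $3$-cycle. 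Your $\phi$, however, sends $A_x$ and $A_z$ into different tensor factors of $\mathcal A_\omega(\Delta_1)\otimes\mathcal A_\omega(\Delta_2)$, where they commute. So your $\phi$ is not an algebra homomorphism, and it does not restrict to $\mathbb S_e$.

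Step 2 has a related error. For your $L$, the vector $L(Q_\lambda{\bf e}_k)$ has entry $Q_\lambda(v,k)$ at both $v'$ and $v''$. But $Q_{\lambda'}{\bf e}_k$ has entries $Q_{\lambda'}(v',k)$ and $Q_{\lambda'}(v'',k)$, which only sum to $Q_\lambda(v,k)$; for $n\ge 3$ one of them is $0$ (Lemma~\ref{lem-splitting-Q}). Your arrow-counting proves $L^{\top}Q_{\lambda'}{\bf e}_k=Q_\lambda{\bf e}_k$, which is not the condition the exchange relations need. With your $L$, the two exchange monomials of $\mu_k(A_k)$ acquire different frozen factors as soon as $k$ is adjacent to a vertex of $e$. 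Then $\phi(\mu_k(A_k))$ is not a frozen multiple of $\mu'_k(A_k)$, and $\phi(\mathbb T_{\mu_k})\not\subset\mathbb T'_{\mu_k}$.

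The repair is as follows.
\begin{itemize}
\item Take $\phi:=\psi_{\lambda'}^{-1}\circ\mathcal S_e\circ\psi_\lambda$. Since $\mathcal S_e$ preserves balanced vectors, Step 1 then follows from Theorem~\ref{thm-trace-cut} and \eqref{eq-compability-tr-A-X-diag}.
\item Compute this $\phi$ explicitly. It is the corrected map $A_v\mapsto[A_v\prod_{u\in\mathcal V^1_v}A_{u'}\prod_{u\in\mathcal V^2_v}A_{u''}]$ for $v\notin e$, and $A_v\mapsto[A_{v'}A_{v''}]$ for $v\in e$.
\item For this map, your condition $L(Q_\lambda{\bf e}_k)=Q_{\lambda'}{\bf e}_k$ becomes $Q_{\lambda'}(u'',k)+\sum_{v_1\in\mathcal V'_u}Q_\lambda(v_1,k)=0$, together with its analogue for $u'$ and $\mathcal V''_u$. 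This is condition (d) of Proposition~\ref{prop:split}, proved in Lemma~\ref{lem:mut}. The correction terms are exactly what make the exchange relations match.
\end{itemize}
With the correct map, your Step 3 goes through. An induction over all mutation sequences avoiding $e$ is more than is needed, though. The paper only checks adjacent seeds and then invokes Theorem~\ref{thm-upper-w-upper}, using Lemma~\ref{lem:propotion} to handle the Weyl ordering of the frozen factor.
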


We first establish the splitting homomorphism for quantum cluster upper algebras in a general setting (Proposition~\ref{prop:split}).  
We then verify that the ${\rm SL}_n$ quantum upper cluster algebra $\mathscr{U}_{\omega}(\fS)$ satisfies the hypotheses of this general result. Together with Proposition~\ref{prop:image}.  
this proves Theorem~\ref{intro-thm:splitU}.

\vspace{0.2cm}

We are close to prove Theorem~\ref{intro-thm-skein-inclusion-A}. Before that we consider two special cases when $\fS=\mathbb P_3,\mathbb P_4$.

\vspace{0.2cm}

There is a dual quiver $\mathcal N(\mathbb P_3,v_1)$ (see Figure~\ref{dualquiver}) associated with the quiver $\Gamma_{\mathbb P_3}$.  
As illustrated in Figure~\ref{dualquiver}, the endpoints of $\mathcal N(\mathbb P_3,v_1)$ along $e_1$ and $e_3$ are labeled by $1,2,\ldots,n$.  
For any $i,j\in\{1,2,\ldots,n\}$, define the path set $\mathsf P(\mathbb P_3,v_1,i,j)$ to consist of all paths in $\mathcal N(\mathbb P_3,v_1)$ starting at the $i$ lying in $e_3$ and ending at the $j$ lying in $e_1$ (see \eqref{def-path-P3-v1}).  
Assume that $\fS=\mathbb P_3$.  
Then \cite[Theorem~10.5]{LY23} (Theorem~\ref{lem-trace-image-cornerarc-P3}) together with \eqref{intro-eq-compability-tr-A-X-diag} implies
\begin{align}\label{intro-path-sum}
    C_{ij}=\sum_{p\in\mathsf P(\mathbb P_3,v_1,i,j)} A_p \in \mathcal A_{\omega}(\mathbb P_3),
\end{align}
where $A_p$ is the Laurent monomial defined in \eqref{def-path-A-monomial}.  

For $j=1$ or $j=i$, the set $\mathsf P(\mathbb P_3,v_1,i,j)$ contains a single path, and \eqref{intro-eq-Cij} follows directly from \eqref{intro-path-sum}.  
When $1<j<i$, there exist a ``maximum'' path $p_{\max}^{(i,j)}$, containing the largest number of small vertices on its left, and a ``minimum'' path $p_{\min}^{(i,j)}$, containing the fewest such vertices, such that every $p\in \mathsf P(\mathbb P_3,v_1,i,j)$ is between $p_{\max}^{(i,j)}$ and $p_{\min}^{(i,j)}$, denoted as
\[
   p_{\min}^{(i,j)} \le p \le p_{\max}^{(i,j)}.
\]
The paths $p_{\max}^{(i,j)}$ and $p_{\min}^{(i,j)}$ bound a quadrilateral whose two “tails’’ connect to $e_1$ and $e_3$, respectively.  
The small vertices contained in this quadrilateral are
\begin{align}
   v_{j,j-1},\ldots,v_{j,1},\ldots,v_{i-1,1},\ldots,v_{i-1,j-1}.
\end{align}
A promising observation is that
\begin{align}\label{intro-mutation-Ap}
   \sum_{p_{\min}^{(i,j)} \le p \le p_{\max}^{(i,j)}} A_p
   = \bigl[\mu_{j,j-1}\cdots\mu_{j,1}\cdots\mu_{i-1,1}\cdots\mu_{i-1,j-1}(A_{j,j-1}) \cdot A_{i0}^{-1} A_{jj}^{-1}\bigr],
\end{align}
which is precisely the formula in \eqref{intro-eq-Cij}.  

To prove \eqref{intro-mutation-Ap}, note that the set $\mathsf P(\mathbb P_3,v_1,i,j)$ is in one-to-one correspondence with the disjoint union of the sets of paths from $i-1$ to $j-1$ and from $i-1$ to $j$ for ${\rm SL}_{n-1}$ (Lemma~\ref{lem:divide2}).  The result then follows by induction combined with a detailed analysis of the mutations
\(\mu_{(j;j-1)}\cdots \mu_{(i-2;j-1)} \mu_{(i-1;j-1)}\) (Lemmas~\ref{lem:quiver0}--\ref{lem:mut2}).

The equality \eqref{intro-eq-bar-Cij} (Proposition~\ref{prop-bar-Cij}) for $\fS=\mathbb P_3$ can be established in the same way.

Using \eqref{intro-eq-Cij} and \eqref{intro-eq-bar-Cij}, we establish Equation~\eqref{into-eq-Dij} for $\fS=\mathbb P_4$ (Theorem~\ref{thm-P4-Dij}) by cutting $\mathbb P_4$ along $c_5$ into two triangles. More precisely, in light of the injectivity and compatibility of the splitting homomorphisms $\mathbb S_{c_5}$ and $\mathbb S^U_{c_5}$ for the projected ${\rm SL}_n$ skein algebra and the corresponding quantum upper cluster algebra, respectively (Theorem \ref{intro-thm:splitU}), it suffices to verify \eqref{into-eq-Dij} after applying $\mathbb S_{c_5}$ and $\mathbb S^U_{c_5}$ to the left and right of \eqref{into-eq-Dij}, respectively. The cases $i\geq j=1$ and $j>i=1$ follow by direct computation. For the case $i\geq j>1$, a key observation is that the subquiver of $\overline \mu^{\diamondsuit}_j\circ  \mu^{\diamondsuit}_{(i;j-1)}(Q_{\mathbb P_4})$ formed by the vertices $v_{j-1,j-1},\cdots, v_{22},v_{11}$ is of type $A$ of linear orientation (Lemma \ref{lem:quiver2}). Relying on this, we apply the expansion formula for type $A$ quantum cluster algebras \cite{R,CL,H1} to compute the exchangeable cluster variable $\left(\mu_{j-1,j-1}\cdots\mu_{22}\mu_{11}\right)\circ \overline \mu^{\diamondsuit}_j\circ  \mu^{\diamondsuit}_{(i;j-1)}(A_{j-1,j-1})$ (Corollary \ref{cor:cv}). A direct check then confirms that \eqref{into-eq-Dij} holds after applying $\mathbb S_{c_5}$ and $\mathbb S^U_{c_5}$ to both sides. The case $j>i>1$ can be proved in a same manner.

Finally, combining the above results for $\mathbb{P}_3$ and $\mathbb{P}_4$ with detailed calculations for  
$\mathbb{S}^{U}_{e_2}$ and $\mathbb{S}^{U}_{c_1}\circ \mathbb{S}^{U}_{c_3}$  
(Lemmas~\ref{lem:split_e2}--\ref{lem-splitting-essential-arc})  
and invoking Theorem~\ref{intro-thm:splitU},  
we prove Theorem~\ref{intro-thm-skein-inclusion-A} by either cutting out a $\mathbb{P}_3$ along $e_2$  
or cutting out a $\mathbb{P}_4$ along $c_1$ and $c_3$.

\vspace{0.2cm}

We conclude the introduction with the following two equivalent conjectures.

\begin{conjecture}\label{con-equality-Skein-A-U}
      Under the same assumption as Theorem~\ref{intro-thm-skein-inclusion-A},
     we have 
     $$\widetilde{\cS}_\omega(\fS)= \mathscr{A}_{\omega}(\fS)= \mathscr{U}_{\omega}(\fS).$$
\end{conjecture}

\begin{conjecture}\label{con-intro}
      Under the same assumption as Theorem~\ref{intro-thm-skein-inclusion-A}, the 
$R\langle A^{\pm1}_v\mid v\in \mathcal V\setminus \mathcal V_{\mathrm{mut}}\rangle$-algebra
 $\mathscr{U}_{\omega}(\fS)$
 is generated by the exchangeable cluster variables appearing in 
 Equations~\eqref{intro-eq-Cij}, \eqref{intro-eq-bar-Cij}, and \eqref{into-eq-Dij}.
\end{conjecture}

One can check that Conjecture \ref{con-intro} holds for $\fS=\mathbb P_3$ and $n=3$, $4$ or $5$.

It was shown in \cite{LY23} that 
$R\langle A^{\pm1}_v\mid v\in \mathcal V\setminus \mathcal V_{\mathrm{mut}}\rangle\subset \dS$ (the case when $j=i$ in \eqref{intro-eq-Cij} also implies this inclusion).
Then Theorem~\ref{intro-thm-skein-inclusion-A} leads to the following result,  
which provides a promising potential approach to proving Theorem~\ref{con-equality-Skein-A-U}.
We will investigate this in a future project. 

\begin{corollary}\label{intro-cor}
Conjectures~\ref{con-equality-Skein-A-U} and \ref{con-intro} are equivalent.
\end{corollary}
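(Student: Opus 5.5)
The plan is to show each conjecture implies the other, using Theorem~\ref{intro-thm-skein-inclusion-A}, the inclusion $\mathscr A_\omega(\fS)\subset\mathscr U_\omega(\fS)$ from the quantum Laurent phenomenon, and the inclusion $R\langle A^{\pm1}_v\mid v\in \mathcal V\setminus \mathcal V_{\mathrm{mut}}\rangle\subset \widetilde{\cS}_\omega(\fS)$ from \cite{LY23}. Write $\mathcal F$ for this frozen Laurent algebra and $\mathcal G\subset\mathscr U_\omega(\fS)$ for the $\mathcal F$-subalgebra generated by the exchangeable cluster variables in \eqref{intro-eq-Cij}, \eqref{intro-eq-bar-Cij} and \eqref{into-eq-Dij}. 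The key intermediate claim is
\[
\mathcal G=\widetilde{\cS}_\omega(\fS).
\]

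For $\mathcal G\subset\widetilde{\cS}_\omega(\fS)$: each listed cluster variable $X$ appears in Theorem~\ref{intro-thm-skein-inclusion-A} as a Weyl-ordered product $[X\cdot A^{\mathbf t}]$, equal to a stated essential arc $\alpha$, with $A^{\mathbf t}$ a Laurent monomial in frozen variables. In the quantum torus, $[X\cdot A^{\mathbf t}]$ and $X A^{\mathbf t}$ differ by a power of the quantum parameter, because $X$ and the frozen variables quasi-commute in the relevant seed. Hence $X=\omega^{c}\,\alpha\,A^{-\mathbf t}$ for some exponent $c$, and $X\in\widetilde{\cS}_\omega(\fS)$ since $\alpha$ and $A^{-\mathbf t}$ both lie there. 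As $\mathcal F\subset\widetilde{\cS}_\omega(\fS)$ as well, we get $\mathcal G\subset\widetilde{\cS}_\omega(\fS)$.

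For the reverse inclusion, Lemma~\ref{lem-essential-arc} says $\widetilde{\cS}_\omega(\fS)$ is generated by finitely many stated essential arcs. Each of these is, up to a power of the quantum parameter, a listed cluster variable times a frozen monomial, so it lies in $\mathcal G$. This proves the claim.

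With $\mathcal G=\widetilde{\cS}_\omega(\fS)$ in hand, the equivalence follows directly. If Conjecture~\ref{con-equality-Skein-A-U} holds, then $\mathscr U_\omega(\fS)=\widetilde{\cS}_\omega(\fS)=\mathcal G$, which is Conjecture~\ref{con-intro}. Conversely, if $\mathscr U_\omega(\fS)=\mathcal G$, then combining with Theorem~\ref{intro-thm-skein-inclusion-A} gives
\[
\mathscr U_\omega(\fS)=\widetilde{\cS}_\omega(\fS)\subset\mathscr A_\omega(\fS)\subset\mathscr U_\omega(\fS),
\]
so all three algebras coincide.

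The main obstacle is bookkeeping. We must confirm that the exchangeable variable $X$ quasi-commutes with the frozen variables in the mutated seed, so that the Weyl ordering only contributes a scalar. This holds because frozen variables are never mutated and the compatibility matrix of every seed is still a quasi-commutation matrix. We must also confirm that Lemma~\ref{lem-essential-arc}'s generators are covered by cases (a)--(c) of Theorem~\ref{intro-thm-skein-inclusion-A}, after choosing for each arc a suitable triangulation $\lambda$ and using Theorem~\ref{intro-naturality} to identify the cluster algebras across triangulations. Orientation reversal is handled by the $C$ versus $\overline C$ cases.
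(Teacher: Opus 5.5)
Your proposal is correct and follows essentially the same route as the paper. The paper likewise combines three ingredients: the inclusion $R\langle A_v^{\pm1}\mid v\in\mathcal V\setminus\mathcal V_{\mathrm{mut}}\rangle\subset\widetilde{\cS}_\omega(\fS)$ from \cite{LY23}, generation of $\widetilde{\cS}_\omega(\fS)$ by stated essential arcs (Lemma~\ref{lem-essential-arc}), and the explicit formulas of Theorem~\ref{intro-thm-skein-inclusion-A}; from these it identifies $\widetilde{\cS}_\omega(\fS)$ with the frozen-Laurent algebra generated by the listed exchangeable variables, and then sandwiches with $\widetilde{\cS}_\omega(\fS)\subset\mathscr A_\omega(\fS)\subset\mathscr U_\omega(\fS)$. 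One cosmetic point: some arcs, such as $C_{ii}$, are pure frozen monomials with no exchangeable factor, which is harmless because your $\mathcal G$ already contains the frozen Laurent algebra.
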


Conjecture~\ref{con-equality-Skein-A-U} was confirmed for $n=2$ in \cite{muller2016skein,LS21}. Then, it follows from
Corollary~\ref{intro-cor} that Conjecture~\ref{con-intro} holds when $n=2$.

{\bf Acknowledgements:}
This work was partially supported by the National Natural Science Foundation
of China (No.12471023) (M.H.). Part of this research was carried out while Z.W. was a Dual PhD student at the University of Groningen (The Netherlands) and Nanyang Technological University (Singapore), supported by a PhD scholarship from the University of Groningen and a research scholarship from Nanyang Technological University.  
Z.W. was supported by a KIAS Individual Grant (MG104701) at the Korea Institute for Advanced Study.

\section{Quantum trace maps for reduced stated $\SL$-skein algebras}

The quantum trace maps for reduced stated $\SL$-skein algebras were constructed in \cite{LY23}. 
There are two versions of these maps, called the $\mathcal X$-version and the $\mathcal A$-version, which are compatible to each other. 
Both are algebra homomorphisms from the reduced stated $\SL$-skein algebra to a quantum torus. 
The $X$-quantum torus coincides with the $n$-root Fock--Goncharov algebra, which is related to the $X$-cluster variety \cite{KimWang}. 
In \S\ref{sec-seed-structure}, we will use 
the $A$-quantum torus to construct quantum seeds (Definition~\ref{def-quantum-seed}).

\subsection{Reduced stated $\SL$-skein algebras}\label{sub-def-reduced-skein}

\def\Si{\fS}

Let $\fS$ be a pb surface (see \S\ref{intro-sec-reduced}).
Consider the 3-manifold $\Si \times (-1,1)$, the thickened surface. For a point $(x,t) \in \Si \times (-1,1)$, the value $t$ is called the {\bf height} of this point. 
\begin{definition}[\cite{LS21}]\label{def-n-web}
    An {\bf $n$-web} $\alpha$ in $\Si\times(-1,1)$ is a disjoint union of oriented closed curves and a directed finite graph properly embedded into $\Si\times(-1,1)$, satisfying the following requirements:
\begin{enumerate}
    \item $\alpha$ only contains $1$-valent or $n$-valent vertices. Each $n$-valent vertex is a source or a  sink. The set of $1$-valent vertices is denoted as $\partial \alpha$, which are called \textbf{endpoints} of $\alpha$. For any boundary component $c$ of $\Si$, we require that the points of  $\partial\alpha\cap (c\times(-1,1))$ have mutually distinct heights.
    \item Every edge of the graph is an embedded oriented  closed interval  in $\Si\times(-1,1)$.
    \item $\alpha$ is equipped with a transversal \textbf{framing}. 
    \item The set of half-edges at each $n$-valent vertex is equipped with a  cyclic order. 
    \item $\partial \alpha$ is contained in $\partial\Si\times (-1,1)$ and the framing at these endpoints is given by the positive direction of $(-1,1)$.
\end{enumerate}
We will consider $n$-webs up to (ambient) \textbf{isotopy} which are continuous deformations of $n$-webs in their class. 
The empty $n$-web, denoted by $\emptyset$, is also considered as an $n$-web, with the convention that $\emptyset$ is only isotopic to itself. 

A {\bf state} for $\alpha$ is a map $s\colon\partial\alpha\rightarrow \{1,2,\cdots,n\}$. A {\bf stated $n$-web} in $\Si\times(-1,1)$ is an $n$-web equipped with a state.
\end{definition}

By regarding $\fS$ as $\fS\times\{0\}$, there is a projection $\text{pr}\colon\fS\times(-1,1)\rightarrow \fS.$
We say the (stated) $n$-web $\alpha$ is in {\bf vertical position} if 
\begin{enumerate}
    \item the framing at everywhere is given by the positive direction of $(-1,1)$,
    \item $\alpha$ is in general position with respect to the projection  $\text{pr}\colon \Si\times(-1,1)\rightarrow \Si\times\{0\}$,
    \item at every $n$-valent vertex, the cyclic order of half-edges as the image of $\text{pr}$ is given by the positive orientation of $\Si$ (drawn counter-clockwise in pictures).
\end{enumerate}

For every (stated) $n$-web $\alpha$, we can isotope $\alpha$ to be in vertical position. For each boundary component $c$ of $\fS$, the heights of $\partial\alpha\cap (c\times(-1,1))$ determine a linear order on  $c\cap \text{pr}(\alpha)$.
Then a {\bf (stated) $n$-web diagram} of $\alpha$ is $\text{pr}(\alpha)$ equipped with the usual over/underpassing information at each double point (called a crossing) and a linear order on $c\cap \text{pr}(\alpha)$ for each boundary component $c$ of $\fS$.

Let $S_n$ denote the permutation group on the set $\{1,2,\cdots,n\}$. 
For an integer $i\in\{1,2,\cdots,n\}$, we use $\bar{i}$ to denote $n+1-i$.

\def\M {M,\cN}

The \textbf{stated $\SL$-skein algebra} $\cS_{\omega}(\fS)$ of $\fS$ is
the quotient module of the $R$-module (see \S\ref{sec-intro}) freely generated by the set 
 of all isotopy classes of stated 
$n$-webs in $\fS\times (-1,1)$ subject to  relations \eqref{w.cross}-\eqref{wzh.eight} (see \eqref{intro-constants} for involved constants in $R$).

\beq\label{w.cross}
q^{-\frac{1}{n}} 
\raisebox{-.20in}{

\begin{tikzpicture}
\tikzset{->-/.style=

{decoration={markings,mark=at position #1 with

{\arrow{latex}}},postaction={decorate}}}
\filldraw[draw=white,fill=gray!20] (-0,-0.2) rectangle (1, 1.2);
\draw [line width =1pt,decoration={markings, mark=at position 0.5 with {\arrow{>}}},postaction={decorate}](0.6,0.6)--(1,1);
\draw [line width =1pt,decoration={markings, mark=at position 0.5 with {\arrow{>}}},postaction={decorate}](0.6,0.4)--(1,0);
\draw[line width =1pt] (0,0)--(0.4,0.4);
\draw[line width =1pt] (0,1)--(0.4,0.6);
\draw[line width =1pt] (0.4,0.6)--(0.6,0.4);
\end{tikzpicture}
}
- q^{\frac{1}{n}}
\raisebox{-.20in}{
\begin{tikzpicture}
\tikzset{->-/.style=

{decoration={markings,mark=at position #1 with

{\arrow{latex}}},postaction={decorate}}}
\filldraw[draw=white,fill=gray!20] (-0,-0.2) rectangle (1, 1.2);
\draw [line width =1pt,decoration={markings, mark=at position 0.5 with {\arrow{>}}},postaction={decorate}](0.6,0.6)--(1,1);
\draw [line width =1pt,decoration={markings, mark=at position 0.5 with {\arrow{>}}},postaction={decorate}](0.6,0.4)--(1,0);
\draw[line width =1pt] (0,0)--(0.4,0.4);
\draw[line width =1pt] (0,1)--(0.4,0.6);
\draw[line width =1pt] (0.6,0.6)--(0.4,0.4);
\end{tikzpicture}
}
= (q^{-1}-q)
\raisebox{-.20in}{

\begin{tikzpicture}
\tikzset{->-/.style=

{decoration={markings,mark=at position #1 with

{\arrow{latex}}},postaction={decorate}}}
\filldraw[draw=white,fill=gray!20] (-0,-0.2) rectangle (1, 1.2);
\draw [line width =1pt,decoration={markings, mark=at position 0.5 with {\arrow{>}}},postaction={decorate}](0,0.8)--(1,0.8);
\draw [line width =1pt,decoration={markings, mark=at position 0.5 with {\arrow{>}}},postaction={decorate}](0,0.2)--(1,0.2);
\end{tikzpicture}
},
\eeq 
\beq\label{w.twist}
\raisebox{-.15in}{
\begin{tikzpicture}
\tikzset{->-/.style=
{decoration={markings,mark=at position #1 with
{\arrow{latex}}},postaction={decorate}}}
\filldraw[draw=white,fill=gray!20] (-1,-0.35) rectangle (0.6, 0.65);
\draw [line width =1pt,decoration={markings, mark=at position 0.5 with {\arrow{>}}},postaction={decorate}](-1,0)--(-0.25,0);
\draw [color = black, line width =1pt](0,0)--(0.6,0);
\draw [color = black, line width =1pt] (0.166 ,0.08) arc (-37:270:0.2);
\end{tikzpicture}}
= \mathbbm{t}
\raisebox{-.15in}{
\begin{tikzpicture}
\tikzset{->-/.style=
{decoration={markings,mark=at position #1 with
{\arrow{latex}}},postaction={decorate}}}
\filldraw[draw=white,fill=gray!20] (-1,-0.5) rectangle (0.6, 0.5);
\draw [line width =1pt,decoration={markings, mark=at position 0.5 with {\arrow{>}}},postaction={decorate}](-1,0)--(-0.25,0);
\draw [color = black, line width =1pt](-0.25,0)--(0.6,0);
\end{tikzpicture}}
,  
\eeq
\beq\label{w.unknot}
\raisebox{-.20in}{
\begin{tikzpicture}
\tikzset{->-/.style=
{decoration={markings,mark=at position #1 with
{\arrow{latex}}},postaction={decorate}}}
\filldraw[draw=white,fill=gray!20] (0,0) rectangle (1,1);
\draw [line width =1pt,decoration={markings, mark=at position 0.5 with {\arrow{>}}},postaction={decorate}](0.45,0.8)--(0.55,0.8);
\draw[line width =1pt] (0.5 ,0.5) circle (0.3);
\end{tikzpicture}}
= (-1)^{n-1} [n]\ 
\raisebox{-.20in}{
\begin{tikzpicture}
\tikzset{->-/.style=
{decoration={markings,mark=at position #1 with
{\arrow{latex}}},postaction={decorate}}}
\filldraw[draw=white,fill=gray!20] (0,0) rectangle (1,1);
\end{tikzpicture}}
,\ \text{where}\ [n]=\frac{q^n-q^{-n}}{q-q^{-1}},
\eeq
\beq\label{wzh.four}
\raisebox{-.30in}{
\begin{tikzpicture}
\tikzset{->-/.style=
{decoration={markings,mark=at position #1 with
{\arrow{latex}}},postaction={decorate}}}
\filldraw[draw=white,fill=gray!20] (-1,-0.7) rectangle (1.2,1.3);
\draw [line width =1pt,decoration={markings, mark=at position 0.5 with {\arrow{>}}},postaction={decorate}](-1,1)--(0,0);
\draw [line width =1pt,decoration={markings, mark=at position 0.5 with {\arrow{>}}},postaction={decorate}](-1,0)--(0,0);
\draw [line width =1pt,decoration={markings, mark=at position 0.5 with {\arrow{>}}},postaction={decorate}](-1,-0.4)--(0,0);
\draw [line width =1pt,decoration={markings, mark=at position 0.5 with {\arrow{<}}},postaction={decorate}](1.2,1)  --(0.2,0);
\draw [line width =1pt,decoration={markings, mark=at position 0.5 with {\arrow{<}}},postaction={decorate}](1.2,0)  --(0.2,0);
\draw [line width =1pt,decoration={markings, mark=at position 0.5 with {\arrow{<}}},postaction={decorate}](1.2,-0.4)--(0.2,0);
\node  at(-0.8,0.5) {$\vdots$};
\node  at(1,0.5) {$\vdots$};
\end{tikzpicture}}=(-q)^{-\frac{n(n-1)}{2}}\cdot \sum_{\sigma\in \fS_n}
(-q^{-\frac{1-n}n})^{\ell(\sigma)} \raisebox{-.30in}{
\begin{tikzpicture}
\tikzset{->-/.style=
{decoration={markings,mark=at position #1 with
{\arrow{latex}}},postaction={decorate}}}
\filldraw[draw=white,fill=gray!20] (-1,-0.7) rectangle (1.2,1.3);
\draw [line width =1pt,decoration={markings, mark=at position 0.5 with {\arrow{>}}},postaction={decorate}](-1,1)--(0,0);
\draw [line width =1pt,decoration={markings, mark=at position 0.5 with {\arrow{>}}},postaction={decorate}](-1,0)--(0,0);
\draw [line width =1pt,decoration={markings, mark=at position 0.5 with {\arrow{>}}},postaction={decorate}](-1,-0.4)--(0,0);
\draw [line width =1pt,decoration={markings, mark=at position 0.5 with {\arrow{<}}},postaction={decorate}](1.2,1)  --(0.2,0);
\draw [line width =1pt,decoration={markings, mark=at position 0.5 with {\arrow{<}}},postaction={decorate}](1.2,0)  --(0.2,0);
\draw [line width =1pt,decoration={markings, mark=at position 0.5 with {\arrow{<}}},postaction={decorate}](1.2,-0.4)--(0.2,0);
\node  at(-0.8,0.5) {$\vdots$};
\node  at(1,0.5) {$\vdots$};
\filldraw[draw=black,fill=gray!20,line width =1pt]  (0.1,0.3) ellipse (0.4 and 0.7);
\node  at(0.1,0.3){$\sigma_{+}$};
\end{tikzpicture}},
\eeq
where the ellipse enclosing $\sigma_+$  is the minimum crossing positive braid representing a permutation $\sigma\in S_n$ and $\ell(\sigma)=\#\{(i,j)\mid 1\leq i<j\leq n,\ \sigma(i)>\sigma(j)\}$ is the length of $\sigma\in S_n$.

\beq\label{wzh.five}
   \raisebox{-.30in}{
\begin{tikzpicture}
\tikzset{->-/.style=
{decoration={markings,mark=at position #1 with
{\arrow{latex}}},postaction={decorate}}}
\filldraw[draw=white,fill=gray!20] (-1,-0.7) rectangle (0.2,1.3);
\draw [line width =1pt](-1,1)--(0,0);
\draw [line width =1pt](-1,0)--(0,0);
\draw [line width =1pt](-1,-0.4)--(0,0);
\draw [line width =1.5pt](0.2,1.3)--(0.2,-0.7);
\node  at(-0.8,0.5) {$\vdots$};
\filldraw[fill=white,line width =0.8pt] (-0.5 ,0.5) circle (0.07);
\filldraw[fill=white,line width =0.8pt] (-0.5 ,0) circle (0.07);
\filldraw[fill=white,line width =0.8pt] (-0.5 ,-0.2) circle (0.07);
\end{tikzpicture}}
   = 
   \mathbbm{a} \sum_{\sigma \in \fS_n} (-q)^{-\ell(\sigma)}\,  \raisebox{-.30in}{
\begin{tikzpicture}
\tikzset{->-/.style=
{decoration={markings,mark=at position #1 with
{\arrow{latex}}},postaction={decorate}}}
\filldraw[draw=white,fill=gray!20] (-1,-0.7) rectangle (0.2,1.3);
\draw [line width =1pt](-1,1)--(0.2,1);
\draw [line width =1pt](-1,0)--(0.2,0);
\draw [line width =1pt](-1,-0.4)--(0.2,-0.4);
\draw [line width =1.5pt,decoration={markings, mark=at position 1 with {\arrow{>}}},postaction={decorate}](0.2,1.3)--(0.2,-0.7);
\node  at(-0.8,0.5) {$\vdots$};
\filldraw[fill=white,line width =0.8pt] (-0.5 ,1) circle (0.07);
\filldraw[fill=white,line width =0.8pt] (-0.5 ,0) circle (0.07);
\filldraw[fill=white,line width =0.8pt] (-0.5 ,-0.4) circle (0.07);
\node [right] at(0.2,1) {$\sigma(n)$};
\node [right] at(0.2,0) {$\sigma(2)$};
\node [right] at(0.2,-0.4){$\sigma(1)$};
\end{tikzpicture}},
\eeq
\beq \label{wzh.six}
\raisebox{-.20in}{
\begin{tikzpicture}
\tikzset{->-/.style=
{decoration={markings,mark=at position #1 with
{\arrow{latex}}},postaction={decorate}}}
\filldraw[draw=white,fill=gray!20] (-0.7,-0.7) rectangle (0,0.7);
\draw [line width =1.5pt,decoration={markings, mark=at position 1 with {\arrow{>}}},postaction={decorate}](0,0.7)--(0,-0.7);
\draw [color = black, line width =1pt] (0 ,0.3) arc (90:270:0.5 and 0.3);
\node [right]  at(0,0.3) {$i$};
\node [right] at(0,-0.3){$j$};
\filldraw[fill=white,line width =0.8pt] (-0.5 ,0) circle (0.07);
\end{tikzpicture}}   = \delta_{\bar j,i }\,  \mathbbm{c}_{i} \raisebox{-.20in}{
\begin{tikzpicture}
\tikzset{->-/.style=
{decoration={markings,mark=at position #1 with
{\arrow{latex}}},postaction={decorate}}}
\filldraw[draw=white,fill=gray!20] (-0.7,-0.7) rectangle (0,0.7);
\draw [line width =1.5pt](0,0.7)--(0,-0.7);
\end{tikzpicture}},
\eeq
\beq \label{wzh.seven}
\raisebox{-.20in}{
\begin{tikzpicture}
\tikzset{->-/.style=
{decoration={markings,mark=at position #1 with
{\arrow{latex}}},postaction={decorate}}}
\filldraw[draw=white,fill=gray!20] (-0.7,-0.7) rectangle (0,0.7);
\draw [line width =1.5pt](0,0.7)--(0,-0.7);
\draw [color = black, line width =1pt] (-0.7 ,-0.3) arc (-90:90:0.5 and 0.3);
\filldraw[fill=white,line width =0.8pt] (-0.55 ,0.26) circle (0.07);
\end{tikzpicture}}
= \sum_{i=1}^n  (\mathbbm{c}_{\bar i})^{-1}\, \raisebox{-.20in}{
\begin{tikzpicture}
\tikzset{->-/.style=
{decoration={markings,mark=at position #1 with
{\arrow{latex}}},postaction={decorate}}}
\filldraw[draw=white,fill=gray!20] (-0.7,-0.7) rectangle (0,0.7);
\draw [line width =1.5pt,decoration={markings, mark=at position 1 with {\arrow{>}}},postaction={decorate}](0,0.7)--(0,-0.7);
\draw [line width =1pt](-0.7,0.3)--(0,0.3);
\draw [line width =1pt](-0.7,-0.3)--(0,-0.3);
\filldraw[fill=white,line width =0.8pt] (-0.3 ,0.3) circle (0.07);
\filldraw[fill=black,line width =0.8pt] (-0.3 ,-0.3) circle (0.07);
\node [right]  at(0,0.3) {$i$};
\node [right]  at(0,-0.3) {$\bar{i}$};
\end{tikzpicture}},
\eeq
\beq\label{wzh.eight}
\raisebox{-.20in}{

\begin{tikzpicture}
\tikzset{->-/.style=

{decoration={markings,mark=at position #1 with

{\arrow{latex}}},postaction={decorate}}}
\filldraw[draw=white,fill=gray!20] (-0,-0.2) rectangle (1, 1.2);
\draw [line width =1.5pt,decoration={markings, mark=at position 1 with {\arrow{>}}},postaction={decorate}](1,1.2)--(1,-0.2);
\draw [line width =1pt](0.6,0.6)--(1,1);
\draw [line width =1pt](0.6,0.4)--(1,0);
\draw[line width =1pt] (0,0)--(0.4,0.4);
\draw[line width =1pt] (0,1)--(0.4,0.6);
\draw[line width =1pt] (0.4,0.6)--(0.6,0.4);
\filldraw[fill=white,line width =0.8pt] (0.2 ,0.2) circle (0.07);
\filldraw[fill=white,line width =0.8pt] (0.2 ,0.8) circle (0.07);
\node [right]  at(1,1) {$i$};
\node [right]  at(1,0) {$j$};
\end{tikzpicture}
} =q^{\frac{1}{n}}\left(\delta_{{j<i} }(q^{-1}-q)\raisebox{-.20in}{

\begin{tikzpicture}
\tikzset{->-/.style=

{decoration={markings,mark=at position #1 with

{\arrow{latex}}},postaction={decorate}}}
\filldraw[draw=white,fill=gray!20] (-0,-0.2) rectangle (1, 1.2);
\draw [line width =1.5pt,decoration={markings, mark=at position 1 with {\arrow{>}}},postaction={decorate}](1,1.2)--(1,-0.2);
\draw [line width =1pt](0,0.8)--(1,0.8);
\draw [line width =1pt](0,0.2)--(1,0.2);
\filldraw[fill=white,line width =0.8pt] (0.2 ,0.8) circle (0.07);
\filldraw[fill=white,line width =0.8pt] (0.2 ,0.2) circle (0.07);
\node [right]  at(1,0.8) {$i$};
\node [right]  at(1,0.2) {$j$};
\end{tikzpicture}
}+q^{\delta_{i,j}}\raisebox{-.20in}{

\begin{tikzpicture}
\tikzset{->-/.style=

{decoration={markings,mark=at position #1 with

{\arrow{latex}}},postaction={decorate}}}
\filldraw[draw=white,fill=gray!20] (-0,-0.2) rectangle (1, 1.2);
\draw [line width =1.5pt,decoration={markings, mark=at position 1 with {\arrow{>}}},postaction={decorate}](1,1.2)--(1,-0.2);
\draw [line width =1pt](0,0.8)--(1,0.8);
\draw [line width =1pt](0,0.2)--(1,0.2);
\filldraw[fill=white,line width =0.8pt] (0.2 ,0.8) circle (0.07);
\filldraw[fill=white,line width =0.8pt] (0.2 ,0.2) circle (0.07);
\node [right]  at(1,0.8) {$j$};
\node [right]  at(1,0.2) {$i$};
\end{tikzpicture}
}\right),
\eeq
where   
$\delta_{j<i}= 
\begin{cases}
1  & j<i\\
0 & \text{otherwise}
\end{cases},\ 
\delta_{i,j}= 
\begin{cases} 
1  & i=j\\
0  & \text{otherwise}
\end{cases}$, and small white dots represent an arbitrary orientation of the edges (left-to-right or right-to-left), consistent for the entire equation. The black dot represents the opposite orientation. When a boundary edge of a shaded area is directed, the direction indicates the height order of the endpoints of the diagrams on that directed line, where going along the direction increases the height, and the involved endpoints are consecutive in the height order. The height order outside the drawn part can be arbitrary.

Our parameter $\omega^{\frac{1}{2}}$ corresponds to $\hat q^{-1}$ in \cite{LY23} (our setting fits well with the quantum cluster theory). 
Note that $R$ carries a natural $\mathbb{Z}[\omega^{\pm \frac{1}{2}}]$-module structure, making it a 
$\mathbb{Z}[\omega^{\pm \frac{1}{2}}]$-algebra. 
For the remainder of this paper, we will assume $R = \mathbb{Z}[\omega^{\pm \frac{1}{2}}]$. 
All results remain valid for a general $R$ by applying the functor 
$-\otimes_{\mathbb{Z}[\omega^{\pm \frac{1}{2}}]} R$.

 \def \Sv{\cS_n(\Si,\mathbbm{v})}

 The algebra structure for $\cS_{\omega}(\fS)$ is given by stacking the stated $n$-webs, i.e. for any two stated $n$-webs $\alpha,\alpha'
 \subset\fS\times(-1,1)$, the product $\alpha\alpha'$ is defined by stacking $\alpha$ above $\alpha'$. That is, if $\alpha \subset \fS\times(0,1)$ and $\alpha' \subset \fS\times (-1,0)$, we have $\alpha \alpha' = \alpha \cup \alpha'$.

For a boundary puncture $p$ of a pb surface $\fS$, 
corner arcs $C(p)_{ij}$ and $\overline{C}(p)_{ij}$ are stated arcs depicted as in Figure \ref{Fig;badarc}.
For a boundary puncture $p$ which is not on a $\mathbb P_1$ component of $\fS$, set 
$$C_p=\{C(p)_{ij}\mid i<j\},\quad\overline{C}_p=\{\overline{C}(p)_{ij}\mid i<j\}.$$  
Each element of $C_p\cup \overline{C}_p$ is called a \emph{bad arc} at $p$. 
\begin{figure}[h]
    \centering
    \includegraphics[width=150pt]{badarc.pdf}
    \caption{The left is $C(p)_{ij}$ and the right is $\overline{C}(p)_{ij}$.}\label{Fig;badarc}
\end{figure}

For a pb surface $\fS$,  $$\overline \cS_{\omega}(\fS) = \cS_{\omega}(\fS)/I^{\text{bad}}$$ 
is called the \textbf{reduced stated $\SL$-skein algebra}, defined in \cite{LY23}, where $I^{\text{bad}}$ is the two-sided ideal of $\cS_{\omega}(\fS)$ generated by all bad arcs.

\def\al{\alpha}

\subsection{The splitting homomorphism}\label{sub-splitting}
\def\cut{\mathsf{Cut}}
\def\pr{{\bf pr}}

Let $e$ be an interior ideal arc of a pb surface $\fS$ such that it is contained in the interior of $\fS$. After cutting $\fS$ along $e$, we get a new pb surface $\cut_e(\fS)$, which has two copies $e_1,e_2$ for $c$ such that 
${\fS}= \cut_e(\fS)/(e_1=e_2)$. We use $\pr_e$ to denote the projection from $\cut_e(\fS)$ to $\fS$.  Suppose that $\alpha$ is  a stated $n$-web diagram in $\fS$, which is transverse to $e$.
Let $s$ be a map from $e\cap\alpha$ to $\{1,2,\cdots,n\}$, and let $h$ be a linear order on $e\cap\alpha$. Then there is a lift diagram $\alpha(h,s)$ for a stated $n$-web in $\cut_e(\fS)$. 
 For $i=1,2$, the heights of the endpoints of $\alpha(h,s)$ on $e_i$ are induced by $h$ (via $\pr_e$), and the states of the endpoints of $\alpha(h,s)$ on $e_i$ are induced by $s$ (via $\pr_e$).
Then the splitting map 
$$
\mathbb{S}_e : \mathscr{S}_\omega(\fS) \to \mathscr{S}_\omega(\cut_e(\fS))
$$
is defined by 
\begin{align}\label{eq-def-splitting}
    \mathbb S_e(\alpha) =\sum_{s\colon \alpha \cap e \to \{1,\cdots, n\}} \alpha(h, s). 
\end{align}
Furthermore $\mathbb S_e$ is an $R$-algebra homomorphism \cite{LS21}.

It is well-known that \cite{LS21}
\begin{align}\label{com-splitting-skein}
    \mathbb S_{e_1}\circ \mathbb S_{e_2}
    = \mathbb S_{e_2}\circ \mathbb S_{e_1}
\end{align}
for any two disjoint interior ideal arcs $e_1,e_2$ of $\fS$. 

We can observe that $\mathbb S_e$ sends bad arcs to bad arcs.
So it induces an algebra homomorphism from $\rdS$
to $\rS(\cut_e(\fS))$, which is still denoted as $\mathbb S_e$.
When there is no confusion we will omit the subscript $e$ 
from $\mathbb S_e$.

\def\bT{\mathbb T}

\def\bZ{\mathbb Z}

\subsection{Reflection}\label{sub-sec-invariant}
Recall that $R = \bZ[\omega^{\pm\frac{1}{2}}]$. 
An $R$-algebra with reflection is
an $R$-algebra $A$ equipped with a $\mathbb Z$-linear anti-involution ${\bf Rf}$, called the reflection, such that ${\bf Rf}(\omega^{\frac{1}{2}}) = \omega^{-\frac{1}{2}}$. In other words, ${\bf Rf}\colon A\rightarrow A$ is a $\mathbb Z$-linear map such that for all
$x,y\in A$,
$${\bf Rf}(xy) = {\bf Rf}(y){\bf Rf}(x),\quad
{\bf Rf}(\omega^{\frac{1}{2}}x)
=\omega^{-\frac{1}{2}}x,\quad
{\bf Rf}^2={\rm Id}_A.$$
An element $z\in A$ is called reflection invariant if ${\bf Rf}(z)=z$. If $B$ is another $R$-algebra
with reflection ${\bf Rf}'$, then a map $f\colon A\rightarrow B$ is {\bf reflection invariant} if $f\circ {\bf Rf} = {\bf Rf}'\circ f$.

For a pb surface $\fS$, 
it is known that there is a unique reflection $$* \colon \cS_{\omega} (\fS)\to \cS_{\omega} (\fS)$$ such that, for a stated n-web diagram $\alpha$, $*(\alpha)$ is defined from $\alpha$ by switching all the crossings and reversing the height order on each boundary edge \cite[Theorem 4.9]{LS21}. 

A stated $n$-web diagram $\alpha$ in a pb surface $\fS$ is \textbf{reflection-normalizable} if $*(\alpha) = \omega^{k}\alpha$ for $k \in \bZ$. Note that such $k$ is unique if each connected component of $\fS$ contains non-empty boundaries \cite{LS21}. We define the \textbf{reflection-normalization} of $\alpha$ by
$$[\alpha]_{\rm norm} := \omega^{\frac{k}{2}}\alpha.$$
Then we have $*([\alpha]_{\rm norm}) = [\alpha]_{\rm norm}$, i.e., $[\alpha]_{\rm norm}$ is reflection invariant. 

The following lemma holds since the linear order $h$ in \eqref{eq-def-splitting} may be chosen arbitrarily. 

\begin{lemma}\label{lem-reflection}
    Let $\fS$ be a pb surface with an ideal arc $e$. Then the splitting map 
    $\mathbb S_e\colon \cS_{\omega}(\fS)\rightarrow \cS_{\omega} (\mathsf{Cut}_e(\fS))$ is reflection invariant.
\end{lemma}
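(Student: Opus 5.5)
The claim is that $\mathbb S_e\circ * = *\circ\mathbb S_e$ on $\cS_{\omega}(\fS)$. I will prove it by checking both sides on a single stated $n$-web diagram $\alpha$. This suffices because both composites are $\mathbb Z$-linear and send $\omega^{\frac12}\mapsto\omega^{-\frac12}$, and the stated $n$-web diagrams span $\cS_\omega(\fS)$ over $R$. Note that $\mathbb S_e$ is $R$-linear while $*$ is only antilinear, so both composites are antilinear; it is therefore enough to compare them on diagrams.

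First I isotope $\alpha$ so that it is transverse to $e$ and in vertical position. Fix a linear order $h$ on $e\cap\alpha$. By \eqref{eq-def-splitting},
\[
\mathbb S_e(\alpha)=\sum_{s}\alpha(h,s).
\]
Applying $*$ termwise, each lift $*(\alpha(h,s))$ is obtained from $\alpha(h,s)$ by switching all crossings and reversing the height order on every boundary edge of $\cut_e(\fS)$. This includes the two new edges $e_1,e_2$, on which the order is now the reverse order $h^{\mathrm{op}}$. On the other side, $*(\alpha)$ is the diagram with the same underlying projection, all crossings switched, and height orders reversed on the original boundary edges. Splitting $*(\alpha)$ with the choice of linear order $h^{\mathrm{op}}$ on $e\cap *(\alpha)$ gives $\sum_s (*\alpha)(h^{\mathrm{op}},s)$. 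Matching these two sums term by term shows that $(*\alpha)(h^{\mathrm{op}},s)=*(\alpha(h,s))$ as diagrams: the crossings agree, the orders on the old boundary edges agree, and the orders on $e_1,e_2$ agree. Both sums range over the same set of states $s$, so they are equal.

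The one nontrivial input is that $\mathbb S_e$ is well defined independently of the choice of linear order $h$ on $e\cap\alpha$ (\cite{LS21}). This is exactly what allows me to use $h^{\mathrm{op}}$ when splitting $*(\alpha)$ and still compute $\mathbb S_e(*(\alpha))$. I also have to make sure that the lifted diagram is read with $e_1,e_2$ carrying the induced heights, so that reversing $h$ corresponds precisely to $*$ reversing the height order on those new edges.

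The main point needing care is the bookkeeping just described: the reflection on $\cut_e(\fS)$ reverses orders on the new boundary edges $e_1,e_2$ as well as the old ones. Because $h$ may be chosen arbitrarily, this is harmless, and the identity follows.
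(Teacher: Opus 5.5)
Your proof is correct and follows the paper's argument. The paper justifies the lemma in one line: the linear order $h$ in \eqref{eq-def-splitting} may be chosen arbitrarily. That is exactly your choice of $h^{\mathrm{op}}$ to match $*$ reversing the height order on the new edges $e_1,e_2$; you have written out the bookkeeping (the reduction to diagrams by antilinearity, and the termwise comparison of lifts) that the paper leaves implicit.
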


\subsection{Quantum torus and its $n$-th root of version}
Let $\mathcal V$ be a finite set, and $Q$ be an anti-symmetric matrix $Q\colon \mathcal V\times \mathcal V\rightarrow \frac{1}{2}\mathbb Z$.
Define the quantum torus
\begin{equation*}
\bT_q(Q) = R \langle 
X_v^{\pm 1}, v \in \mathcal V \rangle / (
X_v 
X_{v'}= q^{\, 2 Q(v,v')} 
X_{v'} 
X_v \text{ for } v,v'\in\mathcal  V),
\end{equation*}
and its $n$-th root of version as follows:
\begin{equation*}
\bT_{\omega}(Q) = R \langle 
Z_v^{\pm 1}, v \in \mathcal  V \rangle / (
Z_v 
Z_{v'}= \omega^{\, 2 Q(v,v')} 
Z_{v'} 
Z_v \text{ for } v,v'\in \mathcal  V ).
\end{equation*}
There is an $R$-algebra embedding from $\bT_q(Q)$
to $\bT_\omega(Q)$ defined by $X_v\mapsto Z_v^n$
for $v\in\mathcal  V$.

A useful notion is the Weyl-ordered product, which is a certain normalization of a Laurent monomial defined as follows: for any $v_1,\ldots,v_r \in\mathcal  V$ and $a_1,\ldots,a_r \in \mathbb{Z}$,
\begin{align}
\label{Weyl_ordering-X-1}
\left[ X_{v_1}^{a_1} X_{v_2}^{a_2} \cdots X_{v_r}^{a_r} \right] := q^{-\sum_{i<j} a_i a_j Q(v_i,v_j)} X_{v_1}^{a_1} X_{v_2}^{a_2} \cdots X_{v_r}^{a_r}\\
\label{Weyl_ordering-Z-1}
\left[ Z_{v_1}^{a_1} Z_{v_2}^{a_2} \cdots Z_{v_r}^{a_r} \right] := \omega^{-\sum_{i<j} a_i a_j Q(v_i,v_j)} Z_{v_1}^{a_1} Z_{v_2}^{a_2} \cdots Z_{v_r}^{a_r}.
\end{align}
In particular, for ${\bf t} = (t_v)_{v\in \mathcal{V}} \in \mathbb{Z}^{\mathcal  V}$, the following notation for the corresponding Weyl-ordered Laurent monomial will become handy:
\begin{align}\label{wey-normal-monomial-XZ}
    X^{\bf t} := \left[ \prod_{v\in \mathcal{V}} X_v^{t_v}\right],\quad
Z^{\bf t} := \left[ \prod_{v\in \mathcal{V}} Z_v^{t_v}\right].
\end{align}
Due to the property of Weyl-ordered products, note that these elements are independent of the choice of the order of the product inside the bracket.
The element $X^{\bf t}$ (or $Z^{\bf t}$) is called the {\bf (Laurent) monomial}. It is well-known that $\{X^{\bf t}\mid {\bf t}\in \mathbb Z^{\mathcal  V}\}$ (resp.
$\{Z^{\bf t}\mid {\bf t}\in \mathbb Z^{\mathcal  V}\}$) is an $R$ basis of $\bT_{q}(Q)$ (resp.
$\bT_{\omega}(Q)$), called the {\bf monomial basis}.

For any ${\bf k},{\bf t}\in\mathbb Z^{\mathcal V}$, it is straightforward to verify that
\begin{align*}
    X^{\bf k} X^{\bf t} = q^{2{\bf k}Q{\bf t}^T} X^{\bf t} X^{\bf k} \text{ and }
Z^{\bf k} Z^{\bf t} = \omega^{2{\bf k}Q{\bf t}^T} Z^{\bf t} Z^{\bf k},
\end{align*}
where ${\bf k}$ and ${\bf t}$ are regarded as row vectors. 

There is a reflection
\begin{align}\label{def-eq-ref-torus}
    * \colon \bT_{\omega}(Q)\longrightarrow \bT_{\omega}(Q), \qquad
    Z_v \longmapsto Z_v \quad (v\in \mathcal  V).
\end{align}
It is straightforward to verify that 
$*(Z^{\mathbf k}) = Z^{\mathbf k}$ for all $\mathbf k \in \mathbb Z^{\mathcal  V}$.

\def\bk{{\bf k}}

Let $\mathcal C$ be a subgroup of $\mathbb Z^{\mathcal  V}$. Define 
\begin{align}
    \label{submonoid}
    \bT_\omega(Q;\mathcal C) = \text{span}_R\{Z^\bk\mid \bk\in \mathcal C\}\subset
\bT_\omega(Q).
\end{align}
Then $\bT_\omega(Q;\mathcal C)$ is a subalgebra of $\bT_\omega(Q)$, called the {\bf group subalgebra of $\bT_\omega(Q)$}.

The following lemma will be used in \S\ref{sec-skein-cluster}.

\begin{lemma}\label{lem-frac-subalgebra}
Let $\bT_\omega(Q;\mathcal C)$ be a group subalgebra of $\bT_\omega(Q)$.
Suppose that $X \in \bT_\omega(Q)$ and $0 \neq P \in \bT_{\omega}(Q;\mathcal C)$ satisfy 
$XP \in \bT_{\omega}(Q;\mathcal C)$. Then $X \in \bT_{\omega}(Q;\mathcal C)$.
\end{lemma}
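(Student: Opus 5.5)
Write $X$ in the monomial basis of $\bT_\omega(Q)$ and split it into a part supported in $\mathcal C$ and a part supported outside it. I will then show that the second part must vanish, by looking at cosets of $\mathcal C$ in $\mathbb Z^{\mathcal V}$.

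Concretely, since $\{Z^{\bf k}\mid {\bf k}\in\mathbb Z^{\mathcal V}\}$ is an $R$-basis of $\bT_\omega(Q)$, I write
\[
X=\sum_{\gamma\in \mathbb Z^{\mathcal V}/\mathcal C} X_\gamma,\qquad X_\gamma=\sum_{{\bf k}\in\gamma} x_{\bf k}Z^{\bf k},
\]
a finite sum over cosets $\gamma$ of $\mathcal C$. The basic fact I need is that $Z^{\bf k}Z^{\bf t}=\omega^{m}Z^{{\bf k}+{\bf t}}$ for some $m\in\frac12\mathbb Z$; this follows directly from the definition of the Weyl-ordered product. Since $P$ is supported in $\mathcal C$, each $X_\gamma P$ is then supported in the coset $\gamma$. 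The cosets are disjoint and the monomials form a basis, so the decomposition of $XP$ by cosets is exactly $XP=\sum_\gamma X_\gamma P$ with $X_\gamma P$ the $\gamma$-component. By hypothesis $XP\in\bT_\omega(Q;\mathcal C)$, so $X_\gamma P=0$ for every $\gamma\neq\mathcal C$.

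It remains to conclude $X_\gamma=0$ from $X_\gamma P=0$ with $P\neq 0$, i.e.\ that $\bT_\omega(Q)$ has no zero divisors. This is the step that needs an argument, though it is standard. I would fix a total order on $\mathbb Z^{\mathcal V}$ compatible with addition, for instance the lexicographic order. Take the leading terms $aZ^{\bf k}$ of $X_\gamma$ and $bZ^{\bf t}$ of $P$. The leading term of the product is then $ab\,\omega^{m}Z^{{\bf k}+{\bf t}}$, because any other pair of monomials has strictly smaller exponent sum. Since $R=\mathbb Z[\omega^{\pm\frac12}]$ is an integral domain and $\omega^m$ is a unit, $ab\,\omega^m\neq 0$, so $X_\gamma P\neq 0$ whenever $X_\gamma\neq 0$. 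Hence $X_\gamma=0$ for all $\gamma\neq\mathcal C$, and $X=X_{\mathcal C}\in\bT_\omega(Q;\mathcal C)$.

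If $R$ is replaced by a general ring via base change, this leading-term step needs $R$ to be a domain. Under the paper's standing convention $R=\mathbb Z[\omega^{\pm\frac12}]$, that holds.
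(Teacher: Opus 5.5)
Your proof is correct and rests on the same key fact as the paper's proof: leading exponents add under the lexicographic order, because $R$ is a domain. The paper applies this directly to the non-$\mathcal C$ part $X''$ of $X$ (its leading exponent lies outside $\mathcal C$, hence so does that of $X''P$), whereas you first use the $\mathbb Z^{\mathcal V}/\mathcal C$-grading to reduce to the absence of zero divisors, which is only a mild reorganization of the same argument.
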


\begin{proof}
Choose a linear order on $\mathcal V$, and let $\leq$ denote the induced lexicographic order on $\mathbb Z^{\mathcal  V}$.  
For any nonzero element
$\sum_{1 \leq i \leq m} r_i Z^{{\bf k}_i} \in \bT_\omega(Q),$
with $0 \neq r_i \in R$ and distinct ${\bf k}_i \in \mathbb Z^{\mathcal V}$,  
assume ${\bf k}_i < {\bf k}_1$ for $2 \leq i \leq m$.  
Define 
\[
\deg\!\left(\sum_{1 \leq i \leq m} r_i Z^{{\bf k}_i}\right) = {\bf k}_1.
\]
It is straightforward to verify that for any two nonzero elements 
$X_1, X_2 \in \bT_\omega(Q)$ one has
\[
\deg(X_1 X_2) = \deg(X_1) + \deg(X_2).
\]

Now suppose 
\(
X = X' + \sum_{1 \leq i \leq l} r_i Z^{{\bf k}_i},
\)
where $X' \in \bT_{\omega}(Q;\mathcal C)$,  
$0 \neq r_i \in R$, and ${\bf k}_i \in \mathbb Z^{V} \setminus \mathcal C$ are distinct ($1 \leq i \leq l$).  
Note that $l$ could be $0$.  
By assumption, 
\[
\Bigl(\sum_{1 \leq i \leq l} r_i Z^{{\bf k}_i}\Bigr) P \in \bT_{\omega}(Q;\mathcal C).
\]
To conclude the proof, we need to show that $l=0$, which would imply $X=X' \in \bT_{\omega}(Q;\mathcal C)$.  

Assume, on the contrary, that $l>0$.  
Set 
\(
X'' = \sum_{1 \leq i \leq l} r_i Z^{{\bf k}_i}.
\)
Then 
\[
\deg(X''P) = \deg(X'') + \deg(P).
\]
Since $\deg(X'') \in \mathbb Z^{V} \setminus \mathcal C$ and $\deg(P) \in \mathcal C$, it follows that 
\[
\deg(X''P) \in \mathbb Z^{V} \setminus \mathcal C.
\]
This contradicts the assumption that $X'' P \in \bT_{\omega}(Q;\mathcal C)$.  
Hence $l=0$, and the claim follows.
\end{proof}

\subsection{The $\mathcal X$-version quantum trace map}
\label{sec-traceX}

\begin{figure}[h]
    \centering
    \includegraphics[width=240pt]{coord.pdf}
    \caption{Barycentric coordinates $ijk$ and a $4$-triangulation with its quiver.}\label{Fig;coord_ijk}
\end{figure}

In this subsection we review the $\mathcal X$-version quantum trace map defined in \cite{LY23}, which is an algebra homomorphism from the reduced stated $\SL$-skein algebra of a triangulable pb surface $\fS$ to a quantum torus algebra associated to a special quiver built from any choice of an `(ideal) triangulation' of $\fS$.

To begin with, consider barycentric coordinates for an ideal triangle $\bP_3$ so that
\begin{equation*}
\bP_3=\{(i,j,k)\in\bR^3\mid i,j,k\geq 0,\ i+j+k=n\}\setminus\{(0,0,n),(0,n,0),(n,0,0)\}, 
\end{equation*}
where $(i,j,k)$ (or $ijk$ for simplicity) are the barycentric coordinates. 
Let $v_1=(n,0,0)$, $v_2=(0,n,0)$, $v_3=(0,0,n)$. 
Let $e_i$ denote the edge on $\partial \bP_3$ whose endpoints are $v_i$ and $v_{i+1}$. 
See Figure \ref{Fig;coord_ijk}.

The \textbf{$n$-triangulation} of $\bP_3$ is the subdivision of $\bP_3$ into $n^2$ small triangles using lines $i,j,k=\text{constant integers}$. 
For the $n$-triangulation of $\bP_3$, the vertices and edges of all small triangles, except for $v_1,v_2,v_3$ and the small edges adjacent to them, form a quiver $\Gamma_{\bP_3}$.
An \textbf{arrow} is the direction of a small edge defined as follows. If a small edge $e$ is in the boundary $\partial\bP_3$ then $e$ has the clockwise direction of $\partial \bP_3$. If $e$ is interior then its direction is the same with that of a boundary edge of $\bP_3$ parallel to $e$. Assign weight $\frac{1}{2}$ to any boundary arrow and weight $1$ to any interior arrow.

\def\bZ{\mathbb Z}

Let $V_{\bP_3}$ be the set of all
points with integer barycentric coordinates of $\bP_3$:
\begin{align}\label{def-V-P3}
V_{\bP_3} = \{ijk \in \bP_3 \mid i, j, k \in \bZ\}.
\end{align}
Note that $V_{\bP_3}$ does not contain
$v_1$, $v_2$, or $v_3$.
Elements of $V_{\bP_3}$ are called \textbf{small vertices}, and small vertices on the boundary of $\bP_3$ are called the \textbf{edge vertices}. 

A pb surface $\fS$ is said to be {\bf triangulable} if no connected component of $\fS$ is of the following types:  
\begin{enumerate}[label={\rm (\arabic*)}]
    \item the monogon $\mathbb{P}_1$,  
    \item the bigon $\mathbb{P}_2$,  
    \item a sphere with one or two punctures,  
    \item a closed surface (i.e., a pb surface without punctures).  
\end{enumerate}

A {\bf triangulation} $\lambda$ of $\fS$ is a collection of disjoint ideal arcs in $\fS$ with the following properties: (1) any two arcs in $\lambda$ are not isotopic; (2) $\lambda$ is maximal under condition (1); (3) every puncture is adjacent to at least two ideal arcs.
Our definition of triangulation excludes the so-called self-folded triangles.
We will call each ideal arc in $\lambda$ an {\bf edge} of $\lambda$.
If an edge is isotopic to a boundary component of $\fS$, we call such an edge a {\bf boundary edge}.
We use $\mathbb F_{\lambda}$ to denote the set of faces after we cut $\fS$ along all ideal arcs in $\lambda$.
It is well-known that any triangulable surface admits a triangulation.

Suppose that $\fS$ is a triangulable pb surface with a triangulation $\lambda$.
By cutting $\fS$ along all edges not isotopic to a boundary edge, we have a disjoint union of ideal triangles. Each triangle is called a \textbf{face} of $\lambda$. Then
\begin{equation}\label{eq.glue}
\fS = \Big( \bigsqcup_{\tau\in\mathbb F_\lambda} \tau \Big) /\sim,
\end{equation}
where each face $\tau$ is regarded as a copy of $\bP_3$, and $\sim$ is the identification of edges of the faces to recover $\lambda$. 
Each face $\tau$ is characterized by a \textbf{characteristic map} 
\begin{align}\label{eq-character-map}
    f_\tau\colon \mathbb P_3 \to \fS,
\end{align}
which is a homeomorphism when we restrict $f_\tau$ to $\Int\mathbb P_3$ or the interior of each edge of $\mathbb P_3$.

An \textbf{$n$-triangulation} of $\lambda$ is a collection of $n$-triangulations of the faces $\tau$ which are compatible with the gluing $\sim$,  where compatibility means, for any edges $b$ and $b'$ glued via $\sim$, the vertices on $b$ and $b'$ are identified. Define
$$V_\lambda=\bigcup_{\tau\in\mathbb F_\lambda} V_\tau, \quad V_\tau=f_\tau(V_{\mathbb P_3}).$$
The images of the weighted quivers $\Gamma_{\mathbb P_3}$ by $f_\tau$ form a quiver $\Gamma_\lambda$ on $\fS$.
Note that when edges $b$ and $b'$ are glued, a small edge on $b$ and the corresponding small edge of $b'$ have opposite directions, i.e. the resulting arrows are of weight $0$. 

For any two $v,v'\in V_\lambda$, define
$$
a_\lambda(v,v') = \begin{cases} w \quad & \text{if there is an arrow from $v$ to $v'$ of weight $w$},\\
0 &\text{if there is no arrow between $v$ and $v'$.} 
\end{cases}$$
Let $Q_\lambda\colon V_\lambda\times V_\lambda \to \frac{1}{2}\bZ$ be the signed adjacency matrix of the weighted quiver $\Gamma_\lambda$ defined by 
\begin{equation}\label{eq-def-Q-lambda-re}
Q_\lambda(v,v') = a_\lambda(v,v') - a_\lambda(v',v).
\end{equation}
Especially we use $Q_{\bP_3}$ to denote $Q_\lambda$ when $\fS=\bP_3$.

\begin{remark}\label{rem-Q-LY}
    Note that the arrows in the quiver of Figure~\ref{Fig;coord_ijk} are oriented oppositely to those in \cite[Figure~10]{LY23}, 
and that $Q_\lambda$ in this paper equals 
$-\tfrac{1}{2}\,\overline{\mathsf{Q}}_\lambda$ as defined in \cite{LY23}.  
We adopt this modification because, as will be seen in \S\ref{sec-seed-structure}, our setting aligns more naturally with the framework of quantum cluster theory.
\end{remark}

\def\bT{\mathbb T}

\def\tr{{\rm tr}_{\lambda}}
\def\X{\mathcal X_{\omega}(\fS,\lambda)}
\def\Y{\mathcal Y_{q}(\fS,\lambda)}
\def\bX{\mathcal Z_{\omega}^{\rm bl}(\fS,\lambda)}
\def\bk{{\bf k}}
\def\V{V_\lambda}

The \textbf{Fock-Goncharov algebra} is the quantum torus algebra defined as follows:
\begin{equation}
\mathcal{X}_{q}(\fS,\lambda)= \bT_q(Q_\lambda) = R \langle 
X_v^{\pm 1}, v \in V_\lambda \rangle / (
X_v 
X_{v'}= q^{\, 2 Q_\lambda(v,v')} 
X_{v'} 
X_v \text{ for } v,v'\in V_\lambda ).
\end{equation}
The \textbf{$n$-th root Fock-Goncharov algebra} is the quantum torus algebra defined as follows:
\begin{equation}
\mathcal{Z}_{\omega}(\fS,\lambda)= \bT_{\omega}(Q_\lambda) = R \langle 
Z_v^{\pm 1}, v \in V_\lambda \rangle / (
Z_v 
Z_{v'}= \omega^{\, 2 Q_\lambda(v,v')} 
Z_{v'} 
Z_v \text{ for } v,v'\in V_\lambda ).
\end{equation}

Recall that $w= q^{n^2}$. There is an algebra embedding
from 
$\mathcal{X}_q(\fS,\lambda)$ to 
$\mathcal{Z}_\omega(\fS,\lambda)$ given by 
$$X_v \mapsto Z_v^n$$
for $v\in \V$.
We will regard 
$\mathcal{X}_q(\fS,\lambda)$ as a subalgebra of 
$\mathcal{Z}_\omega(\fS,\lambda)$.

Let ${\bf k}_i\colon V_{\bP_3} \to\bZ\ (i=1,2,3)$ be the functions defined by
\begin{equation}
{\bf k}_1(ijk)=i,\quad {\bf k}_2(ijk)=j,\quad {\bf k}_3(ijk)=k.\label{def:proj}
\end{equation} 
Let $\mathcal B_{\bP_3}\subset\bZ^{V_{\bP_3}}$ be the subgroup generated by ${\bf k}_1,{\bf k}_2,{\bf k}_3$ and $(n\bZ)^{V_{\bP_3}}$. Elements in $\mathcal{B}_{\bP_3}$ are called \textbf{balanced}.

A vector $\bk\in\bZ^{\V}$ is \textbf{balanced} if its pullback ${\bf k}_\tau:=f_\tau^\ast\bk$ to ${\bP_3}$ is balanced for every face of $\lambda$, where for every face $\tau$ and its characteristic map  $f_\tau\colon\mathbb P_3\to\fS$, the pullback $f_\tau^\ast\bk$ is a vector $\V\to\bZ$ given by $f_\tau^\ast\bk(v)=\bk(f_\tau(v))$. 
Let 
\begin{align}
    \label{B_lambda}
    \mathcal{B}_\lambda = \mbox{the subgroup of $\mathbb{Z}^{V_\lambda}$ generated by all the balanced vectors.}
\end{align}

\def\lt{\text{lt}}

The \textbf{balanced Fock-Goncharov algebra} \cite{LY23} is defined as the monomial subalgebra
\begin{align}
    \label{Z_bl_omega}
\bX=\text{span}_R\{Z^\bk\mid \bk\in\mathcal B_\lambda\}\subset
\mathcal{Z}_\omega(\fS,\lambda),
\end{align}
where $Z^{\bf k}$ is defined as in \eqref{wey-normal-monomial-XZ}.
It is easy to verify that $
\mathcal{X}_q(\fS,\lambda) \subset \bX$, and that $\{Z^\bk\mid \bk\in\mathcal B_\lambda\}$ is an $R$-basis of $\bX$.

Suppose $v,v'\in V_\lambda$, If $v$ and $v'$ are not contained in the same boundary edge, define 
\begin{align}\label{def-H-Q-interior}
    H_\lambda(v,v')=Q_\lambda(v,v').
\end{align}
If $v$ and $v'$ are contained in the same boundary edge, define 
\begin{align}\label{eq-def-H-lambda}
    H_\lambda(v,v')=\begin{cases}
    1 & \text{when $v=v'$},\\
    -1 & \text{when there is an arrow from $v’$ to $v$},\\
    0 & \text{otherwise.}
\end{cases}
\end{align}

\begin{lemma}\cite[Proposition~11.10]{LY23}\label{lem-balanced-H}
Suppose that $\fS$ contains no interior punctures. 
    A vector ${\bf k}\in\mathbb Z^{V_\lambda}$ is balanced if and only if ${\bf k} H_\lambda\in (n\mathbb Z)^{V_\lambda}$.
\end{lemma}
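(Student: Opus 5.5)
The plan is to reduce to a single triangle and then glue along interior edges. Write $Q_\lambda=\sum_{\tau\in\mathbb F_\lambda}(f_\tau)_*Q_{\mathbb P_3}$, where the weight-$\tfrac12$ boundary arrows of two faces glued along an interior edge cancel. Let $H_{\mathbb P_3}$ be the matrix defined in \eqref{eq-def-H-lambda} for the triangle $\fS=\mathbb P_3$, where all three edges are boundary edges.

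\emph{Step 1 (the triangle, forward direction).} I will check directly that ${\bf k}_1H_{\mathbb P_3},{\bf k}_2H_{\mathbb P_3},{\bf k}_3H_{\mathbb P_3}\in(n\bZ)^{V_{\mathbb P_3}}$.
\begin{itemize}
\item At an interior small vertex $ijk$, the six neighbours come in three pairs. Each pair is parallel to one edge, and $\pm$ opposite arrows sit on the two members of a pair. So $({\bf k}_aQ)(ijk)$ is a sum of differences of a linear function along three directions, and these differences sum to $0$.
\item At an edge vertex, the convention \eqref{eq-def-H-lambda} replaces the half-weight boundary arrows by the diagonal entry $1$ and the entry $-1$. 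The same bookkeeping then leaves $\pm n$ or $0$; the value $n$ appears because $i+j+k=n$.
\end{itemize}
Since $(n\bZ)^{V}H\subset(n\bZ)^V$, this gives $\mathcal B_{\mathbb P_3}H_{\mathbb P_3}\subset (n\bZ)^{V}$.

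\emph{Step 2 (the triangle, converse).} Suppose ${\bf k}H_{\mathbb P_3}\equiv0 \pmod n$. Subtract a combination of ${\bf k}_1,{\bf k}_2,{\bf k}_3$ so that ${\bf k}$ vanishes mod $n$ at three chosen vertices near one corner. I will then propagate row by row, parallel to $e_1$. The congruence at a vertex involves its six neighbours, and the arrow at the new vertex has coefficient $\pm1$, so it expresses ${\bf k}$ at one new vertex in terms of already known values. Starting from the boundary convention along $e_1$, the edge vertices are handled the same way. This will show ${\bf k}\equiv0\pmod n$ everywhere, so ${\bf k}\in\mathcal B_{\mathbb P_3}$.

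\emph{Step 3 (gluing).} For a vertex $v$ in the interior of a face, $({\bf k}H_\lambda)(v)=({\bf k}_\tau H_{\mathbb P_3})(f_\tau^{-1}v)$. For a vertex $v$ on an interior edge shared by faces $\tau,\tau'$, the rows of $H_{\mathbb P_3}$ at $v$ include edge terms: the diagonal $1$ and the $-1$ arrow term, in each face. The glued $Q_\lambda$ has neither, so
\[
({\bf k}H_\lambda)(v)=({\bf k}_\tau H)(v)+({\bf k}_{\tau'}H)(v)-\bigl(2{\bf k}(v)-{\bf k}(v')-{\bf k}(v'')\bigr),
\]
where $v',v''$ are the neighbours of $v$ along the edge. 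Boundary edges of $\fS$ keep the convention of \eqref{eq-def-H-lambda}, so the condition at their vertices is unchanged. Interior punctures are excluded, so every vertex falls into one of these cases.
\begin{itemize}
\item Forward: balanced vectors restrict to each edge as an affine function mod $n$, since each ${\bf k}_a$ is linear along edges. Hence the second difference vanishes mod $n$, and ${\bf k}H_\lambda\in(n\bZ)^{V_\lambda}$ follows from Step 1.
\item Converse: I expect this to be the main obstacle, because the congruence at an interior-edge vertex mixes the two faces and Step 2 cannot be applied to each face separately. I would first try the propagation of Step 2 face by face. On the first face, the congruences at its interior vertices alone should force ${\bf k}_\tau$ to be balanced; this needs a variant of Step 2 that does not use the edge-vertex equations. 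Balancedness makes ${\bf k}$ affine mod $n$ on the shared edge, so the second-difference term vanishes. The edge congruence then says exactly that $({\bf k}_{\tau'}H)(v)\equiv 0$ on that edge, which is the input Step 2 needs for the adjacent face. Connectivity of the dual graph of $\lambda$ finishes the induction.
\end{itemize}
If the interior-vertex variant of Step 2 fails to pin down ${\bf k}_\tau$, I would fall back on a Smith normal form or index comparison of $H_\lambda$ modulo $n$ against $|\mathcal B_\lambda/(n\bZ)^{V_\lambda}|$.
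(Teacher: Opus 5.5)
Your forward direction is fine. Step 1 checks out, and your gluing formula at a vertex of an interior edge is correct. One small repair: a balanced vector is \emph{linear} mod $n$ along each edge, with value $\equiv 0$ at the two punctures, and you need this rather than "affine". It is what makes the second difference vanish at the two edge vertices next to $p_1,p_2$, where one neighbour is missing.

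The gap is in the converse. Step 3 needs the congruences at the interior vertices of a single face to force ${\bf k}_\tau$ to be balanced, and this is false. Those congruences are only $\binom{n-1}{2}$ conditions on $\tfrac{(n+4)(n-1)}{2}$ values. Concretely, ${\bf k}_\tau={\bf e}_{(n-1,1,0)}+{\bf e}_{(n-1,0,1)}$ satisfies all of them: the only interior vertex adjacent to its support is $(n-2,1,1)$, and there the two contributions cancel. But it is not balanced for $n\ge 3$: matching the two values forces ${\bf k}_\tau\equiv-{\bf k}_1$, which would be $2\not\equiv 0$ at $(n-2,2,0)$.

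A symptom of this is that your argument never really uses the hypothesis on interior punctures; every small vertex lies in a face or on an edge in any case. Yet the converse fails without that hypothesis. Take an interior puncture and let ${\bf r}$ be $1$ at the small vertex nearest the puncture on each incident edge-end, and $0$ elsewhere. For $n\ge 3$ one checks ${\bf r}H_\lambda={\bf r}Q_\lambda=0$: those vertices form an oriented cycle around the puncture, and the remaining contributions cancel as in the single-face example. But ${\bf r}$ is not balanced. So no face-by-face propagation that ignores $\partial\fS$ can prove the ``if'' direction.

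Step 2 has the same defect in miniature. Its statement is true (it is the case $\fS=\mathbb P_3$), but your mechanism does not prove it.
\begin{itemize}
\item Since ${\bf k}_1+{\bf k}_2+{\bf k}_3=n\mathbf{1}$, only two values near $v_1$ can be normalized.
\item One congruence near $v_1$ then gives the third value, and another determines one more edge vertex.
\item For $n\ge 4$, every remaining congruence then contains at least two unknown values; look at the columns of $(n-2,2,0)$, $(n-2,1,1)$ and $(n-2,0,2)$.
\item So ``one new vertex per congruence'' stalls. For $n=4$ one checks that no choice of two starting values avoids this. You would have to combine congruences, and you give no scheme for doing so.
\end{itemize}

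The Smith-normal-form fallback only restates the problem, since it requires knowing $|\det H_\lambda|=[\mathcal B_\lambda:(n\mathbb Z)^{V_\lambda}]$.

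The missing ingredient is the skeleton matrix $K_\lambda$. Two facts about it close the gap.
\begin{itemize}
\item $H_\lambda K_\lambda=nI$ ([LY23, Lemma~11.9(c)]). This is exactly where the absence of interior punctures enters.
\item Each row $K_\lambda(v,*)$ is balanced, because it is the exponent of ${\rm tr}_\lambda(\mathsf g_v)\in\mathcal Z^{\rm bl}_\omega(\fS,\lambda)$.
\end{itemize}
Now if ${\bf k}H_\lambda=n{\bf a}$ with ${\bf a}$ integral, then ${\bf k}=\tfrac1n{\bf k}H_\lambda K_\lambda={\bf a}K_\lambda\in\mathcal B_\lambda$. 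Combined with your forward computation, this proves the lemma; the paper itself simply imports it from [LY23, Proposition~11.10].
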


\begin{theorem}\cite{BW11,Kim20,LY22,LY23}\label{thm.quantum_trace}
    Suppose that $\fS$ is a triangulable pb surface with a triangulation $\lambda$. Then the following hold.

    \begin{enumerate}[label={\rm (\alph*)}]
    \item There is an algebra homomorphism
    $$\tr \colon \rdS\rightarrow
    \mathcal{Z}_\omega(\fS,\lambda),$$
    called the quantum trace map,
    such that $\im\tr\subset \bX.$

    \item  
    $\tr$ is injective when $n=2$ or when $n=3$.

    \item 
    $\tr$ is injective when $\fS$ is a polygon.
    \end{enumerate}
\end{theorem}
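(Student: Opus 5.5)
This statement collects results from the literature (\cite{BW11,Kim20,LY22,LY23}), so the plan is to reconstruct the arguments along the lines of \cite{LY23}, in three parts.

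\emph{Part (a).} We build $\tr$ face by face. For $\fS=\bP_3$ we define a map $\overline{\cS}_\omega(\bP_3)\to\mathcal Z_\omega(\bP_3)$ on generators: stated arcs, namely corner arcs around each $v_i$. Each stated arc is sent to a sum over paths in the dual network of $\Gamma_{\bP_3}$ of Weyl-ordered monomials $Z^{\bf k}$, in the spirit of the path formula \eqref{intro-path-sum}. We then check the defining relations \eqref{w.cross}--\eqref{wzh.eight}, and check that bad arcs go to $0$.

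For a general triangulation $\lambda$, cutting along all interior edges gives the splitting homomorphism
\[
\mathbb S\colon \rdS\to\bigotimes_{\tau\in\mathbb F_\lambda}\overline{\cS}_\omega(\tau).
\]
Its well-definedness and independence of the order of the cuts come from \eqref{com-splitting-skein}. We compose $\mathbb S$ with the tensor product of the face maps. The image lies in the tensor product of the triangle tori, and we must show that it lands in the subalgebra obtained by gluing edge variables, namely $\mathcal Z_\omega(\fS,\lambda)$.

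The key step for gluing is this: for a pair of glued edges, only the terms whose states agree on both copies survive, and these combine the edge monomials into a single variable $Z_v$. Balancedness is then checked on each face. The image of any stated web in $\bP_3$ is spanned by monomials whose exponent vectors are congruent modulo $n$ to integer combinations of ${\bf k}_1,{\bf k}_2,{\bf k}_3$. It suffices to check this on generating arcs, and there it follows by a direct computation on paths: a path to state $i$ changes the exponents linearly along rows.

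\emph{Part (b).} For $n=2,3$ we use a leading-term argument. Take the basis of $\rdS$ given by non-elliptic stated webs in simultaneously crossless position; for $n=2$ these are simple multicurves, and for $n=3$ reduced webs in the sense of \cite{Kim20}. Order monomials lexicographically, as in the proof of Lemma~\ref{lem-frac-subalgebra}. We then show two things:
\begin{enumerate}[label=(\roman*)]
\item $\tr(\alpha)$ has a unique highest term, coming from the ``highest states'' path choice;
\item the exponent of this highest term, read off as intersection or tropical coordinates with the $n$-triangulation, determines $\alpha$.
\end{enumerate}
Distinct basis elements then have distinct leading exponents, which gives injectivity.

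\emph{Part (c).} For a polygon $\fS=\mathbb P_k$ we proceed by induction on $k$, splitting along a diagonal into a triangle and $\mathbb P_{k-1}$. On $\bP_3$, injectivity is checked directly, using the known basis of $\overline{\cS}_\omega(\bP_3)$ (related to a quotient of $\mathcal O_q({\rm SL}_n)$) and the leading-term argument as in (b).

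The main obstacle is that the splitting map on reduced skein algebras must itself be injective for polygons. My first attempt would be to use the Lê--Sikora coaction picture: the splitting map is injective on the non-reduced algebras. I would then argue that the bad-arc ideal is exactly the preimage of the bad-arc ideal, using a triangular (leading-term) decomposition of the basis compatible with cutting.
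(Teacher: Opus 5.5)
The paper gives no proof of this statement; it is quoted from the literature. Your sketch therefore has to stand on its own, and it has genuine gaps.

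In (a), you cannot obtain the face map $\overline{\mathscr S}_\omega(\mathbb P_3)\to\mathcal Z_\omega(\mathbb P_3)$ by assigning path sums to corner arcs and then checking \eqref{w.cross}--\eqref{wzh.eight}. Those are relations among web diagrams containing $n$-valent vertices, caps and crossings, none of which are corner arcs. Checking them therefore presupposes that the map is already defined on all stated webs. Defining a homomorphism by its values on the corner-arc generators would require a complete presentation of $\overline{\mathscr S}_\omega(\mathbb P_3)$ in those generators, and you do not have one. You need one of two alternatives:
\begin{itemize}
\item a state sum defined directly on every stated web diagram (the Bonahon--Wong method for $n=2$); or
\item a construction that avoids presentations, e.g.\ embedding $\overline{\mathscr S}_\omega(\mathbb P_3)$ into a quantum torus through a quantum torus frame such as $\{\mathsf g_v\}$ and then applying $\psi$ (compare Theorem~\ref{thm-trace-A} and Theorem~\ref{thm-transition-LY}).
\end{itemize}
Your gluing step is also misdescribed. $\mathbb S_e$ never produces terms with mismatched states, so nothing has to ``fail to survive''. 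What must be shown is that in every monomial the exponents of $Z_{v'}$ and $Z_{v''}$ agree, so that the image lies in the image of the edge-gluing embedding $Z_v\mapsto[Z_{v'}Z_{v''}]$. This is a real verification. By Theorem~\ref{lem-trace-image-cornerarc-P3}, the exponent at an edge vertex $v$ is $n{\bf k}^\gamma(v)-{\bf k}(v)$, and ${\bf k}={\bf k}_1$ depends on which corner the arc turns around. Hence the same state on the same edge gives different exponent vectors for corner arcs around the two ends of that edge.

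In (c), you reduce everything to injectivity of $\mathbb S_e$ on reduced algebras and leave that as a first attempt. Since $\mathcal S_e\circ{\rm tr}_\lambda={\rm tr}_{\lambda_e}\circ\mathbb S_e$ (Theorem~\ref{thm-trace-cut}) and $\mathcal S_e$ is injective, this step is equivalent to the claim once the pieces are known. So it is the entire content of (c), not a side lemma. Your sketch (injectivity on the non-reduced algebras, plus ``the bad-arc ideal is the preimage of the bad-arc ideal'') uses nothing specific to polygons. If it worked as stated, it would give injectivity of the reduced splitting maps, and hence of ${\rm tr}_\lambda$, for every triangulable surface. That is open for $n\ge 4$, and it is exactly why the paper works with $\widetilde{\mathscr S}_\omega(\mathfrak S)$.

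The base case is also unsupported for $n\ge 4$. The leading-term argument of (b) relies on $n=2,3$ web bases and tropical coordinates, and you exhibit no basis of $\overline{\mathscr S}_\omega(\mathbb P_3)$ that is separated by leading terms for general $n$. In the framework quoted in this paper, (c) comes from the $\mathcal A$-side instead:
\begin{itemize}
\item ${\rm tr}^A_\lambda$ is injective for polygons (Theorem~\ref{thm-trace-A}(c));
\item ${\rm tr}_\lambda=\psi_\lambda\circ{\rm tr}^A_\lambda$ (Theorem~\ref{thm-trace-A}(d));
\item $\psi_\lambda$ is an embedding (Theorem~\ref{thm-transition-LY}).
\end{itemize}

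Finally, in (b), the crossless non-elliptic stated webs form a basis of $\mathscr S_\omega(\mathfrak S)$, not of the reduced algebra. A bad arc is such a web, yet it is $0$ in $\overline{\mathscr S}_\omega(\mathfrak S)$ and has trace $0$ (cf.\ $\mathsf P(\mathbb P_3,v_1,i,j)=\emptyset$ for $i<j$). So your claim (i) fails for it. You must first isolate a family that is a basis of the quotient, and proving that is part of the work.
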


\def\barV{V}
\def\barK{K}
\def\barVl{V_\lambda}
\def\barKt{K_\tau}

\subsection{The $\mathcal A$-version quantum tori} \label{sec;A_tori}
In this subsection, assume $\fS$ contains interior punctures.

For a small vertex $v\in\barV_\lambda$ and an ideal triangle $\tau \in \cF_\lambda$, we define its \textbf{skeleton} $\skeleton_\tau(v)\in \bZ [\barV_\nu]$ as follows.

For a face $\nu\in\cF_\lambda$ containing $v$, suppose $v=(ijk)\in V_\nu$. Draw a weighted directed graph $Y_v$ properly embedded into $\nu$ as in the left of Figure~\ref{Fig;skeleton}, where an edge of $Y_v$ has weight $i$, $j$ or $k$ according as the endpoint lying on the edge $e_1$, $e_2$ or $e_3$ respectively. 
\begin{figure}[h]
    \centering
    \includegraphics[width=350pt]{skeieton.pdf}
    \caption{Left: Weighted graph $Y_v$,\quad Middle: Elongation $\widetilde{Y}_v$,\quad Right: Turning left}\label{Fig;skeleton}
\end{figure}

\def\barVt{V_\tau}

Elongate the nonzero-weighted edges of $Y_v$ to have an embedded weighted directed graph $\widetilde{Y}_v$ as drawn in the middle of Figure~\ref{Fig;skeleton}. Here, each edge is elongated by turning left whenever it enters a triangle. 
The part of the elongated edge in a triangle $\tau$ is called a \textbf{(arc) segment} of $\widetilde{Y}_v$ in $\tau$. In addition, we also regard $Y_v$ as a segment of $\widetilde{Y}_v$, called the \textbf{main segment}.

For the main segment $s=Y_v$, define $Y(s) = v \in \barV_\nu$. For an arc segment $s$ in a triangle $\tau$, define $Y(s)\in \barV_\tau$ to be the small vertex of the following weighted graph. 
$$
s=\begin{array}{c}\includegraphics[scale=0.27]{Ys_s.pdf}\end{array}
\longrightarrow\quad Y(s):=\begin{array}{c}\includegraphics[scale=0.27]{Ys.pdf}\end{array}
$$
Define $\skeleton_\tau(v)$ by
\begin{equation}
\skeleton_\tau(v) = \sum_{s \subset \tau\cap\tilde{Y}_v} Y(s) \in \bZ [\barVt],
\end{equation}
where the sum is over all the segments of $\widetilde{Y}_v$ in $\tau$.
It is known that $\skeleton_\tau(v)$ is well-defined \cite[Lemma~11.4]{LY23}. 

Define a $\bZ_3$-invariant map
$$\barK_{\bP_3}\colon \barV_{\mathbb P_3}\times\barV_{\mathbb P_3}\to\bZ$$
such that if $v=ijk$ and $v'=i'j'k'$ satisfy $i'\leq i$ and $j'\geq j$ then 
\begin{align}
\barK_{\bP_3}(v,v')=jk'+ki'+i'j.
\end{align}
Under identifying $\bP_3$ and a face $\tau$ of $\lambda$, we also use $\barK_{\tau}$ as $\barK_{\bP_3}$. 

Recall that $\cF_\lambda$ denotes the set of all the faces of the triangulation $\lambda$. 
For $u, v \in \barVl$ and a face $\tau \in \cF_\lambda$ containing $v$,  define 
\begin{equation}\label{eq-surgen-exp}
\barK_{\lambda}(u,v)=\barKt(\skeleton_\tau(u),v)=\sum_{s\subset\tau\cap\tilde{Y}_u}\barKt(Y(s),v).
\end{equation}
It is known that $\barK_{\lambda}$ is well-defined \cite[Lemma~11.5]{LY23}. 
Note that our $K_\lambda$ equals $\overline{\mathsf K}_\lambda$ in \cite{LY23}.

\def\barP{P}

Define the anti-symmetric integer matrices $\barP_{\lambda}$ by 
\begin{align}\label{eq-anti-matric-P-def}
\barP_\lambda:=2\barK_\lambda Q_\lambda\barK^t_\lambda.
\end{align}

\begin{remark}\label{rem-P-P}
    Our $P_\lambda$ is the matrix $-\overline{\mathsf P}_\lambda$ defined in \cite[Equation~(163) and (205)]{LY23}.
    From the definition of $\overline{\mathsf P}_\lambda$ in \cite{LY23}, we know that 
    each entry of $P_\lambda$ is in $n\mathbb Z.$
\end{remark}

\cite[Equation~(214)]{LY23} implies that 
\begin{align}\label{eq-prod-PQ}
    P_\lambda Q_\lambda
    =\begin{pmatrix}
-2n^2(\text{Id}_{\mathring{V}_\lambda\times \mathring{V}_\lambda}) & * \\
O    &  * \\
\end{pmatrix},
\end{align}
where $\mathring{V}_\lambda\subset V_\lambda$ consists of all small vertices contained in the interior of $\fS$.
Equation \eqref{eq-prod-PQ} demonstrates that $(P_\lambda, Q_\lambda)$ forms a compatible pair as defined in \cite[Definition~3.1]{BZ}, which plays a crucial role in \S\ref{sec-skein-cluster} for defining the quantum seed (Definition~\ref{def-quantum-seed}) within the skew-field of the reduced stated ${\rm SL}_n$-skein algebra.

\def\Y{A}

The following is the \textbf{$\mathcal A$-version quantum torus} of $(\fS,\lambda)$ \cite{LY23}:
\begin{equation}
\mathcal{A}_{\omega}(\fS,\lambda) = R \langle 
\Y_v^{\pm 1}, v \in V_\lambda \rangle / (
\Y_v 
\Y_{v'}= \omega^{P_\lambda(v,v')} 
\Y_{v'} 
\Y_v \text{ for } v,v'\in V_\lambda ).
\end{equation}
For any $v_1,\ldots,v_r \in V_\lambda$ and $a_1,\ldots,a_r \in \mathbb{Z}$,
\begin{align}
\label{Weyl_ordering-Y}
\left[ \Y_{v_1}^{a_1} \Y_{v_2}^{a_2} \cdots \Y_{v_r}^{a_r} \right] := \omega^{-\frac{1}{2}\sum_{i<j}a_i a_j P_\lambda(v_i,v_j)} \Y_{v_1}^{a_1} \Y_{v_2}^{a_2} \cdots \Y_{v_r}^{a_r}
\end{align}
In particular, for ${\bf t} = (t_v)_{v\in V_\lambda} \in \mathbb{Z}^{V_\lambda}$, define
\begin{align}\label{def-monomial-for-A}
    \Y^{\bf t} := \left[ \prod_{v\in V_\lambda} \Y_v^{t_v}\right].
\end{align}

\begin{theorem}[{\cite[Theorem~11.7]{LY23}}]\label{thm-transition-LY}
The $R$-linear maps 
\begin{align}\label{def-A-bal-isomor}
 \psi_\lambda &\colon \mathcal{A}_{\omega}(\fS,\lambda)\to
\mathcal{Z}_{\omega}(\fS,\lambda),
\quad  \Y^\mathbf{k}\mapsto Z^{\mathbf{k} K_{\lambda}}, \ ({\mathbf{k}} \in \bZ^{V_{\lambda}})
\end{align}
are $R$-algebra embeddings with $\im \psi_\lambda= \mathcal{Z}^{\rm bl}_{\omega}(\fS,\lambda)$. 
\end{theorem}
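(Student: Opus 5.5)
The plan is to check three things in order: that $\psi_\lambda$ is multiplicative, that it is injective, and that its image is exactly $\mathcal{Z}^{\rm bl}_\omega(\fS,\lambda)$. Throughout, we take $\fS$ to have no interior punctures, which is the setting in which $\mathcal A_\omega(\fS,\lambda)$ was introduced and in which Lemma~\ref{lem-balanced-H} applies.

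\emph{Multiplicativity.} Since $\psi_\lambda$ is defined on the monomial basis, it suffices to compare products of monomials. In $\mathcal A_\omega(\fS,\lambda)$ the relation $A_vA_{v'}=\omega^{P_\lambda(v,v')}A_{v'}A_v$ and the Weyl normalization \eqref{Weyl_ordering-Y} give
\[
A^{\mathbf k}A^{\mathbf t}=\omega^{\frac12\mathbf kP_\lambda\mathbf t^T}A^{\mathbf k+\mathbf t}.
\]
In $\mathcal Z_\omega(\fS,\lambda)$, the relation $Z_vZ_{v'}=\omega^{2Q_\lambda(v,v')}Z_{v'}Z_v$ together with \eqref{Weyl_ordering-Z-1} gives
\[
Z^{\mathbf a}Z^{\mathbf b}=\omega^{\mathbf aQ_\lambda\mathbf b^T}Z^{\mathbf a+\mathbf b}.
\]
Set $\mathbf a=\mathbf kK_\lambda$ and $\mathbf b=\mathbf tK_\lambda$. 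The exponent becomes $\mathbf kK_\lambda Q_\lambda K_\lambda^T\mathbf t^T$, which equals $\frac12\mathbf kP_\lambda\mathbf t^T$ by the definition \eqref{eq-anti-matric-P-def}. Hence $\psi_\lambda(A^{\mathbf k}A^{\mathbf t})=\psi_\lambda(A^{\mathbf k})\psi_\lambda(A^{\mathbf t})$. Unitality is clear because $\psi_\lambda(A^{\mathbf 0})=Z^{\mathbf 0}=1$.

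\emph{Injectivity.} Both source and target have monomial bases, and $\psi_\lambda$ sends basis elements to basis elements. So injectivity reduces to showing that $\mathbf k\mapsto\mathbf kK_\lambda$ is injective on $\mathbb Z^{V_\lambda}$, i.e.\ that $\det K_\lambda\neq0$. This will follow from the matrix identity established in the next step.

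\emph{The image.} We must show $\mathbb Z^{V_\lambda}K_\lambda=\mathcal B_\lambda$; there are two inclusions.

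For ``$\subseteq$'', it suffices that each row $K_\lambda(u,\cdot)$ is balanced. By \eqref{eq-surgen-exp}, the pullback of this row to a face $\tau$ is $\sum_s K_\tau(Y(s),\cdot)$, so it is enough to see that $K_{\mathbb P_3}(w,\cdot)$ is balanced for every $w=ijk\in V_{\mathbb P_3}$. On each of the regions cut out by the inequalities in the definition, $K_{\mathbb P_3}(w,\cdot)$ is an integer combination of $\mathbf k_1,\mathbf k_2,\mathbf k_3$, e.g.\ $jk'+ki'+i'j=k\,\mathbf k_1+j\,\mathbf k_1+j\,\mathbf k_3$ evaluated at $v'$. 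The different regional formulas should agree modulo $n$, using $i'+j'+k'=n$. So the row is balanced.

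For ``$\supseteq$'', I would use Lemma~\ref{lem-balanced-H}. If $\mathbf b$ is balanced, then $\mathbf bH_\lambda=n\mathbf c$ for some integer vector $\mathbf c$. Suppose we have the identity
\[
H_\lambda K_\lambda=n\,\mathrm{Id}\qquad(\text{equivalently } K_\lambda H_\lambda=n\,\mathrm{Id}).
\]
Then $\mathbf b=\tfrac1n\mathbf bH_\lambda K_\lambda=\mathbf cK_\lambda$, which lies in the image. This identity also gives $\det K_\lambda\neq0$, completing the injectivity step.

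\emph{Main obstacle.} The hardest step is proving $K_\lambda H_\lambda=n\,\mathrm{Id}$. First I would verify it face by face for $\mathbb P_3$: compute $\sum_{v'}K_{\mathbb P_3}(w,v')H(v',v)$ using the local structure of $Q_{\mathbb P_3}$. At an interior vertex $v$ the relevant sum is over six neighbours and should telescope to $n\delta_{w,v}$; at edge vertices the modified $H$ of \eqref{eq-def-H-lambda} is exactly what compensates for the missing half of the hexagon. Then I would pass to a general triangulation. The skeleton of $u$ turns left at each crossing of an edge, so its contributions in adjacent faces should cancel in pairs, leaving only the main segment, which contributes $n\delta_{u,v}$. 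This step is precisely the content of \cite[Proposition~11.10 and Theorem~11.7]{LY23}, and I would follow their argument.
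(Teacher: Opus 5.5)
Your proposal is correct and follows essentially the argument the paper relies on. Multiplicativity is immediate from $P_\lambda=2K_\lambda Q_\lambda K_\lambda^{T}$, and bijectivity of $\mathbf{k}\mapsto\mathbf{k}K_\lambda$ onto $\mathcal B_\lambda$ comes from $H_\lambda K_\lambda=n\,\mathrm{Id}$ (which the paper takes from \cite[Lemma~11.9(c)]{LY23}) together with Lemma~\ref{lem-balanced-H}. This yields the inverse $Z^{\mathbf{k}}\mapsto A^{\frac1n\mathbf{k}H_\lambda}$.

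Your face-by-face check for ``$\subseteq$'' is unnecessary. Since $(\mathbf{k}K_\lambda)H_\lambda=n\mathbf{k}$, Lemma~\ref{lem-balanced-H} already shows that $\mathbf{k}K_\lambda$ is balanced. Your regional formulas are nonetheless consistent: all three do reduce to $k\,\mathbf{k}_1-j\,\mathbf{k}_2 \pmod n$.
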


\def\A{\mathcal{A}_{\omega}(\fS,\lambda)}

\def\Ap{\mathcal{A}_{\omega}^{+}(\fS,\lambda)}

It is shown in \cite[Lemma~11.9(c)]{LY23} that
$H_\lambda K_\lambda= n I$, where $I$ is the identity matrix. Then the inverse of 
\begin{align}\label{eq-iso-A-balanced-psi}
    \psi_\lambda \colon \mathcal{A}_{\omega}(\fS,\lambda)\to
\mathcal{Z}_{\omega}^{\rm bl}(\fS,\lambda)
\end{align}
is the following
\begin{align}\label{eq-iso-A-balanced-psi-inv}
    \mathcal{Z}_{\omega}^{\rm bl}(\fS,\lambda)\to
\mathcal{A}_{\omega}(\fS,\lambda),\quad
Z^{\bf k}\mapsto A^{\frac{1}{n} {\bf k}H_\lambda}
\end{align}
for ${\bf k}\in \mathcal B_\lambda$. It follows from Lemma \ref{lem-balanced-H} that
$\frac{1}{n} {\bf k}H_\lambda\in\mathbb Z^{V_\lambda}$.

\def\barV{V}

\def\bZ{\mathbb Z}

\subsection{The $\mathcal A$-version quantum trace map}
\label{sec-traceA}


For $v=(ijk) \in  \barV_\nu\subset \barV_\lambda$ with a triangle $\nu$ of $\lambda$,  consider the graph $\widetilde{Y}_v$ defined in \S\ref{sec;A_tori}. 
By replacing a $k$-labeled edge of $\widetilde{Y}_v$ with $k$-parallel edges, we obtain a stated $n$-web $g''_v$, adjusted by a sign: 
$$\widetilde{Y}_v=
\begin{array}{c}\includegraphics[scale=0.40]{tildeYv.pdf}\end{array}\longrightarrow g''_v:=(-1)^{\binom{n}{2}}\begin{array}{c}\includegraphics[scale=0.40]{gv_paralell.pdf}\end{array}. $$
It is known that $g''_v$ is reflection-normalizable \cite[Lemma~4.12]{LY23}.

Define $\gaa_v\in \cS_{\omega}(\fS)$ as the reflection invariant of $\gaa_v''$. 
We still use $\gaa_v$ to denote the image of $\gaa_v$ in $\overline{\cS}_{\omega}(\fS)$ by the projection $\cS_{\omega}(\fS)\to \overline{\cS}_{\omega}(\fS)$. 
The following lemma is shown in \cite{LY23}.

\begin{lemma}\label{gaa-com}
    For any $v,v'\in V_\lambda$, we have
    \begin{align}
    \gaa_v\gaa_{v'} = \omega^{P_\lambda(v,v')
    } \gaa_{v'}\gaa_v.
\end{align}
\end{lemma}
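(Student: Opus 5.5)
The plan is to prove the identity on the level of the quantum torus first and then lift it back to the skein algebra. The key input is an explicit formula for the image of $\gaa_v$ under the $\mathcal X$-version quantum trace. I expect, following the construction of $\skeleton_\tau$ and $K_\lambda$ in \S\ref{sec;A_tori}, that
\[
{\rm tr}_\lambda(\gaa_v)=Z^{{\bf e}_v K_\lambda},
\]
a single Weyl-ordered monomial whose exponent is the $v$-th row of $K_\lambda$.

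To see this, cut $\fS$ along all interior edges of $\lambda$ using the splitting homomorphism of \S\ref{sub-splitting}. The elongated graph $\widetilde Y_v$ then breaks into its segments in the faces $\tau$. Each segment other than the main one is a bundle of parallel corner arcs turning left. In $\overline{\cS}_\omega$ the bad arcs vanish, so in the state sum \eqref{eq-def-splitting} only one assignment of states on each cut edge survives: the states along each bundle are forced to be monotone. Hence $\mathbb S(\gaa_v)$ is a single tensor product of reflection-normalized pieces, one in each face. On a triangle, the quantum trace of a bundle of corner arcs with these extremal states is a monomial. Combining this with the main segment $Y_v$ gives $Z^{{\bf e}_v\cdot(\text{contribution of } \tau)}$ in face $\tau$, and summing over $\tau$ gives exactly $K_\lambda(v,\cdot)$ by \eqref{eq-surgen-exp}. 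Reflection-invariance of $\gaa_v$ (Lemma~\ref{lem-reflection}) and of Weyl-ordered monomials pins the power of $\omega$ to be $0$.

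Granting this, the commutation in the torus is immediate:
\[
Z^{{\bf e}_vK_\lambda}Z^{{\bf e}_{v'}K_\lambda}=\omega^{2\,{\bf e}_vK_\lambda Q_\lambda K_\lambda^t{\bf e}_{v'}^t}Z^{{\bf e}_{v'}K_\lambda}Z^{{\bf e}_vK_\lambda},
\]
and the exponent is $P_\lambda(v,v')$ by \eqref{eq-anti-matric-P-def}. So ${\rm tr}_\lambda$ of both sides of the claimed identity agree.

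The main obstacle is lifting this back to $\overline{\cS}_\omega(\fS)$, because ${\rm tr}_\lambda$ is only known to be injective in special cases (Theorem~\ref{thm.quantum_trace}). I would avoid injectivity entirely and argue directly in the skein algebra. Isotope $\gaa_v$ and $\gaa_{v'}$ so that their projections meet only near boundary edges; this is possible because both are unions of left-turning arc bundles, which can be made disjoint in the interior. The two products $\gaa_v\gaa_{v'}$ and $\gaa_{v'}\gaa_v$ then differ only in the height order of endpoints on each boundary edge. Reordering these endpoints one pair at a time uses \eqref{wzh.eight}. The correction terms $\delta_{j<i}(q^{-1}-q)$ vanish because the states on each bundle are extremal/monotone, or they produce bad arcs, which are zero in $\overline{\cS}_\omega$. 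What remains is a pure power of $\omega$, a local count summed over boundary edges. That count is the same bilinear quantity computed by ${\rm tr}_\lambda$ on the corresponding monomials, so it equals $P_\lambda(v,v')$. Equivalently, one can compute it in each triangle after cutting and use that the splitting map respects products.

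The step I expect to be hardest is this local bookkeeping: checking that every non-diagonal term from \eqref{wzh.eight} really dies (either by state monotonicity or by producing a bad arc), and that the accumulated exponent matches $2K_\lambda Q_\lambda K_\lambda^t$ including the half-weight boundary arrows. For that I would first verify the case $\fS=\mathbb P_3$ by hand with $K_{\mathbb P_3}$ and then glue.
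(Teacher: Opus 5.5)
The gap is in the lifting step: your argument proves the identity only modulo $\ker{\rm tr}_\lambda$, not in $\overline{\cS}_\omega(\fS)$. For context, the paper does not prove this lemma; it quotes it from \cite{LY23}, and it only ever uses it in the quotient $\widetilde{\cS}_\omega(\fS)$ (see \eqref{eq-gaa-com}). For that use your first half already suffices. The formula ${\rm tr}_\lambda(\gaa_v)=Z^{K_\lambda(v,*)}$ is Theorem~\ref{thm-trace-A}(d), your exponent $2K_\lambda Q_\lambda K_\lambda^t=P_\lambda$ is correct, and ${\rm tr}_\lambda$ is injective on $\widetilde{\cS}_\omega(\fS)$ by Lemma~\ref{lem-basic-lem}(a). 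If you rederive the trace formula yourself, fixing the power of $\omega$ needs reflection-invariance of ${\rm tr}_\lambda$, not Lemma~\ref{lem-reflection}, which is about $\mathbb S_e$.

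Your lifting argument fails because the claim that $\gaa_v$ and $\gaa_{v'}$ can be isotoped to meet only near boundary edges is false once $n\ge 4$. Take $v,v'$ both interior to the same face $\tau$, for instance with $\fS=\mathbb P_3$.
\begin{itemize}
\item Each of $\gaa_v$, $\gaa_{v'}$ contains its main segment $Y_v$: an $n$-valent vertex with nonempty bundles of strands ending on all three sides of $\tau$. It is not a union of left-turning arc bundles.
\item No component of the complement in $\tau$ of the projection of $\gaa_v$ meets all three sides.
\item Hence in every diagram, some strand of $\gaa_{v'}$ ending on one side crosses a strand of $\gaa_v$ ending on a different side.
\end{itemize}
Such a crossing cannot be isotoped away or turned into a height exchange at a single boundary edge. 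To pass from $\gaa_v\gaa_{v'}$ to $\gaa_{v'}\gaa_v$ you must switch it with \eqref{w.cross}, which produces $(q^{-1}-q)$ times a web in which strands of $\gaa_v$ and $\gaa_{v'}$ are reconnected. Your bookkeeping (monotone states, bad arcs from \eqref{wzh.eight}) does not touch these terms, and controlling them in $\overline{\cS}_\omega(\fS)$ is the real content of the lemma.

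Your fallback of computing in each triangle after cutting does not close the gap. It proves the identity only after applying the splitting map, and the paper has injectivity of splitting only on $\widetilde{\cS}_\omega$ (Lemma~\ref{lem-basic-lem}(c)). So your argument reaches $\overline{\cS}_\omega(\fS)$ only in the injective cases of Theorem~\ref{thm.quantum_trace}(b),(c).

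What is missing is a skein-level proof that $\gaa_v\gaa_{v'}=\omega^m\gaa_{v'}\gaa_v$ for some integer $m$, handling the interior crossings. Once you have that, your trace computation correctly forces $m=P_\lambda(v,v')$, since the quantum torus is a domain.
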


\def\trA{{\rm tr}_{\lambda}^A}
\def\SK{\overline{\cS}_\omega(\fS)}

We use $\Ap$ to denote the $R$-subalgebra of $\A$ generated by $A^{\bf k}$ for ${\bf k}\in\mathbb N^{V_\lambda}$.

\begin{definition}\label{def-M-vector}
    Suppose $M$ is a complex square matrix with rows and columns labeled by vertices in $V_\lambda$.
For each $v\in V_\lambda$, we use $M(v,*)$ to denote the row vector in $\mathbb C^{V_\lambda}$ such that its entry labeled by $u$ is $M(v,u)$ for $u\in V_\lambda$.
\end{definition}

\begin{theorem}\cite{LY23}\label{thm-trace-A}
    Let $\fS$ be a triangulable pb surface without interior punctures, and let $\lambda$ be an ideal triangulation of $\fS$. 
    There exists an algebra homomorphism 
    $$\trA\colon
    \overline{\cS}_\omega(\fS)\rightarrow\A$$
    with the following properties
    \begin{enumerate}[label={\rm (\alph*)}]\itemsep0,3em

    \item\label{thm-trace-A-a} For each $v\in V_\lambda$, we have $\trA(\gaa_v) =  A_v$.

    \item\label{thm-trace-A-b} We have $\Ap\subset\im\trA\subset\A$.

    \item If $n=2,3$, or $n>3$ and $\fS$ is a polygon, then $\trA$ is injective.

    \item\label{thm-trace-A-d} The following diagram commutes:
    \begin{align}
        \label{eq-compability-tr-A-X-diag}
        \xymatrix{
        & \rdS \ar[dl]_{{\rm tr}_{\lambda}^A} \ar[dr]^{{\rm tr}_\lambda} & \\
        \A \ar[rr]_-{\psi_\lambda} & & \mathcal{Z}_\omega(\fS,\lambda)}.
    \end{align}
    In particular, we have $\tr(\gaa_v)
    =Z^{K_\lambda(v,*)}$ for $v\in V_\lambda$.
    \end{enumerate}
\end{theorem}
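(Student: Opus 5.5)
The plan is to \emph{define} ${\rm tr}_\lambda^A$ from the $\mathcal X$-version and read off most of the properties. By Theorem~\ref{thm.quantum_trace}(a), ${\rm tr}_\lambda$ takes values in $\mathcal{Z}^{\rm bl}_\omega(\fS,\lambda)$. By Theorem~\ref{thm-transition-LY}, $\psi_\lambda$ is an algebra isomorphism from $\mathcal{A}_\omega(\fS,\lambda)$ onto this subalgebra, with inverse given by \eqref{eq-iso-A-balanced-psi-inv}. This inverse is well defined because $H_\lambda K_\lambda=nI$ and because of Lemma~\ref{lem-balanced-H}; here the hypothesis that $\fS$ has no interior punctures is used. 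I would therefore set
\[
{\rm tr}_\lambda^A:=\psi_\lambda^{-1}\circ {\rm tr}_\lambda .
\]
This is an algebra homomorphism, and part (d) holds by construction.

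Part (c) then follows at once. Since $\psi_\lambda^{-1}$ is injective, ${\rm tr}_\lambda^A$ is injective exactly when ${\rm tr}_\lambda$ is, and Theorem~\ref{thm.quantum_trace}(b),(c) covers $n=2,3$ and the polygon case.

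For part (a) it suffices, by (d), to show that ${\rm tr}_\lambda(\gaa_v)=Z^{K_\lambda(v,*)}$. Applying $\psi_\lambda^{-1}$ then gives $A^{\frac1n K_\lambda(v,*)H_\lambda}=A^{{\bf e}_v}=A_v$, again because $H_\lambda K_\lambda=nI$. I would compute ${\rm tr}_\lambda(\gaa_v)$ through the triangle-by-triangle description of the quantum trace, that is, via the splitting homomorphisms along all interior edges of $\lambda$. After splitting, $\gaa_v$ becomes a sum over states on the cut points. In each face $\tau$ the pieces are parallel families of corner arcs together with at most one $n$-valent vertex (the piece $Y_v$). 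The key point is that the reduced relation kills every bad arc, so in each face only one choice of states survives: the ``highest'' one, in which parallel strands carry distinct, suitably ordered states. The quantum trace of that surviving piece in $\mathbb P_3$ should be a single Weyl-ordered monomial $Z^{K_\tau(Y(s),*)}$ for each segment $s$. Multiplying over faces and segments gives $Z^{\sum_s K_\tau(Y(s),*)}$, and by \eqref{eq-surgen-exp} this is $Z^{K_\lambda(v,*)}$. The reflection normalization built into $\gaa_v$, together with the reflection invariance of the splitting maps (Lemma~\ref{lem-reflection}), should fix the power of $\omega$ to be exactly $1$, since both sides are reflection invariant.

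For part (b), the inclusion ${\rm im}\,{\rm tr}_\lambda^A\subset\mathcal{A}_\omega(\fS,\lambda)$ is by construction. For ${\bf k}\in\mathbb N^{V_\lambda}$, the Weyl-ordered monomial $A^{\bf k}$ is a power of $\omega^{1/2}$ times the ordered product $\prod_v A_v^{k_v}$. By (a) this ordered product is the image of $\prod_v\gaa_v^{k_v}$, so $A^{\bf k}$ lies in the image, and hence so does $\mathcal{A}^+_\omega(\fS,\lambda)$. The step I expect to be the main obstacle is (a): showing that exactly one state assignment survives in the reduced algebra in each face, and then matching the resulting single-triangle monomials with the elongation-and-turn-left bookkeeping that defines $K_\lambda$. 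For this I would first settle the case $\fS=\mathbb P_3$ by direct computation and then glue across edges.
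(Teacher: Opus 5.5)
The paper gives no proof of this statement; it is quoted from \cite{LY23}. Your reduction is sound, and in fact forced: $\psi_\lambda$ is injective with image $\mathcal Z^{\rm bl}_\omega(\fS,\lambda)\supset\operatorname{im}{\rm tr}_\lambda$, so any homomorphism satisfying (d) must be $\psi_\lambda^{-1}\circ{\rm tr}_\lambda$. Hence (c) and (d) are formal, and (b) follows from (a) as you say. One small repair: to conclude $A^{\frac1n K_\lambda(v,*)H_\lambda}=A_v$ you need $K_\lambda H_\lambda=nI$. This follows from $H_\lambda K_\lambda=nI$ because the matrices are square. Alternatively, note that $\psi_\lambda(A_v)=Z^{K_\lambda(v,*)}$ by definition and use injectivity.

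Everything therefore rests on ${\rm tr}_\lambda(\gaa_v)=Z^{K_\lambda(v,*)}$, which is the real content of the cited result. Here what you wrote is a plan rather than a proof. Cutting is the right strategy: the skeleton formula \eqref{eq-surgen-exp} is literally a sum over the segments produced by cutting, and $\mathcal S_E$ is injective and intertwines the traces. But three points are asserted, not shown.

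First, uniqueness of the surviving states does not follow from ``bad arcs vanish'' alone. It depends on the particular boundary states (and height order) of $g''_v$, which your argument never uses. What the triangle formula gives is monotonicity of the states along each strand, since $\mathsf P(\mathbb P_3,v_1,i,j)=\emptyset$ for $i<j$. Only together with extremal boundary states and the constraint the $n$-valent vertex puts on its legs does this pin the states down. For other states the pieces are genuine sums, e.g.\ $C_{32}$ in the proof of Proposition~\ref{prop-Cij-bar-Cij}.

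Second, Theorem~\ref{lem-trace-image-cornerarc-P3} only treats corner arcs. The main segment containing the $n$-valent vertex needs its own triangle computation, for instance sliding the vertex onto an edge by \eqref{wzh.five} and evaluating the resulting returning arcs by \eqref{wzh.six}. For the corner-arc families the bookkeeping does close up. The trace ${\rm tr}_{\mathbb P_3}(C_{nn}C_{n-1,n-1}\cdots C_{n-k+1,n-k+1})$ is a power of $\omega^{1/2}$ times $Z^{\bf m}$, with ${\bf m}(a,b,c)=n\min(a,k)-ka=K_{\mathbb P_3}((k,0,n-k),(a,b,c))$. The lowest states $1,\dots,k$ give instead $-K_{\mathbb P_3}((n-k,0,k),*)$, which is exactly why ``highest'' matters.

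Third, fixing the power of $\omega$ by reflection requires that ${\rm tr}_\lambda$ itself commute with the reflections. That is a further property of the L{\^e}--Yu trace, not recorded in this paper; Lemma~\ref{lem-reflection} alone does not give it.
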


\def\X{\mathcal{Z}_\omega(\fS,\lambda)}

\subsection{Projected stated ${\rm SL}_n$-skein algebra}\label{sub-sec-Kernel-trace}
Let $\fS$ be a triangulable pb surface, and let $\lambda$ be an ideal triangulation of $\fS$.  
As introduced in \S\ref{sec-traceX}, there is an algebra homomorphism  
\[
\tr \colon \rdS \to \mathcal{Z}_\omega(\fS,\lambda).
\]  
According to Theorem~\ref{thm.quantum_trace}, this homomorphism is known to be injective only in certain special cases, and it remains an open question whether $\tr$ is injective in general.  

For the purpose of introducing a quantum cluster algebra structure within the skew-field of $\rdS$, we require the injectivity of the quantum trace map. To address this, we will work with the quotient algebra $\rdS/(\ker \tr)$ in \S\ref{sec-skein-cluster} instead of $\rdS$. To justify this replacement, we will show that $\ker \tr$ is independent of the choice of triangulation $\lambda$.

\begin{lemma}\label{lem-ker-naturality}
    Let $\fS$ be a triangulable pb surface, and let $\lambda,\lambda'$ be two ideal triangulations of $\fS$. We have 
    $\ker \tr = \ker{\rm tr}_{\lambda'}$.
\end{lemma}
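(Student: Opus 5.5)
The plan is to use the naturality of the $\mathcal X$-version quantum trace under change of triangulation, that is, the commutative diagram \eqref{intro-eq-compability-tr-mutation} from \cite{KimWang} (Theorem~\ref{thm-main-compatibility}). Since $\mathcal{Z}^{\rm bl}_\omega(\fS,\lambda)$ is an Ore domain, it embeds into its skew-field of fractions. By Theorem~\ref{thm.quantum_trace}(a), $\operatorname{im}\tr\subset \mathcal{Z}^{\rm bl}_\omega(\fS,\lambda)$, and likewise for $\lambda'$. So we may regard $\tr$ and ${\rm tr}_{\lambda'}$ as maps into ${\rm Frac}(\mathcal{Z}^{\rm bl}_\omega(\fS,\lambda))$ and ${\rm Frac}(\mathcal{Z}^{\rm bl}_\omega(\fS,\lambda'))$ respectively, without changing their kernels.

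Next, the diagram gives $\tr = \Theta^\omega_{\lambda\lambda'}\circ {\rm tr}_{\lambda'}$ as maps $\rdS\to {\rm Frac}(\mathcal{Z}^{\rm bl}_\omega(\fS,\lambda))$. Because $\Theta^\omega_{\lambda\lambda'}$ is an isomorphism of skew-fields, and in particular injective, an element $x\in\rdS$ satisfies $\tr(x)=0$ if and only if ${\rm tr}_{\lambda'}(x)=0$. This yields $\ker\tr=\ker{\rm tr}_{\lambda'}$ directly. If one prefers to use only flips, the same argument applies flip by flip: any two triangulations are related by a sequence of flips, and the equality of kernels is transitive.

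The main obstacle is checking that the compatibility theorem really applies in the generality of the lemma. The lemma does not assume the absence of interior punctures, so I would verify that the Kim--Wang construction of $\Theta^\omega_{\lambda\lambda'}$ and its compatibility hold for every triangulable pb surface, or else cite the L{\^e}--Yu version of the coordinate change \cite{LY23}, which is stated in that generality. A second point to check is that $\Theta^\omega_{\lambda\lambda'}$ is a genuine isomorphism (a homomorphism of division rings, hence injective) rather than merely a map defined on a localization. Once these hold, the kernel equality is immediate.
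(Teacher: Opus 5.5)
Your proposal is correct and matches the paper's proof. The paper also deduces the kernel equality directly from the compatibility ${\rm tr}_\lambda=\Theta^\omega_{\lambda\lambda'}\circ{\rm tr}_{\lambda'}$ (Theorem~\ref{thm-main-compatibility}) together with the fact that $\Theta^\omega_{\lambda\lambda'}$ is a skew-field isomorphism; your worry about generality does not arise, since that theorem is stated for arbitrary triangulable pb surfaces with no assumption on interior punctures.
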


We postpone the proof of Lemma~\ref{lem-ker-naturality} to \S\ref{sec-proof-lem}.

\def\dS{\widetilde{\cS}_\omega(\fS)}

\begin{definition}\label{def-key-algebra}
    Let $\fS$ be a triangulable pb surface, and let $\lambda$ be an ideal triangulations of $\fS$. Define 
    $\dS:=\rdS/(\ker \tr)$, called {\bf projected stated ${\rm SL}_n$-skein algebra}.
    Lemma~\ref{lem-ker-naturality} shows that 
    $\dS$ is independent of the choice of $\lambda$.
\end{definition}

\begin{remark}
    It is conjectured in \cite{LY23} that $\dS=\rdS$.
\end{remark}

Let $\fS$ be a triangulable pb surface with a triangulation $\lambda$. 
In principle, one can cut the surface $\fS$ along any ideal arc, but here let us assume that $e$ is a non-boundary edge of $\lambda$. As in \S\ref{sub-splitting}, we denote by  
$\cut_e(\fS)$ 
the pb surface obtained from $\fS$ by cutting along edge $e$, and denote the projection map by
$$
\pr_e : \cut_e(\fS) \to \fS,
$$
so that $\pr_e^{-1}({e})$ consists of two ideal arcs $e',e''$ of $\cut_e(\fS)$. In \S\ref{sub-splitting} we studied the induced splitting homomorphism $\mathbb{S}_e$ between the skein algebras and that between the reduced skein algebras; here we will need the latter
$$
\mathbb{S}_e : \rS(\fS) \to \rS(\cut_e(\fS)).
$$
We further need to investigate a `splitting map' between the ($n$-th root)  Fock-Goncharov algebras. Note that  
$\lambda_e = (\lambda\setminus\{e\})\cup\{e',e''\}$ is a triangulation of
$\cut_e(\fS)$. We say $\lambda_e$ is induced from $\lambda$.
For a small
vertex $v$ in $e$, i.e. $v\in V_\lambda$ lying in $e$, we have $\pr^{-1}_{e}(v) = \{v',v''\}$, where $v'$ and $v''$ are small vertices of $\lambda'$. 
There is an algebra embedding
\begin{align}\label{eq-splitting-torus}
    \mathcal S_e
    \colon
    \mathcal{Z}_\omega(\fS,\lambda)\rightarrow
    \mathcal{Z}_\omega(\cut_e(\fS),\lambda_e)
\end{align}
defined on the generators $Z_v$, $v\in V_\lambda$, by
\begin{align}
\label{cutting_homomorphism_for_Z_omega}
    \mathcal S_e
    (
    Z_v)=
    \begin{cases}
        Z_v & \text{ if $v$ is not contained in $e$},\\
        [
        Z_{v'} Z_{v''}] & \text{ if $v$ is contained in $e$ and 
        $\pr^{-1}_{e}(v) = \{v',v''\}$},
    \end{cases}
\end{align}
where $[\sim]$ is the Weyl-ordered product \eqref{Weyl_ordering-Z-1}; since we are not allowing self-folded triangles in this paper, in fact $Z_{v'}$ commutes with $Z_{v''}$ in the second case, and hence $\mathcal{S}_e(Z_v) = Z_{v'}Z_{v''}$.
It is easy to show that $\mathcal{S}_e$ is injective since it sends the monomial basis of $\mathcal{Z}_\omega(\fS,\lambda)$ to a subset of the monomial basis of $\mathcal{Z}_\omega(\cut_e(\fS),\lambda_e)$.

It is well-known that 
\begin{align}\label{com-splitting-torus}
    \mathcal S_{e_1}\circ \mathcal S_{e_2}
    = \mathcal S_{e_2}\circ \mathcal S_{e_1}
\end{align}
for any $e_1,e_2\in\lambda$.

The following statement is about the compatibility of the quantum trace map ${\rm tr}_\lambda 
   $ (Theorem \ref{thm.quantum_trace}) with the splitting homomorphisms.

\begin{theorem}[\cite{LY23}]\label{thm-trace-cut}
    The following diagram commutes
    \begin{equation}\label{eq-com-tr-splitting}
\begin{tikzcd}
\rdS \arrow[r, "\mathbb S_e"]
\arrow[d, "\tr"]  
&  \overline{\cS}_{\omega}(\cut_e(\fS)) \arrow[d, "{\rm tr}_{\lambda_e}"] \\
 \mathcal{Z}_\omega(\fS,\lambda)
 \arrow[r, "\mathcal S_e"] 
&  
\mathcal{Z}_\omega(\cut_e(\fS),\lambda_e)
\end{tikzcd}
\end{equation}
    where $\mathbb S_e$ is the splitting homomorphism defined in \S\ref{sub-splitting}.
\end{theorem}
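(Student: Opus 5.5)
The plan is to reduce the statement to the construction of ${\rm tr}_\lambda$ itself. In \cite{LY23} the quantum trace is built face by face. One cuts $\fS$ along all non-boundary edges of $\lambda$ into the disjoint union $\bigsqcup_{\tau\in\mathbb F_\lambda}\tau$ of triangles and applies the triangle trace ${\rm tr}_\tau\colon \overline{\cS}_\omega(\mathbb P_3)\to \mathcal Z_\omega(\mathbb P_3)$ on each factor. One then observes that the result lies in the image of the iterated torus splitting map. Concretely, write $\mathbb S_\lambda$ for the composite of the skein splitting maps $\mathbb S_{e}$ over all interior edges $e\in\lambda$, and $\mathcal S_\lambda$ for the composite of the torus splitting maps \eqref{eq-splitting-torus}. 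Both are well defined independently of the order, by \eqref{com-splitting-skein} and \eqref{com-splitting-torus}. I will use the characterization
\[
\mathcal S_\lambda\circ {\rm tr}_\lambda \;=\; \Bigl(\bigotimes_{\tau\in\mathbb F_\lambda}{\rm tr}_\tau\Bigr)\circ \mathbb S_\lambda ,
\]
which, since $\mathcal S_\lambda$ is injective (it sends monomials to distinct monomials), determines ${\rm tr}_\lambda$ uniquely. This identity is the defining property of ${\rm tr}_\lambda$ in \cite{LY23}.

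With this in hand, fix an interior edge $e\in\lambda$ and write $\mathbb S'$ and $\mathcal S'$ for the composites of the splittings of $\cut_e(\fS)$ along the interior edges of $\lambda_e$, namely the edges of $\lambda\setminus\{e\}$. The faces of $\lambda_e$ coincide with those of $\lambda$. By the commutativity relations \eqref{com-splitting-skein} and \eqref{com-splitting-torus}, we have $\mathbb S'\circ\mathbb S_e=\mathbb S_\lambda$ and $\mathcal S'\circ\mathcal S_e=\mathcal S_\lambda$. Applying the characterization twice gives, for $x\in\rdS$,
\[
\mathcal S'\bigl({\rm tr}_{\lambda_e}(\mathbb S_e x)\bigr)=\Bigl(\bigotimes_\tau {\rm tr}_\tau\Bigr)\bigl(\mathbb S'\mathbb S_e x\bigr)=\Bigl(\bigotimes_\tau {\rm tr}_\tau\Bigr)(\mathbb S_\lambda x)=\mathcal S_\lambda\bigl({\rm tr}_\lambda x\bigr)=\mathcal S'\bigl(\mathcal S_e({\rm tr}_\lambda x)\bigr).
\]
Injectivity of $\mathcal S'$ then yields ${\rm tr}_{\lambda_e}\circ\mathbb S_e=\mathcal S_e\circ{\rm tr}_\lambda$, which is exactly the commutativity of \eqref{eq-com-tr-splitting}.

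I expect the main obstacle to be bookkeeping at the level of the reduced algebras rather than any new idea. First, all the splitting maps must descend to the reduced skein algebras; this holds because $\mathbb S_e$ sends bad arcs to bad arcs. Second, the face decomposition of $\cut_e(\fS)$ induced by $\lambda_e$ must be identified with that of $\fS$, with matching characteristic maps $f_\tau$, so that the same tensor product of triangle traces appears on both sides. Third, the Weyl normalization in \eqref{cutting_homomorphism_for_Z_omega} must be harmless. This follows because $Z_{v'}$ and $Z_{v''}$ commute in the absence of self-folded triangles, so $\mathcal S_e$ is a genuine algebra map sending Weyl-ordered monomials to Weyl-ordered monomials. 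If the characterization of ${\rm tr}_\lambda$ is not available verbatim, I would instead verify the identity on generators. One would compare both sides on stated arcs in a single face together with the state-sum formula \eqref{eq-def-splitting}, and then use that both composites are algebra homomorphisms.
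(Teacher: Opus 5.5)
Your proposal is correct. The paper gives no proof of its own and simply quotes the statement from \cite{LY23}, where ${\rm tr}_\lambda$ is constructed face by face as $(\bigotimes_\tau {\rm tr}_\tau)\circ\mathbb S_\lambda$, landing in the image of the injective gluing map $\mathcal S_\lambda$; your derivation of single-edge compatibility — factoring $\mathbb S_\lambda=\mathbb S'\circ\mathbb S_e$ and $\mathcal S_\lambda=\mathcal S'\circ\mathcal S_e$, then cancelling the injective $\mathcal S'$ — is the standard argument from that construction, and the generator-by-generator fallback is not needed.
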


\def\bN{\mathbb N}

Let $A$ be an $R$-domain and let $S \subset A$ be a subset of non-zero elements. Let $\mathsf{Pol}(S)$ be the $R$-subalgebra of $A$ generated by $S$ and $\mathsf{LPol}(S)$ be the set of all $a \in A$ such that there is an $S$-monomial $m$ satisfying $am \in \mathsf{Pol}(S)$. If $A = \mathsf{LPol}(S)$ we say $S$ \textbf{weakly generates} $A$.

\begin{definition}
    For an $R$-domain $A$, consider a finite set $S = \{a_1,\cdots,a_r\} \subset A$. We call $S$ a \textbf{quantum torus frame} for $A$ if $S$ satisfies the following conditions; 
\begin{enumerate}
    \item $S$ is $q$-commuting and each element of $S$ is non-zero, 
    \item $S$ weakly generates $A$, 
    \item the set $\{a_1^{n_1}\cdots a_r^{n_r}\mid  n_i \in \bN\}$ is $R$-linearly independent.
\end{enumerate}
\end{definition}

The following result shows that the quantum trace map and the splitting map for $\rdS$ descend to the corresponding maps on $\rS$.

\begin{lemma}\label{lem-basic-lem}
Let $\fS$ be a triangulable pb surface.

    (a) Let $\lambda$ be a triangulation of 
    $\fS$.
    The quantum trace map 
    $\tr\colon \rdS\to \X$
    induces an injective algebra homomorphism
     $$\tr\colon \dS\to \X.$$
    When $\fS$ contains no interior punctures, the quantum trace map 
    $\trA\colon\rdS\to\A$
     induces an injective algebra homomorphism
     $$
    \trA\colon\dS\to\A.$$

    (b) We have $\dS$ is a domain.
    When $\fS$ contains no interior punctures, then $\dS$ is an Ore domain.

    (c) Let $e$ be an ideal arc of $\fS$ such that $\cut_e(\fS)$ is also triangulable (see \S\ref{sub-splitting}).
    Then the splitting map 
$$
\mathbb{S}_e \colon\overline{\mathscr{S}}_\omega(\fS) \to \overline{\mathscr{S}}_\omega(\cut_e(\fS))
$$ induces an injective algebra homomorphism
$$
\mathbb{S}_e \colon\widetilde{\mathscr{S}}_\omega(\fS) \to \widetilde{\mathscr{S}}_\omega(\cut_e(\fS))
$$

(d) The set \(\{\gaa_v\mid v\in V_\lambda\}\) is a quantum torus frame of $\dS$.
\end{lemma}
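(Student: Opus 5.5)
The plan is to derive each part from the quantum trace maps and the basic properties of quantum tori.

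For part (a), the kernel of $\tr$ is an ideal, so $\tr$ factors through $\dS=\rdS/\ker\tr$, and the induced map is injective by construction. For the $\mathcal A$-version, Theorem~\ref{thm-trace-A}\ref{thm-trace-A-d} gives $\tr=\psi_\lambda\circ\trA$, and $\psi_\lambda$ is injective by Theorem~\ref{thm-transition-LY}. Hence $\ker\trA=\ker\tr$, and $\trA$ also descends to an injective map on $\dS$.

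For part (b), the quantum torus $\X$ is a domain, since monomials multiply with additive leading degree, as in the degree argument of Lemma~\ref{lem-frac-subalgebra}. Subalgebras of domains are domains, so $\dS$ is a domain. When there are no interior punctures, identify $\dS$ with its image under $\trA$. This image lies between $\Ap$ and $\A$ by Theorem~\ref{thm-trace-A}\ref{thm-trace-A-b}.
\begin{itemize}
\item Every element $x\in\A$ can be written as $x=a m^{-1}$, where $a\in\Ap$ and $m$ is a monomial in $\Ap$. To see this, multiply by a high positive monomial.
\item The Ore condition for the image then reduces to the Ore condition in the Noetherian domain $\A$, which is a quantum torus. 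Given $a,b$ in the image, there are $c,d\in\A$ with $ac=bd\neq 0$. Clearing denominators on the right by a common positive monomial $m\in\Ap$ gives $a(cm)=b(dm)$ with $cm,dm\in\Ap\subset$ image.
\item This gives the right Ore condition; the left Ore condition is symmetric.
\end{itemize}

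For part (c), first note that $\mathbb S_e$ maps $\ker\tr$ into $\ker{\rm tr}_{\lambda_e}$. Here we may take $\lambda$ to contain $e$, which is allowed by Lemma~\ref{lem-ker-naturality}. The inclusion follows from Theorem~\ref{thm-trace-cut}: ${\rm tr}_{\lambda_e}\circ\mathbb S_e=\mathcal S_e\circ\tr$. Thus $\mathbb S_e$ descends to $\dS\to\widetilde{\cS}_\omega(\cut_e(\fS))$. For injectivity, suppose $\mathbb S_e(x)\in\ker{\rm tr}_{\lambda_e}$. Then $\mathcal S_e(\tr(x))=0$, and since $\mathcal S_e$ is injective (it sends the monomial basis injectively into the monomial basis), $\tr(x)=0$.

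Part (d) is the step needing the most care, mainly the weak generation. We work inside $\A$ through $\trA$, under which $\gaa_v\mapsto A_v$.
\begin{itemize}
\item The $\gaa_v$ $q$-commute by Lemma~\ref{gaa-com}. They are nonzero because their images are monomials.
\item Ordered monomials in the $A_v$ are linearly independent, since they form part of a basis of $\A$.
\item For weak generation, take $x\in\dS$. Its image is a finite Laurent polynomial in the $A_v$, so multiplying on the right by a suitable positive monomial $m=\prod A_v^{N}$ lands in $\Ap$. This product lies in $\mathsf{Pol}(\{A_v\})$, which is the image of $\mathsf{Pol}(\{\gaa_v\})$.
\item Injectivity from part (a) transports this back, giving $x\,\gaa^{N}\in\mathsf{Pol}(\{\gaa_v\})$.
\end{itemize}
This uses $\Ap\subset\operatorname{im}\trA$ and requires no interior punctures; that assumption is implicit in (d), since $\trA$ is only defined in that setting.
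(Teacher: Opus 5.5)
Your proposal is correct and takes essentially the same route as the paper. Parts (a) and (c) are the same kernel and commutative-square arguments, via Theorem~\ref{thm-trace-A}(d), the injectivity of $\psi_\lambda$, Theorem~\ref{thm-trace-cut} and the injectivity of $\mathcal S_e$. Parts (b) and (d) rest, as in the paper, on the sandwich $\mathcal{A}_{\omega}^{+}(\fS,\lambda)\subset\widetilde{\cS}_\omega(\fS)\subset\mathcal{A}_{\omega}(\fS,\lambda)$. The differences are minor: you verify the Ore condition directly by clearing denominators with a positive monomial, whereas the paper cites a general result of L\^e--Yu (their Proposition~2.2). You also make explicit two points the paper leaves implicit: the choice of $\lambda\ni e$ in (c), justified by Lemma~\ref{lem-ker-naturality}, and the check of the three frame axioms in (d).
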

\begin{proof}
    (a) This is trivial from the definition of $\dS$ and Theorem~\ref{thm-trace-A}(d).

    (b) 
    It follows from (a) that $\dS$ is a subalgebra of $\X$, which is a domain \cite{Cohn}. Then $\dS$ is a domain.
    
    From (a) and Theorem~\ref{thm-trace-A}(b), we can regard $\dS$ as an $R$-subalgebra of $\A$ with
    \begin{align}\label{sandwitch-projected}
        \Ap\subset\dS\subset\A
    \end{align}
    when $\fS$ contains no interior punctures. 
    Then \cite[Proposition~2.2]{LY22} shows that
    $\dS$ is an Ore domain.

    (c) Diagram~\eqref{eq-com-tr-splitting} implies that $\mathbb S_e(\ker\tr)
    \subset \ker{\rm tr}_{\lambda_e}$. Then the splitting map 
$
\mathbb{S}_e \colon\overline{\mathscr{S}}_\omega(\fS) \to \overline{\mathscr{S}}_\omega(\cut_e(\fS))
$ induces an algebra homomorphism
$$
\mathbb{S}_e \colon\widetilde{\mathscr{S}}_\omega(\fS) \to \widetilde{\mathscr{S}}_\omega(\cut_e(\fS)).
$$
Diagram \eqref{eq-com-tr-splitting} shows the following diagram commutes 
 \begin{equation*}
\begin{tikzcd}
\dS \arrow[r, "\mathbb S_e"]
\arrow[d, "\tr"]  
&  \widetilde{\cS}_{\omega}(\cut_e(\fS)) \arrow[d, "{\rm tr}_{\lambda_e}"] \\
 \mathcal{Z}_\omega(\fS,\lambda)
 \arrow[r, "\mathcal S_e"] 
&  
\mathcal{Z}_\omega(\cut_e(\fS),\lambda_e)
\end{tikzcd}
\end{equation*}
Since all $\tr$, ${\rm tr}_{\lambda_e}$, and 
$\mathcal S_e$  are injective, then so is 
$
\mathbb{S}_e \colon\widetilde{\mathscr{S}}_\omega(\fS) \to \widetilde{\mathscr{S}}_\omega(\cut_e(\fS)).
$

(d) In the proof of (b), we showed 
$\Ap\subset\dS\subset\A$ with $\gaa_v= A_v$ for $v\in V_\lambda$. This completes the proof. 
\end{proof}

\section{Quantum cluster algebras}\label{sec-quantum-clu-al}

We recall basics of quantum cluster algebras, obtained by gluing quantum torus algebras associated to cluster seeds along quantum mutations. We refer the readers to \cite{BZ,FG09a}. Then we extend the known quantum mutations to balanced subalgebras for $n$-th root variables. Such an extension was studied in \cite{Hiatt,BW11} for $n=2$, and in \cite{Kim21} for $n=3$.

\subsection{Classical cluster $\mathcal A$- and $\mathcal X$-mutations}\label{sec-mutation-classical}

Fix a non-empty set $\mathcal{V}$, and a non-empty subset $\mathcal{V}_{\rm mut}$ of $\mathcal{V}$. The elements of $\mathcal{V}$, $\mathcal{V}_{\rm mut}$ and $\mathcal{V}\setminus \mathcal{V}_{\rm mut}$ are called {\bf vertices}, {\bf mutable vertices} and {\bf frozen vertices}, respectively. By a quiver $\Gamma$ we mean a directed graph whose set of vertices is $\mathcal{V}$ 
and equipped with weight on the edges so that the weight on each edge is $1$ unless the edge connects two frozen vertices, in which case the weight is $\frac{1}{2}$. An edge of weight $1$ will be called an {\bf arrow}, and an edge of weight $\frac{1}{2}$ a {\bf half-arrow}. As before, we denote by $Q = (Q(u,v))_{u,v\in \mathcal{V}}$ to denote the signed adjacency matrix of $\Gamma$. A quiver is considered up to equivalence, where two quivers are equivalent if they yield the same signed adjacency matrix. In particular, a quiver can be represented by a representative quiver without an oriented cycle of length 1 or 2. Let $\mathcal{F}$ be the field of rational functions over $\mathbb{Q}$ with the set of algebraically independent generators enumerated by $\mathcal{V}$, say $\mathcal{F} = \mathbb{Q}(\{y_v\}_{v\in \mathcal{V}})$. 

A {\bf cluster $\mathcal{X}$-seed} (resp. {\bf cluster $\mathcal{A}$-seed}) is a pair $\mathcal{D}_X = (\Gamma,(X_v)_{v\in \mathcal{V}})$ (resp. $\mathcal{D}_A= (\Gamma,(A_v)_{v\in \mathcal{V}})$), where $\Gamma$ is a quiver and $\{X_v\}_{v\in \mathcal{V}}$ (resp. $\{A_v\}_{v\in \mathcal{V}}$) forms an algebraically independent generating set of $\mathcal{F}$ over $\mathbb{Q}$. The signed adjacency matrix $Q$ of $\Gamma$ is called the {\bf exchange matrix} of the seed $\mathcal{D}_X$ (resp. $\mathcal{D}_A$).

\def\sgn{\text{sgn}}

Suppose that $k\in\mathcal V_{
\rm mut}$. The {\bf mutation}
$\mu_{k,X}$ at the 
mutable vertex $k\in\mathcal{V}$ is defined to be the process that transforms an $\mathcal X$-seed $\mathcal{D}_X= (\Gamma,(X_v)_{v\in \mathcal{V}})$ into a new seed
$\mu_{k,X}(\mathcal D_X) = \mathcal D_X' = 
(\Gamma', (
X_v')_{v\in 
\mathcal{V}})
$.
Here 
$$
X_v'= \begin{cases}
    X_v^{-1} & v=k,\\
    X_v (1+ 
    X_k^{-\text{sgn}(Q(v,k)) })^{-Q(v,k)} & v\neq k,
\end{cases}
$$
where 
$$
\sgn(a)=
\begin{cases}
    1 & a>0,\\
    0 & a=0,\\
    -1 & a<0,
\end{cases}
$$
for $a\in\mathbb R$, and $\Gamma'$ is obtained from 
$\Gamma$ by the following procedures:
\begin{enumerate}
    \item for each pair of arrows $i\rightarrow k$ and $k\rightarrow j$ draw an arrow $i\rightarrow j$,
    
    \item reverse all the arrows incident to the vertex $k$,

    \item delete a maximal pairs of arrows $i\rightarrow j$ and $i\rightarrow j$ going in the opposite directions.
\end{enumerate}
We use $Q' = (Q'(u,v))_{u,v\in\mathcal V}$ to denote the adjacency matrix of $\Gamma'$. Then we have 
\begin{align}\label{eq-mutation-Q}
Q'(u,v) = \begin{cases}
    - Q(u,v) & k\in\{u,v\},\\
    Q(u,v) +\frac{1}{2}\Big(Q(u,k)|Q(k,v)|+
    |Q(u,k)|Q(k,v)\Big) & k\notin\{u,v\}.
\end{cases}
\end{align}
We use $\mu_k(Q)$ (resp. $\mu_k(\Gamma)$)
to denote $Q'$ (resp. $\Gamma'$).

The {\bf mutation}
$\mu_{k,A}$ at the 
mutable vertex $k\in\mathcal{V}$ is defined to be the process that transforms an $\mathcal A$-seed $\mathcal{D}_A= (\Gamma,(A_v)_{v\in \mathcal{V}})$ into a new seed
$\mu_{k,A}(\mathcal D_A) = \mathcal D_A' = 
(\Gamma', (
A_v')_{v\in 
\mathcal{V}})
$.
Here 
$$
A_v'= \begin{cases}
    A^{-1}_k(\prod_{j\in\mathcal V} A_j^{[Q(j,k)]_+} + \prod_{j\in\mathcal V} A_j^{[-Q(j,k)]_+}) & v=k,\\
    A_v & v\neq k,
\end{cases}
$$
where $[a]_+$ stands for the positive part of a real number $a$
      $$ [a]_+ := \max\{a,0\} = {\textstyle \frac{1}{2}(a+|a|)},$$
and $\Gamma'$ is obtained from $\Gamma$ by the same procedures introduced for $\mu_{k,X}$.

It is well-known that $\mu_{k,X}(\mu_{k,X}(\mathcal D_X)) = \mathcal D_X$ and 
$\mu_{k,A}(\mu_{k,A}(\mathcal D_A)) = \mathcal D_A$.
\def\Fr{{\rm Frac}}

Two $*$-seeds in $\mathcal F$ are said to be
mutation-equivalent if they are transformed to each other by a finite sequence of seed
mutations. An equivalence class of $*$-seeds is called a $*$-mutation class.
Here $*$ is $\mathcal X$ or $\mathcal A$.

\subsection{Quantum $\mathcal A$-mutations and quantum cluster algebras}\label{sec-mutation-quantum}
Let $\mathcal F$ be a skew-field over $R=\mathbb Z[\omega^{\pm\frac{1}{2}}]$. Recall that $\xi=\omega^n$.
\begin{definition}[\cite{BZ}]\label{def-quantum-seed}
    A {\bf quantum seed} is a triple $(Q,\Pi,M)$, where 
\begin{enumerate}
    \item $Q=(Q(u,v))_{u,v\in\mathcal V}$ is an exchange matrix;

    \item $\Pi=(\Pi(u,v))_{u,v\in\mathcal V}$ is a anti-symmetric matrix with integral entries satisfying the compatibility relation
    $$\sum_{k\in\mathcal V}Q(k,u) \Pi(k,v)=\delta_{u,v} d_u$$
    for all $u\in\mathcal V_{\text{mut}}$
    and $v\in\mathcal V$, where $d_u$ is a positive integer for $u\in\mathcal V_{\text{mut}}$;

    \item $M\colon \mathbb Z^{\mathcal V}\rightarrow\mathcal F\setminus\{0\}$ is a function such that 
    \begin{align}\label{eq-M-relation}
        M({\bf k}) M({\bf t}) = \xi^{\frac{1}{2} {\bf k} \Pi {\bf t}^T} M({\bf k} + {\bf t})
    \end{align}
    for row vectors ${\bf k},{\bf t}\in \mathbb Z^{\mathcal V}$, and $M(\mathbb Z^{\mathcal V})$ is a basis of a quantum torus over $R$ whose skew-field is $\mathcal F$.
\end{enumerate}
\end{definition}

For each $i\in\mathcal V$, define ${\bf e}_i\in\mathbb Z^{\mathcal V}$ with
\begin{align}\label{eq-def-vector-ei-ele}
    {\bf e}_i(v)=\delta_{i,v} \text{ for $v\in\mathcal V$}.
\end{align}
Then the function $M$ is uniquely determined by the values $A_i:=M({\bf e}_i)$, which we call {\bf (quantum) cluster variables}.
A (quantum) cluster variable $A_i$ is called a {\bf frozen (quantum) cluster variable} if $i\in\mathcal V\setminus\mathcal V_{\text{mut}}$, and an {\bf exchangeable (quantum) cluster variable} otherwise.

We label the vertex set $\mathcal V$ as 
$\{v_1,\cdots,v_r\}$.
For any ${\bf k}=(k_v)_{v\in\mathcal V}\in \mathbb Z^{\mathcal V}$, we have 
\begin{align}\label{eq-A-power}
    M({\bf k}) = A^{\bf k}:= \left[\prod_{v\in \mathcal V}A_v^{k_v}\right],
\end{align}
where $\left[\prod_{v\in \mathcal V}A_v^{k_v}\right]$ is the Weyl-ordered product 
\begin{align}\label{Weyl-A}
    \left[\prod_{v\in \mathcal V}A_v^{k_v}\right]= \xi^{-\frac{1}{2}\sum_{i<j} k_{v_i} k_{v_j}\Pi(i,j)} A_{v_1}^{k_{v_1}} A_{v_2}^{k_{v_2}} \cdots A_{v_r}^{k_{v_r}}.
\end{align}
Note that the Weyl-ordered product in \eqref{Weyl-A} is independent of how we label the vertex set $\mathcal V$.

Note that the anti-symmetric matrix $\Pi$ is uniquely determined by the function $M$ because of \eqref{eq-M-relation}.

Given a quantum seed $(Q,\Pi,M)$ in $\mathcal F$ and 
$k\in\mathcal V_{\text{mut}}$, the quantum $\mathcal A$-mutation produces a new quantum seed $(Q',\Pi',M')=\mu_{k,A}(Q,\Pi,M)$ according to the following rules:
\begin{itemize}
    \item $Q'=\mu_k(Q)$;

    \item the mutation on the quantum cluster variables is
    \begin{equation*}
        A_i':=M'({\bf e}_i)=
        \begin{cases}
            M({\bf e}_i) & i\neq k\\
            M(-{\bf e}_k + \sum_{j\in\mathcal V}[Q(j,k)]_{+}{\bf e}_j)
            + M(-{\bf e}_k + \sum_{j\in\mathcal V}[-Q(j,k)]_{+}{\bf e}_j)
            & i=k
        \end{cases}
    \end{equation*}

    \item $\Pi'=(\Pi'(i,j))_{i,j\in\mathcal V}$, where, for each pair $i,j\in\mathcal V$, the entry $\Pi'(i,j)$ is the unique integer such that 
    $$M'({\bf e}_i)M'({\bf e}_j)
    =\xi^{\Pi'(i,j)} M'({\bf e}_j)M'({\bf e}_i).$$
\end{itemize}
There is a formula of $\Pi'(i,j)$ expressed in entries of $\Pi$ \cite[Equation~(3.4)]{BZ}
\begin{align}\label{eq-for-P'}
    \Pi'(i,j)=
    \begin{cases}
        \Pi(i,j) & k\notin\{i,j\},\\
        -\Pi(k,j) + \sum_{v\in\mathcal V} [Q(v,k)]_{+} \Pi(v,j) & j\neq i=k,\\
        -\Pi(i,k) + \sum_{v\in\mathcal V} [Q(v,k)]_{+} \Pi(i,v) & i\neq j=k,\\
        \Pi(k,k)=0 & i=j=k.
    \end{cases}
\end{align}
It is well-known that $\mu_{k,A}(\mu_{k,A}(Q,\Pi,M)) = (Q,\Pi,M)$ \cite[Proposition~4.10]{BZ}.

\begin{definition}\label{def-quantum-class}
    Two quantum seeds in $\mathcal F$ are said to be
mutation-equivalent if they are transformed to each other by a finite sequence of quantum 
$\mathcal A$-mutations. An equivalence class of quantum seeds is called a quantum 
$\mathcal A$-mutation class.
\end{definition}


The relations among the quantum seeds in a given quantum $\mathcal A$-mutation class $\mathsf{S}$ can be encoded in the 
exchange graphs.

\def\Exc{\mathsf{Exch}_\mathsf{S}}

\begin{definition}\cite[page 37]{ishibashi2023skein}\label{def-exch-graph}
    The exchange graph of the quantum $\mathcal A$-mutation class $\mathsf{S}$ is a graph $\mathsf{Exch}_\mathsf{S}$ with vertices $w$ corresponding
to the quantum seeds $s^{w}$
in $\mathsf{S}$, together with edges of the forms
$s^{w} \wideoverunder{\mu_{k,A}}{} s^{w'}$, where $\mu_{k,A}(s^{w})= s^{w'}$ and 
$k\in\mathcal V_{\text{mut}}$.
\end{definition}

Suppose two exchange graphs $\mathsf{Exch}_{\mathsf{S}}$ and 
$\mathsf{Exch}_{\mathsf{S}'}$ have one vertex in common, then it is straightforward to verify that 
$\mathsf{Exch}_{\mathsf{S}}=\mathsf{Exch}_{\mathsf{S}'}$ (or $\mathsf{S}= \mathsf{S}'$).

For any vertex $w$ in $\Exc$ and the quantum seed $s^w = (Q^{w}, \Pi^w, M^w)$ corresponding to $w$, define 
\begin{align}
\textbf{A}^{+}(w)&:=\{M^w({\bf k})\mid {\bf k}\in\mathbb Z_{\geq 0}^{\mathcal V}\}\subset
\textbf{A}(w):=\{M^w({\bf k})\mid {\bf k}\in\mathbb Z^{\mathcal V}\}\subset \mathcal F
\label{eq-A-w}
\\
    \mathbb T(w)&:= \text{span}_R(\textbf{A}(w))\subset \mathcal F.
    \label{eq-T-w}
\end{align}
We call $\mathbb T(w)$ the {\bf based quantum torus} associated to the quantum seed $s^w$.
We use ${\rm Frac}(\mathbb T(w))$ to denote 
 the skew-field of fractions of $\mathbb T(w)$; for the existence of ${\rm Frac}(\mathbb T(w))$, see \cite{Cohn}.

Let $s^w = (Q^{w}, \Pi^w, M^w)$ and $s^{w'} = (Q^{w'}, \Pi^{w'}, M^{w'})$ be two quantum seeds in $\mathsf S$ such that 
$\mu_{k,A}(s^w) = s^{w'}$, where $k\in\mathcal V_{\text{mut}}$. 
For each $i\in\mathcal V$, set 
$$A_i=M^w({\bf e}_i)\text{ and }
A_i'=M^{w'}({\bf e}_i).$$
Then the mutation $\mu_{k,A}$ can be regarded as a skew-field isomorphism 
\begin{align}\label{eq-mut-iso}
    \mu_{k,A}\colon {\rm Frac}(\mathbb T(w'))\rightarrow {\rm Frac}(\mathbb T(w))
\end{align}
defined by 
\begin{equation*}
        \mu_{k,A}(A_i')=
        \begin{cases}
            A_i & i\neq k,\\
            A^{-{\bf e}_k + \sum_{j\in\mathcal V}[Q(j,k)]_{+}{\bf e}_j}
            + A^{-{\bf e}_k + \sum_{j\in\mathcal V}[-Q(j,k)]_{+}{\bf e}_j}
            & i=k,
        \end{cases}
    \end{equation*}
    for $i\in\mathcal V$ (see \eqref{eq-A-power} and \eqref{Weyl-A} for  $A^{*}$).
Note that 
\begin{align}\label{eq-iso-mutation-eqal}
    \mu_{k,A}(A_i') = M^{w'}({\bf e}_i)
\end{align}
for $i\in\mathcal V$.

\begin{definition}\label{def-quan-cluster-algebra}
    The {\bf quantum cluster algebra} associated with a quantum $\mathcal A$-mutation class $\mathsf S$ of
quantum seeds is the $R$-subalgebra $\mathscr{A}_{\mathsf S}\subset \mathcal F$ generated by $\bigcup_{w\in\Exc}{\bf A}^{+}(w)$  (see \eqref{eq-A-w}) and the inverses of frozen quantum
cluster variables. Each exchangeable quantum cluster variable of any $s^w\in \mathsf S$ is called an {\bf exchangeable quantum cluster variable} of $\mathscr{A}_{\mathsf S}$.

The {\bf quantum upper
cluster algebra} is defined to be
$\mathscr{U}_{\mathsf S}:=\bigcap _{w\in\Exc}
\mathbb T(w)\subset \mathcal F$ (see \eqref{eq-T-w}).

For each vertex $w\in\Exc$, the {\bf upper bound} at $w$ is defined to be 
$$\mathscr{U}_{\mathsf S}(w):= \mathbb T(w)\cap \bigcap _{w'}
\mathbb T(w')\subset \mathcal F,$$ where 
$w'\in \Exc$ runs over the vertices adjacent to $w$. 
\end{definition}

The following two theorems describe the relations among the three definitions above.

\begin{theorem}\cite[Corollary~5.2]{BZ}\label{thm-inclusion-quantum}
    We have the inclusion
    $$\mathscr{A}_{\mathsf S}\subset \mathscr{U}_{\mathsf S}.$$
\end{theorem}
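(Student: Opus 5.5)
The plan is to prove the quantum Laurent phenomenon in the strong form of \cite{BZ}: the upper bound $\mathscr U_{\mathsf S}(w)$ does not depend on the vertex $w\in\Exc$, and it contains every cluster variable. The inclusion $\mathscr A_{\mathsf S}\subset\mathscr U_{\mathsf S}$ then follows formally.

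\emph{Step 1 (cluster monomials lie in the upper bound of their own seed).} Fix $w$ and a neighbour $w'=\mu_k(w)$. The variables $A_i$ with $i\ne k$ coincide in the two seeds. For $A_k$, the exchange relation of the quantum seed $s^{w'}$ expresses $A_k$ as a sum of two Laurent monomials in $\mathbb T(w')$, by \eqref{eq-iso-mutation-eqal} and the involutivity of $\mu_{k,A}$. Each element of $\mathbf A^+(w)$ is a Weyl-ordered product of the $A_i$, so it also lies in $\mathbb T(w')$, and trivially in $\mathbb T(w)$. Hence $\mathbf A^+(w)\subset\mathscr U_{\mathsf S}(w)$. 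Frozen variables are never mutated, so their inverses lie in every $\mathbb T(w')$.

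\emph{Step 2 (invariance of the upper bound).} The main step is to show $\mathscr U_{\mathsf S}(w)=\mathscr U_{\mathsf S}(w')$ whenever $w'=\mu_k(w)$. First I derive full rank from the compatibility condition in Definition~\ref{def-quantum-seed}. That condition says $Q^{\top}\Pi$ restricted to mutable rows is $(D\mid 0)$ with $D$ diagonal and positive. Hence the mutable columns of $Q$ are linearly independent, and this property is preserved by mutation. Following \cite{BZ}, I then express the upper bound as
\[
\mathscr U_{\mathsf S}(w)=\bigcap_{j\in\mathcal V_{\rm mut}} R\text{-span of } A^{\mathbf k}\text{ with } \mathbf k_j\ge 0,\ \text{together with } A_j'^{\,m},\ m\ge 0,
\]
that is, a Laurent ring in all variables except that $A_j$ and its mutation $A_j'$ both appear only polynomially. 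This description uses a leading-term argument in the quantum torus, in the spirit of Lemma~\ref{lem-frac-subalgebra}. Its key input is that the two exchange binomials $P_j=M(\sum[Q(i,j)]_+\mathbf e_i)+M(\sum[-Q(i,j)]_+\mathbf e_i)$, for distinct $j$, are pairwise coprime and not divisible by any $A_i$. Full rank guarantees this, because it makes the exponent vectors of distinct $P_j$ non-proportional. Finally, I compare this description for $w$ and for $w'$. The factor for $j=k$ is symmetric in $A_k,A_k'$. For $j\neq k$, I verify that the conditions for $w$ and $w'$ agree, using the mutation rule \eqref{eq-mutation-Q} and checking that the new exchange binomial $P_j'$ differs from $P_j$ by a substitution that is invertible inside the $k$-localized ring. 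I expect this comparison for $j\ne k$ to be the main obstacle. In particular, the quantum $\xi$-twists in the Weyl ordering have to be tracked, and they work out precisely because of the compatibility of $(Q,\Pi)$ via \eqref{eq-for-P'}.

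\emph{Step 3 (conclusion).} The exchange graph is connected, so Step 2 gives that $\mathscr U_{\mathsf S}(w)$ is the same ring $U$ for all $w$. Then
\[
U=\bigcap_w\mathscr U_{\mathsf S}(w)\subset\bigcap_w\mathbb T(w)=\mathscr U_{\mathsf S},
\]
and conversely $\mathscr U_{\mathsf S}\subset\mathscr U_{\mathsf S}(w)=U$, so $\mathscr U_{\mathsf S}=U$. By Step 1, $U$ contains $\bigcup_w\mathbf A^+(w)$ and the inverses of the frozen variables. Since $U$ is an $R$-subalgebra of $\mathcal F$, it contains the algebra these elements generate, which is $\mathscr A_{\mathsf S}$.
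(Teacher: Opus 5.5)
Your Steps~1 and~3 are exactly how Berenstein and Zelevinsky deduce Corollary~5.2 from their Theorem~5.1, which the paper records as Theorem~\ref{thm-upper-w-upper}. Your derivation of full rank from the compatibility condition is also correct. Had you simply invoked Theorem~\ref{thm-upper-w-upper} for Step~2, the proof would be complete. The gap is in your own sketch of Step~2.

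Let $\mathcal L_j(w)$ be the $R$-subalgebra generated by $A_i^{\pm1}$ ($i\neq j$), $A_j$ and $A_j'$. For $j\neq k$ the rings $\mathcal L_j(w)$ and $\mathcal L_j(w')$ are genuinely different. So checking that ``the conditions for $w$ and $w'$ agree'' factor by factor cannot succeed, whatever substitution relates $P_j'$ to $P_j$ after localizing at $k$. A counterexample already appears in the classical specialization of type $A_2$. Take $x_1x_1'=1+x_2$, $x_2x_2'=1+x_1$, $k=1$, $j=2$. Then $x_1^{-1}\in\mathcal L_2(w)$. But $x_1^{-1}=x_1'/(1+x_2)$ is not a Laurent polynomial in the cluster $(x_1',x_2)$ of $w'$, hence $x_1^{-1}\notin\mathcal L_2(w')$. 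Inverting $A_k$ and $A_k'$ discards exactly the information the comparison needs, namely polynomiality in $A_k$ and $A_k'$.

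What you actually need is
$\mathcal L_k(w)\cap\mathcal L_j(w)=\mathcal L_k(w)\cap\mathcal L_j(w')$ for each $j\neq k$, using $\mathcal L_k(w)=\mathcal L_k(w')$. Intersecting over $j$ then gives $\mathscr U_{\mathsf S}(w)=\mathscr U_{\mathsf S}(w')$. This is the invariance of the upper bound for the rank-two subseed on $\{j,k\}$, with all other variables inverted. It is here, and not in the description $\mathscr U_{\mathsf S}(w)=\bigcap_j\mathcal L_j(w)$, that coprimality of $P_j$ and $P_k$ enters, through a genuine rank-two analysis. Roughly: an element of the intersection has its coefficients of negative powers of $A_k$ divisible by twisted powers of $P_k$. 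Its coefficients of negative powers of $A_j$ are divisible by twisted powers of $P_j$. Coprimality is needed to control the terms in which both exponents are negative.

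In the quantum setting even the notion of coprimality needs care. The compatibility condition gives $A^{Q(*,j)}A^{Q(*,k)}=\xi^{d_jQ(j,k)}A^{Q(*,k)}A^{Q(*,j)}$, so the two binomials need not commute.

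Conversely, the description $\mathscr U_{\mathsf S}(w)=\bigcap_j\mathcal L_j(w)$ needs no coprimality at all. It follows from the one-step identity $\mathbb T(w)\cap\mathbb T(w_j)=\mathcal L_j(w)$, where $w_j$ is the neighbour of $w$ in direction $j$. That identity is proved by grading by the $A_j$-exponent, using only $P_j\neq 0$.
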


\begin{theorem}\cite[Theorem~5.1]{BZ}\label{thm-upper-w-upper}
    For any $w\in\Exc$, we have 
    $$\mathscr{U}_{\mathsf S}(w)=\mathscr{U}_{\mathsf S}.$$
\end{theorem}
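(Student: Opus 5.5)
The plan is to follow the strategy of Berenstein--Fomin--Zelevinsky, adapted to the quantum setting as in \cite{BZ}. Since $\mathsf{Exch}_\mathsf{S}$ is connected, $\mathscr{U}_{\mathsf S}=\bigcap_{w}\mathbb T(w)$ is contained in every upper bound $\mathscr{U}_{\mathsf S}(w)$. For the reverse inclusion it therefore suffices to prove
$$\mathscr{U}_{\mathsf S}(w)=\mathscr{U}_{\mathsf S}(w')\qquad\text{for adjacent vertices } w \wideoverunder{\mu_{k,A}}{} w'.$$
Indeed, then all upper bounds coincide, and their common value lies in every $\mathbb T(w)$. Throughout, write $w_j$ for the vertex adjacent to $w$ through $\mu_{j,A}$.

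The first step is to describe a single intersection $\mathbb T(w)\cap\mathbb T(w_j)$. Let $A_j'=\mu_{j,A}(A_j')=A^{-{\bf e}_j+\sum_v[Q(v,j)]_+{\bf e}_v}+A^{-{\bf e}_j+\sum_v[-Q(v,j)]_+{\bf e}_v}$. I would show that
$$\mathbb T(w)\cap\mathbb T(w_j)=\mathbb T_{\hat j}[A_j,A_j'],$$
where $\mathbb T_{\hat j}$ is the quantum torus in the $A_i^{\pm1}$ with $i\neq j$. Every element of $\mathbb T(w)$ has a unique expansion $\sum_m c_m A_j^m$ with $c_m\in\mathbb T_{\hat j}$. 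Rewriting the negative powers through $A_j^{-1}=(A_j')\,P_j^{-1}$, where $P_j$ is the exchange binomial (a $q$-twisted element of $\mathbb T_{\hat j}$), reduces membership in $\mathbb T(w_j)$ to a divisibility condition: the coefficient $c_{-m}$ must be divisible by the appropriately twisted power of $P_j$. This is the quantum analogue of the classical statement and uses only the Ore and domain properties of quantum tori.

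The main step is to compare $\mathbb T(w)\cap\mathbb T(w_k)\cap\mathbb T(w_j)$ with the corresponding intersection around $w'$, for each $j\neq k$. This rank-two computation is where I expect the real difficulty. For it I need the exchange binomials $P_j$ and $P_k$ to be coprime in the relevant quantum Laurent ring, and no column of the exchange matrix restricted to $\mathcal V_{\rm mut}$ may vanish. In the quantum setting both properties follow from the compatibility condition $\sum_v Q(v,u)\Pi(v,w)=\delta_{u,w}d_u$. This condition forces $Q$ to have full rank on $\mathcal V_{\rm mut}$, so the exponent vectors of distinct exchange monomials are linearly independent, and that independence gives coprimality.

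With coprimality in hand, I would run the BFZ argument. Using the description from the first step, an element of $\mathbb T(w)\cap\mathbb T(w_k)$ lies in $\mathbb T(w_j)$ exactly when its coefficients satisfy the $P_j$-divisibility condition. One then checks that this condition is unchanged when $A_k$ is replaced by $A_k'$, treating separately the cases $Q(j,k)=0$ and $Q(j,k)\neq0$; the mutated binomial $P_j'$ differs from $P_j$ by a substitution that is invertible modulo $P_k$. Intersecting over all $j\neq k$ gives $\mathscr{U}_{\mathsf S}(w)\subset\mathscr{U}_{\mathsf S}(w')$, and symmetry gives equality. The most delicate bookkeeping is keeping track of the $\xi$-powers in the Weyl-ordered monomials during this substitution.
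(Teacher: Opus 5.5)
The paper gives no proof of this statement; it cites Berenstein--Zelevinsky. Your skeleton is exactly their adaptation of the BFZ argument: reduce to $\mathscr U_{\mathsf S}(w)=\mathscr U_{\mathsf S}(w')$ for adjacent $w,w'$, describe $\mathbb T(w)\cap\mathbb T(w_j)=\mathbb T_{\hat j}[A_j,A_j']$ by divisibility of the coefficients of negative powers of $A_j$, and compare $\mathbb T(w)\cap\mathbb T(w')\cap\mathbb T(w_j)$ with $\mathbb T(w)\cap\mathbb T(w')\cap\mathbb T(w'_j)$ for each $j\neq k$. The gap is in the case $Q(j,k)\neq 0$, which is the heart of the proof. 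Your only indication there, that $P_j'$ and $P_j$ differ by a substitution ``invertible modulo $P_k$'', points the wrong way. Modulo $P_k$ one has $A_k'=A_k^{-1}P_k\equiv 0$. Since $Q'(k,j)=-Q(k,j)\neq 0$, exactly one monomial of $P_j'$ contains $A_k'$, so $P_j'$ collapses to a single monomial and retains no information about $P_j$.

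The mechanism that works is $A_j$-adic. Because $Q(j,k)\neq 0$, $A_j$ divides exactly one of the two monomials of $P_k$, so $P_k$ becomes a unit in the skew Laurent series ring in $A_j$ over $\mathbb T_{\hat j}$. Hence $A_k'^{\pm1}$ lies in the $A_j$-adic completion built from $w$, and $A_k^{\pm1}$ lies in the one built from $w'$, so the two completions are identified. In that completion the mutation rule gives $P_j'\equiv(\text{monomial})\cdot P_j$ modulo $A_j$. Therefore the variable obtained by mutating $A_j$ in $w'$ equals a unit times the one obtained in $w$, plus an element with no negative powers of $A_j$, and the two rings generated over the completed base by these variables coincide. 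For an element of $\mathbb T(w)\cap\mathbb T(w')$, membership in $\mathbb T(w_j)$ (resp.\ $\mathbb T(w'_j)$) is a coefficientwise condition on its $A_j$-expansion, i.e.\ exactly membership in that ring. So the two conditions agree, and no coprimality is used in this case.

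Two smaller corrections. First, divisibility by the twisted powers of $P_j$ is tractable not because quantum tori are Ore domains, but because compatibility gives $Q(*,j)^{T}\Pi\,{\bf e}_i=\delta_{ij}d_j$ (the identity behind \eqref{eq-com-i-neq-k}). Thus $P_j$ is a monomial times a binomial $1+cA^{-Q(*,j)}$ that is central in $\mathbb T_{\hat j}$; left and right divisibility agree, and the twisting only changes the scalar $c$. Second, coprimality is needed only when $Q(j,k)=0$, and there it holds as you intend. The two binomials commute, and $\mathbb T_{\hat j\hat k}$ is free over the central commutative Laurent ring of monomials with exponents in the saturation of $\mathbb Z Q(*,j)+\mathbb Z Q(*,k)$. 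Linear independence of the full columns $Q(*,j),Q(*,k)$ then gives coprimality there; note these must be the full columns with frozen rows included, since columns of the principal part can vanish. When $Q(j,k)\neq 0$ one has $A^{Q(*,j)}A^{Q(*,k)}=\xi^{d_jQ(j,k)}A^{Q(*,k)}A^{Q(*,j)}$, so coprimality is not even the relevant notion, consistent with the $A_j$-adic argument above.
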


\begin{remark}
    Let $R'$ be another commutative domain with an invertible element $(\omega')^{\frac{1}{2}}$. Then there is an algebra homomorphism from $R=\mathbb Z[\omega^{\pm\frac{1}{2}}]$ to $R'$ sending $w^{\frac{1}{2}}$ to  $(w')^{\frac{1}{2}}$. This algebra homomorphism induces an $R$-algebra structure for $R'$. 
    Then $\mathcal F\otimes_R R'$ is a skew-field over $R'$. 
    Actually, all arguments in this paper about the quantum cluster work for general ground ring $R'$ due to the functor ``$-\otimes_R R'$".
    In particular, when $R'=\mathbb Q$ and 
    $(\omega')^{\frac{1}{2}}=1$, 
    the quantum $\mathcal A$-mutation introduced in this subsection is equivalent to the classical cluster $\mathcal A$-mutation introduced in \S\ref{sec-mutation-classical}.
\end{remark}

\subsection{A decomposition of the quantum $\mathcal A$-mutation}\label{sec-A-mutation-decom}
In \cite{FG09b}, the authors introduced a decomposition for the classical cluster $\mathcal A$-mutation. They divided the classical cluster $\mathcal A$-mutation into two steps. In this subsection, we will divide the quantum $\mathcal A$-mutation into two steps, which is equivalent to decompose the isomorphism in \eqref{eq-mut-iso} into the combination of two isomorphisms. 
In \S\ref{sec-compatibility}, we will use this decomposition to prove the compatibility between the quantum $\mathcal A$-mutation and the quantum $\mathcal X$-mutation introduced in \S\ref{subsec-mutation-Fock}.

Let $s^w = (Q, \Pi, M)$ and $s^{w'} = (Q', \Pi', M')$ be two quantum seeds in a quantum 
$\mathcal A$-mutation class $\mathsf S$ such that 
$\mu_{k,A}(s^w) = s^{w'}$, where $k\in\mathcal V_{\text{mut}}$. 
For each $i\in\mathcal V$, set 
$A_i=M({\bf e}_i)\text{ and }
A_i'=M'({\bf e}_i).$
Define
\begin{align}\label{eq-def-mu-step-1}
    \mu_{k,A}'\colon {\rm Frac}(\mathbb T(w'))\rightarrow {\rm Frac}(\mathbb T(w))
\end{align}
such that
\begin{equation}\label{eq-def-mu-step1}
        \mu_{k,A}'(A_i')=
        \begin{cases}
            A_i & i\neq k,\\
            A^{-{\bf e}_k + \sum_{j\in\mathcal V}[Q(j,k)]_{+}{\bf e}_j}
            & i=k,
        \end{cases}
    \end{equation}
    for $i\in\mathcal V$ (see \eqref{eq-A-power} and \eqref{Weyl-A} for  $A^{*}$).

\begin{lemma}
    $\mu_{k,A}'$ is a well-defined isomorphism between skew-fields.
\end{lemma}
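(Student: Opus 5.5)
To show that $\mu_{k,A}'$ is a well-defined isomorphism of skew-fields, we exhibit it as the (unique) extension of an algebra isomorphism between the based quantum tori $\mathbb T(w')\to\mathbb T(w)$. Since a quantum torus is an Ore domain, an injective algebra homomorphism into a skew-field extends uniquely to its skew-field of fractions, so this suffices.

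\emph{Step 1: reduce to a statement about exponent vectors.} Put ${\bf b}:=-{\bf e}_k+\sum_{j\in\mathcal V}[Q(j,k)]_+{\bf e}_j\in\mathbb Z^{\mathcal V}$. Let $L\colon\mathbb Z^{\mathcal V}\to\mathbb Z^{\mathcal V}$ be the linear map with ${\bf e}_i\mapsto{\bf e}_i$ for $i\neq k$ and ${\bf e}_k\mapsto{\bf b}$. Since $Q(k,k)=0$, the $k$-th coordinate of ${\bf b}$ is $-1$. Hence $L$ is unimodular, and in fact $L^2=\mathrm{Id}$.

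We then define $\mu_{k,A}'$ on the monomial basis by
\[
M'({\bf t})\mapsto M(L({\bf t})).
\]
This is an $R$-linear bijection from $\mathbb T(w')$ onto $\mathbb T(w)$, because it maps the basis $\{M'({\bf t})\}$ bijectively onto the basis $\{M({\bf t})\}$. It agrees with \eqref{eq-def-mu-step1} on the generators $A_i'=M'({\bf e}_i)$.

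\emph{Step 2: multiplicativity.} By \eqref{eq-M-relation}, this linear map is an algebra homomorphism if and only if
\[
L({\bf k})\,\Pi\,L({\bf t})^T={\bf k}\,\Pi'\,{\bf t}^T\quad\text{for all }{\bf k},{\bf t},
\]
which amounts to $\Pi'=L\Pi L^T$ in matrix form (writing $L$ for the matrix whose rows are $L({\bf e}_i)$). We verify this entrywise against \eqref{eq-for-P'}.

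For $i,j\neq k$ both sides equal $\Pi(i,j)$. For $i=k\neq j$ we compute
\[
{\bf b}\,\Pi\,{\bf e}_j^T=-\Pi(k,j)+\sum_v[Q(v,k)]_+\Pi(v,j),
\]
which is exactly the second case of \eqref{eq-for-P'}; the case $i\neq j=k$ follows by antisymmetry. Finally ${\bf b}\Pi{\bf b}^T=0$ because $\Pi$ is antisymmetric.

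Equivalently, the commutation relations among the elements $\mu_{k,A}'(A_i')$ match those among the $A_i'$. The Weyl-ordering normalizations then also match, since $M'({\bf t})$ equals the Weyl-ordered product of the generators and both sides use the same $\xi^{\frac12(\cdot)}$ convention.

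\emph{Step 3: extend and invert.} The resulting algebra isomorphism $\mathbb T(w')\cong\mathbb T(w)$ extends uniquely to an isomorphism ${\rm Frac}(\mathbb T(w'))\to{\rm Frac}(\mathbb T(w))$. Its inverse is the extension of the map $M({\bf t})\mapsto M'(L^{-1}{\bf t})$, which is an algebra map by the same identity $\Pi'=L\Pi L^T$.

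The only nontrivial point is the identification of $\Pi'$ with $L\Pi L^T$ in Step 2. This is essentially the content of the derivation of \cite[Equation~(3.4)]{BZ}, so I expect it to be a direct check rather than a real obstacle.
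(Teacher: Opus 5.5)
Your proposal is correct and follows essentially the same route as the paper. Your entrywise check of $\Pi'=L\Pi L^{T}$ against \eqref{eq-for-P'} is exactly the paper's case-by-case verification that the images $\mu_{k,A}'(A_i')$ satisfy the $\Pi'$-commutation relations. Your inverse $M({\bf t})\mapsto M'(L{\bf t})$ (using $L^2=\mathrm{Id}$) coincides with the paper's map \eqref{eq-inverse}, since $[-Q'(j,k)]_+=[Q(j,k)]_+$.
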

\begin{proof}
    For any $i,j\in\mathcal V$, we have 
    $A_i'A_j' = \xi^{\Pi'(i,j)} A_j' A_i'\in {\rm Frac}(\mathbb T(w'))$.
    To show the well-definedness of $\mu_{k,A}'$, it suffices to show that 
    \begin{align*}
        \mu_{k,A}'(A_i')\mu_{k,A}'(A_j') = \xi^{\Pi'(i,j)}\mu_{k,A}'( A_j') \mu_{k,A}'(A_i')\in {\rm Frac}(\mathbb T(w)).
    \end{align*}

    {\bf Case 1}: When $i=j=k$ or $k\notin\{i,j\}$, this is trivial from Equations~\eqref{eq-for-P'} and \eqref{eq-def-mu-step1}.

    {\bf Case 2}: When $j\neq i=k$, we have 
    \begin{align*}
        \mu_{k,A}'(A_i')\mu_{k,A}'(A_j')
        =& A^{-{\bf e}_k + \sum_{v\in\mathcal V}[Q(v,k)]_{+}{\bf e}_v} A_j\\
        =& \xi^{(-{\bf e}_k + \sum_{v\in\mathcal V}[Q(v,k)]_{+}{\bf e}_v) \Pi {\bf e}_j^T} A_j A^{-{\bf e}_k + \sum_{v\in\mathcal V}[Q(v,k)]_{+}{\bf e}_v}\\
        =& \xi^{-\Pi(k,j) + \sum_{v\in\mathcal V} [Q(v,k)]_{+} \Pi(v,j)}
        A_j A^{-{\bf e}_k + \sum_{v\in\mathcal V}[Q(v,k)]_{+}{\bf e}_v}\\
        =& \xi^{\Pi'(i,j)}\mu_{k,A}'( A_j') \mu_{k,A}'(A_i').
    \end{align*}

    {\bf Case 3}: When $i\neq j=k$, the proof is similar with the proof of case 2.

    It is a trivial check that the follow map 
    \begin{align}
    {\rm Frac}(\mathbb T(w))&\rightarrow {\rm Frac}(\mathbb T_{w'}) \nonumber
    \\
    A_i&\mapsto
        \begin{cases}
            A_i' & i\neq k, \\
            (A')^{-{\bf e}_k + \sum_{j\in\mathcal V}[-Q'(j,k)]_{+}{\bf e}_j}
            & i=k,
        \end{cases}
        \label{eq-inverse}
\end{align}
is the inverse of $\mu_{k,A}'$.
\end{proof}

\begin{remark}
    Note that we  use $[Q'(k,j)]_{+}$ instead of $[Q'(j,k)]_{+}$ in \eqref{eq-inverse}.
    We can directly check that the map defined in \eqref{eq-inverse} is a well-defined homomorphism between skew-fields using the following equations:
     $$
    \begin{cases}
       \sum_{v\in\mathcal V} [Q'(v,k)]_{+} \Pi'(v,j) =\sum_{v\in\mathcal V} [Q'(k,v)]_{+} \Pi'(v,j) & j\neq i=k,\\
       \sum_{v\in\mathcal V} [Q'(v,k)]_{+} \Pi'(i,v) = \sum_{v\in\mathcal V} [Q'(k,v)]_{+} \Pi'(i,v) & i\neq j=k.
    \end{cases}
    $$    
\end{remark}

\def\ZV{\mathbb Z^{\mathcal V}}

 Define $Q(*,k)\in\ZV$ to be the $k$-th column of $Q$, i.e., the $j$-th entry is $Q(j,k)$ for $j\in\mathcal V$.
Note that 
\begin{align}\label{eq-Q-vector-sum}
    Q(*,k) =\sum_{j\in\mathcal V}[Q(j,k)]_{+}{\bf e}_j - \sum_{j\in\mathcal V}[-Q(j,k)]_{+}{\bf e}_j.
\end{align}
Define
\begin{align}\label{def-mu-sharp}
    \mu_{k,A}^{\sharp}\colon {\rm Frac}(\mathbb T(w))\rightarrow {\rm Frac}(\mathbb T(w))
\end{align}
such that
\begin{equation}\label{eq-def-mu-step2}
        \mu_{k,A}^{\sharp}(A_i)=
        \begin{cases}
            A_i & i\neq k,
            \\
            A_k\left(1 + \xi^{-d_k/2} A^{-Q(*,k)}\right)^{-1}
            & i=k.
        \end{cases}
    \end{equation}

\begin{lemma}\label{lem-decom-A}
    \begin{enumerate}[label={\rm (\alph*)}]\itemsep0,3em

    \item The map $\mu_{k,A}^\sharp$ in \eqref{def-mu-sharp} is a well-defined isomorphism between skew-fields.

    \item\label{lem-decom-A-b} We have $\mu_{k,A} = \mu_{k,A}^{\sharp}\circ \mu_{k,A}'.$

    \end{enumerate}
\end{lemma}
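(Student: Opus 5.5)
The plan is to prove both parts by working with the single monomial
\[
Y:=A^{-Q(*,k)}\in \mathbb T(w)
\]
and its commutation relations. These all come from the compatibility condition $\sum_{j\in\mathcal V}Q(j,k)\Pi(j,v)=\delta_{k,v}d_k$ in Definition~\ref{def-quantum-seed}. Since $A^{\bf a}A^{\bf b}=\xi^{{\bf a}\Pi{\bf b}^T}A^{\bf b}A^{\bf a}$, taking ${\bf a}=-Q(*,k)$ and ${\bf b}={\bf e}_i$ gives $YA_i=\xi^{-\delta_{k,i}d_k}A_iY$. So $Y$ commutes with every $A_i$ with $i\neq k$, and it $\xi$-commutes with $A_k$. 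Moreover $Q(k,k)=0$, so $Y$ contains no $A_k$.

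For part (a), I would first define $\mu^\sharp_{k,A}$ on generators and check that the images satisfy the defining relations $A_iA_j=\xi^{\Pi(i,j)}A_jA_i$ of $\mathbb T(w)$. When $k\notin\{i,j\}$ there is nothing to check. When $i=k\neq j$, the factor $(1+\xi^{-d_k/2}Y)^{-1}$ commutes with $A_j$, so the relation between $A_k(1+\xi^{-d_k/2}Y)^{-1}$ and $A_j$ is the same as the relation between $A_k$ and $A_j$. This gives an algebra map $\mathbb T(w)\to{\rm Frac}(\mathbb T(w))$. I would check it is injective using a leading-term (degree) argument, as in the proof of Lemma~\ref{lem-frac-subalgebra}. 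It then extends to the skew-field of fractions by the universal property of Ore localization. Every generator other than $A_k$ is fixed, and $Y$ involves only those generators, so $\mu^\sharp_{k,A}(Y)=Y$. Hence the map $A_k\mapsto A_k(1+\xi^{-d_k/2}Y)$, $A_i\mapsto A_i$ for $i\neq k$, is well defined by the same check, and it is a two-sided inverse. This proves that $\mu^\sharp_{k,A}$ is an isomorphism.

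For part (b), both sides are skew-field homomorphisms, so it is enough to compare them on the generators $A_i'$. For $i\neq k$ both sides give $A_i$. For $i=k$, write ${\bf p}=\sum_j[Q(j,k)]_+{\bf e}_j$ and ${\bf n}=\sum_j[-Q(j,k)]_+{\bf e}_j$; neither has a ${\bf e}_k$ component. By \eqref{eq-Q-vector-sum}, $-{\bf e}_k+{\bf p}-Q(*,k)=-{\bf e}_k+{\bf n}$. Write $A^{-{\bf e}_k+{\bf p}}=\xi^{c}A_k^{-1}A^{\bf p}$ for the appropriate Weyl scalar $c$. Since $\mu^\sharp_{k,A}(A_k^{-1})=(1+\xi^{-d_k/2}Y)A_k^{-1}$ and $A^{\bf p}$ is fixed, we get
\[
\mu^\sharp_{k,A}\bigl(A^{-{\bf e}_k+{\bf p}}\bigr)=A^{-{\bf e}_k+{\bf p}}+\xi^{-d_k/2}\,Y\,A^{-{\bf e}_k+{\bf p}}.
\]
It then remains to show $\xi^{-d_k/2}YA^{-{\bf e}_k+{\bf p}}=A^{-{\bf e}_k+{\bf n}}$, after which (b) follows by comparison with the definition of $\mu_{k,A}$.

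The main obstacle is getting this scalar exactly right, so that the powers $\xi^{\pm d_k/2}$ cancel rather than double. I would use the Weyl product rule $A^{\bf a}A^{\bf b}=\xi^{\frac12{\bf a}\Pi{\bf b}^T}A^{{\bf a}+{\bf b}}$ with ${\bf a}=-Q(*,k)$ and ${\bf b}=-{\bf e}_k+{\bf p}$. The exponent ${\bf a}\Pi{\bf b}^T$ splits into two pieces. The first is $Q(*,k)\Pi{\bf e}_k^T=\sum_jQ(j,k)\Pi(j,k)=d_k$. The second is $-Q(*,k)\Pi{\bf p}^T=-\sum_v p_v\delta_{k,v}d_k=0$, because $p_k=0$. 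So the total factor is $\xi^{d_k/2}$, which cancels $\xi^{-d_k/2}$ as needed. I would double-check the sign conventions of $\Pi$ in \eqref{eq-M-relation} against the direction of the compatibility relation, since a sign slip there would produce $\xi^{-d_k}$ instead of $1$.
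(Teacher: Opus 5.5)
Your proposal is correct and follows essentially the same route as the paper. Part (a) checks the $q$-commutation relations using that $A^{-Q(*,k)}=A^{Q(k,*)}$ commutes with every $A_i$ for $i\neq k$. Part (b) is a direct Weyl-ordering computation on $A_k'$, where the compatibility condition produces the factor $\xi^{d_k/2}$ that cancels $\xi^{-d_k/2}$; your sign check of this is right. Your explicit inverse $A_k\mapsto A_k(1+\xi^{-d_k/2}A^{-Q(*,k)})$, together with the injectivity and Ore-extension argument, supplies the bijectivity and extension-to-$\mathrm{Frac}$ steps that the paper's proof leaves implicit after checking relations.
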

\begin{proof}
    First we show that
    \begin{align}\label{eq-com-i-neq-k}
        \text{$A_i A^{Q(k,*)} = A^{Q(k,*)} A_i\in {\rm Frac}(\mathbb T(w))$ when $i\neq k.$}
    \end{align}
     It is equivalent to show 
    $Q(k,*)\Pi{\bf e}_i^T=0$. We have 
    $$Q(k,*)\Pi{\bf e}_i^T=\sum_{v\in\mathcal V} Q(k,v) \Pi(v,i)=-\sum_{v\in\mathcal V} Q(v,k) \Pi(v,i)=-\delta_{i,k}d_k=0.$$

        To show the well-definedness of $\mu_{k,A}^{\sharp}$, it suffices to show that 
    \begin{align*}
        \mu_{k,A}^{\sharp}(A_i)\mu_{k,A}^{\sharp}(A_j) = \xi^{\Pi(i,j)}\mu_{k,A}^{\sharp}( A_j) \mu_{k,A}^{\sharp}(A_i)\in {\rm Frac}(\mathbb T(w)).
    \end{align*}

    {\bf Case 1}: When $i=j=k$ or $k\notin\{i,j\}$, this is trivial from equation 
    \eqref{eq-def-mu-step2}.

    {\bf Case 2}: When $j\neq i=k$, we have 
    \begin{align*}
        \mu_{k,A}^{\sharp}(A_i)\mu_{k,A}^{\sharp}(A_j)
        =& A_k\left(1 + \xi^{-d_k/2} A^{Q(k,*)}\right)^{-1} A_j\\
        =& A_k A_j\left(1 + \xi^{-d_k/2} A^{Q(k,*)}\right)^{-1}\\
        =& \xi^{\Pi(k,j)}
        A_j A_k\left(1 + \xi^{-d_k/2} A^{Q(k,*)}\right)^{-1}\\
        =&\xi^{\Pi(i,j)} \mu_{k,A}^{\sharp}(A_j)\mu_{k,A}^{\sharp}(A_i).
    \end{align*}

    {\bf Case 3}: When $i\neq j=k$, the proof is similar with the proof of case 2. 

    Then we will show $\mu_{k,A} = \mu_{k,A}'\circ \mu_{k,A}^\sharp.$ It suffices to show that 
    $\mu_{k,A} (A_i')= \mu_{k,A}^{\sharp}(\mu_{k,A}'(A_i'))$ for $i\in\mathcal V.$ This is trivial when $i\neq k$.
    We know that there exists an integer $m$ such that 
    $$A_k A^{\sum_{j\in\mathcal V}[Q(j,k)]_{+}{\bf e}_j} = \xi^m A^{\sum_{j\in\mathcal V}[Q(j,k)]_{+}{\bf e}_j} A_k.$$
    Then we have 
    \begin{align}\label{eq-mu-k-A-k}
       \mu_{k,A}'(A_k') =A^{-{\bf e}_k+\sum_{j\in\mathcal V}[Q(j,k)]_{+}{\bf e}_j}= \xi^{-\frac{m}{2}} A^{\sum_{j\in\mathcal V}[Q(j,k)]_{+}{\bf e}_j} A_k^{-1}. 
    \end{align}
    It follows that 
    \begin{equation}\label{eq-mu-sharp-mu'}
    \begin{split}
        \mu_{k,A}^{\sharp}(\mu_{k,A}'(A_k'))
        =& \xi^{-\frac{m}{2}} A^{\sum_{j\in\mathcal V}[Q(j,k)]_{+}{\bf e}_j}\left(1 + \xi^{-d_k/2} A^{Q(k,*)}\right) A_k^{-1}\\
        =& \xi^{-\frac{m}{2}} A^{\sum_{j\in\mathcal V}[Q(j,k)]_{+}{\bf e}_j} A_k^{-1} +
        \xi^{-\frac{m}{2}} \xi^{-d_k/2} A^{\sum_{j\in\mathcal V}[Q(j,k)]_{+}{\bf e}_j} A^{Q(k,*)} A_k^{-1}.
        \end{split}
    \end{equation}
    Equation \eqref{eq-com-i-neq-k} shows that $A^{\sum_{j\in\mathcal V}[Q(j,k)]_{+}{\bf e}_j} A^{Q(k,*)}= A^{Q(k,*)} A^{\sum_{j\in\mathcal V}[Q(j,k)]_{+}{\bf e}_j}$.
    We have 
    $$A^{Q(k,*)} A_k^{-1} = \xi^{-\sum_{i\in\mathcal V}Q(k,i)\Pi(i,k)}
    A_k^{-1} A^{Q(k,*)}
    =\xi^{d_k}
    A_k^{-1} A^{Q(k,*)}.$$
    This implies that 
    \begin{equation}\label{eq-mu-sharp-mu'1}
    \begin{split}
        \xi^{-\frac{m}{2}} \xi^{-d_k/2} A^{\sum_{j\in\mathcal V}[Q(j,k)]_{+}{\bf e}_j} A^{Q(k,*)} A_k^{-1}
        &= A^{-{\bf e}_k + Q(k,*) + \sum_{j\in\mathcal V}[Q(j,k)]_{+}{\bf e}_j}\\& = A^{-{\bf e}_k+\sum_{j\in\mathcal V}[Q(k,j)]_{+}{\bf e}_j},
        \end{split}
    \end{equation}
    where the last equality comes from \eqref{eq-Q-vector-sum}.
    Equations \eqref{eq-mu-k-A-k}, \eqref{eq-mu-sharp-mu'}, and \eqref{eq-mu-sharp-mu'1} imply that 
    $$\mu_{k,A}^{\sharp}(\mu_{k,A}'(A_k'))
    =A^{-{\bf e}_k+\sum_{j\in\mathcal V}[Q(j,k)]_{+}{\bf e}_j}+ A^{-{\bf e}_k+\sum_{j\in\mathcal V}[Q(k,j)]_{+}{\bf e}_j}=\mu_{k,A}(A_k').$$

\end{proof}

\subsection{Quantum $\mathcal X$-mutations for Fock-Goncharov algebras}

Suppose that 
$\mathcal{D}_X= (\Gamma,(X_v)_{v\in \mathcal{V}})$ is a cluster $\mathcal X$-seed 
whose 
exchange matrix 
is $Q$.
Define the {\bf Fock-Goncharov algebra} associated to $\mathcal D_X$ to be the quantum torus algebra
\begin{align*}
\mathcal{X}_q(\mathcal{D}_X)  = \bT_q(Q)  = R\langle X_v^{\pm 1}, v\in \mathcal{V} \rangle / (X_v X_{v'} = q^{2Q(v,v')} X_{v'} X_v \mbox{ for } v,v' \in \mathcal{V}).
\end{align*}
We understand $\mathcal{X}_q(\mathcal{D}_X)$ as the quantum algebra associated to the seed $\mathcal{D}_X$, which yields the classical algebra associated to $\mathcal{D}_X$ as $q\to 1$. In this subsection we recall the quantum version of the mutation
formula.

The quantum mutation shall be a non-commutative rational map. So we need to use the skew-field of fractions of $\mathcal{X}_q(\mathcal{D})$, which we denote by ${\rm Frac}(\mathcal{X}_q(\mathcal{D}_X))$; for the existence of ${\rm Frac}(\mathcal{X}_q(\mathcal{D}_X))$, see \cite{Cohn}.

The following special function is a crucial ingredient.
\begin{definition}[compact quantum dilogarithm \cite{F95,FK94}]
    The quantum dilogarithm for a quantum parameter
$q$ is the function
$$\Psi^q(x) = \prod_{r=1}^{+\infty} (1+ q^{2r-1} x)^{-1}.$$
\end{definition}
For our purposes, the infinite product can be understood formally, as what matters for us is only the following:

\def\sgn{{\rm sgn}}

\begin{lemma}\label{lem-F-P}
    Suppose that $xy = q^{2m} yx$ for some $m\in \mathbb{Z}$. Then we have 
    $${\rm Ad}_{\Psi^q(x)}(y) = y F^q(x,m),$$
    where
    \begin{align}\label{eq-def-Fq}
        F^q(x,m)=\prod_{r=1}^{|m|} (1+ q^{(2r-1){\rm sgn}(m)}x)^{\sgn(m)}.
    \end{align}
\end{lemma}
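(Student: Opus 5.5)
The plan is to reduce the conjugation to a statement about functions of the single variable $x$, and then compute a ratio of two quantum dilogarithms by telescoping. Throughout, we interpret ${\rm Ad}_{\Psi^q(x)}(y)=\Psi^q(x)\,y\,\Psi^q(x)^{-1}$. We work formally, in the completion of the algebra generated by $y^{\pm1}$ and $x$ with respect to powers of $x$, where $\Psi^q(x)$ and its inverse make sense as power series in $x$. Only the relations below are used, so any reasonable formal setting will do.

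First I will move $y$ past functions of $x$. From $xy=q^{2m}yx$, induction gives $x^k y=q^{2mk}\,y\,x^k$ for all $k\ge 0$. Extending linearly, and continuously in the $x$-adic topology, this yields $f(x)\,y=y\,f(q^{2m}x)$ for every formal power series $f$. Applying this to $f=\Psi^q$ gives
\[
{\rm Ad}_{\Psi^q(x)}(y)=y\,\Psi^q(q^{2m}x)\,\Psi^q(x)^{-1}.
\]
All factors on the right other than $y$ are power series in the single variable $x$, so they commute with one another. The problem therefore reduces to an identity in the commutative ring $\mathbb Z[q^{\pm1}][[x]]$.

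Next I will compute the ratio from the product formula:
\[
\Psi^q(q^{2m}x)\,\Psi^q(x)^{-1}=\prod_{r\ge1}\frac{1+q^{2r-1}x}{1+q^{2r-1+2m}x}.
\]
If $m>0$, the denominator factors are the numerator factors with index shifted by $m$. After cancelling, only $\prod_{r=1}^{m}(1+q^{2r-1}x)$ remains. If $m<0$, the numerator factors with $r\le |m|$ are missing from the denominator's range, and the extra denominator factors are $1+q^{2r-1-2|m|}x$ for $r=1,\dots,|m|$. These are exactly $1+q^{-(2s-1)}x$ for $s=1,\dots,|m|$, so the ratio is $\prod_{s=1}^{|m|}(1+q^{-(2s-1)}x)^{-1}$. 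If $m=0$, the ratio is $1$. In all three cases the result is $F^q(x,m)$ as defined in \eqref{eq-def-Fq}.

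The only delicate point is justifying the telescoping for infinite products. I will handle it by truncating both products at a large index $N$. The truncated identity is an exact finite cancellation, and the discrepancy between the truncated and full products lies in $x$-degrees that grow with $N$, so letting $N\to\infty$ $x$-adically gives the claim. Alternatively, one can skip infinite products entirely: it suffices to check the functional equation $\Psi^q(q^{2}x)=(1+qx)\Psi^q(x)$ and iterate it $|m|$ times, in either direction.
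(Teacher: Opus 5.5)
The paper states this lemma without proof, as a standard fact about the quantum dilogarithm, so there is no route to compare against; your telescoping computation is the standard derivation. The algebra itself is correct: the reduction via $f(x)y=yf(q^{2m}x)$ to $\Psi^q(q^{2m}x)\Psi^q(x)^{-1}=F^q(x,m)$, and the cancellation in the cases $m>0$, $m<0$ and $m=0$, all check out.

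What fails is your formal justification. The product $\prod_{r\ge1}(1+q^{2r-1}x)^{-1}$ does not converge $x$-adically over $\mathbb{Z}[q^{\pm1}]$. Each factor has $x$-coefficient $-q^{2r-1}$, so the $x$-coefficient of the product would be the infinite sum $-\sum_{r\ge1}q^{2r-1}$. Hence $\Psi^q(x)$ is not an element of $\mathbb{Z}[q^{\pm1}][[x]]$, and your truncation claim is false. After the finite cancellation at level $N$ you are left with extra factors, for instance $\prod_{r=N+1}^{N+m}(1+q^{2r-1}x)^{-1}$ when $m>0$. These differ from $1$ already in $x$-degree one, with $x$-coefficient $-\sum_{r=N+1}^{N+m}q^{2r-1}$, for every $N$. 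What tends to zero is the $q$-degree, not the $x$-degree.

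There are two ways to repair this.
\begin{itemize}
\item Take coefficients in $\mathbb{Z}((q))$ and use coefficientwise $q$-adic convergence. In each fixed $x$-degree the partial products stabilize, because new contributions have $q$-degree at least $2N+1$. Your truncation argument then works verbatim with ``$x$-degree'' replaced by ``$q$-degree''.
\item More cleanly, use the $q$-binomial expansion $\Psi^q(x)=\sum_{k\ge0}a_kx^k\in\mathbb{Q}(q)[[x]]$ with $a_k=(-q)^k/\prod_{j=1}^{k}(1-q^{2j})$. Verify $\Psi^q(q^2x)=(1+qx)\Psi^q(x)$ coefficientwise via $(q^{2k}-1)a_k=q\,a_{k-1}$. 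Then iterate this $|m|$ times in the appropriate direction, as in your last sentence: this gives $\Psi^q(q^{2m}x)=\prod_{r=1}^{m}(1+q^{2r-1}x)\,\Psi^q(x)$ for $m>0$, and $\Psi^q(q^{-2|m|}x)=\prod_{s=1}^{|m|}(1+q^{-(2s-1)}x)^{-1}\Psi^q(x)$ for $m<0$.
\end{itemize}
With the second option everything is genuinely $x$-adic, the conjugation makes sense in the $x$-adically completed algebra over $\mathbb{Q}(q)$, and your argument goes through.
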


It is easy to check that ${\rm Ad}_{\Psi^q(X_k)}$ extends to a unique skew-field isomorphism from ${\rm Frac}(\mathcal{X}_q(\mathcal{D}_X))$ to itself, which we denote by ${\rm Ad}_{\Psi^q(X_k)}$.

\begin{definition}[quantum $\mathcal X$-mutation for Fock-Goncharov algebras \cite{BZ,FG09a,FG09b}]\label{def.quantum_X-mutation}
     Suppose 
     that $\mathcal{D}_X = (\Gamma,(X_v)_{v\in \mathcal{V}})$ is an $\mathcal X$-seed, 
     $k$$\in \mathcal{V}_{\rm mut}$ is a 
     mutable vertex of $\Gamma$, and $\mathcal D_X'=\mu_k(\mathcal D_X)$.
      The quantum mutation map 
      is the 
      isomorphism between the 
      skew-fields
      $$\mu_{
      \mathcal{D}_X,\mathcal{D}_X'}^q=\mu_k^q \, \colon \, \Fr(
      \mathcal{X}_q(\mathcal D_X'))\rightarrow
      \Fr(
      \mathcal{X}_q(\mathcal D_X))$$
      given by the composition
      $$\mu_k^q= \mu_k^{\sharp q}\circ \mu_k',$$
      where $\mu'_k$ is the isomorphism of skew-fields
      $$\mu_k'\colon \Fr(
      \mathcal{X}_q(\mathcal D_X'))\rightarrow
      \Fr(
      \mathcal{X}_q(\mathcal D_X))$$ 
      given by 
      \begin{align}\label{eq-quantum-mutation}
      \mu_k'(
      X_v') = 
      \begin{cases}
          X_k^{-1}& \mbox{if } v=k,\\
          \left[ X_v X_k^{[Q(v,k)]_+}\right]
          & \mbox{if } v\neq k.
      \end{cases}
      \end{align}
      where $[a]_+$ stands for the positive part of a real number $a$
      $$
      [a]_+ := \max\{a,0\} = {\textstyle \frac{1}{2}(a+|a|)},
$$
and $[\sim]$ is the Weyl-ordered product as in \eqref{Weyl_ordering-X-1},
      while
       $\mu^{\sharp q}_k$ is the following automorphism of skew-field
       $$\mu_k^{\sharp q} = {\rm Ad}_{\Psi^q(
       X_k)}\colon \Fr(
       \mathcal{X}_q(\mathcal D))\rightarrow
      \Fr(
      \mathcal{X}_q(\mathcal D)).$$
\end{definition}

The following lemma will be used in \S\ref{sec-skein-cluster}.

\begin{lemma}\cite[Lemma~3.4]{KimWang}\label{lem-commute}
Let $\mathbb{F} = \mathbb{Z}$ or $\mathbb{Z}/m\mathbb{Z}$ for some positive integer $m$.
    Let $\mathcal V_0\subset\mathcal V$ contain $k$, and let ${\bf t}'=(t_v')_{v\in\mathcal V}\in\bZ ^{\mathcal V}$ 
    satisfy $\sum_{v\in\mathcal V} Q'(u,v) t_v' = 0\in 
    \mathbb{F}$ for all $u\in\mathcal V_0$.
    Suppose that  $\mu_k'((
    X')^{{\bf t}'}) = 
    X^{\bf t}$, where
    ${\bf t}=(
    t_v)_{v\in\mathcal V}\in\bZ ^{\mathcal V}$.
    Then we have 
    $\sum_{v\in\mathcal V} Q(u,v) t_v = 0\in 
    \mathbb{F}$ for all $u\in\mathcal V_0$.
\end{lemma}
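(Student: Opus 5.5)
The proof is a direct computation: write $\mathbf t$ explicitly in terms of $\mathbf t'$, then compare the two linear forms $\sum_v Q(u,v)t_v$ and $\sum_v Q'(u,v)t'_v$ using the matrix mutation rule \eqref{eq-mutation-Q}. The hypothesis $k\in\mathcal V_0$ is what makes the correction term vanish.

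\emph{Step 1: the exponent vector.} From \eqref{eq-quantum-mutation}, the map $\mu_k'$ sends $X'_v\mapsto [X_vX_k^{[Q(v,k)]_+}]$ for $v\neq k$ and $X'_k\mapsto X_k^{-1}$. Since Weyl-ordered monomials multiply additively in exponents up to powers of $q$, and the statement compares normalized monomials, we get
\[
t_v=t'_v \quad (v\neq k),\qquad t_k=-t'_k+\sum_{v\neq k}[Q(v,k)]_+\,t'_v .
\]

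\emph{Step 2: the case $u=k$.} Since $Q(k,k)=0$ and $Q'(k,v)=-Q(k,v)$, we have
\[
\sum_v Q(k,v)t_v=\sum_{v\neq k}Q(k,v)t'_v=-\sum_v Q'(k,v)t'_v=0\in\mathbb F .
\]

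\emph{Step 3: the case $u\in\mathcal V_0\setminus\{k\}$.} Substitute Step 1 and \eqref{eq-mutation-Q}, and form the difference
\[
D:=\sum_v Q(u,v)t_v-\sum_v Q'(u,v)t'_v .
\]
The $t'_k$-terms cancel, because both sides contribute $-Q(u,k)t'_k$. For $v\neq k$, the coefficient of $t'_v$ in $D$ is
\[
Q(u,k)[Q(v,k)]_+-\tfrac12\bigl(Q(u,k)|Q(k,v)|+|Q(u,k)|Q(k,v)\bigr).
\]
Using skew-symmetry, $[Q(v,k)]_+=\tfrac12(|Q(k,v)|-Q(k,v))$, and this coefficient simplifies to $-[Q(u,k)]_+\,Q(k,v)$. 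Hence
\[
D=-[Q(u,k)]_+\sum_{v\neq k}Q(k,v)t'_v=[Q(u,k)]_+\sum_v Q'(k,v)t'_v .
\]

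\emph{Step 4: conclusion.} The sum $\sum_v Q'(k,v)t'_v$ is $0$ in $\mathbb F$ because $k\in\mathcal V_0$. Together with the hypothesis for $u$ itself, this gives $\sum_vQ(u,v)t_v=0$ in $\mathbb F$.

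The only delicate point is that $D$ must be an \emph{integer} multiple of a quantity vanishing in $\mathbb F=\mathbb Z/m\mathbb Z$. This holds because $k$ is mutable, so $Q(u,k)$ is an integer: half-weights occur only between two frozen vertices. Consequently $[Q(u,k)]_+\in\mathbb Z$, and the reduction modulo $m$ is legitimate.
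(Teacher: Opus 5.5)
Your proof is correct, and it is the same direct computation behind this lemma. The paper cites \cite{KimWang} for it and runs the identical $u=k$ versus $u\neq k$ exponent bookkeeping in the proof of Lemma~\ref{lem-rest-iso-to-balanced}, except that it splits on the sign of $Q(u,k)$ where you package both cases into the single factor $[Q(u,k)]_+\sum_v Q'(k,v)t'_v$. Your remark that this factor is an integer because $k$ is mutable is exactly what justifies the reduction when $\mathbb F=\mathbb Z/m\mathbb Z$.
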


\subsection{Quantum mutations for balanced $n$-th root Fock-Goncharov algebras}\label{subsec-mutation-Fock}
In this subsection, we review the quantum $\mathcal{X}$-mutation for the balanced Fock--Goncharov algebra, as constructed in \cite{KimWang}. In \S\ref{sec-compatibility}, we establish the compatibility between quantum $\mathcal{A}$- and $\mathcal{X}$-mutations via the algebra isomorphism \eqref{eq-iso-A-balanced-psi}, which will be used to prove the naturality of the quantum cluster structure inside ${\rm Frac}(\rdS)$, defined in \S\ref{sec-seed-inside}.

Suppose 
that $\mathcal{D}_X= (\Gamma,(X_v)_{v\in \mathcal{V}})$ is an $\mathcal X$-seed 
whose exchange matrix is $Q$.
Define the $n$-th root Fock-Goncharov algebra associated to $\mathcal D_X$ to be
\begin{align}\label{eq-Fock-Goncharov-n}
\mathcal{Z}_\omega(\mathcal{D}_X)  = \mathbb{T}_\omega(Q)  = R\langle Z_v^{\pm 1}, v\in \mathcal{V} \rangle / (Z_v Z_{v'} = \omega^{2Q(v,v')} Z_{v'} Z_v \mbox{ for } v,v' \in \mathcal{V}),
\end{align}
into which the Fock-Goncharov algebra $\mathcal{X}_q(\mathcal{D}_X)$ embeds as
$$
X_v \mapsto Z_v^n, \quad \forall v\in \mathcal{V}.
$$

We use $\Fr(
\mathcal{Z}_\omega(\mathcal{D}_X))$ to denote the 
skew-field of fractions of $
\mathcal{Z}_\omega(\mathcal D_X)$. 
Naturally, ${\rm Frac}(\mathcal{X}_q(\mathcal{D}_X))$ embeds into ${\rm Frac}(\mathcal{Z}_\omega(\mathcal{D}_X))$.

Let $k$ be a 
mutable vertex of $\Gamma$ and let 
$$
\mathcal D_X' =
\mu_k(\mathcal D_X).
$$
Define the 
     skew-field isomorphism $$\nu_k'\colon \Fr(
     \mathcal{Z}_\omega(\mathcal D_X'))\rightarrow
      \Fr(
      \mathcal{Z}_\omega(\mathcal D_X))$$
      such that the action of $\nu_k'$ on the generators is given by 
      the formula \eqref{eq-quantum-mutation} for $\mu'_k$ with $
      X_{v}$ replaced by $
      Z_{v}$:
     \begin{align}\label{eq-quantum-mutation_Z}
      \nu_k'(Z_v') = 
      \begin{cases}
          Z_k^{-1}& \mbox{if } v=k,\\
          \left[ Z_v Z_k^{[Q(v,k)]_+}\right]
          & \mbox{if } v\neq k.
      \end{cases}
      \end{align}
      It is easy to check that $\nu'_k$ extends $\mu'_k$.

      For an $n$-root extension of the automorphism part $\mu^{\sharp q}_k$, we consider the following subalgebra of $\mathcal{Z}_\omega(\mathcal{D}_X)$.
   \begin{definition}[\cite{KimWang}]\label{def-mut-Z}
   Define  
       $$\mathcal B_{\mathcal D_X}=\{{\bf t}=(
       t_v)_{v\in\mathcal V}\in\mathbb Z^{
       \mathcal{V}}\mid \sum_{v\in 
       \mathcal{V}} Q(u,v)
       t_v=0\in
       \mathbb{Z}/n \mathbb{Z}\text{ for all }u\in\mathcal V_{
       {\rm mut}}\},$$
    and define 
    $\mathcal{Z}^{\rm mbl}_\omega(\mathcal{D}_X)$
    to be the $R$-submodule of $
    \mathcal{Z}_\omega(\mathcal D_X)$ spanned by $
    Z^{{\bf t}}$ for ${\bf t}\in \mathcal B_{\mathcal D_X}$. We call $\mathcal{Z}^{\rm mbl}_\omega(\mathcal{D}_X)$ the {\bf mutable-balanced subalgebra} of $\mathcal{Z}_\omega(\mathcal{D}_X)$.
   \end{definition}

\def\ZM{\mathcal{Z}^{\rm mbl}_\omega}

Note that $\mathcal B_{\mathcal D_X}$ is a subgroup of $\mathbb Z^{
\mathcal{V}}$. So 
$\mathcal{Z}^{\rm mbl}_\omega(\mathcal{D}_X)$
    is 
    indeed an $R$-subalgebra of $
    \mathcal{Z}_\omega(\mathcal D_X)$.
    Since $n \mathbb Z^{
    \mathcal{V}}\subset\mathcal B_{\mathcal D_X}\subset \mathbb Z^{
    \mathcal{V}}$, we have 
    $$\mathcal{X}_q(\mathcal{D}_X) \subset \mathcal{Z}^{\rm mbl}_\omega(\mathcal{D}_X) \subset \mathcal{Z}_\omega(\mathcal{D}_X).$$
    The mutable-balanced subalgebra $\mathcal{Z}^{\rm mbl}_\omega(\mathcal{D}_X)$ is precisely the subalgebra of the $n$-th root Fock-Goncharov algebra $\mathcal{Z}_\omega(\mathcal{D}_X)$ to which we can extend the automorphism-part map $\mu^{\sharp q}_k$ for $\mathcal{X}_q(\mathcal{D}_X)$.
    

\begin{lemma}\cite[Lemma~3.7]{KimWang}\label{lem.nu_sharp_well-defined}
There exists a unique skew-field isomorphism
    \begin{align}\label{lem-def-nu-sharp}
        \nu_k^{\sharp \omega}:={\rm Ad}_{\Psi^q(
    X_k)}\colon \Fr(
    \mathcal{Z}^{\rm mbl}_\omega(\mathcal{D}_X))\rightarrow
      \Fr(
      \mathcal{Z}^{\rm mbl}_\omega(\mathcal{D}_X))
    \end{align}
      such that for each ${\bf t} = (t_v)_{v\in \mathcal{V}} \in \mathcal{B}_{\mathcal{D}_X}$, we have
      $$
      \nu^{\sharp \omega}_k(Z^{\bf t}) = Z^{\bf t} \, F^q(X_k, m),
$$
where $m = \frac{1}{n} \sum_{v\in \mathcal{V}} Q(k,v) t_v$.
\end{lemma}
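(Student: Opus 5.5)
The plan is to construct $\nu_k^{\sharp\omega}$ first on the $R$-basis $\{Z^{\bf t}\mid {\bf t}\in\mathcal B_{\mathcal D_X}\}$ of $\mathcal{Z}^{\rm mbl}_\omega(\mathcal{D}_X)$ by the stated formula, and then to show three things: it is multiplicative, it is invertible, and it extends to the skew-field of fractions.

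\emph{Integrality and commutation.} Since $k\in\mathcal V_{\rm mut}$ and ${\bf t}\in\mathcal B_{\mathcal D_X}$, the sum $\sum_v Q(k,v)t_v$ lies in $n\mathbb Z$, so $m({\bf t})=\frac1n\sum_v Q(k,v)t_v$ is an integer. Next, $X_k=Z_k^n=Z^{n{\bf e}_k}$, and $\omega^{n^2}=q$. The commutation rule for Weyl-ordered monomials then gives
\[
X_k Z^{\bf t}=\omega^{2n\,{\bf e}_kQ{\bf t}^T}Z^{\bf t}X_k=q^{2m({\bf t})}Z^{\bf t}X_k .
\]
This is exactly the hypothesis of Lemma~\ref{lem-F-P} with $x=X_k$ and $y=Z^{\bf t}$. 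So the proposed formula is what ${\rm Ad}_{\Psi^q(X_k)}$ does formally, and this justifies the notation.

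\emph{Multiplicativity.} Define $\nu(Z^{\bf t}):=Z^{\bf t}F^q(X_k,m({\bf t}))$ and extend $R$-linearly; the values lie in ${\rm Frac}(\mathcal{Z}^{\rm mbl}_\omega(\mathcal{D}_X))$ because $X_k\in\mathcal{X}_q(\mathcal D_X)\subset\mathcal{Z}^{\rm mbl}_\omega(\mathcal{D}_X)$. Write $m_i=m({\bf t}_i)$, so $m({\bf t}_1+{\bf t}_2)=m_1+m_2$. Using $Z^{{\bf t}_1}Z^{{\bf t}_2}=\omega^{{\bf t}_1Q{\bf t}_2^T}Z^{{\bf t}_1+{\bf t}_2}$, we need
\[
Z^{{\bf t}_1}F^q(X_k,m_1)Z^{{\bf t}_2}F^q(X_k,m_2)=\omega^{{\bf t}_1Q{\bf t}_2^T}Z^{{\bf t}_1+{\bf t}_2}F^q(X_k,m_1+m_2).
\]
Moving $Z^{{\bf t}_2}$ to the left past $F^q(X_k,m_1)$ turns $X_k$ into $q^{2m_2}X_k$. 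The claim then reduces to the one-variable cocycle identity
\[
F^q(q^{2m_2}x,m_1)\,F^q(x,m_2)=F^q(x,m_1+m_2)
\]
in $\mathbb Q(q)(x)$. I would verify this by telescoping the finite products in \eqref{eq-def-Fq}, splitting into cases by the signs of $m_1$, $m_2$ and $m_1+m_2$. A shortcut is to note that both sides express ${\rm Ad}_{\Psi^q(x)}$ applied to a product of two $q$-commuting monomials, inside a completion where $\Psi^q(x)$ makes sense.

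\emph{Bijectivity, extension and uniqueness.} The same construction with $F^q(X_k,m)^{-1}$, i.e. ${\rm Ad}_{\Psi^q(X_k)^{-1}}$, gives a two-sided inverse on the images. Hence $\nu$ is an injective homomorphism from the Ore domain $\mathcal{Z}^{\rm mbl}_\omega(\mathcal{D}_X)$ into a skew-field, and by the universal property of Ore localization it extends uniquely to ${\rm Frac}$. The inverse extends likewise, so the extension is an isomorphism. Uniqueness holds because any such map is determined on the generators $Z^{\bf t}$, which generate the skew-field of fractions.

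The main obstacle is the sign- and case-bookkeeping in the cocycle identity for $F^q$, including getting the exact $q$-shift in the first step of the multiplicativity argument; I would handle it by telescoping as described, or bypass it with the formal $\Psi^q$ argument.
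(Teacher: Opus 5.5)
Your outline is the standard argument; the paper gives no proof here and simply quotes [KimWang, Lemma~3.7]. The first two parts are right. $m(\mathbf t)\in\mathbb Z$ because $k\in\mathcal V_{\rm mut}$, and $X_kZ^{\mathbf t}=q^{2m}Z^{\mathbf t}X_k$ because $q=\omega^{n^2}$. Multiplicativity does reduce to $F^q(q^{2m_2}x,m_1)F^q(x,m_2)=F^q(x,m_1+m_2)$. This identity telescopes from $F^q(x,m)=\Psi^q(q^{2m}x)\Psi^q(x)^{-1}$, which is a finite product for each $m$, so the completion is not needed.

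The gap is in the bijectivity paragraph, which is circular as written. For $m(\mathbf t)<0$, the element $\nu(Z^{\mathbf t})=Z^{\mathbf t}F^q(X_k,m)$ contains factors $(1+q^{-(2r-1)}X_k)^{-1}$. So $\nu(\mathcal{Z}^{\rm mbl}_\omega(\mathcal D_X))\not\subset\mathcal{Z}^{\rm mbl}_\omega(\mathcal D_X)$, and your inverse map $\nu'$, so far defined only on $\mathcal{Z}^{\rm mbl}_\omega(\mathcal D_X)$, cannot be applied to these images. Extending $\nu'$ to ${\rm Frac}$ first would require knowing that $\nu'$ is injective. That is the same kind of statement you are trying to deduce from the existence of the inverse.

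The repair is short. Since $m(n\mathbf e_k)=Q(k,k)=0$, both $\nu$ and $\nu'$ fix $X_k$. Let $S$ be the multiplicative set generated by the elements $1+q^{j}X_k$, $j\in\mathbb Z$. The relation $s(X_k)Z^{\mathbf t}=Z^{\mathbf t}s(q^{2m}X_k)$ shows that $S$ is an Ore set in $\mathcal{Z}^{\rm mbl}_\omega(\mathcal D_X)$. Both $\nu$ and $\nu'$ take values in the localization $S^{-1}\mathcal{Z}^{\rm mbl}_\omega(\mathcal D_X)$ and send $S$ to units. By the universal property of Ore localization, which needs no injectivity, they extend to endomorphisms of $S^{-1}\mathcal{Z}^{\rm mbl}_\omega(\mathcal D_X)$. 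There, $\nu'\nu$ and $\nu\nu'$ fix every $Z^{\mathbf t}$ and every $s^{-1}$, so both composites are the identity. Hence $\nu$ is injective on $\mathcal{Z}^{\rm mbl}_\omega(\mathcal D_X)$, and your final step goes through unchanged: extension to ${\rm Frac}$, extension of the inverse, and uniqueness.

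Alternatively, you can get injectivity directly with a leading-term argument. Expand in the skew Laurent series ring in $Z_k$, where $F^q(X_k,m)\in 1+X_kR[[X_k]]$. Then the terms of lowest $Z_k$-degree in $\nu(x)$ coincide with those of $x$, so $\nu(x)\neq 0$ whenever $x\neq 0$.
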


Lemma~\ref{lem-F-P} implies that $\nu^{\sharp \omega}_k$ extends $\mu^{\sharp q}_k$ (Definition~\ref{def.quantum_X-mutation}).

The following lemma says that the skew-fields of fractions of the mutable-balanced subalgebras are compatible with the monomial-transformation isomorphism $\nu'_k$.

\begin{lemma}\cite[Lemma~3.8]{KimWang}\label{lem.nu_prime_restricts}
    The map $\Fr(    \mathcal{Z}_\omega(\mathcal{D}_X'))\xrightarrow{\nu_k'}
      \Fr(
      \mathcal{Z}_\omega(\mathcal{D}_X))$ defined in \eqref{eq-quantum-mutation_Z} restricts to a  
      skew-field isomorphism
      $\Fr(
      \mathcal{Z}^{\rm mbl}_\omega(\mathcal{D}_X'))\xrightarrow{\nu_k'}
      \Fr(
      \mathcal{Z}^{\rm mbl}_\omega(\mathcal{D}_X)).$ \qed
\end{lemma}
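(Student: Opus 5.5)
The approach is to show that $\nu_k'$ carries the monomial basis of $\mathcal{Z}^{\rm mbl}_\omega(\mathcal{D}_X')$ bijectively onto the monomial basis of $\mathcal{Z}^{\rm mbl}_\omega(\mathcal{D}_X)$. Passing to skew-fields of fractions is then formal.

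First, I would record that $\nu_k'$ is a monomial transformation. By \eqref{eq-quantum-mutation_Z}, each generator $Z_v'$ goes to a Weyl-ordered monomial in the $Z_u$. Since $\nu_k'$ is an algebra homomorphism, it sends any Weyl-ordered monomial $(Z')^{{\bf t}'}$ to a unit multiple of a monomial $Z^{\bf t}$. The exponent is ${\bf t}={\bf t}'L$ for the integer matrix $L$ that is the identity except in the $k$-th column. That column has $-1$ in position $k$ and $[Q(v,k)]_+$ in position $v\neq k$. Because both sides are reflection invariant under $*$ (which fixes Weyl-ordered monomials), the unit is in fact $1$, so $\nu_k'((Z')^{{\bf t}'})=Z^{{\bf t}'L}$. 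Moreover $L^2=I$, so $L$ is unimodular. Hence $\nu_k'$ induces a bijection between the monomial bases of $\mathcal{Z}_\omega(\mathcal{D}_X')$ and $\mathcal{Z}_\omega(\mathcal{D}_X)$.

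Second, I would show that $L$ maps $\mathcal{B}_{\mathcal{D}_X'}$ into $\mathcal{B}_{\mathcal{D}_X}$. Take ${\bf t}'\in\mathcal{B}_{\mathcal{D}_X'}$, so $\sum_v Q'(u,v)t'_v\equiv 0 \pmod n$ for all $u\in\mathcal V_{\rm mut}$. The classical monomial part satisfies $\mu_k'((X')^{{\bf t}'})=X^{{\bf t}}$ with the same exponent map, so this is exactly the setting of Lemma~\ref{lem-commute} with $\mathbb F=\mathbb Z/n\mathbb Z$ and $\mathcal V_0=\mathcal V_{\rm mut}$, which contains $k$. It gives ${\bf t}={\bf t}'L\in\mathcal B_{\mathcal D_X}$. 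Therefore $\nu_k'(\mathcal{Z}^{\rm mbl}_\omega(\mathcal{D}_X'))\subset\mathcal{Z}^{\rm mbl}_\omega(\mathcal{D}_X)$.

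Third, I would prove the reverse inclusion, which I expect to be the main obstacle since Lemma~\ref{lem-commute} is stated in only one direction. Mutation is involutive: $\mu_k(\mathcal D_X')=\mathcal D_X$, and $Q'(v,k)=-Q(v,k)$. I would therefore apply the same argument to the reverse mutation, which produces the matrix $\tilde L$ with column entries $[Q'(v,k)]_+=[-Q(v,k)]_+$. This $\tilde L$ differs from $L^{-1}=L$ by the vector $Q(*,k)$ added in the $k$-th column. The $k$-th coordinate of ${\bf t}L$ and of ${\bf t}\tilde L$ therefore differ by $\sum_v t_vQ(v,k)=-\sum_vQ(k,v)t_v$, and this vanishes mod $n$ when ${\bf t}\in\mathcal B_{\mathcal D_X}$, since $k$ is mutable. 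Hence ${\bf t}L$ and ${\bf t}\tilde L$ have the same image condition modulo $n$-multiples, and Lemma~\ref{lem-commute} for the reverse seed gives ${\bf t}L\in\mathcal B_{\mathcal D_X'}$. If this bookkeeping is awkward, the fallback is a direct verification: expand $\sum_vQ'(u,v)({\bf t}L)_v$ using \eqref{eq-mutation-Q} and reduce mod $n$ using the hypothesis at $u$ and at $k$.

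Finally, with both inclusions, $\nu_k'$ restricts to an algebra isomorphism $\mathcal{Z}^{\rm mbl}_\omega(\mathcal{D}_X')\to\mathcal{Z}^{\rm mbl}_\omega(\mathcal{D}_X)$. These algebras are quantum-torus-type Ore domains, so the isomorphism extends uniquely to their skew-fields of fractions. This extension agrees with the restriction of $\nu_k'$ on $\Fr(\mathcal{Z}^{\rm mbl}_\omega(\mathcal{D}_X'))\subset\Fr(\mathcal{Z}_\omega(\mathcal{D}_X'))$.
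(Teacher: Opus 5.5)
Your proposal is correct. The paper gives no proof of its own here and only cites \cite[Lemma~3.8]{KimWang}, and your argument is essentially the standard one behind that citation: the exponent map ${\bf t}'\mapsto{\bf t}'L$ with $L^2=I$, Lemma~\ref{lem-commute} over $\mathbb Z/n\mathbb Z$ with $\mathcal V_0=\mathcal V_{\rm mut}$ for the forward inclusion, and the correction ${\bf t}L-{\bf t}\tilde L=\bigl(\sum_v t_vQ(v,k)\bigr){\bf e}_k\in n\mathbb Z^{\mathcal V}$ for the inverse, which is the same bookkeeping the paper does for $\mathcal Z^{\rm bl}$ in Lemma~\ref{lem-rest-iso-to-balanced}. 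If you want both inclusions at once, use $\mu_k(Q)=LQL^{T}$, which gives $Q'({\bf t}')^{T}=LQ{\bf t}^{T}$ and $Q{\bf t}^{T}=LQ'({\bf t}')^{T}$; since $k$ is mutable, each mutable coordinate on one side is then an integer combination of the $u$-th and $k$-th mutable coordinates on the other.
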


Define the quantum mutation $\nu_k^{\omega}$ for the $n$-th root Fock-Goncharov algebras, or more precisely for the skew-fields of fractions of their mutable-balanced subalgebras,  
to be the composition 
\begin{align}\label{eq-def-quantum-mutation-X}
\nu^\omega_k := \nu^{\sharp\omega}_k \circ \nu'_k ~:~ 
\Fr(
\mathcal{Z}^{\rm mbl}_\omega(\mathcal{D}_X'))\xrightarrow{\nu_k'}
      \Fr(
      \mathcal{Z}^{\rm mbl}_\omega(\mathcal{D}_X))\xrightarrow{\nu_k^{\sharp \omega}}
      \Fr(
      \mathcal{Z}^{\rm mbl}_\omega(\mathcal{D}_X)).
\end{align}

\begin{lemma}\cite[Lem~3.9]{KimWang}\label{lem:nu_extends_mu}
    The map $\nu^\omega_k$ extends $\mu^q_k$.
\end{lemma}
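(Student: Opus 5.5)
We will prove that $\nu^\omega_k$ extends $\mu^q_k$ by restricting both factors of the decomposition $\nu^\omega_k=\nu^{\sharp\omega}_k\circ\nu'_k$ and comparing them with $\mu^q_k=\mu^{\sharp q}_k\circ\mu'_k$. We regard $\Fr(\mathcal X_q(\mathcal D_X'))$ as a sub-skew-field of $\Fr(\mathcal Z^{\rm mbl}_\omega(\mathcal D_X'))$ through $X'_v\mapsto (Z'_v)^n$. This is legitimate because $n\mathbb Z^{\mathcal V}\subset\mathcal B_{\mathcal D_X'}$, so $\mathcal X_q(\mathcal D_X')\subset\mathcal Z^{\rm mbl}_\omega(\mathcal D_X')$, and an injective homomorphism of Ore domains extends to their fraction skew-fields. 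The same holds for $\mathcal D_X$. Since all maps are skew-field homomorphisms, it is enough to check agreement on the generators $X'_v=(Z'_v)^n$.

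\emph{Step 1: $\nu'_k$ extends $\mu'_k$.} For $v=k$ we have $\nu'_k((Z'_k)^n)=Z_k^{-n}=X_k^{-1}=\mu'_k(X'_k)$. For $v\neq k$ we have $\nu'_k((Z'_v)^n)=[Z_vZ_k^{[Q(v,k)]_+}]^n$. Because $Z^{\bf t}$ is a Weyl-ordered monomial, $(Z^{\bf t})^n=Z^{n{\bf t}}$, and this equals the Weyl-ordered product $[X_vX_k^{[Q(v,k)]_+}]$ under the embedding. Here one checks the normalisations: the $\omega$-exponent $-n^2 a_ia_jQ$ matches the $q$-exponent $-a_ia_jQ$ because $\omega^{n^2}=q$. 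This gives $\nu'_k(X'_v)=\mu'_k(X'_v)$.

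\emph{Step 2: $\nu^{\sharp\omega}_k$ extends $\mu^{\sharp q}_k$.} For ${\bf t}=n{\bf e}_v\in\mathcal B_{\mathcal D_X}$, Lemma~\ref{lem.nu_sharp_well-defined} gives $\nu^{\sharp\omega}_k(X_v)=X_vF^q(X_k,m)$ with $m=\frac1n\sum_u Q(k,u)\,n\delta_{u,v}=Q(k,v)$. On the other side, $X_kX_v=q^{2Q(k,v)}X_vX_k$, so Lemma~\ref{lem-F-P} gives $\mu^{\sharp q}_k(X_v)={\rm Ad}_{\Psi^q(X_k)}(X_v)=X_vF^q(X_k,Q(k,v))$. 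The two expressions agree.

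The only point needing care is Step~1's normalisation bookkeeping, namely $\omega^{n^2}=q$ together with $(Z^{\bf t})^n=Z^{n{\bf t}}$. The latter holds because powers of a single Weyl-ordered monomial involve no reordering. With both steps in hand, composing gives $\nu^\omega_k|_{\Fr(\mathcal X_q(\mathcal D'_X))}=\mu^{\sharp q}_k\circ\mu'_k=\mu^q_k$.
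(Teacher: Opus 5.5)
Your proof is correct and follows the route the paper indicates. Just before this lemma the paper remarks that $\nu'_k$ extends $\mu'_k$ (``easy to check'') and that Lemma~\ref{lem-F-P} shows $\nu^{\sharp\omega}_k$ extends $\mu^{\sharp q}_k$, deferring details to Kim--Wang; composing gives $\nu^\omega_k=\nu^{\sharp\omega}_k\circ\nu'_k$ extending $\mu^q_k=\mu^{\sharp q}_k\circ\mu'_k$, and your normalisation check via $(Z^{\bf t})^n=Z^{n{\bf t}}$ and $\omega^{n^2}=q$ supplies that ``easy to check'' step.
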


\begin{lemma}\label{lem:nu_involutation}
    The map $\nu^\omega_k\circ \nu^\omega_k$ equals identity.
\end{lemma}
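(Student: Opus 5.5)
We verify the identity on generators of the skew-field $\Fr(\mathcal{Z}^{\rm mbl}_\omega(\mathcal{D}_X))$. Since both $\nu^\omega_k\circ\nu^\omega_k$ and the identity are skew-field homomorphisms, it suffices to show $\nu^\omega_k(\nu^\omega_k(Z^{\bf t}))=Z^{\bf t}$ for every ${\bf t}\in\mathcal B_{\mathcal D_X}$. Here the inner $\nu^\omega_k$ goes from the seed $\mathcal D_X$ (viewed as $\mu_k(\mathcal D_X')$, using $\mu_k\mu_k=\mathrm{id}$ on seeds) to $\mathcal D_X'$, and the outer one from $\mathcal D_X'$ back to $\mathcal D_X$.

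\textbf{Step 1: the monomial part.} First I would compute the composite of the two monomial maps. Let $\nu_k'$ be the map for $\mathcal D_X'\to\mathcal D_X$ and $\tilde\nu_k'$ the one in the reverse direction. Using $Q'(v,k)=-Q(v,k)$ and \eqref{eq-quantum-mutation_Z}, for $v\neq k$ we get
\[
\nu_k'\bigl(\tilde\nu_k'(Z_v)\bigr)=\nu_k'\bigl([Z'_v (Z'_k)^{[-Q(v,k)]_+}]\bigr)=[Z_vZ_k^{[Q(v,k)]_+-[-Q(v,k)]_+}]=[Z_vZ_k^{Q(v,k)}],
\]
and $Z_k\mapsto Z_k$. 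Hence on monomials $\nu_k'\tilde\nu_k'(Z^{\bf t})=Z^{{\bf t}+c\,{\bf e}_k}$ with $c=\sum_{v\neq k}Q(v,k)t_v=-nm$, where $m=\frac1n\sum_v Q(k,v)t_v$. In other words, the composite is the monomial automorphism $Z^{\bf t}\mapsto Z^{\bf t}X_k^{-m}$ up to the Weyl-ordering power of $\omega$, which I would track explicitly.

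\textbf{Step 2: the automorphism parts.} Write the full composite as $\nu^{\sharp\omega}_k\circ\nu_k'\circ\tilde\nu^{\sharp\omega}_k\circ\tilde\nu_k'$. By Lemma~\ref{lem.nu_sharp_well-defined}, $\tilde\nu^{\sharp\omega}_k(Z'^{\bf s})=Z'^{\bf s}F^q(X'_k,m')$. Applying $\nu_k'$, and using $\nu_k'(X_k')=X_k^{-1}$, conjugates the dilogarithm and gives $\mathrm{Ad}_{\Psi^q(X_k^{-1})}$ on the image. The problem thus reduces to the standard identity underlying the involutivity of the quantum $\mathcal X$-mutation,
\[
\mathrm{Ad}_{\Psi^q(X_k)}\circ \mathrm{Ad}_{\Psi^q(X_k^{-1})}\circ(\text{monomial map } Z^{\bf t}\mapsto Z^{\bf t}X_k^{-m})=\mathrm{id}.
\]
I would verify this on each $Z^{\bf t}$ directly via Lemma~\ref{lem-F-P}:
\[
Z^{\bf t}\;\mapsto\; Z^{\bf t}X_k^{-m}\,F^q(X_k^{-1},-m)\,F^q(X_k,m).
\]
The key scalar identity is then
\[
X_k^{-m}F^q(X_k^{-1},-m)F^q(X_k,m)=1,
\]
taking care of the commutation between $Z^{\bf t}$ and $X_k$ that produces the $q$-shifts. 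For $m>0$ this follows because $\prod_{r=1}^m(1+q^{2r-1}X)/(1+q^{-(2r-1)}X^{-1})=\prod_r q^{2r-1}X=q^{m^2}X^m$, and the factor $q^{m^2}$ cancels against the Weyl normalization from Step 1. The case $m<0$ is symmetric, and $m=0$ is trivial.

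\textbf{Expected obstacle.} The hard part is bookkeeping: the powers of $\omega$ coming from Weyl ordering in $[Z_vZ_k^{Q(v,k)}]$, and the order of $Z^{\bf t}$ versus $X_k^{-m}$, must exactly cancel $q^{m^2}$. As a safeguard, I would instead reduce to Lemma~\ref{lem:nu_extends_mu}. Both sides are $\omega$-twisted in a controlled way, and $(Z^{\bf t})^n\in\mathcal X_q$, where $\mu^q_k\circ\mu^q_k=\mathrm{id}$ is known. So the composite fixes $(Z^{\bf t})^n$, while it sends $Z^{\bf t}$ to $Z^{\bf t}\cdot f$ for some $f$ in the commutative subfield $\mathbb Q(\omega)(X_k)$. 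Then $(Z^{\bf t}f)^n=(Z^{\bf t})^n$ forces a product of $q$-shifts of $f$ to equal $1$. A degree argument in $X_k$ shows $f$ is a constant, which must be a root of unity. Evaluating the constant through the explicit formulas at $\omega=1$ gives $f=1$.
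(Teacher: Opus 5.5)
Your proposal is correct and is essentially the paper's argument. The paper only remarks that the standard proof of $\mu^q_k\circ\mu^q_k=\mathrm{id}$ carries over to the monomials $Z^{\bf t}$, ${\bf t}\in\mathcal B_{\mathcal D_X}$; your Steps 1--2 (the composite monomial map $Z^{\bf t}\mapsto Z^{{\bf t}-nm{\bf e}_k}$, conjugating $\tilde\nu^{\sharp\omega}_k$ through $\nu_k'$ to get $\mathrm{Ad}_{\Psi^q(X_k^{-1})}$, then Lemma~\ref{lem-F-P}) are exactly that argument written out. One wording correction: $X_k^{-m}F^q(X_k^{-1},-m)F^q(X_k,m)$ equals $q^{m^2}$ rather than $1$, and it is cancelled by the Weyl factor in $Z^{{\bf t}-nm{\bf e}_k}=q^{-m^2}Z^{\bf t}X_k^{-m}$, as your computation in fact shows. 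So the direct computation already closes the proof, and the $n$-th power safeguard is unnecessary.
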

\begin{proof}
For any ${\bf k}\in \mathbb Z^{\mathcal V}$, it is well-known that
$$\mu^q_k(\mu^q_k(X^{\bf k}))=X^{\bf k}\in  \Fr(\mathcal{X}_q(\mathcal{D}_X)).$$
 A similar argument shows that
 $$\nu^\omega_k(\nu^\omega_k(Z^{\bf t}))=Z^{\bf t}\in  \Fr(\mathcal{Z}^{\rm mbl}_\omega
 (\mathcal{D}_X))$$
  for any ${\bf k}\in \mathcal B_{\mathcal D_X}$.
\end{proof}

    In order to apply to our situation, consider a triangulable pb surface $\frak{S}$ with a triangulation $\lambda$.
    Letting $\mathcal{V} = V_\lambda$ and $\mathcal{V}_{\rm mut}$ be the subset of $V_\lambda$ consisting of the vertices contained in the interior of $\frak{S}$, one obtains a cluster $\mathcal{X}$-seed $\mathcal{D}_{\lambda} := (\Gamma_\lambda,(X_v)_{v\in V_\lambda})$.
    We denote the $n$-root Fock-Goncharov algebra and its mutable-balanced subalgebra as
    $$
    \mathcal{Z}_\omega(\fS,\lambda) = \mathcal{Z}_\omega(\mathcal{D}_{\lambda}), \qquad
    \mathcal{Z}_\omega^{\rm mbl}(\fS,\lambda) = \mathcal{Z}^{\rm mbl}_\omega(\mathcal{D}_{\lambda}).
$$
On the other hand, recall the balanced subalgebra $\mathcal{Z}^{\rm bl}_\omega(\fS,\lambda)$ of $\mathcal{Z}_\omega(\frak{S}, \lambda)$ defined in \eqref{Z_bl_omega}.

\begin{lemma}\cite[Lemma~3.11]{KimWang}
\label{lem:bl_in_mbl}
  We have $\mathcal{Z}^{\rm bl}_\omega(\fS,\lambda) \subset \mathcal{Z}^{\rm mbl}_\omega(\fS,\lambda)$.
\end{lemma}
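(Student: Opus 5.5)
The plan is to unwind the two definitions and show that every balanced vector $\mathbf k\in\mathcal B_\lambda$ lies in $\mathcal B_{\mathcal D_\lambda}$. Concretely, for every interior small vertex $u\in\mathring V_\lambda=\mathcal V_{\rm mut}$ I will show that
\[
\sum_{v\in V_\lambda} Q_\lambda(u,v)\,k_v\equiv 0 \pmod n .
\]
Both $\mathcal Z^{\rm bl}_\omega(\fS,\lambda)$ and $\mathcal Z^{\rm mbl}_\omega(\fS,\lambda)$ are spanned by Weyl-ordered monomials $Z^{\mathbf k}$ over their respective subgroups. So the inclusion of subalgebras follows from the inclusion of subgroups $\mathcal B_\lambda\subset\mathcal B_{\mathcal D_\lambda}$.

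\textbf{Case of no interior punctures.} Here I would invoke Lemma~\ref{lem-balanced-H}: $\mathbf k$ is balanced if and only if $\mathbf kH_\lambda\in(n\mathbb Z)^{V_\lambda}$. For an interior vertex $u$, no pair $(v,u)$ lies on a common boundary edge. Hence by \eqref{def-H-Q-interior} the $u$-th column of $H_\lambda$ is exactly $Q_\lambda(\ast,u)$. Using antisymmetry of $Q_\lambda$,
\[
(\mathbf kH_\lambda)_u=\sum_v k_vQ_\lambda(v,u)=-\sum_vQ_\lambda(u,v)k_v\equiv 0\pmod n ,
\]
which is what we want.

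\textbf{General case (interior punctures allowed).} If the lemma is needed in this generality, I would argue locally, since $Q_\lambda(u,\cdot)$ only involves the neighbours of $u$ in $\Gamma_\lambda$. It suffices to treat a balanced $\mathbf k$ face by face, because $(n\mathbb Z)^{V_\lambda}$ contributes nothing modulo $n$.

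If $u$ is interior to a face $\tau$, its six neighbours form a hexagon inside $\tau$ whose arrows alternate in and out. Opposite hexagon vertices differ by a fixed lattice direction. The pullback $\mathbf k_\tau$ is, modulo $n$, an integer combination of the coordinate functions $\mathbf k_1,\mathbf k_2,\mathbf k_3$. These are affine, so the contributions of the three in/out pairs cancel, giving $\sum_vQ(u,v)k_v\equiv0$.

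If $u$ lies on an interior edge $e$ shared by faces $\tau,\tau'$, the sum splits into the contributions of the two half-hexagons. The half-weight arrows along $e$ coming from $\tau$ and from $\tau'$ cancel in $\Gamma_\lambda$. On each side the contribution is again computed from an affine function modulo $n$. The two affine functions agree on $e$, since they are the pullbacks of the same $\mathbf k$. A direct check with $\mathbf k_1,\mathbf k_2,\mathbf k_3$ should show that each half contributes a multiple of $n$ plus a term depending only on the values along $e$, and that these terms cancel between the two sides.

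The main obstacle is this edge-vertex bookkeeping: matching the barycentric coordinates of $\tau$ and $\tau'$ along $e$, with their different orientations, and keeping track of the cancellation of boundary half-arrows. In the no-interior-puncture setting, which is the one used for the $\mathcal A$-side constructions, the argument via Lemma~\ref{lem-balanced-H} avoids this entirely.
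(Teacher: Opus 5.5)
Your argument for surfaces without interior punctures is correct. It is exactly how the paper proves this inclusion where it needs it: Lemma~\ref{eq-bl-inclusion-mut-vu}, applied to the empty mutation sequence, is your computation. There Lemma~\ref{lem-matrix-HQ} reduces to \eqref{def-H-Q-interior}, and Lemma~\ref{lem-balanced-H} identifies the $H_\lambda$-balanced subalgebra with $\mathcal Z^{\rm bl}_\omega(\fS,\lambda)$.

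For the statement in full generality the paper gives no argument and only cites \cite{KimWang}, so your face-by-face treatment is a genuinely different route. The $H_\lambda$ route is a one-liner, but it rests on the characterization in Lemma~\ref{lem-balanced-H}, which is available only without interior punctures. The local route uses nothing beyond the definition of $\mathcal B_\lambda$ and works on every triangulable surface.

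Your sketch does close, but at an edge vertex the two sides cancel only after rewriting, not term by term. Let $\delta$ be the step along $e$ in the direction of $\tau$'s half-arrow at $u$. Let $f_\tau,f_{\tau'}$ be affine representatives of $\mathbf k_\tau,\mathbf k_{\tau'}$ modulo $n$, with linear parts $L_\tau,L_{\tau'}$. First cancel the half-arrows along $e$, which you rightly do before reducing mod $n$: per face, $\tfrac12$-weights times the $n\mathbb Z$-part of $\mathbf k_\tau$ only land in $\tfrac n2\mathbb Z$. The remaining full arrows at $u$ in $\tau$ are $u\to u+d$ and $u-d'\to u$, where $\delta+d+d'=0$. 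They contribute $f_\tau(u+d)-f_\tau(u-d')=-L_\tau(\delta)$, and likewise $\tau'$ contributes $-L_{\tau'}(-\delta)$. Modulo $n$ these are $k_{u-\delta}-k_u$ and $k_u-k_{u-\delta}$ respectively, so they cancel. If $u-\delta$ is an endpoint of $e$ (which also covers $n=2$), set $k_{u-\delta}:=0$. This is legitimate because $a\mathbf k_1+b\mathbf k_2+c\mathbf k_3$ takes values in $n\mathbb Z$ at the corners of $\mathbb P_3$. Finally, $\tau\neq\tau'$ always holds, since self-folded triangles are excluded.
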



\subsection{The naturality of the quantum trace map}\label{sec-naturality-trace}

Let $\fS$ be a triangulable pb surface with a triangulation $\lambda$. Suppose $e\in\lambda$ is not a boundary edge. 
There exists a unique idea arc $e'$ such that $e'\neq e$ and $\lambda':=(\lambda\setminus\{e\})\cup\{e'\}$ is a triangulation of $\fS$.
We call that $\lambda$ and $\lambda'$ are obtained from each other by a {\bf flip}.
In \cite{FG06,GS19}, the authors showed that $\Gamma_{\lambda'}$ and $\Gamma_\lambda$ are related to each
other by a special sequence of mutations.

Let $V_{\lambda;e}$ be the set of all vertices of $V_\lambda$ that either lie in the interior of one of these two triangles or lie on $e$; so $V_{\lambda;e}$ consists of $(n-1)^2$ vertices. For each $i=0,1,2,\ldots,n-2$, we will define a subset $V_{\lambda;e}^{(i)}$ of $V_{\lambda;e}$. Draw this ideal quadrilateral for $\lambda$ as in Figure \ref{Fig;mutation_sequence_for_flip}, so that $e$ is the `middle vertical line', and after flipping at $e$, the flipped $e$ would be the `middle horizontal line'. As in Figure \ref{Fig;mutation_sequence_for_flip}, for each vertex $v$ in $V_{\lambda,e}$ one can define the vertical distance $d_{\rm vert}(v) \in \mathbb{N}$ from the middle horizontal line, and the horizontal distance $d_{\rm hori}(v) \in \mathbb{N}$ from the middle vertical line. Define
$$
V_{\lambda;e}^{(i)} := \left\{v\in V_{\lambda;e} \, \left| \, \begin{array}{ll} d_{\rm vert}(v) \le i, &  d_{\rm vert}(v) \equiv i \, (\mbox{mod } 2), \\ d_{\rm hori}(v) \le n-2-i, & d_{\rm hori}(v) \equiv n-2-i (\mbox{mod } 2) \end{array} \right. \right\}.
$$
See Figure \ref{Fig;mutation_sequence_for_flip}; so each $V^{(i)}_{\lambda;e}$ can be viewed as forming the grid points of a quadrilateral, consisting of $(i+1)(n-i-1)$ points. Notice that these sets for different $i$ are not necessarily disjoint with each other. For example, if $n\ge 4$, then $V_{\lambda;e}^{(0)}$ and $V_{\lambda;e}^{(2)}$ have $n-3$ vertices in common.

\begin{figure}[h]
    \centering
    \scalebox{1.0}{\input{flip_mutation_sequence.pdf_tex}}
    \caption{Vertices of the $n$-triangulation quiver involved in the flip of triangulations; the above example picture is for the case $n=4$}\label{Fig;mutation_sequence_for_flip}
\end{figure}

The sought-for mutation sequence of Fock, Goncharov and Shen consists of first mutations at all vertices of $V_{\lambda;e}^{(0)}$ in any order, then mutations at all vertices of $V_{\lambda;e}^{(1)}$ in any order, etc., then lastly mutations at all vertices of $V_{\lambda;e}^{(n-2)}$ in any order. The length of this mutation sequence is $\sum_{i=0}^{n-2} (i+1)(n-1-i) = \sum_{j=1}^{n-1} j(n-j) = \frac{1}{6}(n^3-n) =:r$. Let us denote the corresponding sequence of vertices as
$$
v_1,v_2,\ldots,v_r.
$$
So, the first $n-1$ of them would be the elements of $V^{(0)}_{\lambda;e}$, and we have
\begin{align}
\label{two_seeds_connected_by_sequence_of_mutations}
\mathcal{D}_{{\lambda'}} = \mu_{v_r} \cdots \mu_{v_2} \mu_{v_1} (\mathcal{D}_{\lambda})   \end{align}
and in particular,
\begin{align}\label{eq-mutation-flip-quiver}
    \Gamma_{\lambda'} = \mu_{v_r} \cdots \mu_{v_2} \mu_{v_1} (\Gamma_\lambda),  \quad
Q_{\lambda'} = \mu_{v_r} \cdots \mu_{v_2} \mu_{v_1} (Q_\lambda).
\end{align}

Note that, at each mutation step $\mu_{v_i}$ for $1\leq i\leq r$, the number of arrows between any two vertices on the same boundary component of $\fS$ never changes (see \cite[Figure~10.3]{FG06}). 
For any $0\leq i\leq r$, define 
$Q_\lambda^{(i)}:=
\mu_{v_i} \cdots \mu_{v_2} \mu_{v_1} (Q_\lambda).$
Then 
\begin{align}\label{def-equal-QiQ}
    Q_\lambda^{(i)}(u,v) = Q_\lambda(u,v)
\end{align}
for any two vertices $u,v$ on the same boundary component of $\fS$.

 The $n$-root balanced version of the quantum coordinate change isomorphism is defined to be
\begin{equation}\label{eq-Theta2}
\Theta_{
\lambda \lambda'}^{\omega}:=\nu_{v_1}^{\omega}\circ\cdots\circ\nu_{v_r}^{\omega}\colon\Fr(\mathcal Z_\omega^{\rm mbl}(\fS,\lambda'))\rightarrow 
\Fr(\mathcal Z_\omega^{\rm mbl}(\fS,\lambda)).
\end{equation}

Suppose now that $\lambda$ and $\lambda'$ are any two triangulations of $\fS$. A {\bf triangulation sweep} connecting 
$\lambda$ and $\lambda'$ is a sequence of triangulations
$\Lambda=(\lambda_1,\cdots,\lambda_m)$ such that 
$\lambda_1=\lambda$, $\lambda_m=\lambda'$, and $\lambda_{i+1}$ is obtained from $\lambda_i$ by a flip for each $1\leq i\leq m-1$. It is well known that for any $\lambda$ and $\lambda'$, there exists a triangulation sweep $\Lambda$ connecting $\lambda$ and $\lambda'$ (\cite{Lab09}). For any such triangulation sweep $\Lambda$, we define the corresponding $\mathcal{X}$-version quantum coordinate change isomorphism
\begin{equation}\label{eq-Theta-change2}
\Theta_{\Lambda}^{\omega}  :=\Theta_{
\lambda_1\lambda_2}^{\omega}\circ\cdots\circ\Theta_{
\lambda_{m-1}\lambda_m}^{\omega}\colon\Fr(
\mathcal{Z}^{\rm mbl}_\omega(\fS,\lambda'))\rightarrow 
\Fr(
\mathcal{Z}^{\rm mbl}_\omega(\fS,\lambda)).
\end{equation}

\begin{proposition}\cite[Proposition~3.14]{KimWang}
\label{prop:Theta_omega_consistency}
    $\Theta^\omega_\Lambda$ depends only on $\lambda$ and $\lambda'$. 
\end{proposition}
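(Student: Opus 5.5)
The plan is to use the classical fact that any two triangulation sweeps connecting $\lambda$ and $\lambda'$ are related by a finite sequence of elementary moves. These are the \emph{trivial move} (flipping an edge and immediately flipping it back), the \emph{commutation move} (interchanging two flips supported in quadrilaterals with no common triangle), and the \emph{pentagon move} (the five flips inside an ideal pentagon). This is the standard presentation of the Ptolemy/flip groupoid, as used in \cite{FG06,Lab09}. It then suffices to show that $\Theta^\omega$ is unchanged under each elementary move. Every $\Theta^\omega_{\lambda_i\lambda_{i+1}}$ is a composite of the $\nu^\omega_{v}$ of \eqref{eq-Theta2}, so each move becomes an identity between composites of the maps $\nu^\omega_v=\nu^{\sharp\omega}_v\circ\nu'_v$ on $\Fr(\mathcal Z^{\rm mbl}_\omega)$.

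For the trivial move I would first note that the mutation sequence for the reverse flip is the sequence $v_r,\dots,v_1$ read backwards. Since the quivers \eqref{eq-mutation-flip-quiver} are also symmetric under this reversal, the composite collapses by repeated use of Lemma~\ref{lem:nu_involutation}. For the commutation move, the two mutation sequences involve vertices of two disjoint quadrilaterals. At every stage such vertices have $Q(u,v)=0$, because the flip sequences never create arrows between interiors of different quadrilaterals. Hence $X_u$ and $X_v$ commute, the exponents $m=\frac1n\sum_v Q(k,v)t_v$ in Lemma~\ref{lem.nu_sharp_well-defined} are unaffected, and one checks directly on monomials $Z^{\bf t}$ that $\nu^\omega_u\nu^\omega_v=\nu^\omega_v\nu^\omega_u$.

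The pentagon move is the main obstacle, and I would not verify it by direct computation. Instead I would reduce it to the known $q$-version. Let $\phi$ be the composite around the pentagon, an automorphism of $\Fr(\mathcal Z^{\rm mbl}_\omega(\fS,\lambda))$. By Lemma~\ref{lem:nu_extends_mu}, $\phi$ restricts to the corresponding composite of the $\mu^q_v$ on $\Fr(\mathcal X_q)$, and this is the identity by the quantum (Fock--Goncharov) independence of the cluster coordinate change \cite{FG09a,FG09b}. Next I would prove, by induction along the sequence using \eqref{eq-quantum-mutation_Z} and Lemma~\ref{lem.nu_sharp_well-defined}, that for ${\bf t}\in\mathcal B_{\mathcal D_X}$ one has $\phi(Z^{\bf t})=Z^{{\bf s}}\,R$, where $R\in\Fr(\mathcal X_q)$ and ${\bf s}=L({\bf t})$ for the integer linear map $L$ obtained by composing the monomial parts $\nu'_v$. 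Lemma~\ref{lem-commute} keeps every intermediate exponent in the mutable-balanced lattice, so that each step is defined.

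To pin down $\phi(Z^{\bf t})=Z^{\bf t}$, I would use two facts. First, $n{\bf t}$ corresponds to an element $X^{\bf t}\in\mathcal X_q$ fixed by $\phi$. Second, $\phi(Z^{\bf t})$ has the same $q$-commutation exponents with every $X_v$ as $Z^{\bf t}$ does. Together these force $L({\bf t})={\bf t}$: $L$ is linear and $L(n{\bf t})=n{\bf t}$ for the monomial-leading term, with a degree argument in the spirit of Lemma~\ref{lem-frac-subalgebra}. The same two facts also force $R$ to be a central element of $\Fr(\mathcal X_q)$ whose $n$-th power equals $1$. The anticipated difficulty is exactly this step, since a priori $R$ could be a nontrivial root of unity times a central element. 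To rule this out I would specialize $\omega^{1/2}\to1$ together with the explicit sign conventions in $F^q$, or compare leading coefficients in the lexicographic degree, to conclude that $R=1$. This proves the pentagon identity and hence that $\Theta^\omega_\Lambda$ depends only on $\lambda$ and $\lambda'$.
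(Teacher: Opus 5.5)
Your reduction to the trivial, commutation and pentagon moves is sound, and so is the idea of lifting the pentagon identity from $\mathrm{Frac}(\mathcal X_q)$. The paper itself gives no argument; it simply cites \cite[Proposition~3.14]{KimWang}.

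\textbf{The gap.} It lies in the claim that $\phi(Z^{\bf t})=Z^{L({\bf t})}R$ with $L=\mathrm{id}$ and $R$ central. Here $L$ is the composite of the fixed-sign monomial parts $\nu'_v$ of \eqref{eq-quantum-mutation_Z}, and a fixed-sign tropicalization is not periodic around a mutation cycle. Take $n=2$. The flip pentagon is then the $A_2$ period of five mutations at the two diagonal vertices $A,B$. A direct computation with the convention $[Q(v,k)]_+$ gives the following: the five monomial parts, followed by the natural identification of the last seed with the first (which exchanges the slots $A$ and $B$), send ${\bf e}_B$ to $-{\bf e}_A$ or to ${\bf e}_A+{\bf e}_B$, depending on the orientation of the arrow between $A$ and $B$. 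So $L\neq\mathrm{id}$, and this already fails at the $q$-level on $X_B$.

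The compensating monomials sit inside the factors $F^q$, for example $\prod_{r=1}^m(1+q^{2r-1}x)=q^{m^2}x^m\prod_{r=1}^m(1+q^{1-2r}x^{-1})$. Hence $R$ has nonzero lexicographic degree and is not central. Your degree argument only yields $L({\bf t})+\deg R={\bf t}$, and the assertion ``$L(n{\bf t})=n{\bf t}$ for the monomial-leading term'' is false. The conclusion that $R$ is a central $n$-th root of unity therefore has no support, and the pentagon step is not proved as written.

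\textbf{A repair.} Never split off $L$. Put $Y:=Z^{-{\bf t}}\phi(Z^{\bf t})$. Since $\phi$ fixes every $X_v$, $Y$ commutes with every $X_v$.

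You then need a centralizer lemma. For generic $\omega$, the centralizer of $\{X_v\}$ in $\mathrm{Frac}(\mathcal Z_\omega)$ is $\mathrm{Frac}(T_0)$, where $T_0=\mathrm{span}_R\{Z^{\bf a}\mid {\bf a}Q=0\}$ is a commutative Laurent polynomial ring. To see this:
\begin{itemize}
\item The nonzero right ideal $\{S\mid YS\in\mathcal Z_\omega\}$ is stable under the automorphisms $\mathrm{Ad}_{X_v}$.
\item These act diagonally on the monomial basis with distinct joint eigenvalues, so the ideal contains a homogeneous $S$.
\item Then $P=YS$ lies in the same eigenspace $Z^{\bf b}T_0$ as $S$, and $Y=PS^{-1}\in\mathrm{Frac}(T_0)$.
\end{itemize}

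Since $Z^{\bf t}$ commutes with $T_0$, you get $Y^n=Z^{-n{\bf t}}\phi(X^{\bf t})=1$. Because $\mathrm{Frac}(T_0)$ is purely transcendental over $\mathbb Q(\omega^{1/2})$, this forces $Y=\pm1$.

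Your leading-coefficient idea then excludes $-1$. The element $\phi(Z^{\bf t})$ is built from monomials and the subtraction-free factors $F^q$, using sums, products and inverses. Its lexicographic leading coefficient is therefore a quotient of nonzero elements of $\mathbb N[\omega^{\pm1/2}]$, which can never equal $-1$.

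With this centralizer lemma in place of the claim $L=\mathrm{id}$, your argument goes through. This still presupposes the $q$-level consistency, which rests on the fact that quantum mutation sequences satisfy the same relations as classical ones.
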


Therefore, when we do not have to keep track of a specific triangulation sweep, we can write
$$
\Theta^\omega_{\lambda\lambda'} = \Theta^\omega_\Lambda.
$$

\begin{proposition}\cite[Proposition~4.9]{KimWang}\label{Prop-rest-bal}
    The isomorphism $$\Theta_{\lambda\lambda'}^{\omega}\colon\Fr(
\mathcal{Z}^{\rm mbl}_\omega(\fS,\lambda'))\rightarrow 
\Fr(
\mathcal{Z}^{\rm mbl}_\omega(\fS,\lambda))$$ restricts to an isomorphism 
$$\Theta_{\lambda\lambda'}^{\omega}\colon\Fr(
\mathcal{Z}^{\rm bl}_\omega(\fS,\lambda'))\rightarrow 
\Fr(
\mathcal{Z}^{\rm bl}_\omega(\fS,\lambda)).$$
\end{proposition}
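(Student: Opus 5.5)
The plan is to reduce to a single flip and then use an intrinsic, seed-theoretic description of balancedness that can be tracked through each mutation. By Proposition~\ref{prop:Theta_omega_consistency} and the definition \eqref{eq-Theta-change2}, $\Theta^\omega_{\lambda\lambda'}$ is a composition of the flip maps $\Theta^\omega_{\lambda_i\lambda_{i+1}}$. So it suffices to treat the case where $\lambda'$ is obtained from $\lambda$ by flipping one edge $e$. Moreover $\Theta^\omega_{\lambda'\lambda}$ is the inverse of $\Theta^\omega_{\lambda\lambda'}$: reversing the mutation sequence and using Lemma~\ref{lem:nu_involutation}. Hence it is enough to show the one-sided statement
\[
\Theta^\omega_{\lambda\lambda'}\bigl(\mathcal Z^{\rm bl}_\omega(\fS,\lambda')\bigr)\subset \Fr\bigl(\mathcal Z^{\rm bl}_\omega(\fS,\lambda)\bigr),
\]
since the same inclusion for the reverse flip gives surjectivity. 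By Lemma~\ref{lem:bl_in_mbl} the left side lies in the domain of $\Theta^\omega_{\lambda\lambda'}$, so the question is only where the image lands.

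Next I work monomial by monomial, with $Z^{\bf t}$ for ${\bf t}\in\mathcal B_{\lambda'}$, following the factorization $\nu^\omega_{v_i}=\nu^{\sharp\omega}_{v_i}\circ\nu'_{v_i}$ along the sequence $v_1,\dots,v_r$ of \eqref{two_seeds_connected_by_sequence_of_mutations}. The intermediate seeds are not triangulation seeds, so I need a notion of balancedness that makes sense for every seed in the sequence. I would use Lemma~\ref{lem-balanced-H} as a guide and aim to characterize $\mathcal B_\lambda$ as the set of ${\bf t}\in\mathcal B_{\mathcal D_\lambda}$ satisfying extra congruences modulo $n$ at the frozen (boundary) vertices. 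Those extra congruences should involve only the entries $Q_\lambda(u,v)$ with $u,v$ on a common boundary component, together with the columns of $Q$ at frozen vertices.

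With such a description, preservation along the sequence is argued in three pieces:
\begin{itemize}
\item[(i)] The mutable-vertex congruences are transported by each $\nu'_{v_i}$, by Lemma~\ref{lem-commute} with $\mathbb F=\mathbb Z/n\mathbb Z$ and $\mathcal V_0=\mathcal V_{\rm mut}$.
\item[(ii)] The boundary entries of $Q$ never change, by \eqref{def-equal-QiQ}, so the boundary congruences can be checked directly from the monomial formula \eqref{eq-quantum-mutation_Z}.
\item[(iii)] Each automorphism part $\nu^{\sharp\omega}_{v_i}$ multiplies $Z^{\bf t}$ by $F^q(X_{v_i},m)$ (Lemma~\ref{lem.nu_sharp_well-defined}). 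Since $X_{v_i}=Z_{v_i}^n$ and $n\mathbb Z^{\mathcal V}$ is contained in every balanced lattice, this factor lies in the balanced subalgebra, and so does its inverse in the fraction field.
\end{itemize}
Composing over $i=r,\dots,1$ then lands $\Theta^\omega_{\lambda\lambda'}(Z^{\bf t})$ in $\Fr(\mathcal Z^{\rm bl}_\omega(\fS,\lambda))$.

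The main obstacle is the frozen-vertex part of step (ii): the monomial maps $\nu'_{v_i}$ change the exponents of frozen vertices adjacent to the mutated vertices, and I must check that the boundary congruences survive. To keep this manageable I would localize. The flip only involves the quadrilateral containing $e$, and the splitting maps $\mathcal S_{c}$ along its other four sides are injective monomial maps that commute with the mutations at interior vertices of the quadrilateral. So it should suffice to verify the statement for $\fS=\mathbb P_4$ with its two triangulations, where $\mathcal B_\lambda$ is generated by the explicit vectors ${\bf k}_1,{\bf k}_2,{\bf k}_3$ on each face together with $(n\mathbb Z)^{V_\lambda}$. There I would check generator by generator that each mutation step keeps the exponent vector in the lattice generated by these tropicalized images and $n\mathbb Z^{\mathcal V}$.

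A shortcut I would consider first is to use $\Fr(\mathcal Z^{\rm bl}_\omega(\fS,\lambda))=\Fr(\operatorname{im}{\rm tr}_\lambda)$, which follows from Theorem~\ref{thm-trace-A}\ref{thm-trace-A-b} and $\psi_\lambda$, together with the compatibility $\Theta^\omega_{\lambda\lambda'}\circ{\rm tr}_{\lambda'}={\rm tr}_\lambda$. This would give the statement immediately, but only if that compatibility is proved independently of the present proposition, which I would have to confirm to avoid circularity.
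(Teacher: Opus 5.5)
Your outline (reduce to one flip, carry a lattice of exponents through each $\nu'_{v_i}$, and dismiss $\nu^{\sharp\omega}_{v_i}$ because $F^q(X_{v_i},m)$ only involves exponents in $n\mathbb Z^{\mathcal V}$) is the argument the paper itself gives later, as an alternative to \cite{KimWang}: see \S\ref{sec-compatibility}--\S\ref{subsec-naturality}, Lemmas~\ref{lem-matrix-HQ}, \ref{lem-rest-iso-to-balanced}, \ref{lem-Hi-H}, \ref{lem-Hlambda-H}. However, the step you single out as the main obstacle is where all the content lies. You leave it unresolved, and your diagnosis of it is wrong. By \eqref{eq-quantum-mutation_Z}, $\nu'_k$ changes only the $k$-th exponent, $t_k=-t'_k+\sum_w[Q(w,k)]_+t'_w$, and never the frozen ones; what changes at frozen vertices is the matrix.

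The missing idea is to define the intermediate lattices by applying matrix mutation to $H_\lambda$ itself: $L^{(i)}=\{{\bf t}\mid {\bf t}H^{(i)}\in(n\mathbb Z)^{V_\lambda}\}$ with $H^{(i)}=\mu_{v_i}\cdots\mu_{v_1}(H_\lambda)$. For $k=v_i$, the exponent map of $\nu'_k$ is ${\bf t}={\bf t}'E$ with $E=E_{k,-,(v_1,\dots,v_{i-1})}$. Hence Lemma~\ref{lem-mutation-H}(a) and Lemma~\ref{lem-matrix-mul}(b) give
\[
{\bf t}H^{(i-1)}={\bf t}'EH^{(i-1)}={\bf t}'H^{(i)}F,\qquad F=F_{k,-,(v_1,\dots,v_{i-1})},
\]
with $F$ an integer matrix. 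This transports the congruences in every column at once, frozen and mutable alike. No localization is needed, and no ``generator-by-generator'' check on $\mathbb P_4$, which as you phrase it would not be an argument for general $n$ anyway.

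Your observation (ii) is then exactly what pins down the endpoints. By \eqref{def-equal-QiQ}, the mutation correction vanishes on pairs of vertices on a common boundary edge. So $H^{(i)}$ is $Q^{(i)}$ with the boundary block of $H_\lambda$ kept, which is your intended intermediate notion, and $H^{(r)}=H_{\lambda'}$. Lemma~\ref{lem-balanced-H} then gives $L^{(0)}=\mathcal B_\lambda$ and $L^{(r)}=\mathcal B_{\lambda'}$.

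Second, Lemma~\ref{lem-balanced-H} requires $\fS$ to have no interior punctures. The proposition as quoted from \cite{KimWang}, and as used in Theorem~\ref{thm-main-compatibility}, does not assume this. The paper's own re-derivation is likewise only asserted in the puncture-free setting of Theorem~\ref{thm-main-1}. So your main route proves only the restricted statement.

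A localization to $\mathbb P_4$ of the kind you sketch is what would remove that restriction, but for a different reason than the one you give. Balancedness is a face-by-face condition, and $\nu'_{v_i}$ alters only the coordinate at $v_i$, which lies inside the quadrilateral. So the conditions on the other faces are carried along unchanged, and the $H$-argument above need only be run on $\mathbb P_4$, which has no interior punctures. Some care is needed when sides of the quadrilateral are identified in $\fS$.

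Your shortcut has the same puncture restriction, since it goes through Theorem~\ref{thm-trace-A}. Also, Theorem~\ref{thm-main-compatibility} is formulated here with the balanced fraction fields as targets, so you would need a version of the compatibility proved with $\mathcal Z^{\rm mbl}_\omega$ targets to avoid circularity.
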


\def\wl{\widetilde\lambda}

\begin{theorem}\cite[Theorem~4.1]{KimWang}\label{thm-main-compatibility}
    Let $\fS$ be a triangulable pb surface. For any two triangulations $\lambda$ and $\lambda'$ of $\fS$, the following diagram commutes:
    \begin{align}
        \label{eq-compability-tr-mutation}
        \xymatrix{
        & \rdS \ar[dl]_{{\rm tr}_{\lambda'}} \ar[dr]^{{\rm tr}_\lambda} & \\
        {\rm Frac}(\mathcal{Z}^{\rm bl}_\omega(\fS,\lambda')) \ar[rr]_-{\Theta^\omega_{\lambda\lambda'}} & & {\rm Frac}(\mathcal{Z}^{\rm bl}_\omega(\fS,\lambda))}.
    \end{align}
    That is, we have
    \begin{align}
    \label{compatibility_equation}
        {\rm tr}_\lambda = \Theta^\omega_{\lambda\lambda'} \circ {\rm tr}_{\lambda'}.
    \end{align}
\end{theorem}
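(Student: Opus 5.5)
The plan is to reduce the identity ${\rm tr}_\lambda = \Theta^\omega_{\lambda\lambda'}\circ{\rm tr}_{\lambda'}$ to a local computation in a single ideal quadrilateral, using three ingredients: consistency of $\Theta^\omega$, compatibility of the trace with splitting, and a generator check. First, Proposition~\ref{prop:Theta_omega_consistency} lets us choose any triangulation sweep from $\lambda$ to $\lambda'$. Since $\Theta^\omega_{\Lambda}$ is a composition of the flip isomorphisms $\Theta^\omega_{\lambda_i\lambda_{i+1}}$, it suffices to treat a single flip at a non-boundary edge $e$. Proposition~\ref{Prop-rest-bal} guarantees that both sides land in ${\rm Frac}(\mathcal Z^{\rm bl}_\omega(\fS,\lambda))$, so the statement makes sense. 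Both sides are algebra homomorphisms out of $\rdS$, so it is enough to check the identity on a set of algebra generators. Alternatively, it suffices to check it after composing with an injective map.

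Second, I would localize using the splitting maps. Let $E$ be the set of interior edges of $\lambda$ other than $e$. Cutting along all of $E$ produces a disjoint union of triangles together with one quadrilateral $\mathbb P_4$ containing $e$ as its diagonal. The splitting $\mathcal S_E$ on $n$-th root Fock--Goncharov algebras is injective, and by \eqref{com-splitting-torus} it does not depend on the order of cutting. I would show that $\mathcal S_E$ intertwines $\Theta^\omega_{\lambda\lambda'}$ on $\fS$ with $\Theta^\omega$ on the cut surface; on the cut surface this map acts as the identity on the triangles and as the flip map on $\mathbb P_4$.

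This reduces to a claim about single mutations. Every vertex $v_i$ in the Fock--Goncharov--Shen sequence lies in $V_{\lambda;e}$, that is, in the interior of the quadrilateral or on $e$, so it is never on a cut edge. For such a vertex, the row $Q(v_i,\ast)$ pulled back along $\mathcal S_E$ agrees with the corresponding row for the cut surface. Hence the monomial part $\nu'_{v_i}$ commutes with $\mathcal S_E$. The automorphism part ${\rm Ad}_{\Psi^q(X_{v_i})}$ also commutes with $\mathcal S_E$, because the exponent $m$ in Lemma~\ref{lem.nu_sharp_well-defined} is computed from the same row. Combining this with Theorem~\ref{thm-trace-cut}, the theorem for $\fS$ follows from the theorem for $\mathbb P_4$, where for triangles both sides are trivially equal.

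Third, for $\fS=\mathbb P_4$, the trace is injective by Theorem~\ref{thm.quantum_trace}(c), so it suffices to check generators. For these I would take the stated arcs, namely the corner arcs and the essential arcs between opposite sides, which generate $\rdS$ of a polygon. Two approaches are available, and I would try the first before the second.
\begin{itemize}
\item The first approach uses the $\mathcal A$-side. By Theorem~\ref{thm-trace-A} we have ${\rm tr}_\lambda(\gaa_v)=Z^{K_\lambda(v,\ast)}$, and the elements $\gaa_v$ weakly generate. It would suffice to check that $\Theta^\omega$ carries $Z^{K_{\lambda'}(v,\ast)}$ to ${\rm tr}_\lambda(\gaa_v)$. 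Through $\psi_\lambda$, this amounts to showing that a quantum $\mathcal A$-mutation corresponds to $\nu^\omega_k$, via the decomposition in Lemma~\ref{lem-decom-A}.
\item The second approach computes directly with the path-sum (network) formula for ${\rm tr}_\lambda$ of stated arcs on $\mathbb P_3$, glued along $e$. One then tracks how each single mutation $\nu^\omega_{v_i}$ transforms the sums. The Fock--Goncharov cluster transformation for a flip is known to transport the network path sums correctly classically, so the quantum version should follow by checking the normalizations of the Weyl orderings.
\end{itemize}

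The main obstacle is this last step: controlling $\frac{1}{6}(n^3-n)$ mutations for general $n$. I expect the cleanest route to be the first approach, making the $\mathcal A$--$\mathcal X$ compatibility hold mutation by mutation. One would then only need that the composite $\mathcal A$-mutation sends the $\mathcal A$-seed of $\lambda'$ to that of $\lambda$, which reduces to the classical statement of \cite{FG06,GS19} together with a check of the $\Pi$-matrices via \eqref{eq-for-P'}.
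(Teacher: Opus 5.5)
The paper does not prove this theorem. It quotes it from Kim--Wang and uses it as an \emph{input}, most importantly in the proof of the naturality of the quantum seeds (Theorem~\ref{thm-main-1}). There, the single-step $\mathcal A$--$\mathcal X$ compatibility (Proposition~\ref{prop-comp}) is combined with ${\rm tr}_\lambda=\Theta^\omega_{\lambda\lambda'}\circ{\rm tr}_{\lambda'}$ to show that the quantum $\mathcal A$-mutation sequence takes the seed of $\lambda$ to that of $\lambda'$.

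Your reductions to a single flip and then to $\mathbb P_4$ are fine in outline. Your preferred first route for $\mathbb P_4$, however, runs the paper's implication backwards. That is logically legitimate: Proposition~\ref{prop-comp} is purely algebraic, and on $\mathbb P_4$ the trace is injective, so the $\mathsf g_{v,\lambda'}$ weakly generate. But it leaves you needing
$\mu_{v_1,A}\circ\cdots\circ\mu_{v_r,A}(A'_v)={\rm tr}^A_\lambda(\mathsf g_{v,\lambda'})$ in $\mathrm{Frac}(\mathcal A_\omega(\mathbb P_4,\lambda))$. What matters here is the $\lambda$-trace of the skeleton web built from $\lambda'$, not $Z^{K_\lambda(v,\ast)}$. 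This identity is equivalent in content to the theorem and is not available independently for general $n$.

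Your justification (``the classical statement of Fock--Goncharov and Goncharov--Shen plus a check of the $\Pi$-matrices'') does not supply it. The classical result only gives the identity at $\omega=1$, and the $\Pi$-check only shows that both families $q$-commute with the same matrix. Neither determines the quantum elements. For example, replacing every variable $x$ of a seed by $gxg^{-1}$ with $g=1+M$, for a Laurent monomial $M$, keeps both the $\omega=1$ specialization and the commutation matrix but changes the variables. Nor can you verify the quantum exchange relations one step at a time in the skein algebra, since the intermediate seeds do not come from triangulations. Closing this route means computing ${\rm tr}^A_\lambda(\mathsf g_{v,\lambda'})$ for webs with $n$-valent vertices crossing the other diagonal, and matching it with $\frac16(n^3-n)$ quantum mutations. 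That is exactly the computation the paper calls impractical and sidesteps by importing the $\mathcal X$-statement.

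Your second route, via path sums on $\mathbb P_4$, is the viable one, but as written it has the same classical-to-quantum gap. Knowing that the flip transports network path sums classically does not give the quantum identity by ``checking normalizations of Weyl orderings''. Each ${\rm Ad}_{\Psi^q(X_{v_i})}$ introduces $q$-shifted factors $F^q(X_{v_i},m)$. You need the genuinely quantum statement that the whole sequence carries the $q$-path sums for one diagonal to those for the other. This is a Schrader--Shapiro-type result, of the kind the paper itself imports in its triangle lemma showing $\nu^\omega_k({\rm tr}_{\mathbb P_3}(\beta))$ is balanced.

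You must also split off the $n$-th root factor in ${\rm tr}_{\lambda_4}(x_{ij})=\sum_\gamma Z^{n\mathbf k^\gamma_x-\mathbf k_x}$. Show that it $q$-commutes with all path terms by the same power, and that it is moved purely monomially (trivial sharp part) along the entire sequence.

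Finally, in your localization step, commuting $\nu'_{v_i}$ with $\mathcal S_E$ needs more than additivity of rows. You need $[Q(u,k)]_+=[Q_E(u',k)]_+ + [Q_E(u'',k)]_+$, i.e.\ sign-coherence of split entries, at every intermediate quiver (this matters, for instance, when two sides of the quadrilateral are glued in $\mathfrak S$). The mutated quivers must also remain related by splitting. This is what Kim--Wang's splitting/mutation compatibility provides.
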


\subsection{Proof of Lemma \ref{lem-ker-naturality}}\label{sec-proof-lem} 
\begin{proof}[Proof of Lemma \ref{lem-ker-naturality}]
    The commutativity of diagram~\eqref{eq-compability-tr-mutation}, together with the fact that $\Theta^\omega_{\lambda\lambda'}$ is an isomorphism, proves Lemma~\ref{lem-ker-naturality}.
\end{proof}

\section{Quantum cluster structure inside $\text{Frac}(\dS)$}\label{sec-seed-structure}
In this section, we assume throughout that the pb surface $\fS$ is triangulable and has no interior punctures.  

Recall that the algebra $\dS$ (Definition~\ref{def-key-algebra}) was introduced as a quotient of the reduced stated ${\rm SL}_n$-skein algebra $\rdS$, chosen so that the quantum trace maps become injective (Lemma~\ref{lem-basic-lem}).  
Moreover, Lemma~\ref{lem-basic-lem}(b) guarantees the existence of the skew-field $\mathrm{Frac}(\dS)$ associated with $\dS$.  
Our aim in this section is to construct a quantum cluster structure in $\mathrm{Frac}(\dS)$ (Lemma~\ref{lem-seed-skein} and Theorem~\ref{thm-main-1}) and to define the ${\rm SL}_n$ quantum (upper) cluster algebra inside $\mathrm{Frac}(\dS)$ (Definition~\ref{def-sln-quantum-cluster-al}).  
In \S\ref{sec-skein-cluster} and \S\ref{sec-skein-inclusion-cluster}, we will further show that $\dS$ is contained in this ${\rm SL}_n$ quantum (upper) cluster algebra when every connected component of $\fS$ contains at least two boundary components.  

For the cases $n=2,3$, related results are available in \cite{ishibashi2023skein,LY22,muller2016skein}.

\subsection{The quantum seeds inside $\mathrm{Frac}(\dS)$}\label{sec-seed-inside}
Let $\lambda$ be a triangulation of $\fS$.  
As in \S\ref{subsec-mutation-Fock}, set 
$\mathcal V = V_\lambda$ and $\mathcal V_{\text{mut}} = \mathring{V}_\lambda$, 
where $\mathring{V}_\lambda$ denotes the set of small vertices lying in the interior of $\fS$.  

Recall the anti-symmetric matrices $Q_\lambda$ (see \S\ref{sec-traceX}) and $P_\lambda$ (see \S\ref{sec;A_tori}).  
Define  
\begin{align}\label{eq-prod-Qpi}
    \Pi_\lambda := \tfrac{1}{n} P_\lambda.
\end{align}
Equation~\eqref{eq-prod-PQ} implies that  
\begin{align}\label{eq-prod-PiQ}
    \sum_{k\in V_\lambda}
    Q_\lambda(k,u)\,\Pi_\lambda(k,v)
    = 2n \,\delta_{u,v},
\end{align}
for all $u\in \mathcal V_{\text{mut}}$ and $v\in \mathcal V$.  

For each $v\in \mathcal V$, we defined an element $\gaa_v \in \rdS$ (see \S\ref{sec-traceA}), and we use the same notation $\gaa_v$ for its image under the projection $\rdS \twoheadrightarrow \dS$.  
Recall that $\xi = \omega^n$.  
Then Lemma~\ref{gaa-com} shows that  
\begin{align}\label{eq-gaa-com}
    \gaa_v \gaa_{v'} = \xi^{\Pi_\lambda(v,v')} \gaa_{v'} \gaa_v
    \quad\in \dS.
\end{align}

For any $v_1,\ldots,v_r \in \mathcal V$ and $a_1,\ldots,a_r \in \mathbb{Z}$, define the Weyl-ordered product by  
\begin{align}\label{Weyl_ordering-gaa}
    \left[ \gaa_{v_1}^{a_1} \gaa_{v_2}^{a_2} \cdots \gaa_{v_r}^{a_r} \right] 
    := \xi^{-\frac{1}{2}\sum_{i<j} a_i a_j \Pi_\lambda(v_i,v_j)} 
       \gaa_{v_1}^{a_1} \gaa_{v_2}^{a_2} \cdots \gaa_{v_r}^{a_r}.
\end{align}
For ${\bf t} = (t_v)_{v\in V_\lambda} \in \mathbb{Z}^{\mathcal V}$, set  
\begin{align}\label{def-gaa-monomial}
    \gaa^{\bf t} := \left[ \prod_{v\in V_\lambda} \gaa_v^{t_v}\right].
\end{align}

Finally, define the map  
\begin{align}\label{def-M-lambda}
    M_\lambda \colon \mathbb Z^{\mathcal V} \longrightarrow \Fr(\dS), 
    \quad {\bf t} \longmapsto \gaa^{\bf t}.
\end{align}
Then we have the following.

\begin{lemma}\label{lem-seed-skein}
Let $\fS$ be a triangulable pb surface and has no interior punctures, and let $\lambda$ be a triangulation of $\fS$.
    Then $(Q_\lambda,\Pi_\lambda,M_\lambda)$ is a quantum seed (Definition~\ref{def-quantum-seed}) inside the skew-field $\Fr(\dS)$.
\end{lemma}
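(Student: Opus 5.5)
We check the three conditions of Definition~\ref{def-quantum-seed} in turn, taking $\mathcal V=V_\lambda$, $\mathcal V_{\rm mut}=\mathring V_\lambda$ and $\mathcal F=\Fr(\dS)$. The strategy is to transport everything through the injective $\mathcal A$-quantum trace: by Lemma~\ref{lem-basic-lem}(a) and Theorem~\ref{thm-trace-A}\ref{thm-trace-A-a}, ${\rm tr}^A_\lambda\colon\dS\hookrightarrow\A$ is injective with ${\rm tr}^A_\lambda(\gaa_v)=A_v$. Together with the sandwich $\Ap\subset\dS\subset\A$ from \eqref{sandwitch-projected}, this lets us identify $\Fr(\dS)$ with $\Fr(\A)$, as in \eqref{intro-identity-Frac}.

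\textbf{Condition (1).} We must check that $Q_\lambda$ is a valid exchange matrix. Its entries lie in $\tfrac12\mathbb Z$. Weight-$\tfrac12$ arrows occur only along boundary edges of $\fS$, after the gluing cancellations in $\Gamma_\lambda$. Both endpoints of such an arrow are edge vertices on $\partial\fS$, and hence frozen. So every entry involving a mutable vertex is an integer, which is exactly what is required of a quiver in \S\ref{sec-mutation-classical}.

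\textbf{Condition (2).} We first need $\Pi_\lambda=\tfrac1nP_\lambda$ to be an integral antisymmetric matrix. Antisymmetry is immediate from $P_\lambda=2K_\lambda Q_\lambda K_\lambda^t$. Integrality follows from Remark~\ref{rem-P-P}, which says that every entry of $P_\lambda$ lies in $n\mathbb Z$. The compatibility relation is \eqref{eq-prod-PiQ}, which is deduced from \eqref{eq-prod-PQ}: for $u\in\mathcal V_{\rm mut}$ and $v\in\mathcal V$ it gives $\sum_k Q_\lambda(k,u)\Pi_\lambda(k,v)=2n\,\delta_{u,v}$. So we may take $d_u=2n$. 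The one point to check is the sign and transpose convention. The block in \eqref{eq-prod-PQ} is $-2n^2\,\mathrm{Id}$ on the rows and columns indexed by $\mathring V_\lambda$. Using antisymmetry of both $P_\lambda$ and $Q_\lambda$, the entry $(P_\lambda Q_\lambda)(v,u)=\sum_k P_\lambda(v,k)Q_\lambda(k,u)$ rewrites as $-n\sum_k Q_\lambda(k,u)\Pi_\lambda(k,v)$. Setting this equal to $-2n^2\delta_{u,v}$ gives the positive value $d_u=2n$.

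\textbf{Condition (3).} We now verify the multiplicative relation \eqref{eq-M-relation} and the torus basis property.
\begin{itemize}
\item The relation $M_\lambda(\mathbf k)M_\lambda(\mathbf t)=\xi^{\frac12\mathbf k\Pi_\lambda\mathbf t^T}M_\lambda(\mathbf k+\mathbf t)$ is a formal consequence of two facts: the $q$-commutation \eqref{eq-gaa-com}, and the Weyl normalization \eqref{Weyl_ordering-gaa}. This is the standard computation for Weyl-ordered monomials in a quantum torus.
\item Alternatively, applying ${\rm tr}^A_\lambda$ sends $\gaa^{\mathbf t}$ to $A^{\mathbf t}$, because $\xi^{\Pi_\lambda}=\omega^{P_\lambda}$ makes \eqref{Weyl_ordering-gaa} and \eqref{Weyl_ordering-Y} agree. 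The identity then holds in $\A$, where it is standard.
\item The map $M_\lambda$ is nonzero-valued, since each $\gaa^{\mathbf t}$ is invertible in $\Fr(\dS)$.
\item The image $M_\lambda(\mathbb Z^{\mathcal V})$ corresponds under the trace to the monomial basis $\{A^{\mathbf t}\}$ of the quantum torus $\A$. This is exactly the statement that $\{\gaa_v\}$ is a quantum torus frame, Lemma~\ref{lem-basic-lem}(d).
\item Finally, the skew-field of this torus is all of $\mathcal F$. Indeed, $\Ap\subset\dS\subset\A$ forces $\Fr(\dS)=\Fr(\A)$, because every element of $\A$ is an element of $\Ap$ times an inverse monomial.
\end{itemize}

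The main obstacle is condition (2): deducing the exact compatibility identity, with its sign, from \eqref{eq-prod-PQ}, which is cited from \cite{LY23} under different conventions (Remarks~\ref{rem-Q-LY} and \ref{rem-P-P} flip signs and scale by $-\tfrac12$). I would carefully recompute the sign chain to make sure $d_u$ comes out positive. If a sign turned out wrong, I would replace $\Pi_\lambda$ by $-\Pi_\lambda$. The remaining steps are bookkeeping.
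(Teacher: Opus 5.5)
Your proposal is correct and follows essentially the same route as the paper's proof. The paper gets integrality of $\Pi_\lambda$ from Remark~\ref{rem-P-P}, the compatibility relation \eqref{eq-prod-PiQ} from \eqref{eq-prod-PQ} (your sign check giving $d_u=2n$ is right), \eqref{eq-M-relation} from \eqref{eq-gaa-com}, and the basis and skew-field condition from the injectivity of ${\rm tr}^A_\lambda$ together with $\mathcal{A}^{+}_{\omega}(\mathfrak S,\lambda)\subset\widetilde{\mathscr S}_\omega(\mathfrak S)\subset\mathcal{A}_{\omega}(\mathfrak S,\lambda)$, exactly as you do. One caveat: your fallback of replacing $\Pi_\lambda$ by $-\Pi_\lambda$ would not have been legitimate, since $\Pi_\lambda$ is forced by the commutation relations \eqref{eq-gaa-com} of the $\mathsf{g}_v$ (equivalently by $M_\lambda$ via \eqref{eq-M-relation}); as your computation shows, it is not needed.
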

\begin{proof}
By definition, $Q_\lambda$ is an exchange matrix.  
Since $P_\lambda$ is anti-symmetric with entries in $n\mathbb{Z}$ (Remark~\ref{rem-P-P}), it follows that $\Pi_\lambda$ is an integral anti-symmetric matrix, and equation~\eqref{eq-prod-PiQ} holds.  

Equation~\eqref{eq-gaa-com} implies
\[
    M_\lambda({\bf k})\, M_\lambda({\bf t}) 
    = \xi^{\frac{1}{2}\,{\bf k} P {\bf t}^T} 
      M_\lambda({\bf k} + {\bf t}).
\]

By Lemma~\ref{lem-basic-lem}(a), the map $\trA\colon\dS \to \A$ is injective, so we may regard $\dS$ as a subalgebra of $\A$.  
Lemma~\ref{thm-trace-A} then shows that $\gaa_v = A_v$ for all $v \in \mathcal V$, and moreover
\[
    \Ap \subset \dS \subset \A.
\]
Hence $M_\lambda(\mathbb Z^{\mathcal V})$ coincides with the monomials of the quantum torus $\A$, and we obtain 
\[
    \Fr(\A) = \Fr(\dS).
\]

\end{proof}

Given a triangulation $\lambda$ of $\fS$, we identify $\Fr(\A)$ (resp. $A_v \in \Fr(\A)$ for $v \in V_\lambda$) with $\Fr(\dS)$ (resp. $\gaa_v \in \Fr(\dS)$ for $v \in V_\lambda$).  
With this identification, the triple $(Q_\lambda,\Pi_\lambda,M_\lambda)$ forms a quantum seed in the skew-field $\Fr(\A)$, where  
\begin{align}\label{def-M-lambda-A}
    M_\lambda \colon \mathbb Z^{V_\lambda} \to \Fr(\A), 
    \quad {\bf t} \mapsto A^{\bf t}.
\end{align}

\begin{definition}\label{def-tri-quantum}
    Let $\mathsf{S}_{\fS,\lambda}$ denote the quantum $\mathcal A$-mutation class (Definition~\ref{def-quantum-class}) containing the quantum seed $(Q_\lambda,\Pi_\lambda,M_\lambda)$.  

    Define $\mathscr{A}_{\fS,\lambda}$ (resp.~$\mathscr{U}_{\fS,\lambda}$) as $\mathscr{A}_{\mathsf S_{\fS,\lambda}}$ (resp.~$\mathscr{U}_{\mathsf S_{\fS,\lambda}}$) in the sense of Definition~\ref{def-quan-cluster-algebra}.
\end{definition}

By Theorem~\ref{thm-inclusion-quantum}, we obtain the following:

\begin{lemma}
    There are inclusions
    \[
        \mathscr{A}_{\mathsf S_{\fS,\lambda}}
        \subset 
        \mathscr{U}_{\mathsf S_{\fS,\lambda}}
        \subset 
        \Fr(\dS).
    \]
\end{lemma}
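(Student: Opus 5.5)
The inclusion $\mathscr{A}_{\mathsf S_{\fS,\lambda}}\subset\mathscr{U}_{\mathsf S_{\fS,\lambda}}$ is exactly Theorem~\ref{thm-inclusion-quantum} (the quantum Laurent phenomenon of Berenstein--Zelevinsky) applied to the mutation class $\mathsf S_{\fS,\lambda}$. This class is well defined because Lemma~\ref{lem-seed-skein} shows that $(Q_\lambda,\Pi_\lambda,M_\lambda)$ is a genuine quantum seed in $\Fr(\dS)$. So the real content is the second inclusion, $\mathscr{U}_{\mathsf S_{\fS,\lambda}}\subset\Fr(\dS)$.

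For that, I will first record that all quantum $\mathcal A$-mutations take place inside the fixed skew-field $\mathcal F=\Fr(\dS)$. This requires only the identification made in the proof of Lemma~\ref{lem-seed-skein}: via the injective map $\trA$ of Lemma~\ref{lem-basic-lem}(a), we have $\Ap\subset\dS\subset\A$, and therefore $\Fr(\dS)=\Fr(\A)$. Every seed $s^w\in\mathsf S_{\fS,\lambda}$ is obtained by finitely many mutations $\mu_{k,A}$. By the explicit formula for $\mu_{k,A}$ (sums of Laurent monomials in the previous cluster) and induction on the length of the mutation sequence, each $M^w({\bf e}_i)$ lies in $\mathcal F$. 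Hence each based quantum torus $\mathbb T(w)=\operatorname{span}_R\{M^w({\bf k})\}$ is a subset of $\mathcal F$.

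In particular, taking $w_0$ to be the vertex of $\mathsf{Exch}_{\mathsf S_{\fS,\lambda}}$ corresponding to the initial seed, Definition~\ref{def-quan-cluster-algebra} gives
\[
\mathscr{U}_{\mathsf S_{\fS,\lambda}}=\bigcap_{w}\mathbb T(w)\subset \mathbb T(w_0)=\operatorname{span}_R\{A^{\bf t}\mid {\bf t}\in\mathbb Z^{V_\lambda}\}=\A\subset\Fr(\A)=\Fr(\dS).
\]
This completes the chain.

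The only point needing care is bookkeeping rather than mathematics. Theorem~\ref{thm-inclusion-quantum} is stated for an arbitrary mutation class in an ambient skew-field, so I must confirm that the hypotheses of Definition~\ref{def-quantum-seed} hold for the initial seed, namely compatibility \eqref{eq-prod-PiQ} with $d_u=2n$ and relation \eqref{eq-M-relation}. These were verified in Lemma~\ref{lem-seed-skein}, and mutation preserves them by \cite{BZ}. No genuine obstacle is expected.
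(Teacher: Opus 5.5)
Your proposal is correct and matches the paper. The paper gets the first inclusion directly from Theorem~\ref{thm-inclusion-quantum}, and treats the second as immediate because every based quantum torus $\mathbb T(w)$ sits inside the ambient skew-field $\Fr(\dS)$ by Definition~\ref{def-quan-cluster-algebra}. Your extra route through $\mathbb T(w_\lambda)=\mathcal{A}_{\omega}(\fS,\lambda)\subset\Fr(\dS)$ is valid but not needed.
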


The constructions of $\mathsf{S}_{\fS,\lambda}$, $\mathscr{A}_{\mathsf S_{\fS,\lambda}}$, and $\mathscr{U}_{\mathsf S_{\fS,\lambda}}$ all depend a priori on the triangulation $\lambda$.  
In the remainder of this section, we will prove that the definitions in Definition~\ref{def-tri-quantum} are, in fact, independent of the choice of $\lambda$.

\subsection{The compatibility between quantum $\mathcal A$ and $\mathcal X$-mutations}\label{sec-compatibility}

In this subsection, we will show that the 
quantum $\mathcal A$ and $\mathcal X$-mutations introduced in \S\ref{sec-quantum-clu-al} are compatible to each other, which will be used in \S\ref{subsec-naturality} to show the naturality of Definition~\ref{def-tri-quantum}.

\def\Xb{\mathcal Z_{\omega}^{\rm bl}(\fS,\lambda)}

For every cluster $\mathcal X$-seed $\mathcal D_X$,
 we defined the $n$-th root Fock-Goncharov algebra $\mathcal{Z}_\omega(\mathcal{D}_X)$ (Equation~\eqref{eq-Fock-Goncharov-n}) and a subalgebra $\mathcal{Z}^{\rm mbl}_\omega(\mathcal{D}_X)$ of $\mathcal{Z}_\omega(\mathcal{D}_X)$ (Definition~\ref{def-mut-Z}).
Let $k\in\Vm$, and let $\mu_{k,X}(\mathcal D_X)=\mathcal D_X'$. We introduced the following mutation (Equation~\eqref{eq-def-quantum-mutation-X})
$$
\nu^\omega_k \colon
\Fr(
\mathcal{Z}^{\rm mbl}_\omega(\mathcal{D}_X'))\rightarrow
      \Fr(
      \mathcal{Z}^{\rm mbl}_\omega(\mathcal{D}_X)).$$

 As discussed in \S\ref{subsec-mutation-Fock}, we have $\mathcal D_{\lambda}=(\Gamma_\lambda,(X_v)_{v\in V_\lambda})$ is a cluster $\mathcal X$-seed and $\mathcal{Z}^{\rm bl}_\omega(\fS,\lambda) \subset \mathcal{Z}^{\rm mbl}_\omega(\mathcal D_{\lambda})$ (Lemma~\ref{lem:bl_in_mbl}).

\def\bv{\mathbbm{v}}

As introduced in \S\ref{sec-seed-inside}, the triple
\begin{align}\label{def-qum-seed-w}
    w_\lambda:=(Q_\lambda,\Pi_\lambda,M_\lambda)
\end{align}
 is a quantum seed inside the skew-field $\Fr(\A)$.
Theorem \ref{thm-transition-LY} shows that there is an algebra isomorphism from $\A$ to $\Xb$.
For a sequence $\mathbbm{v}=(v_1,\cdots,v_m)$ of mutable vertices in $V_\lambda$, define
\begin{align}\label{eq-def-wD}
    \text{$w_\lambda^{\mathbbm{v}}=\mu_{v_m,A}(\cdots \mu_{v_1,A}(w_\lambda))\text{ and }
\mathcal D_\lambda^{\mathbbm{v}}=\mu_{v_m,X}(\cdots \mu_{v_1,X}(\mathcal D_{\lambda}))
$}
\end{align}
Note that we allow $\bv$ to be an empty sequence and define 
$$\text{$w_\lambda^{\emptyset}=w_\lambda$, and $\mathcal D_\lambda^{\emptyset}=\mathcal D_\lambda$.}$$
Suppose ${w_\lambda^{\mathbbm{v}}}=
(Q_\lambda',\Pi_\lambda',M_\lambda')$, then define
\begin{align}\label{eq-def-piv}
    \Pi_\lambda^{\bv}=\Pi_\lambda'.
\end{align}

It is natural to ask whether there exists a subalgebra 
$\mathcal Z_\omega^{\mathrm{bl}}(\mathcal D_\lambda^{\mathbbm{v}}) \subset \ZM(\mathcal D_\lambda^{\mathbbm{v}})$ 
that is isomorphic to $\mathbb T(w_\lambda^{\mathbbm{v}})$, where $\mathbb T(w_\lambda^{\mathbbm{v}})$ 
is defined in~\eqref{eq-T-w}. In the remainder of this subsection, we shall construct 
$\mathcal Z_\omega^{\mathrm{bl}}(\mathcal D_\lambda^{\mathbbm{v}})$ and show that the isomorphism 
$\nu_k^\omega$ in~\eqref{eq-def-quantum-mutation-X} descends to 
$\mathcal Z_\omega^{\mathrm{bl}}$. This construction will be used in 
\S\ref{subsec-naturality} to establish the main result of this section 
(Theorem~\ref{thm-main-1}).

Recall that $\Xb$ is the subalgebra of $\X$ generated by elements $Z^{\bf k}$ with 
${\bf k}\in \mathbb Z^{V_\lambda}$ satisfying ${\bf k}H_\lambda \in n\mathbb Z^{V_\lambda}$, 
where $H_\lambda$ is defined in~\eqref{eq-def-H-lambda}. 
The key step in defining 
$\mathcal Z_\omega^{\mathrm{bl}}(\mathcal D_\lambda^{\mathbbm{v}})$ 
is therefore to introduce an appropriate matrix $H_\lambda^\bv$ 
associated with $\mathcal Z(\mathcal D_{\lambda}^\bv)$.

\def\bv{\mathbbm{v}}

Note that the matrix mutation defined in \eqref{eq-mutation-Q} can be generalized to any square matrix $Q=(Q(u,v))_{u,v\in\mathcal V}$ with entries in $\mathbb C$, which is still denoted as $\mu_k$.
For a sequence $\mathbbm{v}=(v_1,\cdots,v_m)$ of mutable vertices in $V_\lambda$, define 
\begin{align}\label{eq-def-DH-v}
    Q_\lambda^\bv=\mu_{v_m}(\cdots \mu_{v_1}(Q_\lambda))\text{ and }
 H_\lambda^\bv=\mu_{v_m}(\cdots \mu_{v_1}(H_\lambda)).
\end{align}
When $\bv=\emptyset$, define 
$$\text{$Q_\lambda^{\emptyset}=Q_\lambda$, and $H_\lambda^{\emptyset}=H_\lambda$.}$$

The following lemma shows that $H_\lambda^\bv$ coincides with $Q_\lambda^\bv$ away from the boundary, thereby establishing the naturality of the definition in \eqref{eq-def-DH-v} by comparison with \eqref{def-H-Q-interior}.

\begin{lemma}\label{lem-matrix-HQ}
  Let $\bv=(v_1,\ldots,v_m)$ be a sequence of mutable vertices in $V_\lambda$.  
  For $u,v\in V_\lambda$, if $u$ and $v$ are not contained in the same boundary edge, then
  \[
    H_\lambda^\bv(u,v) = Q_\lambda^\bv(u,v).
  \]
\end{lemma}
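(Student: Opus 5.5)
We argue by induction on the length $m$ of $\bv$, carrying along a slightly stronger statement that keeps the induction closed:
\begin{itemize}
  \item[(i)] $H_\lambda^\bv(u,v)=Q_\lambda^\bv(u,v)$ whenever $u,v$ are not on the same boundary edge;
  \item[(ii)] $H_\lambda^\bv(u,k)=Q_\lambda^\bv(u,k)$ and $H_\lambda^\bv(k,u)=Q_\lambda^\bv(k,u)$ for every mutable $k$ and every $u\in V_\lambda$.
\end{itemize}
Statement (ii) is really a special case of (i). A mutable vertex lies in the interior of $\fS$, so it never shares a boundary edge with any vertex, including itself. For $u=k$ we use $H_\lambda^\bv(k,k)=0=Q_\lambda^\bv(k,k)$.

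For $m=0$, the claim is the definition \eqref{def-H-Q-interior}. Note that $H_\lambda(k,k)=Q_\lambda(k,k)=0$ for interior $k$, since the diagonal value $1$ in \eqref{eq-def-H-lambda} only occurs for vertices on a boundary edge.

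For the inductive step, write $\bv=(\bv_0,k)$ with $k$ mutable, and set $H=H_\lambda^{\bv_0}$, $Q=Q_\lambda^{\bv_0}$. By the induction hypothesis, $H$ and $Q$ agree off same-boundary-edge pairs, and in particular on the whole $k$-th row and column. Recall the generalized mutation formula \eqref{eq-mutation-Q}:
\begin{itemize}
  \item if $k\in\{u,v\}$, then $\mu_k(H)(u,v)=-H(u,v)$;
  \item otherwise, $\mu_k(H)(u,v)=H(u,v)+\tfrac12\bigl(H(u,k)|H(k,v)|+|H(u,k)|H(k,v)\bigr)$.
\end{itemize}
The correction term involves only entries in row and column $k$, where $H$ and $Q$ coincide. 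So for any pair $(u,v)$ at which $H(u,v)=Q(u,v)$, we get $\mu_k(H)(u,v)=\mu_k(Q)(u,v)$. This is exactly the case when $u,v$ are not on the same boundary edge, which proves (i) for $\bv$, and (ii) follows as noted above.

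There is one subtlety: for $u,v$ on the same boundary edge, $H$ and $Q$ genuinely differ, and the mutation formula also changes that entry. This causes no trouble, since we make no claim there. The only thing to verify is that the induction hypothesis we actually use, agreement in row and column $k$, never relies on such pairs. It does not, because $k$ is interior. This observation about the interior vertex $k$ is the only real point of the argument; everything else is the direct entrywise comparison above.
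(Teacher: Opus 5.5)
Your proof is correct and follows the same route as the paper. Both argue by induction on the length of the mutation sequence, with two cases: if $k\in\{u,v\}$ the entry just changes sign, and if $k\notin\{u,v\}$ the correction term uses only row and column $k$, where $H$ and $Q$ agree. You also state explicitly the point the paper's proof uses without comment: $k$ is mutable, hence interior, so its entire row and column are covered by the induction hypothesis.
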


\begin{proof}
We proceed by induction on $m$.  

When $m=0$, the claim follows directly from \eqref{def-H-Q-interior}.  

For $1\leq i\leq m$, define
\[
  Q_\lambda^{(i)} := \mu_{v_i}\!\bigl(\cdots \mu_{v_1}(Q_\lambda)\bigr),
  \qquad
  H_\lambda^{(i)} := \mu_{v_i}\!\bigl(\cdots \mu_{v_1}(H_\lambda)\bigr),
\]
with initial terms $Q_\lambda^{(0)} = Q_\lambda$ and $H_\lambda^{(0)} = H_\lambda$.  

Assume that for some $s\geq 1$ we have  
\[
   H_\lambda^{(s-1)}(u,v) = Q_\lambda^{(s-1)}(u,v),
\]
whenever $u$ and $v$ do not lie on the same boundary edge.  

- **Case 1: $v_s \in \{u,v\}$.**  
  In this situation,
  \[
    H_\lambda^{(s)}(u,v) 
      = - H_\lambda^{(s-1)}(u,v)
      = - Q_\lambda^{(s-1)}(u,v)
      = Q_\lambda^{(s)}(u,v).
  \]

- **Case 2: $v_s \notin \{u,v\}$.**  
  Then by the mutation rule,
  \begin{align*}
    H_\lambda^{(s)}(u,v) 
      &= H_\lambda^{(s-1)}(u,v) 
        + \tfrac{1}{2}\Bigl(
           H_\lambda^{(s-1)}(u,v_s)\,\bigl|H_\lambda^{(s-1)}(v_s,v)\bigr|
          +\bigl|H_\lambda^{(s-1)}(u,v_s)\bigr|\,H_\lambda^{(s-1)}(v_s,v)
        \Bigr) \\
      &= Q_\lambda^{(s-1)}(u,v) 
        + \tfrac{1}{2}\Bigl(
           Q_\lambda^{(s-1)}(u,v_s)\,\bigl|Q_\lambda^{(s-1)}(v_s,v)\bigr|
          +\bigl|Q_\lambda^{(s-1)}(u,v_s)\bigr|\,Q_\lambda^{(s-1)}(v_s,v)
        \Bigr) \\
      &= Q_\lambda^{(s)}(u,v).
  \end{align*}

Thus the claim holds for $s$, completing the induction.
\end{proof}

For a sequence $\mathbbm{v}=(v_1,\cdots,v_m)$ of mutable vertices in $V_\lambda$, $k\in\mathring{V}_\lambda$, and $\epsilon\in\{-,+\}$, define 
\begin{align*}
    E_{k,\epsilon,\bv} (i,j)&=
    \begin{cases}
        \delta_{i,j} & j\neq k,\\
        -1 & i=j=k,\\
        [-\epsilon Q_\lambda^{\bv}(i,k)]_+
        & i\neq k=j,
    \end{cases}\\
     F_{k,\epsilon,\bv} (i,j)&=
    \begin{cases}
        \delta_{i,j} & i\neq k,\\
        -1 & i=j=k,\\
        [-\epsilon Q_\lambda^{\bv}(j,k)]_+
        & j\neq k=i,
    \end{cases}
\end{align*}
for $i,j\in V_\lambda.$
Note that 
\begin{align}\label{eq-tran-EF}
    E_{k,\epsilon,\bv}^T= F_{k,\epsilon,\bv}.
\end{align}

\begin{lemma}\cite{BZ}\label{lem-matrix-mul}
    For a sequence $\mathbbm{v}=(v_1,\cdots,v_m)$ of mutable vertices in $V_\lambda$ and $k\in\mathring{V}_\lambda$, set $\mathbbm{u}=(v_1,\cdots,v_m,k)$.

\begin{enumerate}[label={\rm (\alph*)}]\itemsep0,3em

\item\label{lem-matrix-mul-a} $Q_\lambda^{\mathbbm{u}}
=E_{k,\epsilon,\bv}Q_\lambda^{\mathbbm{v}} F_{k,\epsilon,\bv},\quad
\Pi_\lambda^{\mathbbm{u}}
=E_{k,\epsilon,\bv}^T \Pi_\lambda^{\mathbbm{v}} E_{k,\epsilon,\bv}.$

\item\label{lem-matrix-mul-b} $E_{k,\epsilon,\bv}^2= F_{k,\epsilon,\bv}^2=I.$

\item\label{lem-matrix-mul-c} $(E_{k,-,\bv} E_{k,+,\bv})^T \Pi_\lambda^{\mathbbm{v}} (E_{k,-,\bv} E_{k,+,\bv})= \Pi_\lambda^{\mathbbm{v}}.$

\item\label{lem-matrix-mul-d} $Q_\lambda^{\mathbbm{v}} \Pi_\lambda^{\mathbbm{v}} = Q_\lambda \Pi_\lambda.$

\end{enumerate}
    
\end{lemma}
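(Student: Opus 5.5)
Working one mutation at a time, I would prove the four assertions of Lemma~\ref{lem-matrix-mul} directly from the definitions of $E_{k,\epsilon,\bv}$ and $F_{k,\epsilon,\bv}$, in the order (b), (a), (c), (d). Throughout, write $Q:=Q_\lambda^{\bv}$, $\Pi:=\Pi_\lambda^{\bv}$ and $E_\epsilon:=E_{k,\epsilon,\bv}$. The matrix $E_\epsilon$ is the identity except for its $k$-th column, whose entries are $-1$ at position $k$ and $[-\epsilon Q(i,k)]_+$ at positions $i\neq k$.

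For (b), compute $E_\epsilon^2$ directly. Columns $j\neq k$ are unchanged. For column $k$, the $(i,k)$ entry of $E_\epsilon^2$ equals $[-\epsilon Q(i,k)]_+\cdot 1+[-\epsilon Q(i,k)]_+\cdot(-1)=0$ for $i\neq k$, and equals $(-1)^2=1$ for $i=k$. Hence $E_\epsilon^2=I$. The identity $F_\epsilon^2=I$ then follows by transposing, using \eqref{eq-tran-EF}.

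For (a), the exchange-matrix identity is the standard Berenstein--Zelevinsky factorization of the mutation rule \eqref{eq-mutation-Q}. Expanding $(E_\epsilon QF_\epsilon)(i,j)$ gives four cases.
\begin{itemize}
\item If $i=j=k$, the entry is $Q(k,k)=0$.
\item If exactly one of $i,j$ equals $k$, the entry is $-Q(i,j)$. This uses $Q(k,k)=0$.
\item If $i,j\neq k$, the entry is $Q(i,j)+[-\epsilon Q(i,k)]_+Q(k,j)+Q(i,k)[-\epsilon Q(j,k)]_+$.
\end{itemize}
In the last case, $Q$ is skew-symmetric, so $Q(k,j)=-Q(j,k)$. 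A sign check on $\epsilon$ then shows this expression equals $Q(i,j)+\tfrac12\bigl(Q(i,k)|Q(k,j)|+|Q(i,k)|Q(k,j)\bigr)$, for either choice of $\epsilon$. This case check is the only computation with any content in (a).

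For the $\Pi$ identity in (a), compare $E_\epsilon^T\Pi E_\epsilon$ with \eqref{eq-for-P'}. Entries with $i,j\neq k$ are unchanged. The $(k,j)$ entry with $j\neq k$ is $-\Pi(k,j)+\sum_v[-\epsilon Q(v,k)]_+\Pi(v,j)$, and for $\epsilon=-$ this is exactly \eqref{eq-for-P'}. For $\epsilon=+$ the two values differ by $\sum_v Q(v,k)\Pi(v,j)$. By the compatibility relation $\sum_v Q(v,k)\Pi(v,j)=\delta_{k,j}d_k$ (preserved along mutations, by \cite{BZ}), this difference vanishes for $j\neq k$. The $(k,k)$ entry is $0$ by skew-symmetry.

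For (c), apply (a) with $\epsilon=+$ to $\Pi$ and then with $\epsilon=-$. Both yield $\Pi_\lambda^{\mathbbm u}$, so $E_-^T\Pi E_-=E_+^T\Pi E_+$. Using $E_-^2=I$ from (b), $(E_-E_+)^T\Pi(E_-E_+)=E_+^T\bigl(E_-^T\Pi E_-\bigr)E_+=E_+^T E_+^T\Pi E_+E_+=\Pi$.

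For (d), argue by induction on the length of $\bv$, with the base case trivial. By (a), $Q^{\mathbbm u}\Pi^{\mathbbm u}=E_\epsilon QF_\epsilon E_\epsilon^T\Pi E_\epsilon$. Since $F_\epsilon=E_\epsilon^T$ and $(E_\epsilon^T)^2=I$ by (b), this collapses to $E_\epsilon Q\Pi E_\epsilon$. Note that (d) must be read on the rows indexed by mutable vertices, i.e.\ as the compatibility matrix $\begin{pmatrix} D & 0\end{pmatrix}$-type block of \eqref{eq-prod-PQ}. So the remaining step is to show $E_\epsilon (Q\Pi) E_\epsilon=Q\Pi$ on those rows, using that the mutable rows of $Q\Pi$ form a scalar multiple of the identity block and that the nontrivial column of $E_\epsilon$ has zero entries on the frozen part relevant here.

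I expect the main obstacle to be exactly this final bookkeeping in (d): identifying which block of $Q\Pi$ is claimed to be invariant, and checking that conjugation by $E_\epsilon$ fixes it. The transpose and sign conventions here are easy to get wrong, so I would first verify the small case $|\mathcal V|=2$ to pin them down.
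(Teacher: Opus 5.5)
Your arguments for (a), (b) and (c) are correct. They are the standard Berenstein--Zelevinsky matrix computations that the paper is citing; the paper gives no proof of its own. In particular the following three steps are all right:
\begin{itemize}
\item the four-case expansion of $E_\epsilon QF_\epsilon$;
\item the observation that the $\epsilon=-$ form of $E_\epsilon^T\Pi E_\epsilon$ is literally the mutation rule for $\Pi$, while the $\epsilon=+$ form differs by $\sum_v Q(v,k)\Pi(v,j)$, which vanishes for $j\neq k$ by compatibility;
\item the deduction of (c) from $E_-^T\Pi E_-=E_+^T\Pi E_+$ together with $E_\pm^2=I$.
\end{itemize}

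The gap is in (d). You leave it unfinished, and the mechanism you indicate is not the right one.

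Your restriction to mutable rows is correct, and in fact necessary. Write $M=Q\Pi$ and $c_l=[-\epsilon Q(l,k)]_+$. For a frozen $f$, the $(f,k)$ entry of $E_\epsilon ME_\epsilon$ equals $M(f,k)+\bigl(\sum_{l\neq k}M(f,l)c_l+2n\,c_f-2M(f,k)\bigr)$, and nothing forces the bracket to vanish. For example, with $n=2$, flipping the middle diagonal of a fan triangulation of a hexagon already changes the entries of $Q\Pi$ in the rows of the two boundary vertices at the fan vertex. The paper only ever uses mutable rows, namely $(Q\Pi)(k,k)=-2n$ when comparing $\mathcal A$- and $\mathcal X$-mutations.

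However, the $k$-th column of $E_\epsilon$ does \emph{not} vanish on frozen rows: its entries $[-\epsilon Q(f,k)]_+$ are typically nonzero. What you actually need is that the mutable rows of $E_\epsilon$ are supported on mutable columns. For mutable $i\neq k$ the $i$-th row is $e_i^T+c_ie_k^T$, and the $k$-th row is $-e_k^T$.

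Now argue inductively. Every mutable row of $M$ equals $-2n\,e_u^T$ with the \emph{same} constant $2n$; this is the compatibility identity $\sum_w Q_\lambda(w,u)\Pi_\lambda(w,v)=2n\delta_{u,v}$. Hence the $i$-th row of $E_\epsilon M$ is $-2n(e_i^T+c_ie_k^T)$, or $2n\,e_k^T$ when $i=k$. Right multiplication by $E_\epsilon$ then returns $-2n\,e_i^T$. With unequal constants $d_u$ you would instead get $c_i(d_k-d_i)$ in position $(i,k)$, so the equality of the constants is essential.

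Equivalently, (d) on mutable rows is just the statement that $(Q,\Pi)$ stays compatible with the same $d_u=2n$. That is the Berenstein--Zelevinsky fact you already invoked in (a), so (d) costs nothing once you cite it. If you prefer to avoid the citation, run a single induction on the length of the sequence:
\begin{itemize}
\item the inductive step for (d) uses only the $\epsilon=-$ case of (a), which needs no compatibility;
\item (d) at a given stage then supplies the compatibility needed for the $\epsilon=+$ case of (a), and hence for (c), at that stage.
\end{itemize}
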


\def\bu{\mathbbm{u}}

\begin{lemma}\label{lem-mutation-H}
 Under the same assumptions as Lemma~\ref{lem-matrix-mul}, we have 
  \begin{enumerate}[label={\rm (\alph*)}]\itemsep0.3em
    \item\label{lem-mutation-H-a} 
    $H_\lambda^{\mathbbm{u}}
    =E_{k,\epsilon,\bv}\,H_\lambda^{\mathbbm{v}}\,F_{k,\epsilon,\bv}
    =E_{k,\epsilon,\bv}\,H_\lambda^{\mathbbm{v}}\,E_{k,\epsilon,\bv}^T$.
    
    \item\label{lem-mutation-H-b} 
    $\mu_k\!\bigl(\mu_k(H_\lambda^{\mathbbm{v}})\bigr)
    = H_\lambda^{\mathbbm{v}}$.
  \end{enumerate}
\end{lemma}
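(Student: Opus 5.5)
The plan is to reduce everything to the behaviour of the $k$-th row and column, since the generalized mutation $\mu_k$ of \eqref{eq-mutation-Q} only uses those entries. Because $k\in\mathring{V}_\lambda$ is interior, $k$ never lies on a boundary edge. So Lemma~\ref{lem-matrix-HQ} gives
\[
H_\lambda^{\bv}(i,k)=Q_\lambda^{\bv}(i,k),\qquad H_\lambda^{\bv}(k,j)=Q_\lambda^{\bv}(k,j)
\]
for all $i,j$, including $H_\lambda^{\bv}(k,k)=Q_\lambda^{\bv}(k,k)=0$. In particular the $k$-th row and column of $H_\lambda^{\bv}$ are ``antisymmetric'': $H_\lambda^{\bv}(k,j)=-H_\lambda^{\bv}(j,k)$. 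This is the only input about $H_\lambda^{\bv}$ we need.

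For (a), I would compute $E_{k,\epsilon,\bv}H_\lambda^{\bv}F_{k,\epsilon,\bv}$ entrywise, writing $M=H_\lambda^{\bv}$ and $Q=Q_\lambda^{\bv}$, and compare with $\mu_k(M)$.
\begin{itemize}
\item If $i=k$ or $j=k$, the $-1$ in position $(k,k)$ of $E$ or $F$, together with $M(k,k)=0$, gives $-M(i,j)$. This matches the first case of \eqref{eq-mutation-Q}.
\item If $i,j\neq k$, the product equals
\[
M(i,j)+[-\epsilon Q(i,k)]_+\,M(k,j)+M(i,k)\,[-\epsilon Q(j,k)]_+ ,
\]
the term involving $M(k,k)$ vanishing.
\end{itemize}
In the second case, substitute $M(i,k)=Q(i,k)$ and $M(k,j)=Q(k,j)=-Q(j,k)$. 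A case check on the signs of $Q(i,k)$ and $Q(k,j)$ then shows the correction equals $\tfrac12\bigl(Q(i,k)|Q(k,j)|+|Q(i,k)|Q(k,j)\bigr)$, independently of $\epsilon$. This is the same identity that gives the matrix part of Lemma~\ref{lem-matrix-mul}\ref{lem-matrix-mul-a}, so I would simply quote that computation. Since the same row and column entries of $M$ feed the mutation rule, this is exactly $\mu_k(M)(i,j)$, and hence $H_\lambda^{\bu}=\mu_k(H_\lambda^{\bv})=E_{k,\epsilon,\bv}H_\lambda^{\bv}F_{k,\epsilon,\bv}$. The second expression in (a) then follows from $F_{k,\epsilon,\bv}=E_{k,\epsilon,\bv}^T$, which is \eqref{eq-tran-EF}.

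For (b), I would apply (a) a second time, now to $H_\lambda^{\bu}$ with the sign $-\epsilon$. This is legitimate because the $k$-th row and column of $H_\lambda^{\bu}$ still agree with those of $Q_\lambda^{\bu}$ (again by Lemma~\ref{lem-matrix-HQ}). Since $Q_\lambda^{\bu}(i,k)=-Q_\lambda^{\bv}(i,k)$, we have
\[
E_{k,-\epsilon,\bu}(i,k)=[\epsilon Q_\lambda^{\bu}(i,k)]_+=[-\epsilon Q_\lambda^{\bv}(i,k)]_+,
\]
so $E_{k,-\epsilon,\bu}=E_{k,\epsilon,\bv}$. Therefore
\[
\mu_k(H_\lambda^{\bu})=E_{k,\epsilon,\bv}^2\,H_\lambda^{\bv}\,(E_{k,\epsilon,\bv}^T)^2=H_\lambda^{\bv}
\]
by Lemma~\ref{lem-matrix-mul}\ref{lem-matrix-mul-b}.

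The only delicate point is the first step: making sure the boundary blocks, where $H$ and $Q$ differ and $H$ is not antisymmetric (it has $1$ on the diagonal), never enter the row or column $k$. This is guaranteed because $k$ is interior.
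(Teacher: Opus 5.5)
Your proposal is correct and is essentially the paper's proof. For (a) the paper likewise combines Lemma~\ref{lem-matrix-HQ} (row and column $k$ of $H_\lambda^{\mathbbm{v}}$ agree with those of $Q_\lambda^{\mathbbm{v}}$ because $k$ is interior) with the computation of Lemma~\ref{lem-matrix-mul}\ref{lem-matrix-mul-a}, applied to $H$ in place of $Q$. For (b) it uses the same observation $E_{k,+,\mathbbm{u}}=E_{k,-,\mathbbm{v}}$, which is your $E_{k,-\epsilon,\mathbbm{u}}=E_{k,\epsilon,\mathbbm{v}}$ with $\epsilon=-$, followed by Lemma~\ref{lem-matrix-mul}\ref{lem-matrix-mul-b}; your entrywise check, including the use of $H_\lambda^{\mathbbm{v}}(k,k)=0$, only spells out what the paper leaves implicit.
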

\begin{proof}
    (a) It follows from Lemma~\ref{lem-matrix-HQ} and Lemma~\ref{lem-matrix-mul}(a) (with $Q$ replaced by $H$).

    (b) From part (a), we have
  \begin{align*}   \mu_k\!\bigl(\mu_k(H_\lambda^{\mathbbm{v}})\bigr)
    &= \mu_k(H_\lambda^{\mathbbm{u}}) \\
    &= E_{k,+,\bu}\,H_\lambda^{\mathbbm{u}}\,F_{k,+,\bu} \\
    &= E_{k,+,\bu}\,E_{k,-,\bv}\,H_\lambda^{\mathbbm{v}}\,F_{k,-,\bv}\,F_{k,+,\bu}.
  \end{align*}
  It is straightforward to check that 
  $E_{k,+,\bu}=E_{k,-,\bv}$ and 
  $F_{k,+,\bu}=F_{k,-,\bv}$. 
  Then Lemma~\ref{lem-matrix-mul}\,(b) gives
  \begin{align*}
    \mu_k\!\bigl(\mu_k(H_\lambda^{\mathbbm{v}})\bigr)
    &= E_{k,-,\bv}\,E_{k,-,\bv}\,H_\lambda^{\mathbbm{v}}\,F_{k,-,\bv}\,F_{k,-,\bv} \\
    &= H_\lambda^{\mathbbm{v}}.
  \end{align*}
\end{proof}

Recall that we defined the matrices $K_\lambda$ (Equation~\eqref{eq-surgen-exp}) and $P_\lambda$ (Equation~\ref{eq-anti-matric-P-def}). Remarks~\ref{rem-Q-LY}, \ref{rem-P-P}, and \cite[Lemma~11.9]{LY23} imply the following.

\begin{lemma}\label{lem-matrix-id-LY}
We have the following identities
\begin{enumerate}[label={\rm (\alph*)}]\itemsep0,3em

\item\label{lem-matrix-id-LY-a} $H_\lambda K_\lambda= n I$.

\item\label{lem-matrix-id-LY-b} $K_\lambda Q_\lambda K_\lambda^T =\frac{1}{2} P_\lambda.$

\end{enumerate}
    
\end{lemma}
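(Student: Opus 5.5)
Both identities come from \cite[Lemma~11.9]{LY23}; the work is a dictionary between our normalizations and those of \cite{LY23}, using Remarks~\ref{rem-Q-LY} and \ref{rem-P-P}. I would treat the two parts separately, starting with (b), which is essentially formal.

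For \ref{lem-matrix-id-LY-b}, by \eqref{eq-anti-matric-P-def} we defined $P_\lambda = 2K_\lambda Q_\lambda K_\lambda^T$. Hence $K_\lambda Q_\lambda K_\lambda^T=\tfrac12 P_\lambda$ is just this definition rewritten. The only point to check is that this definition agrees with the matrix used elsewhere, namely $-\overline{\mathsf P}_\lambda$ of \cite[Equations~(163), (205)]{LY23}, as asserted in Remark~\ref{rem-P-P}. For that I would substitute $Q_\lambda=-\tfrac12\overline{\mathsf Q}_\lambda$ (Remark~\ref{rem-Q-LY}) and $K_\lambda=\overline{\mathsf K}_\lambda$ into the formula of \cite[Lemma~11.9]{LY23} expressing $\overline{\mathsf P}_\lambda$ through $\overline{\mathsf K}_\lambda\overline{\mathsf Q}_\lambda\overline{\mathsf K}_\lambda^T$, and check that the constants match, so the sign flip and the factor $\tfrac12$ cancel correctly.

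For \ref{lem-matrix-id-LY-a}, the plan is to identify our $H_\lambda$ (defined by \eqref{def-H-Q-interior} and \eqref{eq-def-H-lambda}) entrywise with the matrix $\overline{\mathsf H}_\lambda$ (or its appropriate rescaling) for which \cite[Lemma~11.9(c)]{LY23} gives $\overline{\mathsf H}_\lambda\overline{\mathsf K}_\lambda = nI$. I would split into two cases.
\begin{enumerate}
\item Pairs $(u,v)$ not on a common boundary edge: here $H_\lambda(u,v)=Q_\lambda(u,v)=-\tfrac12\overline{\mathsf Q}_\lambda(u,v)$, so I only need to match this with the corresponding entry of the \cite{LY23} matrix.
\item Pairs on the same boundary edge: here the diagonal entry $1$ and the entries $-1$ along arrows must be compared with the boundary correction in \cite{LY23}. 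Because our arrows are reversed relative to \cite[Figure~10]{LY23}, the phrase ``arrow from $v'$ to $v$'' in \eqref{eq-def-H-lambda} should correspond to the opposite arrow there.
\end{enumerate}
Once the entries agree, part~(a) follows directly from \cite[Lemma~11.9(c)]{LY23}. As an independent consistency check, Lemma~\ref{lem-balanced-H} and Theorem~\ref{thm-transition-LY} (i.e.\ $\psi_\lambda^{-1}(Z^{\bf k})=A^{\frac1n{\bf k}H_\lambda}$) require ${\bf k}K_\lambda H_\lambda = n{\bf k}$ for all ${\bf k}$. This gives $K_\lambda H_\lambda=nI$, and since these are square matrices it is equivalent to $H_\lambda K_\lambda=nI$.

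The main obstacle is bookkeeping rather than mathematics: getting the signs and the factor $\tfrac12$ right in the boundary entries of $H_\lambda$ under the reversed orientation convention. If a direct comparison with \cite{LY23} is awkward, I would fall back on the local, triangle-by-triangle computation. Since $K_\lambda(u,v)=K_\tau(\skeleton_\tau(u),v)$, it suffices to verify $\sum_w H_\lambda(u,w)K_\lambda(w,v)=n\delta_{u,v}$ face by face on $\mathbb P_3$, using the explicit formula $K_{\mathbb P_3}(v,v')=jk'+ki'+i'j$.
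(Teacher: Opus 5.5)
Your plan matches the paper: part (b) is literally the definition \eqref{eq-anti-matric-P-def} of $P_\lambda$, and part (a) is imported from \cite[Lemma~11.9(c)]{LY23} via the convention dictionary of Remarks~\ref{rem-Q-LY} and \ref{rem-P-P}, which is all the paper does. One caution: your ``independent consistency check'' is circular. The inverse formula $Z^{\bf k}\mapsto A^{\frac1n{\bf k}H_\lambda}$ is derived in the paper from (a) itself, and Theorem~\ref{thm-transition-LY} with Lemma~\ref{lem-balanced-H} alone only show that ${\bf k}\mapsto{\bf k}K_\lambda$ is a bijection onto $\{{\bf t}\mid {\bf t}H_\lambda\in(n\mathbb Z)^{V_\lambda}\}$, which does not force $K_\lambda H_\lambda=nI$.
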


\begin{lemma}\label{lem-key-com-mutation}
  Let $\bv=(v_1,\cdots,v_m)$ be a sequence of mutable vertices in $V_\lambda$.
\begin{enumerate}[label={\rm (\alph*)}]\itemsep0,3em

\item\label{lem-key-com-mutation-a} There exists a unique matrix $K_\lambda^{\bv}$ such that 
$H_\lambda^{\mathbbm{v}} K_\lambda^{\mathbbm{v}}=nI$.

\item\label{lem-key-com-mutation-b} $H_\lambda^{\mathbbm{v}} \Pi_\lambda^{\mathbbm{v}} (H_\lambda^{\mathbbm{v}})^T=2n Q_\lambda^{\mathbbm{v}} $.

\end{enumerate}
\end{lemma}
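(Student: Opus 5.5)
We argue by induction on the length $m$ of $\bv$, using the transformation rules of Lemma~\ref{lem-mutation-H}\ref{lem-mutation-H-a} and Lemma~\ref{lem-matrix-mul}\ref{lem-matrix-mul-a}. The base case $m=0$ is Lemma~\ref{lem-matrix-id-LY}. Part~\ref{lem-key-com-mutation-a} then holds with $K_\lambda=K_\lambda^{\emptyset}$. For part~\ref{lem-key-com-mutation-b} we need $H_\lambda\Pi_\lambda H_\lambda^T=2nQ_\lambda$. To obtain it, multiply Lemma~\ref{lem-matrix-id-LY}\ref{lem-matrix-id-LY-b}, written as $K_\lambda Q_\lambda K_\lambda^T=\tfrac12 P_\lambda=\tfrac n2\Pi_\lambda$, on the left by $H_\lambda$ and on the right by $H_\lambda^T$. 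Since $H_\lambda K_\lambda=nI$, this gives $n^2Q_\lambda=\tfrac n2 H_\lambda\Pi_\lambda H_\lambda^T$, which is the claim. In particular $H_\lambda$ is invertible, with $H_\lambda^{-1}=\tfrac1n K_\lambda$.

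For the inductive step, let $\bu=(v_1,\dots,v_m,k)$ and assume both statements hold for $\bv$. Fix a sign $\epsilon$ and write $E=E_{k,\epsilon,\bv}$. By Lemma~\ref{lem-mutation-H}\ref{lem-mutation-H-a} we have $H_\lambda^{\bu}=EH_\lambda^{\bv}E^T$. Because $E^2=I$ by Lemma~\ref{lem-matrix-mul}\ref{lem-matrix-mul-b}, $E$ is invertible, so $H_\lambda^{\bu}$ is invertible as well. We may therefore take $K_\lambda^{\bu}:=n(H_\lambda^{\bu})^{-1}=E^TK_\lambda^{\bv}E$, using that $(E^T)^{-1}=E^T$. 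This is the unique solution of $H_\lambda^{\bu}K_\lambda^{\bu}=nI$, which proves~\ref{lem-key-com-mutation-a}. Uniqueness in general comes directly from the invertibility of $H_\lambda^{\bv}$, which we propagate through the induction.

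For~\ref{lem-key-com-mutation-b}, Lemma~\ref{lem-matrix-mul}\ref{lem-matrix-mul-a} gives $\Pi_\lambda^{\bu}=E^T\Pi_\lambda^{\bv}E$. Substituting,
\[
H_\lambda^{\bu}\Pi_\lambda^{\bu}(H_\lambda^{\bu})^T = EH_\lambda^{\bv}E^T\,E^T\Pi_\lambda^{\bv}E\,E(H_\lambda^{\bv})^TE^T .
\]
The inner products $E^TE^T=(EE)^T$ and $EE$ both equal $I$, so the right-hand side reduces to $E\,H_\lambda^{\bv}\Pi_\lambda^{\bv}(H_\lambda^{\bv})^T E^T$. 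By the induction hypothesis this is $2n\,EQ_\lambda^{\bv}E^T$. By Lemma~\ref{lem-matrix-mul}\ref{lem-matrix-mul-a} together with \eqref{eq-tran-EF}, $EQ_\lambda^{\bv}E^T=EQ_\lambda^{\bv}F=Q_\lambda^{\bu}$, which completes the induction.

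The main point to check is that the same matrix $E=E_{k,\epsilon,\bv}$ governs the mutation of $H$, $Q$ and $\Pi$ simultaneously. For $H$, this requires that the column of $H_\lambda^{\bv}$ at the mutable vertex $k$ agrees with that of $Q_\lambda^{\bv}$, since $E$ is built from $Q_\lambda^{\bv}(\cdot,k)$. This agreement is Lemma~\ref{lem-matrix-HQ}: a mutable vertex $k$ never lies on a boundary edge together with another vertex. So the key verification is that Lemma~\ref{lem-mutation-H}\ref{lem-mutation-H-a} may be applied with the matrix $E$ defined from $Q$. That, together with the cancellations $E^2=I$, is the whole argument.
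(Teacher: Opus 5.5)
Your proof is correct and follows the paper's argument: induction on $m$, the same base case from Lemma~\ref{lem-matrix-id-LY}, and the same choice $K_\lambda^{\mathbbm{u}}=E^TK_\lambda^{\mathbbm{v}}E$. The only difference is that the paper conjugates $H$ by $E_{k,+,\mathbbm{v}}$ and $\Pi$ by $E_{k,-,\mathbbm{v}}$ and then needs Lemma~\ref{lem-matrix-mul}(c) to cancel, whereas you use one sign for both so that $E^2=I$ suffices; you also make the uniqueness in (a) explicit via invertibility of $H_\lambda^{\mathbbm{v}}$, which the paper leaves implicit.
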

\begin{proof}
    We prove the lemma using induction on $m$. When $m=0$, Lemma~\ref{lem-matrix-id-LY}(a) implies (a). 
    Lemma~\ref{lem-matrix-id-LY}(a) and (b) show that 
    $H_\lambda P_\lambda H_\lambda^T=2n^2 Q_\lambda$. Since $\Pi_\lambda=\frac{1}{n} P_\lambda$, then 
    $H_\lambda \Pi_\lambda H_\lambda^T=2n Q_\lambda$.
    Assume that the lemma holds for $m$ ($m\geq 0$). Let $v_{m+1}$ be a mutable vertex, let $k=v_{m+1}$, and let $\mathbbm{u}=(v_1,\cdots,v_m,k)$.
    From the inductive hypothesis, there exists a unique matrix $K_\lambda^{\bv}$ such that 
$H_\lambda^{\mathbbm{v}} K_\lambda^{\mathbbm{v}}=nI$.
    Define 
    $$K_\lambda^{\mathbbm{u}}
=F_{k,\epsilon,\bv}K_\lambda^{\mathbbm{v}} E_{k,\epsilon,\bv}.$$

We have 
\begin{align*}
H_\lambda^{\mathbbm{u}} K_\lambda^{\mathbbm{u}}&=
E_{k,\epsilon,\bv}H_\lambda^{\mathbbm{v}} F_{k,\epsilon,\bv} F_{k,\epsilon,\bv}K_\lambda^{\mathbbm{v}} E_{k,\epsilon,\bv}
     \quad (\mbox{by Lemma~\ref{lem-mutation-H}})\\
     &= E_{k,\epsilon,\bv}H_\lambda^{\mathbbm{v}} K_\lambda^{\mathbbm{v}} E_{k,\epsilon,\bv}
     \quad ( \mbox{by Lemma~\ref{lem-matrix-mul}(b)})\\
     & = nE_{k,\epsilon,\bv}E_{k,\epsilon,\bv}
     \quad ( \mbox{by inductive hypothesis})\\
     & = nI \quad ( \mbox{by Lemma~\ref{lem-matrix-mul}(b)}).
\end{align*}
This proves part (a) for $\bu$.  

  For part (b), we compute
\begin{align*}
H_\lambda^{\mathbbm{u}} \Pi_\lambda^{\mathbbm{u}} (H_\lambda^{\mathbbm{u}})^T&=H_\lambda^{\mathbbm{u}} 
E_{k,-,\bv}^T \Pi_\lambda^{\mathbbm{v}} E_{k,-,\bv} (H_\lambda^{\mathbbm{u}})^T 
     \quad ( \mbox{by Lemma~\ref{lem-matrix-mul}(a)})\\
     &= E_{k,+,\bv}H_\lambda^{\mathbbm{v}} E_{k,+,\bv}^T 
E_{k,-,\bv}^T \Pi_\lambda^{\mathbbm{v}} E_{k,-,\bv} E_{k,+,\bv}(H_\lambda^{\mathbbm{v}})^T E_{k,+,\bv}^T
     \quad ( \mbox{by Lemma~\ref{lem-mutation-H}})\\
     &= E_{k,+,\bv}H_\lambda^{\mathbbm{v}} \Pi_\lambda^{\mathbbm{v}}(H_\lambda^{\mathbbm{v}})^T E_{k,+,\bv}^T
     \quad ( \mbox{by Lemma~\ref{lem-matrix-mul}(c)})\\
     &=2n E_{k,+,\bv}Q_\lambda^{\mathbbm{v}}  E_{k,+,\bv}^T
     \quad ( \mbox{by inductive hypothesis})\\
     &=2n Q_\lambda^{\mathbbm{u}}  
     \quad ( \mbox{by Lemma~\ref{lem-matrix-mul}(a) and \eqref{eq-tran-EF}}).
\end{align*}
Thus (b) also holds for $\bu$, completing the induction.
\end{proof}

For a sequence $\mathbbm{v}=(v_1,\cdots,v_m)$ of mutable vertices in $V_\lambda$, define 
\begin{align}
&\mathcal B_\lambda^{\bv}:=\{Z^{\bf k}\mid {\bf k} H_\lambda^{\bv}\in (n\mathbb Z)^{V_\lambda}, {\bf k}\in \mathbb Z^{V_\lambda}\}\\
\label{def-Z-D-v-lambda}
    &\mathcal Z_\omega^{\rm bl}(\mathcal D_\lambda^{\bv}):=
    \text{span}_R\{Z^{\bf k}\mid {\bf k} \in \mathcal B_\lambda^{\bv}\}\subset \mathcal Z_\omega(\mathcal D_\lambda^{\bv}),
\end{align}
where $\mathcal D_\lambda^{\bv}$ is defined in \eqref{eq-def-wD}.
It is easy to show that $\mathcal B_\lambda^{\bv}$ is a subgroup of $\mathbb Z^{V_\lambda}$ and $\mathcal Z_\omega^{\rm bl}(\mathcal D_\lambda^{\bv})$ is a subalgebra of $\mathcal Z_\omega(\mathcal D_\lambda^{\bv})$ with 
$\mathcal X_\omega(\mathcal D_\lambda^{\bv})\subset \mathcal Z_\omega^{\rm bl}(\mathcal D_\lambda^{\bv})$.

\def\Zbv{\mathcal Z_\omega^{\rm bl}(\mathcal D_\lambda^{\bv})}
\def\Zmv{\mathcal Z_\omega^{\rm mbl}(\mathcal D_\lambda^{\bv})}
\def\Tv{\mathbb T({w_\lambda^{\mathbbm{v}}})}
\def\Hv{H_\lambda^{\bv}}
\def\Kv{K_\lambda^{\bv}}
\def\Qv{Q_\lambda^{\bv}}
\def\Pv{\Pi_\lambda^{\bv}}
\def\Bv{\mathcal B_\lambda^{\bv}}

\begin{lemma}\label{lem-com-bal-T}
  Let $\bv=(v_1,\ldots,v_m)$ be a sequence of mutable vertices in $V_\lambda$.  
  The map
  \[
    \varphi_\lambda^{\bv}\colon \Zbv \longrightarrow \mathbb T(w_\lambda^{\mathbbm{v}}), 
    \qquad 
    Z^{\bf k}\longmapsto A^{\tfrac{1}{n}{\bf k} H_\lambda^{\bv}}
    \quad \text{for } {\bf k}\in\Bv,
  \]
  where $\mathbb T(w_\lambda^{\mathbbm{v}})$ is defined in~\eqref{eq-T-w}, is an algebra isomorphism.
\end{lemma}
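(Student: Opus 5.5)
The map is determined by its values on the monomial basis, so I would check three things in turn: that it is well defined and bijective on these bases, that it is multiplicative, and that it maps onto the correct set of monomials.

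\emph{Well-definedness and bijectivity.} By definition of $\Bv$, for ${\bf k}\in\Bv$ the vector $\tfrac1n{\bf k}\Hv$ lies in $\mathbb Z^{V_\lambda}$. Hence $A^{\frac1n{\bf k}\Hv}=M^{w_\lambda^\bv}(\tfrac1n{\bf k}\Hv)$ is a genuine element of $\Tv$, and $\varphi_\lambda^\bv$ is defined by $R$-linear extension from the monomial basis $\{Z^{\bf k}\}_{{\bf k}\in\Bv}$.

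For bijectivity I would use Lemma~\ref{lem-key-com-mutation}\ref{lem-key-com-mutation-a}. Since $\Hv\Kv=nI$, the matrix $\Hv$ is invertible, and ${\bf t}\mapsto {\bf t}\Kv$ is an inverse to ${\bf k}\mapsto\tfrac1n{\bf k}\Hv$. It remains to check that the inverse lands in $\Bv$. For ${\bf t}\in\mathbb Z^{V_\lambda}$, we need ${\bf k}={\bf t}\Kv$ to be integral; then ${\bf k}\Hv=n{\bf t}\in(n\mathbb Z)^{V_\lambda}$ automatically.

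Integrality of $\Kv$ is the one genuine point, and I expect it to be the main obstacle. I would prove it by strengthening the induction in Lemma~\ref{lem-key-com-mutation}: $K_\lambda$ is an integer matrix, and $K_\lambda^{\bu}=F_{k,\epsilon,\bv}\Kv E_{k,\epsilon,\bv}$. The entries $[-\epsilon Q_\lambda^\bv(i,k)]_+$ are integers whenever $i$ or $k$ is mutable, and $k$ is mutable here. So the matrices $E$ and $F$ are integral, and integrality propagates through the induction.

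With integrality in hand, $\varphi_\lambda^\bv$ sends the basis $\{Z^{\bf k}\}_{{\bf k}\in\Bv}$ bijectively onto the basis $\{A^{\bf t}\}_{{\bf t}\in\mathbb Z^{V_\lambda}}$ of $\Tv$. It is therefore an $R$-module isomorphism.

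\emph{Multiplicativity.} In $\mathcal Z_\omega(\mathcal D_\lambda^\bv)$, where the exchange matrix is $\Qv$, the Weyl-ordered monomials satisfy $Z^{\bf k}Z^{\bf l}=\omega^{{\bf k}\Qv{\bf l}^T}Z^{{\bf k}+{\bf l}}$. In $\Tv$, \eqref{eq-M-relation} gives $A^{\bf s}A^{\bf t}=\xi^{\frac12{\bf s}\Pv{\bf t}^T}A^{{\bf s}+{\bf t}}$. Substituting ${\bf s}=\tfrac1n{\bf k}\Hv$ and ${\bf t}=\tfrac1n{\bf l}\Hv$ and using $\xi=\omega^n$, the second exponent becomes
\[
\omega^{\frac{1}{2n}{\bf k}\Hv\Pv(\Hv)^T{\bf l}^T}.
\]
By Lemma~\ref{lem-key-com-mutation}\ref{lem-key-com-mutation-b}, $\Hv\Pv(\Hv)^T=2n\Qv$, so this equals $\omega^{{\bf k}\Qv{\bf l}^T}$. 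The two structure constants therefore agree, and $\varphi_\lambda^\bv$ is an algebra homomorphism.

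Finally, I would confirm that both Weyl normalizations use the same conventions, namely \eqref{Weyl_ordering-Z-1} and \eqref{Weyl-A} with $\Pi=\Pv$, so that the constant $\omega^{{\bf k}\Qv{\bf l}^T}$ is not off by a factor of $2$. Combined with the bijection above, this shows $\varphi_\lambda^\bv$ is an algebra isomorphism.
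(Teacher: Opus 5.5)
Your proposal is correct and follows the paper's proof. Multiplicativity comes from $H_\lambda^{\mathbbm{v}}\Pi_\lambda^{\mathbbm{v}}(H_\lambda^{\mathbbm{v}})^T=2nQ_\lambda^{\mathbbm{v}}$ (Lemma~\ref{lem-key-com-mutation}(b)), and the inverse $A^{\bf t}\mapsto Z^{{\bf t}K_\lambda^{\mathbbm{v}}}$ uses the matrix from Lemma~\ref{lem-key-com-mutation}(a). The paper leaves implicit that $K_\lambda^{\mathbbm{v}}$ is integral, which is needed for $Z^{{\bf t}K_\lambda^{\mathbbm{v}}}$ to make sense; your inductive check via $K_\lambda^{\mathbbm{u}}=F_{k,\epsilon,\mathbbm{v}}K_\lambda^{\mathbbm{v}}E_{k,\epsilon,\mathbbm{v}}$, with integral $E$ and $F$, fills that in, and checking the full product rule rather than only the commutation relation is a further small tightening.
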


\begin{proof}
  For any ${\bf k}, {\bf t}\in \Bv$, we have 
  \[
    Z^{\bf k} Z^{\bf t}= \omega^{2 {\bf k}\Qv {\bf t}^T}\, Z^{\bf t} Z^{\bf k}\in\Zbv.
  \]
  To show that $\varphi_\lambda^{\bv}$ is a well-defined algebra homomorphism, it suffices to check that
  \[
    \varphi_\lambda^{\bv}(Z^{\bf k})\,\varphi_\lambda^{\bv}(Z^{\bf t})
    = \omega^{2 {\bf k}\Qv {\bf t}^T}\,
      \varphi_\lambda^{\bv}(Z^{\bf t})\,\varphi_\lambda^{\bv}(Z^{\bf k})
    \in\Tv.
  \]

  Indeed,
  \begin{align*}
    \varphi_\lambda^{\bv}(Z^{\bf k})\,
    \varphi_\lambda^{\bv}(Z^{\bf t})
    &= A^{\tfrac{1}{n}{\bf k} H_\lambda^{\bv}}
       A^{\tfrac{1}{n}{\bf t} H_\lambda^{\bv}} \\
    &= \xi^{\tfrac{1}{n^2}\,{\bf k}\Hv\Pv(\Hv)^T {\bf t}^T}\,
       A^{\tfrac{1}{n}{\bf t} H_\lambda^{\bv}}
       A^{\tfrac{1}{n}{\bf k} H_\lambda^{\bv}} \\
    &= \xi^{\tfrac{2}{n}\,{\bf k}\Qv {\bf t}^T}\,
       A^{\tfrac{1}{n}{\bf t} H_\lambda^{\bv}}
       A^{\tfrac{1}{n}{\bf k} H_\lambda^{\bv}}
       \qquad \text{(by Lemma~\ref{lem-key-com-mutation}(b))} \\
    &= \omega^{2 {\bf k}\Qv {\bf t}^T}\,
       \varphi_\lambda^{\bv}(Z^{\bf t})\,\varphi_\lambda^{\bv}(Z^{\bf k})
       \qquad \text{(by\;$\omega=\xi^{1/n}$)}.
  \end{align*}
  Hence $\varphi_\lambda^{\bv}$ is an algebra homomorphism.  

  \smallskip
  Finally, Lemma~\ref{lem-key-com-mutation}(a) provides a matrix 
  $K_\lambda^{\bv}$ such that $\Hv \Kv = nI$.  
  Then the map
  \[
    \Tv \longrightarrow \Zbv, 
    \qquad 
    A^{\bf k}\longmapsto Z^{{\bf k} K_\lambda^{\bv}}
    \quad \text{for } {\bf k}\in\mathbb Z^{V_\lambda},
  \]
  is the inverse of $\varphi_\lambda^{\bv}$.  
  Thus $\varphi_\lambda^{\bv}$ is an algebra isomorphism.
\end{proof}

\begin{lemma}\label{eq-bl-inclusion-mut-vu}
  For any sequence $\mathbbm{v}=(v_1,\ldots,v_m)$ of mutable vertices in $V_\lambda$, we have 
  \[
    \Zbv \subset \Zmv.
  \]
\end{lemma}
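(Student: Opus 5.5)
We must show: if ${\bf k}H_\lambda^{\bv}\in(n\mathbb Z)^{V_\lambda}$ then $\sum_{v}Q_\lambda^{\bv}(u,v)k_v\equiv 0 \pmod n$ for every mutable $u$. In matrix form the condition is that the entries of ${\bf k}(Q_\lambda^{\bv})^T$ indexed by $\mathring V_\lambda$ lie in $n\mathbb Z$. By antisymmetry this is the same as the entries of ${\bf k}Q_\lambda^{\bv}$ indexed by mutable vertices lying in $n\mathbb Z$.

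The plan is to compare the columns of $Q_\lambda^{\bv}$ and $H_\lambda^{\bv}$ indexed by mutable vertices. We claim they coincide:
$$H_\lambda^{\bv}(w,u)=Q_\lambda^{\bv}(w,u)\quad\text{for all } w\in V_\lambda,\ u\in\mathring V_\lambda.$$
This follows from Lemma~\ref{lem-matrix-HQ}. A mutable vertex $u$ lies in the interior of $\fS$, so $u$ and $w$ are never contained in the same boundary edge, and the lemma applies.

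Granting the claim, take ${\bf k}\in\mathcal B_\lambda^{\bv}$. For each $u\in\mathring V_\lambda$ we get
$$\sum_{w}k_wQ_\lambda^{\bv}(w,u)=\sum_w k_wH_\lambda^{\bv}(w,u)=({\bf k}H_\lambda^{\bv})_u\in n\mathbb Z.$$
Antisymmetry of $Q_\lambda^{\bv}$ then gives $\sum_v Q_\lambda^{\bv}(u,v)k_v\in n\mathbb Z$, so ${\bf k}\in\mathcal B_{\mathcal D_\lambda^{\bv}}$. Hence every spanning monomial $Z^{\bf k}$ of $\Zbv$ lies in $\Zmv$, which proves the inclusion.

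There is one point to check. In Lemma~\ref{lem-matrix-HQ}, $H_\lambda^{\bv}$ is mutated with the entrywise rule, even though $H_\lambda$ is not antisymmetric on boundary edges, and $Q_\lambda^{\bv}$ must remain antisymmetric. The latter holds because mutation preserves antisymmetry. The former is already handled in the lemma's induction, since mutation at an interior vertex only involves entries $H(u,v_s)$ and $H(v_s,v)$ with $v_s$ interior. I expect this to be the only subtle point; the rest is bookkeeping.
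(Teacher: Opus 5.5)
Your proof is correct and is essentially the paper's argument. Both use Lemma~\ref{lem-matrix-HQ} to identify the entries of $H_\lambda^{\bv}$ and $Q_\lambda^{\bv}$ that involve a mutable vertex, so that ${\bf k}H_\lambda^{\bv}\in(n\mathbb Z)^{V_\lambda}$ forces the mutable-balancedness condition. Your version is slightly more careful than the paper's one-line computation, which writes the row sums $\sum_v H_\lambda^{\bv}(u,v)t_v$ rather than the column sums $\sum_v t_vH_\lambda^{\bv}(v,u)$ that actually appear in ${\bf t}H_\lambda^{\bv}$; your use of the antisymmetry of $Q_\lambda^{\bv}$ makes the link between the two explicit.
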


\begin{proof}
  Let ${\bf t}=(t_v)_{v\in V_\lambda}\in \mathcal B_\lambda^{\bv}$.  
  For any mutable vertex $u$, Lemma~\ref{lem-matrix-HQ} implies
  \[
    \sum_{v\in V_\lambda} Q_\lambda^{\bv}(u,v)\,t_v
    = \sum_{v\in V_\lambda} H_\lambda^{\bv}(u,v)\,t_v
    = 0 \in \mathbb{Z}/n\mathbb{Z}.
  \]
  By the definition of $\Zmv$ (Definition~\ref{def-mut-Z}), this condition ensures that 
  $Z^{\bf t}\in \Zmv$.  
  Hence every generator of $\Zbv$ lies in $\Zmv$, proving the inclusion.
\end{proof}

\def\bvi{\bv(i)}
\def\bu{\mathbbm{u}}

Let $\bv=(v_1,\cdots,v_m)$ be a sequence of mutable vertices in $V_\lambda$, and let $k$ be a mutable vertex in $V_\lambda$.
Set $\mathbbm u= (v_1,\cdots,v_m,k)$.
In \S\ref{subsec-mutation-Fock}, we introduced the following
 skew-field isomorphisms
\begin{align*}
   \nu_{k}'&\colon \Fr(
      \mathcal{Z}^{\rm mbl}_\omega(\mathcal{D}_\lambda^{\bu}))\rightarrow
      \Fr(
      \mathcal{Z}^{\rm mbl}_\omega(\mathcal{D}_\lambda^{\bv})),\\
      \nu_{k}^{\sharp\omega}&\colon \Fr(
      \mathcal{Z}^{\rm mbl}_\omega(\mathcal{D}_\lambda^{\bv}))\rightarrow
      \Fr(
      \mathcal{Z}^{\rm mbl}_\omega(\mathcal{D}_\lambda^{\bv})),\\      \nu_{k}^\omega&=\nu_{k}^{\sharp\omega}\circ\nu_{k}' \colon \Fr(
      \mathcal{Z}^{\rm mbl}_\omega(\mathcal{D}_\lambda^{\bu}))\rightarrow
      \Fr(
      \mathcal{Z}^{\rm mbl}_\omega(\mathcal{D}_\lambda^{\bv})).
\end{align*}

We have the following.
\begin{lemma}\label{lem-rest-iso-to-balanced}
    The skew-field isomorphism $\nu_{k}^\omega \colon \Fr(
      \mathcal{Z}^{\rm mbl}_\omega(\mathcal{D}_\lambda^{\bu}))\rightarrow
      \Fr(
      \mathcal{Z}^{\rm mbl}_\omega(\mathcal{D}_\lambda^{\bv}))$ restricts to a skew-field isomorphism 
      $$\nu_{k}^\omega \colon \Fr(
      \mathcal{Z}^{\rm bl}_\omega(\mathcal{D}_\lambda^{\bu}))\rightarrow
      \Fr(
      \mathcal{Z}^{\rm bl}_\omega(\mathcal{D}_\lambda^{\bv})).$$
\end{lemma}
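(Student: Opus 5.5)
The plan is to treat the two factors of $\nu_k^\omega=\nu_k^{\sharp\omega}\circ\nu_k'$ separately. I will show that each factor sends the balanced monomials of its source into the skew-field of fractions of the balanced subalgebra of its target. Surjectivity will then come from the involutivity of $\nu_k^\omega$ (Lemma~\ref{lem:nu_involutation}). By Lemma~\ref{eq-bl-inclusion-mut-vu}, $\mathcal Z^{\rm bl}_\omega(\mathcal D_\lambda^{\bu})\subset \mathcal Z^{\rm mbl}_\omega(\mathcal D_\lambda^{\bu})$, and similarly for $\bv$, so the corresponding skew-fields of fractions sit inside the mbl ones. The restriction therefore makes sense, and injectivity is automatic.

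\textbf{Step 1: the monomial part $\nu_k'$.} From \eqref{eq-quantum-mutation_Z}, $\nu_k'$ sends a Weyl-ordered monomial $(Z')^{{\bf t}'}$ to the Weyl-ordered monomial $Z^{\bf t}$. Here $t_v=t'_v$ for $v\neq k$ and $t_k=-t'_k+\sum_{v}[Q_\lambda^{\bv}(v,k)]_+t'_v$. In matrix form this reads ${\bf t}={\bf t}'E_{k,-,\bv}$. The Weyl normalisation is preserved because $\nu_k'$ is a homomorphism of quantum tori given by a linear change of exponents that respects the commutation forms, so there are no extra powers of $\omega$. Now suppose ${\bf t}'\in\mathcal B_\lambda^{\bu}$, i.e.\ ${\bf t}'H_\lambda^{\bu}\in(n\mathbb Z)^{V_\lambda}$. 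Lemma~\ref{lem-mutation-H}(a) with $\epsilon=-$ gives $H_\lambda^{\bu}=E_{k,-,\bv}H_\lambda^{\bv}F_{k,-,\bv}$. Since $F_{k,-,\bv}^2=I$ by Lemma~\ref{lem-matrix-mul}(b), we get
\[
{\bf t}H_\lambda^{\bv}={\bf t}'E_{k,-,\bv}H_\lambda^{\bv}=({\bf t}'H_\lambda^{\bu})F_{k,-,\bv}\in(n\mathbb Z)^{V_\lambda},
\]
because $F_{k,-,\bv}$ is an integer matrix. Hence ${\bf t}\in\mathcal B_\lambda^{\bv}$, and so $\nu_k'(\mathcal Z^{\rm bl}_\omega(\mathcal D_\lambda^{\bu}))\subset\mathcal Z^{\rm bl}_\omega(\mathcal D_\lambda^{\bv})$.

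\textbf{Step 2: the automorphism part $\nu_k^{\sharp\omega}$.} For ${\bf t}\in\mathcal B_\lambda^{\bv}\subset\mathcal B_{\mathcal D_\lambda^{\bv}}$, Lemma~\ref{lem.nu_sharp_well-defined} gives $\nu_k^{\sharp\omega}(Z^{\bf t})=Z^{\bf t}F^q(X_k,m)$ with $m\in\mathbb Z$. By \eqref{eq-def-Fq}, $F^q(X_k,m)$ is a product of factors $(1+q^{\pm(2r-1)}X_k)^{\pm1}$. Since $X_k=Z_k^n=Z^{n{\bf e}_k}$ and $n{\bf e}_kH_\lambda^{\bv}\in(n\mathbb Z)^{V_\lambda}$, each factor lies in $\mathcal Z^{\rm bl}_\omega(\mathcal D_\lambda^{\bv})$ or is the inverse of a nonzero element of it. Consequently $\nu_k^{\sharp\omega}(Z^{\bf t})\in\Fr(\mathcal Z^{\rm bl}_\omega(\mathcal D_\lambda^{\bv}))$. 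Combining Steps 1 and 2 with the universal property of skew-fields of fractions, $\nu_k^\omega$ maps $\Fr(\mathcal Z^{\rm bl}_\omega(\mathcal D_\lambda^{\bu}))$ into $\Fr(\mathcal Z^{\rm bl}_\omega(\mathcal D_\lambda^{\bv}))$.

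\textbf{Step 3: surjectivity.} Apply Steps 1 and 2 with $\bu$ in place of $\bv$ and the sequence $(v_1,\dots,v_m,k,k)$ in place of $\bu$. By Lemma~\ref{lem-mutation-H}(b) and the involutivity of matrix mutation, $H_\lambda^{(\bv,k,k)}=H_\lambda^{\bv}$ and $Q_\lambda^{(\bv,k,k)}=Q_\lambda^{\bv}$. Hence the balanced subalgebra for $(\bv,k,k)$ coincides with $\mathcal Z^{\rm bl}_\omega(\mathcal D_\lambda^{\bv})$, and the reverse mutation maps it into $\Fr(\mathcal Z^{\rm bl}_\omega(\mathcal D_\lambda^{\bu}))$. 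Lemma~\ref{lem:nu_involutation} then shows that the two restricted maps are mutually inverse. I expect the main obstacle to be Step 1, specifically verifying that the exponent change of $\nu_k'$ is exactly right multiplication by $E_{k,-,\bv}$, with the sign convention $\epsilon=-$ matched to Lemma~\ref{lem-mutation-H}(a), and that no normalisation issue arises.
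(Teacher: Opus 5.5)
Your proof is correct, and its overall structure matches the paper's. You split $\nu_k^\omega=\nu_k^{\sharp\omega}\circ\nu_k'$, dispose of $\nu_k^{\sharp\omega}$ because $X_k=Z_k^n$ already lies in $\mathcal Z^{\rm bl}_\omega(\mathcal D_\lambda^{\mathbbm v})$, and get the inverse from Lemma~\ref{lem:nu_involutation}. Your $(\mathbbm v,k,k)$ device with Lemma~\ref{lem-mutation-H}(b) just spells out the ``similarly'' the paper leaves implicit.

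The genuine difference is Step~1. The paper checks balancedness of $\nu_k'((Z')^{{\bf t}'})$ entrywise. It substitutes the mutation rule for $H$, separates $u=k$ from $u\neq k$, and in the latter case splits on the sign of $H'(u,k)$. It ends with an integer combination of quantities already known to vanish mod $n$.

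You instead recognize the exponent change as ${\bf t}={\bf t}'E_{k,-,\mathbbm v}$. This is right: off the diagonal, the $k$-th column of $E_{k,-,\mathbbm v}$ is $[Q_\lambda^{\mathbbm v}(i,k)]_+$, and its diagonal entry is $-1$. You then combine the factorization $H_\lambda^{\mathbbm u}=E_{k,-,\mathbbm v}H_\lambda^{\mathbbm v}F_{k,-,\mathbbm v}$ of Lemma~\ref{lem-mutation-H}(a) with $F_{k,-,\mathbbm v}^2=I$. This gives the exact identity ${\bf t}H_\lambda^{\mathbbm v}=({\bf t}'H_\lambda^{\mathbbm u})F_{k,-,\mathbbm v}$ in one line. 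The companion identity $Q_\lambda^{\mathbbm u}=E_{k,-,\mathbbm v}Q_\lambda^{\mathbbm v}E_{k,-,\mathbbm v}^{T}$ also settles your normalisation worry, since it says the exponent map preserves the commutation form.

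What your version buys:
\begin{itemize}
\item It is shorter and needs no case split.
\item It works directly with the row condition ${\bf t}H_\lambda^{\mathbbm v}\in(n\mathbb Z)^{V_\lambda}$ that defines $\mathcal B_\lambda^{\mathbbm v}$. The paper writes its sums as $\sum_v H(u,v)t_v$, which differs from $({\bf t}H)(u)$ on boundary blocks where $H_\lambda^{\mathbbm v}$ is not antisymmetric. Its entrywise computation therefore implicitly needs the remark that $H+H^{T}$ is supported on frozen vertices and is unchanged by mutation at $k$.
\end{itemize}

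The paper's route, in turn, uses only the entrywise mutation rule and Lemma~\ref{lem-matrix-HQ}. Yours leans on Lemma~\ref{lem-mutation-H}(a). For the non-antisymmetric $H$, that lemma is valid only because row and column $k$ of $H$ coincide with those of $Q$ and $H(k,k)=0$.
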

\begin{proof}
  We first show that 
  $\nu_{k}'$ (resp. $\nu_{k}^{\sharp\omega}$) restricts to a skew-field homomorphism
  \[
    \nu_{k}' \colon \Fr(
      \mathcal{Z}^{\rm bl}_\omega(\mathcal{D}_\lambda^{\bu}))\longrightarrow
      \Fr(
      \mathcal{Z}^{\rm bl}_\omega(\mathcal{D}_\lambda^{\bv})),
      \quad 
      \text{resp. }\;
      \nu_{k}^{\sharp\omega} \colon \Fr(
      \mathcal{Z}^{\rm bl}_\omega(\mathcal{D}_\lambda^{\bv}))\longrightarrow
      \Fr(
      \mathcal{Z}^{\rm bl}_\omega(\mathcal{D}_\lambda^{\bv})).
  \]
  The claim for $\nu_{k}^{\sharp\omega}$ is immediate because of Lemma~\ref{lem.nu_sharp_well-defined} and $\mathcal{X}_\omega(\mathcal{D}_\lambda^{\bv})\subset 
  \mathcal{Z}^{\rm bl}_\omega(\mathcal{D}_\lambda^{\bv})$.

  To simplify notation, write
  \[
    Q = Q_\lambda^{\bv}, \quad H = H_\lambda^{\bv}, \qquad 
    Q' = Q_\lambda^{\bu}, \quad H' = H_\lambda^{\bu}.
  \]
  Then $Q'=\mu_k(Q)$ and $H'=\mu_k(H)$.

  Let ${\bf t}' = (t_v')_{v\in V_\lambda}$ be such that ${\bf t}' H'\in (n\mathbb Z)^{V_\lambda}$. 
  By definition of $\nu_k'$ (see Equation~\eqref{eq-quantum-mutation_Z}), we have
  \[
    \nu_k'\bigl((Z')^{{\bf t}'}\bigr) = Z^{\bf t},
  \]
  where ${\bf t} = (t_v)_{v\in V_\lambda}$ is given by
  \[
    t_v =
    \begin{cases}
      t_v' & v\neq k,\\[4pt]
      -t_k' + \sum_{i\in V_\lambda} [Q(i,k)]_+\,t_i' & v=k.
    \end{cases}
  \]

  To show $Z^{\bf t}\in
      \mathcal{Z}^{\rm bl}_\omega(\mathcal{D}_\lambda^{\bv})$, it suffices to show that
  \[
    \sum_{v\in V_\lambda} H(u,v)\,t_v = 0 \in \mathbb Z/n\mathbb Z
    \qquad \text{for all } u\in V_\lambda.
  \]

  \medskip
  \noindent
  **Case 1: $u=k$.**  
  Lemma~\ref{lem-matrix-HQ} gives $H(k,k)=Q(k,k)=0$ and $H'(k,k)=Q'(k,k)=0$.  
  Hence
  \[
    \sum_{v\in V_\lambda} H(u,v)\,t_v 
    = \sum_{\substack{v\in V_\lambda\\ v\neq k}} H(u,v)\,t_v
    = \sum_{\substack{v\in V_\lambda\\ v\neq k}} -H'(u,v)\,t_v'
    = -\sum_{v\in V_\lambda} H'(u,v)\,t_v' = 0.
  \]

  \medskip
  \noindent
  **Case 2: $u\neq k$.**  
  We compute:
  \begin{align*}
    \sum_{v\in V_\lambda} H(u,v)\,t_v 
    &= \sum_{\substack{v\in V_\lambda\\ v\neq k}} H(u,v)\,t_v + H(u,k)\,t_k \\[4pt]
    &= \sum_{\substack{v\in V_\lambda\\ v\neq k}} H'(u,v)\,t_v'
      + \tfrac{1}{2}\sum_{\substack{v\in V_\lambda\\ v\neq k}}
        \bigl(H'(u,k)|H'(k,v)| + |H'(u,k)|H'(k,v)\bigr)t_v' \\
    &\quad - H'(u,k)\Bigl(-t_k' + \sum_{v\in V_\lambda}[Q'(k,v)]_+\,t_v'\Bigr).
  \end{align*}
  Using Lemma~\ref{lem-matrix-HQ}, this simplifies to
  \[
    \sum_{v\in V_\lambda} H(u,v)\,t_v =
    \begin{cases}
      \sum_{v\in V_\lambda} H'(u,v)\,t_v' = 0\in \mathbb Z/n\mathbb Z & \text{if } H'(u,k)\geq 0,\\[6pt]
      \sum_{v\in V_\lambda} H'(u,v)\,t_v' - H'(u,k)\!\!\sum_{v\in V_\lambda} H'(k,v)\,t_v' = 0\in \mathbb Z/n\mathbb Z & \text{if } H'(u,k)<0.
    \end{cases}
  \]

This shows that $\nu_{k}^\omega \colon \Fr(
      \mathcal{Z}^{\rm mbl}_\omega(\mathcal{D}_\lambda^{\bu}))\rightarrow
      \Fr(
      \mathcal{Z}^{\rm mbl}_\omega(\mathcal{D}_\lambda^{\bv}))$ restricts to a skew-field homomorphism 
      $$\nu_{k}^\omega \colon \Fr(
      \mathcal{Z}^{\rm bl}_\omega(\mathcal{D}_\lambda^{\bu}))\rightarrow
      \Fr(
      \mathcal{Z}^{\rm bl}_\omega(\mathcal{D}_\lambda^{\bv})).$$
Similarly, we have 
$\nu_{k}^\omega \colon \Fr(
      \mathcal{Z}^{\rm mbl}_\omega(\mathcal{D}_\lambda^{\bv}))\rightarrow
      \Fr(
      \mathcal{Z}^{\rm mbl}_\omega(\mathcal{D}_\lambda^{\bu}))$ restricts to a skew-field homomorphism 
      $$\nu_{k}^\omega \colon \Fr(
      \mathcal{Z}^{\rm bl}_\omega(\mathcal{D}_\lambda^{\bv}))\rightarrow
      \Fr(
      \mathcal{Z}^{\rm bl}_\omega(\mathcal{D}_\lambda^{\bu})).$$
This completes the proof because $\nu_{k}^\omega\circ \nu_{k}^\omega$ is the identity (Lemma~\ref{lem:nu_involutation}).
\end{proof}

In Equations~\eqref{eq-mut-iso}, \eqref{eq-def-mu-step1}, and \eqref{def-mu-sharp}, we introduced the following skew-field isomorphisms
\begin{align*}
\mu_{k,A}&\colon {\rm Frac}(\mathbb T(w_\lambda^\bu))\rightarrow {\rm Frac}(\mathbb T(w_\lambda^{\bv})),\\
    \mu_{k,A}'&\colon {\rm Frac}(\mathbb T(w_\lambda^{\bu}))\rightarrow {\rm Frac}(\mathbb T(w_\lambda^{\bv})),\\
\mu_{k,A}^{\sharp} &\colon {\rm Frac}(\mathbb T(w^{\bv}_\lambda))\rightarrow {\rm Frac}(\mathbb T(w_\lambda^{\bv})).
\end{align*}
Lemma \ref{lem-decom-A}(b)
shows $\mu_{k,A}=\mu_{k,A}^{\sharp}\circ \mu_{k,A}'$.
The algebra isomorphisms in Lemma \ref{lem-com-bal-T}
$$\varphi_\lambda^{\bv}\colon\Zbv\rightarrow \mathbb T({w_\lambda^{\mathbbm{v}}}), \quad \varphi_\lambda^{\bu}\colon 
      \mathcal{Z}^{\rm bl}_\omega(\mathcal{D}_\lambda^{\bu})\rightarrow \mathbb T({w_\lambda^{\mathbbm{u}}})$$
induce the following skew-field isomorphisms 
$$\varphi_\lambda^{\bv}\colon\Fr(\Zbv)\rightarrow \mathbb \Fr(\mathbb T({w_\lambda^{\mathbbm{v}}})), \quad \varphi_\lambda^{\bu}\colon 
     \Fr( \mathcal{Z}^{\rm bl}_\omega(\mathcal{D}_\lambda^{\bu}))\rightarrow \Fr(\mathbb T({w_\lambda^{\mathbbm{u}}})).$$

The following proposition shows the compatibility between mutations $\mu_{k,A}$
and $\nu_{k}^\omega$.

\begin{proposition}\label{prop-comp}
Let $\bv=(v_1,\ldots,v_m)$ be a sequence of mutable vertices in $V_\lambda$, and let $k$ be another mutable vertex in $V_\lambda$.
Set $\mathbbm{u}=(v_1,\ldots,v_m,k)$.
Then the following diagram commutes:
\begin{equation*}
\begin{tikzcd}
\Fr\bigl(\mathcal{Z}^{\rm bl}_\omega(\mathcal{D}_\lambda^{\bu})\bigr)
  \arrow[r, "\nu_k^\omega"]
  \arrow[d, "\varphi_\lambda^{\bu}"']  
& \Fr\bigl(\mathcal{Z}^{\rm bl}_\omega(\mathcal{D}_\lambda^{\bv})\bigr) 
  \arrow[d, "\varphi_\lambda^{\bv}"]  \\
\Fr\bigl(\mathbb T({w_\lambda^{\mathbbm{u}}})\bigr)
  \arrow[r, "\mu_{k,A}"] 
& \Fr\bigl(\mathbb T({w_\lambda^{\mathbbm{v}}})\bigr)
\end{tikzcd}.
\end{equation*}
\end{proposition}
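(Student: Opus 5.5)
The natural route is to split both horizontal arrows into their two steps and check two smaller squares. By \eqref{eq-def-quantum-mutation-X} we have $\nu_k^\omega=\nu_k^{\sharp\omega}\circ\nu_k'$, and by Lemma~\ref{lem-decom-A}\ref{lem-decom-A-b} we have $\mu_{k,A}=\mu_{k,A}^\sharp\circ\mu_{k,A}'$. It therefore suffices to prove two identities:
\[
\varphi_\lambda^{\bv}\circ\nu_k'=\mu_{k,A}'\circ\varphi_\lambda^{\bu}
\quad\text{on }\Fr\bigl(\mathcal Z^{\rm bl}_\omega(\mathcal D_\lambda^{\bu})\bigr),
\qquad
\varphi_\lambda^{\bv}\circ\nu_k^{\sharp\omega}=\mu_{k,A}^{\sharp}\circ\varphi_\lambda^{\bv}
\quad\text{on }\Fr\bigl(\mathcal Z^{\rm bl}_\omega(\mathcal D_\lambda^{\bv})\bigr).
\]
Lemma~\ref{lem-rest-iso-to-balanced} (applied to each factor separately, as in its proof) shows that these compositions make sense. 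All maps are skew-field isomorphisms, so each identity only needs to be checked on the monomial basis $Z^{\bf t}$, ${\bf t}\in\mathcal B_\lambda^{\bu}$ (respectively $\mathcal B_\lambda^{\bv}$).

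For the monomial step, I fix $\epsilon=+$ and write $E=E_{k,+,\bv}$, $F=F_{k,+,\bv}=E^T$. From \eqref{eq-quantum-mutation_Z}, $\nu_k'((Z')^{{\bf t}'})=Z^{{\bf t}'E'}$, where the linear map $E'$ is exactly the substitution $t_k=-t_k'+\sum_i[Q(i,k)]_+t_i'$ written in the proof of Lemma~\ref{lem-rest-iso-to-balanced}. The Weyl normalisation works out because $\nu_k'$ sends Weyl-ordered monomials to Weyl-ordered monomials, by the same Weyl-ordering compatibility used in that lemma. I expect $E'$ to coincide with $F$ or $E$ for $\epsilon=-$, depending on row/column conventions.

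Similarly, \eqref{eq-def-mu-step1} says $\mu_{k,A}'((A')^{\bf s})=A^{{\bf s}E''}$ for the matching substitution. Here I will need to check that the sign convention agrees, namely $[Q(j,k)]_+$ in both. Commutativity of the first square then reduces to the matrix identity
\[
\tfrac1n{\bf t}'H^{\bu}E''=\tfrac1n{\bf t}'E'H^{\bv}.
\]
This should follow from Lemma~\ref{lem-mutation-H}\ref{lem-mutation-H-a}, i.e.\ $H^{\bu}=E_{k,\epsilon,\bv}H^{\bv}F_{k,\epsilon,\bv}$, together with $E^2=F^2=I$ from Lemma~\ref{lem-matrix-mul}\ref{lem-matrix-mul-b}, after choosing $\epsilon$ appropriately. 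The main bookkeeping difficulty is matching which of $E_{k,\pm,\bv}$ and its transpose occurs on each side. I would fix the conventions by testing on ${\bf t}'={\bf e}_k$ and ${\bf t}'={\bf e}_v$.

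For the automorphism step, take ${\bf t}\in\mathcal B_\lambda^{\bv}$ and set $m=\frac1n\sum_vQ(k,v)t_v$. Lemma~\ref{lem.nu_sharp_well-defined} gives $\nu_k^{\sharp\omega}(Z^{\bf t})=Z^{\bf t}F^q(X_k,m)$ with $X_k=Z_k^n=Z^{n{\bf e}_k}$. Applying $\varphi_\lambda^{\bv}$ sends $X_k$ to $A^{{\bf e}_kH^{\bv}}=A^{H(k,*)}$. This vector agrees with $Q(k,*)$ because $k$ is mutable (Lemma~\ref{lem-matrix-HQ}), and $Q(k,*)=-Q(*,k)$, so $X_k\mapsto A^{-Q(*,k)}$, which is exactly the variable in \eqref{eq-def-mu-step2}. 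On the other side, write $\varphi_\lambda^{\bv}(Z^{\bf t})=A^{\bf s}$ with ${\bf s}=\frac1n{\bf t}H$. Since $\mu_{k,A}^\sharp$ fixes $A_i$ for $i\ne k$ and sends $A_k\mapsto A_k(1+\xi^{-d_k/2}A^{-Q(*,k)})^{-1}$, I will use the $q$-commutation $A_kA^{-Q(*,k)}=\xi^{\pm d_k}A^{-Q(*,k)}A_k$ with $d_k=2n$ (from \eqref{eq-prod-PiQ} and Lemma~\ref{lem-matrix-mul}\ref{lem-matrix-mul-d}), together with the commutation in \eqref{eq-com-i-neq-k}. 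Moving all factors past one another should give
\[
\mu_{k,A}^\sharp(A^{\bf s})=A^{\bf s}\prod_{r=1}^{|s_k|}\bigl(1+q^{(2r-1)\sgn}A^{-Q(*,k)}\bigr)^{\pm1},
\]
where $\xi^{-d_k/2}=\omega^{-n^2}=q^{-1}$ supplies the shift. This product is $F^q(A^{-Q(*,k)},s_k)$ up to sign conventions. It then remains to check $s_k=m$: this is $\frac1n({\bf t}H)_k=\frac1n\sum_vt_vH(v,k)=-\frac1n\sum_vQ(k,v)t_v$ by antisymmetry, so I expect agreement only up to a sign that must be reconciled with the sign of the exponent $-Q(*,k)$. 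I expect this sign and exponent matching to be the main obstacle of the second square, and I would verify it first on ${\bf t}=n{\bf e}_v$ for a neighbour $v$ of $k$, where both sides are explicit.
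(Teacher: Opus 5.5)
Your proposal is correct and follows the paper's proof: you use the same factorization into the monomial square \eqref{eq-com-step-one} and the automorphism square \eqref{eq-com-step-two}. Your matrix shortcut for the first square works with $\epsilon=-$: $\nu_k'$ acts on exponents by ${\bf t}'\mapsto{\bf t}'E_{k,-,\bv}$ and $\mu_{k,A}'$ by ${\bf s}'\mapsto{\bf s}'F_{k,-,\bv}$, and Lemma~\ref{lem-mutation-H}\ref{lem-mutation-H-a} with $F_{k,-,\bv}^2=I$ gives $H_\lambda^{\bu}F_{k,-,\bv}=E_{k,-,\bv}H_\lambda^{\bv}$, a tidier substitute for the paper's entrywise check $nb_v=nc_v$. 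In the second square the sign you flagged is not a discrepancy: $s_k=-m$ is what is needed, because the factor $(1+q^{-1}A^{Q(k,*)})^{-1}$ in $\mu_{k,A}^\sharp(A_k)$ yields $\mu_{k,A}^\sharp(A_k)^{s_k}=A_k^{s_k}F^q(A^{Q(k,*)},-s_k)=A_k^{s_k}F^q(A^{H(k,*)},m)$, as in the paper.
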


\begin{proof}
It suffices to show the commutativity of the following two diagrams
\begin{equation}\label{eq-com-step-one}
\begin{tikzcd}
\Fr\bigl(\mathcal{Z}^{\rm bl}_\omega(\mathcal{D}_\lambda^{\bu})\bigr)
  \arrow[r, "\nu_k'"]
  \arrow[d, "\varphi_\lambda^{\bu}"']  
& \Fr\bigl(\mathcal{Z}^{\rm bl}_\omega(\mathcal{D}_\lambda^{\bv})\bigr) 
  \arrow[d, "\varphi_\lambda^{\bv}"]  \\
\Fr\bigl(\mathbb T({w_\lambda^{\mathbbm{u}}})\bigr)
  \arrow[r, "\mu_{k,A}'"] 
& \Fr\bigl(\mathbb T({w_\lambda^{\mathbbm{v}}})\bigr)
\end{tikzcd}
\end{equation}
and
\begin{equation}\label{eq-com-step-two}
\begin{tikzcd}
\Fr\bigl(\mathcal{Z}^{\rm bl}_\omega(\mathcal{D}_\lambda^{\bv})\bigr)
  \arrow[r, "\nu_k^{\sharp\omega}"]
  \arrow[d, "\varphi_\lambda^{\bv}"']  
& \Fr\bigl(\mathcal{Z}^{\rm bl}_\omega(\mathcal{D}_\lambda^{\bv})\bigr) 
  \arrow[d, "\varphi_\lambda^{\bv}"]  \\
\Fr\bigl(\mathbb T({w_\lambda^{\mathbbm{v}}})\bigr)
  \arrow[r, "\mu_{k,A}^{\sharp}"] 
& \Fr\bigl(\mathbb T({w_\lambda^{\mathbbm{v}}})\bigr)
\end{tikzcd}.
\end{equation}

\smallskip
\noindent
**Step 1: Commutativity of \eqref{eq-com-step-one}.**  
For notational simplicity, write
\[
Q=Q_\lambda^{\bv}, \quad H=H_\lambda^{\bv}, 
\qquad 
Q'=Q_\lambda^{\bu}, \quad H'=H_\lambda^{\bu}.
\]

Let ${\bf t}'=(t_v')_{v\in V_\lambda}$ with ${\bf t}'H'\in (n\mathbb Z)^{V_\lambda}$.  
By definition of $\nu_k'$ (Equation~\eqref{eq-quantum-mutation_Z}), 
\[
\nu_k'\bigl((Z')^{{\bf t}'}\bigr) = Z^{\bf t},
\]
where ${\bf t}=(t_v)_{v\in V_\lambda}$ is given by
\[
t_v=
\begin{cases}
t_v' & v\neq k,\\[4pt]
-t_k' + \sum_{i\in V_\lambda}[Q(i,k)]_+\,t_i' & v=k.
\end{cases}
\]

Set ${\bf b}'=\tfrac{1}{n}{\bf t}'H'$. Then
\[
\mu_{k,A}'\!\bigl(\varphi_\lambda^{\bu}((Z')^{{\bf t}'})\bigr)
= \mu_{k,A}'(A^{{\bf b}'})
= A^{\bf b},
\]
where ${\bf b}=(b_v)_{v\in V_\lambda}$ with
\[
b_v=
\begin{cases}
b_v' + b_k'[Q(v,k)]_+ & v\neq k,\\[4pt]
-b_k' & v=k.
\end{cases}
\]
On the other hand, set ${\bf c}=\tfrac{1}{n}{\bf t}H$. Then
\[
\varphi_\lambda^{\bv}\!\bigl(\nu_k'((Z')^{{\bf t}'})\bigr) = A^{\bf c}.
\]
Thus it suffices to show $n{\bf b}=n{\bf c}$.

   When $v\neq k$, we have 
    \begin{align*}
        nb_v =&  nb_v' + nb_k'[Q(v,k)]_{+}\\
        =& \left(\sum_{u\in V_\lambda} t_u' H'(u,v)\right)
        + ([H(v,k)]_{+}) \sum_{u\in V_\lambda} t_u' H'(u,k)\\
        =& \sum_{k\neq u\in V_\lambda} t_u'  H(u,v) +\frac{1}{2}\sum_{k\neq u\in V_\lambda} t_u'\left(H(u,k)|H(k,v)|+
    |H(u,k)|H(k,v)\right)\\
    & -t_k' H(k,v) - ([H(v,k)]_{+}) \sum_{u\in V_\lambda} t_u' H(u,k)\\
    =&-t_k' H(k,v)+ \sum_{k\neq u\in V_\lambda} t_u'  H(u,v) \\
    &  +\frac{1}{2}\sum_{k\neq u\in V_\lambda} t_u'\left(H(u,k)(|H(k,v)|-2[H(v,k)]_{+})+
    |H(u,k)|H(k,v)\right) \\
     =& -t_k' H(k,v)+\sum_{k\neq u\in V_\lambda} t_u'  H(u,v)  +\frac{1}{2}\sum_{k\neq u\in V_\lambda} t_u'\left(H(u,k) H(k,v)+
    |H(u,k)|H(k,v)\right) \\
    =& -t_k' H(k,v)+\sum_{k\neq u\in V_\lambda} t_u'  H(u,v)  + \sum_{k\neq u\in V_\lambda} t_u'[H(u,k)]_{+} H(k,v),
    \end{align*}
    and 
    \begin{align*}
        nc_v =& \sum_{u\in V_\lambda} t_u H(u,v)\\
        =&(-t_k' +\sum_{u\in \mathcal V} [Q(u,k)]_+ t_u') H(k,v) + \sum_{k\neq u\in V_\lambda} t_u' H(u,v)\\
        =& -t_k' H(k,v)+\sum_{k\neq u\in V_\lambda} t_u'  H(u,v)  + \sum_{k\neq u\in V_\lambda} t_u'[H(u,k)]_{+} H(k,v)\\
        =&nb_v.
    \end{align*}

     When $v= k$, we have 
    \begin{align*}
        nb_k = -nb_k'
        = -\sum_{k\neq u\in V_\lambda} t_u'  H(u,k)'
        =\sum_{k\neq u\in V_\lambda} t_u'  H(u,k)
        =\sum_{k\neq u\in V_\lambda} t_u  H(u,k)
        =nc_k.
    \end{align*}
   This shows the commutativity of \eqref{eq-com-step-one}.

\smallskip
\noindent
**Step 2: Commutativity of \eqref{eq-com-step-two}.**  
Let ${\bf f}=(f_v)_{v\in V_\lambda}$ with ${\bf f}H\in(n\mathbb Z)^{V_\lambda}$. 
Write $H(k,*)$ for the $k$-th row of $H$.  
By Lemma~\ref{lem.nu_sharp_well-defined},
\begin{align}\label{varphi-nu-Z-f}
\varphi_\lambda^{\bv}(\nu_k^{\sharp}(Z^{\bf f}))
= \varphi_\lambda^{\bv}(Z^{\bf f} F^q(X_k,m))
= A^{\tfrac{1}{n}{\bf f}H} F^q(A^{H(k,*)},m),
\end{align}
where $m=\tfrac{1}{n}\sum_{v\in V_\lambda} Q(k,v)f_v$
and $X_k=Z_k^n$.

Let ${\bf s}=\tfrac{1}{n}{\bf f}H$
and decompose ${\bf s}={\bf s}'+s_k{\bf e}_k$ with ${\bf s}'(k)=0$, where ${\bf e}_k$ is defined as in \eqref{eq-def-vector-ei-ele}.  
Suppose
\begin{align}\label{eq-vector-s-A-j}
   A^{\bf s} = \xi^{\frac{j}{2}} A^{{\bf s}'} A_k^{s_k}
\end{align}
 for some integer $j$.
 Equation \eqref{eq-prod-PiQ} implies that the
    $d_k$ in \eqref{eq-def-mu-step2} is $2n$. 
    Then 
    \begin{align}\label{eq-mu-varphi-Z}
        \mu_{k,A}^{\sharp}(\varphi_\lambda^{\bv}(Z^{\bf f}))
        =\xi^{\frac{j}{2}}\mu_{k,A}^{\sharp}(A^{{\bf s}'})
        \mu_{k,A}^{\sharp}(A_k)^{s_k}
        = \xi^{\frac{j}{2}} A^{{\bf s}'} \left(A_k\left(1 + q^{-1}A^{Q(k,*)}\right)^{-1}\right)^{s_k}.
    \end{align}
    
    We use $\Pi$ to denote $\Pi_\lambda^{\bv}$.
    Then
    $$ Q(k,*) \Pi {\bf e}_k^T
    =\sum_{u\in \V} Q(k,u) \Pi(u,k) = -2n,$$
    where the last equality comes from \eqref{eq-prod-PiQ} and Lemma \ref{lem-matrix-mul}(d).
    This implies that 
    \begin{align}\label{eq-commu-AAk}
        A^{Q(k,*)} A_k =\xi^{-2n} A_k A^{Q(k,*)}
    =q^{-2} A_k A^{Q(k,*)}.
    \end{align}
    Note that $$s_k=\frac{1}{n}\sum_{v\in\V} H(v,k) f_v=-\frac{1}{n}\sum_{v\in\V} Q(k,v) f_v =-m.$$
    When $m\geq 0$, we have 
    \begin{equation}\label{eq-m-big-zero}
    \begin{split}
        &\left(A_k\left(1 + q^{-1} A^{Q(k,*)}\right)^{-1}\right)^{s_k}\\
        =& \left(1 + q^{-1} A^{Q(k,*)}\right) A_k^{-1}
        \cdots \left(1 + q^{-1} A^{Q(k,*)}\right) A_k^{-1}\\
        =& A_k^{-m} \left(1 + q^{2m-1} A^{Q(k,*)}\right)\cdots  \left(1 + q^{3} A^{Q(k,*)}\right) \left(1 + q A^{Q(k,*)}\right)
        \quad(\mbox{by \eqref{eq-commu-AAk}})\\
        =& A_k^{s_k} F^q(A^{H(k,*)}, m)
        \quad (\mbox{by \eqref{eq-def-Fq}}).
    \end{split}
    \end{equation}
    Similarly, when $m\leq 0$, we have 
    \begin{equation}\label{eq-m-small-zero}
    \begin{split}
        &\left(A_k\left(1 + q^{-1} A^{Q(k,*)}\right)^{-1}\right)^{s_k}\\
        =& A_k\left(1 + q^{-1}A^{Q(k,*)}\right)^{-1}
        \cdots A_k\left(1 + q^{-1} A^{Q(k,*)}\right)^{-1}\\
        =& A_k^{-m} \left(1 + q^{-(2m-1)} A^{Q(k,*)}\right)^{-1}\cdots  \left(1 + q^{-3}A^{Q(k,*)}\right)^{-1} \left(1 + q^{-1}A^{Q(k,*)}\right)^{-1}\\
        =& A_k^{s_k} F^q(A^{H(k,*)}, m).
    \end{split}
    \end{equation}
    Therefore we have 
    \begin{align*}
        \mu_{k,A}^{\sharp}(\varphi_\lambda^{\bv}(Z^{\bf d}))
        =& \xi^{\frac{j}{2}} A^{{\bf s}'} \left(A_k\left(1 + q^{-1}A^{Q(k,*)}\right)^{-1}\right)^{s_k}
        \quad(\mbox{by \eqref{eq-mu-varphi-Z}})\\
        = & \xi^{\frac{j}{2}} A^{{\bf s}'}
         A_k^{s_k} F^q(A^{H(k,*)}, m)
         \quad(\mbox{by \eqref{eq-m-big-zero} and \eqref{eq-m-small-zero}})\\
        =& A^{{\bf s}}
          F^q(A^{H(k,*)}, m)\quad(\mbox{by \eqref{eq-vector-s-A-j}})\\
         =&\varphi_\lambda^{\bv} (\nu_k^{\sharp} (Z^{\bf d}))\quad(\mbox{by \eqref{varphi-nu-Z-f}}).
    \end{align*}
      This shows the commutativity of \eqref{eq-com-step-two}. 

\smallskip
Together, Steps~1 and~2 show that the full diagram commutes.
\end{proof}

\subsection{The naturality of the quantum cluster algebra structure inside the ${\rm SL}_n$-skein theory}
\label{subsec-naturality}

The main result of this section is the following theorem, which establishes that the quantum $\mathcal A$-mutation class $\mathsf{S}_{\fS,\lambda}$, defined in Definition~\ref{def-tri-quantum}, is independent of the choice of triangulation $\lambda$.  
Consequently, there exists a natural quantum cluster structure inside $\text{Frac}(\dS)$.  
The proof relies on the compatibility between quantum $\mathcal A$- and $\mathcal X$-mutations, shown in \S\ref{sec-compatibility}, together with the compatibility between $\mathcal X$-mutations and the $\mathcal X$-version of the quantum trace maps (Theorem~\ref{thm-main-compatibility}), established in \cite{KimWang}.

\begin{theorem}[Naturality of the quantum cluster structure inside $\text{Frac}(\dS)$]\label{thm-main-1}
    Let $\fS$ be a triangulable pb surface without interior punctures, and let $\lambda$, $\lambda'$ be two triangulations of $\fS$. Then we have   $$\mathsf{S}_{\fS,\lambda}=\mathsf{S}_{\fS,\lambda'},\;
    \mathscr{A}_{\fS,\lambda}=
    \mathscr{A}_{\fS,\lambda'},\text{ and }
    \mathscr{U}_{\fS,\lambda}=
    \mathscr{U}_{\fS,\lambda'}\quad \text{(see Definition~\ref{def-tri-quantum}).}$$
\end{theorem}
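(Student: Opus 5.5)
Since any two triangulations are connected by a sequence of flips, it suffices to treat the case where $\lambda'$ is obtained from $\lambda$ by a single flip at an edge $e$. Let $\bv=(v_1,\dots,v_r)$ be the Fock--Goncharov--Shen mutation sequence of \S\ref{sec-naturality-trace}, so that $Q_{\lambda'}=Q_\lambda^{\bv}$ by \eqref{eq-mutation-flip-quiver}. The goal is to show that the quantum seed $w_\lambda^{\bv}=\mu_{v_r,A}\cdots\mu_{v_1,A}(w_\lambda)$, viewed inside $\Fr(\dS)$ via the identifications of \S\ref{sec-seed-inside}, coincides with $w_{\lambda'}=(Q_{\lambda'},\Pi_{\lambda'},M_{\lambda'})$. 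Granting this, the two mutation classes share a vertex of the exchange graph, so $\mathsf S_{\fS,\lambda}=\mathsf S_{\fS,\lambda'}$. The equalities $\mathscr A_{\fS,\lambda}=\mathscr A_{\fS,\lambda'}$ and $\mathscr U_{\fS,\lambda}=\mathscr U_{\fS,\lambda'}$ then follow immediately from Definition~\ref{def-quan-cluster-algebra}, since both algebras depend only on the class $\mathsf S$.

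Since a seed is determined by its exchange matrix together with its cluster variables, and the exchange matrices already agree, the core task is to match cluster variables. I plan to do this by iterating Proposition~\ref{prop-comp} along the prefixes of $\bv$. This gives a commutative square with top arrow $\nu_{v_1}^\omega\circ\cdots\circ\nu_{v_r}^\omega=\Theta^\omega_{\lambda\lambda'}$ from $\Fr(\mathcal Z^{\rm bl}_\omega(\mathcal D_\lambda^{\bv}))$ to $\Fr(\mathcal Z^{\rm bl}_\omega(\mathcal D_\lambda))$. Its bottom arrow is $\mu_{v_1,A}\circ\cdots\circ\mu_{v_r,A}$ from $\Fr(\mathbb T(w_\lambda^{\bv}))$ to $\Fr(\mathbb T(w_\lambda))$, and its vertical arrows are $\varphi_\lambda^{\bv}$ and $\varphi_\lambda^{\emptyset}=\psi_\lambda^{-1}$; the latter identification uses \eqref{eq-iso-A-balanced-psi-inv}. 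By \eqref{eq-iso-mutation-eqal}, the bottom composite sends the formal generator $A_u$ of $\mathbb T(w_\lambda^{\bv})$ to the actual cluster variable $M_\lambda^{\bv}(\mathbf e_u)$ in $\Fr(\mathbb T(w_\lambda))=\Fr(\dS)$. It therefore suffices to show that
\[
\psi_\lambda^{-1}\Theta^\omega_{\lambda\lambda'}\bigl((\varphi_\lambda^{\bv})^{-1}(A_u)\bigr)=\gaa_u\quad\text{in }\Fr(\dS),
\]
where $\gaa_u$ is computed for $\lambda'$.

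Here I expect the main obstacle: identifying $\mathcal D_\lambda^{\bv}$ and its balanced data with $\mathcal D_{\lambda'}$. Concretely, I need $H_\lambda^{\bv}=H_{\lambda'}$, hence $K_\lambda^{\bv}=K_{\lambda'}$ by uniqueness in Lemma~\ref{lem-key-com-mutation}(a), and therefore $\mathcal Z^{\rm bl}_\omega(\mathcal D_\lambda^{\bv})=\mathcal Z^{\rm bl}_\omega(\fS,\lambda')$ and $\varphi_\lambda^{\bv}=\psi_{\lambda'}^{-1}$. Off the boundary, $H_\lambda^{\bv}=Q_\lambda^{\bv}=Q_{\lambda'}=H_{\lambda'}$ by Lemma~\ref{lem-matrix-HQ}. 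For two vertices on the same boundary edge, I will argue as in \eqref{def-equal-QiQ}: each $v_i$ is interior, and at every step $H^{(i)}(v_i,\cdot)$ vanishes on the boundary-edge vertices whenever no arrow joins $v_i$ to two vertices of a common boundary edge with product of opposite signs. This gives exactly the unchanged-boundary-arrow phenomenon of \cite[Figure~10.3]{FG06}, so those entries remain equal to $H_\lambda=H_{\lambda'}$. If this check is delicate, I will verify it by direct inspection of the local quiver in the quadrilateral.

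With these identifications in hand, $(\varphi_\lambda^{\bv})^{-1}(A_u)=\psi_{\lambda'}(A_u)={\rm tr}_{\lambda'}(\gaa_u)$ by Theorem~\ref{thm-trace-A}. Theorem~\ref{thm-main-compatibility} then gives $\Theta^\omega_{\lambda\lambda'}({\rm tr}_{\lambda'}(\gaa_u))={\rm tr}_\lambda(\gaa_u)=\psi_\lambda({\rm tr}^A_\lambda(\gaa_u))$. Applying $\psi_\lambda^{-1}$ yields ${\rm tr}_\lambda^A(\gaa_u)$, which is exactly $\gaa_u$ under the identification $\dS\subset\A$; this is the required equality. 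Finally, $\Pi_\lambda^{\bv}=\Pi_{\lambda'}$ follows because $\Pi$ is determined by $M$ via \eqref{eq-M-relation}, so $w_\lambda^{\bv}=w_{\lambda'}$.
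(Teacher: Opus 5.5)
Your proposal is correct and follows the paper's proof essentially step for step. You reduce to a single flip, prove $H^{(r)}_\lambda=H_{\lambda'}$ so that the balanced subalgebra and the map $\varphi$ of the mutated seed are those of $\lambda'$, transport $\mathrm{tr}_{\lambda'}(\mathsf{g}_u)$ through the iterated square of Proposition~\ref{prop-comp} and Theorem~\ref{thm-main-compatibility}, and recover $\Pi$ from $M$; taking $\psi_\lambda^{-1}\circ\mathrm{tr}_\lambda=\mathrm{tr}^A_\lambda$ directly from Theorem~\ref{thm-trace-A}(d), instead of the paper's quantum-torus-frame identity, is only cosmetic.

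One wording correction in the boundary step: what vanishes is not $H^{(i)}(v_i,\cdot)$ but the mutation increment on a same-boundary-edge pair. That increment equals the corresponding $Q$-increment, because $H^{(i)}$ and $Q^{(i)}$ agree on every entry involving the interior vertex $v_i$ (Lemma~\ref{lem-matrix-HQ}). This already gives $H^{(i)}-Q^{(i)}=H_\lambda-Q_\lambda$ for all $i$. Since $H-Q$ is supported on same-boundary-edge blocks, and these blocks coincide for $\lambda$ and $\lambda'$, you get $H^{(r)}_\lambda=Q_{\lambda'}+(H_{\lambda'}-Q_{\lambda'})=H_{\lambda'}$ without the step-by-step invariance of boundary arrows quoted from Fock--Goncharov.
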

\begin{proof}
   We only need to show that $\mathsf{S}_{\fS,\lambda}=\mathsf{S}_{\fS,\lambda'}$.  
It suffices to prove that $\mathsf{S}_{\fS,\lambda}$ contains the quantum seed $w_{\lambda'}=(Q_{\lambda'},\Pi_{\lambda'},M_{\lambda'})$.  
Since any two triangulations can be connected by a sequence of flips, we may assume that $\lambda'$ is obtained from $\lambda$ by a single flip.  
As discussed in \S\ref{sec-naturality-trace}, there exist mutable vertices
$v_1,v_2,\ldots,v_r$ such that 
\begin{align*}
\mathcal{D}_{{\lambda'}} = \mu_{v_r} \cdots \mu_{v_2} \mu_{v_1} (\mathcal{D}_{\lambda}). 
\end{align*}

For each $1\leq i\leq r$, define 
\[
Q_\lambda^{(i)} :=
\mu_{v_i} \cdots \mu_{v_2} \mu_{v_1}(Q_\lambda), \qquad
H_\lambda^{(i)} :=
\mu_{v_i} \cdots \mu_{v_2} \mu_{v_1}(H_\lambda),
\]
with the convention that 
\[
Q_\lambda^{(0)} := Q_\lambda, 
\qquad 
H_\lambda^{(0)} := H_\lambda.
\]

Before proceeding with the proof of Theorem~\ref{thm-main-1}, we state the following useful lemma, which will be proved later.

\begin{lemma}\label{lem-Hlambda-H}
    We have $H_\lambda^{(r)}=H_{\lambda'}$.
\end{lemma}


Set $\bv=(v_1,\ldots,v_r)$. 
By Lemma~\ref{lem-Hlambda-H} we have
\[
\mathcal{Z}^{\mathrm{bl}}_\omega(\mathcal{D}_\lambda^{\bv})
=\mathcal{Z}^{\mathrm{bl}}_\omega(\mathcal{D}_{\lambda'})
=\mathcal{Z}^{\mathrm{bl}}_\omega(\fS,\lambda').
\]
Proposition~\ref{prop-comp} therefore yields the commutative diagram
\begin{equation}\label{proof-thm-diag-1}
\begin{tikzcd}[row sep=3em, column sep=5em]
\Fr(\mathcal{Z}^{\mathrm{bl}}_\omega(\fS,\lambda'))\arrow[r, "\nu_{v_1}^\omega\circ\cdots\circ \nu_{v_r}^\omega"]
\arrow[d, "\varphi_{\lambda'}"]  
& \Fr(\mathcal{Z}^{\mathrm{bl}}_\omega(\fS,\lambda)) \arrow[d, "\varphi_\lambda"]  \\
 \Fr(\mathbb T(w_\lambda^{\mathbbm{v}}))
 \arrow[r, "\mu_{v_1,A}\circ\cdots\circ \mu_{v_r,A}"] 
&  \Fr(\mathbb T(w_\lambda))
\end{tikzcd},
\end{equation}
where $\varphi_\lambda(Z^{\mathbf k})= A^{\frac{1}{n}\mathbf k H_\lambda}$ and 
$\varphi_{\lambda'}((Z')^{\mathbf k'})= (A')^{\frac{1}{n}\mathbf k' H_{\lambda'}}$
for $\mathbf k,\mathbf k'\in\mathbb Z^{V_\lambda}$ with 
$\mathbf k H_\lambda,\mathbf k' H_{\lambda'}\in (n\mathbb Z)^{V_\lambda}$.  
To simplify notation we write $\nu$ (resp.\ $\mu$) for 
$\nu_{v_1}^\omega\circ\cdots\circ \nu_{v_r}^\omega$ (resp.\ $\mu_{v_1,A}\circ\cdots\circ \mu_{v_r,A}$).
Note that $\nu$ is the restriction of $\Theta_{\lambda\lambda'}$ (see \eqref{eq-Theta2}).
Diagram \eqref{eq-compability-tr-mutation} gives the commutative triangle
\begin{align}\label{proof-thm-diag-2}
        \xymatrix{
        & \dS \ar[dl]_{{\rm tr}_{\lambda'}} \ar[dr]^{{\rm tr}_\lambda} & \\
        {\rm Frac}(\mathcal{Z}^{\mathrm{bl}}_\omega(\fS,\lambda')) \ar[rr]_-{\nu} & & {\rm Frac}(\mathcal{Z}^{\mathrm{bl}}_\omega(\fS,\lambda)) }.
\end{align}

Recall from \S\ref{sec-traceA} that for each $v\in\V$ we defined an element $\gaa_v\in\dS$, the definition depending on the triangulation.  In the sequel we write $\gaa_{v,\lambda}$ (resp.\ $\gaa_{v,\lambda'}$) for the element defined with respect to $\lambda$ (resp.\ $\lambda'$).  
Identify $\Fr(\mathbb T(w_\lambda))$ with $\Fr(\dS)$ so that $\gaa_{v,\lambda}=A_v$ for $v\in V_\lambda$.  
Then, for each $v\in\V$, we have
\[
\varphi_\lambda\bigl(\tr(\gaa_{v,\lambda})\bigr)
=\varphi_\lambda\bigl(Z^{K_\lambda(v,*)}\bigr)=A_v=\gaa_{v,\lambda},
\]
where $K_\lambda(v,*)$ is as in Definition~\ref{def-M-vector}; the first equality is from Lemma~\ref{thm-trace-A}(d) and the second is from Lemma~\ref{lem-matrix-id-LY}(a).
Lemma~\ref{lem-basic-lem}(d) implies the identity
\begin{equation}\label{eq-iden-key-step}
    \varphi_\lambda\bigl(\tr(x)\bigr)=x\qquad\text{for all }x\in\dS.
\end{equation}

Write $w_\lambda^{\bv}=(Q',\Pi',M')$. For $v\in\V$ we compute
\begin{align*}
    M'({\bf e}_v)&=\mu(A_v') \quad ( \mbox{by \eqref{eq-iso-mutation-eqal}})\\
    &=\mu(\varphi_{\lambda'} (Z^{K_{\lambda'}(v,*)})) \quad ( \mbox{by Lemma~\ref{lem-matrix-id-LY}(a)}) \\
    &=\mu(\varphi_{\lambda'} (\text{tr}_{\lambda'}(\gaa_{v,\lambda'}))) \quad ( \mbox{by Lemma~\ref{thm-trace-A}(d)})\\
    &=\varphi_{\lambda}(\nu (\text{tr}_{\lambda'}(\gaa_{v,\lambda'}))) \quad ( \mbox{by \eqref{proof-thm-diag-1}})\\
     &=\varphi_{\lambda}(\tr(\gaa_{v,\lambda'})) \quad (\mbox{by \eqref{proof-thm-diag-2}})\\
     &=\gaa_{v,\lambda'} \quad (\mbox{by \eqref{eq-iden-key-step}})\\
     &=M_{\lambda'}({\bf e}_v)
     \quad (\mbox{by \eqref{def-M-lambda}})
\end{align*}
Hence $M'=M_{\lambda'}$.

Equation~\eqref{eq-mutation-flip-quiver} yields $Q'=\mu(Q_\lambda)=Q_{\lambda'}$.  Since $\Pi_{\lambda'}$ (resp.\ $\Pi'$) is uniquely determined by $M_{\lambda'}$ (resp.\ $M'$), we conclude $w_\lambda^{\bv}=\mu(w_\lambda)=w_{\lambda'}$.  
Therefore $\mathsf{S}_{\fS,\lambda}$ contains the quantum seed $w_{\lambda'}=(Q_{\lambda'},\Pi_{\lambda'},M_{\lambda'})$, as required.

\end{proof}

Before we prove Lemma~\ref{lem-Hlambda-H}, we state the following.

\begin{lemma}\label{lem-Hi-H}
For any two vertices $u,v$ lying on the same boundary component of $\fS$, we have 
\[
    H_\lambda^{(i)}(u,v) = H_\lambda(u,v),
\]
where $0\leq i\leq r$.
\end{lemma}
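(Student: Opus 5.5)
We argue by induction on $i$, running the same two-case analysis as in the proof of Lemma~\ref{lem-matrix-HQ}, with the boundary fact \eqref{def-equal-QiQ} supplying the one missing ingredient. The case $i=0$ is trivial. Assume $H_\lambda^{(i-1)}(u,v)=H_\lambda(u,v)$ for all $u,v$ on a common boundary component, and set $k:=v_i$. Since $k$ is mutable, it lies in the interior of $\fS$, so $k\notin\{u,v\}$. The mutation rule \eqref{eq-mutation-Q} applied to $H$ then gives
\[
H_\lambda^{(i)}(u,v)=H_\lambda^{(i-1)}(u,v)+\tfrac12\Bigl(H_\lambda^{(i-1)}(u,k)\bigl|H_\lambda^{(i-1)}(k,v)\bigr|+\bigl|H_\lambda^{(i-1)}(u,k)\bigr|H_\lambda^{(i-1)}(k,v)\Bigr).
\]
It therefore suffices to show that the correction term vanishes.

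Because $k$ is interior, neither of the pairs $(u,k)$ and $(k,v)$ lies on a common boundary edge. Lemma~\ref{lem-matrix-HQ}, applied to the prefix sequence $(v_1,\dots,v_{i-1})$, thus lets us replace each $H^{(i-1)}_\lambda$-entry in the correction term by the corresponding $Q_\lambda^{(i-1)}$-entry. The correction term for $H$ is then literally the same as the correction term in the mutation of $Q_\lambda^{(i-1)}$ at $k$, evaluated at $(u,v)$. That term equals $Q_\lambda^{(i)}(u,v)-Q_\lambda^{(i-1)}(u,v)$, and this difference vanishes by \eqref{def-equal-QiQ}, since $u,v$ lie on the same boundary component. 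Hence $H_\lambda^{(i)}(u,v)=H_\lambda^{(i-1)}(u,v)=H_\lambda(u,v)$, which closes the induction.

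The only point that needs care is the applicability of \eqref{def-equal-QiQ}. The paper quotes it from the Fock--Goncharov picture: along the flip sequence, the number of arrows between vertices on the same boundary component never changes. If one wants to avoid relying on that global statement, the fallback is to verify directly that $Q_\lambda^{(i-1)}(u,k)\,Q_\lambda^{(i-1)}(k,v)\ge 0$ is impossible unless one factor is zero; that is, that no mutable vertex ever creates a directed path $u\to k\to v$ between two boundary vertices. This is exactly what the cited figure in \cite{FG06} encodes, so we would simply invoke \eqref{def-equal-QiQ} rather than redo that bookkeeping.
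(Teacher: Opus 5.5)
Your proof is correct and follows the paper's argument. Both proceed by induction on $i$, use Lemma~\ref{lem-matrix-HQ} to replace the $H^{(i-1)}_\lambda$-entries in the correction term by $Q^{(i-1)}_\lambda$-entries (legitimate because $k=v_i$ is interior), and then invoke \eqref{def-equal-QiQ} to see that the correction term vanishes; you additionally make explicit that $k\notin\{u,v\}$, which the paper uses tacitly.
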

\begin{proof}
We argue by induction on $i$.  
For $i=0$, the claim is immediate, since 
$H_\lambda^{(0)} = H_\lambda.$

Assume the statement holds for $i-1\geq 0$, i.e.,
\begin{align}\label{eq-H-i-1-H-l}
    H_\lambda^{(i-1)}(u_1,v_1) = H_\lambda(u_1,v_1)\text{ whenever $u_1,v_1$ lie on the same boundary component of $\fS$}.
\end{align}
Then, by the mutation formula, we obtain
\begin{equation}\label{eq-Hi-i-1}
\begin{split}
H_\lambda^{(i)}(u,v) 
&= H_\lambda^{(i-1)}(u,v) 
 + \tfrac{1}{2}\bigl( H_\lambda^{(i-1)}(u,k)\,|H_\lambda^{(i-1)}(k,v)|
 + |H_\lambda^{(i-1)}(u,k)|\,H_\lambda^{(i-1)}(k,v) \bigr) \\
&= H_\lambda^{(i-1)}(u,v) 
 + \tfrac{1}{2}\bigl( Q_\lambda^{(i-1)}(u,k)\,|Q_\lambda^{(i-1)}(k,v)|
 + |Q_\lambda^{(i-1)}(u,k)|\,Q_\lambda^{(i-1)}(k,v) \bigr),
 \end{split}
\end{equation}
where the second equality comes from Lemma~\ref{lem-matrix-HQ}.

On the other hand, we have
\[
Q_\lambda^{(i)}(u,v) 
= Q_\lambda^{(i-1)}(u,v) 
+ \tfrac{1}{2}\bigl( Q_\lambda^{(i-1)}(u,k)\,|Q_\lambda^{(i-1)}(k,v)|
+ |Q_\lambda^{(i-1)}(u,k)|\,Q_\lambda^{(i-1)}(k,v) \bigr).
\]

By Equation~\eqref{def-equal-QiQ}, we know that 
$Q_\lambda^{(i)}(u,v)=Q_\lambda^{(i-1)}(u,v)$.  
Therefore,
\[
Q_\lambda^{(i-1)}(u,v) 
+ \tfrac{1}{2}\bigl( Q_\lambda^{(i-1)}(u,k)\,|Q_\lambda^{(i-1)}(k,v)|
+ |Q_\lambda^{(i-1)}(u,k)|\,Q_\lambda^{(i-1)}(k,v) \bigr) = 0.
\]
Together with \eqref{eq-H-i-1-H-l} and \eqref{eq-Hi-i-1}, this shows that
\[
H_\lambda^{(i)}(u,v) = H_\lambda^{(i-1)}(u,v) = H_\lambda(u,v).
\]
\end{proof}

\begin{proof}[Proof of Lemma~\ref{lem-Hlambda-H}]
    Lemma \ref{lem-matrix-HQ} implies that
    \begin{align*}
        H_\lambda^{(r)}(u,v)=
        Q_\lambda^{(r)}(u,v)=
        Q_{\lambda'}(u,v)=
        H_{\lambda'}(u,v)
    \end{align*}
for any two vertices $u,v$ not on the same boundary component of $\fS$.

Suppose $u,v$ lie on the same boundary component of $\fS$.
Since $Q_\lambda(u_1,v_1)=Q_{\lambda'}(u_1,v_1)$ for any vertices $u_1,v_1$ on a common boundary component of $\fS$, Equation~\eqref{eq-def-H-lambda} implies
$H_\lambda(u,v)=H_{\lambda'}(u,v)$.
By Lemma~\ref{lem-Hi-H},
\[
H_\lambda^{(r)}(u,v)=H_\lambda(u,v)=H_{\lambda'}(u,v).
\]
This completes the proof.
 
\end{proof}

\begin{definition}\label{def-sln-quantum-cluster-al}
    Let $\fS$ be a triangulable pb surface without interior punctures.
    Define   $$\mathsf{S}(\fS):=\mathsf{S}_{\fS,\lambda},\;
    \mathscr{A}_{\omega}(\fS):=
    \mathscr{A}_{\mathsf{S}(\fS)},\text{ and }
    \mathscr{U}_{\omega}({\fS}):=
    \mathscr{U}_{\mathsf{S}(\fS)}\quad \text{(Definition~\ref{def-tri-quantum}),}$$
    where $\lambda$ is a triangulation of $\fS$.
    We call $\mathscr{A}_{\omega}(\fS)$
    (resp. $\mathscr{U}_{\omega}({\fS})$)
    the {\bf ${\rm SL}_n$ quantum cluster algebra} (resp. {\bf ${\rm SL}_n$ quantum upper cluster algebra}) of $\fS$ 
\end{definition}

Theorem \ref{thm-main-1} implies that  
$\mathsf{S}(\fS)$, $\mathscr{A}_{\omega}(\fS)$, and $\mathscr{U}_{\omega}(\fS)$
are independent of the choice of $\lambda$.

\begin{remark}
  Theorem~\ref{thm-main-1} was proved in \cite{muller2016skein} and \cite{ishibashi2023skein} for $n=2$ and $n=3$, respectively, using direct computations.  
For general $n$, such a direct approach is impractical: a single flip involves too many mutation steps, and the resulting products of the elements $\{\gaa_{v,\lambda},\gaa_{v,\lambda'}\mid v\in V_\lambda\}$  are extremely difficult to compute in $\dS$ (we need to check some equalities in $\dS$, each side of such an equality is a product of $\{\gaa_{v,\lambda},\gaa_{v,\lambda'}\mid v\in V_\lambda\}$).  
Our proof of Theorem~\ref{thm-main-1}, by contrast, avoids these heavy computations and provides a more constructive argument.

\end{remark}

We conclude this section by introducing the $\mathcal A$-version quantum coordinate change isomorphism and demonstrating its compatibility with the $\mathcal A$-version quantum trace, a result of independent interest.

The proof of Theorem \ref{thm-main-1} shows that the isomorphism
$$\Theta_{
\lambda \lambda'}^{\omega}\colon\Fr(\mathcal Z_\omega^{\rm mbl}(\fS,\lambda'))\rightarrow 
\Fr(\mathcal Z_\omega^{\rm mbl}(\fS,\lambda))$$
restricts to an isomorphism 
$$\Theta_{
\lambda \lambda'}^{\omega}\colon\Fr(\mathcal Z_\omega^{\rm bl}(\fS,\lambda'))\rightarrow 
\Fr(\mathcal Z_\omega^{\rm bl}(\fS,\lambda))$$
for any two triangulations $\lambda,\lambda'$ of $\fS$,
which has been proved in \cite{KimWang} using a different technique.
Then commutative diagram \eqref{eq-compability-tr-mutation} implies that
the following diagram commutes (see also \cite{KimWang}):
    \begin{align}
        \label{eq-compability-tr-mutation-new}
        \xymatrix{
        & \rdS \ar[dl]_{{\rm tr}_{\lambda'}} \ar[dr]^{{\rm tr}_\lambda} & \\
        {\rm Frac}(\mathcal{Z}^{\rm bl}_\omega(\fS,\lambda')) \ar[rr]_-{\Theta^\omega_{\lambda\lambda'}} & & {\rm Frac}(\mathcal{Z}^{\rm bl}_\omega(\fS,\lambda))}.
    \end{align}
The proof of Theorem \ref{thm-main-1} also shows that the following diagram commutes
\begin{equation}\label{proof-thm-diag-1-new}
\begin{tikzcd}[row sep=3em, column sep=5em]
\Fr(\mathcal{Z}^{\rm bl}_\omega(\fS,\lambda'))\arrow[r, "\Theta_{\lambda\lambda'}^\omega"]
\arrow[d, "\varphi_{\lambda'}"]  
& \Fr(\mathcal{Z}^{\rm bl}_\omega(\fS,\lambda)) \arrow[d, "\varphi_\lambda"]  \\
 \Fr(\mathcal A_{\omega} (\fS,\lambda'))
 \arrow[r, "\mu_{v_1,A}\circ\cdots\circ \mu_{v_r,A}"] 
&  \Fr(\A)
\end{tikzcd},
\end{equation}
for any two triangulations $\lambda,\lambda'$ of $\fS$ related to each other by a flip.
Note that the isomorphisms $\varphi_\lambda$
and $\varphi_{\lambda'}$ are inverse to isomorphisms 
$\psi_\lambda$
and $\psi_{\lambda'}$ respectively (Lemma~\ref{thm-transition-LY}).

Recall that there is more than one way to order \( v_1, \ldots, v_r \) to obtain \eqref{eq-mutation-flip-quiver} when \( n > 2 \). Diagram \eqref{proof-thm-diag-1-new} demonstrates that \( \mu_{v_1,A} \circ \cdots \circ \mu_{v_r,A} \) is independent of the ordering of \( v_1, \ldots, v_r \) because \( \varphi_\lambda \) and \( \varphi_{\lambda'} \) are isomorphisms, and \( \Theta^\omega_{\lambda\lambda'} \) is independent of the ordering of \( v_1, \ldots, v_r \).
Then we define 
$$\Phi_{\lambda\lambda'}^\omega:=
\mu_{v_1,A}\circ\cdots\circ \mu_{v_r,A}\colon
\Fr(\mathcal A_{\omega} (\fS,\lambda'))\rightarrow
\Fr(\mathcal A_{\omega} (\fS,\lambda)).$$

Suppose that $\lambda$ and $\lambda'$ are any two triangulations of $\fS$ (see \S\ref{sec-naturality-trace}). 
Let $\Lambda=(\lambda_1,\cdots,\lambda_m)$ be a triangulation sweep 
connecting 
$\lambda$ and $\lambda'$.
Define the corresponding $\mathcal A$-version quantum coordinate change isomorphism 
\begin{equation}\label{eq-Phi-change}
\Phi_{\Lambda}^{\omega}  :=\Phi_{
\lambda_1\lambda_2}^{\omega}\circ\cdots\circ\Phi_{
\lambda_{m-1}\lambda_m}^{\omega}\colon\Fr(\mathcal A_{\omega} (\fS,\lambda'))\rightarrow 
\Fr(\mathcal A_{\omega} (\fS,\lambda)).
\end{equation}

The following shows that the definition of $\Phi_{\Lambda}^{\omega}$ is independent of the choice of $\Lambda$ and $\Phi_{\Lambda}^{\omega}$ relates the two quantum trace maps $\text{tr}_\lambda^A$
and $\text{tr}_{\lambda'}^A$. It is the $\mathcal A$-version for Proposition \ref{prop:Theta_omega_consistency} and Theorem \ref{thm-main-compatibility}.

\begin{corollary}\label{thm-naturality-sln-cluster}
    Let $\fS$ be a triangulable pb surface without interior punctures,  let $\lambda$, $\lambda'$ be two triangulations of $\fS$, and let $\Lambda$
    a triangulation sweep 
connecting 
$\lambda$ and $\lambda'$.
We have the following:

\begin{enumerate}[label={\rm (\alph*)}]\itemsep0,3em

\item $\Phi_\Lambda^\omega$ only depends on $\lambda$ and $\lambda'$.

\item We denote $\Phi_\Lambda^\omega$ as $\Phi_{\lambda\lambda'}^\omega$ because of (a). Then the following diagram commutes
\begin{equation}\label{eq-com-coord-change-A-X}
\begin{tikzcd}
 \Fr(\mathcal A_{\omega} (\fS,\lambda'))\arrow[r, "\Phi_{\lambda\lambda'}^\omega"]
\arrow[d, "\psi_{\lambda'}"]  
&  \Fr(\A) \arrow[d, "\psi_\lambda"]  \\
\Fr(\mathcal{Z}^{\rm bl}_\omega(\fS,\lambda'))
 \arrow[r, "\Theta_{\lambda\lambda'}^\omega"] 
& \Fr(\mathcal{Z}^{\rm bl}_\omega(\fS,\lambda))
\end{tikzcd},
\end{equation}
where $\psi_\lambda$ and $\psi_{\lambda'}$ are defined in \eqref{eq-iso-A-balanced-psi}.

\item The following diagram commutes
\begin{align}
       \label{eq-compability-tr-mutation-A}
        \xymatrix{
        & \rdS \ar[dl]_{{\rm tr}_{\lambda'}^A} \ar[dr]^{{\rm tr}_\lambda^A} & \\
        \Fr(\mathcal A_{\omega} (\fS,\lambda')) \ar[rr]_-{\Phi^\omega_{\lambda\lambda'}} & & \Fr(\mathcal A_{\omega} (\fS,\lambda))}.
\end{align}

\end{enumerate}
\end{corollary}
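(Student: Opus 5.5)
The plan is to transport everything to the $\mathcal X$-side, where the analogous statements are already established, using the two diagrams built in the proof of Theorem~\ref{thm-main-1}.

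For a single flip, diagram \eqref{proof-thm-diag-1-new} together with the fact that $\varphi_\lambda=\psi_\lambda^{-1}$ and $\varphi_{\lambda'}=\psi_{\lambda'}^{-1}$ (Theorem~\ref{thm-transition-LY} and Lemma~\ref{lem-matrix-id-LY}(a)) gives
\[
\Phi^\omega_{\lambda\lambda'}=\varphi_\lambda\circ\Theta^\omega_{\lambda\lambda'}\circ\psi_{\lambda'} .
\]
Equivalently, $\psi_\lambda\circ\Phi^\omega_{\lambda\lambda'}=\Theta^\omega_{\lambda\lambda'}\circ\psi_{\lambda'}$.

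For a general sweep $\Lambda=(\lambda_1,\dots,\lambda_m)$, compose these identities. The intermediate factors $\psi_{\lambda_i}\varphi_{\lambda_i}$ are the identity on $\Fr(\mathcal Z^{\rm bl}_\omega(\fS,\lambda_i))$, so they cancel and we get
\[
\Phi^\omega_\Lambda=\varphi_{\lambda}\circ\Theta^\omega_{\lambda_1\lambda_2}\circ\cdots\circ\Theta^\omega_{\lambda_{m-1}\lambda_m}\circ\psi_{\lambda'}=\varphi_\lambda\circ\Theta^\omega_\Lambda\circ\psi_{\lambda'} .
\]

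Part (a) follows because $\Theta^\omega_\Lambda$ depends only on $\lambda,\lambda'$ by Proposition~\ref{prop:Theta_omega_consistency}, and $\varphi_\lambda,\psi_{\lambda'}$ do not involve $\Lambda$. Part (b) is the displayed identity rewritten as $\psi_\lambda\circ\Phi^\omega_{\lambda\lambda'}=\Theta^\omega_{\lambda\lambda'}\circ\psi_{\lambda'}$. Here we also use that $\Theta^\omega_{\lambda\lambda'}$ restricts to the $\Fr(\mathcal Z^{\rm bl}_\omega)$ level, which is Proposition~\ref{Prop-rest-bal}.

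For (c), start from the diagram \eqref{eq-compability-tr-A-X-diag}, $\psi_\lambda\circ{\rm tr}^A_\lambda={\rm tr}_\lambda$, so ${\rm tr}^A_\lambda=\varphi_\lambda\circ{\rm tr}_\lambda$. Then compute
\[
\Phi^\omega_{\lambda\lambda'}\circ{\rm tr}^A_{\lambda'}=\varphi_\lambda\circ\Theta^\omega_{\lambda\lambda'}\circ\psi_{\lambda'}\circ{\rm tr}^A_{\lambda'}=\varphi_\lambda\circ\Theta^\omega_{\lambda\lambda'}\circ{\rm tr}_{\lambda'}=\varphi_\lambda\circ{\rm tr}_\lambda={\rm tr}^A_\lambda .
\]
The third equality is Theorem~\ref{thm-main-compatibility}.

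The only delicate point is the single-flip step. Diagram \eqref{proof-thm-diag-1-new} was derived via Lemma~\ref{lem-Hlambda-H} and Proposition~\ref{prop-comp}, which identify $\mathcal Z^{\rm bl}_\omega(\mathcal D^{\bv}_\lambda)$ with $\mathcal Z^{\rm bl}_\omega(\fS,\lambda')$ and $\varphi^{\bv}_\lambda$ with $\varphi_{\lambda'}$. Once that diagram is granted as established in the proof of Theorem~\ref{thm-main-1}, everything else is formal bookkeeping of compositions of skew-field isomorphisms.
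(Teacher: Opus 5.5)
Your proposal is correct and follows the paper's proof: the paper also stacks the single-flip squares \eqref{proof-thm-diag-1-new} into one square relating $\Theta^\omega_\Lambda$ and $\Phi^\omega_\Lambda$. From that square it obtains (a) via Proposition~\ref{prop:Theta_omega_consistency}, (b) via $\varphi_\lambda=\psi_\lambda^{-1}$ and $\varphi_{\lambda'}=\psi_{\lambda'}^{-1}$, and (c) via $\psi_\lambda\circ{\rm tr}^A_\lambda={\rm tr}_\lambda$ combined with \eqref{eq-compability-tr-mutation-new}. Your formula $\Phi^\omega_\Lambda=\varphi_\lambda\circ\Theta^\omega_\Lambda\circ\psi_{\lambda'}$ is simply the algebraic form of that stacked diagram.
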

\begin{proof}
(a)    The diagram \eqref{proof-thm-diag-1-new} shows the following diagram commutes
    \begin{equation}\label{proof-thm-2-diag-1}
\begin{tikzcd}
\Fr(\mathcal{Z}^{\rm bl}_\omega(\fS,\lambda'))\arrow[r, "\Theta_{\Lambda}^\omega"]
\arrow[d, "\varphi_{\lambda'}"]  
& \Fr(\mathcal{Z}^{\rm bl}_\omega(\fS,\lambda)) \arrow[d, "\varphi_\lambda"]  \\
 \Fr(\mathcal A_{\omega} (\fS,\lambda'))
 \arrow[r, "\Phi_{\Lambda}^\omega"] 
&  \Fr(\A)
\end{tikzcd}.
\end{equation}
Since $\Theta_{\Lambda}^\omega$ depends only on $\lambda$ and $\lambda'$ (Proposition~\ref{prop:Theta_omega_consistency}), then 
so does $\Phi_{\Lambda}^\omega$.

(b) The commutative diagram \eqref{proof-thm-2-diag-1}, together with the identities \( (\psi_\lambda)^{-1} = \varphi_\lambda \) and \( (\psi_{\lambda'})^{-1} = \varphi_{\lambda'} \), implies the commutativity of diagram 
\eqref{eq-com-coord-change-A-X}.

(c) Lemma \ref{thm-trace-A}(d) demonstrates that
$$\psi_\lambda\circ \text{tr}_\lambda^A=\tr,\quad
\psi_{\lambda'}\circ \text{tr}_{\lambda'}^A= \text{tr}_{\lambda'}.$$
Then commutative diagrams \eqref{eq-compability-tr-mutation-new} and \eqref{eq-com-coord-change-A-X} imply the commutativity of diagram
\eqref{eq-compability-tr-mutation-A}.

\end{proof}

\begin{remark}
    In \cite{LY23}, the authors defined an isomorphism
    $$
    \overline{\Psi}_{\lambda\lambda'}^A \colon \Fr(\mathcal A_{\omega} (\fS,\lambda')) \longrightarrow
    \Fr(\mathcal A_{\omega} (\fS,\lambda))
    $$
    such that the diagram \eqref{eq-compability-tr-mutation-A} commutes when $\Phi_{\lambda\lambda'}^\omega$ is replaced by $\overline{\Psi}_{\lambda\lambda'}^A$.
    L{\^e} and Yu also proved the uniqueness of $\overline{\Psi}_{\lambda\lambda'}^A$ that makes this diagram commute \cite{LY23}. 
    It follows that
    $$
    \overline{\Psi}_{\lambda\lambda'}^A = \Phi_{\lambda\lambda'}^\omega.
    $$
\end{remark}

\section{The inclusion of skein algebras into quantum upper cluster algebras}\label{sec-skein-cluster}

The following theorem is the main result of this section. It asserts that the projected stated ${\rm SL}_n$-skein algebra $\dS$ (Definition~\ref{def-key-algebra}) is contained in the ${\rm SL}_n$ quantum cluster upper algebra $\mathscr{U}_{\omega}(\fS)$ (Definition~\ref{def-sln-quantum-cluster-al}) whenever $\fS$ is a triangulable pb surface without interior punctures.

\begin{theorem}\label{thm-main-3}
    Let $\fS$ be a triangulable pb surface without interior punctures. Then
    $$
    \dS \subset \mathscr{U}_{\omega}(\fS).
    $$
\end{theorem}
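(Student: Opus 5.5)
Fix a triangulation $\lambda$ of $\fS$ and regard $\dS$ as a subalgebra of $\A=\mathbb T(w_\lambda)$ via the injective map $\trA$ (Lemma~\ref{lem-basic-lem}). By Theorem~\ref{thm-upper-w-upper}, $\mathscr U_\omega(\fS)=\mathscr U_{\mathsf S}(w_\lambda)=\mathbb T(w_\lambda)\cap\bigcap_k \mathbb T(w_\lambda^{(k)})$, where $w_\lambda^{(k)}=\mu_{k,A}(w_\lambda)$ and $k$ ranges over $\mathring V_\lambda$. Since $\dS\subset\mathbb T(w_\lambda)$ is automatic, it suffices to show $\dS\subset\mathbb T(w_\lambda^{(k)})$ for each mutable $k$.

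For a fixed $k$, I transport the question to the $\mathcal X$-side. By Lemma~\ref{lem-com-bal-T} and Proposition~\ref{prop-comp} with $\bv=\emptyset$, $\bu=(k)$, the identification $\varphi_\lambda\circ\tr={\rm id}$ on $\dS$ (equation \eqref{eq-iden-key-step}) shows that $x\in\dS$ lies in $\mathbb T(w_\lambda^{(k)})$ if and only if $(\nu_k^\omega)^{-1}(\tr(x))=\nu_k^\omega(\tr(x))$ lies in $\mathcal Z^{\rm bl}_\omega(\mathcal D_\lambda^{(k)})$, i.e.\ is a Laurent polynomial in the mutated $n$-th root variables with balanced exponents. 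By Lemma~\ref{lem-frac-subalgebra}, the balanced condition is automatic once Laurentness in $\mathcal Z_\omega(\mathcal D_\lambda^{(k)})$ is known, because the image already lies in the fraction field of the balanced subalgebra. So the task reduces to a Laurentness statement: for every $x\in\dS$, $\nu_k^\omega(\tr(x))\in\mathcal Z_\omega(\mathcal D_\lambda^{(k)})$.

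To prove this Laurentness, I localize. The vertex $k$ lies either in the interior of a face $\tau$ or in the interior of an edge $e$ of $\lambda$. Accordingly, I cut $\fS$ along the edges bounding $\tau$ (which gives a $\mathbb P_3$) or along the edges bounding the two faces adjacent to $e$ (which gives a $\mathbb P_4$, possibly after first passing to a cover so that the two faces are distinct). By Theorem~\ref{thm-trace-cut} and the injectivity of the splitting maps $\mathbb S$ on $\dS$ and $\mathcal S$ on the tori, $\tr(x)$ maps to $\mathrm{tr}(\mathbb S(x))$, and $\mathbb S(x)$ is a sum of products of elements of $\widetilde\cS_\omega(\mathbb P_3)$ or $\widetilde\cS_\omega(\mathbb P_4)$ with elements supported away from $k$. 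The mutation $\nu_k^\omega$ only involves $Z_k$ and its neighbours, so it commutes with $\mathcal S$ in the appropriate sense, once one checks that the quiver near $k$ is unchanged by cutting (mutable neighbours stay neighbours, and frozen weights match). The problem therefore reduces to $\fS=\mathbb P_3$ or $\mathbb P_4$.

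For polygons, $\widetilde\cS_\omega=\rdS$ is generated by stated corner and essential arcs, and I will verify Laurentness generator by generator using the explicit path-sum formula \eqref{intro-path-sum} for $\tr$ of these arcs. Alternatively, for $\mathbb P_4$ with $k$ on the diagonal and $n$ arbitrary, I would use the Fock--Goncharov flip sequence: the first step mutates exactly $V^{(0)}_{\lambda;e}$, whose vertices pairwise commute. Combining Theorem~\ref{thm-main-compatibility} with the fact that $\mathrm{tr}_{\lambda'}$ is Laurent, I expect to deduce that mutating at a single vertex of this first layer yields Laurent images, because the remaining mutations form a Laurent-preserving sequence. This is the step I expect to be the main obstacle, in particular for vertices interior to a face, which are not in any first layer of a flip. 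There I would attempt a direct computation with the path sums: show that the numerator factor $F^q(X_k,m)$ divides each path sum after grouping paths in pairs that differ only at $k$.
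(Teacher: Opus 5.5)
Your reductions are correct and follow the paper's route. The upper-bound theorem reduces everything to $\dS\subset\mathbb T(w_\lambda^{(k)})$. Proposition~\ref{prop-comp} turns this into $\nu_k^\omega(\mathrm{tr}_\lambda(x))\in\mathcal Z^{\rm bl}_\omega(\mathcal D_\lambda^{(k)})$. Cutting along all edges of $\lambda$ except the one containing $k$ then reduces to $\mathbb P_3$ or $\mathbb P_4$. No cover is needed: self-folded triangles are excluded, so the two faces at $e$ are distinct. Using Lemma~\ref{lem-frac-subalgebra} to get balancedness from Laurentness is fine. Pulling Laurentness back through the cut, however, needs more than injectivity of the splitting maps. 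You need that same lemma applied to the group subalgebra $\mathcal S_E^{(k)}(\mathcal Z_\omega(\mathcal D_\lambda^{(k)}))$, together with the commutation $\mathcal S_E^{(k)}\circ\nu_k^\omega=\nu_k^\omega\circ\mathcal S_E$.

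The genuine gap is the base case, which is where the theorem's content lies. You never show that $\nu_k^\omega(\mathrm{tr}(\alpha))$ is Laurent for the generating arcs $\alpha$ of $\overline{\mathscr S}_\omega(\mathbb P_3)$ and $\overline{\mathscr S}_\omega(\mathbb P_4)$.

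\emph{The flip argument fails.} Quantum $\mathcal X$-mutations do not preserve Laurentness: for a neighbour $v$ of $k$ with $Q(k,v)=-1$, one already gets $\mu_k^q(X'_v)=[X_vX_k](1+q^{-1}X_k)^{-1}$. So Laurentness of $\mathrm{tr}_{\lambda'}(x)$ says nothing about the intermediate seed obtained after $\mu_{v_1}$. In any case that argument would not reach face-interior vertices.

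\emph{The pairing idea is only a heuristic.}

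\emph{What the paper supplies instead.}
\begin{enumerate}
\item By Theorem~\ref{lem-trace-image-cornerarc-P3}, and since all paths in $\mathsf P(\mathbb P_3,v_1,i,j)$ agree on boundary vertices, one can factor $\mathrm{tr}(\alpha(v_1)_{ij})=\omega^{m_{ij}}Z^{-\mathbf k}\sum_\gamma Z^{n\mathbf k^\gamma}$.
\item Because $\mathbf k=\mathbf k_1$ satisfies $\sum_u\mathbf k(u)Q(u,u')=0$ for every interior $u'$, Lemma~\ref{lem-commute} makes the exponent $m$ in Lemma~\ref{lem.nu_sharp_well-defined} vanish. Hence $\nu_k^\omega(Z^{-\mathbf k})=\nu_k'(Z^{-\mathbf k})$ is a monomial.
\item The remaining factor is $\nu_k^\omega(\sum_\gamma Z^{n\mathbf k^\gamma})=\mu_k^q(\sum_\gamma X^{\mathbf k^\gamma})$. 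This is Laurent by Proposition~4.2 of Schrader--Shapiro on mutations of path sums in the triangle network.
\item The quadrilateral is handled the same way, using the networks $\mathcal N_a,\mathcal N_b,\mathcal N_c$ of Lemma~\ref{lem-quantum-trace-P4}.
\end{enumerate}
Without that input, or an actual proof of your pairing claim (which amounts to reproving the Schrader--Shapiro result), your argument is incomplete.
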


In \S\ref{sec-splitting-upper}, we will construct the splitting homomorphism for $\mathscr{U}_{\omega}(\fS)$ and prove that it is compatible with the splitting homomorphism of $\dS$ introduced in \S\ref{sub-splitting}.  
Finally, in \S\ref{sec-skein-inclusion-cluster}, we establish the stronger inclusion 
$\dS \subset \mathscr{A}_{\omega}(\fS)$ when every connected component of $\fS$ contains at least two punctures, where $\mathscr{A}_{\omega}(\fS)$ denotes the ${\rm SL}_n$ quantum cluster algebra defined in Definition~\ref{def-sln-quantum-cluster-al}.  

Although it is well known that $\mathscr{A}_{\omega}(\fS) \subset \mathscr{U}_{\omega}(\fS)$, it is necessary to first show $\dS \subset \mathscr{U}_{\omega}(\fS)$. The reason is that our proof of $\dS \subset \mathscr{A}_{\omega}(\fS)$ relies on cutting $\fS$ into a new pb surface and a triangle or quadrilateral, which in turn requires $\dS \subset \mathscr{U}_{\omega}(\fS)$ and the compatibility between the splitting homomorphisms of $\dS$ and $\mathscr{U}_{\omega}(\fS)$.

\subsection{Quantum trace maps for triangles and quadrilaterals}

Recall that, for each positive integer $k$, we use $\mathbb P_k$ to denote the pb surface obtained from the closed disk by removing $k$ punctures on its boundary.

We label the three vertices and three edges of $\mathbb P_3$ as $v_1,v_2,v_3$ and $e_1,e_2,e_3$, respectively, as shown in Figure~\ref{Fig;coord_ijk}.  
Note that once $v_1$ is fixed, the labeling of all other vertices and edges is uniquely determined.
Recall that the `$n$-triangulation' of $\mathbb P_3$ is the subdivision of $\bP_3$ into $n^2$ small triangles (Figure~\ref{Fig;coord_ijk}).
 Consider a graph dual to this $n$-triangulation, and give orientations on the edges as in Figure \ref{dualquiver}, presented as blue arrows there. The resulting oriented graph, consisting of $3n$ vertices on the boundary of $\mathbb{P}_3$, $n^2$ vertices in the interior, and $\frac{3n(n+1)}{2}$ oriented edges, is referred to as a `network' in \cite{schrader2017continuous}; let's denote it as $\mathcal N({\mathbb P_3},v_1)$.
 The vertices of $\mathcal N({\mathbb P_3},v_1)$  contained in $e_1$ (and $e_3$)
 are labeled by $1,2,\cdots,n$, as shown in Figure \ref{dualquiver}.

\begin{figure}[h]
	\centering
	\includegraphics[width=4.2cm]{dualquiver.pdf}
	\caption{The dual quiver for the one in Figure \ref{Fig;coord_ijk} when $n=4$.}\label{dualquiver}
\end{figure}

 For any $i,j\in\{1,2,\cdots,n\}$, 
 define
 \begin{align}\label{def-path-P3-v1}
     \mathsf P(\mathbb P_3,v_1,i,j):=
     \{\text{paths in $\mathcal N({\mathbb P_3},v_1)$ going from $i$ contained in $e_3$ to $j$ contained in $e_1$}\},
 \end{align}
  where a path is a concatenation of edges of $\mathcal N({\mathbb P_3},v_1)$ respecting the orientations.

 Note that $\mathsf P(\mathbb P_3,v_1,i,j)=\emptyset$ when $i<j$, and 
 $\mathsf P(\mathbb P_3,v_1,i,j)$ consists of one element when $i=j$, which we may denote by $\gamma_{ii}$.
 Recall that $V_{\mathbb P_3}$ is the vertex set of the quiver in Figure \ref{Fig;coord_ijk}.
 For each path $\gamma\in \mathsf P(\mathbb P_3,v_1,i,j)$, we define an integer-valued vector
 ${\bf k}^\gamma  =(k^\gamma_v)_{v\in V_{\mathbb{P}_3}} \in\mathbb Z^{V_{\mathbb P_3}}$ 
 as
 \begin{align}\label{eq-def-k-gamma}
 k^\gamma_v =
 \begin{cases}
     1 & \text{if $v$ is on your left when you walk along $\gamma$,}\\
     0 & \text{otherwise}.
 \end{cases}
 \end{align}
 For example, for the unique path $\gamma$ from $2$ to $1$, there is exactly one $v \in V_{\mathbb{P}_3}$ with $k^\gamma_v=1$, for the vertex $v$ between 1 and 2 contained in $e_3$. For the unique path $\gamma_{22}$ from $2$ to $2$, there are two $v' \in V_{\mathbb{P}_3}$ with $k^{\gamma_{22}}_{v'}=1$.

Define 
\begin{align}\label{eq-k-sum-k1}
    {\bf k}=
    \sum_{i=\{1,2,\cdots,n\}} \, \sum_{\gamma\in \mathsf P(\mathbb P_3,v_1,i,i)} 
    {\bf k}^\gamma \, \in \mathbb{Z}^{V_{\mathbb{P}_3}}.
\end{align}
Note that ${\bf k}^{\gamma_{11}}=0$ and
$${\bf k} = \sum_{i=1}^n {\bf k}^{\gamma_{ii}} = {\bf k}^{\gamma_{22}} +\cdots + {\bf k}^{\gamma_{nn}}={\bf k}_1,$$
where ${\bf k}_1$ is defined in \eqref{def:proj}.

Let $v\in\{v_1,v_2,v_3\}$, and let $\alpha(v)$ be a counterclockwise corner arc in $\mathbb{P}_3$, surrounding the puncture $p$.
For $i,j\in\{1,2,3\}$, let $\alpha(v)_{ij} \in \overline{\cS}_{\omega}(\mathbb{P}_3)$ be the stated corner arc whose state values at the initial and terminal endpoints are $i$ and $j$, respectively.

The following lemma gives a neat description of the value of the quantum trace of a stated corner arc in a triangle.
\begin{theorem}[{\cite[Theorem~10.5]{LY23}}]\label{lem-trace-image-cornerarc-P3}
    We have
    \begin{align}\label{eq-trace-P3-arc}
{\rm tr}_{\mathbb P_3}(\alpha(v_1)_{ij})
=\sum_{\gamma\in\mathsf{P}(\mathbb P_3,v_1,i,j)}
Z^{n{\bf k}^\gamma -{\bf k}},
\end{align}
where ${\bf k}^\gamma$ and ${\bf k}$ are defined in \eqref{eq-def-k-gamma} and \eqref{eq-k-sum-k1} respectively. 
\end{theorem}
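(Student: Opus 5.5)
The plan is to derive the formula from the explicit construction of ${\rm tr}_{\mathbb P_3}$ in \cite{LY23}, rather than from the skein relations directly. For a single triangle, ${\rm tr}_{\mathbb P_3}$ is built, following Schrader--Shapiro \cite{schrader2017continuous}, by attaching to the network $\mathcal N(\mathbb P_3,v_1)$ a quantum matrix. This matrix is the ordered product of elementary ``snake'' matrices: diagonal matrices in the $Z_v^{\pm1}$ and unipotent matrices $E_{i}$ read off along the network. The value of the stated corner arc $\alpha(v_1)_{ij}$ is then the $(i,j)$ matrix coefficient, normalized by a monomial.

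The first step is to expand this matrix coefficient as a sum over paths. Each path contributes the ordered product of the face-weights it picks up. Only edges respecting the orientations of $\mathcal N(\mathbb P_3,v_1)$ contribute, so the index set is exactly $\mathsf P(\mathbb P_3,v_1,i,j)$. This gives immediately that the sum is empty for $i<j$ and has the single term $\gamma_{ii}$ for $i=j$.

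The second step identifies the monomial attached to each path. In the $n$-th root variables, a path $\gamma$ collects $Z_v^n=X_v$ exactly for the small vertices lying on its left, giving $Z^{n{\bf k}^\gamma}$ up to a power of $\omega$. The global normalization (the determinant/``$\det=1$'' shift in \cite{LY23}) multiplies every term by $Z^{-{\bf k}}$. Here ${\bf k}={\bf k}_1$, as computed just before the statement, from the diagonal paths $\gamma_{ii}$.

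The third step, which I expect to be the main obstacle, is to show that each term is exactly the Weyl-ordered monomial $Z^{n{\bf k}^\gamma-{\bf k}}$ with coefficient $1$, with no stray power of $\omega$. I would handle this with the reflection $*$. The corner arc $\alpha(v_1)_{ij}$ is reflection invariant, and ${\rm tr}_{\mathbb P_3}$ is reflection invariant (cf.\ Lemma~\ref{lem-reflection} and \eqref{def-eq-ref-torus}). Hence every coefficient in the monomial basis is $*$-invariant. It remains to check that the coefficient of each distinct monomial is a signed power of $\omega$ that is forced to be $1$. This reduces to comparing a single extreme path, say $\gamma_{ii}$ or the path hugging $e_3$, with the known value for that path.

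Finally, I would record the convention translation: $Q_\lambda=-\tfrac12\overline{\mathsf Q}_\lambda$ (Remark~\ref{rem-Q-LY}) and $\omega^{1/2}=\hat q^{-1}$. These turn the formula of \cite[Theorem~10.5]{LY23} into the form \eqref{eq-trace-P3-arc}. Because the Weyl-ordered monomials are invariant under these substitutions, the translation does not change the shape of the formula.
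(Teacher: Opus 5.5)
The paper gives no argument of its own here. The statement is imported from \cite[Theorem~10.5]{LY23}, and the only work is the dictionary in your last paragraph: reversed arrows and $Q_\lambda=-\tfrac12\overline{\mathsf Q}_\lambda$ from Remark~\ref{rem-Q-LY}, plus $\omega^{1/2}\leftrightarrow\hat q^{-1}$. That paragraph alone justifies the statement at the level of the paper.

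Your Steps 1--3, however, are not an independent proof. Step 1 asserts that ${\rm tr}_{\mathbb P_3}(\alpha(v_1)_{ij})$ is, by construction, a normalized $(i,j)$ coefficient of a product of Schrader--Shapiro snake matrices read along $\mathcal N(\mathbb P_3,v_1)$. Nothing available here supports this. The paper characterizes ${\rm tr}_{\mathbb P_3}$ only through the $\mathcal A$-version, i.e.\ by ${\rm tr}_{\mathbb P_3}(\mathsf{g}_v)=Z^{K_{\mathbb P_3}(v,*)}$ on the quantum torus frame $\{\mathsf{g}_v\}$ (Theorem~\ref{thm-trace-A}, Lemma~\ref{lem-basic-lem}(d)). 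Identifying the trace of a corner arc with a path sum over the network is exactly the content of the theorem, so Steps 1--2 assume what is to be proved. Closing this needs a real bridge. One option is to show that the network assignment extends to an algebra homomorphism on $\overline{\mathscr S}_\omega(\mathbb P_3)$ and agrees with ${\rm tr}_{\mathbb P_3}$ on every $\mathsf{g}_v$; that is a quantum-minor, Lindstr\"om--Gessel--Viennot type computation for the webs $\mathsf{g}_v$. Another is to write $\alpha(v_1)_{ij}$ times a $\mathsf{g}$-monomial as a polynomial in the $\mathsf{g}_v$ and compute its image.

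Step 3 is also weaker than you claim. Reflection invariance of ${\rm tr}_{\mathbb P_3}$ is true, but it is not Lemma~\ref{lem-reflection}, which concerns $\mathbb S_e$. It follows instead from $*(\mathsf{g}_v)=\mathsf{g}_v$, $*(Z^{\bf t})=Z^{\bf t}$ and the frame property. It makes every coefficient in the Weyl monomial basis invariant under $\omega^{1/2}\mapsto\omega^{-1/2}$. For a coefficient of the form $\pm\omega^{m/2}$ this forces $m=0$, but it leaves the sign undetermined. Checking one extreme path then fixes only that path's coefficient. You would also need every path term to carry one common scalar factor, and that again comes only from the unproven structure in Step 1.
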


\def\Pt{\mathbb P_3}

We use $\mathcal D_{\Pt}$ to denote the $\mathcal X$-seed $(\Gamma_{\Pt},(X_v)_{v\in V_{\Pt}})$. 
Recall that, for any mutable vertex $k$, we defined $\mathcal D_{\Pt}^{(k)}$ (see \eqref{eq-def-wD}) and $\mathcal Z_{\omega}^{\rm bl}(\mathcal D_{\Pt}^{(k)})$ (see \eqref{def-Z-D-v-lambda}). 
Note that $\mathcal D_{\Pt}=\mathcal D_{\Pt}^{(k,k)}$ and $\mathcal Z_{\omega}^{\rm bl}(\mathcal D_{\Pt})=\mathcal Z_{\omega}^{\rm bl}(\mathcal D_{\Pt}^{(k,k)})$, since 
$\mu_k(\mu_k(H_{\Pt}))=H_{\Pt}$ (Lemma~\ref{lem-mutation-H}(b)).
Therefore, by Lemma~\ref{lem-rest-iso-to-balanced}, there exists a skew-field isomorphism
\[
\nu_{k}^{\omega}\colon \Fr(
      \mathcal{Z}^{\rm bl}_\omega(\mathcal{D}_{\Pt}))\;\longrightarrow\;
      \Fr(
      \mathcal{Z}^{\rm bl}_\omega(\mathcal{D}_{\mathbb P_3}^{(k)})),
\]
which extends the map
\[
\mu_k^q\colon \Fr(\mathcal{X}_q(\mathcal{D}_{\Pt}))\;\longrightarrow\;    \Fr(\mathcal{X}_q(\mathcal{D}_{\lambda}^{(k)})),
\]
as established in Lemma~\ref{lem:nu_extends_mu}.

\begin{lemma}\label{lem-key-mutation-polynomial-P3}
    For any element $\beta\in \overline{\cS}_\omega(\mathbb P_3)$, we have 
    $$\nu_k^{\omega}({\rm tr}_{\mathbb P_3}(\beta))\in 
      \mathcal{Z}^{\rm bl}_\omega(\mathcal{D}_{\mathbb P_3}^{(k)}).$$
\end{lemma}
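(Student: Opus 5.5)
We need to show that for $\beta\in\overline{\cS}_\omega(\mathbb P_3)$, $\nu_k^\omega(\mathrm{tr}_{\mathbb P_3}(\beta))$ is a Laurent polynomial, i.e.\ lies in $\mathcal Z^{\rm bl}_\omega(\mathcal D^{(k)}_{\mathbb P_3})$, and not just in its skew-field of fractions.

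Since $\nu_k^\omega$ is an algebra homomorphism and $\mathcal Z^{\rm bl}_\omega(\mathcal D^{(k)}_{\mathbb P_3})$ is a subalgebra, it suffices to check the claim on a generating set of $\overline{\cS}_\omega(\mathbb P_3)$. By Theorem~\ref{thm.quantum_trace}(c) and Theorem~\ref{thm-trace-A}, $\overline{\cS}_\omega(\mathbb P_3)$ is generated by the stated corner arcs $\alpha(v_a)_{ij}$, $a=1,2,3$. This is the standard fact (from \cite{LY23,LS21}) that stated arcs generate the reduced skein algebra of a polygon. The $\mathbb Z_3$ rotational symmetry of $\mathbb P_3$ permutes the corners and preserves the quiver. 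Using it, I reduce to corner arcs at $v_1$, at the cost of letting $k$ range over all interior vertices.

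For a generator I use Theorem~\ref{lem-trace-image-cornerarc-P3}:
\[
\mathrm{tr}_{\mathbb P_3}(\alpha(v_1)_{ij})=\sum_{\gamma}Z^{n{\bf k}^\gamma-{\bf k}} .
\]
Write $\nu_k^\omega=\nu_k^{\sharp\omega}\circ\nu_k'$. The map $\nu_k'$ is a monomial map and sends balanced monomials to balanced monomials (Lemma~\ref{lem-rest-iso-to-balanced}), so the only danger is the factor $F^q(X_k,m)$ with $m<0$ coming from $\nu_k^{\sharp\omega}$. Concretely, $\nu_k^\omega(Z^{\bf t})=Z^{{\bf t}'}F^q(X_k,m_{\bf t})$, where $m_{\bf t}=\tfrac1n\sum_v Q(k,v)t_v$. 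Since $n{\bf k}^\gamma-{\bf k}$ differs from $-{\bf k}$ by a multiple of $n$, we get $m_\gamma=\sum_v Q(k,v)k^\gamma_v+c$, with $c$ independent of $\gamma$. The paths $\gamma$ can be grouped according to how they pass around the interior vertex $k$. At $k$ the network has a small hexagon of local moves: a path either avoids the neighbourhood of $k$ or passes on one of two sides of it, and swapping sides changes ${\bf k}^\gamma$ by ${\bf e}_k$ plus the local neighbours. I will set up an involution-like pairing of paths $\gamma\leftrightarrow\gamma'$ that differ only by flipping around the face containing $k$; in each pair ${\bf k}^{\gamma'}={\bf k}^\gamma+{\bf e}_k$. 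Each pair then contributes $Z^{\bf t}(1+q^{\pm1}X_k)$ up to Weyl normalization. After applying $\nu_k^{\sharp\omega}$, the denominator $(1+q^{-1}X_k)^{-1}$ produced by $m=-1$ is exactly cancelled by this binomial factor. Unpaired paths should have $m_\gamma\ge0$, so they contribute polynomially.

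The main obstacle is this combinatorial verification. I have to check that for each interior $k$ and each corner arc, $|m_\gamma|\le1$, and that the negative-$m$ paths are exactly those paired with a partner whose monomial differs by $X_k$ with the correct $q$-power. This requires understanding the local shape of $\mathcal N(\mathbb P_3,v_1)$ around each interior small vertex. I would first verify it by hand for $n=2,3$, then argue via the local rhombus structure of Figure~\ref{dualquiver}.

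An alternative route, to try if the pairing becomes messy, is cluster-theoretic. Regard $\mathrm{tr}^A(\beta)$ inside $\mathcal A_\omega(\mathbb P_3)$ via Proposition~\ref{prop-comp}. Then the claim becomes that $\mu_{k,A}^{-1}$ of an element of the positive-generated algebra is Laurent in the mutated seed. This would follow if each $\mathrm{tr}^A(\alpha_{ij})$ lies in the upper bound at the initial seed. That in turn can be checked using the known $\mathbb P_3$ cluster structure of \cite{schrader2017continuous}, where corner arcs are matrix coefficients of $\mathcal O_q(SL_n)$-type elements and hence lie in the upper cluster algebra.
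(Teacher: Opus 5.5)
Your reduction matches the paper's. You treat the generators $\alpha(v_1)_{ij}$, use Theorem~\ref{lem-trace-image-cornerarc-P3}, split $\nu_k^\omega=\nu_k^{\sharp\omega}\circ\nu_k'$, and observe that the only possible non-Laurent contribution is a factor $F^q(X_k',m)$ with $m<0$. (Generation by corner arcs is \cite[Theorem~6.1(c)]{LY23}; Theorems~\ref{thm.quantum_trace}(c) and \ref{thm-trace-A} only give injectivity.)

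\textbf{The gap.} The decisive claim is that these denominators cancel in $\nu_k^\omega\bigl(\sum_\gamma Z^{n\mathbf k^\gamma-\mathbf k}\bigr)$. You leave this as ``the main obstacle'', and $|m_\gamma|\le 1$ and ``unpaired paths have $m_\gamma\ge 0$'' are stated as expectations. So as written the lemma is not proved.

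\textbf{How the paper closes it.} The paper does no local combinatorics of its own. Since $\mathbf k=\mathbf k_1$ satisfies $\sum_u\mathbf k(u)Q_{\mathbb P_3}(u,k)=0$ for interior $k$, and $\mathbf k^\gamma$ takes $\gamma$-independent values on boundary vertices, it writes $\mathrm{tr}(\alpha(v_1)_{ij})=\omega^{m_{ij}}Z^{-\mathbf k}\sum_\gamma X^{\mathbf k^\gamma}$. Then $\nu_k^\omega(Z^{-\mathbf k})$ is a single balanced monomial, because by Lemma~\ref{lem-commute} its $F^q$-exponent is $0$. On $\mathcal X_q$ the map $\nu_k^\omega$ is $\mu_k^q$, and $\mu_k^q\bigl(\sum_\gamma X^{\mathbf k^\gamma}\bigr)$ is Laurent by \cite[Proposition~4.2]{schrader2017continuous}. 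For the same reason your constant $c$ is actually $0$.

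\textbf{Completing your pairing.} Your pairing is essentially a hands-on proof of that proposition. It can be completed, but the check is more delicate than your sketch suggests.
\begin{itemize}
\item The face of $\mathcal N(\mathbb P_3,v_1)$ around an interior $k$ is a hexagon of dual vertices with one source and one sink, joined by two directed arcs of three edges each. A path meets it in at most one contiguous sub-arc, entering and leaving through outward edges.
\item Paths through a full arc cross three spokes at $k$ and have $|m|=1$. They pair with $\mathbf k^{\gamma'}=\mathbf k^\gamma+\mathbf e_k$, and Weyl ordering forces the pair to be $Z^{\mathbf t}(1+qX_k)$. This does cancel $(1+q^{-1}X_k')^{-1}$.
\item Paths through a two-edge sub-arc have $m=0$.
\item Paths through a one-edge sub-arc cross a single spoke, are unpaired, and also have $|m|=1$, with the \emph{opposite} sign to the full-arc paths. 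For $n=3$ the path from $2$ to $1$ is of this type.
\end{itemize}
So everything hinges on one sign. Using the actual orientation of $Q_{\mathbb P_3}$ and the left-hand convention for $\mathbf k^\gamma$, you must show that the full-arc paths are the negative ones. The exponent must be computed with $\mu_k(Q_{\mathbb P_3})$, which equals $-\tfrac1n\sum_vQ_{\mathbb P_3}(k,v)t_v$. This verification is the heart of the lemma and is missing.

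Your alternative route is circular as stated. Membership in the upper bound at the initial seed already contains the required inclusion in the torus of the mutated seed, so it needs the same input from \cite{schrader2017continuous}.
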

\begin{proof}
 \cite[Theorem~6.1(c)]{LY23} implies that 
$\overline{\cS}_\omega(\mathbb P_3)$ is generated by 
\[
\{\alpha(v)_{ij}\mid 1\leq j\leq i\leq n,\; v=v_1,v_2,v_3\}.
\]
Since any vertex of $\mathbb P_3$ may be chosen as $v_1$, it suffices to show that 
\[
\nu_k^{\omega}\bigl({\rm tr}_{\mathbb P_3}(\alpha(v_1)_{ij})\bigr)\in 
      \mathcal{Z}^{\rm bl}_\omega(\mathcal{D}_{\mathbb P_3}^{(k)})
      \quad \text{for } 1\leq j\leq i\leq n.
\]

By Theorem~\ref{lem-trace-image-cornerarc-P3}, we have
\[
{\rm tr}_{\mathbb P_3}(\alpha(v_1)_{ij})
=\sum_{\gamma\in\mathsf{P}(\mathbb P_3,v_1,i,j)}
Z^{n{\bf k}^\gamma -{\bf k}}.
\]
It is straightforward to check that
\begin{align}\label{eq-commute-Zk-interior}
\sum_{u\in V_{\mathbb P_3}}{\bf k}(u)\, Q_{\mathbb P_3}(u,u')=0,
\quad\text{and}\quad
Z^{{\bf k}} Z_{u'} = Z_{u'} Z^{{\bf k}},
\end{align}
whenever $u'\in V_{\mathbb P_3}$ lies in the interior of $\mathbb P_3$.
Moreover, for any two paths $\gamma,\gamma'\in \mathsf{P}(\mathbb P_3,v_1,i,j)$ and any $u\in V_{\mathbb P_3}$ contained in the boundary, we have 
${\bf k}^\gamma(u) = {\bf k}^{\gamma'}(u)$. 
Thus
\[
Z^{n{\bf k}^\gamma -{\bf k}}
=\omega^{m_{ij}}Z^{-{\bf k}} Z^{n{\bf k}^\gamma},
\]
for some $m_{ij}\in\mathbb Z$ independent of $\gamma$. Hence,
\[
{\rm tr}_{\mathbb P_3}(\alpha(v_1)_{ij})
=\omega^{m_{ij}} Z^{-{\bf k}}
\sum_{\gamma\in\mathsf{P}(\mathbb P_3,v_1,i,j)}
Z^{n{\bf k}^\gamma}.
\]

It follows that
\begin{align*}
    \nu_k^{\omega}\bigl({\rm tr}_{\mathbb P_3}(\alpha(v_1)_{ij})\bigr) 
    &= \omega^{m_{ij}}  \nu_k^{\omega}(Z^{-{\bf k}})\,
    \nu_k^{\omega}\!\left(\sum_{\gamma\in\mathsf{P}(\mathbb P_3,v_1,i,j)}
Z^{n{\bf k}^\gamma}\right)\\
&= \omega^{m_{ij}}  \nu_k^{\omega}(Z^{-{\bf k}})\,
    \mu_k^{q}\!\left(\sum_{\gamma\in\mathsf{P}(\mathbb P_3,v_1,i,j)}
X^{{\bf k}^\gamma}\right).
\end{align*}

By \cite[Proposition~4.2]{schrader2017continuous}, we know that 
\[
\mu_k^{q}\!\left(\sum_{\gamma\in\mathsf{P}(\mathbb P_3,v_1,i,j)}
X^{{\bf k}^\gamma}\right)\in \mathcal{Z}^{\rm bl}_\omega(\mathcal{D}_{\mathbb P_3}^{(k)}).
\]
On the other hand, by definition of $\nu_k'$, we have
\[
\nu_k'(Z^{-{\bf k}})=(Z')^{{\bf k}'}
\quad\text{for some }{\bf k}'\in \mathbb Z^{V_\lambda}\qquad \text{(see \eqref{eq-quantum-mutation_Z})}.
\]
Then Lemma~\ref{lem-commute} and \eqref{eq-commute-Zk-interior} imply
\begin{align}\label{eq-proof-P3-zero-Q}
\sum_{u\in V_{\mathbb P_3}}{\bf k}'(u) Q_{\mathbb P_3}'(u,k)=0,
\end{align}
where $Q_{\mathbb P_3}'=\nu_k(Q_{\mathbb P_3})$.
Recall that $\nu_k^{\omega}= \nu_k^{\omega\sharp}\circ \nu_k'$ (see \eqref{eq-def-quantum-mutation-X}).
Therefore,
\[
\nu_k^{\omega}(Z^{-{\bf k}})
=\nu_k^{\omega\sharp}(\nu_k'(Z^{-{\bf k}}))
=\nu_k^{\omega\sharp}\bigl((Z')^{{\bf k}'}\bigr)
= (Z')^{{\bf k}'}
\;\in\; \mathcal{Z}^{\rm bl}_\omega(\mathcal{D}_{\mathbb P_3}^{(k)}),
\]
where the last equality comes from \eqref{eq-proof-P3-zero-Q} and Lemma~\ref{lem.nu_sharp_well-defined}.

This completes the proof.

\end{proof}

We now state a result for $\mathbb P_4$ analogous to Lemma~\ref{lem-key-mutation-polynomial-P3}.  
Note that $\mathbb P_4$ admits two triangulations $\lambda_4,\lambda_4'$; see Figure~\ref{P4_two_triangulations}.

\begin{figure}[h]
    \centering \includegraphics{P4_two_triangulations.pdf}
    \caption{Two triangulations $\lambda_4$ and $\lambda_4'$ of $\mathbb{P}_4$.}\label{P4_two_triangulations}
\end{figure}

Consider the three corner arcs $a,b,c$ in $\mathbb P_4$, shown in Figure~\ref{P4_corner_arcs}. 
For $i,j\in\{1,2,\ldots,n\}$, let $a_{ij}$ denote the stated web diagram whose starting point (resp.\ endpoint) of $a$ is labeled by $i$ (resp.\ $j$). 
Similarly, we define $b_{ij}$ and $c_{ij}$. 
We then have the following.

\begin{figure}[h]
    \centering
    \input{P4_corner_arcs.pdf_tex}
    \caption{Three corner arcs $a,b,c$ in $\mathbb P_4$. The four edges of $\mathbb P_4$ are labeled by $e_1,e_2,e_4,e_5$.}\label{P4_corner_arcs}
\end{figure}

\begin{lemma}\cite{LY23,KimWang}\label{lem-satuared-rd}
    The reduced stated skein algebra $\rdP$ is generated by
    $a_{ij},b_{ij},c_{ij}$ for $i,j \in \{1,\ldots,n\}$ with $i\geq j$.
\end{lemma}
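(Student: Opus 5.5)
The plan is to reduce in three stages: first to stated arcs, then to arcs with a fixed orientation, and finally to the three corner arcs $a,b,c$ with admissible states.

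\emph{Stage 1: reduction to stated arcs.} I would use the structure theory of stated skein algebras of polygons from \cite{LS21,LY23}. Every stated $n$-web in $\mathbb P_4\times(-1,1)$ is, modulo relations \eqref{w.cross}--\eqref{wzh.eight}, an $R$-linear combination of products of stated arcs, meaning simple arcs joining two distinct boundary edges. To see this:
\begin{enumerate}
\item Crossings are resolved by \eqref{w.cross} and \eqref{wzh.eight}.
\item Closed components in the disk are removed by \eqref{w.unknot}.
\item Each $n$-valent vertex is isotoped next to a boundary edge, where \eqref{wzh.five} replaces it by $n$ parallel stated strands.
\end{enumerate}
Arcs with both endpoints on the same edge are removed by \eqref{wzh.six}. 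The arcs left in $\mathbb P_4$ are the four corner arcs, at the four punctures, and the two arcs joining opposite edges.

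\emph{Stage 2: orientations and states.} Since we work in $\overline{\cS}_\omega(\mathbb P_4)$, every bad arc vanishes, so corner arcs oriented as in Figure~\ref{P4_corner_arcs} only survive with states $i\ge j$. An arc with the opposite orientation I would rewrite through the standard duality in ${\rm SL}_n$ skein theory. Take $n-1$ parallel copies of the arc, join them at an $n$-valent vertex, and push the vertex to the boundary using \eqref{wzh.five} together with \eqref{wzh.seven}. This writes the reversed arc as a polynomial in arcs of the fixed orientation, and the same identity is used in \cite[Theorem~6.1]{LY23} to produce generators of $\overline{\cS}_\omega(\mathbb P_3)$.

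\emph{Stage 3: reduction to $a,b,c$.} I expect this to be the main obstacle. The remaining arcs are the fourth corner arc $d$ and the two diagonal arcs, and each must be expressed through $a,b,c$. I would take a product such as $a_{ik}c_{kj}$, with the two arcs adjacent along a common edge, and sum over the middle state. Applying \eqref{wzh.seven} in reverse fuses the pair into the arc crossing the quadrilateral, up to a lower-order correction produced by the height exchange \eqref{wzh.eight}. I would then argue by induction on a filtration, for example the number of arcs together with the lexicographic order on states, so that these correction terms always lie in the algebra generated by $a,b,c$. The delicate point is to check, using \eqref{wzh.eight}, that the terms involving bad arcs vanish and that the leading term is invertible, namely a power of $q$ times the desired arc. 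I would confirm the triangularity after applying the injective quantum trace ${\rm tr}_{\lambda_4}$ of Theorem~\ref{thm.quantum_trace}(c): there the leading Laurent monomials can be compared through the path formula of Theorem~\ref{lem-trace-image-cornerarc-P3}, after splitting along the diagonal.
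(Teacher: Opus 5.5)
Your argument is correct in substance, but it is a longer, hands-on route, while the paper gives no proof and only cites \cite{LY23,KimWang}. In the paper's own framework the lemma is a two-line consequence of Lemma~\ref{lem-saturated} (\cite[Theorem~6.2(3)]{LY23}). The three corner arcs $a,b,c$ go around three different punctures, so cutting $\mathbb P_4$ along them leaves four regions, each containing exactly one puncture, and no further arc can be added. Hence $\{a,b,c\}$ is a saturated system. Therefore $a_{ij},b_{ij},c_{ij}$ for all $1\le i,j\le n$ generate $\cS_\omega(\mathbb P_4)$, and in $\overline{\cS}_\omega(\mathbb P_4)$ those with $i<j$ are bad arcs and vanish. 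Your Stages 1--3 reprove that generation theorem by hand for $\mathbb P_4$. Your route gives a self-contained argument with explicit formulas; the citation gives brevity and works for any surface with boundary.

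If you keep your route, Stage~3 is not an obstacle and needs no filtration, no leading-term analysis and no quantum trace. Isotope the single diagonal arc so that it runs along the edge shared by two of the three available corners. Then cut it with \eqref{wzh.seven}, or its opposite-height variant, as the paper itself does in the proof of Lemma~\ref{lem-essential-arc}. The two pieces are corner arcs that are disjoint in the diagram and share only the cut edge. So the result is exactly a sum over $k$, weighted by $\mathbbm{c}_{\bar k}^{-1}$, of products of the two corner arcs, stacked in the order dictated by the heights. The states on the cut edge are complementary, $k$ and $\bar k$, not $k,k$ as in your $a_{ik}c_{kj}$.

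There are no correction terms, and no leading term to control: the diagonal arc is the whole weighted sum. In any case \eqref{wzh.eight} is a relation for two strands of the same orientation, so it would not apply to the incoming/outgoing pair on the cut edge. You do not treat the fourth corner arc explicitly. Handle it by pushing it along the two edges lying between the three available punctures and cutting twice, which gives triple products of corner arcs. Pieces that come out with the wrong orientation are converted by your Stage~2. So appealing to injectivity of ${\rm tr}_{\lambda_4}$ brings in a much deeper result than needed.

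Finally, in Stage~1, \eqref{w.cross} only trades a crossing for its mirror modulo a smoothing, and \eqref{w.unknot} only removes trivial circles. You therefore need the usual induction on crossings, or you can simply cut closed components with \eqref{wzh.seven}.
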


Recall that the $n$-triangulation of $\lambda_4$ in $\mathbb P_4$ is the subdivision of $\mathbb P_4$ into $2n^2$ small triangles.
Consider three graphs $\mathcal N_a$, $\mathcal N_b$, and $\mathcal N_c$ dual to this $n$-triangulation, and orient their edges as in Figure~\ref{network1}, where the orientations are drawn in blue. 
Each oriented graph consists of $4n$ vertices on the boundary of $\mathbb{P}_4$ ($2n$ of them are labeled as in Figure~\ref{network1}), $2n^2-n$ interior vertices, and $3n^2+2n$ oriented edges.  
Note that, disregarding edge orientations, these three graphs are identical: each is the dual graph of the $n$-triangulation of $\lambda_4$.


\begin{figure}
	\centering
    \includegraphics[width=15cm]{network-P4.pdf}
	\caption{For $n=4$, the left, middle, and right dual graphs correspond to $\mathcal N_a$, $\mathcal N_b$, and $\mathcal N_c$, respectively.}\label{network1}
\end{figure}

For $i,j\in\{1,2,\ldots,n\}$ and $x\in\{a,b,c\}$, let 
$\mathsf P(x,\lambda_4,i,j)$ denote the set of oriented paths in $\mathcal N_x$ from $x_i$ to $x_j'$, where a path is a concatenation of edges in $\mathcal N_x$ respecting their orientations.  
Note that $\mathsf P(x,\lambda_4,i,j)=\emptyset$ if $i<j$, and $\mathsf P(x,\lambda_4,i,i)$ consists of a unique path $\gamma_{ii}^x$.

For each $\gamma\in \mathsf P(x,\lambda_4,i,j)$, define a vector ${\bf k}^\gamma_x=(k^\gamma_v)_{v\in V_{\lambda_{4}}}\in\mathbb Z^{V_{\lambda_{4}}}$ by
\begin{align}
 k^\gamma_v =
 \begin{cases}
     1 & \text{if $v$ lies to the left of $\gamma$,}\\
     0 & \text{otherwise}.
 \end{cases}
\end{align}

For each $x\in\{a,b,c\}$, set
\begin{align}
    {\bf k}_x=
    \sum_{i=1}^n \;\sum_{\gamma\in \mathsf P(x,\lambda_{4},i,i)} 
    {\bf k}^\gamma_x \;\in \mathbb{Z}^{V_{\lambda_{4}}}.
\end{align}
Note that 
${\bf k}_x = \sum_{i=1}^n {\bf k}^{\gamma_{ii}^x}_x = {\bf k}^{\gamma_{22}^x}_x + \cdots + {\bf k}^{\gamma_{nn}^x}_x$, since ${\bf k}^{\gamma_{11}^x}_x=0$.

\begin{lemma}\cite{KimWang}\label{lem-quantum-trace-P4}
     For any $i,j\in\{1,2,\ldots,n\}$ and $x\in\{a,b,c\}$, we have 
     \[
     {\rm tr}_{\lambda_4}(x_{ij}) = \sum_{\gamma\in\mathsf{P}(x,\lambda_{4},i,j)}
     Z^{\,n{\bf k}^\gamma_x -{\bf k}_x}.
     \]
\end{lemma}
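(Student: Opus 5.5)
The plan is to reduce the computation on $\mathbb P_4$ to the triangle formula of Theorem~\ref{lem-trace-image-cornerarc-P3}, using the splitting homomorphism and its compatibility with the quantum trace (Theorem~\ref{thm-trace-cut}). Let $e$ be the interior diagonal of $\lambda_4$. Cutting along $e$ gives two triangles $\tau_1,\tau_2$, and $\mathcal S_e$ is injective. It therefore suffices to show
\[
\mathcal S_e\bigl({\rm tr}_{\lambda_4}(x_{ij})\bigr)=\mathcal S_e\Bigl(\sum_{\gamma\in\mathsf P(x,\lambda_4,i,j)} Z^{\,n{\bf k}^\gamma_x-{\bf k}_x}\Bigr),
\]
and by Theorem~\ref{thm-trace-cut} the left side equals $({\rm tr}_{\tau_1}\otimes{\rm tr}_{\tau_2})(\mathbb S_e(x_{ij}))$.

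The three arcs behave differently under the cut.

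\textbf{The arc $a$ (and symmetrically $c$).} If it sits at a corner of $\lambda_4$ not crossing $e$, it lies in a single triangle. Then $\mathbb S_e(x_{ij})$ is a single corner arc in $\tau_1$, and Theorem~\ref{lem-trace-image-cornerarc-P3} applies directly. One then checks that the network $\mathcal N_x$, restricted to the relevant triangle, is $\mathcal N(\mathbb P_3,v)$ with paths confined there. One also checks that ${\bf k}^\gamma_x$ and ${\bf k}_x$ agree with the triangle vectors after applying $\mathcal S_e$, which doubles the vertices on $e$ with the same coefficient on both copies.

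\textbf{The arc $b$.} It crosses $e$ once, so by \eqref{eq-def-splitting}
\[
\mathbb S_e(b_{ij})=\sum_{s=1}^n \alpha_{is}\,\alpha'_{sj},
\]
a product of corner arcs in $\tau_1$ and $\tau_2$, with the appropriate height order. Applying the triangle formula to each factor turns the right side into a double sum over pairs $(\gamma_1,\gamma_2)$ of triangle paths meeting $e$ at label $s$. This sum should match $\mathsf P(b,\lambda_4,i,j)$ bijectively, by concatenation at the crossing point on $e$, because $\mathcal N_b$ is glued from the two triangle networks with compatible orientations along $e$.

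Under this bijection two facts must hold:
\begin{enumerate}
\item $\mathcal S_e(Z^{n{\bf k}^\gamma_b})$ equals the Weyl-ordered product of $Z^{n{\bf k}^{\gamma_1}}$ and $Z^{n{\bf k}^{\gamma_2}}$. This holds because the left-of-path indicator on $e$-vertices is the same seen from either triangle.
\item The normalizing vectors satisfy ${\bf k}_b\mapsto {\bf k}^{(1)}+{\bf k}^{(2)}$ under $\mathcal S_e$, where ${\bf k}^{(1)},{\bf k}^{(2)}$ are the ${\bf k}_1$-type vectors of the two triangles attached to the relevant corners.
\end{enumerate}

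The main obstacle is the power of $\omega$. The Weyl normalizations in the two triangle factors, multiplied in the order dictated by the heights, must combine into exactly the single Weyl-ordered monomial $Z^{n{\bf k}^\gamma_b-{\bf k}_b}$, with no residual $\omega$-factor depending on $(\gamma_1,\gamma_2)$. To handle this, compute the commutator $2\,{\bf u}_1 Q_{\lambda_e}{\bf u}_2^T$ between the two triangle exponents. The only contributions come from pairs of vertices on the two copies of $e$, and these should vanish because the two copies lie in different triangles, so $Q_{\lambda_e}$ has no entries between them. If the residual is a nonzero constant independent of the path, I would instead absorb it using the reflection invariance of $\mathbb S_e$ (Lemma~\ref{lem-reflection}). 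Since both sides are reflection-invariant, the power of $\omega$ must then be zero.
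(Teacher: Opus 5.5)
Your argument is correct, and it is essentially how this formula is obtained in the source the paper cites; the paper gives no proof of its own, only the reference to Kim--Wang. There too one splits along the diagonal, applies the L{\^e}--Yu triangle formula to each piece, and glues paths across $e$; the labels on $e'$ and $e''$ match because in both triangles label $1$ sits nearest the common corner.

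The one step to tidy is the $\omega$-power. $Q_{\lambda_e}$ is block-diagonal with respect to $\tau_1\sqcup\tau_2$, so monomials supported in different triangles commute and $Z^{u_1}Z^{u_2}=Z^{u_1+u_2}$ exactly. There are no cross terms at all, not merely none between the two copies of $e$, so the reflection-invariance fallback is unnecessary.
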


We use $\mathcal D_{\lambda_{4}}$ to denote the $\mathcal X$-seed $(\Gamma_{\lambda_{4}},(X_v)_{v\in V_{\lambda_{4}}})$ associated to a triangulation $\lambda_{4}$ of $\mathbb P_4$. 
Recall from \S\ref{sec-compatibility} that, for any mutable vertex $k$, we defined $\mathcal D_{\lambda_{4}}^{(k)}$ and $\mathcal Z_{\omega}^{\rm bl}(\mathcal D_{\lambda_{4}}^{(k)})$. 
Note that $\mathcal D_{\lambda_{4}}=\mathcal D_{\lambda_{4}}^{(k,k)}$ and $\mathcal Z_{\omega}^{\rm bl}(\mathcal D_{\lambda_{4}})=\mathcal Z_{\omega}^{\rm bl}(\mathcal D_{\lambda_{4}}^{(k,k)})$.
By Lemma~\ref{lem-rest-iso-to-balanced}, there exists a skew-field isomorphism
\[
\nu_{k}^{\omega}\colon \Fr (
      \mathcal{Z}^{\rm bl}_\omega(\mathcal{D}_{\lambda_{4}}))\longrightarrow\;
      \Fr(
      \mathcal{Z}^{\rm bl}_\omega(\mathcal{D}_{\lambda_{4}}^{(k)})).
\]

\begin{lemma}\label{lem-key-mutation-polynomial-P4}
    For any element $\beta\in \overline{\cS}_\omega(\mathbb P_4)$, we have 
    \[
    \nu_k^{\omega}({\rm tr}_{\lambda_{4}}(\beta))\in 
      \mathcal{Z}^{\rm bl}_\omega(\mathcal{D}_{\lambda_{4}}^{(k)}).
    \]
\end{lemma}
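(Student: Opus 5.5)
The plan is to copy the proof of Lemma~\ref{lem-key-mutation-polynomial-P3}, replacing the corner arc formula of Theorem~\ref{lem-trace-image-cornerarc-P3} by Lemma~\ref{lem-quantum-trace-P4}. First, ${\rm tr}_{\lambda_4}$ and $\nu_k^\omega$ are algebra homomorphisms and $\mathcal{Z}^{\rm bl}_\omega(\mathcal{D}_{\lambda_4}^{(k)})$ is a subalgebra. Hence, by Lemma~\ref{lem-satuared-rd}, it suffices to show
\[
\nu_k^{\omega}\bigl({\rm tr}_{\lambda_4}(x_{ij})\bigr)\in \mathcal{Z}^{\rm bl}_\omega(\mathcal{D}_{\lambda_4}^{(k)})
\]
for $x\in\{a,b,c\}$ and $i\geq j$. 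By Lemma~\ref{lem-quantum-trace-P4},
\[
{\rm tr}_{\lambda_4}(x_{ij})=\sum_{\gamma\in\mathsf P(x,\lambda_4,i,j)}Z^{n{\bf k}^\gamma_x-{\bf k}_x}.
\]

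The second step is to factor out the common part. I would check two facts:
\begin{itemize}
\item For every interior vertex $u'$ of $\mathbb P_4$, $\sum_u {\bf k}_x(u)Q_{\lambda_4}(u,u')=0$. This is because ${\bf k}_x$ is a sum of the $\gamma_{ii}^x$ vectors, which restrict on each triangle to a coordinate function ${\bf k}_\ell$ (or $0$), and these balance at interior vertices.
\item For all $\gamma\in\mathsf P(x,\lambda_4,i,j)$, the vectors ${\bf k}^\gamma_x$ agree on boundary vertices, since the endpoints of $\gamma$ are fixed.
\end{itemize}
From these, $Z^{n{\bf k}^\gamma_x-{\bf k}_x}=\omega^{m}Z^{-{\bf k}_x}Z^{n{\bf k}^\gamma_x}$ with $m$ independent of $\gamma$. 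Therefore
\[
{\rm tr}_{\lambda_4}(x_{ij})=\omega^m Z^{-{\bf k}_x}\sum_\gamma X^{{\bf k}^\gamma_x}.
\]

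Next, apply $\nu_k^\omega$. On the $X$-part, $\nu_k^\omega$ restricts to $\mu_k^q$ (Lemma~\ref{lem:nu_extends_mu}). The sum $\sum_\gamma X^{{\bf k}^\gamma_x}$ is the partition function of the network $\mathcal N_x$, which is a concatenation of two triangle networks. By the network mutation result \cite[Proposition~4.2]{schrader2017continuous}, which I would apply to $\mathcal N_x$ (if needed, after checking that $\mathcal N_x$ is a Schrader--Shapiro-type network on the quadrilateral), $\mu_k^q$ of it is a Laurent polynomial in the mutated torus. That polynomial lies in $\mathcal{X}_q(\mathcal{D}^{(k)}_{\lambda_4})\subset\mathcal{Z}^{\rm bl}_\omega(\mathcal{D}^{(k)}_{\lambda_4})$. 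For the monomial part, write $\nu_k'(Z^{-{\bf k}_x})=(Z')^{{\bf k}'}$. By Lemma~\ref{lem-commute} with $\mathcal V_0=\{k\}$ and $\mathbb F=\mathbb Z$, $\sum_u{\bf k}'(u)Q'(u,k)=0$. Lemma~\ref{lem.nu_sharp_well-defined} then gives $m=0$, so $F^q=1$ and $\nu_k^\omega(Z^{-{\bf k}_x})=(Z')^{{\bf k}'}$. Since Lemma~\ref{lem-rest-iso-to-balanced} maps $\mathcal{Z}^{\rm bl}_\omega(\mathcal{D}_{\lambda_4})$ into its mutated counterpart, this monomial is balanced, and the product lies in $\mathcal{Z}^{\rm bl}_\omega(\mathcal{D}_{\lambda_4}^{(k)})$.

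The main obstacle is the network step. An interior vertex $k$ of $\mathbb P_4$ may lie on the diagonal edge shared by the two triangles, where the quiver is glued. I must make sure the polynomiality statement of \cite{schrader2017continuous} applies to $\mathcal N_x$ at such a $k$, and not only inside a single triangle. If it does not apply directly, I would fall back on the splitting: $\mathbb P_4$ split along the diagonal is two triangles, and $\mathcal S_e$ intertwines $\mu_k^q$ with a mutation at both copies of $k$, to which the triangle result applies. Alternatively, I would verify directly that the paths through the small triangles around $k$ pair up so that the sum is divisible by $1+X_k$ (or $1+X_k^{-1}$) as needed. A secondary check is the vanishing of $\sum_u{\bf k}_x(u)Q_{\lambda_4}(u,u')$ at diagonal vertices. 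This needs the contributions from the two triangles to cancel, which I expect from the gluing rule for ${\bf k}_x$.
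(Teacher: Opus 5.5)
Your proposal is correct and matches the paper's proof, which consists of the single remark that the argument for Lemma~\ref{lem-key-mutation-polynomial-P3} goes through with Lemma~\ref{lem-quantum-trace-P4} in place of Theorem~\ref{lem-trace-image-cornerarc-P3}. One caveat: your first fallback, splitting along the diagonal, cannot handle a $k$ lying on the diagonal. There its two copies become frozen boundary vertices of the triangles, so no mutation corresponds to $\nu_k^\omega$ there; this is exactly why the paper needs a separate $\mathbb P_4$ lemma. The network step must therefore rest on applying the mutation result of \cite{schrader2017continuous} to $\mathcal N_x$ itself, or on your direct local pairing argument.
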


\begin{proof}
Using Lemma~\ref{lem-quantum-trace-P4}, the proof proceeds analogously to Lemma~\ref{lem-key-mutation-polynomial-P3}.
\end{proof}

The following proposition extends Lemmas~\ref{lem-key-mutation-polynomial-P3} and \ref{lem-key-mutation-polynomial-P4} to arbitrary triangulable pb surfaces without interior punctures, serving as a key step in the proof of Theorem~\ref{thm-main-3}.

\begin{proposition}\label{prop-key-bal-polynomial}
    Let $\fS$ be a triangulable pb surface without interior punctures, let $\lambda$ be a triangulation of $\fS$, and let $k\in V_\lambda$ be a mutable vertex. 
    For any element $\beta\in \overline{\cS}_\omega(\fS)$, we have 
    $$\nu_k^{\omega}({\rm tr}_{\lambda}(\beta))\in 
      \mathcal{Z}^{\rm bl}_\omega(\mathcal{D}_{\lambda}^{(k)}),$$
    where the isomorphism $\nu_k^{\omega} \colon \Fr (
      \mathcal{Z}^{\rm bl}_\omega(\mathcal{D}_{\lambda}))\longrightarrow\;
      \Fr(
      \mathcal{Z}^{\rm bl}_\omega(\mathcal{D}_{\lambda}^{(k)}))$ is 
      defined using the mutation $\mu_k(\mathcal{D}_{\lambda}^{(k)})=\mathcal{D}_{\lambda}^{(k)}$.
    
\end{proposition}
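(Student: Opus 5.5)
Since the mutable vertex $k$ is interior to $\fS$, I will first cut out a small neighbourhood of $k$ and reduce to Lemmas~\ref{lem-key-mutation-polynomial-P3} and \ref{lem-key-mutation-polynomial-P4}. There are two cases. If $k$ lies in the interior of a face $\tau$ of $\lambda$, let $E$ be the set of interior edges of $\lambda$ bounding $\tau$. If $k$ lies on an interior edge $e$, let $\tau_1,\tau_2$ be the two faces adjacent to $e$ (they are distinct, since self-folded triangles are excluded), and let $E$ be the set of interior edges bounding $\tau_1\cup\tau_2$, other than $e$. In both cases cut $\fS$ along all edges of $E$, using $\mathbb S_{E}=\prod_{e'\in E}\mathbb S_{e'}$; by \eqref{com-splitting-skein} the order does not matter. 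This gives $\fS'=\fS_0\sqcup\fS_1$, where $\fS_0$ is a copy of $\mathbb P_3$ or of $\mathbb P_4$ with triangulation $\lambda_4$, and $\fS_1$ carries the induced triangulation. If two edges of $\tau$ are glued to each other, I will cut only the ones that yield a disjoint $\mathbb P_3$ or $\mathbb P_4$. The degenerate situation where the quadrilateral has $c_1=c_3$ requires a little care, but the same argument with fewer cuts should apply.

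Next I compare the mutations on the two sides. Theorem~\ref{thm-trace-cut} gives $\mathcal S_E\circ {\rm tr}_\lambda={\rm tr}_{\lambda_E}\circ\mathbb S_E$, and by \eqref{cutting_homomorphism_for_Z_omega} the map $\mathcal S_E$ is injective and monomial. The vertex $k$ is not on any cut edge. Its neighbours in $\Gamma_\lambda$ lie in $\tau$ (respectively $\tau_1\cup\tau_2$), and the entries $Q_\lambda(u,k)$ agree with $Q_{\lambda_E}(u,k)$ for $u$ in $\fS_0$; a cut-edge vertex $u$ splits into $u',u''$, and only $u'\in\fS_0$ is adjacent to $k$. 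From this I will check directly that $\mathcal S_E$ intertwines the two mutations, namely $\nu_k^{\omega}\circ\mathcal S_E=\mathcal S_E^{(k)}\circ\nu_k^{\omega}$. Here $\mathcal S_E^{(k)}$ is the same monomial map with $Q$ replaced by $\mu_k(Q)$. The check is done separately on the two factors. For $\nu_k'$, the exponents $[Q(v,k)]_+$ match. For $\nu_k^{\sharp\omega}$, the integer $m=\frac1n\sum_vQ(k,v)t_v$ is preserved, because $Q(k,u)=Q(k,u')$ and $Q(k,u'')=0$.

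On the cut surface, $\mathcal Z_\omega(\fS',\lambda_E)=\mathcal Z_\omega(\fS_0)\otimes\mathcal Z_\omega(\fS_1)$, and ${\rm tr}_{\lambda_E}$ factors as ${\rm tr}_{\fS_0}\otimes{\rm tr}_{\fS_1}$. The mutation $\nu_k^\omega$ acts only on the $\fS_0$ factor. Writing $\mathbb S_E(\beta)=\sum\beta_0\otimes\beta_1$, Lemma~\ref{lem-key-mutation-polynomial-P3} or \ref{lem-key-mutation-polynomial-P4} gives $\nu_k^\omega({\rm tr}(\beta_0))\in\mathcal Z^{\rm bl}_\omega(\mathcal D_{\fS_0}^{(k)})$, and ${\rm tr}(\beta_1)$ is balanced by Theorem~\ref{thm.quantum_trace}. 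Hence $\mathcal S_E^{(k)}\bigl(\nu_k^\omega({\rm tr}_\lambda(\beta))\bigr)$ is a Laurent polynomial whose monomial exponents are all balanced for $H_{\lambda_E}^{(k)}$.

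The last step, which I expect to be the main obstacle, is descending from $\fS'$ back to $\fS$. The element $X:=\nu_k^\omega({\rm tr}_\lambda(\beta))$ a priori lies only in the skew-field $\Fr(\mathcal Z^{\rm bl}_\omega(\mathcal D^{(k)}_\lambda))$. I first need to show that $X$ is a genuine Laurent polynomial. Write $X=YP^{-1}$ with $Y,P$ in the torus. Injectivity and monomiality of $\mathcal S_E^{(k)}$, together with a leading-degree argument as in Lemma~\ref{lem-frac-subalgebra}, should then show that $X$ lies in the torus $\mathcal Z_\omega(\mathcal D_\lambda^{(k)})$. Second, I need to show that its exponents ${\bf t}$ satisfy ${\bf t}H^{(k)}_\lambda\in(n\mathbb Z)^{V_\lambda}$. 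For this I will compare ${\bf t}H^{(k)}_\lambda$ with $(\mathcal S_E{\bf t})H^{(k)}_{\lambda_E}$ entrywise. Away from cut edges the entries coincide. At a vertex $u$ on a cut edge, the column of $H$ for $u$ is the sum of the columns for $u'$ and $u''$, using Lemma~\ref{lem-matrix-HQ} and the definition \eqref{eq-def-H-lambda}. This compatibility of $H$ with cutting is exactly the balancedness criterion of Lemma~\ref{lem-balanced-H}, now in its mutated form, and it is the bookkeeping I expect to take the most care.
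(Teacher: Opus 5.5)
Your architecture matches the paper's. You cut along edges of $\lambda$ to isolate a $\mathbb P_3$ or $\mathbb P_4$ containing $k$. You intertwine $\nu_k^\omega$ with the monomial splitting map; the paper imports $\nu_k^\omega\circ\mathcal S_E=\mathcal S_E^{(k)}\circ\nu_k^\omega$ from Kim--Wang, while you check it by hand. You then apply Lemmas~\ref{lem-key-mutation-polynomial-P3}/\ref{lem-key-mutation-polynomial-P4} on the polygon factor and pull back with the leading-degree Lemma~\ref{lem-frac-subalgebra}. That the paper cuts along every edge except the one containing $k$, rather than only around $k$, is cosmetic.

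The step that fails as written is your balancedness descent. You plan to use that, for $u$ on a cut edge, column $u$ of $H^{(k)}_\lambda$ is the sum of columns $u'$ and $u''$ of $H^{(k)}_{\lambda_E}$; this is false for $n\ge 3$. Since $u$ is interior, Lemma~\ref{lem-matrix-HQ} makes its column equal to $Q^{(k)}_\lambda(\cdot,u)$, with diagonal entry $0$. But $u',u''$ are boundary vertices, so $H^{(k)}_{\lambda_E}(u',u')=H^{(k)}_{\lambda_E}(u'',u'')=1$. Moreover, the $-1$ entries from \eqref{eq-def-H-lambda} along the two new boundary edges come from arrows running in opposite directions. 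For $n=3$, with $e$ carrying $u_1,u_2$, the two sides of your comparison at $u_1$ differ by $2t_{u_1}-t_{u_2}$, which is not $0$ mod $3$ in general. Balancedness does descend---for instance by transporting exponents through $\nu_k'$ back to $\mathcal D_\lambda$, where it is a face-by-face condition untouched by cutting---but not through this identity.

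No $H$-bookkeeping is needed. Since $X\in{\rm Frac}(\mathcal Z^{\rm bl}_\omega(\mathcal D^{(k)}_\lambda))$, write $X=YP^{-1}$ with $Y,P\in\mathcal Z^{\rm bl}_\omega(\mathcal D^{(k)}_\lambda)$, not merely in the torus. Then:
\begin{itemize}
\item $\mathcal S^{(k)}_E(\mathcal Z^{\rm bl}_\omega(\mathcal D^{(k)}_\lambda))$ is a group subalgebra of $\mathcal Z_\omega(\mathcal D^{(k)}_{\lambda_E})$;
\item your polygon step puts $\mathcal S^{(k)}_E(X)$ in that torus;
\item $\mathcal S^{(k)}_E(X)\,\mathcal S^{(k)}_E(P)=\mathcal S^{(k)}_E(Y)$.
\end{itemize}
Lemma~\ref{lem-frac-subalgebra} gives $\mathcal S^{(k)}_E(X)\in \mathcal S^{(k)}_E(\mathcal Z^{\rm bl}_\omega(\mathcal D^{(k)}_\lambda))$, and injectivity of $\mathcal S^{(k)}_E$ yields $X\in\mathcal Z^{\rm bl}_\omega(\mathcal D^{(k)}_\lambda)$. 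This gives torus membership and balancedness in one application, which is how the paper concludes.

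A smaller point concerns the case where an outer edge of the quadrilateral is glued to another ($c_1=c_3$) and $n=2$. There both $u'$ and $u''$ lie in $\fS_0$ and are adjacent to $k$, so your claim that $Q(k,u'')=0$ fails. What the intertwining actually uses is sign-coherence $Q_{\lambda_E}(u',k)\,Q_{\lambda_E}(u'',k)\ge 0$, as in Lemma~\ref{lem-splitting-Q}. Your worry about two edges of $\tau$ being glued is moot, since that would be a self-folded triangle.
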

\begin{proof}
    Define a subset $E$ of $\lambda$ such that $$E=\begin{cases}
        \lambda\setminus\{e\} & \text{$k$ is contained in $e$ for some $e\in\lambda$},\\
        \lambda & \text{otherwise}.
    \end{cases}$$
The surface obtained from $\fS$ by cutting along ideal arcs in $E$ is the disjoint union of a pb surface $\fS'$ consisting of $\mathbb P_3$ and a polygon $\mathbb P$ containing $k$. Note that $\mathbb P=\mathbb P_3$ or $\mathbb P_4.$
The triangulation $\lambda$ of $\fS$ induces a triangulation $\lambda_E$ of $\fS'\sqcup \mathbb P$, which induces a triangulation $\lambda'$ for $\fS'$ and a triangulation $\lambda_{\mathbb P}$ for $\mathbb P$. 
Then $$\mathcal{Z}^{\rm bl}_\omega(\mathcal{D}_{\lambda_E})=
\mathcal{Z}^{\rm bl}_\omega(\mathcal{D}_{\lambda'})\otimes_R \mathcal{Z}^{\rm bl}_\omega(\mathcal{D}_{\lambda_{\mathbb P}})\text{ and }
\overline{\cS}_{\omega}(\fS'\cup\mathbb P)=\overline{\cS}_{\omega}(\fS')\otimes_R \overline{\cS}_{\omega}(\mathbb P).$$
Define 
\begin{align*}
    \mathcal S_E&:=\prod_{e\in E}\mathcal S_e\colon
\mathcal{Z}^{\rm bl}_\omega(\mathcal{D}_{\lambda})\rightarrow
\mathcal{Z}^{\rm bl}_\omega(\mathcal{D}_{\lambda_E}).\\
 \mathbb S_E&:=\prod_{e\in E}\mathbb S_e\colon
\overline{\cS}_{\omega}(\fS) \rightarrow
\overline{\cS}_{\omega}(\fS'\cup\mathbb P). 
\end{align*}
 Note that $\mathcal S_E$ and $\mathbb S_E$ are well-defined because of the commutativity of the splitting homomorphisms (see \eqref{com-splitting-skein} and \eqref{com-splitting-torus}).

Let $e\in E$, we use $\lambda_e$ to denote the triangulation of $\mathsf{Cut}_e(\fS)$ induced by $\lambda$. 
Since $k$ is contained in the interior of $\mathbb P$, we can define an algebra embedding $\mathcal S_e^{(k)} \colon
\mathcal{Z}_\omega(\mathcal{D}_{\lambda}^{(k)})\rightarrow
\mathcal{Z}_\omega(\mathcal{D}_{\lambda_e}^{(k)})$ using \eqref{com-splitting-torus}.
Similarly as $\mathcal S_E$, define
$$\mathcal S_E^{(k)}:=\prod_{e\in E}\mathcal S_e^{(k)}\colon
\mathcal{Z}_\omega(\mathcal{D}_{\lambda}^{(k)})\rightarrow
\mathcal{Z}_\omega(\mathcal{D}_{\lambda_E}^{(k)}).$$
Lemma~\ref{eq-bl-inclusion-mut-vu} and \cite[Equation~(39)]{KimWang} imply that the above algebra embedding restricts to the following one:
$$\mathcal S_E^{(k)}\colon
\mathcal{Z}^{\rm bl}_\omega(\mathcal{D}_{\lambda}^{(k)})\rightarrow
\mathcal{Z}^{\rm mbl}_\omega(\mathcal{D}_{\lambda_E}^{(k)}),$$
\cite[Equation~(43)]{KimWang} implies that the following diagram commutes
\begin{align}\label{eq-com-S-vi-new}
\xymatrix{
{\rm Frac}(\mathcal{Z}^{\rm bl}_\omega(\mathcal{D}_{\lambda})) \ar[r]^{\mathcal{S}_E} \ar[d]_{\nu^\omega_{k}} & {\rm Frac}(\mathcal{Z}^{\rm bl}_\omega(\mathcal{D}_{\lambda_E})) \ar[d]^{\nu^\omega_{k}} \\
{\rm Frac}(\mathcal{Z}^{\rm bl}_\omega(\mathcal{D}_{\lambda}^{(k)})) \ar[r]^{\mathcal{S}_E^{(k)}} & {\rm Frac}(\mathcal{Z}^{\rm mbl}_\omega(\mathcal{D}_{\lambda_E}^{(k)}))
}.
\end{align}

Since $\nu^\omega_k\big({\rm Frac}(\mathcal{Z}^{\rm bl}_\omega(\mathcal{D}_{\lambda_E}))\big)\subset {\rm Frac}(\mathcal{Z}^{\rm bl}_\omega(\mathcal{D}_{\lambda_E}^{(k)}))$ and $\nu^\omega_k\colon {\rm Frac}(\mathcal{Z}^{\rm bl}_\omega(\mathcal{D}_{\lambda}))\rightarrow {\rm Frac}(\mathcal{Z}^{\rm bl}_\omega(\mathcal{D}_{\lambda}^{(k)}))$ is an isomorphism, we have 
$$\mathcal S_E^{(k)}\big(
{\rm Frac}(\mathcal{Z}^{\rm bl}_\omega(\mathcal{D}_{\lambda}^{(k)}))\big)
\subset 
{\rm Frac}(\mathcal{Z}^{\rm bl}_\omega(\mathcal{D}_{\lambda_E}^{(k)})).$$
Then the commutative diagram in \eqref{eq-com-S-vi-new} implies the following commutative diagram:
\begin{align}\label{eq-com-S-vi}
\xymatrix{
{\rm Frac}(\mathcal{Z}^{\rm bl}_\omega(\mathcal{D}_{\lambda})) \ar[r]^{\mathcal{S}_E} \ar[d]_{\nu^\omega_{k}} & {\rm Frac}(\mathcal{Z}^{\rm bl}_\omega(\mathcal{D}_{\lambda_E})) \ar[d]^{\nu^\omega_{k}} \\
{\rm Frac}(\mathcal{Z}^{\rm bl}_\omega(\mathcal{D}_{\lambda}^{(k)})) \ar[r]^{\mathcal{S}_E^{(k)}} & {\rm Frac}(\mathcal{Z}^{\rm bl}_\omega(\mathcal{D}_{\lambda_E}^{(k)}))
}.
\end{align}

Theorem \ref{thm-trace-cut} implies that the following diagram commutes
\begin{equation}\label{eq-com-tr-splitting-E}
\begin{tikzcd}
\rdS \arrow[r, "\mathbb S_E"]
\arrow[d, "\tr"]  
&  \overline{\cS}_{\omega}(\fS'\cup\mathbb P) \arrow[d, "{\rm tr}_{\lambda_E}"] \\
 \mathcal{Z}_\omega(\mathcal D_{\lambda})
 \arrow[r, "\mathcal S_E"] 
&  
\mathcal{Z}_\omega(\mathcal D_{\lambda_E})
\end{tikzcd}.
\end{equation}
Then
we have 
\begin{align*}
  \mathcal S_E ^{(k)} (\nu_k^{\omega}({\rm tr}_{\lambda}(\beta)))=
  \nu_k^{\omega}(\mathcal S_E({\rm tr}_{\lambda}(\beta)))
  = \nu_k^{\omega}({\rm tr}_{\lambda_E}(\mathbb S_E(\beta))).
\end{align*}
Lemmas \ref{lem-key-mutation-polynomial-P3} and \ref{lem-key-mutation-polynomial-P4} show that $\mathcal S_E ^{(k)} (\nu_k^{\omega}({\rm tr}_{\lambda}(\beta)))
  = \nu_k^{\omega}({\rm tr}_{\lambda_E}(\mathbb S_E(\beta)))\in \mathcal{Z}^{\rm bl}_\omega(\mathcal{D}_{\lambda_E}^{(k)}).$

The definition of $\mathcal S_E^{(k)}$ (also \eqref{cutting_homomorphism_for_Z_omega}) implies that 
$\mathcal S_E^{(k)}\big(\mathcal{Z}_\omega(\mathcal D_{\lambda}^{(k)})\big)$ 
is a group subalgebra of the quantum torus 
$\mathcal{Z}_\omega(\mathcal D_{\lambda_E}^{(k)})$ (see \eqref{submonoid}).  
Together with 
$\mathcal S_E^{(k)}\big(\nu_k^{\omega}({\rm tr}_{\lambda}(\beta))\big)
\in \mathcal{Z}^{\rm bl}_\omega(\mathcal{D}_{\lambda_E}^{(k)})$,  
Lemma~\ref{lem-frac-subalgebra} then implies that 
\[
\nu_k^{\omega}({\rm tr}_{\lambda}(\beta)) \in 
\mathcal{Z}^{\rm bl}_\omega(\mathcal{D}_{\lambda}^{(k)}).
\]

\end{proof}

\begin{remark}
    Let $\fS$ be a triangulable pb surface with two triangulations $\lambda$ and $\lambda'$. Suppose $e\in\lambda$ is not a boundary edge, and 
 that $\lambda$ and $\lambda'$ are obtained from each other by a flip on the ideal arc $e$.
Recall that there is a sequence of mutations 
$\mu_{v_1},\cdots,\mu_{v_r}$ (see Figure \ref{Fig;mutation_sequence_for_flip}) such that 
$$\mathcal D_{\lambda'} = \mu_{v_r}\cdots \mu_{v_1}(\mathcal D_\lambda).$$
Define $\mathbbm{v}:=(v_1,\cdots,v_i)$.
Let $\mathbb P_4$ be the quadrilateral containing $e$ as a diagonal ideal arc.
Using the same technique, we can easily generalize Lemma \ref{lem-key-mutation-polynomial-P4} to the following:

For any element $\beta\in \overline{\cS}_\omega(\mathbb P_4)$, we have 
\begin{align}\label{eq-P4-sequence}
    \nu_{v_i}^\omega\cdots \nu_{v_1}^{\omega}({\rm tr}_{\lambda_4}(\beta))\in 
       \mathcal{Z}^{\rm mbl}_\omega(\mathcal{D}_{\lambda_4}^{\mathbbm{v}}).
\end{align}
    
Using \eqref{eq-P4-sequence} and the method of Proposition~\ref{prop-key-bal-polynomial} (with every occurrence of $\mathcal Z^{\rm bl}$ replaced by $\mathcal Z^{\rm mbl}$), we obtain the following:

 For any element $\beta\in \overline{\cS}_\omega(\fS)$, we have 
\begin{align}\label{eq-fS-sequence}
    \nu_{v_i}^\omega\cdots \nu_{v_1}^{\omega}({\rm tr}_{\lambda}(\beta))\in 
       \mathcal{Z}^{\rm mbl}_\omega(\mathcal{D}_{\lambda}^{\mathbbm{v}}).
\end{align}
Note that there is no restriction that $\fS$ has no interior punctures for \eqref{eq-fS-sequence}.

\end{remark}

\subsection{Proof of Theorem \ref{thm-main-3}}
\begin{proof}[Proof of Theorem \ref{thm-main-3}]
Let $\lambda$ be a triangulation of $\fS$. By Theorem~\ref{thm-upper-w-upper}, it suffices to show that 
$\dS \subset \mathbb T(w_\lambda)$ and 
$\dS \subset \mathbb T(w_\lambda^{(k)})$ for any mutable vertex $k \in V_\lambda$, where $w_\lambda$ is defined in \eqref{def-qum-seed-w}, $w_\lambda^{(k)}$ is defined in \eqref{eq-def-wD}, and $\mathbb T(-)$ is defined in \eqref{eq-T-w}.  
Since $\mathbb T(w_\lambda) = \A$, Lemma~\ref{lem-basic-lem}(a) implies that 
$\dS \subset \mathbb T(w_\lambda)$.  
Moreover, the commutative diagram~\eqref{eq-compability-tr-A-X-diag}, together with Propositions~\ref{prop-comp} and \ref{prop-key-bal-polynomial}, shows that 
$\dS \subset \mathbb T(w_\lambda^{(k)})$ for every mutable vertex $v \in V_\lambda$.
\end{proof}

\section{Splitting homomorphisms for (upper) quantum cluster algebras}\label{sec-splitting-upper}
In this section, we first construct a splitting homomorphism for quantum upper cluster algebras in general (Proposition~\ref{prop:split}). 
We then specialize to the ${\rm SL}_n$ quantum upper cluster algebra and show that this splitting homomorphism is compatible with the splitting homomorphism for the projected ${\rm SL}_n$-skein algebra via the inclusion in Theorem~\ref{thm-main-3} (Theorem~\ref{thm:splitU}). 
Theorem~\ref{thm:splitU} will be used in \S\ref{sec-skein-inclusion-cluster} to prove that the projected ${\rm SL}_n$-skein algebra is contained in the ${\rm SL}_n$ quantum cluster algebra. 
Moreover, it may be employed in the future to approach Conjecture~\ref{con-equality-Skein-A-U} by cutting the pb surface $\fS$ into smaller pieces.

\subsection{Construction for the general case}
Let $\mathscr U$ be an upper quantum cluster algebra. For two elements $x,x'\in \mathscr U$, we say that $x$ and $x'$ are \textbf{proportional}, denoted by $x\asymp x'$ if $x^{-1}x'\in R\langle A_v^{\pm 1}\mid v\in \mathcal V\setminus \mathcal{V}_{\rm mut}\rangle$.

\begin{definition}
Let $\mathscr R, \mathscr R'$ be (upper) quantum cluster algebras with initial seeds $s$ and $s'$, respectively. For a subset $I\subset \mathcal{V}_{\rm mut}$, we say that an $R$-algebra homomorphism $f:\mathscr R\to \mathscr R'$ a \emph{splitting map} on $I$ if the following hold:
    \begin{itemize}
        \item for any $v\in \mathcal{V}_{\rm mut}\setminus I$, $f(A_v)\asymp A_{v'}$ and $f(\mu_{v}(A_v))\asymp \mu_{v'}(A_{v'})$ for some $v'\in \mathcal V_{\rm mut}'$;
        \item for any $v\in I$, $f(A_v)\asymp A_{v'_1}A_{v'_2}$ for some $v'_1,v'_2\in \mathcal V'\setminus \mathcal{V'}_{\rm mut}$; 
        \item for any $v\in \mathcal V\setminus \mathcal{V}_{\rm mut}$, $f(A_v)\asymp 1$.
    \end{itemize}
\end{definition}

\begin{definition}\label{def-splitting-matrix}
Let $Q$ be the exchange matrix with vertex set $\mathcal V$. For a subset $\mathcal{V}_{\rm split}\subset \mathcal{V}_{\rm mut}$,
denote $\mathcal V'_{\rm split}=\{k'\mid k\in \mathcal{V}_{\rm split}\}$ and $\mathcal V''_{\rm split}=\{k''\mid k\in \mathcal{V}_{\rm split}\}$ as two copies of $\mathcal{V}_{\rm split}$, 

$(1)$ we say that an exchange matrix $Q_{\rm split}=(Q_{\rm split}(u,v))$ with unfrozen vertex set $\mathcal{V}_{\rm mut}\setminus \mathcal{V}_{\rm split}$ and frozen vertex set $\mathcal{V}_{\rm frozen}\cup \mathcal V'_{\rm split}\cup \mathcal V''_{\rm split}$ is a \emph{splitting} of $Q$ on vertices $\mathcal{V}_{\rm split}$ if 
    \begin{itemize}
        \item for any $u\in \mathcal V\setminus \mathcal{V}_{\rm split}$ and $v\in \mathcal{V}_{\rm split}$, we have $Q_{\rm split}(u,v')Q_{\rm split}(u,v'')\geq 0$ and $Q(u,v)=Q_{\rm split}(u,v')+Q_{\rm split}(u,v'')$,
        \item for any $u,v\in \mathcal V\setminus \mathcal{V}_{\rm split}$, we have $Q_{\rm split}(u,v)=Q(u,v)$.
    \end{itemize}

$(2)$ We say that a quantum seed $\mathbf s_{split}=(Q_{split}, \Pi_{split}, M_{split})$ is a \emph{splitting} of seed $\mathbf s=(Q,\Pi,M)$ on vertices $\mathcal{V}_{\rm split}$ if $Q_{\rm split}$ is a splitting of $Q$ on vertices $\mathcal{V}_{\rm split}$.
\end{definition}



In particular, for any triangulation $\lambda$ of a triangulable pb surface $\fS$ and edge $e\in \lambda$, the seed $\mathsf s(\lambda_e)$ is a splitting of $\mathsf s(\lambda)$ on the vertices lying on $e$.

For an $R$-algebra $\mathscr R$, we say that two elements $x,y\in \mathscr R$ are \emph{quasi-commutative} if $xy=\xi^myx$ ($\xi=\omega^n$) for some $m\in \mathbb Z$, and we denote $\Lambda(x,y)=m$. 

\begin{lemma}\label{lem:propotion}
Let $A,A',B_1,B_2,C_1,C_2,C_1',C_2'$ be pairwise quasi-commutative invertible elements. Let $X=[AB_1C_1]+[AB_2C_2]$ and $Y=[AA'B_1C_1']+[AA'B_2C_2']$. Assume that $[C_1^{-1}C'_1]=[C_2^{-1}C'_2]$ and $\Lambda([B_1C_1],[C_1^{-1}C'_1A'])=\Lambda([B_2C_2],[C_1^{-1}C'_1A'])$, then $X$ and $C_1^{-1}C'_1A'$ are quasi-commutative and $Y=[XC_1^{-1}C'_1A']$.
\end{lemma}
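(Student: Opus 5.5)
We will work entirely with Weyl-ordered products of pairwise quasi-commuting invertible elements. For such elements $x_1,\dots,x_r$ we set $[x_1\cdots x_r]=\xi^{-\frac12\sum_{i<j}\Lambda(x_i,x_j)}x_1\cdots x_r$. Two basic facts are used throughout. First, $\Lambda$ is bilinear on Weyl products of the generating set. Second, for Weyl monomials $M,N$ one has
\[
MN=\xi^{\frac12\Lambda(M,N)}[MN].
\]
Set $D:=[C_1^{-1}C_1'A']$. By the first hypothesis, $D=[C_1^{-1}C_1'A']=[C_2^{-1}C_2'A']$, because $[C_1^{-1}C_1']=[C_2^{-1}C_2']$ and the Weyl bracket is associative for quasi-commuting factors.

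The quasi-commutativity of $X$ and $D$ comes first. Since $A,B_1,C_1,D$ pairwise quasi-commute, $\Lambda([AB_iC_i],D)=\Lambda(A,D)+\Lambda([B_iC_i],D)$. The second hypothesis says this value is the same for $i=1,2$; call it $m$. Then $[AB_1C_1]D=\xi^mD[AB_1C_1]$ and likewise for the second summand. Adding the two relations gives $XD=\xi^m DX$, so $X$ and $D$ quasi-commute with $\Lambda(X,D)=m$. This also makes $[XD]:=\xi^{-m/2}XD$ well defined.

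Next we expand:
\[
[XD]=\xi^{-m/2}[AB_1C_1]D+\xi^{-m/2}[AB_2C_2]D=[AB_1C_1D]+[AB_2C_2D].
\]
It remains to identify $[AB_iC_iD]$ with $[AA'B_iC_i']$. In the Weyl bracket the exponent vector simply adds, and $C_i\cdot C_i^{-1}C_i'A'$ contributes $C_i'A'$. So $[AB_iC_iD]=[AB_iC_iC_i^{-1}C_i'A']=[AA'B_iC_i']$, where for $i=2$ we use the alternative expression $D=[C_2^{-1}C_2'A']$. Summing over $i$ gives $Y=[XD]$.

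The main subtlety is that the Weyl bracket is only well behaved when all factors lie in one quasi-commuting family. Bilinearity of $\Lambda$ and additivity of exponents, $[MN']=[M\,N']$, hold inside the group generated by $A,A',B_i,C_i,C_i'$. The hypothesis that these are pairwise quasi-commuting guarantees that this group is a quantum-torus-like lattice, so we may freely identify $[C_2^{-1}C_2'A']$ with $D$. We would state this lattice identification explicitly before the computation.
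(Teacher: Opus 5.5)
Your argument is correct and is essentially the paper's proof. Additivity of $\Lambda$ shows that $\Lambda([AB_iC_i],[C_1^{-1}C_1'A'])$ does not depend on $i$, which gives the quasi-commutativity; you then distribute the Weyl bracket over the two summands of $X$, use $[C_1^{-1}C_1']=[C_2^{-1}C_2']$ in the second summand, and cancel, exactly as the paper does. Your closing appeal to a ``quantum-torus-like lattice'' is unnecessary and is not literally implied by the hypotheses; all you actually use is that the Weyl bracket is invariant under reordering and under cancelling $xx^{-1}$, which holds for any pairwise quasi-commuting invertible elements.
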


\begin{proof}
We have
    \begin{align*}
        \Lambda([AB_1C_1],[C_1^{-1}C'_1A'])=\Lambda(A,[C_1^{-1}C'_1A'])+\Lambda([B_1C_1],[C_1^{-1}C'_1A']),\\
        \Lambda([AB_2C_2],[C_1^{-1}C'_1A'])=\Lambda(A,[C_1^{-1}C'_1A'])+\Lambda([B_2C_2],[C_1^{-1}C'_1A']).
    \end{align*}

Thus, $\Lambda([AB_1C_1],[C_1^{-1}C'_1A'])=\Lambda([AB_2C_2],[C_1^{-1}C'_1A'])$, and $X, C_1^{-1}C'_1A'$ are quasi-commutative.

Therefore,
\begin{align*}
  [XC_1^{-1}C_1'A']  &= [X\cdot [C_1^{-1}C_1'A']]  \\
  & = [([AB_1C_1]+[AB_2C_2])\cdot [C_1^{-1}C_1'A']]\\
  & = [AB_1C_1C_1^{-1}C_1'A']+[AB_2C_2C_1^{-1}C_1'A']\\
  & = [AB_1C_1C_1^{-1}C_1'A']+[AB_2C_2C_2^{-1}C_2'A']\\
  & = Y.
\end{align*}
The proof is complete.
\end{proof}


Let $v\in\mathcal V_{\rm mut}$. The symbol 
``$\prod_{u\to v\in Q}$'' (resp. ``$\prod_{u\leftarrow v\in Q}$'') denotes the product over the multiset 
$\{\,u\in\mathcal V \mid Q(u,v)>0\,\}$
(resp. $\{\,u\in\mathcal V \mid Q(u,v)<0\,\}$), where each element $u$ occurs with multiplicity $|Q(u,v)|$ 
(for example, $\prod_{u\to v\in Q} A_u = A_{u'}A_{u''}^2$ if $u',u''\in\mathcal V$ are the only vertices with $Q(u,v)>0$, with $Q(u',v)=1$ and $Q(u'',v)=2$).

Define 
$$\text{$Q(-,v)=\{w\in \mathcal V\mid Q(w,v)>0\} \text{ and } Q(v,-)=\{w\in \mathcal V\mid Q(v,w)>0\}$.}$$

Let \(S_1\) be a set and \(S_2\) a multiset.  
Denote by \(F(S_2)\) the underlying set of \(S_2\), obtained by forgetting multiplicities.  
We define the intersection \(S_1 \cap S_2\) to be the multiset whose underlying set is
\(S_1 \cap F(S_2)\), and in which each element \(s \in S_1 \cap F(S_2)\) has the same
multiplicity as \(s\) does in \(S_2\).

\begin{proposition}[Splitting homomorphism for (upper) quantum cluster algebras]\label{prop:split} 
Let $\mathbf s$ be a quantum seed and $\mathbf s_{\rm split}$ be a splitting of $\mathbf s$ on vertices $\mathcal{V}_{\rm split}$. Suppose that 
we associate each $u\in \mathcal{V}_{\rm split}$ with two muti-subsets $\mathcal{V}'_u, \mathcal{V}''_u\subset \mathcal V\setminus \mathcal{V}_{\rm split}$ (may intersect) such that

$(a)$ for any $v_1,v_2\in \mathcal V\setminus \mathcal{V}_{\rm split}$,
$$\Lambda(A_{v_1},A_{v_2})=\Lambda([A_{v_1}\prod\limits_{
u\in \mathcal{V}^1_{v_1}} A_{u'}\prod\limits_{
u\in \mathcal{V}^2_{v_1}}A_{u''}],[A_{v_2}\prod\limits_{u\in \mathcal{V}^1_{v_2}} A_{u'}\prod\limits_{u\in \mathcal{V}_{v_2}^2}A_{u''}]),$$ 

$(b)$ for any $v_1\in V\setminus \mathcal{V}_{\rm split}$ and $u\in \mathcal{V}_{\rm split}$,
$$\Lambda(A_{v_1},A_{u})=\Lambda([A_{v_1}\prod\limits_{
v\in \mathcal{V}^1_{v_1}} A_{v'}\prod\limits_{
v\in \mathcal{V}^2_{v_1}}A_{v''}],[A_{u'}A_{u''}]),$$ 

$(c)$ for any $u_1,u_2\in \mathcal{V}_{\rm split}$, $$\Lambda(A_{u_1},A_{u_s})=\Lambda([A_{u'_1}A_{u''_1}],[A_{u'_2}A_{u''_2}]),$$

$(d)$ for any $v\in \mathcal{V}_{\rm mut}\setminus \mathcal{V}_{\rm split}$ and  $u\in \mathcal{V}_{\rm split}$, 
$$[Q_{\rm split}(u'',v)]_+\,+ \sum\limits_{v_1\in Q(-,v)\cap \mathcal{V}'_{u}}Q(v_1,v)=
[Q_{\rm split}(v,u'')]_+\,+ \sum\limits_{v_1\in Q(v,-)\cap \mathcal{V}'_{u}}Q(v,v_1)$$ 

$$[Q_{\rm split}(u',v)]_+\,+ \sum\limits_{v_1\in Q(-,v)\cap \mathcal{V}''_{u}}Q(v_1,v)=
[Q_{\rm split}(v,u')]_+\,+ \sum\limits_{v_1\in Q(v,-)\cap \mathcal{V}''_{u}}Q(v,v_1),$$

where
$$\mathcal V^1_w:=\{u\in \mathcal{V}_{\rm split}\mid
w\in \mathcal V_u'\}\text{ and }
\mathcal V^2_w:=\{u\in \mathcal{V}_{\rm split}\mid
w\in \mathcal V_u''\},
$$ for each $w\in \mathcal V$. In particular,  
$\mathcal V^1_u=\mathcal V^2_u=\emptyset$ for each $u\in \mathcal{V}_{\rm split}$.
Note that both $\mathcal V^1_w$ and $\mathcal V^2_w$ are multisets, and the multiplicity of $u\in \mathcal V^1_w$ (resp. $u\in \mathcal V^2_w$) is the multiplicity of $w$ in $\mathcal V_u'$ (resp. $\mathcal V_u''$).

Then the assignments 
    $$A_v\mapsto 
    \begin{cases}
    [A_v\prod\limits_{u\in  \mathcal{V}^1_{v}} A_{u'}\prod\limits_{u\in  \mathcal{V}^2_{v}}A_{u''}]
    & \text{ if } v\in \mathcal V\setminus \mathcal{V}_{\rm split},\vspace{2mm}\\
    [A_{v'}A_{v''}] & \text{ if } v\in \mathcal{V}_{\rm split},
    \end{cases}$$ 
    give an injective reflection-invariant (see \eqref{def-eq-ref-torus} and \S\ref{sub-sec-invariant}) splitting homomorphism $\iota:\mathscr{U}_s\to \mathscr{U}_{s_{\rm split}}$ on $\mathcal{V}_{\rm split}$. 
\end{proposition}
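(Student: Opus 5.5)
The plan is to define $\iota$ first on the initial based quantum torus $\mathbb T(\mathbf s)$ and then show that it maps $\mathscr U_{\mathbf s}$ into $\mathscr U_{\mathbf s_{\rm split}}$ by checking adjacent seeds, using Theorem~\ref{thm-upper-w-upper}.

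\textbf{Step 1: $\iota$ on the initial torus.} Conditions (a), (b), (c) say exactly that the proposed images of the generators $A_v$, $v\in\mathcal V$, satisfy the same $\xi$-commutation relations as the $A_v$. Since the images are Weyl-ordered monomials in $\mathbb T(\mathbf s_{\rm split})$, the assignment extends to an algebra homomorphism $\iota\colon\mathbb T(\mathbf s)\to\mathbb T(\mathbf s_{\rm split})$, given on monomials by $A^{\mathbf k}\mapsto A_{\rm split}^{\mathbf kL}$ for an integer matrix $L$.

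The matrix $L$ is injective: restricted to $\mathcal V\setminus\mathcal V_{\rm split}$ it is the identity plus entries landing in the new split coordinates, and each $u\in\mathcal V_{\rm split}$ is sent to $\mathbf e_{u'}+\mathbf e_{u''}$, which involves no coordinate of $\mathcal V\setminus\mathcal V_{\rm split}$. Hence $\iota$ sends the monomial basis injectively to a monomial basis, so $\iota$ is injective.

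Reflection invariance is immediate, because $*$ fixes every Weyl-ordered monomial and $\iota$ sends Weyl-ordered monomials to Weyl-ordered monomials. We then extend $\iota$ to the skew-fields of fractions.

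\textbf{Step 2: adjacent seeds.} By Theorem~\ref{thm-upper-w-upper}, $\mathscr U_{\mathbf s}$ is the upper bound at $\mathbf s$, namely $\mathbb T(\mathbf s)\cap\bigcap_{v}\mathbb T(\mu_v\mathbf s)$ over $v\in\mathcal V_{\rm mut}$. Likewise $\mathscr U_{\mathbf s_{\rm split}}$ is the upper bound at $\mathbf s_{\rm split}$, whose mutable set is $\mathcal V_{\rm mut}\setminus\mathcal V_{\rm split}$. It therefore suffices to show the following:
\begin{enumerate}
\item[(i)] For $v\in\mathcal V_{\rm mut}\setminus\mathcal V_{\rm split}$, $\iota(\mathbb T(\mu_v\mathbf s))\subset\mathbb T(\mu_v\mathbf s_{\rm split})$.
\item[(ii)] For $u\in\mathcal V_{\rm split}$, $\iota(\mathbb T(\mu_u\mathbf s))\subset\mathbb T(\mathbf s_{\rm split})$.
\end{enumerate}
Given these, any $x\in\mathscr U_{\mathbf s}$ satisfies $\iota(x)\in\mathbb T(\mathbf s_{\rm split})\cap\bigcap_{v}\mathbb T(\mu_v\mathbf s_{\rm split})=\mathscr U_{\mathbf s_{\rm split}}$.

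\textbf{Case (i).} We need $\iota(A'_v)$, where $A'_v=[A^{-1}_v\prod_{w\to v}A_w]+[A_v^{-1}\prod_{w\leftarrow v}A_w]$, to equal the mutated variable $\mu_v(A_v)$ in the split seed times a Laurent monomial in frozen variables that quasi-commutes appropriately.

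Applying $\iota$ gives two monomials $[\iota(A_v)^{-1}\prod_{w\to v}\iota(A_w)]$ and $[\iota(A_v)^{-1}\prod_{w\leftarrow v}\iota(A_w)]$. In the split seed, the arrows at $v$ coming from split vertices are divided between $u'$ and $u''$ with $Q(w,v)=Q_{\rm split}(w,v')+Q_{\rm split}(w,v'')$ and the same sign. The extra factors $A_{u'},A_{u''}$ introduced by $\iota(A_w)$ for $w\in\mathcal V'_u,\mathcal V''_u$ must then match on both sides, up to a common factor; this is exactly the balance condition (d).

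I then apply Lemma~\ref{lem:propotion} with $X$ the split mutated variable $A^{\rm split\,\prime}_v$, with $A=\iota(A_v)^{-1}$ times the common part, and with the two differing monomial tails playing the roles of $C_i,C'_i$. This yields $\iota(A'_v)=[A^{\rm split\,\prime}_v\cdot C]$, where $C$ is a frozen monomial. Consequently the generators of $\mathbb T(\mu_v\mathbf s)$ map into $\mathbb T(\mu_v\mathbf s_{\rm split})$, and this also verifies the first bullet of the splitting-map definition.

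\textbf{Case (ii).} Here $A'_u$ is a sum of two monomials in $\mathbb T(\mathbf s)$, so its image under $\iota$ lies in $\mathbb T(\mathbf s_{\rm split})$. The remaining concern is $\iota(A_u'^{-1})$: an inverse of a binomial need not be Laurent, so this case is not automatic.

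I would handle it by noting that every element of $\mathscr U_{\mathbf s}$ is already handled once we use only $\mathbb T(\mathbf s)$ and the $\mathbb T(\mu_v\mathbf s)$ for $v\notin\mathcal V_{\rm split}$. Indeed $\mathscr U_{\mathbf s}\subset\mathbb T(\mathbf s)$, and Step 1 gives $\iota(\mathbb T(\mathbf s))\subset\mathbb T(\mathbf s_{\rm split})$, so mutations at split vertices are never needed. Under this approach case (ii) can be dropped entirely, and only (i) together with the frozen-image bullets of the definition must be checked. The frozen-image bullets hold by construction: $\iota(A_v)\asymp A_v$, $\iota(A_u)=[A_{u'}A_{u''}]$, and frozen variables map to proportional frozen monomials.

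\textbf{Main obstacle.} The hardest step is the exponent and quasi-commutation bookkeeping in case (i). One has to confirm that the hypotheses of Lemma~\ref{lem:propotion} hold, in particular that the two $\Lambda$-values coincide. I expect this to follow from the compatibility relation $\sum_k Q(k,v)\Pi(k,\cdot)=\delta d_v$ combined with (a)–(c), but it must be checked carefully, including multiplicities in the multisets $\mathcal V'_u,\mathcal V''_u$.
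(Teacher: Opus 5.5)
Your proposal is correct and follows essentially the same route as the paper. You define $\iota$ on $\mathbb T(\mathbf s)$ via (a)--(c), use Lemma~\ref{lem:propotion} together with (d) to get $\iota(\mu_v(A_v))\asymp\mu_v^{\mathbf s_{\rm split}}(A_v)$ for $v\in\mathcal V_{\rm mut}\setminus\mathcal V_{\rm split}$, and then use $\mathscr U_{\mathbf s}\subset\mathbb T(\mathbf s)\cap\bigcap_{v\notin\mathcal V_{\rm split}}\mathbb T(\mu_v\mathbf s)$, so that mutations at split vertices are never needed.

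The $\Lambda$-equality you flag is settled in the paper much as you anticipate, but it needs only the quantum-seed property of $\mathbf s_{\rm split}$, not (a)--(c). The point is that $C_1^{-1}C_1'A'$ is a monomial in the frozen variables $A_{u'},A_{u''}$ alone. Moreover, the two terms of the split exchange binomial have equal $\Lambda$ against every $A_w$ with $w\neq v$.
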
 

\begin{proof}

The assignments give an injective $R$-algebra homomorphism $\iota:\mathbb T(\mathbf s)\hookrightarrow \mathbb T(\mathbf s_{split})$ by $(a),(b)$ and $(c)$. 
It follows from the definition of $\iota$ that it is reflection-invariant.

For any $v\in \mathcal{V}_{\rm mut}\setminus \mathcal{V}_{\rm split}$, in $\mathscr U_{\mathbf s}$, we have 
\begin{align*}
 \iota(\mu^{\mathbf s}_v(A_v)) 
 &= \iota([A_v^{-1}\prod_{v_1\to v\in Q}A_{v_1}]+[A_v^{-1}\prod_{v_2\leftarrow v\in Q}A_{v_2}]) \\  
 &= \Bigg [A^{-1}_v\prod\limits_{u\in \mathcal{V}^1_{v}} A^{-1}_{u'}
 \prod\limits_{u\in \mathcal{V}^2_{v}} A^{-1}_{u''}\\
 & \cdot
 \prod_{v_1\to v\in Q, v_1\notin \mathcal{V}_{\rm split}}\left(A_{v_1}
 \prod\limits_{u\in \mathcal{V}^1_{v_1}} A_{u'}
 \prod\limits_{u\in \mathcal{V}^2_{v_1}}A_{u''}\right) \cdot \prod_{u\to v\in Q, u\in \mathcal{V}_{\rm split}} A_{u'}A_{u''}\Bigg ]  \\
 &+ \Bigg [A^{-1}_v\prod\limits_{u\in \mathcal{V}^1_{v}} A^{-1}_{u'}
 \prod\limits_{u\in \mathcal{V}^2_{v}} A^{-1}_{u''}\\
 & \cdot
 \prod_{v_2\leftarrow v\in Q, v_2\notin \mathcal{V}_{\rm split}}\left(A_{v_2}
 \prod\limits_{u\in \mathcal{V}^1_{v_2}} A_{u'}
 \prod\limits_{u\in \mathcal{V}^2_{v_2}}A_{u''}\right) \cdot \prod_{u\leftarrow v\in Q, u\in \mathcal{V}_{\rm split}} A_{u'}x_{u''}\Bigg ]  \\
&=\Bigg [A^{-1}_v \cdot \prod\limits_{u\in \mathcal{V}^1_v} A^{-1}_{u'}
 \prod\limits_{u\in \mathcal{V}^2_v} A^{-1}_{u''}\cdot \prod_{v_1\to v\in Q, v_1\notin \mathcal{V}_{\rm split}} A_{v_1}
\\
 &\cdot  \prod_{u\to v\in Q, u\in \mathcal{V}_{\rm split}}A_{u'}A_{u''}
 \prod_{v_1\to v\in Q, v_1\notin \mathcal{V}_{\rm split}}\left(
 \prod\limits_{u\in \mathcal{V}^1_{v_1}} A_{u'}
 \prod\limits_{u\in \mathcal{V}^2_{v_1}}A_{u''}\right) \Bigg ]  \\
 &+\Bigg [A^{-1}_v \cdot\prod\limits_{u\in \mathcal{V}^1_v} A^{-1}_{u'}
 \prod\limits_{u\in \mathcal{V}^2_v} A^{-1}_{u''}\cdot 
 \prod_{v_2\leftarrow v\in Q, v_2\notin \mathcal{V}_{\rm split}} A_{v_2}
\\
 &\cdot \prod_{v\to u\in Q, u\in \mathcal{V}_{\rm split}}A_{u'}A_{u''}
 \prod_{v_2\leftarrow v\in Q, v_2\notin \mathcal{V}_{\rm split}}\left(
 \prod\limits_{u\in \mathcal{V}^1_{v_2}} A_{u'}
 \prod\limits_{u\in \mathcal{V}^2_{v_2}}A_{u''}\right) \Bigg ].
\end{align*}

In $\mathscr U_{\mathbf s_{\rm split}}$, we have 
\begin{align*}
  \mu_v^{\mathbf s_{split}}(A_v) &=  
  [A_v^{-1}\prod_{v_1\to v\in Q,v_1\notin \mathcal{V}_{\rm split}}A_{v_1}\prod_{u\in \mathcal{V}_{\rm split}, u'\to v\in Q_{\rm split}}A_{u'}\prod_{u\in \mathcal{V}_{\rm split}, u''\to v\in Q_{\rm split}}A_{u''}]\\
  &+[A_v^{-1}\prod_{v_2\leftarrow v\in Q,v_2\notin \mathcal{V}_{\rm split}}A_{v_2}\prod_{u\in \mathcal{V}_{\rm split}, u'\leftarrow v\in Q_{\rm split}}A_{u'}\prod_{u\in \mathcal{V}_{\rm split}, u''\leftarrow v\in Q_{\rm split}}A_{u''}].
\end{align*}

Denote 
$$A=A_v^{-1}, A'= \prod\limits_{u\in \mathcal{V}^1_v} A^{-1}_{u'}
 \prod\limits_{u\in \mathcal{V}^2_v} A^{-1}_{u''},$$ 
 
$$B_1=\prod_{v_1\to v\in Q,v_1\notin \mathcal{V}_{\rm split}}A_{v_1}, \qquad B_2=\prod_{v_2\leftarrow v\in Q,v_2\notin \mathcal{V}_{\rm split}}A_{v_2},$$
 
$$C_1=\prod_{u\in \mathcal{V}_{\rm split}, u'\to v\in Q_{\rm split}}A_{u'}\prod_{u\in \mathcal{V}_{\rm split}, u''\to v\in Q_{\rm split}}A_{u''},$$ 

$$C_2=\prod_{u\in \mathcal{V}_{\rm split}, u'\leftarrow v\in Q_{\rm split}}A_{u'}\prod_{u\in \mathcal{V}_{\rm split}, u''\leftarrow v\in Q_{\rm split}}A_{u''},$$ 

$$C'_1=\prod_{u\to v\in Q, u\in \mathcal{V}_{\rm split}}A_{u'}A_{u''}
 \prod_{v_1\to v\in Q, v_1\notin \mathcal{V}_{\rm split}}\left(
 \prod\limits_{u\in \mathcal{V}^1_{v_1}} A_{u'}
 \prod\limits_{u\in \mathcal{V}^2_{v_1}}A_{u''}\right),$$

$$C'_2=\prod_{u\leftarrow v\in Q, u\in \mathcal{V}_{\rm split}}A_{u'}A_{u''}
 \prod_{v_2\leftarrow v\in Q, v_2\notin \mathcal{V}_{\rm split}}\left(
 \prod\limits_{u\in \mathcal{V}^1_{v_2}} A_{u'}
 \prod\limits_{u\in \mathcal{V}^2_{v_2}}A_{u''}\right).$$

Then we have 
\begin{equation*}
   \iota(\mu^{\mathbf s}_v(A_v))=[AA'B_1C'_1]+[AA'B_2C'_2], \hspace{5mm} \mu_v^{\mathbf s_{split}}(A_v)=[AB_1C_1]+[AB_2C_2], 
\end{equation*}

$$[C_1^{-1}C_1']=
\prod_{u\in\mathcal V_{\rm split}}\left[A_{u'}^{[Q_{\rm split}(u'',v)]_+\,+ \sum\limits_{v_1\in Q(-,v)\cap \mathcal{V}'_{u}}Q(v_1,v)}
A_{u''}^{[Q_{\rm split}(u',v)]_+\, + \sum\limits_{v_1\in Q(-,v)\cap \mathcal{V}''_{u}}Q(v_1,v)}
\right],$$

$$[C_2^{-1}C_2']=
\prod_{u\in\mathcal V_{\rm split}}\left[A_{u'}^{[Q_{\rm split}(v,u'')]_{+}\, +\sum\limits_{v_1\in Q(v,-)\cap \mathcal{V}'_{u}}Q(v,v_1)}
A_{u''}^{[Q_{\rm split}(v,u')]_{+}\,+ \sum\limits_{v_1\in Q(v,-)\cap \mathcal{V}''_{u}}Q(v,v_1)}
\right].$$

Thus, we obtain $[C_1^{-1}C'_1]=[C_2^{-1}C'_2]$ by $(d)$.

As $\mathbf s_{\rm split}$ is a quantum seed and $\mu_v^{\mathbf s_{split}}(A_v)=[AB_1C_1]+[AB_2C_2]$, for any $u\in \mathcal{V}_{\rm split}$ we have $\Lambda(B_1C_1,A_{u'})=\Lambda(B_2C_2,A_{u'})$ and $\Lambda(B_1C_1,A_{u''})=\Lambda(B_2C_2,A_{u''})$. It follows that $$\Lambda(B_1C_1,C_1^{-1}C'_1A')=\Lambda(B_2C_2,C_1^{-1}C'_1A').$$

Therefore, by Lemma \ref{lem:propotion}, we have $\iota(\mu^{\mathbf s}_v(A_v))\asymp \mu_v^{\mathbf s_{split}}(A_v)$.
Note that $\iota\colon \mathbb T(\mathbf s)\to \mathbb T(\mathbf s_{\rm split})$ induces a skew-field embedding $\iota\colon \Fr(\mathbb T(\mathbf s))\rightarrow \Fr(\mathbb T(\mathbf s_{\rm split}))$. The fact that $\iota(\mu^{\mathbf s}_v(A_v))\asymp \mu_v^{\mathbf s_{split}}(A_v)$ implies that $\iota$ induces an $R$-algebra embedding $\iota: \mathbb T(\mu_v(\mathbf s))\to \mathbb T(\mu_v(\mathbf s_{\rm split}))$ for any $v\in \mathcal{V}_{\rm mut}\setminus \mathcal{V}_{\rm split}$. We thus obtain an $R$-algebra embedding
$$\iota: \mathbb T(\mathbf s)\cap \bigcap\limits_{v\in \mathcal{V}_{\rm mut}\setminus \mathcal{V}_{\rm split}} \mathbb T(\mu_v(\mathbf s))\to \mathbb T(\mathbf s_{\rm split})\cap \bigcap\limits_{v\in \mathcal{V}_{\rm mut}\setminus \mathcal{V}_{\rm split}} \mathbb T(\mu_v(\mathbf s_{\rm split}))=\mathscr U(\mathbf s_{\rm split}).$$

Combing the natural embedding $\mathscr U(\mathbf s)\hookrightarrow \mathbb T(\mathbf s)\cap \bigcap\limits_{v\in \mathcal{V}_{\rm mut}\setminus \mathcal{V}_{\rm split}} \mathbb T(\mu_v(\mathbf s))$, we obtain an injective splitting homomorphism $\iota: \mathscr U(\mathbf s)\to \mathscr U(\mathbf s_{\rm split})$.

The proof is complete.
\end{proof}


The following is an immediate corollary of the proof of Proposition \ref{prop:split} and Lemma \ref{lem:propotion}.

\begin{corollary}\label{cor:propotion}
    For any $v\in \mathcal{V}_{\rm mut}\setminus \mathcal{V}_{\rm split}$, we have
   \begin{align*}
     \iota(\mu^{\mathbf s}_v(A_v))   &= \Bigg[\mu_v^{\mathbf s_{split}}(A_v)\cdot \prod\limits_{u\in \mathcal{V}^1_v} A^{-1}_{u'}
 \prod\limits_{u\in \mathcal{V}^2_v} A^{-1}_{u''} \\
        & \cdot \prod\limits_{u\in \mathcal{V}_{\rm split}}(A^{-[Q_{\rm split}(u',v)]_+}_{u'}\cdot \prod\limits_{v_1\to v\in Q, u\in \mathcal{V}^1_{v_1}} A_{u'}\cdot A^{-[Q_{\rm split}(u'',v)]_+}_{u''}\cdot \prod\limits_{v_1\to v\in Q, u\in \mathcal{V}^2_{v_1}} A_{u''})\Bigg].
   \end{align*}
\end{corollary}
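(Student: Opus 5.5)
The plan is to read off the formula directly from the computation in the proof of Proposition~\ref{prop:split}, combined with the identity $Y=[X C_1^{-1}C_1'A']$ from Lemma~\ref{lem:propotion}. Fix $v\in\mathcal V_{\rm mut}\setminus\mathcal V_{\rm split}$ and reuse the notation $A,A',B_1,B_2,C_1,C_2,C_1',C_2'$ introduced in that proof. There we showed $\iota(\mu^{\mathbf s}_v(A_v))=[AA'B_1C_1']+[AA'B_2C_2']=Y$ and $\mu_v^{\mathbf s_{\rm split}}(A_v)=[AB_1C_1]+[AB_2C_2]=X$. We also verified both hypotheses of Lemma~\ref{lem:propotion}: condition (d) gives $[C_1^{-1}C_1']=[C_2^{-1}C_2']$, and the seed compatibility of $\mathbf s_{\rm split}$ gives equality of the $\Lambda$'s. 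So the lemma yields
\[
\iota(\mu^{\mathbf s}_v(A_v))=\bigl[\mu_v^{\mathbf s_{\rm split}}(A_v)\cdot C_1^{-1}C_1'A'\bigr].
\]

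It then remains to identify the monomial $[C_1^{-1}C_1'A']$ with the product displayed in the corollary. The factor $A'$ is, by definition, $\prod_{u\in\mathcal V^1_v}A_{u'}^{-1}\prod_{u\in\mathcal V^2_v}A_{u''}^{-1}$, which is the first bracket in the statement.

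For $C_1^{-1}$, I will rewrite $C_1=\prod_{u\in\mathcal V_{\rm split}}A_{u'}^{[Q_{\rm split}(u',v)]_+}A_{u''}^{[Q_{\rm split}(u'',v)]_+}$, which is just the multiset notation unwound. For $C_1'$, the two pieces are handled separately.
\begin{itemize}
\item The product over $v_1\to v$ with $v_1\notin\mathcal V_{\rm split}$ gives $\prod_{v_1\to v\in Q,\,u\in\mathcal V^1_{v_1}}A_{u'}$ and the analogous $A_{u''}$ factor.
\item The factor $\prod_{u\to v\in Q,\,u\in\mathcal V_{\rm split}}A_{u'}A_{u''}$ must be absorbed. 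Since $Q(u,v)=Q_{\rm split}(u',v)+Q_{\rm split}(u'',v)$ with both summands of the same sign (Definition~\ref{def-splitting-matrix}), we get $[Q(u,v)]_+=[Q_{\rm split}(u',v)]_+ + [Q_{\rm split}(u'',v)]_+$.
\end{itemize}

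This absorption is the only delicate bookkeeping point, and I expect it to be the main obstacle: the exponents of $A_{u'}$ and $A_{u''}$ coming from this factor do not individually match the $-[Q_{\rm split}(u',v)]_+$ and $-[Q_{\rm split}(u'',v)]_+$ corrections in the displayed formula. So the identification must be made inside the Weyl-ordered bracket, where exponents simply add. I would compare total exponents vertex by vertex, as in the computation of $[C_1^{-1}C_1']$ in the proof of Proposition~\ref{prop:split}. If the naive matching fails, I would fall back on the exponent formula for $[C_1^{-1}C_1']$ obtained there and check it agrees with the corollary's monomial.

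Since all factors are monomials in a single quantum torus, the Weyl-ordered product is insensitive to ordering. Therefore equality of exponent vectors suffices, and the result follows.
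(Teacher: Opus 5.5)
Your reduction is the same as the paper's, which simply calls the statement an immediate consequence of Lemma~\ref{lem:propotion} and the proof of Proposition~\ref{prop:split}. Using the data $A,A',B_i,C_i,C_i'$ from that proof, you correctly get $\iota(\mu^{\mathbf s}_v(A_v))=[\mu_v^{\mathbf s_{\rm split}}(A_v)\cdot C_1^{-1}C_1'A']$.

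The gap is the identification step you deferred, and it does not close. Working inside the Weyl bracket only lets exponent vectors add. It cannot remove the factor $\prod_{u\to v\in Q,\,u\in\mathcal V_{\rm split}}A_{u'}A_{u''}$ of $C_1'$, and that factor has no counterpart in the displayed monomial. It is not hidden in the products over $v_1\to v$ either, because $\mathcal V^1_u=\mathcal V^2_u=\emptyset$ for $u\in\mathcal V_{\rm split}$.

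Concretely, fix $u\in\mathcal V_{\rm split}$. Let $a_u$ be the multiplicity of $u$ in $\mathcal V^1_v$, and set $N_u=\sum_{v_1\in Q(-,v)\cap\mathcal V'_u}Q(v_1,v)$. Using your identity $[Q(u,v)]_+=[Q_{\rm split}(u',v)]_++[Q_{\rm split}(u'',v)]_+$, the exponent of $A_{u'}$ in $C_1^{-1}C_1'A'$ is
\[
-a_u-[Q_{\rm split}(u',v)]_+ +[Q(u,v)]_+ +N_u\;=\;-a_u+[Q_{\rm split}(u'',v)]_+ +N_u .
\]
In the displayed product the exponent of $A_{u'}$ is $-a_u-[Q_{\rm split}(u',v)]_+ +N_u$. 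The two differ by $[Q(u,v)]_+$, and the same happens for $A_{u''}$.

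This difference matters. $[XM]$ is a scalar multiple of $XM$, $X=\mu_v^{\mathbf s_{\rm split}}(A_v)\neq 0$, and the quantum torus is a domain. So $[XM_1]=[XM_2]$ forces $M_1,M_2$ to have the same exponent vector. Hence the printed identity fails whenever some $u\in\mathcal V_{\rm split}$ has an arrow into $v$. This already happens in the intended application: for $n=3$, one of $s_1,s_2$ has an arrow into the central vertex of $\Delta_1$.

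Your fallback fails for the same reason. The paper's formula for $[C_1^{-1}C_1']$ in the proof of Proposition~\ref{prop:split} carries $A_{u'}^{[Q_{\rm split}(u'',v)]_+}$ and $A_{u''}^{[Q_{\rm split}(u',v)]_+}$, not the negative exponents $-[Q_{\rm split}(u',v)]_+$, $-[Q_{\rm split}(u'',v)]_+$ of the display. Condition (d) does not rescue this, since it only relates the $C_1$-side to the $C_2$-side.

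What your argument, and the paper's one-line derivation, actually proves is the identity with exponents $+[Q_{\rm split}(u'',v)]_+$ on $A_{u'}$ and $+[Q_{\rm split}(u',v)]_+$ on $A_{u''}$. Equivalently, it is the displayed expression multiplied inside the bracket by $\prod_{u\in\mathcal V_{\rm split}}(A_{u'}A_{u''})^{[Q(u,v)]_+}$. The printed statement appears to have a typo in these exponents. Instead of asserting that equality of exponent vectors ``suffices, and the result follows,'' carry out the vertex-by-vertex comparison, record this discrepancy, and state the corrected formula.
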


\subsection{The splitting homomorphism for $\mathscr U_\omega(\fS)$}
For any triangulation $\lambda$ of a triangulable pb surface $\fS$ without interior punctures and any edge $e \in \lambda$, assume that $e$ is the common side of two ideal triangles $\Delta_1$ and $\Delta_2$, as shown in Figure~\ref{Fig;cutting-e}. 
Suppose that the punctures adjacent to $e$ are labeled $p_1$ and $p_2$ (where it is possible that $p_1 = p_2$), as illustrated in Figure~\ref{Fig;cutting-e}.
The small vertices on $e$ are labeled $s_1, \dots, s_{n-1}$, also indicated in Figure~\ref{Fig;cutting-e}. 
In the figure, there are two oriented curves, one drawn in blue and the other in red. 
Suppose that, as we follow the red (resp.\ blue) curve from its starting point to its endpoint, we encounter the sequence of triangles \( \tau_1', \dots, \tau_k' \) (resp.\ \( \tau_1'', \dots, \tau_m'' \)) consecutively, as labeled in Figure~\ref{Fig;cutting-e}. 
Note that $\tau_1''=\Delta_1$ and $\tau_1'=\Delta_2$.



We use $\mathcal V$ to denote $V_\lambda$. Then $\mathcal V_{\rm mut}$ consists of all small vertices contained in the interior of $\fS$.
For each $1\leq i\leq n-1$,
we construct two muti-subsets $\mathcal{V}'_{s_i}, \mathcal{V}''_{s_i}\subset \mathcal V$ as follows:
\begin{enumerate}
    \item Each vertex in $\mathcal{V}'_{s_i}$ (resp. $\mathcal{V}_{s_i}''$) is contained in one of the triangles $\tau_1',\cdots,\tau_k'$ (resp. $\tau_1'',\cdots,\tau_m''$).
    \item For each $1\leq j\leq k$ (resp. $1\leq j\leq m$), we choose a barycentric coordinate for $\tau_j'$ (resp. $\tau_j''$) such that $p_1=(n,0,0)$ (resp. $p_2=(n,0,0)$).
    Then $\mathcal{V}'_{s_i}\cap \tau_j'$ consists of small vertices, other than $s_i$, whose first entry is $i$ (resp. $\mathcal{V}''_{s_i}\cap \tau_j''$ consists of small vertices, other than $s_i$, whose first entry is $n-i$).
\end{enumerate}

We provide a geometric interpretation of $\mathcal{V}'_{s_i}$ and $\mathcal{V}_{s_i}''$.
All the vertices in $\mathcal{V}'_{s_i}$ (resp. $\mathcal{V}_{s_i}''$) are contained in the curve $c_i'$ (resp. $c_i''$), see the right picture in Figure~\ref{Fig;cutting-e}. 

\begin{figure}[h]
    \centering
    \includegraphics[width=150mm]{splitting.pdf}
    \caption{Left: the labelings of the triangles $\tau_1', \dots, \tau_k'$ and $\tau_1'', \dots, \tau_m''$. \\
    Right: schematic illustrations of the curves $c_1', \dots, c_{n-1}'$ (in green) and $c_1'', \dots, c_{n-1}''$ (in purple).}\label{Fig;cutting-e}
\end{figure}

\def\eqq{\overset{\omega}{=}}

Define 
\begin{equation}
\label{def-V-split-skein}
    \begin{split}
    &\text{set $\mathcal{V}_{\rm split}=\{s_1,\cdots,s_{n-1}\}$,}\\
    &\text{multiset $\mathcal{V}^1_v=\{u\in \mathcal{V}_{\rm split}\mid v\in \mathcal{V}'_u\}$ for any $v\in \mathcal V\setminus \mathcal{V}_{\rm split}$,}\\
    &\text{and multiset $\mathcal{V}^2_v=\{u\in \mathcal{V}_{\rm split}\mid v\in \mathcal{V}''_u\}$ for any $v\in \mathcal V\setminus \mathcal{V}_{\rm split}$.}
    \end{split}
\end{equation}
The multiplicity of $u\in \mathcal V^1_v$ (resp. $u\in \mathcal V^2_v$) is the multiplicity of $v$ in $\mathcal V_u'$ (resp. $\mathcal V_u''$).
After we cut $\fS$ along $e$, the small vertices $s_1',\cdots,s_{n-1}'$ (resp. $s_1'',\cdots,s_{n-1}''$) are contained in the triangle $\Delta_1$ (resp. $\Delta_2$).

For two elements $x,y$ in an $R$-algebra $\mathscr R$, we denote $x\eqq y$ if $x=\omega^{\frac{k}{2}}y\in \mathscr R$ for an integer $k$.

\begin{proposition}\label{prop:image}
Let $\fS$ be a triangulable pb surface with a triangulation $\lambda$, and assume that $\fS$ has no interior punctures.  
Then we have the following:
\begin{enumerate}[label={\rm (\alph*)}]\itemsep0,3em

\item For each $v \in \mathcal V=V_\lambda$, in $\widetilde{\mathscr{S}}_\omega(\cut_e (\fS))$ (and likewise in $\overline{\mathscr{S}}_\omega(\cut_e (\fS))$) we have  
\begin{align}\label{eq-splitting-gv}
   \mathbb S_e(\gaa_v)=
   \begin{cases}
   \big[\gaa_v \prod\limits_{u \in \mathcal{V}^1_v} \gaa_{u'} \prod\limits_{u \in \mathcal{V}^2_v} \gaa_{u''}\big] & \text{if } v \in \mathcal V \setminus \mathcal{V}_{\rm split},\vspace{2mm}\\
   [\gaa_{v'} \gaa_{v''}] & \text{if } v \in \mathcal{V}_{\rm split},
   \end{cases}
\end{align}
where $\cut_e (\fS)$ and $\mathbb S_e$ are defined in \S\ref{sub-splitting}.  

\item In particular, the algebra embedding  
\[
\mathbb{S}_e\colon \widetilde{\cS}_{\omega}(\fS) \longrightarrow \widetilde{\mathscr{S}}_\omega(\cut_e (\fS))
\]
extends uniquely to an algebra embedding  
\[
\mathbb{S}_e\colon \mathcal{A}_{\omega}(\fS, \lambda) \longrightarrow \mathcal{A}_{\omega}(\cut_e (\fS), \lambda_e),
\]
defined as in \eqref{eq-splitting-gv}, with $\gaa_v$ replaced by $A_v$, where $\lambda_e$ is defined in \S\ref{sub-sec-Kernel-trace}.
\end{enumerate}
\end{proposition}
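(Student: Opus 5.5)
The plan is to pass through the quantum trace maps and reduce part (a) to an identity between Laurent monomials in $n$-th root Fock–Goncharov tori. By Lemma~\ref{lem-basic-lem}(a),(c), the map $\mathrm{tr}_{\lambda_e}$ is injective on $\widetilde{\mathscr S}_\omega(\cut_e(\fS))$. By Theorem~\ref{thm-trace-cut} we have $\mathrm{tr}_{\lambda_e}\circ\mathbb S_e=\mathcal S_e\circ\mathrm{tr}_\lambda$. So it suffices to compare $\mathcal S_e(\mathrm{tr}_\lambda(\gaa_v))$ with $\mathrm{tr}_{\lambda_e}$ of the right-hand side of \eqref{eq-splitting-gv}.

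\textbf{Step 1: reduce to an identity of exponent vectors.} By Theorem~\ref{thm-trace-A}(d), $\mathrm{tr}_\lambda(\gaa_v)=Z^{K_\lambda(v,*)}$. By \eqref{cutting_homomorphism_for_Z_omega}, $\mathcal S_e$ sends a monomial to a monomial: the exponent at a vertex $s_i$ of $e$ is copied to both $s_i'$ and $s_i''$. Both sides are therefore single Weyl-ordered monomials, up to a power $\omega^{k/2}$.

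\textbf{Step 2: use reflection invariance to kill the scalar.} Both sides are reflection invariant. On the left this follows from Lemma~\ref{lem-reflection} and the fact that $\gaa_v$ is reflection invariant; on the right, a Weyl-ordered product of reflection-invariant $q$-commuting elements is reflection invariant. Since $*(\omega^{k/2}x)=\omega^{-k/2}x$ for a monomial $x$, any scalar $\omega^{k/2}$ must be $1$.

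\textbf{Step 3: the combinatorial identity on $K$.} It remains to prove, for $v\notin\mathcal V_{\rm split}$,
\[
\mathcal S_e\bigl(K_\lambda(v,*)\bigr)=K_{\lambda_e}(v,*)+\sum_{u\in\mathcal V^1_v}K_{\lambda_e}(u',*)+\sum_{u\in\mathcal V^2_v}K_{\lambda_e}(u'',*),
\]
and, for $v=s_i$,
\[
\mathcal S_e\bigl(K_\lambda(s_i,*)\bigr)=K_{\lambda_e}(s_i',*)+K_{\lambda_e}(s_i'',*).
\]
Both use the skeleton description \eqref{eq-surgen-exp}, $K_\lambda(u,w)=K_\tau(\skeleton_\tau(u),w)$.

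For $v=s_i$, the graph $\widetilde Y_{s_i}$ in $\fS$ is the union of the elongations starting on the two sides of $e$. After cutting, these become $\widetilde Y_{s_i'}$ and $\widetilde Y_{s_i''}$, and the segments add face by face.

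For $v\notin\mathcal V_{\rm split}$, the elongated weighted graph $\widetilde Y_v$ may cross $e$. In $\fS$ a crossing strand continues by turning left into the neighbouring triangle. After the cut it terminates on $e'$ or $e''$. The missing continuation is exactly a family of turning-left segments running along the curves $c'_i$ or $c''_i$ through $\tau'_1,\dots,\tau'_k$ or $\tau''_1,\dots,\tau''_m$. Such a family is the skeleton of $s_i'$ or $s_i''$ (minus its main segment), counted with the multiplicity recorded by $\mathcal V^1_v,\mathcal V^2_v$.

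One then checks, triangle by triangle, that the segment contributions $Y(s)$ match. This uses the definitions of $\mathcal V'_{s_i},\mathcal V''_{s_i}$ as the vertices with first barycentric coordinate $i$ (respectively $n-i$) relative to $p_1$ (respectively $p_2$).

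\textbf{Main obstacle.} I expect Step 3 to be the main difficulty: the bookkeeping of the weights $i,j,k$ carried by the elongated edges, and matching them to the multisets $\mathcal V^1_v,\mathcal V^2_v$. The first thing I would try is to verify it locally in $\mathbb P_4=\Delta_1\cup\Delta_2$ and then propagate along the left-turn elongation, using the $\mathbb Z_3$-invariance of $K_{\mathbb P_3}$. If direct segment matching becomes unwieldy, an alternative is to check the equivalent statement after multiplying by $H_{\lambda_e}$, using $H K=nI$ (Lemma~\ref{lem-matrix-id-LY}): a vector is determined by its image under $H$, and $H$ is local.

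\textbf{Part (b).} Define the map on generators $A_v$ by \eqref{eq-splitting-gv}. It is well defined because, by part (a), the images satisfy the same $q$-commutation relations as the $\gaa_v$, which hold in the domain $\widetilde{\mathscr S}_\omega(\cut_e(\fS))\subset\mathcal A_\omega(\cut_e(\fS),\lambda_e)$. It is injective because it sends distinct monomials to distinct monomials (the exponent on $A_v$ for $v\notin\mathcal V_{\rm split}$, and on $A_{s_i'}$ otherwise, recovers the original). It is unique because $\mathcal A^+_\omega\subset\widetilde{\mathscr S}_\omega(\fS)$ and the $A_v$ generate $\mathcal A_\omega(\fS,\lambda)$ after inverting them.
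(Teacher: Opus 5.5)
Your route is genuinely different from the paper's. The paper works with webs. It applies $\mathbb S_e$ to the diagram of $\gaa_v$ itself and identifies the result in the reduced skein algebra, up to a factor $\omega^{k/2}$, with the product of the webs $\gaa_v$, $\gaa_{s_i'}$, $\gaa_{s_j''}$ of $\mathsf{Cut}_e(\fS)$. It then fixes the scalar using the reflection invariance of $\mathbb S_e$.

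You instead pass to traces, and the reduction is sound for four reasons:
\begin{itemize}
\item $\mathrm{tr}_{\lambda_e}$ is injective on $\widetilde{\cS}_\omega(\mathsf{Cut}_e(\fS))$;
\item $\mathrm{tr}_{\lambda_e}\circ\mathbb S_e=\mathcal S_e\circ\mathrm{tr}_\lambda$;
\item $\mathrm{tr}(\gaa_w)=Z^{K(w,*)}$;
\item $\mathcal S_e$ sends each Weyl-ordered monomial $Z^{\mathbf k}$ exactly to $Z^{\mathbf k\circ\pi}$, where $\pi\colon V_{\lambda_e}\to V_\lambda$ is the gluing projection.
\end{itemize}
So, in $\widetilde{\cS}_\omega$, the claim is \emph{equivalent} to your exponent identity, and no skein computation is needed. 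Step~2 is superfluous: once the exponent vectors agree, the two monomials agree on the nose. As written, Step~2 would also need the reflection to descend to $\widetilde{\cS}_\omega$, which you have not justified. Part~(b) is fine.

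There are, however, two gaps.

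\textbf{First gap.} The statement also asserts the identity in $\overline{\cS}_\omega(\mathsf{Cut}_e(\fS))$, and your method cannot reach it. For general $n$, $\mathrm{tr}_{\lambda_e}$ is not known to be injective on the reduced skein algebra, so equal traces only give equality modulo $\ker\mathrm{tr}_{\lambda_e}$. The paper's diagrammatic argument has no such limitation. The later uses, such as the compatibility of $\mathbb S_e$ with $\mathbb S_e^U$, only need the $\widetilde{\cS}_\omega$ version.

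\textbf{Second gap.} Step~3 carries the whole content and is only sketched, and one point of the sketch is off. Your target formula contains all of $K_{\lambda_e}(s_i',*)$, including the main-segment term $K_{\Delta_1}(s_i',\cdot)$. So the first segment of the continuation across $e$ must play the role of the main segment of $Y_{s_i'}$, and likewise for $s_i''$. ``Minus its main segment'' would leave exactly that term unmatched.

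The cleanest way to finish Step~3 is the $H$-version you mention. Since $K_{\lambda_e}H_{\lambda_e}=nI$, your identity for $v\notin\mathcal V_{\mathrm{split}}$ is equivalent to
\[
\mathcal S_e\bigl(K_\lambda(v,*)\bigr)H_{\lambda_e}
= n\Bigl(\mathbf e_v+\sum_{u\in\mathcal V^1_v}\mathbf e_{u'}+\sum_{u\in\mathcal V^2_v}\mathbf e_{u''}\Bigr),
\]
and for $v=s_i$ it is equivalent to $\mathcal S_e(K_\lambda(s_i,*))H_{\lambda_e}=n(\mathbf e_{s_i'}+\mathbf e_{s_i''})$.

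At every coordinate $x\notin e'\cup e''$ this holds automatically. There one has $\sum_{w\in\pi^{-1}(y)}H_{\lambda_e}(w,x)=H_\lambda(y,x)$ for all $y\in V_\lambda$, because $Q_{\lambda_e}(y',x)+Q_{\lambda_e}(y'',x)=Q_\lambda(y,x)$ and the boundary edges of $\fS$ are untouched by the cut. Hence the left-hand side equals $(K_\lambda(v,*)H_\lambda)(x)=n\delta_{v,x}$, which is the right-hand side.

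What remains are the $2(n-1)$ coordinates at $s_j'$ and $s_j''$. These involve only $\skeleton_{\Delta_1}(v)$, $\skeleton_{\Delta_2}(v)$ and a finite computation with $K_{\mathbb P_3}$. This is precisely where the description of $\mathcal V'_{s_j}$ and $\mathcal V''_{s_j}$ by first barycentric coordinate has to enter, and it is the check you still need to carry out.
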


\begin{proof}
(a) For any $1\leq i\leq n-1$, we have that
\begin{align*}
\mathbb S_e(\gaa_{s_i})& \eqq (-1)^{\binom{n}{2}}
    \mathbb S_e\left(
\begin{array}{c}\includegraphics[scale=1]{cutting-gv-edge.pdf}\end{array}\right)
\\&\eqq (-1)^{\binom{n}{2}}\begin{array}{c}\includegraphics[scale=1]{after-cutting-gv-edge.pdf}\end{array}
\\&\eqq\begin{array}{c}\includegraphics[scale=1]{after-cutting-two-gv.pdf}\end{array}
\\&\eqq[\gaa_{s_i'} \gaa_{s_i''}]
\end{align*}    
Lemma \ref{lem-reflection} implies that
$\mathbb S_e(\gaa_{s_i})=[\gaa_{s_i'} \gaa_{s_i''}].$

Let $v\in \mathcal V\setminus \mathcal{V}_{\rm split}$. 
Note that $|\mathcal V_v^1|, |\mathcal V_v^2|=0,1,2,3$, and $|\mathcal V_v^1|+|\mathcal V_v^2|\leq 3$.
We prove only the case where $|\mathcal V_v^1| = |\mathcal V_v^2| = 1$, since the same argument applies to the other cases as well.
Suppose that $\mathcal V_v^1=\{s_i\}$
and $\mathcal V_v^2=\{s_j\}$.
Then we have that
\begin{align*}
    \mathbb S_e(\gaa_v)
    &\eqq \gaa_v \left( 
    \begin{array}{c}\includegraphics[scale=1]{left-side-arc.pdf}\end{array}\right)
    \left(\begin{array}{c}\includegraphics[scale=1]{right-side-arc.pdf}\end{array}\right)
    \\&\eqq
    \gaa_v \left( 
    \begin{array}{c}\includegraphics[scale=1]{left-side-gv.pdf}\end{array}\right)
    \left(\begin{array}{c}\includegraphics[scale=1]{right-side-gv.pdf}\end{array}\right)
    \\&\eqq [\gaa_v\gaa_{s_i'} \gaa_{s_j''}].
\end{align*}
Lemma \ref{lem-reflection} implies that
$\mathbb S_e(\gaa_{v})=[\gaa_v\gaa_{s_i'} \gaa_{s_j''}].$

(b) It follows from (a) and \eqref{sandwitch-projected}.
\end{proof}

Under the same assumption as Proposition \ref{prop:image},
we have the following immediate corollary.

\begin{corollary} \label{cor:quasi-com}
\begin{enumerate}[label={\rm (\alph*)}]\itemsep0,3em

\item For any $v_1,v_2\in \mathcal V\setminus \mathcal{V}_{\rm split}$,
$$\Lambda(A_{v_1},A_{v_2})=\Lambda([A_{v_1}\prod\limits_{
u\in \mathcal{V}^1_{v_1}} A_{u'}\prod\limits_{
u\in \mathcal{V}^2_{v_1}}A_{u''}],[A_{v_2}\prod\limits_{u\in \mathcal{V}^1_{v_2}} A_{u'}\prod\limits_{u\in \mathcal{V}_{v_2}^2}A_{u''}]).$$ 

\item For any $v_1\in \mathcal V\setminus \mathcal{V}_{\rm split}$ and $u\in \mathcal{V}_{\rm split}$,
$$\Lambda(A_{v_1},A_{u})=\Lambda([A_{v_1}\prod\limits_{
v\in \mathcal{V}^1_{v_1}} A_{v'}\prod\limits_{
v\in \mathcal{V}^2_{v_1}}A_{v''}],[A_{u'}A_{u''}]).$$ 

\item For any $u_1,u_2\in \mathcal{V}_{\rm split}$, $\Lambda(A_{u_1},A_{u_s})=\Lambda([A_{u'_1}A_{u''_1}],[A_{u'_2}A_{u''_2}]).$

\end{enumerate}
\end{corollary}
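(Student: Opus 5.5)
The plan is to deduce all three identities from Proposition~\ref{prop:image}. The key ingredients are that $\mathbb S_e$ is an algebra homomorphism and that it is injective on the quantum tori.

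The first step is to fix the setting. By Proposition~\ref{prop:image}(b), the assignment \eqref{eq-splitting-gv}, with $\gaa_v$ replaced by $A_v$, extends to an algebra embedding
\[
\mathbb S_e\colon \mathcal A_\omega(\fS,\lambda)\longrightarrow \mathcal A_\omega(\cut_e(\fS),\lambda_e).
\]
In $\mathcal A_\omega(\fS,\lambda)$ the generators satisfy $A_{v_1}A_{v_2}=\xi^{\Pi_\lambda(v_1,v_2)}A_{v_2}A_{v_1}$, so $\Lambda(A_{v_1},A_{v_2})=\Pi_\lambda(v_1,v_2)$. Applying the homomorphism $\mathbb S_e$ to this relation gives
\[
\mathbb S_e(A_{v_1})\,\mathbb S_e(A_{v_2})=\xi^{\Pi_\lambda(v_1,v_2)}\,\mathbb S_e(A_{v_2})\,\mathbb S_e(A_{v_1}).
\]

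The second step is to read off the $\Lambda$ of the images. The images $\mathbb S_e(A_v)$ are Weyl-ordered monomials in the target quantum torus, so any two of them are quasi-commutative. Moreover, their commutation exponent is uniquely determined, because nonzero monomials $X,Y$ with $XY=\xi^mYX=\xi^{m'}YX$ force $\xi^m=\xi^{m'}$. Here $\xi$ is a formal invertible variable in $R=\mathbb Z[\omega^{\pm1/2}]$ and the target torus is a domain, so $m=m'$.

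The final step is to substitute the formulas from \eqref{eq-splitting-gv}:
\begin{itemize}
\item for $v\in\mathcal V\setminus\mathcal V_{\rm split}$, the image is $\bigl[A_v\prod_{u\in\mathcal V^1_v}A_{u'}\prod_{u\in\mathcal V^2_v}A_{u''}\bigr]$;
\item for $u\in\mathcal V_{\rm split}$, the image is $[A_{u'}A_{u''}]$.
\end{itemize}
Taking $v_1,v_2$ both outside $\mathcal V_{\rm split}$ gives (a). Taking one vertex outside and one in $\mathcal V_{\rm split}$ gives (b). Taking both in $\mathcal V_{\rm split}$ gives (c). In (c), $u_s$ is read as $u_2$.

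The only step needing care is the justification of the embedding in Proposition~\ref{prop:image}(b). It should come from the fact that $\mathbb S_e$ on $\widetilde{\cS}_\omega(\fS)$ is a homomorphism, together with the sandwich \eqref{sandwitch-projected}. The generators $\gaa_v=A_v$ already lie in $\widetilde{\cS}_\omega(\fS)$, and their images satisfy the displayed relations there, because \eqref{eq-splitting-gv} holds in $\widetilde{\cS}_\omega(\cut_e(\fS))\subset\mathcal A_\omega(\cut_e(\fS),\lambda_e)$. So the commutation identities can be verified on $\gaa_v$ directly in the skein algebra, and Ore localization is not needed. I therefore expect no real obstacle.
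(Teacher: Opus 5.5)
Your proposal is correct and is the same argument the paper has in mind: the paper calls this an ``immediate corollary'' of Proposition~\ref{prop:image}. The point is that $\mathbb S_e$ is an algebra homomorphism sending each $A_v$ (equivalently $\gaa_v$) to the stated Weyl-ordered monomial. It therefore preserves the commutation exponents $\Lambda$, which are uniquely determined between nonzero monomials in the target quantum torus.
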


We use $Q$ (resp.\ $Q_{\rm split}$) to denote $Q_\lambda$ (resp.\ $Q_{\lambda_e}$).  
The following lemma is immediate.

\begin{lemma}\label{lem-splitting-Q}
$Q_{\rm split}$ is a splitting of $Q$ along the vertices $\mathcal V_{\rm split}$ (Definition~\ref{def-splitting-matrix}).
\end{lemma}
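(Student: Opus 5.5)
We need to show that $Q_{\rm split}=Q_{\lambda_e}$ satisfies the two conditions of Definition~\ref{def-splitting-matrix}, with $\mathcal V_{\rm split}=\{s_1,\dots,s_{n-1}\}$ the small vertices on $e$, and with $s_i'$, $s_i''$ the copies of $s_i$ lying in $\Delta_1$ and $\Delta_2$ after cutting. The plan is to work locally. By construction (\S\ref{sec-traceX}), $Q_\lambda$ is the sum of the contributions $Q_\tau$ over faces $\tau\in\mathbb F_\lambda$, pushed forward along the characteristic maps $f_\tau$. Here $Q_\tau$ is the signed adjacency matrix of the weighted quiver $\Gamma_{\mathbb P_3}$ placed in $\tau$, and arrows that are identified when edges are glued add (and possibly cancel). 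The same description holds for $Q_{\lambda_e}$, since $\lambda_e$ has the same faces as $\lambda$. The only difference is that the gluing along $e$ is not performed: each $s_i$ is replaced by $s_i'$ in one face and $s_i''$ in the other.

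First consider the second condition. Take $u,v\notin\mathcal V_{\rm split}$. Every arrow between $u$ and $v$ lies in some face $\tau$, and its weight is the same in both quivers. The only way a pair can receive contributions from two faces is if both endpoints lie on a common edge. That edge cannot be $e$, because $u,v\notin e$. Gluing along edges other than $e$ is identical in $\lambda$ and $\lambda_e$. Hence $Q_{\rm split}(u,v)=Q(u,v)$.

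Now consider the first condition. Take $u\notin\mathcal V_{\rm split}$ and $v=s_i$. Then
\[
Q(u,s_i)=\sum_{\tau\ni s_i}Q_\tau(u,s_i)=Q_{\Delta_1}(u,s_i)+Q_{\Delta_2}(u,s_i),
\]
where $Q_{\Delta_1}(u,s_i)$ and $Q_{\Delta_2}(u,s_i)$ are exactly $Q_{\rm split}(u,s_i')$ and $Q_{\rm split}(u,s_i'')$ under the labelling conventions. This gives the additivity. For the sign condition, note that a vertex $u$ not on $e$ lies in the interior of exactly one face, or on a different edge. Such a $u$ can be adjacent (by a small edge of the $n$-triangulation) to $s_i$ in at most one of $\Delta_1,\Delta_2$, with one exception: $u$ lies on an edge shared by both faces. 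That case occurs only when $\Delta_1$ and $\Delta_2$ share two sides. Even then, the two adjacencies come from different small triangles, and the arrow directions must be compared.

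So one of $Q_{\rm split}(u,s_i')$, $Q_{\rm split}(u,s_i'')$ vanishes, and therefore $Q_{\rm split}(u,s_i')\,Q_{\rm split}(u,s_i'')\ge 0$. The frozen/weight bookkeeping needs a brief check: the half-weight arrows along $e$ in $Q_{\rm split}$ connect only vertices of $\mathcal V_{\rm split}'\cup\mathcal V_{\rm split}''$, so they do not enter either condition.

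The main obstacle is the degenerate configuration where $\Delta_1=\Delta_2$ up to the gluing, or where the two faces share a second edge (for example when $p_1=p_2$). In that case one vertex $u$ could be adjacent to $s_i$ on both sides. I would handle it by examining the local $n$-triangulation near $s_i$. Small vertices adjacent to an edge vertex $s_i$ within a face lie in the first interior layer or on $e$ itself. A vertex $u$ on a second shared edge can be adjacent to $s_i$ only when $n$ is small and $s_i$ is near a corner. Since the orientation rule makes arrows parallel to $e$ in both faces point consistently relative to $u$, the signs agree. If this case analysis becomes unwieldy, I would instead pass to a finer model in which the two faces are distinct and invoke the general sign pattern.
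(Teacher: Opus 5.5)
Your overall strategy is the paper's. You write $Q$ and $Q_{\rm split}$ as sums of face contributions, observe that the only difference is that the gluing along $e$ is not performed, and read off $Q_{\rm split}(u,v)=Q(u,v)$ for $u,v\notin\mathcal V_{\rm split}$ and $Q(u,s_i)=Q_{\rm split}(u,s_i')+Q_{\rm split}(u,s_i'')$. Those parts are fine.

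The gap is in the only case where the sign condition has content: $\Delta_1$ and $\Delta_2$ share a second edge $b$, and $u$ lies on $b$. There your conclusion that one of $Q_{\rm split}(u,s_i')$, $Q_{\rm split}(u,s_i'')$ vanishes is false in general. For $n=2$ with $\Delta_1\cup\Delta_2$ a cylinder, both entries are nonzero; this is exactly the case the paper singles out. Your justification also looks at the wrong arrows. Inside a face, a vertex on side $b$ is joined to a vertex on side $e$ only at the corner where $b$ and $e$ meet, by the small edge from $(n-1,1,0)$ to $(n-1,0,1)$ (coordinates with that corner at $(n,0,0)$). That edge is parallel to the third side, not to $e$, so ``arrows parallel to $e$ point consistently'' says nothing about its sign. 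Two smaller points: the worry $\Delta_1=\Delta_2$ is moot, since that would be a self-folded triangle, which is excluded; and the ``finer model'' fallback is not an argument.

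The missing step is to locate these corners. Because $\fS$ is oriented and has no interior punctures, the corner $e\cap b$ in $\Delta_1$ and the corner $e\cap b$ in $\Delta_2$ lie at opposite ends of $e$. If they were at the same end, that puncture would be surrounded by just these two corners and hence be interior. So the vertex of $e$ with a neighbour on $b$ is the one nearest one end of $e$ inside $\Delta_1$, and the one nearest the other end inside $\Delta_2$; that is, $s_1$ and $s_{n-1}$ in some order.

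For $n\ge 3$ these vertices are distinct, so $Q_{\rm split}(u,s_i')\,Q_{\rm split}(u,s_i'')=0$ for every $i$. This is the paper's first sentence.

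For $n=2$ both entries are nonzero, and you still must show they have the same sign. You can compute the orientation of the middle $3$-cycle in each face directly. Alternatively, use the orientation-preserving involution of $\Delta_1\cup\Delta_2$ (glued along $b$ only) that swaps the two faces. It fixes the midpoint $u$ of $b$, sends $s'$ to $s''$, and carries $\Gamma_{\Delta_1}$ to $\Gamma_{\Delta_2}$. Hence $Q_{\rm split}(u,s')=Q_{\rm split}(u,s'')$, and the product is $>0$, as the paper asserts.
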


\begin{proof}
It is clear that  
\[
   Q_{\rm split}(u,v) = Q(u,v) \qquad 
   \text{for all } u,v \in \mathcal V \setminus \mathcal V_{\rm split}.
\]

Let $w \in \mathcal V \setminus \mathcal V_{\rm split}$ and $s \in \mathcal V_{\rm split}$.  
Trivially,
\[
   Q(w,s) = Q_{\rm split}(w,s') + Q_{\rm split}(w,s'').
\]
If $n \ge 3$, then $Q_{\rm split}(w,s') \, Q_{\rm split}(w,s'') = 0$.

When $n = 2$, the only nontrivial case occurs when $w$ and $s'$ lie in the same triangle
and $w$ and $s''$ also lie in the same triangle.  
In this situation it is straightforward to check that
\[
   Q_{\rm split}(u,s') \, Q_{\rm split}(u,s'') > 0.
\]

The proof is compelete.
\end{proof}

\begin{lemma}\label{lem:mut}
    For any $v\in \mathcal{V}_{\rm mut}\setminus \mathcal{V}_{\rm split}$ and  $u\in \mathcal{V}_{\rm split}$, 
    we have
\begin{align}
\label{lem-eq-Q-split-1}
    [Q_{\rm split}(u'',v)]_+\,+ \sum\limits_{v_1\in Q(-,v)\cap \mathcal{V}'_{u}}Q(v_1,v)=
[Q_{\rm split}(v,u'')]_+\,+ \sum\limits_{v_1\in Q(v,-)\cap \mathcal{V}'_{u}}Q(v,v_1),\\
\label{lem-eq-Q-split-2}
[Q_{\rm split}(u',v)]_+\,+ \sum\limits_{v_1\in Q(-,v)\cap \mathcal{V}''_{u}}Q(v_1,v)=
[Q_{\rm split}(v,u')]_+\,+ \sum\limits_{v_1\in Q(v,-)\cap \mathcal{V}''_{u}}Q(v,v_1).
\end{align}
\end{lemma}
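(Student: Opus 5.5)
The plan is to first turn each identity into a single linear condition and then check that condition by a local, triangle-by-triangle cancellation on the quiver $\Gamma_\lambda$. I treat \eqref{lem-eq-Q-split-1}; \eqref{lem-eq-Q-split-2} follows by the symmetric argument with $p_2$, $c_i''$ and $u'$ in place of $p_1$, $c_i'$ and $u''$.

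\textbf{Step 1 (reformulation).} Since $[a]_+-[-a]_+=a$, and since $Q(v,v_1)=-Q(v_1,v)$ by antisymmetry, the difference LHS$-$RHS of \eqref{lem-eq-Q-split-1} equals
\[
Q_{\rm split}(u'',v)+\sum_{w\in \mathcal V'_u} Q(w,v),
\]
where the sum is over the multiset $\mathcal V'_u$, counted with multiplicity. Put $u=s_i$. By Lemma~\ref{lem-splitting-Q} and the definition of $\lambda_e$, the term $Q_{\rm split}(s_i'',v)$ is exactly the part of $Q(s_i,v)$ coming from the triangle $\Delta_2$. So the claim becomes: for every mutable $v\notin\mathcal V_{\rm split}$,
\[
Q_{\rm split}(s_i'',v)+\sum_{w\in\mathcal V'_{s_i}} Q(w,v)=0 .
\]
Geometrically, this says the weighted indicator vector of the curve $c_i'$, closed up by the split vertex $s_i''$, is $Q$-orthogonal to every interior vertex not on $e$.

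\textbf{Step 2 (cancellation inside one triangle).} In a single triangle, the vertices whose first barycentric coordinate equals $i$ form a segment parallel to the edge opposite the corner $p_1$. Take an interior vertex $v$ of the triangle.
\begin{itemize}
\item If $v$ lies on this segment, it meets the segment in two arrows, one incoming and one outgoing (both parallel to the segment). These cancel.
\item If $v$ lies on an adjacent parallel line (first coordinate $i\pm1$), it is joined to two consecutive segment vertices by arrows in the two other edge directions. By the orientation rule (interior arrows parallel to the boundary orientation), one of these arrows points into $v$ and the other out, so again they cancel.
\item All other vertices of the triangle contribute nothing.
\end{itemize}
Thus the sum $\sum_{w\in\mathcal V'_{s_i}}Q(w,v)$ vanishes for every $v$ in the interior of any $\tau_j'$, and what is left comes only from $v$ lying on the shared edges between consecutive triangles.

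\textbf{Step 3 (gluing edges and the ends of $c_i'$).} For $v$ on an edge shared by $\tau_j'$ and $\tau_{j+1}'$, the curve $c_i'$ passes through $v$ or next to it. I add the two half-triangle contributions, including the weight-$\tfrac12$ boundary arrows that become weight $1$ or cancel after gluing, and check that the total is zero, using the same in/out count as in Step 2.

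What remains are the two ends of $c_i'$ on $e$. One end sits next to $s_i$ inside $\Delta_2=\tau_1'$, where the vertex $s_i$ itself is excluded from $\mathcal V'_{s_i}$. Its missing contribution is exactly $Q_{\rm split}(s_i'',v)$, which restores the cancellation. At the other end, in $\Delta_1$, I need to check that only vertices of $e$ are involved. Those are excluded because $v\notin\mathcal V_{\rm split}$, so that end gives nothing.

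\textbf{Main obstacle.} I expect the difficulty to be entirely in Step 3: the boundary and endpoint bookkeeping. This covers the half-weight boundary arrows, triangles that are revisited when $p_1=p_2$ or when $c_i'$ winds around $p_1$ (which is what creates multiplicities in $\mathcal V'_{s_i}$), and the degenerate case $n=2$, where the segment in each triangle is a single vertex. I would handle it by fixing a local picture of two adjacent triangles in barycentric coordinates and checking the few vertex positions there directly. Multiplicities then add linearly, because the orthogonality is checked separately for each traversal of a triangle.
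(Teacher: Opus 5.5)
Your Step 1 is correct, and it is what the paper's proof reduces to. Since $[a]_+-[-a]_+=a$, \eqref{lem-eq-Q-split-1} is equivalent to $Q_{\rm split}(u'',v)+\sum_{w\in\mathcal V'_u}Q(w,v)=0$, with multiplicities counted. The paper first splits into cases: the sign of $Q(u,v)$, which of $u',u''$ is adjacent to $v$, and an exceptional $n=2$ cylinder configuration. It then asserts exactly these orthogonality relations ``from the definitions.'' Your uniform formulation is cleaner: the cylinder case is just your shared-edge check, with $\Delta_1$ reappearing as $\tau_2'$. The in/out cancellation in Step 2 is also right. One small correction: for $n=2$ the level-$1$ set in a triangle is the two edge vertices on the sides through $p_1$, not a single vertex. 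So Step 2 is vacuous for $n=2$, and everything happens in Step 3.

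The gap is at the far end of $c_i'$ in Step 3.
\begin{itemize}
\item You say $c_i'$ has two ends on $e$, the second in $\Delta_1$, and that this end ``gives nothing'' because vertices of $e$ are excluded since $v\notin\mathcal V_{\rm split}$. This confuses the test vertex $v$ with the curve vertices $w$.
\item The conclusion would actually be false in the situation you describe. Suppose $c_i'$ closed up around $p_1$ and came back to $s_i$ through $\Delta_1$, as it would for an interior puncture. Then $s_i\notin\mathcal V'_{s_i}$, so the level line in $\Delta_1$ is incomplete. For a mutable $v\notin e$ adjacent to $s_i'$ in $\Delta_1$, your sum equals $-Q_{\rm split}(s_i',v)\neq 0$ while $Q_{\rm split}(s_i'',v)=0$, so \eqref{lem-eq-Q-split-1} fails.
\item What makes the identity true is a hypothesis you never use: $\mathfrak{S}$ has no interior punctures. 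Hence $p_1\in\partial\mathfrak{S}$, and $c_i'$ runs from $s_i$ through $\tau_1'=\Delta_2,\dots,\tau_k'$ until it reaches a boundary edge of $\mathfrak{S}$. This is why the paper's curves have a starting point and an endpoint.
\item The last vertex of $c_i'$ is frozen and \emph{does} lie in $\mathcal V'_{s_i}$. So Step 2 applies verbatim in $\tau_k'$, and the only correction term is $Q_{\rm split}(s_i'',v)$ at the start.
\end{itemize}
Replace your far-end paragraph with this argument. The same objection applies wherever $c_i'$ meets $e$ a second time, which can only happen when $p_1=p_2$. An omitted vertex of $e$ there leaves an uncancelled term $Q(\cdot,v)$ that cannot be dismissed by $v\notin\mathcal V_{\rm split}$, so that configuration must be examined rather than waved through.
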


\begin{proof}
First consider the case \(Q(u,v)=0\).  
By the construction of \(\mathcal V'_u\), \(\mathcal V''_u\), and \(Q\), both
\eqref{lem-eq-Q-split-1} and \eqref{lem-eq-Q-split-2} follow immediately.

It remains to treat the case \(Q(u,v)>0\) (the same argument works when \(Q(u,v)<0\)).
Let \(\tau_1,\tau_2\) be the triangles of \(\lambda\) containing \(v\)
(with \(\tau_1=\tau_2\) if \(v\) lies in only one triangle).
Observe that
\(Q_{\mathrm{split}}(u'',v)\,Q_{\mathrm{split}}(u',v)=0\)
except when \(n=2\), \(\tau_1\neq\tau_2\), and the subsurface
\(\tau_1\cup\tau_2\) is a cylinder.
In this exceptional situation a direct computation yields
\begin{align*}
[Q_{\mathrm{split}}(u'',v)]_+
  &=\sum_{v_1\in Q(v,-)\cap \mathcal V'_u} Q(v,v_1)
   = [Q_{\mathrm{split}}(u',v)]_+
   =\sum_{v_1\in Q(v,-)\cap \mathcal V''_u} Q(v,v_1) = 1,\\
\sum_{v_1\in Q(-,v)\cap \mathcal V'_u} Q(v_1,v)
  &= [Q_{\mathrm{split}}(v,u'')]_+
   =\sum_{v_1\in Q(-,v)\cap \mathcal V''_u} Q(v_1,v)
   = [Q_{\mathrm{split}}(v,u')]_+ = 0.
\end{align*}
Hence \eqref{lem-eq-Q-split-1} and \eqref{lem-eq-Q-split-2} hold in this case as well.

Next suppose \(Q_{\mathrm{split}}(u'',v)\,Q_{\mathrm{split}}(u',v)=0\).
If \(Q_{\mathrm{split}}(u'',v)=0\), then
\(Q_{\mathrm{split}}(u',v)=Q(u,v)>0\) (Lemma~\ref{lem-splitting-Q}).
From the definitions of \(\mathcal V'_u\), \(\mathcal V''_u\), and \(Q\),
we obtain
\begin{align}
\label{lem-eq-Q-split-3}
\sum_{v_1\in Q(-,v)\cap \mathcal V'_u} Q(v_1,v)
 &= \sum_{v_1\in Q(v,-)\cap \mathcal V'_u} Q(v,v_1),\\[4pt]
\label{lem-eq-Q-split-4}
Q(u,v)+\sum_{v_1\in Q(-,v)\cap \mathcal V''_u} Q(v_1,v)
 &= \sum_{v_1\in Q(v,-)\cap \mathcal V''_u} Q(v,v_1).
\end{align}
Equation \eqref{lem-eq-Q-split-1} follows from
\eqref{lem-eq-Q-split-3} together with \(Q_{\mathrm{split}}(u'',v)=0\),
and \eqref{lem-eq-Q-split-2} follows from
\eqref{lem-eq-Q-split-4} and the identity
\(Q_{\mathrm{split}}(u',v)=Q(u,v)\).
The case \(Q_{\mathrm{split}}(u',v)=0\) is analogous.

The proof is complete.
\end{proof}

The following theorem—the main result of this section—establishes a splitting homomorphism of the ${\rm SL}_n$ quantum upper cluster algebra, compatible with the splitting homomorphism for the projected stated ${\rm SL}_n$-skein algebra.
In \S\ref{sec-skein-inclusion-cluster}, we will use this map to prove the inclusion $\dS \subset \mathscr A_{\omega}(\fS)$ by decomposing $\fS$ into triangles and quadrilaterals.
Beyond this application, the theorem also offers a general tool for reducing other problems—such as Conjecture~\ref{con-equality-Skein-A-U}—to the case of triangles and quadrilaterals.

\begin{theorem}\label{thm:splitU}
Let $\lambda$ be a triangulation of a triangulable pb surface $\fS$ and edge $e$ be an ideal arc in $\lambda$. 
Assume that $\fS$ contains no interior punctures.
The assignments 
    $$A_v\mapsto 
    \begin{cases}
    [A_v\prod\limits_{u\in\mathcal V_v^1} A_{u'}\prod\limits_{u\in\mathcal V_v^2}A_{u''}] & \text{ if } v\in \mathcal V\setminus \mathcal{V}_{\rm split},\vspace{2mm}\\
    [A_{u'}A_{u''}] & \text{ if } u\in \mathcal{V}_{\rm split},
    \end{cases}
    $$ define an injective reflection-invariant (see \eqref{def-eq-ref-torus} and \S\ref{sub-sec-invariant}) splitting homomorphism $\mathbb S_e^U:\mathscr{U}_{\omega}(\fS)\to \mathscr{U}_{\omega}(\cut_e(\fS))$ on $\mathcal{V}_{\rm split}$, moreover, which is compatible with the splitting homomorphism for the projected stated ${\rm SL}_n$-skein algebras in the sense that the following diagram commutes:
    $$\centerline{\xymatrix{
  &\widetilde{\mathscr S}_\omega(\fS) \ar@{^{(}->}[rrr]^{\mathbb S_e}\ar@{^{(}->}[d]  &&& \widetilde{\mathscr S}_\omega(\cut_e(\fS)) \ar@{^{(}->}[d]          \\
  &  \mathscr U_{\omega}(\fS) \ar@{^{(}->}[rrr]^{\mathbb S_e^U}  &&& \mathscr U_{\omega}(\cut_e(\fS)).}}$$ 
  Here $\mathcal V=V_\lambda$, and $\mathcal V_{\rm split}$,  $\mathcal V_{v}^1$, and  $\mathcal V_{v}^2$ are defined in \eqref{def-V-split-skein}
\end{theorem}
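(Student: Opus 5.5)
The plan is to obtain Theorem~\ref{thm:splitU} as a direct application of Proposition~\ref{prop:split}, with the skein-theoretic Proposition~\ref{prop:image} supplying compatibility. Take the initial seed $\mathbf s=w_\lambda=(Q_\lambda,\Pi_\lambda,M_\lambda)$ of $\mathscr U_\omega(\fS)$. Take $\mathbf s_{\rm split}=w_{\lambda_e}$ for $\mathscr U_\omega(\cut_e(\fS))$, where $\lambda_e$ is the induced triangulation. Let $\mathcal V_{\rm split}=\{s_1,\dots,s_{n-1}\}$ be the small vertices on $e$, with the multisets $\mathcal V'_{s_i},\mathcal V''_{s_i}$ as constructed above. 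By Theorem~\ref{thm-main-1}, these algebras do not depend on the chosen triangulations, so it is legitimate to compute with $\lambda$ and $\lambda_e$.

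First, check the hypotheses of Proposition~\ref{prop:split}. Lemma~\ref{lem-splitting-Q} shows that $Q_{\lambda_e}$ is a splitting of $Q_\lambda$ on $\mathcal V_{\rm split}$. Conditions (a)--(c) are exactly Corollary~\ref{cor:quasi-com}. That corollary comes from Proposition~\ref{prop:image}(b): the assignment is the restriction of an algebra embedding $\mathcal A_\omega(\fS,\lambda)\to\mathcal A_\omega(\cut_e(\fS),\lambda_e)$, so the $\xi$-commutation exponents are preserved. Condition (d) is Lemma~\ref{lem:mut}. Proposition~\ref{prop:split} then produces an injective, reflection-invariant splitting homomorphism $\iota\colon \mathscr U_{w_\lambda}\to\mathscr U_{w_{\lambda_e}}$ given by the displayed assignments, and we define $\mathbb S_e^U:=\iota$.

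Next, prove that the diagram commutes. By Theorem~\ref{thm-main-3} together with the identification $\Fr(\dS)=\Fr(\mathcal A_\omega(\fS,\lambda))$ used in Lemma~\ref{lem-seed-skein}, both vertical maps are the restrictions of the embeddings $\trA$ and ${\rm tr}^A_{\lambda_e}$ into the quantum tori, and both upper cluster algebras sit inside these tori. Proposition~\ref{prop:image}(b) says that $\mathbb S_e$ on $\dS$ is the restriction of the torus map $A_v\mapsto$ (the same formula) on $\mathcal A_\omega(\fS,\lambda)$. The proof of Proposition~\ref{prop:split} builds $\iota$ as the restriction of that same torus embedding $\mathbb T(\mathbf s)\to\mathbb T(\mathbf s_{\rm split})$. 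The two routes around the square are therefore restrictions of one map, and the square commutes.

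The step I expect to need the most care is a bookkeeping point in applying Proposition~\ref{prop:split}. Its conclusion lands in $\mathbb T(\mathbf s_{\rm split})\cap\bigcap_{v\in\mathcal V_{\rm mut}\setminus\mathcal V_{\rm split}}\mathbb T(\mu_v(\mathbf s_{\rm split}))$, and this must equal $\mathscr U_\omega(\cut_e(\fS))$. The justification is that the mutable set of $\lambda_e$ is exactly $\mathcal V_{\rm mut}\setminus\mathcal V_{\rm split}$, since the vertices $s_i',s_i''$ become frozen once they lie on the boundary. With that, Theorem~\ref{thm-upper-w-upper} applies. One must also confirm that the frozen-variable proportionality conditions in the definition of a splitting map hold; these are immediate from the formulas.
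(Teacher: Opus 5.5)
Your proposal is correct and follows the paper's own argument: you apply Proposition~\ref{prop:split}, with its hypotheses supplied by Lemma~\ref{lem-splitting-Q}, Corollary~\ref{cor:quasi-com} and Lemma~\ref{lem:mut}, and you get commutativity of the square from Proposition~\ref{prop:image}, since both routes are restrictions of the same torus embedding. Your extra bookkeeping is what the paper leaves implicit: that the mutable set of $\lambda_e$ is $\mathcal V_{\rm mut}\setminus\mathcal V_{\rm split}$, so that Theorem~\ref{thm-upper-w-upper} identifies the target with $\mathscr U_\omega(\cut_e(\fS))$.
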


\begin{proof}

The splitting homomorphism $\mathbb S_e^U:\mathscr{U}_{\omega}(\fS)\to \mathscr{U}_{\omega}(\cut_e(\fS))$ at $\mathcal{V}_{\rm split}$ is followed by Proposition~\ref{prop:split}, Corollary~\ref{cor:quasi-com} and Lemma~\ref{lem:mut}.
By Proposition~\ref{prop:image}, the diagram commutes.

The proof is complete.
\end{proof}



    





\section{The inclusion of skein algebras into quantum cluster algebras}
\label{sec-skein-inclusion-cluster}

In this section we prove the main result of the paper, Theorem~\ref{thm-skein-inclusion-A}, which shows that  
\[
\dS \subset \mathscr A_{\omega}(\fS)
\]
whenever every connected component of $\fS$ has at least two boundary components.  
A properly embedded oriented arc in $\fS$ is called an \emph{essential arc}  
if its endpoints lie on two distinct components of $\partial \fS$.  
Lemma~\ref{lem-essential-arc} asserts that, as an $R$-algebra,  
$\dS$ is generated by stated essential arcs.  
 Theorem~\ref{thm-skein-inclusion-A} provides an explicit expression for each stated essential arc in $\fS$ as the Weyl-ordered product (see \eqref{Weyl-A}) of an exchangeable cluster variable (Definition~\ref{def-quan-cluster-algebra}) and a monomial in frozen cluster variables. 
 
 If one can further show that  
$\mathscr U_{\omega}(\fS)$ is generated by these exchangeable cluster variables as an $R\langle A^{\pm1}_v\mid v\in \mathcal V\setminus \mathcal V_{\mathrm{mut}}\rangle$-algebra, it would follow that  
\[
\dS = \mathscr A_{\omega}(\fS) = \mathscr U_{\omega}(\fS),
\]
offering a promising path toward establishing this equality.

The main result of this section is the following.

\begin{theorem}\label{thm-skein-inclusion-A}
Let $\fS$ be a pb surface without interior punctures. We require that every component of $\fS$ contains at least two punctures.
    Then we have $$\dS\subset \mathscr A_{\omega}(\fS).$$
 Moreover, each stated essential arc is 
 the Weyl-ordered product (see \eqref{Weyl-A}) of an exchangeable cluster variable and a Laurent monomial in frozen cluster variables as stated in Theorem~\ref{intro-thm-skein-inclusion-A}(a), (b), and (c).
\end{theorem}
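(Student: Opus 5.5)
The proof will be a reduction: first to generators, then to triangles and quadrilaterals, where explicit formulas are verified. By Lemma~\ref{lem-essential-arc}, $\dS$ is generated as an $R$-algebra by finitely many stated essential arcs. Since $\mathscr A_{\omega}(\fS)$ is an $R$-algebra containing the frozen variables and their inverses, it therefore suffices to show that every stated essential arc equals the Weyl-ordered product of an exchangeable cluster variable and a Laurent monomial in frozen variables, exactly as in Theorem~\ref{intro-thm-skein-inclusion-A}(a)--(c). Up to isotopy and the choice of $\lambda$ (allowed by Theorem~\ref{thm-main-1}), each essential arc is one of two kinds. It is either a corner arc around a puncture $v_1$ with both ends on distinct boundary edges, realized inside a triangle $\tau$ of $\lambda$ with sides $e_1,e_2,e_3$ as in Figure~\ref{Fig;tau-v1}. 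Or it crosses a single interior edge $c_5$ of a quadrilateral bounded by $c_1,\dots,c_4$ as in Figure~\ref{Fig;essential-P4}. Since every component has at least two punctures, such a triangulation always exists.

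\textbf{Local step: $\mathbb P_3$ and $\mathbb P_4$.}
\begin{itemize}
\item For $\fS=\mathbb P_3$, Theorem~\ref{lem-trace-image-cornerarc-P3} combined with \eqref{intro-eq-compability-tr-A-X-diag} writes $C_{ij}$ as a path sum $\sum_p A_p$.
  \begin{itemize}
  \item For $j=1$ and $j=i$ there is a single path, and formula \eqref{intro-eq-Cij} is read off directly.
  \item For $1<j<i$, I will prove \eqref{intro-mutation-Ap} by induction on $n$. The bijection of Lemma~\ref{lem:divide2} splits the paths from $i$ to $j$ into the ${\rm SL}_{n-1}$ paths from $i-1$ to $j-1$ and from $i-1$ to $j$. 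I will then track the quiver under $\mu_{(j;j-1)}\cdots\mu_{(i-1;j-1)}$ and check that each mutation realizes exactly one step of this decomposition.
  \end{itemize}
  The formula \eqref{intro-eq-bar-Cij} for $\overline C_{ij}$ is handled symmetrically.
\item For $\mathbb P_4$, I cut along the diagonal $c_5$. By the injectivity of $\mathbb S_{c_5}$ (Lemma~\ref{lem-basic-lem}(c)) and of $\mathbb S^U_{c_5}$, together with the commutative square of Theorem~\ref{thm:splitU}, it suffices to compare the images of both sides of \eqref{into-eq-Dij}.
  \begin{itemize}
  \item The left side becomes $\sum_{t} (\text{corner arc})_{it}\,(\text{corner arc})_{tj}$ in the two triangles, and the $\mathbb P_3$ formulas apply to each factor.
  \item For the right side, I use Corollary~\ref{cor:propotion} to transport the cluster variable. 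The needed explicit form comes from the observation that, after $\overline\mu^\diamondsuit_j\circ\mu^\diamondsuit_{(i;j-1)}$, the vertices $v_{j-1,j-1},\dots,v_{11}$ span a linearly oriented type $A$ subquiver. The quantum type $A$ expansion formula then gives the variable as an explicit sum, which I match term by term.
  \end{itemize}
\end{itemize}

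\textbf{Globalization.} For a general $\fS$, I cut out the triangle along $e_2$ (case (a), (b)), or the quadrilateral along $c_1,c_3$ (case (c)). By Theorem~\ref{thm:splitU}, $\mathbb S^U$ is injective and is compatible with $\mathbb S$ on $\dS$. The cluster variable on the $\fS$ side is a mutation at vertices interior to the cut-out piece, so Corollary~\ref{cor:propotion} and Proposition~\ref{prop:image} show that its image under $\mathbb S^U$ is the corresponding local cluster variable times an explicit frozen monomial. I will compute $\mathbb S_{e_2}$ and $\mathbb S_{c_1}\circ\mathbb S_{c_3}$ of the stated arc, which is the local arc with extra frozen factors, and then check that the frozen monomials agree (up to Weyl normalization, fixed by reflection invariance via Lemma~\ref{lem-reflection}). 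Injectivity then gives equality in $\mathscr U_\omega(\fS)$, hence membership in $\mathscr A_\omega(\fS)$.

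I expect the hardest step to be the $\mathbb P_4$ identity in the case $i\ge j>1$. It requires controlling the quiver after a long composite mutation sequence, establishing the type $A$ structure, and matching the expansion with the split path sums including the powers of $\omega$. My first approach will be to verify the quiver shape inductively, one block $\mu_{(k;j-1)}$ at a time, and to fix the $\omega$-powers at the end by reflection invariance rather than by tracking them directly.
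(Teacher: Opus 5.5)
Your architecture matches the paper's. You reduce to stated essential arcs via Lemma~\ref{lem-essential-arc} and get the $\mathbb P_3$ formulas from path sums with the ${\rm SL}_{n-1}$ bijection of Lemma~\ref{lem:divide2}. You handle $\mathbb P_4$ by cutting along $c_5$ and expanding along the linearly oriented type $A$ chain $v_{11},\dots,v_{j-1,j-1}$. You globalize by cutting along $e_2$ or along $c_1,c_3$ with the injective, compatible map of Theorem~\ref{thm:splitU}.

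The gap is in how you compute $\mathbb S^U$ on mutated cluster variables. Corollary~\ref{cor:propotion} gives $\iota(\mu^{\mathbf s}_v(A_v))$ only for a \emph{single} mutation of the \emph{initial} seed at a non-split vertex. Proposition~\ref{prop:split} builds $\iota$ from the upper bound at the initial seed, and nothing there says that $\mu_k(\mathbf s_{\rm split})$ is again a splitting of $\mu_k(\mathbf s)$ with multisets satisfying (a)--(d). So the corollary cannot simply be iterated. Yet every variable you need comes from a long mutation sequence: $\mu_{(j;j-1)}\cdots\mu_{(i-1;j-1)}(A_{j,j-1})$ in the globalization, and the non-chain variables in the type $A$ expansion for $\mathbb P_4$. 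You must induct along the sequence. At each step, write the exchange relation at the vertex being mutated using the explicitly mutated quiver (Lemma~\ref{lem:mutation3}), and apply $\mathbb S^U$ to both monomials by the induction hypothesis. Then check that both monomials acquire the same frozen factor, so the sum factors as the local exchange relation times that factor. This is the content of Propositions~\ref{lem:imageofsplit1} and~\ref{lem:imageofsplite2}. The frozen factors they produce ($\overline A_{s-1,s-1}\overline A_{ii}$ and $A''_{n,t+1}$, respectively) come out of this step-by-step computation, not from Corollary~\ref{cor:propotion}.

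A second ingredient is missing from your $\mathbb P_4$ step. To match the type $A$ expansion term by term with $\mathbb S_{c_5}(D_{ij})=\sum_k C_{ik}\otimes\overline C_{jk}$, you need the following identification. The variable at $v_{k,k-1}$ (resp.\ $\overline v_{k,k+1}$) in the seed $\overline\mu^{\diamondsuit}_j\circ\mu^{\diamondsuit}_{(i;j-1)}(w_\lambda)$ must equal the triangle-only variable $\mu_{(k;k-1)}\cdots\mu_{(i-1;k-1)}(A_{k,k-1})$ (resp.\ $\overline\mu_{(k;n-j)}\cdots\overline\mu_{(j-1;n-j)}(\overline A_{k,k+1})$) that appears in Propositions~\ref{prop-Cij-bar-Cij} and~\ref{prop-bar-Cij}. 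This is a commutation statement (Lemma~\ref{lem:same}). It is proved like Lemma~\ref{lem:mut2}, by showing that the additional mutated vertices are not adjacent to the relevant vertex at the relevant stage.

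Note also that the chain vertices $v_{kk}$ lie on $c_5$, so they are split vertices for $\mathbb S^U_{c_5}$. Corollary~\ref{cor:propotion} is therefore unavailable for them even in principle; the expansion formula is what lets you apply $\mathbb S^U_{c_5}$ at all.

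With these two pieces supplied, your argument coincides with the paper's. Fixing the powers of $\omega$ by reflection invariance is legitimate, since each term is a Weyl-ordered product of quasi-commuting reflection-invariant factors and $\mathbb S^U$ is reflection-invariant.
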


We will prove Theorem~\ref{thm-skein-inclusion-A} in three steps: 
\begin{enumerate}
    \item  Compute explicit expressions for the corner arcs of $\mathbb P_3$ in terms of the cluster variables in $\mathscr{A}_\omega(\mathbb P_3)$  
   (Lemma \ref{lem-Cij-bar-Cij}, Propositions~\ref{prop-Cij-bar-Cij} and \ref{prop-bar-Cij}).  

   \item Determine the cluster-variable expression for a stated arc connecting two opposite edges of $\mathbb P_4$  
   (Theorem~\ref{thm-P4-Dij}). 

  \item 
To prove Theorem~\ref{thm-skein-inclusion-A}, cut a copy of $\mathbb P_3$ or $\mathbb P_4$ out of $\fS$. Then apply the formulas established in Steps (1) and (2), together with the splitting homomorphisms for the projected ${\rm SL}_n$ skein algebra and upper cluster algebras, and their compatibility as stated in Theorem~\ref{thm:splitU}.
\end{enumerate}

\subsection{Triangle case}\label{sub-triangle-case}
Recall that we defined the barycentric coordinates for the (ideal) triangle 
\(\mathbb{P}_3\), whose three vertices are labeled \(v_1, v_2, v_3\) (see Figure~\ref{Fig;coord_ijk}), in 
\S\ref{sec-traceX}. 
Here, we introduce two different labelings of the vertices 
in $\{(i,j,k)\mid i,j,k\in \mathbb Z_{\geq 0}, i+j+k=n\}$, which apply to two different corner arcs cases (Lemma~\ref{lem-Cij-bar-Cij}, Propositions~\ref{prop-Cij-bar-Cij} and \ref{prop-bar-Cij}).
For any $i,j$ with $0\leq j\leq i\leq n$, we abbreviate $v_{ij}=(n-i,j,i-j)$ and $\overline v_{ji}=(j,n-i,i-j)$ and denote $A_{ij}$ and $\overline A_{ij}$ the quantum cluster variable associated to $v_{ij}$ and $\overline v_{ij}$, respectively. In Particular, we set $A_{00}=A_{n0}=A_{nn}=\overline A_{00}=\overline A_{0n}=\overline A_{nn}=1$.


For $1\leq j\leq i\leq n$, we use $C_{ij}\in\widetilde{\cS}_\omega(\mathbb P_3)\subset \mathscr U_{\omega}(\mathbb P_3)$ to denote the stated arc in Figure~\ref{Cij-P3}. Define $\overline{C}_{ij}$ as the stated arc in $\widetilde{\cS}$ obtained from $C_{ij}$ by reversing its orientation.

\begin{figure}[h]
    \centering
    \includegraphics[width=75pt]{Cij-P3.pdf}
    \caption{The picture for $C_{ij}$}\label{Cij-P3}
\end{figure}

Using Theorem~\ref{thm-trace-A}(d) and Theorem~\ref{lem-trace-image-cornerarc-P3}, the following is immediate by calculation.

\begin{lemma}\label{lem-Cij-bar-Cij}
\begin{enumerate}[label={\rm (\alph*)}]\itemsep0,3em
For any $i$, we have

\item  $C_{i1}=[A_{i1}\cdot A_{i0}^{-1}\cdot A_{11}^{-1}]\in \mathscr A_{\omega}(\mathbb P_3)$.

\item $C_{ii}=[A_{i0}^{-1}\cdot A_{i-1,0}] \in \mathscr A_{\omega}(\mathbb P_3)$.

\item  $\overline{C}_{ni}=
    [\overline A_{ii}^{-1}\cdot \overline A_{i-1,i}] \in \mathscr A_{\omega}(\mathbb P_3).$

\item $\overline{C}_{ii}=
     [\overline A_{in}^{-1}\cdot \overline A_{i-1,n}] \in \mathscr A_{\omega}(\mathbb P_3).$

\end{enumerate} 
\end{lemma}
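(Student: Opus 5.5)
The plan is to compute both sides inside the balanced Fock--Goncharov algebra $\mathcal Z^{\rm bl}_\omega(\mathbb P_3)$ and compare them. Since ${\rm tr}_{\mathbb P_3}$ is injective for polygons, and $\psi_{\mathbb P_3}\circ{\rm tr}^A=\mathrm{tr}$ by Theorem~\ref{thm-trace-A}(d), two elements of $\widetilde{\cS}_\omega(\mathbb P_3)\subset\mathcal A_\omega(\mathbb P_3)$ coincide as soon as their images under $\psi$ coincide.

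For the left-hand side, observe that in each of the four cases the path set $\mathsf P(\mathbb P_3,v_1,i,j)$ has exactly one element. For $j=i$ this is noted right after \eqref{def-path-P3-v1}. For $j=1$ the path from $i$ on $e_3$ to $1$ on $e_1$ is also forced, since it must hug the corner at $v_1$. Theorem~\ref{lem-trace-image-cornerarc-P3} therefore gives a single monomial ${\rm tr}(C_{ij})=Z^{n{\bf k}^\gamma-{\bf k}_1}$. For $\overline C$, I will use the rotated labeling $\overline v$ and the corner at the appropriate vertex, which again gives a single path for $\overline C_{ni}$ and $\overline C_{ii}$.

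For the right-hand side, $\psi$ sends a Weyl-ordered monomial $A^{\bf t}$ to $Z^{{\bf t}K_{\mathbb P_3}}$, with no extra power of $\omega$ because both sides are Weyl ordered. It then remains to verify the identities of integer vectors
\[
(\mathbf e_{v_{i1}}-\mathbf e_{v_{i0}}-\mathbf e_{v_{11}})K_{\mathbb P_3}=n{\bf k}^{\gamma}-{\bf k}_1,
\]
together with the analogous identities in the other three cases. Equivalently, multiplying by $H_{\mathbb P_3}$ and using $H K=nI$ (Lemma~\ref{lem-matrix-id-LY}), one checks that $\frac1n(n{\bf k}^\gamma-{\bf k}_1)H_{\mathbb P_3}$ is the stated exponent vector. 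I expect to carry this out with the explicit formula $K_{\mathbb P_3}(v,v')=jk'+ki'+i'j$ for $v=ijk$, $v'=i'j'k'$ with $i'\le i$, $j'\ge j$, extended by $\mathbb Z_3$-symmetry. This is feasible because ${\rm tr}(\gaa_v)=Z^{K(v,*)}$ and the vertices involved lie on $e_1$, $e_3$ or at $v_{11}$.

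The main obstacle is bookkeeping. One must identify ${\bf k}^\gamma$ explicitly as the indicator of the small vertices in a specific strip. For instance, the unique path for $C_{ii}$ turns around the vertices with first coordinate greater than $n-i$. Then the combination of rows of $K$ must reproduce $n$ times that indicator minus ${\bf k}_1$, which requires case-splitting the $\mathbb Z_3$-symmetric formula for $K$ according to the relative positions of $v$ and $v'$. I would first carry out case (b), where the row difference $K(v_{i-1,0},*)-K(v_{i0},*)$ of two adjacent edge vertices gives a step function, and then obtain (a) by adding $K(v_{i1},*)-K(v_{11},*)$.

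Cases (c) and (d) follow from the same computation after relabeling via the reflection that exchanges $v_{ij}$ with $\overline v$ and reverses the arc orientation. Finally, membership in $\mathscr A_\omega(\mathbb P_3)$ is clear because every factor on the right is an initial cluster variable or the inverse of a frozen one. The only other initial variable that appears, $A_{i1}$, is a frozen edge vertex for $i=n$ or lies in the seed, so its positive powers are allowed.
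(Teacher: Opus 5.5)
For (a) and (b) your argument is the paper's own: the paper says these follow by direct calculation from $\psi\circ{\rm tr}^A={\rm tr}$ and the path-sum formula for counterclockwise corner arcs. The calculation does close. Write $a'$ for the first barycentric coordinate of $v'$. The $\mathbb Z_3$-symmetric formula gives $K(v_{m0},v')=m a'$ for $a'\le n-m$ and $(n-m)(n-a')$ otherwise, so $K(v_{i-1,0},*)-K(v_{i0},*)=n\mathbf 1_{\{a'\ge n-i+1\}}-\mathbf k_1=n\mathbf k^{\gamma_{ii}}-\mathbf k_1$. There is one bookkeeping slip. To pass from (b) to (a) you must add $K(v_{i1},*)-K(v_{11},*)-K(v_{i-1,0},*)$, not just the first two rows. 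Doing so shows that $\mathbf k^\gamma$ for the path from $i$ to $1$ is the indicator of the $i-1$ vertices of $e_3$ with first coordinate at least $n-i+1$.

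The genuine gap is in (c) and (d). The path-sum theorem you rely on is stated only for \emph{counterclockwise} corner arcs, but $\overline C_{ij}$ is a clockwise corner arc at $v_1$. Rotations of the triangle preserve orientation, so no choice of ``appropriate vertex'' brings $\overline C_{ij}$ within that theorem's scope. (The $\overline v$-labeling swaps the first two barycentric coordinates of the $v$-labeling, so it is a reflection, not a rotation.) Your fallback, ``the same computation after relabeling via the reflection,'' is also unjustified and in fact gives the wrong answer. A reflection reverses $Q_{\mathbb P_3}$ and does not preserve $K_{\mathbb P_3}$, since the formula $jk'+ki'+i'j$ is only $\mathbb Z_3$-invariant. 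Concretely, $\overline v_{in}=v_{n-i,0}$ and $\overline v_{i-1,n}=v_{n-i+1,0}$, so the right-hand side of (d) has $\psi$-image $Z^{\mathbf k_1-n\mathbf 1_{\{a'\ge i\}}}$. Thus (d) says ${\rm tr}(\overline C_{ii})={\rm tr}(C_{\bar i\bar i})^{-1}$. For $n\ge 3$, an exponent of the form $\mathbf k_1-n\mathbf 1_S$ is never the image of a counterclockwise exponent $n\mathbf k^\gamma-\mathbf k_1$ under a symmetry of the triangle. Any symmetry argument would therefore have to invert the $Z$-variables and send $C_{\bar i\bar i}$ to $\overline C_{ii}$. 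Showing that an orientation reversal intertwines the traces in this way is a separate result you do not have. The missing ingredient is the clockwise analogue of the trace formula (Theorem 10.5 of L\^e--Yu with the network of their Figure 13(B)), which the paper invokes for $\overline C_{ij}$. In that network $\overline C_{ni}$ and $\overline C_{ii}$ are the single-path cases, and your $K$-computation then finishes (c) and (d).
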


For any $i,j$, we abbreviate $\mu_{v_{ij}}=\mu_{ij}$. For any $j,k$ with $1\leq j<k$, we denote 
\begin{align}\label{def-eq-mu-k;j}
\mu_{(k;j)}=\mu_{kj}\cdots\mu_{k2}\mu_{k1}.
\end{align}

The following is the main result of this subsection.

\begin{proposition}\label{prop-Cij-bar-Cij}
For any $1<j<i$, we have 
$$C_{ij}=[\mu_{(j;j-1)}\cdots \mu_{(i-2;j-1)} \mu_{(i-1;j-1)} (A_{j,j-1})\cdot  A_{i0}^{-1}\cdot A_{jj}^{-1}] \in \mathscr A_{\omega}(\mathbb P_3).$$
\end{proposition}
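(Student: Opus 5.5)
The plan is to compute both sides inside the quantum torus $\mathcal A_\omega(\mathbb P_3)$ and compare them. Since ${\rm tr}^A_{\mathbb P_3}$ is injective for polygons, and by Theorem~\ref{thm-trace-A}(d) together with Theorem~\ref{lem-trace-image-cornerarc-P3}, we have
\[
C_{ij}=\psi_{\mathbb P_3}^{-1}\Bigl(\sum_{\gamma\in\mathsf P(\mathbb P_3,v_1,i,j)} Z^{n{\bf k}^\gamma-{\bf k}}\Bigr)=\sum_{\gamma} A^{{\bf k}^\gamma H_{\mathbb P_3}-\frac1n{\bf k}H_{\mathbb P_3}}.
\]
This is a sum of Laurent monomials $A_p$, one for each path. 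As in the proof of Lemma~\ref{lem-key-mutation-polynomial-P3}, the boundary coordinates of ${\bf k}^\gamma$ do not depend on $\gamma$. We can therefore factor out a common frozen monomial, which should be exactly $[A_{i0}^{-1}A_{jj}^{-1}]$ after Weyl normalization. This leaves a sum over paths of monomials in the mutable variables together with some frozen ones. The statement then reduces to identity \eqref{intro-mutation-Ap}: the path sum, restricted to paths between $p^{(i,j)}_{\min}$ and $p^{(i,j)}_{\max}$, equals the cluster variable $\mu_{(j;j-1)}\cdots\mu_{(i-1;j-1)}(A_{j,j-1})$ up to the same frozen factor.

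To prove that identity I will argue by induction on $n$, using the bijection of Lemma~\ref{lem:divide2}. That lemma identifies $\mathsf P(\mathbb P_3,v_1,i,j)$ with the disjoint union of the ${\rm SL}_{n-1}$ path sets from $i-1$ to $j-1$ and from $i-1$ to $j$. This splits the path sum into two pieces, each handled by the inductive hypothesis on a smaller triangle. The smaller triangle sits inside the quiver $Q_{\mathbb P_3}$ as the full subquiver obtained by deleting a strip along one edge. The base cases $j=1$ and $j=i$ are Lemma~\ref{lem-Cij-bar-Cij}(a),(b).

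On the mutation side, I will track the quiver through the sequence $\mu_{(i-1;j-1)},\mu_{(i-2;j-1)},\dots,\mu_{(j;j-1)}$ using Lemmas~\ref{lem:quiver0}--\ref{lem:mut2}. The goal is to show that the last mutation, at $v_{j,j-1}$, is an exchange relation whose two terms are the two path-sum pieces. One term should be the cluster variable $\mu_{(j;j-1)}\cdots(A_{j-1,j-2})$ coming from the smaller instance, times a monomial. The other should be the analogous variable for the pair $(i-1,j)$, times a monomial. Equivalently, I expect to verify
\[
\mu\text{-variable}\cdot A_{j,j-1}=\text{(term for }(i-1,j-1))+\text{(term for }(i-1,j)),
\]
with the exchange monomials read off from the mutated quiver. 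The quantum powers of $\xi$ are then fixed by reflection invariance: both sides are bar-invariant and have the same leading monomial, so the Weyl-ordered normalizations agree.

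I expect the main obstacle to be the explicit description of the quiver after the partial mutation sequence. We need the arrows incident to $v_{j,j-1}$ at the moment of its final mutation. We also need the claim that the previous mutations realize the ${\rm SL}_{n-1}$ sequences on the two sub-triangles compatibly, and that mutations in the complementary region do not interfere. My first attempt will be to prove, by induction along the rows $k=i-1,\dots,j$, a closed-form picture of the quiver after $\mu_{(k;j-1)}\cdots\mu_{(i-1;j-1)}$. In that picture the mutated rows should form a shifted copy of the original triangular lattice with reversed arrows along a zigzag boundary. I would check this picture against the cases $n=4,5$ before doing the general bookkeeping.
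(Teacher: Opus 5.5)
Your proposal is essentially the paper's proof: the path-sum expansion (Lemma \ref{lem-Cij-path-sum-A}), the split of Lemma \ref{lem:divide2}, the exchange relation at the last mutated vertex read off from Lemmas \ref{lem:quiver0}, \ref{lem:quiver3} and \ref{lem:quiver1}, and an induction transported from ${\rm SL}_{n-1}$ to ${\rm SL}_n$ by the embedding $\iota$. The compatibility you flag is exactly Lemma \ref{lem:emb1}, and a single sub-triangle $\iota(V^{n-1}_{\mathbb P_3})$ serves both pieces, not two. One indexing correction: the first exchange term involves the variable at $v_{j,j-2}$, namely $\mu_{(j;j-2)}\cdots\mu_{(i-1;j-2)}(A_{j,j-2})$, which is the $\iota$-image of the ${\rm SL}_{n-1}$ variable at $A^{(n-1)}_{j-1,j-2}$, and Lemma \ref{lem:mut2}(a) is what identifies it with the variable actually sitting at $v_{j,j-2}$ before the final mutation; the edge cases $j=2$ and $i=j+1$ require the separate quiver shapes of Lemmas \ref{lem:quiver0} and \ref{lem:quiver3}.
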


The rest of this section is devoted to prove Proposition  \ref{prop-Cij-bar-Cij}.

Recall that, for $1\leq j\leq i\leq n$, we defined the path set $\mathsf P(\mathbb P_3,v_1,i,j)$ in \eqref{def-path-P3-v1}. For each $p\in \mathsf P(\mathbb P_3,v_1,i,j)$, define
\begin{align}\label{def-path-A-monomial}
    A_p= \psi^{-1}_{\mathbb P_3}(Z^{n{\bf k}^p - {\bf k}_1})\in\mathcal A_\omega(\mathbb P_3),
\end{align}
where $\psi^{-1}_{\mathbb P_3}$ is defined in \eqref{eq-iso-A-balanced-psi-inv}, ${\bf k}_1$ is defined in \eqref{def:proj}, and ${\bf k}^p$ is defined in \eqref{eq-def-k-gamma}.
It follows from the definition of $\psi^{-1}_{\mathbb P_3}$ that
$A_p$ is a monomial.

Theorem~\ref{thm-trace-A}(d) and Theorem~\ref{lem-trace-image-cornerarc-P3} imply the following 
\begin{lemma}\label{lem-Cij-path-sum-A}
    For any $1\leq j\leq i$, we have 
    $$C_{ij} = \sum_{p\in\mathsf{P}(\mathbb P_3,v_1,i,j)} A_p \in \mathcal A_{\omega}(\mathbb P_3).$$
\end{lemma}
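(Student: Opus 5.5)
The plan is to transport the path-sum formula of Theorem~\ref{lem-trace-image-cornerarc-P3} from the $\mathcal X$-side to the $\mathcal A$-side through the isomorphism $\psi_{\mathbb P_3}$ of Theorem~\ref{thm-transition-LY}. Throughout, $C_{ij}$ is identified with its image ${\rm tr}^A_{\mathbb P_3}(C_{ij})\in\mathcal A_\omega(\mathbb P_3)$. This identification is legitimate because $\widetilde{\cS}_\omega(\mathbb P_3)=\overline{\cS}_\omega(\mathbb P_3)$ embeds into $\mathcal A_\omega(\mathbb P_3)$ via ${\rm tr}^A_{\mathbb P_3}$ (Theorem~\ref{thm-trace-A} and Lemma~\ref{lem-basic-lem}(a)). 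So the claim to prove is
\[
{\rm tr}^A_{\mathbb P_3}(C_{ij})=\sum_{p\in\mathsf P(\mathbb P_3,v_1,i,j)}A_p .
\]

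First, check that the definition \eqref{def-path-A-monomial} of $A_p$ makes sense, i.e.\ that $n{\bf k}^p-{\bf k}_1$ is balanced. This is immediate from the definition of $\mathcal B_{\mathbb P_3}$:
\begin{itemize}
\item ${\bf k}_1$ is one of its generators;
\item $n{\bf k}^p\in(n\mathbb Z)^{V_{\mathbb P_3}}$.
\end{itemize}
Hence $Z^{n{\bf k}^p-{\bf k}_1}\in\mathcal Z^{\rm bl}_\omega(\mathbb P_3)$, and $\psi^{-1}_{\mathbb P_3}$ from \eqref{eq-iso-A-balanced-psi-inv} sends it to the monomial $A^{\frac1n(n{\bf k}^p-{\bf k}_1)H_{\mathbb P_3}}$. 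Lemma~\ref{lem-balanced-H} guarantees that this exponent is integral.

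Next, recall that $C_{ij}$ is exactly the stated corner arc $\alpha(v_1)_{ij}$ of Theorem~\ref{lem-trace-image-cornerarc-P3}, after matching the orientation and the state conventions of Figure~\ref{Cij-P3}. Also recall from the computation following \eqref{eq-k-sum-k1} that the vector ${\bf k}$ appearing there equals ${\bf k}_1$. Therefore
\[
{\rm tr}_{\mathbb P_3}(C_{ij})=\sum_{p}Z^{n{\bf k}^p-{\bf k}_1}.
\]

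Finally, use the commutative triangle \eqref{eq-compability-tr-A-X-diag} of Theorem~\ref{thm-trace-A}\ref{thm-trace-A-d}, namely $\psi_{\mathbb P_3}\circ{\rm tr}^A_{\mathbb P_3}={\rm tr}_{\mathbb P_3}$. Since $\psi_{\mathbb P_3}$ is an algebra isomorphism onto $\mathcal Z^{\rm bl}_\omega(\mathbb P_3)$, we may apply $\psi^{-1}_{\mathbb P_3}$ termwise to the sum, which gives
\[
{\rm tr}^A_{\mathbb P_3}(C_{ij})=\sum_p\psi^{-1}_{\mathbb P_3}\bigl(Z^{n{\bf k}^p-{\bf k}_1}\bigr)=\sum_pA_p .
\]

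The only point I expect to need genuine care is the convention check: that $C_{ij}$ really coincides with $\alpha(v_1)_{ij}$, including the orientation around $v_1$ and the order in which the states $i$ and $j$ sit at the initial and terminal endpoints. If the conventions differ, I would relabel via the $\mathbb Z_3$-symmetry of $\mathbb P_3$ or swap $i$ and $j$ accordingly; the algebraic part of the argument is routine.
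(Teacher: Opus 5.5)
Your proposal is correct and is essentially the paper's own argument: the paper obtains this lemma by combining the commutative triangle $\psi_{\mathbb P_3}\circ{\rm tr}^A_{\mathbb P_3}={\rm tr}_{\mathbb P_3}$ of Theorem~\ref{thm-trace-A}\ref{thm-trace-A-d} with the path-sum formula of Theorem~\ref{lem-trace-image-cornerarc-P3} (with ${\bf k}={\bf k}_1$) and applying $\psi^{-1}_{\mathbb P_3}$ termwise, exactly as you do. Your extra checks are details the paper leaves implicit: that $n{\bf k}^p-{\bf k}_1$ is balanced, and that $C_{ij}$ coincides with $\alpha(v_1)_{ij}$.
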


For any $n\geq 3$, consider the  vertices sets $V_{\mathbb P_3}^n=\{(i,j,k)\mid i,j,k\in \mathbb Z_{\geq 0}, i+j+k=n\}=V_{\mathbb P_3}\sqcup\{v_1,v_2,v_3\}$ and $V_{\mathbb P_3}^{n-1}=\{(i,j,k)\mid i,j,k\in \mathbb Z_{\geq 0}, i+j+k=n-1\}$ associated with ${\rm SL}_n$ and ${\rm SL}_{n-1}$, respectively. 
The map defined by $(i,j,k)\mapsto (i,j,k+1)$ gives a natural embedding $\iota: V_{\mathbb P_3}^{n-1}\hookrightarrow V_{\mathbb P_3}^{n}$. This embedding naturally induces an $R$-module inclusion of quantum tori (see \eqref{def-iota-from-n-1-to-n})
$$\iota: \mathcal{A}_{\omega}(\mathbb P_3,n-1)\hookrightarrow \mathcal{A}_{\omega}(\mathbb P_3,n),$$ 
where $\mathcal{A}_{\omega}(\mathbb P_3,n-1)$ and $\mathcal{A}_{\omega}(\mathbb P_3,n)$ denote the quantum tori associated with the triangle $\mathbb P_3$ for ${\rm SL}_{n-1}$ and ${\rm SL}_n$, respectively. 
Note that $\iota$ is not an algebra embedding.

For any $i,j$ with $1\leq j\leq i\leq n-1$, denote by $A^{(n-1)}_{n-1-i,j}$ the cluster variable in $\mathcal{A}_{\omega}(\mathbb P_3,n-1)$ associated with the vertex $(i,j,n-1-i-j)\in V_{\mathbb P_3}^{n-1}$ with the convention that $A^{(n-1)}_{0,0}=
A^{(n-1)}_{n-1,0}=A^{(n-1)}_{0,n-1}=1$.
For any sequence of integers $(k_{ij}),0\leq j\leq i\leq n-1$, then 
\begin{align}\label{def-iota-from-n-1-to-n}
    \iota\left(\Big[ \prod_{0\leq j\leq i\leq n-1} \big(A_{ij}^{(n-1)}\big)^{k_{ij}}\Big] \right) = \Big[ \prod_{0\leq j\leq i\leq n-1} (A_{i+1,j})^{k_{ij}}\Big].
\end{align}

Denote
\(
v_1^{(n-1)} = (n-1,0,0) \in V_{\mathbb P_3}^{\,n-1},
\)
regarded as the vertex \(v_1\) of \(\mathbb P_3\) for \({\rm SL}_{n-1}\).
For any \(1 \le j \le i \le n-1\), let
\(
\mathsf P^{\,n-1}\!\bigl(\mathbb P_3, v_1^{(n-1)}, i, j\bigr)
\)
denote the associated path set for \({\rm SL}_{n-1}\).
Moreover, for any \(1 \le j \le i \le n-1\) and any path
\(p \in \mathsf P^{\,n-1}\!\bigl(\mathbb P_3, v_1^{(n-1)}, i, j\bigr)\),
write
\(
A_p^{(n-1)}
\)
for the monomial in
\(\mathcal A_\omega(\mathbb P_3, n-1)\)
defined via \eqref{def-path-A-monomial} for \({\rm SL}_{n-1}\).

Notation defaults to ${\rm SL}_n$ unless otherwise specified for ${\rm SL}_{n-1}$.


\begin{lemma}\label{lem:divide2}
Under the embedding $\iota: V_{\mathbb P_3}^{n-1}\hookrightarrow V_{\mathbb P_3}^{n}$, for any $i,j$ with $2\leq j<i\leq n$, there is a one-to-one correspondence 
$$\mathsf P^{n-1}(\mathbb P_3,v_1^{(n-1)},i-1,j-1)\sqcup \mathsf P^{n-1}(\mathbb P_3,v_1^{(n-1)},i-1,j)\xrightarrow[1:1]{\iota} \mathsf P(\mathbb P_3,v_1,i,j)
.$$
Moreover, under this bijection,

\begin{enumerate}[label={\rm (\alph*)}]\itemsep0,3em

\item for any $p\in \mathsf P^{n-1}(\mathbb P_3,v^{(n-1)}_1,i-1,j-1)$, we have 
$$A_{\iota(p)}=\begin{cases}
\iota(A_{p}^{(n-1)}) & \mbox{ if $j\geq 3$,}\\
[A_{10}\cdot \iota(A_{p}^{(n-1)})] & \mbox{ if $j=2$.}
\end{cases}$$
 \item for any $p\in \mathsf P^{n-1}(\mathbb P_3,v'_1,i-1,j)$, we have 
 $$A_{\iota(p)}=[A_{j+1,j} \cdot A_{jj}^{-1} \cdot A_{j,j-1}^{-1}\cdot A_{j-1,j-1}\cdot \iota(A_{p}^{(n-1)})].$$
 \end{enumerate}
\end{lemma}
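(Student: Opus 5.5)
The bijection comes from the geometry of the network $\mathcal N(\mathbb P_3,v_1)$, and the monomial identities are then checked coordinatewise.

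\textbf{Constructing $\iota$ on paths.} The vertices of $V^{n}_{\mathbb P_3}$ with third coordinate $k\ge 1$ form a copy of $V^{n-1}_{\mathbb P_3}$, via $(i,j,k)\mapsto(i,j,k+1)$. This copy is the triangle obtained by deleting the row of small triangles adjacent to $e_3$, and the dual network restricts to a copy of $\mathcal N(\mathbb P_3,v_1^{(n-1)})$. A path in $\mathsf P(\mathbb P_3,v_1,i,j)$ with $i>j\ge 2$ starts at the endpoint labelled $i$ on $e_3$. Since every path must turn left toward $e_1$, its first step enters that boundary row and has exactly two options:
\begin{itemize}
\item it leaves the row immediately, entering the smaller network at the endpoint corresponding to $i-1$ on its copy of $e_3$;
\item it travels along the row until it hits the corner at $v_1$.
\end{itemize}
The second option is impossible unless $j=1$, so I will instead split on the end of the path. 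Concretely, I will read off from Figure~\ref{dualquiver} the orientations on the row along $e_1$. The last segment of a path ending at $j$ on $e_1$ either comes down from the smaller triangle's endpoint $j-1$ or passes through its endpoint $j$. Together with the forced first step, this gives the disjoint union
\[
\mathsf P^{n-1}(\mathbb P_3,v_1^{(n-1)},i-1,j-1)\sqcup\mathsf P^{n-1}(\mathbb P_3,v_1^{(n-1)},i-1,j).
\]
Bijectivity follows because the prefix and suffix added by $\iota$ are uniquely determined in each case, and every path decomposes this way. I expect to fix the labeling convention $v_{ij}=(n-i,j,i-j)$ at this point and check it on the example $n=4$.

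\textbf{Comparing ${\bf k}^{\iota(p)}$ with ${\bf k}^p$.} For each class I will compute ${\bf k}^{\iota(p)}-\iota({\bf k}^p)$, where $\iota$ is extended to vectors by shifting. This difference is supported on the vertices of the deleted row and on the vertices near the added tail. For paths of the first class it should be the indicator of the boundary vertices $v_{10},\dots$ to the left of the forced segment. For the second class it picks up extra vertices near $v_{jj}$ and $v_{j,j-1}$. Similarly, ${\bf k}_1^{(n)}$ and $\iota({\bf k}_1^{(n-1)})$ differ by an explicit vector, since ${\bf k}_1(ijk)=i$.

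\textbf{Translating to $A$-monomials, the main obstacle.} Next I will use
\[
A_p=\psi^{-1}\bigl(Z^{n{\bf k}^p-{\bf k}_1}\bigr)=A^{\frac1n(n{\bf k}^p-{\bf k}_1)H},
\]
computed via $H_{\mathbb P_3}$ and the inverse map \eqref{eq-iso-A-balanced-psi-inv}. The $A$-exponent is linear in the $Z$-exponent, but $H^{(n)}$ and $H^{(n-1)}$ are not related by $\iota$ near $e_3$. The key computation is therefore
\[
\tfrac1n\bigl(n{\bf k}^{\iota(p)}-{\bf k}_1^{(n)}\bigr)H^{(n)}-\iota\Bigl(\tfrac1n\bigl(n{\bf k}^p-{\bf k}_1^{(n-1)}\bigr)H^{(n-1)}\Bigr).
\]
I expect this to be a local computation, using that $H$ equals $Q$ away from boundary edges and has the special form \eqref{eq-def-H-lambda} on them. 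The expected result is the exponent vector of $A_{10}$ when $j=2$ and $0$ when $j\ge3$ for the first class, and the exponent vector of $A_{j+1,j}A_{jj}^{-1}A_{j,j-1}^{-1}A_{j-1,j-1}$ for the second class. Because both sides are exponents of Weyl-ordered monomials, equality of exponent vectors yields the claimed identities (a) and (b). This holds even though $\iota$ is not multiplicative, since everything is written in Weyl-ordered form. The case distinction $j=2$ arises because $A^{(n-1)}_{00}=1$ forces a boundary correction at $v_{10}$.
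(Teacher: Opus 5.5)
Your plan (split each path where it leaves the image of $\iota$, then compare exponent vectors after applying $\psi^{-1}$) is the ``direct calculation'' the paper has in mind; its proof says nothing beyond that. However, your setup contains errors that would derail the computation as written.

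\textbf{The deleted strip is along $e_1$, not $e_3$.} The third barycentric coordinate vanishes on $e_1=v_1v_2$, not on $e_3$. So the image of $\iota$ is $\{v_{cd}: c-d\ge 1\}$, and the removed strip runs along $e_1$. Concretely:
\begin{itemize}
\item the missing vertices are $v_{11},\dots,v_{n-1,n-1}$;
\item the smaller copy of $e_1$ is the interior line $k=1$, with vertices $v_{c+1,c}$;
\item the corner $v_1^{(n-1)}$ becomes the genuine vertex $v_{10}$.
\end{itemize}
Hence there is no forced first step. For $i\ge 2$ the small triangle at the $i$-th endpoint of $e_3$ already lies in the image, and that endpoint is literally the $(i-1)$-st endpoint of the smaller network. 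All that $\iota$ appends is a tail of two small triangles crossing the strip. The tail runs from the $(j-1)$-st or $j$-th edge of the line $k=1$ to the $j$-th edge of $e_1$.

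Your final split by the last segment is correct. But the reasoning before it (two options inside an $e_3$-row, ``together with the forced first step'') describes a picture that does not exist. Your predicted ${\bf k}$-differences inherit this error. In both classes one gets ${\bf k}^{\iota(p)}-\iota({\bf k}^p)=\mathbf 1_S$ with $S=\{v_{10},v_{11},\dots,v_{j-1,j-1}\}$. There is nothing extra near $v_{jj}$ or $v_{j,j-1}$ in class (b).

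\textbf{The ${\rm SL}_{n-1}$ normalization is wrong.} The ${\rm SL}_{n-1}$ monomial is $A^{(n-1)}_p=A^{\frac{1}{n-1}((n-1){\bf k}^p-{\bf k}^{(n-1)}_1)H^{(n-1)}}$. With the $\frac1n$ and $n{\bf k}^p$ in your key display, the exponent is not even integral.

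A convenient simplification: $K(v_{10},*)={\bf k}_1$ (equivalently $C_{11}=A_{10}^{-1}$), so $\frac1n{\bf k}_1H={\bf e}_{v_{10}}$. Therefore $A_p=A^{{\bf k}^pH-{\bf e}_{v_{10}}}$, and likewise $A^{(n-1)}_p=A^{{\bf k}^pH^{(n-1)}-{\bf e}_{v^{(n-1)}_{10}}}$ with $\iota(v^{(n-1)}_{10})=v_{20}$.

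\textbf{Where the (a)/(b) distinction actually comes from.} Since the ${\bf k}$-difference is the same in both classes, the distinction comes entirely from $\iota({\bf k}^p)H^{(n)}-\iota({\bf k}^pH^{(n-1)})$. The line $k=1$ is a boundary edge for ${\rm SL}_{n-1}$, but for ${\rm SL}_n$ it carries weight-one interior arrows, plus the strip arrows down to $e_1$.

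Suppose $p$ ends at the $j'$-th edge of that line, with $j'\in\{j-1,j\}$. Then this term equals ${\bf e}_{v_{11}}-{\bf e}_{v_{j'j'}}-{\bf e}_{v_{10}}-{\bf e}_{v_{21}}+{\bf e}_{v_{j'+1,j'}}$ for $j'\ge 2$, and $0$ for $j'=1$. Separately, $\mathbf 1_SH^{(n)}-{\bf e}_{v_{10}}+{\bf e}_{v_{20}}={\bf e}_{v_{10}}+{\bf e}_{v_{21}}-{\bf e}_{v_{11}}+{\bf e}_{v_{j-1,j-1}}-{\bf e}_{v_{j,j-1}}$. Adding the two gives exactly the corrections in (a) and (b): ${\bf e}_{v_{10}}$ or $0$ in class (a), and ${\bf e}_{v_{j+1,j}}-{\bf e}_{v_{jj}}-{\bf e}_{v_{j,j-1}}+{\bf e}_{v_{j-1,j-1}}$ in class (b).

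This computation is the real content of the lemma, and your proposal only states its expected outcome.
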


\begin{proof}
    The one-to-one correspondence is immediate. (a) and (b) follow by direct calculation.
\end{proof}

We denote by $Q_{\mathbb P_3}^{(n-1)}$ the corresponding anti-symmetric matrix for ${\rm SL}_{n-1}$. Let $k_1,\dots,k_m$ (with $m\ge 0$) be mutable vertices in $V_{{\mathbb P}_3}^{(n-1)}$, and set
\[
Q^{(n-1)'} = \mu_{k_1}\cdots \mu_{k_m}\bigl(Q_{\mathbb P_3}^{(n-1)}\bigr), 
\qquad
Q' = \mu_{\iota(k_1)}\cdots \mu_{\iota(k_m)}\bigl(Q_{\mathbb P_3}\bigr).
\]
Then, for any small vertices $u,v \in V_{{\mathbb P}_3}^{(n-1)}$ with 
$\{u,v\} \not\subset \{(i,j,k)\in V_{{\mathbb P}_3}^{(n-1)} \mid k=0\}$, we have
\(
Q^{(n-1)'}(u,v) = Q'(\iota(u),\iota(v)).
\)
Moreover, for any 
$w \in \{v_{kk} \mid k=1,\dots,n-1\}$ and 
$w' \in \{v_{st} \mid 3 \le s \le n,\, 1 \le t \le s-2\}$, we have
\(
Q'(w,w') = 0.
\)

From these observations, we obtain the following lemma.

\begin{lemma}\label{lem:emb1}
Under the embedding $\iota: \mathcal{A}_{\omega}(\mathbb P_3,n-1)\hookrightarrow \mathcal{A}_{\omega}(\mathbb P_3,n)$, for any $i,j$ with $3\leq j< i\leq n$, we have
\begin{align*}
    &\iota\Big(\Big[\mu_{(j-1;j-2)}\cdots \mu_{(i-3;j-2)} \mu_{(i-2;j-2)} (A^{(n-1)}_{j-1,j-2})\cdot (A_{i-1,0}^{(n-1)})^{-1}\cdot (A_{j-1,j-1}^{(n-1)})^{-1}\Big] \Big)\\
    =&\Big[\mu_{(j;j-2)}\cdots \mu_{(i-2;j-2)} \mu_{(i-1;j-2)} (A_{j,j-2})\cdot
    A_{i,0}^{-1}\cdot A_{j,j-1}^{-1}\Big], \text{ and}\\
    &\iota\Big(\Big[\mu_{(j;j-1)}\cdots \mu_{(i-3;j-1)} \mu_{(i-2;j-1)} (A^{(n-1)}_{j,j-1})\cdot  (A_{i-1,0}^{(n-1)})^{-1}\cdot (A_{j,j}^{(n-1)})^{-1}\Big]\Big)\\
    =&\Big[\mu_{(j+1;j-1)}\cdots \mu_{(i-2;j-1)} \mu_{(i-1;j-1)} (A_{j+1,j-1})
    \cdot A_{i,0}^{-1}\cdot A_{j+1,j}^{-1}\Big]
    \quad \text{when $i> j+1$}.
\end{align*}
   
\end{lemma}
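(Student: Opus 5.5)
The plan is to derive Lemma~\ref{lem:emb1} from the two observations stated just before it. The first says that $\iota$ intertwines the mutated exchange matrices away from the bottom row $\{k=0\}$ of $V^{(n-1)}_{\mathbb P_3}$. The second says that in the mutated ${\rm SL}_n$ quiver, the diagonal vertices $v_{kk}$ are disconnected from the block $\{v_{st}\mid 3\le s\le n,\ 1\le t\le s-2\}$.

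First I match indices. Under $(i,j,k)\mapsto(i,j,k+1)$, the ${\rm SL}_{n-1}$ vertex labelled $A^{(n-1)}_{ab}$ corresponds to the ${\rm SL}_n$ vertex $A_{a+1,b}$ by \eqref{def-iota-from-n-1-to-n}. Therefore the mutation $\mu_{(k;j)}$ on the ${\rm SL}_{n-1}$ side (mutations at $v^{(n-1)}_{k1},\dots,v^{(n-1)}_{kj}$) corresponds to $\mu_{(k+1;j)}$ on the ${\rm SL}_n$ side. This shifts the first index of every factor in the sequence by one, as in both displayed identities. The frozen factors $(A^{(n-1)}_{i-1,0})^{-1}$ and $(A^{(n-1)}_{j-1,j-1})^{-1}$, and in the second identity $(A^{(n-1)}_{jj})^{-1}$, correspondingly go to $A_{i0}^{-1}$ and $A_{j,j-1}^{-1}$, resp.\ $A_{j+1,j}^{-1}$. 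It remains to show that $\iota$ commutes with the iterated mutation of the single cluster variable, and that the Weyl-ordering is preserved.

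For the mutation part I proceed by induction along the mutation sequence. At each step the new cluster variable is given by the exchange relation, a sum of two Weyl-ordered monomials built from the column $Q'(\ast,k)$ at the current mutated vertex $k$. All mutated vertices have $t\ge1$ and $s\le i-1$ (resp.\ the shifted range), so they lie in the image of the interior part, where the first observation gives $Q^{(n-1)\prime}(u,k)=Q'(\iota u,\iota k)$ whenever $u$ is not on the bottom row. The only vertices of $V^{n}_{\mathbb P_3}$ not of the form $\iota(u)$ are those with $k=0$ coordinate; these are the diagonal $v_{kk}$ (and $v_1,v_2$-adjacent edge vertices). By the second observation, the diagonal vertices have zero exchange coefficient with every vertex in the mutated range. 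Hence the exchange binomials on the two sides correspond termwise under $\iota$.

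One subtlety is that $\iota$ is only an $R$-module map, not an algebra map. To handle it I will compare the $q$-commutation exponents $\Pi$. Since $d_k=2n$ resp.\ $2(n-1)$ differ, I will check instead that the relevant exponents entering each Weyl-ordered product agree. This should follow from the compatibility relation \eqref{eq-prod-PiQ} restricted to the monomials appearing, or it can be checked directly from $\Pi=\frac1n P$ via $K$. I expect this quantum-exponent bookkeeping to be the main obstacle.

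If a direct comparison is too messy, I will fall back on the classical limit. Both sides are bar-invariant (Weyl-ordered, reflection invariant) elements whose classical specializations agree by the commutative argument above. Both are also Laurent polynomials with coefficients in $\mathbb Z[\xi^{\pm1/2}]$ determined by positivity-type uniqueness of the normalized monomials. So I will try to argue that the quantum mutation sequence is a sequence of pure monomial exchanges plus binomials whose normalization is forced by reflection invariance, which pins the $\omega$-powers. The second identity, for $i>j+1$, is handled identically with $j$ replaced by $j+1$ in the index shifts.
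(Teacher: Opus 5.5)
Your reduction to the two quiver observations is exactly what the paper does. It gives nothing beyond ``from these observations'': the mutated exchange matrices are intertwined away from the bottom row, and $Q'(v_{kk},v_{st})=0$ for $t\le s-2$, a range that covers every mutated vertex here. Your index shift $A^{(n-1)}_{ab}\mapsto A_{a+1,b}$, $\mu_{(k;t)}\mapsto\mu_{(k+1;t)}$ is also correct.

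There is one small omission at the classical level. The vertices $v_{10}$ and $v_{n,n-1}$ are images of the ${\rm SL}_{n-1}$ corners, not of small vertices, so the first observation says nothing about them. You list them parenthetically but never dispose of them. They are harmless: their only mutable neighbours $v_{21}$ and $v_{n-1,n-2}$ lie on the line $t=s-1$, which is never mutated, so their neighbourhoods never change.

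The real gap is the quantum normalization. The paper passes over it, and your plan does not settle it.

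\emph{Matching exponents is the wrong target.} The compatibility constant in \eqref{eq-prod-PiQ} is $2n$ for ${\rm SL}_n$ but $2(n-1)$ for ${\rm SL}_{n-1}$. So, for instance, $A^{Q(\ast,u)}$ and $A^{Q(\ast,w)}$ commute with $\xi$-exponent $2n\,Q(u,w)$ on one side and $2(n-1)\,Q(u,w)$ on the other. There is no reason for the exponents entering the Weyl orderings to match.

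\emph{The fallback is too weak.} Bar-invariance plus agreement at $\xi=1$ does not fix a coefficient, since $2$ and $\xi^{1/2}+\xi^{-1/2}$ are both bar-invariant. You would additionally need multiplicity-freeness (from the ${\rm SL}_{n-1}$ case of Proposition~\ref{prop-Cij-bar-Cij}) and quantum positivity.

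\emph{The missing idea is that the exponents never matter.} Apply Lemma~\ref{lem:mutation3} (with $j-1$ in place of $j$ for the first identity). At each step of these sequences, each of the two monomials in the exchange relation is a monomial $A^{\bf m}$ in initial variables times at most one previously mutated variable $Y$, to the first power.
\begin{itemize}
\item Write $Y=\sum_c m_cA^{{\bf a}_c}$ with distinct ${\bf a}_c$. Since $Y$ and $A^{\bf m}$ lie in one seed, they quasi-commute.
\item Comparing coefficients of basis monomials then shows every $A^{{\bf a}_c}$ has the same commutation exponent with $A^{\bf m}$. Hence $[A^{\bf m}Y]=\sum_c m_cA^{{\bf m}+{\bf a}_c}$.
\item Inductively, every intermediate variable is a $\mathbb Z_{\geq 0}$-combination of Weyl-ordered initial monomials with no power of $\xi$ at all.
\item By the two observations, the coefficients agree on both sides and the exponents correspond under $\iota$.
\item The final Weyl product with $A_{i0}^{-1}A_{j,j-1}^{-1}$ (resp.\ $A_{i0}^{-1}A_{j+1,j}^{-1}$) is handled the same way, since those vertices are frozen or never mutated.
\end{itemize}
With this in hand, the non-multiplicative map $\iota$ passes through and the lemma follows.
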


 Lemmas \ref{lem:quiver0} and \ref{lem:quiver3} follow directly from a computation of the quiver mutation.

\begin{lemma}\label{lem:quiver0}
For $i$ with $i>3$, in the quiver $Q'=\mu_{31}\cdots \mu_{i-2,1} \mu_{i-1,1}(Q_{\mathbb P_3})$, we have $$Q'(v_{21},v)=\begin{cases}
        -1 & \mbox{ if $v=v_{10},v_{22}$ or $v_{i1}$,}\\
        1 & \mbox{ if $v=v_{11}$ or $v_{31}$,}\\
        0 & \mbox{ otherwise.}
    \end{cases}$$
\end{lemma}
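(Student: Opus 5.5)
The plan is to track the row of $v_{21}$, together with the neighbourhood of the vertex about to be mutated, along the sweep
\[
\mu_{i-1,1},\ \mu_{i-2,1},\ \dots,\ \mu_{31}
\]
(applied in this order, rightmost first) down the column $\{v_{k1}=(n-k,1,k-1)\}$. We use the mutation rule \eqref{eq-mutation-Q} throughout.

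\textbf{Initial star of $v_{21}$.} First I record the star of $v_{21}$ in $Q_{\mathbb P_3}$ from Figure~\ref{Fig;coord_ijk}. The vertex $v_{21}$ is interior, so all its arrows have weight $1$. Its six neighbours are
\[
v_{10},\ v_{11},\ v_{20},\ v_{22},\ v_{31},\ v_{32},
\]
and their orientations are read off from the rule that interior arrows are parallel to the clockwise boundary orientation. I will also record the stars of the column vertices $v_{k1}$ for $3\le k\le i-1$. Their neighbours are $v_{k-1,0},v_{k-1,1},v_{k,0},v_{k,2},v_{k+1,1},v_{k+1,2}$, with the same local pattern shifted by $k$. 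For $k\le n-1$ all of these are interior, or are boundary vertices on $e_3$ that are never mutated.

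\textbf{Inductive invariant.} Next I prove the following by descending induction on $k$ from $i-1$ to $4$. After performing $\mu_{k1}\cdots\mu_{i-1,1}$:
\begin{enumerate}[label={\rm (\roman*)}]
\item the row of $v_{21}$ is unchanged;
\item the vertex $v_{k-1,1}$ has its original star, except that its arrows with $v_{k0}$ and $v_{k,1}$ are modified and it acquires a single arrow with $v_{i1}$, oriented so that $v_{k-1,1}\to v_{i1}$.
\end{enumerate}
For the base step ($k=i-1$), I mutate at $v_{i-1,1}$. The composite arrows through $v_{i-1,1}$ produce the arrow between $v_{i-2,1}$ and $v_{i1}$ and cancel the local $3$-cycles.

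For the inductive step, the star of $v_{k-1,1}$ is local, so the computation is identical at each stage with the index shifted. Claim (i) holds because $v_{21}$ is not adjacent to any $v_{k1}$ with $k\ge 4$ at the moment that vertex is mutated. Only mutating at a current neighbour can change the row of $v_{21}$, and by (ii) the only column vertex that becomes adjacent to anything new is $v_{k-1,1}$, with $k-1\ge 3$.

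\textbf{Final mutation.} At the last step I mutate at $v_{31}$, whose star is now known from (ii) with $k=4$. Applying \eqref{eq-mutation-Q} to the entries $Q'(v_{21},v)$ should give the following.
\begin{itemize}
\item The arrow with $v_{31}$ reverses to $+1$.
\item A composite path through $v_{31}$ creates the $-1$ entry with $v_{i1}$.
\item The entries with $v_{20}$ and $v_{32}$ cancel against the $2$-paths through $v_{31}$.
\item The arrows with $v_{10}, v_{11}, v_{22}$ are left untouched.
\end{itemize}
These give exactly the stated values. The degenerate case $i=4$, where the sweep is just $\mu_{31}$ and $v_{i1}=v_{41}$ is already an original neighbour of $v_{31}$, is handled by the same final computation.

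\textbf{Main obstacle.} The main difficulty is establishing the invariant (ii) precisely, that is, getting the orientation and cancellation bookkeeping right when mutating $v_{k1}$. This involves the two triangles adjacent to $v_{k1}$ and the newly created long arrow to $v_{i1}$. To control it, I would first do the cases $i=4,5$ by hand to fix the pattern, then verify that one step of the sweep reproduces the same local picture shifted by one.
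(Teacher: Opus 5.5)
Your strategy is the direct mutation computation the paper has in mind: the row of $v_{21}$ is untouched until the last step, and then \eqref{eq-mutation-Q} is applied once at $v_{31}$. Your claim (i) is correct, and so is your handling of $v_{10},v_{11},v_{22},v_{20},v_{32}$ at the final step. The gap is invariant (ii): it is false as stated, and its orientation claim makes your own final step fail.

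The lemma itself fixes the orientation. The entries with $v_{10},v_{11},v_{22}$ are never changed, so in $Q_{\mathbb P_3}$ we have $v_{10}\to v_{21}\to v_{11}$ and $v_{22}\to v_{21}$. Hence every arrow has the form $v_{st}\to v_{s-1,t}$, $v_{st}\to v_{s,t-1}$ or $v_{st}\to v_{s+1,t+1}$. In particular, the column is the directed path $v_{n1}\to v_{n-1,1}\to\cdots\to v_{21}\to v_{11}$. The first mutation therefore composes $v_{i1}\to v_{i-1,1}\to v_{i-2,1}$ and creates $v_{i1}\to v_{i-2,1}$, not $v_{i-2,1}\to v_{i1}$.

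With your orientation, at the last step both $v_{21}$ and $v_{i1}$ would be out-neighbours of $v_{31}$. (You need $v_{31}\to v_{21}$ for that entry to flip to $+1$.) No arrow between $v_{21}$ and $v_{i1}$ would then be created, and you would get $Q'(v_{21},v_{i1})=0$.

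Separately, $v_{k-1,1}$ is never adjacent to $v_{k0}$. What $\mu_{k1}$ actually does to the star of $v_{k-1,1}$ is: reverse the arrow with $v_{k1}$, and cancel the arrows with $v_{k-1,0}$ and $v_{k2}$. These two arrows lie in the two oriented triangles through $v_{k1}$ and $v_{k-1,1}$.

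The correct invariant reads as follows. For $4\le k\le i-1$, after $\mu_{k1}\cdots\mu_{i-1,1}$ the arrows at $v_{k-1,1}$ are exactly
\[
v_{k-2,0}\to v_{k-1,1},\quad v_{k-1,2}\to v_{k-1,1},\quad v_{i1}\to v_{k-1,1},\quad v_{k-1,1}\to v_{k1},\quad v_{k-1,1}\to v_{k-2,1}.
\]
It propagates when you mutate at $v_{k-1,1}$ with this star:
\begin{itemize}
\item the composite $v_{i1}\to v_{k-1,1}\to v_{k1}$ cancels the arrow $v_{k1}\to v_{i1}$ left over from the reversal at $v_{k1}$;
\item the composite $v_{i1}\to v_{k-1,1}\to v_{k-2,1}$ creates $v_{i1}\to v_{k-2,1}$;
\item the composites from $v_{k-2,0}$ and from $v_{k-1,2}$ through $v_{k-1,1}$ cancel $v_{k-2,1}\to v_{k-2,0}$ and $v_{k-2,1}\to v_{k-1,2}$.
\end{itemize}
This reproduces the same pattern with $k$ replaced by $k-1$.

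At the last step, $v_{31}$ has in-neighbours $v_{20},v_{32},v_{i1}$; for $i=4$ these are the original $v_{20},v_{32},v_{41}$. Since $Q(v_{21},v_{31})=-1$, \eqref{eq-mutation-Q} becomes $Q'(v_{21},w)=Q(v_{21},w)-[Q(w,v_{31})]_+$ for $w\neq v_{31}$. This gives exactly the stated row.
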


\begin{lemma}\label{lem:quiver3}
  For any $i,j$ with $i=j+1>3$, in the quiver $Q'=\mu_{(j;j-1)}(Q_{\mathbb P_3})$, we have $$Q'(v_{j,j-1},v)=\begin{cases}
        1 & \mbox{ if $v=v_{j,j}$ or $v_{j,j-2}$,}\\
        -1 & \mbox{ if $v=v_{j0},v_{j-1,j-1}$ or $v_{j+1,j}$,}\\
        0 & \mbox{ otherwise.}
    \end{cases}$$
\end{lemma}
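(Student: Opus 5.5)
The plan is a direct computation with the mutation rule \eqref{eq-mutation-Q}. I would organize it as an induction along the row $v_{j1},v_{j2},\dots,v_{j,j-1}$, so that I never have to describe the whole quiver $Q'$.

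First I fix the local picture of $Q_{\mathbb P_3}$ in the labelling $v_{ij}=(n-i,j,i-j)$. The vertices $v_{i0},\dots,v_{ii}$ lie on the line whose first barycentric coordinate equals $n-i$. Reading off from Figure~\ref{Fig;coord_ijk}, each interior $v_{j,t}$ has six neighbours: $v_{j,t\pm1}$ in its own row, $v_{j-1,t-1},v_{j-1,t}$ in the row above, and $v_{j+1,t},v_{j+1,t+1}$ in the row below. Orienting these with the rule of \S\ref{sec-traceX}, I record the signs of $Q_{\mathbb P_3}(v_{j,t},\cdot)$; the sign convention is that $Q(u,v)=1$ means an arrow $u\to v$. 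Boundary vertices such as $v_{j0}$ lie on an edge of $\mathbb P_3$. The hypothesis $j>3$ guarantees that $v_{j,j-2}$ and the other vertices met below are interior, so no half-arrow or cancellation subtleties arise except at $v_{j0}$.

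Next I formulate an inductive claim. Write $Q^{(t)}=\mu_{j,t}\cdots\mu_{j,1}(Q_{\mathbb P_3})$. The claim is that in $Q^{(t)}$ the vertex $v_{j,t}$ has exactly the following arrows:
\[
v_{j,t}\to v_{j,t+1},\quad v_{j,t}\to v_{j,t-1}\ (t\ge2),\quad v_{j0}\to v_{j,t},\quad v_{j-1,t}\to v_{j,t},\quad v_{j+1,t+1}\to v_{j,t}.
\]
In addition, the unmutated vertices $v_{j,s}$ with $s>t+1$ and their neighbours still carry their original arrows. The base case $t=1$ is a single mutation at $v_{j1}$, which I compute by hand; this is where the arrow from the frozen vertex $v_{j0}$ enters. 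For the step $t\to t+1$, I mutate at $v_{j,t+1}$. Before this mutation, the neighbours of $v_{j,t+1}$ in $Q^{(t)}$ are its original neighbours, except that the arrow between $v_{j,t}$ and $v_{j,t+1}$ is modified. By the hypothesis, $v_{j,t}$ now brings the path $v_{j0}\to v_{j,t}\to v_{j,t+1}$. Applying the rule ``add $i\to j$ for each path $i\to k\to j$, reverse arrows at $k$, cancel 2-cycles'' then does two things: it transports the arrow from $v_{j0}$ to $v_{j,t+1}$, and it cancels the arrow $v_{j,t}\to v_{j-1,t}$ against a newly created one. The result is precisely the claimed neighbourhood of $v_{j,t+1}$.

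Finally I specialize to $t=j-1$ and use $i=j+1$. Then $v_{j,t+1}=v_{jj}$, $v_{j,t-1}=v_{j,j-2}$, $v_{j-1,t}=v_{j-1,j-1}$ and $v_{j+1,t+1}=v_{j+1,j}$, which is exactly the asserted list of values of $Q'(v_{j,j-1},\cdot)$. A useful check is that the same bookkeeping, applied to a single mutation column, reproduces Lemma~\ref{lem:quiver0}.

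The main obstacle is the cancellation bookkeeping in the inductive step. I must verify that every spurious arrow created at $v_{j,t+1}$ is cancelled by an existing opposite arrow, in particular those toward $v_{j-1,t}$, $v_{j-1,t+1}$ and $v_{j+1,t+1}$, and that the weight-$\tfrac12$ boundary conventions at $v_{j0}$ still yield an integer entry $-1$. I would handle this by first checking the case $n=5$, $j=4$ explicitly. I would then make the inductive statement strong enough to record the full neighbourhoods of $v_{j,t}$ and $v_{j,t+1}$, not just the row of $v_{j,t}$.
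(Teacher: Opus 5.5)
Your overall route, tracking the row $v_{j1},\dots,v_{j,j-1}$ one mutation at a time, is the direct computation the paper has in mind. The neighbourhood you assert for $v_{j,t}$ in $Q^{(t)}$ is correct for $t\ge 2$, so specializing to $t=j-1$ does give the lemma. Note that $i=j+1>3$ means $j\ge 3$, not $j>3$; the case $j=3$ is covered too.

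The problem is the justification of the inductive step. Its premise, that in $Q^{(t)}$ the vertex $v_{j,t+1}$ still has its original neighbourhood apart from the arrow to $v_{j,t}$, is false. The mutation at $v_{j,t}$ already composes the $2$-paths $v_{j,t+1}\to v_{j,t}\to x$ with $x\in\{v_{j0},v_{j-1,t},v_{j+1,t+1}\}$. This creates $v_{j,t+1}\to v_{j0}$ and cancels the original arrows $v_{j-1,t}\to v_{j,t+1}$ and $v_{j+1,t+1}\to v_{j,t+1}$. Conversely, the mutation at $v_{j,t+1}$ cannot use the path $v_{j0}\to v_{j,t}\to v_{j,t+1}$, since its middle vertex is not the mutated one. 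That mutation changes the arrows at $v_{j,t+1}$ only by reversing them. If you mutate the neighbourhood you assert, you get arrows $v_{j,t+1}\to v_{j-1,t}$ and $v_{j,t+1}\to v_{j+1,t+1}$ and none from $v_{j0}$, which contradicts your claim. The base case is also misstated: after $\mu_{j1}$ the vertex $v_{j1}$ keeps all six neighbours. Besides your four arrows it has $v_{j1}\to v_{j-1,0}$ and $v_{j1}\to v_{j+1,1}$. These two are harmless later, but ``exactly'' fails at $t=1$.

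The fix is to induct on the neighbourhood \emph{just before} each mutation. For $2\le t\le j-1$, claim that in $Q^{(t-1)}$ the vertex $v_{j,t}$ has out-arrows exactly to $v_{j-1,t}$, $v_{j+1,t+1}$, $v_{j0}$ and in-arrows exactly from $v_{j,t+1}$, $v_{j,t-1}$.

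For $t=2$ this follows from the single mutation at $v_{j1}$. In $Q_{\mathbb P_3}$ its out-neighbours are $v_{j0},v_{j-1,1},v_{j+1,2}$ and its in-neighbours are $v_{j2},v_{j-1,0},v_{j+1,1}$.

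The step $t\to t+1$ consists of the three $2$-paths from $v_{j,t+1}$ through $v_{j,t}$ described above, plus the reversal of $v_{j,t+1}\to v_{j,t}$. Two facts make this complete: the $2$-paths starting at $v_{j,t-1}$ never reach $v_{j,t+1}$, and $v_{j,t+1}$ was not adjacent to any vertex mutated earlier, so its row was still original.

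The final mutation at $v_{j,j-1}$ simply negates its row in $Q^{(j-2)}$, which is exactly the asserted list. Since $v_{j,t}$ and every mutated vertex are interior, no half-weight entry ever enters this row. Your worry about the boundary convention at $v_{j0}$ therefore does not arise.
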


\begin{lemma}\label{lem:quiver1}
  For any $i,j$ with $i>j+1>3$, in the quiver $Q'=\mu_{(j;j-1)}\cdots \mu_{(i-2;j-1)} \mu_{(i-1;j-1)}(Q_{\mathbb P_3})$, we have $$Q'(v_{j,j-1},v)=\begin{cases}
        -1 & \mbox{ if $v=v_{j+1,j-1}$ or $v_{j-1,j-1}$,}\\
        1 & \mbox{ if $v=v_{j,j-2}$ or $v_{j,j}$,}\\
        0 & \mbox{ otherwise.}
    \end{cases}$$
\end{lemma}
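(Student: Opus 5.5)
We plan a direct computation of quiver mutations, organized as a nested induction that tracks an explicit intermediate quiver. The mutation sequence $\mu_{(j;j-1)}\cdots\mu_{(i-2;j-1)}\mu_{(i-1;j-1)}$ acts first by the block $\mu_{(i-1;j-1)}=\mu_{i-1,j-1}\cdots\mu_{i-1,1}$ (row $i-1$, columns $1,\dots,j-1$, left to right), then by the block for row $i-2$, and so on up to row $j$. Every mutated vertex $v_{kt}$ with $j\le k\le i-1$, $1\le t\le j-1$ is interior, and $Q_{\mathbb P_3}$ is the standard triangular-grid quiver with oriented $3$-cycles. All changes therefore stay inside the parallelogram spanned by rows $j-1,\dots,i$ and columns $0,\dots,j$. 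We may thus work with the explicit local neighbourhood and ignore vertices far away, since their entries with the mutated region remain $0$.

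\textbf{Step 1: an invariant after each completed block.} For $j\le k\le i-1$, let $Q^{[k]}$ be the quiver after the blocks for rows $i-1,\dots,k$. We will formulate and prove by downward induction on $k$ a precise description of the arrows incident to row $k$ and row $k-1$ in $Q^{[k]}$. The expected shape is the following: row $k$ becomes a ``shifted'' copy of the original grid row, with the arrows to row $k+1$ replaced by arrows to $v_{i,*}$. This is the mechanism already visible in Lemma~\ref{lem:quiver0}, where $v_{21}$ ends up adjacent to $v_{i1}$. The base of this induction is the single block $\mu_{(i-1;j-1)}$ applied to $Q_{\mathbb P_3}$. It is a left-to-right sweep along one row, which we handle by an inner induction on the column index $t$. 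At each mutation $\mu_{kt}$ we use the rule \eqref{eq-mutation-Q}: the current vertex has exactly two incoming and two outgoing arrows in the relevant part, and the new arrows cancel the old $2$-cycles. This is the same computation that gives Lemma~\ref{lem:quiver3} (the case $i=j+1$, which we will use as a consistency check and as a model).

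\textbf{Step 2: the inductive step.} Given the description of $Q^{[k+1]}$, we apply the block for row $k$, again sweeping $t=1,\dots,j-1$. Before $\mu_{kt}$, the neighbours of $v_{kt}$ are now $v_{k,t\pm1}$, $v_{k-1,*}$ and the ``transported'' vertices $v_{i,*}$ and $v_{k+1,*}$ produced by the previous block. We verify that the pattern reproduces itself with $k$ replaced by $k-1$. To shorten the bookkeeping we may also invoke the observation stated before Lemma~\ref{lem:emb1}: entries of $Q'$ away from the line $k=0$ agree with those for ${\rm SL}_{n-1}$ under $\iota$. This lets us import the description of the entries not involving the boundary row $v_{*0}$ from the $(n-1)$-case, so that only the arrows involving $v_{k0}$ and the diagonal $v_{kk}$ need to be checked by hand.

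\textbf{Step 3: reading off the row of $v_{j,j-1}$.} The final mutation of the whole sequence is $\mu_{j,j-1}$, so $Q'(v_{j,j-1},\cdot)$ is the negative of the row of $v_{j,j-1}$ just before it. We read this row off from the invariant of Step 1 at $k=j$, in the state after $\mu_{j,j-2}$. We expect it to consist of incoming arrows from $v_{j,j-2}$ and $v_{j,j}$ and outgoing arrows to $v_{j+1,j-1}$ and $v_{j-1,j-1}$, with no other arrows; negating gives the claimed values. The main obstacle is the cancellation that distinguishes this lemma from Lemma~\ref{lem:quiver3}. When $i>j+1$, the arrows between $v_{j,j-1}$ and $v_{j0}$ and between $v_{j,j-1}$ and $v_{j+1,j}$ must cancel, and instead an arrow to $v_{j+1,j-1}$ must appear. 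Tracking exactly when the $2$-cycles through $v_{j0}$ and $v_{j+1,j}$ are created and removed during the earlier blocks is where the case analysis is delicate, so we will first treat small instances ($i=j+2$, $j=3$) explicitly to pin down the correct invariant before proving it in general.
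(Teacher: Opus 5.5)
Your plan follows the paper's proof. There the lemma is obtained by this same downward induction over the row blocks. The block invariant you propose to extract from small cases is exactly what Lemmas~\ref{lem:mutation1} (first block, on the unmutated grid) and~\ref{lem:mutation2} (every later block) record. After row $k$ has been swept, the swept row, with the single extra vertex $v_{i1}$ attached at its left end, is glued to row $k-1$ like a grid row shifted by one column. Your expected final shape in Step~3 is correct.

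One correction to the local picture: it is not $2$-in/$2$-out throughout.
\begin{itemize}
\item In the first block, each $v_{kt}$ with $t\ge 2$ has, just before its mutation, a third outgoing arrow to $v_{k0}$. This arrow propagates along the row, and it is what produces $v_{j0}$ in Lemma~\ref{lem:quiver3}.
\item In every later block, the arrow $v_{k1}\to v_{k0}$ has already been cancelled during the previous block, and an arrow $v_{i1}\to v_{k1}$ sits there instead; this one does not propagate.
\item From $t=2$ on, each $v_{kt}$ in a later block is genuinely $2$-in/$2$-out, with in-neighbours $v_{k,t\pm1}$ and out-neighbours $v_{k-1,t}$ and $v_{k+1,t}$.
\end{itemize}
Taking $k=j$ and $t=j-1$ gives your Step~3 directly.

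Drop the shortcut through the observation preceding Lemma~\ref{lem:emb1}, because it does not apply as you state it.
\begin{itemize}
\item There $(i,j,k)$ are barycentric coordinates, so ``$k=0$'' is the edge $e_1$ of the ${\rm SL}_{n-1}$ triangle. Its image under $\iota$ is the line $\{v_{a+1,a}\}$, not the row $v_{*0}$.
\item The vertex you care about, $v_{j,j-1}=\iota\bigl((n-j,j-1,0)\bigr)$, is the image of a frozen ${\rm SL}_{n-1}$ vertex.
\item Two of the four neighbours in the statement, $v_{jj}$ and $v_{j-1,j-1}$, are not in the image of $\iota$ at all.
\end{itemize}
So the observation cannot supply those two entries. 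For the other two, it only replaces your computation with the same computation at a frozen vertex of ${\rm SL}_{n-1}$. The entries in the lemma have to be computed directly, as in your Steps~1--3.
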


\begin{lemma}\label{lem:mut2}
For any $n$ and $i,j$ with $3\leq j< i\leq n$, we have
   
\begin{enumerate}[label={\rm (\alph*)}]\itemsep0,3em

\item $\mu_{(j;j-1)}\cdots \mu_{(i-2;j-1)} \mu_{(i-1;j-1)} (A_{j,j-2})=\mu_{(j;j-2)}\cdots \mu_{(i-2;j-2)} \mu_{(i-1;j-2)} (A_{j,j-2})$, i.e., the quantum cluster variables associated to the vertex $v_{j,j-2}$ are the same in the quantum seeds $\mu_{(j;j-1)}\cdots \mu_{(i-2;j-1)} \mu_{(i-1;j-1)}(w_{\mathbb P_3})$ and $\mu_{(j;j-2)}\cdots \mu_{(i-2;j-2)} \mu_{(i-1;j-2)}(w_{\mathbb P_3})$ (see \eqref{def-qum-seed-w}).

\item $\mu_{(j;j-1)}\cdots \mu_{(i-2;j-1)} \mu_{(i-1;j-1)} (A_{j+1,j-1})=\mu_{(j+1;j-1)}\cdots \mu_{(i-2;j-1)} \mu_{(i-1;j-1)} (A_{j+1,j-1})$, i.e., the quantum cluster variables associated to the vertex $v_{j+1,j-1}$ are the same in the quantum seeds $\mu_{(j;j-1)}\cdots \mu_{(i-2;j-1)} \mu_{(i-1;j-1)}(w_{\mathbb P_3})$ and $\mu_{(j+1;j-1)}\cdots \mu_{(i-2;j-1)} \mu_{(i-1;j-1)}(w_{\mathbb P_3})$ (see \eqref{def-qum-seed-w}).  
\end{enumerate}
\end{lemma}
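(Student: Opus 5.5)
The claim is that a specific cluster variable does not depend on which of two mutation sequences produced it. I will prove it using the principle that a mutation $\mu_v$ changes only the variable at $v$. Consequently, a variable at a vertex $w$ is unchanged by any mutations applied after the last mutation at $w$. It is also unchanged by mutations at vertices that are not adjacent to $w$ in the current quiver, since these do not change the exchange polynomial at $w$ when $w$ is later mutated.

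For part (a), note that
\[
\mu_{(j;j-1)}\cdots\mu_{(i-1;j-1)}=\prod_k \mu_{k,j-1}\,\mu_{(k;j-2)},
\]
that is, each block $\mu_{(k;j-1)}$ equals $\mu_{(k;j-2)}$ followed by one extra mutation at $v_{k,j-1}$. So the longer sequence is obtained from $\mu_{(j;j-2)}\cdots\mu_{(i-1;j-2)}$ by interleaving the extra mutations $\mu_{k,j-1}$ for $k=i-1,\dots,j$.

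The last mutation at $v_{j,j-2}$ occurs inside the final block $\mu_{(j;j-2)}$ in both sequences. In the long sequence, the only mutation after it is $\mu_{j,j-1}$, which does not affect the variable at $v_{j,j-2}$. The substantive point is the earlier extra mutations $\mu_{k,j-1}$ with $k>j$. I must check that they do not change the exchange relations at $v_{j,j-2}$ or at the vertices that feed into it. I will show this using commutation of mutations at non-adjacent vertices: at the moment $\mu_{k,j-1}$ is applied, the vertex $v_{k,j-1}$ has no arrows to the vertices $v_{k',t}$ with $k'<k$ and $t\le j-2$ that are mutated later. Then $\mu_{k,j-1}$ commutes past all subsequent $\mu_{k',t}$, and all the extra mutations can be pushed to the end of the sequence.

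To justify this non-adjacency I plan to compute the intermediate quivers by induction on $i$. This mirrors Lemmas \ref{lem:quiver0}, \ref{lem:quiver3} and \ref{lem:quiver1}, which describe the arrows at $v_{j,j-1}$ after such sequences, in the style of the Fock–Goncharov computation. I also plan to use the embedding $\iota$ from ${\rm SL}_{n-1}$ and the remark before Lemma \ref{lem:emb1}: the quiver restricted away from the diagonal $\{v_{kk}\}$ is compatible with the ${\rm SL}_{n-1}$ quiver, and the diagonal vertices are disconnected from $\{v_{st}\mid t\le s-2\}$. This reduces the adjacency checks to a smaller $n$.

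Part (b) is analogous. Here the sequence $\mu_{(j+1;j-1)}\cdots\mu_{(i-1;j-1)}$ is obtained by appending the blocks $\mu_{(j;j-1)}$. These blocks involve only the vertices $v_{j,1},\dots,v_{j,j-1}$, and they come after the last mutation at $v_{j+1,j-1}$. So the variable at $v_{j+1,j-1}$ is untouched, because mutations never alter the variables at other vertices. Part (b) is therefore nearly immediate.

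The main obstacle is (a): establishing the precise non-adjacency of $v_{k,j-1}$ with the later-mutated vertices in the partially mutated quivers. I would first try a direct induction that tracks the arrows incident to the column $\{v_{k,j-1}\}$, as in Lemma \ref{lem:quiver1}.
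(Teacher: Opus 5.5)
Your proposal is correct and follows the paper's proof of (a): each extra mutation $\mu_{k,j-1}$ is commuted past the later blocks $\mu_{(k';j-2)}$, $k'<k$, because $v_{k,j-1}$ has no arrows to $\{v_{k',t}\mid t\le j-2\}$ at the moment it is mutated (this is what the quiver computations of Lemma~\ref{lem:mutation3} provide), and it is then discarded since it comes after the last mutation at $v_{j,j-2}$. Your direct argument for (b) --- the block $\mu_{(j;j-1)}$ is applied after the last mutation at $v_{j+1,j-1}$ and never mutates that vertex --- is also correct, and it is simpler than the paper's remark that (b) is analogous to (a).
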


We left the proofs of Lemmas~\ref{lem:quiver1} and \ref{lem:mut2} to Appendix~\ref{sec:proof of Lemmas}. 

\medskip

\begin{proof}[Proof of Proposition \ref{prop-Cij-bar-Cij}]:

For $j=2$, if $i=3$, then
$$\mu_{21}(A_{21})=[A_{10}\cdot A_{22}\cdot A_{31}\cdot A_{21}^{-1}]+[A_{11}\cdot A_{20}\cdot A_{32}\cdot A_{21}^{-1}].$$

By Lemmas~\ref{lem-Cij-bar-Cij}, \ref{lem-Cij-path-sum-A}, and \ref{lem:divide2}, we have $$C_{32}=[A_{10}\cdot A_{31}\cdot A_{30}^{-1}\cdot A_{21}^{-1}]+[A_{32}\cdot A_{22}^{-1}\cdot A_{21}^{-1}\cdot A_{11}\cdot A_{30}^{-1}\cdot A_{20}].$$

Thus we have $C_{32}=[\mu_{21}(A_{21})\cdot A_{30}^{-1}\cdot A_{22}^{-1}]$ and Proposition \ref{prop-Cij-bar-Cij} holds for the case that $j=2,i=3$.

We then consider the case that $j=2$ and $i>3$. By Lemma~\ref{lem:quiver0}, we have 
 \begin{equation*}
    \begin{array}{rcl}
& & \mu_{21}\cdots \mu_{i-2,1} \mu_{i-1,1} (A_{21})\\
&=&[\mu_{31}\cdots \mu_{i-2,1} \mu_{i-1,1} (A_{31}) \cdot A_{11}\cdot A_{21}^{-1}]\\
&+& [A_{10}\cdot A_{22}\cdot A_{i1}\cdot A_{21}^{-1}].\\
&:=& I+[A_{10}\cdot A_{22}\cdot A_{i1}\cdot A_{21}^{-1}].
\end{array}
\end{equation*}
By induction on $i$, and Lemmas~\ref{lem-Cij-bar-Cij}(a),
\ref{lem-Cij-path-sum-A}, \ref{lem:divide2}, and \ref{lem:emb1}, we have
$$\begin{cases}
I=[\sum_{p\in \mathsf P^{n-1}(\mathbb P_3,v_1^{(n-1)},i-1,2)} \iota(A_{p}^{(n-1)})\cdot A_{i0}\cdot A_{32}\cdot A_{11}\cdot A_{21}^{-1}]=[\sum_{p\in \mathsf P^{n-1}(\mathbb P_3,v_1^{(n-1)},i-1,j-1)} A_{\iota(p)}\cdot A_{i0}\cdot A_{22}],\\
C_{i2}=\sum_{p\in \mathsf P^{n-1}(\mathbb P_3,v_1^{(n-1)},i-1,2)} A_{\iota(p)}+\sum_{p\in \mathsf P^{n-1}(\mathbb P_3,v_1^{(n-1)},i-1,1)} A_{\iota(p)},\\
\sum_{p\in \mathsf P^{n-1}(\mathbb P_3,v_1^{(n-1)},i-1,1)} A_{\iota(p)}=[A_{10}\cdot A_{i0}^{-1}\cdot A_{i1}\cdot A_{21}^{-1}].
\end{cases}$$

The induction hypothesis together with Lemma~\ref{lem:emb1} yields
\[
\Big(
  \sum_{p \in \mathsf P^{n-1}\!\bigl(\mathbb P_3,
      v_1^{(n-1)}, i-1, 2\bigr)}
      \iota(A_{p}^{(n-1)})\Big)
A_{i0}\,A_{32}\,A_{11}\,A_{21}^{-1}
=
\xi^{m}\,
A_{i0}\,A_{32}\,A_{11}\,A_{21}^{-1}
  \sum_{p \in \mathsf P^{n-1}\!\bigl(\mathbb P_3,
      v_1^{(n-1)}, i-1, 2\bigr)}
      \iota(A_{p}^{(n-1)})
\]
for some integer \(m\), where \(\xi = \omega^{n}\).
Hence the Weyl normalization notation \([\,\sim\,]\) used in the above identities are well defined.  
Moreover, every occurrence of \([\,\sim\,]\) in the subsequent proof is justified for the similar reason.

Thus, we obtain $C_{i2}=[\mu_{i-1,1}\cdots \mu_{31} \mu_{21}(A_{21})\cdot A_{i0}^{-1}\cdot A_{22}^{-1}]$. It follows that Proposition \ref{prop-Cij-bar-Cij} holds for $j=2$.

For $j\geq 3$, if $i=j+1$, the proof is similar as above by using Lemmas~\ref{lem-Cij-bar-Cij}(b) and \ref{lem:quiver3}.

For $j\geq 3$, if $i>j+1$, then by Lemmas \ref{lem:quiver1} and \ref{lem:mut2}, we have 

 \begin{equation*}
    \begin{array}{rcl}
& & \mu_{(j;j-1)}\cdots \mu_{(i-2;j-1)} \mu_{(i-1;j-1)} (A_{j,j-1})\\
&=&[\mu_{(j;j-2)}\cdots \mu_{(i-2;j-2)} \mu_{(i-1;j-2)} (A_{j,j-2}) \cdot A_{jj}\cdot A_{j,j-1}^{-1}]\\
&+& [\mu_{(j+1;j-1)}\cdots \mu_{(i-2;j-1)} \mu_{(i-1;j-1)} (A_{j+1,j-1})\cdot A_{j-1,j-1}\cdot A_{j,j-1}^{-1}]\\
&:=& I_1+I_2.
\end{array}
\end{equation*}
By induction on $i+j$, and Lemmas~\ref{lem-Cij-path-sum-A}, \ref{lem:divide2}, and \ref{lem:emb1}, we have 
$$\begin{cases}
I_1=[\sum_{p\in \mathsf P^{n-1}(\mathbb P_3,v_1^{(n-1)},i-1,j-1)} \iota(A_{p}^{(n-1)})\cdot A_{i0}\cdot A_{jj}]=[\sum_{p\in \mathsf P^{n-1}(\mathbb P_3,v_1^{(n-1)},i-1,j-1)} A_{\iota(p)}\cdot A_{i0}\cdot A_{jj}],\\
I_2=[\sum_{p\in \mathsf P^{n-1}(\mathbb P_3,v_1^{(n-1)},i-1,j)} \iota(A_{p}^{(n-1)})\cdot A_{i0}\cdot A_{j+1,j}\cdot  A_{j-1,j-1}\cdot A_{j,j-1}^{-1}]\\
\hspace{0.41cm}=[\sum_{p\in \mathsf P^{n-1}(\mathbb P_3,v_1^{(n-1)},i-1,j)} A_{\iota(p)}\cdot A_{i0}\cdot A_{jj}],\\
C_{ij}=\sum_{p\in \mathsf P^{n-1}(\mathbb P_3,v_1^{(n-1)},i-1,j)} A_{\iota(p)}+\sum_{p\in \mathsf P^{n-1}(\mathbb P_3,v_1^{(n-1)},i-1,j-1)} A_{\iota(p)}.
\end{cases}$$

Therefore, we obtain $$C_{ij}=[\mu_{(j;j-1)}\cdots \mu_{(i-2;j-1)} \mu_{(i-1;j-1)} (A_{j,j-1})\cdot A_{i0}^{-1}\cdot A_{jj}^{-1}].$$

The proof is complete.
\end{proof}

We abbreviate $\mu_{\overline v_{ij}}=\overline\mu_{ij}$. 
For any $k,t$ such that $k+t<n$, we denote 
\begin{align}\label{eq-def-bar-mu-k;t}
  \overline\mu_{(k;t)}=\overline \mu_{k,k+1} \overline \mu_{k,k+2}\cdots \overline \mu_{k,k+t}.
\end{align}

Recall that $\overline{C}_{ij}$ is the stated arc in $\mathbb P_3$ obtained from $C_{ij}$ by reversing its orientation.
A parallel statement to Theorem~\ref{lem-trace-image-cornerarc-P3} also holds for $\overline{C}_{ij}$ (see \cite[Theorem~10.5 and Figure~13(B)]{LY23}).
The following proposition is similar to Proposition \ref{prop-Cij-bar-Cij}, we omit its proof.

\begin{proposition}\label{prop-bar-Cij} For any $i,j$ with $1\leq j<i<n$, we have 
$$\overline C_{ij}=[\overline \mu_{(j;n-i)}\cdots \overline \mu_{(i-2;n-i)} \overline \mu_{(i-1;n-i)} (\overline A_{j,j+1})\cdot \overline A_{jj}^{-1}\cdot \overline A_{in}^{-1}]\in \mathscr A_{\omega}(\mathbb P_3).$$
\end{proposition}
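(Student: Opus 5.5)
The plan is to mirror the proof of Proposition~\ref{prop-Cij-bar-Cij}, with the clockwise corner arc and the barred labeling $\overline v_{ji}=(j,n-i,i-j)$ in place of the counterclockwise one. First I would write down the path-sum formula for $\overline C_{ij}$. The parallel of Theorem~\ref{lem-trace-image-cornerarc-P3} from \cite[Theorem~10.5, Figure~13(B)]{LY23}, combined with Theorem~\ref{thm-trace-A}(d), gives
\[
\overline C_{ij}=\sum_{p\in\overline{\mathsf P}(i,j)}\overline A_p ,
\]
where $\overline{\mathsf P}(i,j)$ is the set of paths in the network with reversed orientation, and each $\overline A_p$ is a Weyl-ordered monomial obtained via $\psi^{-1}_{\mathbb P_3}$. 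This is the analogue of Lemma~\ref{lem-Cij-path-sum-A}. The base cases $i=n$ and $i=j$ are Lemma~\ref{lem-Cij-bar-Cij}(c),(d), since there the path set is a singleton.

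Next I would establish the analogue of Lemma~\ref{lem:divide2}: a reduction from ${\rm SL}_n$ to ${\rm SL}_{n-1}$. I would use the embedding $(i,j,k)\mapsto(i,j,k+1)$, or the variant fixing the edge carrying the $\overline A_{\cdot,n}$ frozen variables, chosen so that $\overline{\mathsf P}(i,j)$ decomposes as a disjoint union of two ${\rm SL}_{n-1}$ path sets. I would then record how each $\overline A_{\iota(p)}$ differs from $\iota(\overline A^{(n-1)}_p)$ by a fixed frozen/boundary monomial. Together with the compatibility of $\iota$ with mutations away from the boundary row, this yields the analogue of Lemma~\ref{lem:emb1} for the composite mutations $\overline\mu_{(j;n-i)}\cdots\overline\mu_{(i-1;n-i)}$.

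The core step is a one-step exchange computation. I would determine the neighbours of $\overline v_{j,j+1}$ in the mutated quiver just before its last mutation, via analogues of Lemmas~\ref{lem:quiver0}, \ref{lem:quiver3}, \ref{lem:quiver1}. I expect exactly two incoming and two or three outgoing arrows. I would also prove an analogue of Lemma~\ref{lem:mut2}: the cluster variables at the two relevant neighbouring vertices coincide with those produced by the shorter mutation sequences for the pairs $(i,j)$ with smaller $i-j$, respectively with $n-i$ reduced by one. The exchange relation then writes the target variable as $I_1+I_2$.

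Finally, by induction on $i-j$ (or on $i+j$ as in the paper), each $I_k$ times $[\overline A_{jj}^{-1}\overline A_{in}^{-1}]$ equals one of the two partial path sums. Adding them gives the stated formula; membership in $\mathscr A_\omega(\mathbb P_3)$ is then immediate, since the expression is a cluster variable times frozen inverses. Quasi-commutation, as in the proof of Proposition~\ref{prop-Cij-bar-Cij}, makes the Weyl brackets well defined. I expect the main obstacle to be the quiver bookkeeping: verifying the neighbour structure after the barred mutation sequences and the equality of cluster variables in the $\overline\mu$ analogue of Lemma~\ref{lem:mut2}. To handle it I would first try to use the symmetry of $\mathbb P_3$ exchanging $v_2$ and $v_3$ together with orientation reversal, which carries $\Gamma_{\mathbb P_3}$ to its opposite quiver, and thereby transport Lemmas~\ref{lem:quiver1} and \ref{lem:mut2} directly rather than recomputing them.
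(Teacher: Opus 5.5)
This is essentially the paper's route. The paper omits the proof, saying only that it parallels Proposition~\ref{prop-Cij-bar-Cij} via the clockwise version of \cite[Theorem~10.5]{LY23}, and your outline (path-sum formula, reduction to ${\rm SL}_{n-1}$, final exchange relation, induction) is exactly that parallel.

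One caution: the shortcut through the reflection $(a,b,c)\mapsto(a,c,b)$ will not work. It does send $\Gamma_{\mathbb P_3}$ to its opposite quiver. However, it carries the barred sequence for $(i,j)$ to the unbarred sequence for $(n+1-j,n+1-i)$ run in \emph{reverse} order. It also sends the target $\overline v_{j,j+1}=(j,n-j-1,1)$ to $v_{n-j,1}$, which is the \emph{first} vertex mutated in that sequence, not the last. So Lemmas~\ref{lem:quiver1} and~\ref{lem:mut2} describe a different seed, and you must check the neighbour and coincidence lemmas directly.

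When you do, you should find, in the generic case $i-j\ge 2$, $n-i\ge 2$, that the last exchange at $\overline v_{j,j+1}$ involves two neighbours:
\begin{itemize}
\item $\overline v_{j,j+2}$, which carries the ${\rm SL}_{n-1}$ case $(i,j)$ under $(a,b,c)\mapsto(a,b,c+1)$;
\item $\overline v_{j+1,j+2}$, which carries the ${\rm SL}_n$ case $(i,j+1)$.
\end{itemize}
So the path-set splitting is not two images of a single embedding as in Lemma~\ref{lem:divide2}. It does, however, match the induction described in your core step.
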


\subsection{Quadrilateral case}
\label{Quadrilateral-case}

\begin{figure}[h]
    \centering
    \includegraphics[width=75pt]{Dij-P4.pdf}
    \caption{The picture for $D_{ij}$}\label{Fig:P4}
\end{figure}
Choose a triangulation $\lambda$ of the quadrilateral $\mathbb P_4$, we label the ideal arcs in $\lambda$ by $c_1,c_2,c_3,c_4$ and $c_5$, see Figure \ref{Fig:P4}. Denote by $\mathbb P_3^1$ and $\mathbb P_3^2$ the triangles formed by $c_2,c_3,c_5$ and $c_1,c_4,c_5$, respectively. Denote by $v_1$ and $\overline v_1$ the common vertices of $c_2, c_5$ and $c_4,c_5$, respectively. We label the vertices in $V_{\mathbb P_3^1}^n$ by $\{v_{ij}\mid 0\leq j\leq i\leq n\}$ and the cluster variables by $A_{ij}$ (i.e., $A_{ij}=A_{v_{ij}}\in\mathcal A_\omega(\mathbb P_4,\lambda)$ with the convention that $A_{00}= A_{n0}=A_{nn}=1$) with respect to $v_1$ (see \S\ref{sub-triangle-case}). We label the vertices in $V_{\mathbb P_3^2}^n$ by $\{\overline v_{ji}\mid 0\leq j\leq i\leq n\}$ and the cluster variables by $\overline A_{ij}$ (i.e., $\overline{A}_{ij}=A_{\overline v_{ij}}\in\mathcal A_\omega(\mathbb P_4,\lambda)$ with the convention that $\overline A_{00}= \overline A_{0n}=\overline A_{nn}=1$) with respect to $\overline v_1$ (see \S\ref{sub-triangle-case}). In particular, we have $v_{ii}=\overline v_{ii}$ and $A_{ii}=\overline A_{ii}$ for any $i$.

For any $j>1$, denote 
\begin{align}\label{sec7-def-mu-square-j}
    \overline \mu^{\diamondsuit}_j=\bar \mu_{(1;n-j)}\cdots \bar \mu_{(j-2;n-j)}\bar \mu_{(j-1;n-j)},
\end{align}
where $\overline{\mu}_{(k;t)}$ is defined as in \eqref{eq-def-bar-mu-k;t}.
Note that these mutations are applied to the triangulation of $\mathbb P_4$.

For any $i,j$ with $i\geq j>1$, denote 
\begin{align}\label{sec7-def-mu-square-ij}
    \mu^{\diamondsuit}_{(i;j-1)}=\left(\mu_{(2;1)}\mu_{(3;2)}\cdots\mu_{(j-1;j-2)}\right)\circ \left(\mu_{(j;j-1)}\mu_{(j+1;j-1)}\cdots \mu_{(i-1;j-1)}\right),
\end{align}
where $\mu_{(k;j)}$ is defined as in
\eqref{def-eq-mu-k;j}.
Note that these mutations are applied to the triangulation of $\mathbb P_4$.


For any $1\leq i,j\leq n$, we use $D_{ij}\in\widetilde{\cS}_\omega(\mathbb P_4)\subset \mathscr U_{\omega}(\mathbb P_4)$ to denote the stated arc in Figure~\ref{Fig:P4}.
The following is the main result of this subsection.

\begin{theorem}\label{thm-P4-Dij}
    In $\mathscr A_{\omega}(\mathbb P_4)$, for any $1\leq i,j\leq n$, we have 
    $$D_{ij}=
    \begin{cases}
        [A_{i1}\cdot A_{i0}^{-1}\cdot \overline A_{1n}^{-1}]
        & \mbox{ if $i\geq j=1$,}\\
        [\overline \mu^{\diamondsuit}_{j}(\overline A_{12})\cdot A_{10}^{-1}\cdot \overline A_{jn}^{-1} ] & \mbox{ if $j>i=1$,}\vspace{1.5mm}\\
        [\left(\mu_{j-1,j-1}\cdots\mu_{22}\mu_{11}\right)\circ \overline \mu^{\diamondsuit}_j\circ  \mu^{\diamondsuit}_{(i;j-1)}(A_{j-1,j-1}) \cdot A_{i0}^{-1}\cdot\overline A_{jn}^{-1}] & \mbox{ if $i\geq j>1$,}\vspace{1.5mm}\\
        [\left(\mu_{i-1,i-1}\cdots\mu_{22}\mu_{11}\right)\circ \overline \mu^{\diamondsuit}_j\circ \mu^{\diamondsuit}_{(i;i-1)}(A_{i-1,i-1}) \cdot A_{i0}^{-1}\cdot\overline A_{jn}^{-1}] & \mbox{ if $j>i>1$},
    \end{cases}$$
    where $\mu_{k,k}=\mu_{v_{kk}}$
    for $1\leq k\leq n-1$.
\end{theorem}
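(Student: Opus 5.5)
We would prove Theorem~\ref{thm-P4-Dij} by cutting $\mathbb P_4$ along the diagonal $c_5$ into the two triangles $\mathbb P_3^1$ and $\mathbb P_3^2$, and by using the injectivity and compatibility of the splitting maps. By Lemma~\ref{lem-basic-lem}(c) and Theorem~\ref{thm:splitU}, the square formed by $\mathbb S_{c_5}\colon\widetilde{\cS}_\omega(\mathbb P_4)\to\widetilde{\cS}_\omega(\mathbb P_3^1)\otimes\widetilde{\cS}_\omega(\mathbb P_3^2)$ and $\mathbb S^U_{c_5}$ commutes, and both horizontal maps are injective. It therefore suffices to show that $\mathbb S_{c_5}(D_{ij})$ and $\mathbb S^U_{c_5}$ of the right-hand side agree in $\mathscr U_\omega(\mathbb P_3^1\sqcup\mathbb P_3^2)$. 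Both sides are reflection invariant, by Lemma~\ref{lem-reflection} and Proposition~\ref{prop:split}. Hence we only need to compare them up to powers of $\omega^{1/2}$ and then fix the normalization by the Weyl ordering.

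For the left-hand side, the state-sum definition \eqref{eq-def-splitting} gives
\[
\mathbb S_{c_5}(D_{ij})=\sum_{k} C^{(1)}_{ik}\,\overline C^{(2)}_{kj},
\]
with the sum running over intermediate states $k$ on $c_5$. Only the terms with $j\le k\le i$ (or the appropriate range for reversed arcs) survive, since bad arcs vanish in the reduced algebra. Each factor is then written as a cluster variable times frozen monomials using Lemma~\ref{lem-Cij-bar-Cij} and Propositions~\ref{prop-Cij-bar-Cij} and~\ref{prop-bar-Cij}. For the right-hand side, $\mathbb S^U_{c_5}$ acts on the relevant mutated cluster variables by Corollary~\ref{cor:propotion}. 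The cases $i\ge j=1$ and $j>i=1$ should be direct. In the first, the initial-seed monomial $[A_{i1}A_{i0}^{-1}\overline A_{1n}^{-1}]$ is mapped by Proposition~\ref{prop:image}, and the sum over $k$ collapses to a single term (compare Lemma~\ref{lem-Cij-bar-Cij}(a),(c)). In the second, only the mutations $\overline\mu^\diamondsuit_j$ inside $\mathbb P_3^2$ are involved, and these commute with splitting along $c_5$ up to frozen factors on $c_5$. Thus the identity reduces to Proposition~\ref{prop-bar-Cij} and Lemma~\ref{lem-Cij-bar-Cij}.

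For the main case $i\ge j>1$, we first apply $\mu^\diamondsuit_{(i;j-1)}$ and $\overline\mu^\diamondsuit_j$, which take place inside $\mathbb P_3^1$ and $\mathbb P_3^2$ respectively and away from $c_5$. After these, the exchangeable variables attached to the vertices $v_{kk}$ are, by the triangle propositions, essentially the cluster-variable parts of $C_{ik}$ and $\overline C_{kj}$. We then compute the quiver $\overline\mu^\diamondsuit_j\circ\mu^\diamondsuit_{(i;j-1)}(Q_{\mathbb P_4})$ restricted to $v_{11},\dots,v_{j-1,j-1}$. We expect this restriction to be a linearly oriented type $A_{j-1}$ quiver whose coefficients come from the neighbouring vertices. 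Given that, the cluster variable obtained by $\mu_{j-1,j-1}\cdots\mu_{11}$ can be expanded with the quantum type $A$ expansion formula (snake-graph / path formula of \cite{R,CL,H1}). This yields a sum of $i-j+1$, respectively $j$, Laurent monomials, one for each intermediate state $k$. We then match these term by term with $\mathbb S_{c_5}(D_{ij})$ after multiplying by $A_{i0}^{-1}\overline A_{jn}^{-1}$. The case $j>i>1$ is handled in the same way, with the roles of the two triangles interchanged and $\mu^\diamondsuit_{(i;i-1)}$ used in place of $\mu^\diamondsuit_{(i;j-1)}$.

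We expect the main obstacle to be the quiver computation that establishes the type $A$ structure. Concretely, we must show that after $\overline\mu^\diamondsuit_j\circ\mu^\diamondsuit_{(i;j-1)}$ the diagonal vertices form a linear $A_{j-1}$ chain, and we must determine exactly which frozen and mutable neighbours give the coefficients. We would attempt this by induction on $i$ and $j$, mimicking Lemmas~\ref{lem:quiver0}--\ref{lem:quiver1} and using the embedding $\iota$ from ${\rm SL}_{n-1}$ to ${\rm SL}_n$. Once this is in place, a second but more routine difficulty remains: the coefficients in the type $A$ expansion must reproduce precisely the frozen factors produced by Corollary~\ref{cor:propotion}, and the $\xi$-powers must be consistent. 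For the powers we would use Lemma~\ref{lem:propotion}, which makes all terms quasi-commute uniformly so that the Weyl brackets are well defined.
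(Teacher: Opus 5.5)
Your strategy is the paper's. You cut along $c_5$, use injectivity and compatibility of $\mathbb S_{c_5}$ and $\mathbb S^U_{c_5}$ (Theorem~\ref{thm:splitU}), and check $\min(i,j)=1$ directly. For $i\ge j>1$ you use the linear $A_{j-1}$ chain on $v_{11},\dots,v_{j-1,j-1}$ (Lemma~\ref{lem:quiver2}) with the quantum type $A$ expansion (Corollary~\ref{cor:cv}) and match terms with $\mathbb S_{c_5}(D_{ij})$.

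The bookkeeping in your matching step is wrong as written. Since $C_{ik}=0$ for $k>i$ and $\overline C_{jk}=0$ for $k>j$ in the reduced algebra, the surviving intermediate states are $1\le k\le\min(i,j)$. Thus $\mathbb S_{c_5}(D_{ij})=\sum_{k=1}^{j}C_{ik}\otimes\overline C_{jk}$ for $i\ge j$, not a sum over $j\le k\le i$. Your range even contradicts your own (correct) claim that the sum collapses to one term when $j=1$. The count to match is $j$ terms, which is exactly what Corollary~\ref{cor:cv} produces: two end terms plus $j-2$ middle ones. Also, after $\overline\mu^{\diamondsuit}_j\circ\mu^{\diamondsuit}_{(i;j-1)}$ the chain vertices $v_{kk}$ are unmutated and still carry $A_{kk}$. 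The cluster-variable parts of $C_{ik}$ and $\overline C_{jk}$ sit at the neighbouring vertices $v_{k,k-1}$ and $\overline v_{k,k+1}$, which supply the coefficients of the chain.

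Two steps you take for granted need their own arguments.

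\emph{Identifying the neighbouring variables.} Propositions~\ref{prop-Cij-bar-Cij} and~\ref{prop-bar-Cij} produce these variables through shorter, $k$-dependent mutation sequences. You must show that the single composite $\overline\mu^{\diamondsuit}_j\circ\mu^{\diamondsuit}_{(i;j-1)}$ yields, simultaneously for all $k$, the variable $\mu_{(k;k-1)}\cdots\mu_{(i-1;k-1)}(A_{k,k-1})$ at $v_{k,k-1}$ and $\overline\mu_{(k;n-j)}\cdots\overline\mu_{(j-1;n-j)}(\overline A_{k,k+1})$ at $\overline v_{k,k+1}$. This is Lemma~\ref{lem:same}, proved by commuting mutations at non-adjacent vertices as in Lemma~\ref{lem:mut2}.

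\emph{Splitting the iterated mutations.} Corollary~\ref{cor:propotion} only describes $\mathbb S^U$ of a variable mutated once from the initial seed. To get $\mathbb S^U_{c_5}$ of the iterated-mutation variables (Lemma~\ref{lem:splitk}), you must induct along the mutation sequence using the explicit intermediate exchange relations (Lemma~\ref{lem:mutation3}, Proposition~\ref{lem:imageofsplit1}). No such commutation is available for the final mutations $\mu_{11},\dots,\mu_{j-1,j-1}$, because these vertices lie on $c_5$ and become frozen after cutting. That is exactly why the expansion must be done before applying $\mathbb S^U_{c_5}$.

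With these corrections your plan goes through as in the paper.
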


The rest of this section is devoted to prove Theorem \ref{thm-P4-Dij}.

Recall that we defined the splitting homomorphism 
$$\mathbb S_{c_5}^U\colon \mathscr{U}_\omega(\mathbb P_4)\rightarrow \mathscr{U}_\omega(\mathbb P_3^1)\otimes \mathscr{U}_\omega(\mathbb P_3^2)\quad \text{(Theorem~\ref{thm:splitU}).}$$
For each pair $i,j$ with $0\leq j\leq i\leq n$, we can naturally regard $A_{ij}$ (resp. $\overline{A}_{ji}$) as an element in $\mathscr{U}_\omega(\mathbb P_3^1)$ (resp. $\mathscr{U}_\omega(\mathbb P_3^2)$). Then we have the following.

\begin{lemma}\label{lem:splitk}
  For any $i,j$ with $j\leq i$ and any $k$ with $2\leq k\leq j$, in $\mathscr{U}_\omega(\mathbb P_3^1)\otimes \mathscr{U}_\omega(\mathbb P_3^2)$ we have 

\begin{enumerate}[label={\rm (\alph*)}]\itemsep0,3em

\item
 \begin{equation*}
    \begin{array}{rcl}
&&\mathbb S_{c_5}^U(\mu_{(k;k-1)}\cdots \mu_{(i-2;k-1)} \mu_{(i-1;k-1)} (A_{k,k-1}))\vspace{1mm}\\& =& 
[\mu_{(k;k-1)}\cdots \mu_{(i-2;k-1)} \mu_{(i-1;k-1)} (A_{k,k-1})\otimes \overline A_{k-1,k-1}\cdot \overline A_{ii}],
\end{array}
\end{equation*} 

\item  
 \begin{equation*}
    \begin{array}{rcl}
&&\mathbb S_{c_5}^U(\overline \mu_{(k;n-j)}\cdots \overline \mu_{(j-2;n-j)} \overline \mu_{(j-1;n-j)} (\overline A_{k,k+1}))\vspace{1mm}\\& =& 
[ A_{k-1,k-1}\cdot A_{jj}\otimes \overline \mu_{(k;n-j)}\cdots \overline \mu_{(j-2;n-j)} \overline \mu_{(j-1;n-j)} (\overline A_{k,k+1})].
\end{array}
\end{equation*} 
\end{enumerate}  
\end{lemma}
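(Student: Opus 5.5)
The plan is to prove part (a) by induction along the mutation sequence, using the splitting map $\mathbb S_{c_5}^U$ of Theorem~\ref{thm:splitU} as a homomorphism of upper cluster algebras. Part (b) then follows by the mirror-image argument, with the roles of $\mathbb P_3^1$ and $\mathbb P_3^2$ exchanged and $\mu_{(k;t)}$ replaced by $\overline\mu_{(k;t)}$.

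The key structural input is that every vertex mutated in $\mu_{(k;k-1)}\cdots\mu_{(i-1;k-1)}$ is a vertex $v_{st}$ with $1\le t\le k-1<s$. Such vertices lie in the interior of $\mathbb P_3^1$ and never on $c_5$ (on $c_5$ the second coordinate equals the first in our labeling, i.e. $v_{ss}$). Hence, in the language of Proposition~\ref{prop:split}, they belong to $\mathcal V_{\rm mut}\setminus\mathcal V_{\rm split}$. By the first clause in the definition of a splitting map, $\mathbb S^U_{c_5}$ sends each mutated cluster variable to the corresponding mutated variable in the split seed, up to a frozen monomial; Corollary~\ref{cor:propotion} computes that monomial explicitly.

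I would iterate this one mutation at a time. Applying Proposition~\ref{prop:split} to the seed obtained after the earlier mutations, I would check that the resulting seed is still a splitting of the correspondingly mutated seed on $\mathcal V_{\rm split}$. Each step only mutates vertices away from $c_5$, so the conditions of Definition~\ref{def-splitting-matrix} and hypothesis (d) of Proposition~\ref{prop:split} should persist: the arrows into $v_{ss}$ coming from these mutations are split exactly as in Lemma~\ref{lem:mut}. I would then track the multisets $\mathcal V^1_v,\mathcal V^2_v$. By Proposition~\ref{prop:image}(a), the initial images are $\mathbb S_{c_5}(A_v)=[A_v\prod_{u\in\mathcal V^2_v}\overline A_{u}]$; only the copies $u''$ on the $\mathbb P_3^2$ side arise, since the curves $c''_\ell$ enter $\mathbb P_3^2$. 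For $v=v_{st}$ in $\mathbb P_3^1$, the relevant $c''$-curves pass through vertices with first coordinate $n-\ell$, giving factors $\overline A_{tt}$ and $\overline A_{ss}$, or a similar pair.

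With this, I would compute the frozen monomial attached to the final variable at $v_{k,k-1}$ by induction on $i$. The base case $i=k+1$ uses the arrows listed in Lemma~\ref{lem:quiver3}. The inductive step uses Lemma~\ref{lem:quiver1}, together with Lemma~\ref{lem:mut2} to identify the intermediate variables. Corollary~\ref{cor:propotion} expresses the correction factor through $[Q_{\rm split}(u',v)]_+$ and the neighbours' $\mathcal V^2$-contributions. The target is to show that this factor telescopes to $\overline A_{k-1,k-1}\overline A_{ii}$. Finally, both sides are reflection invariant: $\mathbb S^U_{c_5}$ is reflection invariant and mutation preserves bar-invariance. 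Therefore the power of $\omega$ is forced and the Weyl-ordered bracket is correct, as in the proof of Proposition~\ref{prop:image}.

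I expect the main obstacle to be this bookkeeping of frozen exponents across the long mutation sequence: verifying that at each step the split seed remains a splitting, and that the accumulated monomial is exactly $\overline A_{k-1,k-1}\overline A_{ii}$ and not some other frozen monomial. My first attempt would be to verify the claim directly for the single mutation at $v_{i-1,1}$. I would then show that the exponent data satisfy the same recursion as the quiver data in Lemmas~\ref{lem:quiver0}--\ref{lem:quiver1}, and finish by induction on $i+k$, in the same pattern as the proof of Proposition~\ref{prop-Cij-bar-Cij}.
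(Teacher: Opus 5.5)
\textbf{There is a genuine gap: your induction on $i$ (or $i+k$) does not close.} For $i>k+1$, expanding the final variable at $v_{k,k-1}$ via Lemma~\ref{lem:quiver1} and Lemma~\ref{lem:mut2} gives
\[
[\mu_{(k;k-2)}\cdots\mu_{(i-1;k-2)}(A_{k,k-2})\cdot A_{kk}\cdot A_{k,k-1}^{-1}]+[\mu_{(k+1;k-1)}\cdots\mu_{(i-1;k-1)}(A_{k+1,k-1})\cdot A_{k-1,k-1}\cdot A_{k,k-1}^{-1}].
\]
Both intermediate variables sit at vertices $v_{st}$ with $s-t=2$. The statement only concerns variables at $v_{k',k'-1}$, so the statement for smaller parameters says nothing about their images under $\mathbb S^U_{c_5}$. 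Proposition~\ref{prop-Cij-bar-Cij} got around this with the $\mathrm{SL}_{n-1}$ embedding of Lemma~\ref{lem:emb1}. That map is not an algebra homomorphism, and nothing relates it to $\mathbb S^U_{c_5}$. Lemmas~\ref{lem:quiver0}--\ref{lem:quiver1} also cannot help, since they only describe arrows at the final vertex $v_{k,k-1}$ and not the exchange relations of the intermediate variables.

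The missing idea is to strengthen the claim to every variable produced along the sequence. For $k\le s\le i-1$ and $1\le t\le k-1$, show
\[
\mathbb S^U_{c_5}(X_{st})=[X_{st}\otimes\overline A_{s-1,s-1}\cdot\overline A_{ii}],\qquad X_{st}:=(\mu_{st}\cdots\mu_{s1})\circ\mu_{(s+1;k-1)}\cdots\mu_{(i-1;k-1)}(A_{st}).
\]
The frozen factor here does not depend on $t$. This is Proposition~\ref{lem:imageofsplit1}. It is proved by induction on $i-1-s$ and $t$ in the order of mutation, reading the exchange relation at each $v_{st}$ off Lemma~\ref{lem:mutation3}(a)--(c). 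Part (a) is then the case $s=k$, $t=k-1$. With this stronger hypothesis, your expansion above also works: both terms acquire $\overline A_{k-1,k-1}\cdot\overline A_{ii}$.

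\textbf{Two further corrections.}

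First, your initial data are wrong. Cutting $\mathbb P_4$ along $c_5$ involves only one triangle on each side. So each vertex $v_{st}$ ($t<s$) of $\mathbb P_3^1$ lies on exactly one curve, namely the row $v_{s0},\dots,v_{ss}$ ending at $v_{ss}\in c_5$. Hence $\mathbb S^U_{c_5}(A_{st})=[A_{st}\otimes\overline A_{ss}]$, including for the frozen $v_{s0}$, and $\mathbb S^U_{c_5}(A_{ss})=[A_{ss}\otimes\overline A_{ss}]$. An extra factor such as $\overline A_{tt}$ would change the resulting frozen monomial.

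Second, Corollary~\ref{cor:propotion}, the first clause of the definition of a splitting map, and Lemma~\ref{lem:mut} all concern a single mutation of the \emph{initial} seed. Iterating Proposition~\ref{prop:split} would therefore require re-proving hypothesis (d) for every mutated seed. This is unnecessary. Since $\mathbb S^U_{c_5}$ is an algebra homomorphism on all of $\mathscr U_\omega(\mathbb P_4)$, you can apply it termwise to each exchange relation and check that both terms carry the same frozen factor. For example, both terms of
\[
\mu_{i-1,1}(A_{i-1,1})=[A_{i-1,1}^{-1}\cdot A_{i-1,0}\cdot A_{i-2,1}\cdot A_{i2}]+[A_{i-1,1}^{-1}\cdot A_{i-2,0}\cdot A_{i-1,2}\cdot A_{i1}]
\]
pick up $\overline A_{i-2,i-2}\cdot\overline A_{ii}$. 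The Weyl normalization is then handled as in Lemma~\ref{lem:propotion}.
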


\begin{lemma}\label{lem:same}
For any $i,j$ with $j\leq i$ and any $k$ with $2\leq k\leq j$, in $\mathscr{U}_\omega(\mathbb P_4)$ we have 

\begin{enumerate}[label={\rm (\alph*)}]\itemsep0,3em

\item
 $\overline \mu^{\diamondsuit}_j\circ  \mu^{\diamondsuit}_{(i;j-1)}(A_{k,k-1})=\mu_{(k;k-1)}\cdots \mu_{(i-2;k-1)} \mu_{(i-1;k-1)} (A_{k,k-1}),$

\item  $\overline \mu^{\diamondsuit}_j\circ  \mu^{\diamondsuit}_{(i;j-1)}(\overline A_{k,k+1})=
\overline \mu_{(k;n-j)}\cdots \overline \mu_{(j-2;n-j)} \overline \mu_{(j-1;n-j)} (\overline A_{k,k+1})$.
\end{enumerate}
\end{lemma}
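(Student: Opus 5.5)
The plan is to use two facts repeatedly. First, the cluster variable attached to a vertex $v$ in a seed changes only when we mutate at $v$. Second, mutations at two vertices with no arrow between them commute, and so do the resulting seeds. Recall that $\mu_{(k;t)}=\mu_{kt}\cdots\mu_{k1}$ applies $\mu_{k1}$ first. Hence in $\overline\mu^{\diamondsuit}_j\circ\mu^{\diamondsuit}_{(i;j-1)}$ we first perform the block $\mu_{(j;j-1)}\cdots\mu_{(i-1;j-1)}$, then $\mu_{(j-1;j-2)},\dots,\mu_{(2;1)}$, and only afterwards the barred mutations in $\mathbb P_3^2$. No vertex $v_{kk}$ on $c_5$ is ever mutated.

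For (a), fix $k$. Among the mutations in $\mu^{\diamondsuit}_{(i;j-1)}$, locate the last one at $v_{k,k-1}$. It lies in the block $\mu_{(k;j-1)}$ if $k=j$, and in $\mu_{(k;k-1)}$ if $k<j$. No later mutation touches $v_{k,k-1}$. The barred mutations live at interior vertices of $\mathbb P_3^2$, and the variable at $v_{k,k-1}$ is already fixed when they start. So the left-hand side equals the variable at $v_{k,k-1}$ right after that last mutation.

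Next we compare the history up to that point with $\mu_{(k;k-1)}\cdots\mu_{(i-1;k-1)}$. All the mutations involved are at interior vertices of $\mathbb P_3^1$. Mutating such a vertex only creates or changes arrows among its neighbours, which lie in the closure of $\mathbb P_3^1$. Therefore the exchange relations are exactly those of the triangle computation, with the $c_5$ vertices acting as coefficients. The case $k=j$ is then immediate: the relevant sequence is $\mu_{(j;j-1)}\cdots\mu_{(i-1;j-1)}$ itself.

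For $k<j$ we argue by descending induction on $k$, applying Lemma~\ref{lem:mut2}(a) repeatedly. Lemma~\ref{lem:mut2}(a) says that the variable at $v_{k,k-2}$ is the same whether we run the $(\cdot;k-1)$ sequence or the $(\cdot;k-2)$ sequence. Iterating it (and its analogues obtained from the quiver descriptions in Lemmas~\ref{lem:quiver0}--\ref{lem:quiver1}), we strip the columns of index $>k-1$ from the history. The mutations at $v_{s,t}$ with $t\ge k$ that remain either come after the last mutation at $v_{k,k-1}$, or are at vertices not adjacent to any vertex of the target sequence at the relevant moment. Hence they can be commuted past or discarded.

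The main obstacle is this bookkeeping for $k<j$. We must check that after the block $\mu_{(j;j-1)}\cdots\mu_{(i-1;j-1)}$ and the blocks $\mu_{(j-1;j-2)},\dots,\mu_{(k+1;k)}$, the local quiver around column $k-1$ agrees with that of $\mu_{(k+1;k-1)}\cdots\mu_{(i-1;k-1)}(Q)$. This requires an explicit quiver computation in the style of Lemma~\ref{lem:quiver1}. We would first establish this agreement for $n$ small and then argue by the embedding $\iota$ from ${\rm SL}_{n-1}$ to ${\rm SL}_n$, as in Lemma~\ref{lem:emb1}.

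Part (b) is symmetric. The unbarred mutations come first, but they are at interior vertices of $\mathbb P_3^1$. They never create arrows between an interior vertex of $\mathbb P_3^2$ and anything else, because no vertex of $\mathbb P_3^1$ is adjacent to an interior vertex of $\mathbb P_3^2$. They also leave the variables on $c_5$ unchanged. So the barred mutation sequence sees the same exchange relations as in the triangle $\mathbb P_3^2$. The variable at $\overline v_{k,k+1}$ is then identified with $\overline\mu_{(k;n-j)}\cdots\overline\mu_{(j-1;n-j)}(\overline A_{k,k+1})$ by the barred analogue of Lemma~\ref{lem:mut2}, obtained from the same argument under the relabelling $v_{ij}\leftrightarrow\overline v_{ji}$, together with the same descending induction.
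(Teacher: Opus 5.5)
Your framework matches the paper's. The paper omits this proof as ``analogous to Lemma~\ref{lem:mut2}'': mutations at non-adjacent vertices are moved past one another, and mutations at vertices other than the target are discarded. Your treatment of (a) for $k=j$, via the last mutation at $v_{j,j-1}$, is fine.

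The gap is (a) for $k<j$, which is where all the content lies. You assert that the extra mutations at $v_{s,t}$ with $t\ge k$ are ``at vertices not adjacent to any vertex of the target sequence at the relevant moment'', but you do not prove it, and the route you sketch would not prove it.

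\begin{itemize}
\item Agreement of local quivers is not enough on its own. The exchange relation at $v_{k,t}$ also involves the current variables at its neighbours in row $k+1$, and identifying those is the original problem for other targets. A descending induction on $k$ would therefore need a much stronger hypothesis than (a).
\item The reduction ``small $n$, then the embedding $\iota$'' cannot work. Since $\iota(a,b,c)=(a,b,c+1)$, the preimage of $v_{s,s-1}=(n-s,s-1,1)$ is the frozen vertex $(n-s,s-1,0)$ on $e_1$ of the ${\rm SL}_{n-1}$ triangle. So the staircase mutations at $v_{s,s-1}$, including the one at the target $v_{k,k-1}$, have no ${\rm SL}_{n-1}$ counterpart. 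Consistently, Lemma~\ref{lem:emb1} only produces targets of the form $v_{s,s-2}$.
\end{itemize}

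A clean way to close the gap is induction on $j\ge k$.

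\begin{itemize}
\item Deleting the mutations $\mu_{s,j-1}$, $j\le s\le i-1$, from $\mu^{\diamondsuit}_{(i;j-1)}$ gives exactly $\mu^{\diamondsuit}_{(i;j-2)}$. For $j=k$, your truncation argument identifies the variable at $v_{k,k-1}$ with the right-hand side.
\item So it suffices to push each $\mu_{s,j-1}$ to the end of the sequence, taking $s=j,j+1,\dots,i-1$ in that order. A mutation at $x$ changes the arrows at $y$ only if $x$ and $y$ are adjacent. Hence one quiver fact per $s$ is enough: in $\mu_{(s;j-1)}\cdots\mu_{(i-1;j-1)}(Q_\lambda)$, the vertex $v_{s,j-1}$ has no neighbour in $\{v_{s',t}\mid s'<s,\ 1\le t\le j-2\}$.
\item For $s=j$ this is Lemma~\ref{lem:quiver1}, or Lemma~\ref{lem:quiver3} when $i=j+1$. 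For $s>j$ it comes from the same row-by-row computation as Lemmas~\ref{lem:mutation1}--\ref{lem:mutation3}. In the cases I checked, the neighbours of $v_{s,j-1}$ are $v_{s-1,j-1}$, $v_{s\pm1,\cdot}$ in columns $\ge j-1$, $v_{s,j-2}$, $v_{s,j}$ and $v_{s,0}$; none of these is mutated later.
\end{itemize}

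Part (b) is much simpler than you make it.

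\begin{itemize}
\item All blocks of $\overline\mu^{\diamondsuit}_j$ have the same width $n-j$. The right-hand side of (b) is exactly the first-applied segment $\overline\mu_{(k;n-j)}\cdots\overline\mu_{(j-1;n-j)}$.
\item The remaining blocks act on rows $<k$ and never touch $\overline v_{k,k+1}$.
\item Your observation that no interior vertex of $\mathbb P_3^1$ ever becomes adjacent to an interior vertex of $\mathbb P_3^2$ lets you move all unbarred mutations to the end.
\end{itemize}

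No barred analogue of Lemma~\ref{lem:mut2} and no induction are needed. Also, both sides of (a) and (b) already live in $\mathscr U_\omega(\mathbb P_4)$, so the comparison with the triangle's exchange relations is unnecessary.
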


\begin{lemma}\label{lem:quiver2}
For any $i\geq j> 1$,
    in the quiver $Q'=\overline \mu^{\diamondsuit}_j\circ  \mu^{\diamondsuit}_{(i;j-1)}(Q_\lambda)$, we have the following: 
\begin{enumerate}[label={\rm (\alph*)}]\itemsep0,3em

\item The subquiver formed by the vertices $v_{j-1,j-1},\cdots, v_{22}, v_{11}$ is the type $A_{j-1}$ quiver with linear orientation.

\item For any $v\neq v_{11},v_{22},\cdots,v_{j-1,j-1}$, we have
$$Q'(v_{11},v)=\begin{cases}
    1 &  \mbox{ if $v=v_{21}, \bar v_{23}$,}\\
    -1 & \mbox{ if $v=v_{i1},\bar v_{12}$,}\\
    0 & \mbox{ otherwise,}
\end{cases}\hspace{5mm}
Q'(v_{j-1,j-1},v)=\begin{cases}
    1 &  \mbox{ if $v=v_{j,j-1}, \bar v_{j-1,n}$,}\\
    -1 & \mbox{ if $v=v_{j-1,j-2},\bar v_{j-1,j+1}$,}\\
    0 & \mbox{ otherwise}.
\end{cases}$$
$$Q'(v_{kk},v)=\begin{cases}
    1 &  \mbox{ if $v=v_{k+1,k}, \bar v_{k+1,k+2}$,}\\
    -1 & \mbox{ if $v=v_{k,k-1},\bar v_{k,k+1}$,}\\
    0 & \mbox{ otherwise,}
\end{cases}$$
for $1<k<j-1$.

\end{enumerate}
\end{lemma}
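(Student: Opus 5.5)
The plan is a direct bookkeeping of quiver mutations, organized so that each of the two triangles $\mathbb P_3^1$ and $\mathbb P_3^2$ can be treated separately.

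\textbf{Step 1: reduce to two triangle computations.} The mutation sequence $\mu^{\diamondsuit}_{(i;j-1)}$ only mutates vertices $v_{st}$ with $1\le t<s$, which lie in the interior of $\mathbb P_3^1$. The sequence $\overline\mu^{\diamondsuit}_j$ only mutates vertices $\overline v_{st}$ lying in the interior of $\mathbb P_3^2$. The diagonal vertices $v_{kk}=\overline v_{kk}$ lie on $c_5$ and are never mutated. No interior vertex of $\mathbb P_3^1$ is adjacent to an interior vertex of $\mathbb P_3^2$, and a mutation at $v$ only changes entries $Q(u,w)$ with $u,w$ both neighbours of $v$. Hence two things hold throughout the sequence:
\begin{itemize}
\item vertices of the two triangles never become adjacent;
\item every entry $Q'(v_{kk},\cdot)$, and every entry $Q'(v_{kk},v_{ll})$, is the sum of a contribution computed inside $\mathbb P_3^1$ and one computed inside $\mathbb P_3^2$.
\end{itemize}
The triangle contributions are computed with the boundary half-arrows of weight $\tfrac12$ kept. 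Initially these half-arrows along $c_5$ cancel between the two sides. Since the two sequences commute, I first apply $\mu^{\diamondsuit}_{(i;j-1)}$ and then $\overline\mu^{\diamondsuit}_j$, each inside its own triangle.

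\textbf{Step 2: the $\mathbb P_3^1$ side.} I write $\mu^{\diamondsuit}_{(i;j-1)}$ as the block $\mu_{(j;j-1)}\cdots\mu_{(i-1;j-1)}$ followed by the staircase $\mu_{(2;1)}\cdots\mu_{(j-1;j-2)}$.
\begin{itemize}
\item For the first block I use Lemmas~\ref{lem:quiver0}, \ref{lem:quiver3} and \ref{lem:quiver1}, together with the shift embedding $\iota\colon V^{n-1}_{\mathbb P_3}\hookrightarrow V^n_{\mathbb P_3}$ and the compatibility of quivers under $\iota$ recorded before Lemma~\ref{lem:emb1}. These give the neighbourhood of the strip next to the diagonal. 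Only the arrows from the diagonal vertices $v_{kk}$ to $v_{k,k-1}$, $v_{k+1,k}$, $v_{i1}$ matter.
\item Then I process the staircase row by row, by induction on $k$. After mutating the row $\mu_{(k;k-1)}$, the vertex $v_{k,k-1}$ has the neighbours $v_{kk}$ and $v_{k-1,k-1}$ with opposite orientations. The mutation at $v_{k,k-1}$ therefore creates a half-contribution $v_{k-1,k-1}\to v_{kk}$ in the linear direction. It also transfers the arrows of $v_{kk}$ to $v_{k+1,k}$ and $v_{k,k-1}$ as listed in (b).
\end{itemize}
The inductive hypothesis records only the row adjacent to the diagonal, so that it can be proved by a local computation at each step.

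\textbf{Step 3: the $\mathbb P_3^2$ side and assembly.} The same argument, with the reflected labelling $\overline v_{st}$ and the sequences $\overline\mu_{(k;n-j)}$, yields the $\overline v$-neighbours $\overline v_{k,k+1}$ and $\overline v_{k+1,k+2}$. The extreme vertices get special neighbours: $\overline v_{12}$, $\overline v_{23}$ for $v_{11}$, and $\overline v_{j-1,n}$, $\overline v_{j-1,j+1}$ for $v_{j-1,j-1}$. This side also supplies the second half-contribution to each arrow $v_{k-1,k-1}\to v_{kk}$ with the same orientation. Adding the two sides gives arrows of weight exactly $1$ along the diagonal chain and no other arrows among the $v_{kk}$, which proves (a), and the entries listed in (b).

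I expect the main obstacle to be Step 2. One has to verify, during the staircase, that no extra arrows between $v_{kk}$ and farther vertices survive, and that the boundary-row effects (the vertices $v_{i1}$ and $v_{j0}$) land exactly as stated. I would first check the cases $n=3,4$ by hand to fix the orientations, and then run the induction on $k$ (and on $j$, via $\iota$).
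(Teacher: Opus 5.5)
Your overall route is the paper's. The paper proves this lemma in one line by combining Corollary~\ref{cor:quiver1}, the effect of $\mu^{\diamondsuit}_{(i;j-1)}$ on the $\mathbb P_3^1$ side, with Lemma~\ref{lem:quiver5}, the effect of $\overline\mu^{\diamondsuit}_j$ on the $\mathbb P_3^2$ side. Your Step~1 writes $Q_\lambda=Q^{(1)}+Q^{(2)}$ with the $c_5$ half-arrows kept and notes $Q'=\mu^{\diamondsuit}_{(i;j-1)}(Q^{(1)})+\overline\mu^{\diamondsuit}_j(Q^{(2)})$; this is a cleaner version of the same combination.

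The gap is in Step~3, where you assert that ``the same argument'' on $\mathbb P_3^2$ supplies a second half-arrow of the same orientation. It cannot be the same argument. The initial half-arrows of $Q^{(1)}$ and $Q^{(2)}$ along $c_5$ are opposite. So if the $\mathbb P_3^2$ mutations also reversed the half-arrow between $\overline v_{k-1,k-1}$ and $\overline v_{kk}$, the two net half-arrows would again be opposite and cancel, and (a) would fail.

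What actually happens is asymmetric, and this asymmetry is what needs proof.

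On the $\mathbb P_3^1$ side, $\mu_{k,k-1}$ ($2\le k\le j$; for $k=j$ this needs $i>j$) sees $v_{kk}$ and $v_{k-1,k-1}$ on opposite sides and reverses their half-arrow, as you say.

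On the $\mathbb P_3^2$ side, for $2\le k\le j-1$, row $k$ is processed before row $k-1$, and $\overline v_{k,k+1}$ is mutated last in its row. One checks that $\overline v_{kk},\overline v_{k,k+1},\overline v_{k-1,k}$ still form an oriented $3$-cycle at that moment. So this mutation cancels the arrow between $\overline v_{k-1,k}$ and $\overline v_{kk}$, and when $\overline v_{k-1,k}$ is mutated it no longer sees $\overline v_{kk}$. Hence the $Q^{(2)}$ half-arrows between $\overline v_{k-1,k-1}$ and $\overline v_{kk}$, $2\le k\le j-1$, are never touched. Only the top one, between $\overline v_{j-1,j-1}$ and $\overline v_{jj}$, is reversed (by $\overline\mu_{j-1,j}$).

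The consequences are:
\begin{itemize}
\item The lower pairs carry two equal half-arrows, which gives (a).
\item For the pair $v_{j-1,j-1},v_{jj}$, both sides reverse and the total is $0$. This is an entry of (b) that your sketch does not address.
\end{itemize}
The paper uses this asymmetry tacitly: Corollary~\ref{cor:quiver1} is computed in the full subquiver of $Q_\lambda$, where the $c_5$ half-arrows have already cancelled, so $\overline\mu^{\diamondsuit}_j$ must leave the lower diagonal alone.

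A separate point on Step~2: an inductive hypothesis that records only the row next to the diagonal will not produce entries such as $Q'(v_{11},v_{i1})=-1$. The paper tracks whole rows via Lemmas~\ref{lem:mutation1}, \ref{lem:mutation2} and \ref{lem:mutation4}.

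Finally, two cautions about the statement itself.
\begin{itemize}
\item For $i=j$ the first block is empty, Lemmas~\ref{lem:quiver0}, \ref{lem:quiver3}, \ref{lem:quiver1} do not apply, and $v_{j,j-1}$ is never mutated. Then the $\mathbb P_3^1$ half-arrow between $v_{j-1,j-1}$ and $v_{jj}$ survives while the $\mathbb P_3^2$ one is reversed, so they add to a full arrow. For $n=5$, $i=j=3$ one finds $Q'(v_{22},v_{33})\neq 0$ and $Q'(v_{22},v_{32})=0$. So (b) as written appears to fail there, and neither your plan nor the paper's proof treats this case.
\item The neighbour $\bar v_{j-1,j+1}$ of $v_{j-1,j-1}$ in (b), as in Lemma~\ref{lem:quiver5}, should read $\bar v_{j-1,j}$.
\end{itemize}
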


We will prove Lemmas \ref{lem:splitk},  \ref{lem:same}, and  \ref{lem:quiver2} in Appendix~\ref{sec:proof of Lemmas}.

The following is an immediate corollary of Lemma \ref{lem:quiver2} and the expansion formula for quantum cluster algebras of type $A$, as seen, for example, in \cite{R,CL,H1,H2,H3}.

\begin{corollary}\label{cor:cv}
If $i\geq j>1$, then, in $\mathscr{U}_\omega(\mathbb P_4)$, we have 
 \begin{equation*}
    \begin{array}{rcl}
&&\left(\mu_{j-1,j-1}\cdots\mu_{22}\mu_{11}\right)\circ \overline \mu^{\diamondsuit}_j\circ  \mu^{\diamondsuit}_{(i;j-1)}(A_{j-1,j-1})\vspace{1mm}\\& =& [A_{11}^{-1}\cdot A_{i1}\cdot \overline \mu^{\diamondsuit}_j\circ  \mu^{\diamondsuit}_{(i;j-1)}(\overline A_{12})]+[A_{j-1,j-1}^{-1}\cdot  \overline \mu^{\diamondsuit}_j\circ  \mu^{\diamondsuit}_{(i;j-1)}(A_{j,j-1})\cdot \overline A_{j-1,n}] \vspace{1mm}\\
&+&\sum_{k=2}^{j-1}[A_{k-1,k-1}^{-1}\cdot A_{kk}^{-1}\cdot \overline \mu^{\diamondsuit}_j\circ  \mu^{\diamondsuit}_{(i;j-1)}(A_{k,k-1})\cdot \overline \mu^{\diamondsuit}_j\circ  \mu^{\diamondsuit}_{(i;j-1)}(\overline A_{k,k+1})].
\end{array}
\end{equation*}

\end{corollary}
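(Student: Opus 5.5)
The plan is to treat the type $A_{j-1}$ subquiver singled out in Lemma~\ref{lem:quiver2} as a linearly oriented chain with frozen-like neighbours, and to compute the composite mutation $\mu_{j-1,j-1}\cdots\mu_{22}\mu_{11}$ directly. Let $w'$ be the seed $\overline \mu^{\diamondsuit}_j\circ \mu^{\diamondsuit}_{(i;j-1)}(w_\lambda)$ and write $A'_v$ for its cluster variables. By Lemma~\ref{lem:quiver2}(a), the vertices $v_{11},\dots,v_{j-1,j-1}$ form a linearly oriented $A_{j-1}$ chain. By Lemma~\ref{lem:quiver2}(b), every other vertex adjacent to the chain is attached to it by exactly two arrows:
\begin{itemize}
\item $v_{11}$ is adjacent to $v_{21},\bar v_{23},v_{i1},\bar v_{12}$;
\item $v_{kk}$ is adjacent to $v_{k+1,k},\bar v_{k+1,k+2},v_{k,k-1},\bar v_{k,k+1}$;
\item $v_{j-1,j-1}$ is adjacent to $v_{j,j-1},\bar v_{j-1,n},v_{j-1,j-2},\bar v_{j-1,j+1}$.
\end{itemize}
During the mutations $\mu_{11},\dots,\mu_{j-1,j-1}$ none of these outside variables change, so they behave as frozen coefficients. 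We are therefore in the setting of a type $A$ quantum cluster algebra with principal-type coefficients, and mutating along the chain in order is the standard sequence whose last step produces the "longest" cluster variable.

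First, I would compute the exchange relations inductively. Put $x_k$ for the new variable at $v_{kk}$ after $\mu_{kk}$. Then
\begin{itemize}
\item $x_1 = [A_{11}'^{-1}(\text{in-neighbours})] + [A_{11}'^{-1}(\text{out-neighbours})]$;
\item after mutating at $v_{11}$, the arrow from the chain moves to $v_{22}$, and the same pattern repeats.
\end{itemize}
I expect, and would prove by induction on the position in the chain, the standard snake/expansion formula: the final variable is a sum of $j$ Laurent monomials. Each monomial corresponds to the choice of an index $k\in\{1,\dots,j\}$, which marks where the orientation "switches", and equals
\[
[A_{k-1,k-1}'^{-1}A_{kk}'^{-1}\cdot(\text{one incoming coefficient at } v_{k-1,k-1})\cdot(\text{one outgoing coefficient at } v_{kk})],
\]
with the boundary conventions at $k=1$ and $k=j$. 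Concretely, I would invoke the type $A$ expansion formula of \cite{R,CL,H1} rather than reprove it: for a linear $A_{j-1}$ quiver with frozen attachments, the cluster variable obtained by mutating sequentially from one end is the sum over the $j$ "paths" of the associated snake graph, that is, over perfect matchings of a straight snake.

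Next, I would match this against the displayed formula. Identifying the frozen attachments through Lemma~\ref{lem:quiver2}(b) gives the three kinds of terms:
\begin{itemize}
\item the $k=1$ term pairs $A_{11}^{-1}$ with $A_{i1}$ and $A'_{\bar v_{12}}$;
\item the $k=j$ term pairs $A_{j-1,j-1}^{-1}$ with $A'_{v_{j,j-1}}$ and $\overline A_{j-1,n}$;
\item the middle terms pair $A_{k-1,k-1}^{-1}A_{kk}^{-1}$ with $A'_{v_{k,k-1}}$ and $A'_{\bar v_{k,k+1}}$.
\end{itemize}
The chain vertices $v_{kk}$ and the boundary vertices $v_{i1}$, $\bar v_{j-1,n}$ are never mutated in $\overline \mu^{\diamondsuit}_j\circ \mu^{\diamondsuit}_{(i;j-1)}$, so their variables are the initial $A_{kk}$ and $A_{i1}$. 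The neighbours $v_{k,k-1},\bar v_{k,k+1}$, on the other hand, carry the mutated variables, which is exactly the notation in the statement.

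The main obstacle is the quantum normalisation: checking that each term is precisely a Weyl-ordered product with coefficient $1$, with no stray powers of $\xi$. I would first verify that the quantum expansion formula produces Weyl-ordered monomials in the seed $w'$. This holds in \cite{H1} because each term is a bar-invariant monomial, since mutation preserves the reflection $*$ and every term of a quantum exchange relation is Weyl-ordered. Then I would reduce to the classical identity, since both sides are reflection-invariant Laurent polynomials with monomial terms, so their terms are determined by the exponent vectors. Finally, I would check the orientation signs, including which neighbours are in- versus out-arrows, against the explicit $\pm1$ values in Lemma~\ref{lem:quiver2}(b), in particular that the $v_{11}$ end indeed contributes $A_{i1}$ and $\bar v_{12}$.
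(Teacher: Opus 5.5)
Your proposal is correct and follows the paper's route. The paper states this as an immediate corollary of Lemma~\ref{lem:quiver2} together with the type $A$ quantum expansion formula, which is exactly what you invoke; your freezing of the outside vertices, induction along the linearly oriented chain, and reduction of the Weyl normalisation to matching exponent vectors only spell out what the paper leaves implicit.

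One correction to your heuristic: the $k$-th middle term carries the two \emph{outgoing} outside neighbours $v_{k,k-1},\bar v_{k,k+1}$ of $v_{k-1,k-1}$, which are the same vertices as the \emph{incoming} outside neighbours of $v_{kk}$. This coincidence is what makes the chain variables cancel, and for $k=j-1$ it requires reading the $\bar v_{j-1,j+1}$ printed in Lemma~\ref{lem:quiver2}(b) as $\bar v_{j-1,j}$; your concrete term-by-term matching already records it correctly.
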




\begin{proof}[Proof of Theorem \ref{thm-P4-Dij}:]

{\bf Case 1}: $i\geq j=1$.
We have $$\mathbb S_{c_5}^U([A_{i1}\cdot A_{i0}^{-1}\cdot \overline A_{1n}^{-1}])=[A_{i1}\cdot A_{i0}^{-1}\cdot A_{11}^{-1}\otimes \overline A_{1n}^{-1}]$$ and by Theorem~\ref{thm:splitU} and Lemma \ref{lem-Cij-bar-Cij}, we have
$$
\mathbb S_{c_5}^U(D_{i1})=\mathbb S_{c_5}(D_{i1})=C_{i1}\otimes \overline{C}_{11}=[A_{i1}\cdot A_{i0}^{-1}\cdot A_{11}^{-1}\otimes \overline A_{1n}^{-1}].$$

As $\mathbb S_e^U$ is injective, we obtain $D_{i1}=[A_{i1}\cdot A_{i0}^{-1}\cdot \overline A_{1n}^{-1}]$.

{\bf Case 2}: $j>i=1$.
By Lemma \ref{lem:splitk}, we have
\begin{equation*}
    \begin{array}{rcl}
& &\mathbb S_{c_5}^U\left([\overline \mu_{(1;n-j)}\cdots \overline \mu_{(j-2;n-j)} \overline \mu_{(j-1;n-j)} (\overline A_{12})\cdot A_{10}^{-1}\cdot \overline A_{jn}^{-1} ]\right)\vspace{1mm}\\& =& 
[A_{10}^{-1}\otimes \overline \mu_{(1;n-j)}\cdots \overline \mu_{(j-2;n-j)} \overline \mu_{(j-1;n-j)} (\overline A_{12})\cdot \overline A_{11}^{-1}\cdot \overline A_{jn}^{-1}].
\end{array}
\end{equation*}

By Theorem~\ref{thm:splitU}, Lemma \ref{lem-Cij-bar-Cij}, and Proposition \ref{prop-bar-Cij}, we have
$$S_{c_5}^U(D_{ij})=\mathbb S_{c_5}(D_{1j})=C_{11}\otimes \overline{C}_{j1}=[A_{10}^{-1}\otimes \overline \mu_{(1;n-j)}\cdots \overline \mu_{(j-2;n-j)} \overline \mu_{(j-1;n-j)} (\overline A_{12})\cdot \overline A_{11}^{-1}\cdot \overline A_{jn}^{-1}].$$

As $\mathbb S_{c_5}^U$ is injective, we obtain $D_{1j}=[\overline \mu_{(1;n-j)}\cdots \overline \mu_{(j-2;n-j)} \overline \mu_{(j-1;n-j)} (\overline A_{12})\cdot A_{10}^{-1}\cdot \overline A_{jn}^{-1} ]$.

{\bf Case 3}: $i\geq j>1$. 
By Corollary \ref{cor:cv}, Lemmas \ref{lem:same}, and \ref{lem:splitk}, we have 
 \begin{equation}\label{eq:su1}
    \begin{array}{rcl}
& &\mathbb S_{c_5}^U\left(\left(\mu_{j-1,j-1}\cdots\mu_{22}\mu_{11}\right)\circ \overline \mu^{\diamondsuit}_j\circ  \mu^{\diamondsuit}_{(i;j-1)}(A_{j-1,j-1})\right)\vspace{1mm}\\
& =& [A_{i1}\cdot A_{jj}\cdot A_{11}^{-1}\otimes \overline A_{ii}\cdot \overline A_{11}^{-1}\cdot \overline \mu_{(1;n-j)}\cdots \overline \mu_{(j-2;n-j)} \overline \mu_{(j-1;n-j)} (\overline A_{12})]\vspace{1mm}\\& +&
[\mu_{(j;j-1)}\cdots \mu_{(i-2;j-1)} \mu_{(i-1;j-1)} (A_{j,j-1})\otimes \overline A_{ii}\cdot \overline A_{j-1,n}]\vspace{1mm} \\
&+&\sum_{k=2}^{j-1}[A_{jj}\cdot A_{kk}^{-1}\cdot \mu_{(k;k-1)}\cdots \mu_{(i-2;k-1)} \mu_{(i-1;k-1)} (A_{k,k-1})\vspace{1mm}\\&\otimes& \overline A_{ii} \cdot \overline A_{kk}^{-1}\cdot\overline \mu_{(k;n-j)}\cdots \overline \mu_{(j-2;n-j)} \overline \mu_{(j-1;n-j)} (\overline A_{k,k+1})]\vspace{1mm}
\end{array}
\end{equation}

On the other hand, by Theorem~\ref{thm:splitU}, Lemmas \ref{lem:same},  \ref{lem-Cij-bar-Cij}, and Propositions \ref{prop-Cij-bar-Cij}, \ref{prop-bar-Cij}, we have 
 \begin{equation}\label{eq:su2}
    \begin{array}{rcl}
& &
S_{c_5}^U(D_{ij})=
\mathbb S_e(D_{ij})=\sum_{k=1}^j C_{ik}\otimes \overline C_{jk}\vspace{1mm}\\& =& [A_{i1}\cdot A_{i0}^{-1}\cdot A_{11}^{-1}\otimes \overline A_{jn}^{-1}\cdot \overline A_{11}^{-1}\cdot \overline \mu_{(1;n-j)}\cdots \overline \mu_{(j-2;n-j)} \overline \mu_{(j-1;n-j)} (\overline A_{12})]\vspace{1mm}\\& +&
[A_{i0}^{-1}\cdot A_{jj}^{-1}\cdot\mu_{(j;j-1)}\cdots \mu_{(i-2;j-1)} \mu_{(i-1;j-1)} (A_{j,j-1})\otimes \overline A_{jn}^{-1}\cdot \overline A_{j-1,n}]\vspace{1mm} \\
&+&\sum_{k=2}^{j-1}[A_{i0}^{-1}\cdot A_{kk}^{-1}\cdot \mu_{(k;k-1)}\cdots \mu_{(i-2;k-1)} \mu_{(i-1;k-1)} (A_{k,k-1})]\vspace{1mm}\\&\otimes& [\overline A_{jn}^{-1} \cdot \overline A_{kk}^{-1}\cdot\overline \mu_{(k;n-j)}\cdots \overline \mu_{(j-2;n-j)} \overline \mu_{(j-1;n-j)} (\overline A_{k,k+1})].
\end{array}
\end{equation}

It follows by (\ref{eq:su1}) and (\ref{eq:su2}) that 
$$\mathbb S_{c_5}^U(D_{ij})=\mathbb S_{c_5}^U\left([\left(\mu_{j-1,j-1}\cdots\mu_{22}\mu_{11}\right)\circ \overline \mu^{\diamondsuit}_j\circ  \mu^{\diamondsuit}_{(i;j-1)}(A_{j-1,j-1})\cdot A_{i0}^{-1}\cdot \overline A_{jn}^{-1}]\right).$$

As $\mathbb S_e^U$ is injective, we obtain 
$$D_{ij}=[\left(\mu_{j-1,j-1}\cdots\mu_{22}\mu_{11}\right)\circ \overline \mu^{\diamondsuit}_j\circ  \mu^{\diamondsuit}_{(i;j-1)}(A_{j-1,j-1})\cdot A_{i0}^{-1}\cdot \overline A_{jn}^{-1}]$$

The case that $j>i>1$ is similar to the case that $i\geq j>1$.

The proof is complete.
\end{proof}

\subsection{General case}
Let $\fS$ be a pb surface such that each connected component has non-empty boundary. 
 Let $B$ be a collection of properly embedded, disjoint, oriented arcs in $\fS$. 
We say that $B$ is a \emph{saturated system} if the following hold:
\begin{enumerate}
    \item After cutting $\fS$ along $B$, every component of the resulting surface contains exactly one ideal point.
    \item $B$ is maximal with respect to condition (1).
\end{enumerate}
For any $b\in B$, and $1\leq i,j\leq n$, define $b_{ij}$ to be the stated arc such that the state of the starting (resp. ending) point of $b$ is equipped with the state $i$ (resp. $j$).

\begin{lemma}\cite[Theorem 6.2(3)]{LY23}\label{lem-saturated}
Let $\fS$ be a pb surface such that each connected component has non-empty boundary, and
 let $B$ be a saturated system of $\fS$.
    The algebra $\cS_{\omega}(\fS)$ is generated by
$$\{b_{ij}\mid b\in B, 1\leq i,j\leq n\}.$$
\end{lemma}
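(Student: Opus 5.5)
This lemma is quoted from \cite[Theorem~6.2(3)]{LY23}, so within the paper it may simply be cited. If I were to reprove it, the plan is an induction that cuts $\fS$ along the arcs of $B$ and uses the splitting homomorphism together with the structure of $\cS_\omega$ for the pieces.

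\textbf{Step 1: reduce to neighbourhoods of the arcs.} By condition (1), cutting $\fS$ along $B$ produces components each containing exactly one ideal point. By maximality in (2), each such component is a monogon-type piece, a disk with a single boundary puncture. Every stated $n$-web can be isotoped so that it meets $\bigcup B$ transversely and minimally. The relations \eqref{wzh.four}--\eqref{wzh.eight} (the near-boundary and height-exchange relations) then let us push all $n$-valent vertices and all crossings off the complementary pieces and into thin neighbourhoods of the arcs $b\in B$.

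\textbf{Step 2: simplify each complementary piece.} Inside a piece with one ideal point, the only essential arcs are boundary-parallel. The relation \eqref{wzh.seven} resolves a strand passing over a component into a sum of pairs of stated endpoints. Applied repeatedly, this writes any web as a linear combination of products of stated arcs, each running once alongside a single $b$, with vertices absorbed via \eqref{wzh.five}.

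\textbf{Step 3: induct on a complexity.} Use as the complexity the number of intersection points with $B$, together with the number of crossings and vertices. Each application of the relations in Steps 1--2 strictly lowers this complexity, modulo lower-order terms coming from the crossing relation \eqref{w.cross} and \eqref{wzh.eight}. This yields that $\cS_\omega(\fS)$ is spanned by monomials in the $b_{ij}$.

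The main obstacle is Step 2. One must show that the pieces with a single ideal point really contribute nothing beyond products of the $b_{ij}$; in particular, webs with vertices inside a piece must be eliminated without creating new closed components. The approach I would try first is to use \eqref{wzh.five} to move each vertex to the boundary edge, and then \eqref{wzh.seven} to cut strands there.
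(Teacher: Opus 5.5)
Citing the lemma, as your opening sentence suggests, is exactly what the paper does: it quotes \cite[Theorem~6.2(3)]{LY23} and gives no proof, so the citation is all that is needed. The reproof you outline, however, would not go through as written. In Step~1 the relations cannot do what you ask of them. The relations \eqref{wzh.five}--\eqref{wzh.eight} act only in a small region touching a boundary edge of $\fS$, and \eqref{wzh.four} is an interior sink--source relation, not a boundary one. Every $b\in B$ runs through the interior of $\fS$. Moving a vertex or crossing next to $b$ (which isotopy can do) achieves nothing: no relation cuts a strand along $b$, and a vertex next to $b$ is still a vertex.

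The moves actually available are these. Isotope each vertex to an actual boundary edge and delete it with \eqref{wzh.five}. Cut closed components with \eqref{wzh.seven}. Then, inside each complementary piece, slide strand segments onto the original boundary segments of that piece and cut them with \eqref{wzh.seven}, until every arc meets $\bigcup B$ at most once. Arcs disjoint from $B$ need this too: they are boundary parallel in the piece but not in $\fS$, because the parallel boundary segment may contain copies of arcs of $B$.

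Your complexity is not shown to decrease under these moves. Cutting with \eqref{wzh.seven} leaves the number of intersection points with $B$ unchanged. Dragging a vertex or strand to the boundary creates crossings. The smoothing term of \eqref{w.cross} at a self-crossing produces a closed component that must be cut again. So the induction needs a carefully chosen ordering. Finally, ``by maximality each piece is a disk with one boundary puncture'' holds only without interior punctures, which the statement does not exclude. Otherwise a piece can be a once-punctured disk whose boundary circle carries no puncture, and arcs in it can wind around that puncture.

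The genuinely missing idea is at the end. An arc meeting $\bigcup B$ once is isotopic to some $b$, but it may carry the opposite orientation. Nothing in your procedure prevents this, since webs contain strands crossing $b$ in both directions. The statement allows only the $b_{ij}$ for the fixed orientation of each $b$, and the reversed arc $\overleftarrow{b}_{ij}$ is not among them. You need the identity expressing it, up to a unit, as an $(n-1)\times(n-1)$ quantum minor of the matrix $(b_{kl})$ (the antipode formula of $\mathcal O_q(SL_n)$), derived from the boundary relations in a bigon neighbourhood of $b$. Without it, your argument only shows that the $b_{ij}$ together with the reversed arcs generate.

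You also still have to show that a tangle of parallel copies of arcs of $B$, with arbitrary crossings and boundary height orders, is a linear combination of stacked products $b^{(1)}_{i_1j_1}\cdots b^{(m)}_{i_mj_m}$. This uses \eqref{w.cross} and \eqref{wzh.eight}, with the turn-back arcs created by smoothings evaluated by \eqref{wzh.six}.
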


Recall that a properly embedded oriented arc in $\fS$ is called an essential arc  
if its endpoints lie on two distinct components of $\partial \fS$.

\begin{lemma}\label{lem-essential-arc}
Let $\fS$ be a triangulable pb surface without interior punctures. 
Suppose that each connected component of $\fS$ contains at least two punctures. 
Then the algebra $\cS_{\omega}(\fS)$ is generated by a finite family of stated essential arcs. 
\end{lemma}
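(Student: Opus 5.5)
We start from Lemma~\ref{lem-saturated} and show that the generators $b_{ij}$ can be chosen, or rewritten, as stated essential arcs. The plan is to build a saturated system $B$ consisting entirely of essential arcs whenever every component of $\fS$ has at least two punctures. We work one connected component at a time, since generators of a disjoint union are the union of the generators of the pieces. Recall that on a pb surface each boundary component is an open interval, so every puncture sits between boundary edges.

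First we fix an essential arc $c$ joining two distinct boundary edges. Such a $c$ exists because a component with at least two punctures has at least two boundary edges, whether they lie on the same circle of $\partial\overline{\fS}$ or on different ones. Next we extend $\{c\}$ to a saturated system, and we use the freedom in the standard construction to make every arc essential. Cutting along $B$ must leave pieces that each contain exactly one ideal point. A saturated system can be obtained by first cutting along a collection of arcs that reduces $\fS$ to a polygon, which kills genus and extra boundary circles, and then cutting that polygon along diagonals that separate consecutive punctures. Each such arc has endpoints on boundary edges. If an arc has both endpoints on the same boundary edge, we slide one endpoint across a neighbouring puncture onto the adjacent edge. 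This isotopy is not relative to $\partial\fS$, but it still yields a system satisfying conditions (1) and (2), because the complementary regions and their ideal points change only by relabeling. Maximality is preserved as long as the number of arcs, which is determined by the Euler characteristic and the number of punctures, is unchanged.

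The main obstacle is this endpoint-sliding step. An arc with both ends on one edge $e$ may be forced, for example when it is the arc cutting off a handle. Sliding one end onto the next edge requires that $e$ not be the only boundary edge of the component, and this is exactly where the hypothesis of at least two punctures enters. We would check that the sliding keeps the arcs disjoint. To do this, order the endpoints on $e$ by height and slide the extremal ones first, iterating.

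If the sliding argument is awkward, there is an alternative. We take any saturated system and express each non-essential $b_{ij}$ through essential arcs using the relations \eqref{wzh.seven} and \eqref{wzh.eight}. Pushing one endpoint of $b$ across a puncture onto the adjacent edge produces, via the height-exchange and cap relations, a polynomial expression in $b$ and essential corner-type arcs. Finally, the generating set is finite because $B$ is finite and the states range over $\{1,\dots,n\}$.
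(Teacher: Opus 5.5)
\textbf{There is a genuine gap in your main construction, at the endpoint-sliding step.}

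Moving an endpoint $x\in e$ of $b$ through the puncture $p$ to a point $x'\in e'$ is not an isotopy in $\fS$, since it passes through a removed point. It also changes the cut surface. Before the slide, the boundary segment from $x$ to $p$ lies on one side of $b$, so $p$ belongs to the region on that side. After the slide, $p$ lies between $e$ and $x'$, which is on the other side of $b$.

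When the two sides of $b$ lie in different complementary regions, one region loses its only ideal point and the other acquires two. Condition (1) of a saturated system then fails. So the claim that the regions ``change only by relabeling'' is false. The Euler-characteristic count of arcs cannot rescue maximality once (1) is lost.

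Independently, the slide need not produce an essential arc. The two-puncture hypothesis gives a second boundary edge somewhere in the component. It does not give a different edge across a puncture adjacent to $e$. If the boundary circle of $\overline{\fS}$ containing $e$ carries a single puncture, the edge across that puncture is $e$ itself. For example, take an annulus with one puncture $p$ on each boundary circle. The corner arc at $p$ (both ends on the outer edge), together with one arc from the outer to the inner edge, is a saturated system; your polygon-plus-diagonals construction can produce it. Sliding an endpoint of the corner arc across $p$ lands on the outer edge again, and the result is a trivial arc. A saturated system of essential arcs does exist under these hypotheses (the paper notes this in a remark after the lemma). It would have to be built with the endpoints chosen correctly from the start, and your argument does not do that.

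\textbf{Your fallback is the right idea and is essentially the paper's proof, but as stated it has the same defect.} You push $b$ onto ``the adjacent edge across a puncture'', which may again be $e$. You also describe the outcome as ``a polynomial expression in $b$'', which would be circular; the point is to eliminate $b$.

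The fix is not to restrict to adjacent edges:
\begin{enumerate}
\item Let $C_j$ be a generator from Lemma~\ref{lem-saturated} with both endpoints on an edge $c$. Choose any other boundary edge $c'$ in the same component; this is exactly where the two-puncture hypothesis enters.
\item Cut $\fS$ along $C_j$. Join one of the two copies of $C_j$ to $c'$ by an arc in the cut surface. Its image is a path $\gamma$ from an interior point of $C_j$ to $c'$ that meets $C_j$ only at its start.
\item Push a finger of $C_j$ along $\gamma$ until it sits next to $c'$.
\item Apply \eqref{wzh.seven}, or its variant for the other height order. This gives $C_j=\sum_t s_t W_t$, where each $W_t$ is a product of two stated arcs joining $c$ and $c'$, hence essential.
\end{enumerate}
Only finitely many generators need replacing, so the resulting family is finite.
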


\begin{proof}
By Lemma~\ref{lem-saturated}, the algebra $\cS_{\omega}(\fS)$ is generated by a finite family of stated arcs
$\{C_1,\dots,C_r\}$, where each $C_i$ is a properly embedded oriented arc in $\fS$.
If every $C_i$ is essential, the proof is complete.  

Suppose instead that for some $j$, the endpoints of $C_j$ both lie on the same boundary component $c\subset \partial \fS$.
Since each connected component of $\fS$ has at least two boundary components, there exists another boundary component $c'$ in the same connected component of $\fS$.  

We claim that there is an embedded arc $\gamma\colon[0,1]\to \fS$ such that 
$\gamma(0)\in C_j$, $\gamma(1)\in c'$, and $\gamma$ intersects $C_j$ only at $\gamma(0)$.
To construct $\gamma$, let $\Sigma$ be the surface obtained by cutting $\fS$ along $C_j$.
Then $\partial \Sigma$ contains two copies of $C_j$, denoted $C_j'$ and $C_j''$.
Since $C_j$ and $c'$ lie in the same connected component of $\fS$, 
at least one of $C_j'$ or $C_j''$ lies in the same connected component of $\Sigma$ as $c'$.
Without loss of generality, assume this is $C_j'$.
Then there exists a properly embedded arc $\gamma'\colon[0,1]\to \Sigma$ with $\gamma'(0)\in C_j'$ and $\gamma'(1)\in c'$.
Projecting back via the natural map ${\bf P}\colon \Sigma\to \fS$, we obtain the desired arc $\gamma={\bf P}(\gamma')$.

Using $\gamma$, we can isotope $C_j$ so that near $c'$, the local configuration of $C_j\cup c'$ matches the left-hand side of relation \eqref{wzh.seven}.
Applying relation \eqref{wzh.seven} (or equivalently \cite[Equation~(60)]{LS21}, depending on the relative height order of the endpoints of $C_j$), we may express
\[
C_j = \sum_{1\leq t\leq n} s_t W_t,
\]
where each $W_t$ is a product of two stated essential arcs.  

Thus, all generators can be expressed in terms of finitely many stated essential arcs, which proves the lemma.
\end{proof}

\begin{remark}
Under the assumptions of Lemma~\ref{lem-essential-arc}, there exists a saturated system $B$ of $\fS$ consisting entirely of essential arcs. 
Lemma~\ref{lem-saturated} then implies that the algebra $\cS_{\omega}(\fS)$ is generated by
\[
\{\, b_{ij}\mid b\in B,\; 1\leq i,j\leq n \,\},
\]
where each $b_{ij}$ is a stated essential arc.  
For the purposes of this paper, however, we do not require such a strong result: Lemma~\ref{lem-saturated} is sufficient, and it is both easier to understand and easier to prove.
\end{remark}


Under the assumptions of Lemma~\ref{lem-essential-arc}, let $\lambda$ be a triangulation of $\fS$ containing the ideal arcs $e_1,e_2,e_3$ as in Figure~\ref{Fig;tau-v1}(A), where $e_1\neq e_2$. 
Let $\tau$ be the triangle whose edges are $e_1,e_2,e_3$, and let 
\[
f_\tau\colon \mathbb P_3 \longrightarrow \fS \qquad \text{(see \eqref{eq-character-map})}
\]
be the characteristic map of $\tau$, sending each edge $e_i$ of $\mathbb P_3$ (Figure~\ref{Fig;coord_ijk}) to the corresponding $e_i$ in Figure~\ref{Fig;tau-v1}. 
In Lemma~\ref{lem-Cij-bar-Cij}, Propositions~\ref{prop-Cij-bar-Cij} and \ref{prop-bar-Cij}, we proved
that
$$C_{ij}, \overline C_{ij}\in \mathscr A_{\omega}(\mathbb P_3) \qquad \text{(see Figure~\ref{Cij-P3})}.$$
 
We regard \(C_{ij}\) and \(\overline{C}_{ij}\) as elements of \(\mathscr A_{\omega}(\fS)\) via the map \(f_\tau\),  
since \(f_\tau\) injectively maps all vertices of \(\mathbb P_3\) that realize \(C_{ij}\) and \(\overline{C}_{ij}\) as cluster variables.  
To distinguish them from the  elements \(C_{ij},\overline{C}_{ij}\in \mathscr{A}_\omega(\mathbb P_3)\),  
we denote them by \(C_{ij}^\fS\) and \(\overline{C}_{ij}^\fS\), respectively.
In other words, \(C_{ij}^\fS, \overline{C}_{ij}^\fS \in \mathscr A_{\omega}(\fS)\) are defined through the map \(f_\tau\) and the cluster–variable formulas given in  
Lemma~\ref{lem-Cij-bar-Cij} and Propositions~\ref{prop-Cij-bar-Cij} and \ref{prop-bar-Cij}.

Recall that we labeled small vertices in $V_{\mathbb P_3}$ by $v_{ij}$ for $(i,j)\in\{0\leq j\leq i\leq n\}\setminus\{(0,0),(n,0),(n,n)\}$.
By a slight abuse of notation, we continue to write $A_{ij}$ for 
$A_{f_\tau(v_{ij})}\in \mathcal A_\omega(\fS,\lambda)$.
Then $A_{ij}\in \mathcal A_\omega(\fS,\lambda)$ is the variable associated to the small vertex in $V_\lambda$ labeled by $ij$ in Figure~\ref{Fig;tau-v1}(B).

Let $\mathbb P_3 \sqcup \fS'$ be the pb surface obtained from $\fS$ by cutting along $e_2$, and let $\lambda'$ be the triangulation of $\fS'$ induced by $\lambda$ when $\fS'\neq\emptyset$.  
For each $1\le j\le n-1$, there is a small vertex 
$v_{nj}'' \in V_{\lambda'}$ that is identified with the small vertex 
$v_{nj} \in V_{\mathbb P_3}$ when we glue $\fS'$ and $\mathbb P_3$ back together to recover $\fS$.  
We denote by $A_{nj}''$ the element $A_{v_{nj}''}\in \mathcal A_\omega(\fS',\lambda')$.
We use the convention that $A_{n0}''=
A_{nn}''=1$.

We set $A_{nj}''=1$, for $0\leq j\leq n$, when $\fS'=\emptyset$, i.e., $\fS=\mathbb P_3$, and use the convention that $\widetilde{\cS}_\omega(\emptyset)=R$.



\begin{lemma}\label{lem:split_e2}
In $\mathscr{A}_\omega(\mathbb P_3)\otimes \mathscr{A}_\omega(\fS'
)$, we have the following:

\begin{enumerate}[label={\rm (\alph*)}]\itemsep0,3em

\item $\mathbb S^{U}_{e_2}(A_{ij})=
    [A_{ij}\otimes A''_{nj}]$ for $(i,j)\in\{0\leq j\leq i\leq n\}\setminus\{(0,0),(n,0),(n,n)\}$.

\item $\mathbb S^{U}_{e_2}(\mu_{(j;j-1)}\cdots \mu_{(i-2;j-1)} \mu_{(i-1;j-1)} (A_{j,j-1}))=[\mu_{(j;j-1)}\cdots \mu_{(i-2;j-1)} \mu_{(i-1;j-1)} (A_{j,j-1})\otimes A''_{nj}]$ for any $i,j$ with $1<j<i$.

\end{enumerate}
\end{lemma}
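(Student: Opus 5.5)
Part (a) comes straight from the explicit formula of Theorem~\ref{thm:splitU} for $\mathbb S^U_{e_2}$, applied with $e=e_2$, $\mathcal V_{\rm split}=\{s_1,\dots,s_{n-1}\}=\{v_{n1},\dots,v_{n,n-1}\}$, and $\mathcal V^1_v,\mathcal V^2_v$ as in \eqref{def-V-split-skein}. I will choose the labels so that the copies of $s_j=v_{nj}$ lying in $\mathbb P_3$ are $v_{nj}$ and those lying in $\fS'$ are $v''_{nj}$.

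\textbf{Split vertices ($i=n$).} The theorem gives $A_{nj}\mapsto[A_{nj}\otimes A''_{nj}]$, which is the claim.

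\textbf{Non-split vertices ($i<n$).} I will identify the multisets $\mathcal V^1_{v_{ij}}$ and $\mathcal V^2_{v_{ij}}$. The curves $c'_t$ and $c''_t$ run from the edge $e_2$ into the triangles on either side of it. In barycentric coordinates based at the relevant puncture they are level lines of the first coordinate. With the labeling $v_{ij}=(n-i,j,i-j)$ and $e_2=\{i=n\}$, only one family of these curves passes through $\tau$: the one starting on the $\tau$ side of $e_2$. That family must be the level lines $\{j=\text{const}\}$ in $\tau$. I will check this by comparing, for the endpoint puncture $p_1$ or $p_2$, the coordinate ``first entry $=t$'' with the coordinate $j$. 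This check also fixes which of $c'$, $c''$ is the relevant family, so that the variable attached to $v_{ij}$ is the copy $A''_{nj}$ on the $\fS'$ side.

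Consequently, for every $v_{ij}\in\tau\setminus e_2$ the relevant multiset is $\{s_j\}$ (empty if $j=0$), and the other multiset is empty. A vertex of $\tau$ has exactly one segment of its elongated graph crossing $e_2$, so no multiplicities occur. Substituting into Theorem~\ref{thm:splitU} gives $A_{ij}\mapsto[A_{ij}\otimes A''_{nj}]$, with $A''_{n0}=1$ and with the convention $A''=1$ when $\fS'=\emptyset$. I expect this bookkeeping of $\mathcal V^1$ versus $\mathcal V^2$, and of which copy carries the double prime, to be the only delicate point. I will resolve it from Figure~\ref{Fig;cutting-e}, cross-checked against the skein picture in Proposition~\ref{prop:image}, where $\gaa_{v_{ij}}$ has its $j$-weighted leg leaving through $e_2$.

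Part (b) follows from (a) by induction along the mutation sequence $\mu_{(i-1;j-1)},\dots,\mu_{(j;j-1)}$. All vertices mutated in this sequence are $v_{kl}$ with $k\le i-1<n$, so they are interior to $\tau$ and not split.

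\textbf{Single step.} Corollary~\ref{cor:propotion} describes one mutation at $v\notin\mathcal V_{\rm split}$. It says $\iota(\mu_v(A_v))=[\mu_v^{\rm split}(A_v)\cdot M]$ for an explicit monomial $M$ in the frozen split variables. I will show that if, before mutating at $v_{kl}$, every current cluster variable at $v_{k'l'}$ maps to itself times $A''_{nl'}$, then $M=A''_{nl}$. The reason is that the two exchange monomials each carry the same total power of $A''_{n\ast}$: both equal the $A''_{nl}$ factor of $A_{kl}$, hence total $A''_{nl}$. This is exactly the computation behind Lemma~\ref{lem:propotion}.

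\textbf{Checking the hypothesis.} The point to verify is that $\sum_u Q'(u,v)\,l(u)$ vanishes, where $l(u)$ denotes the second index of $u$ and $Q'$ is the current mutated quiver. This is the statement that the vector $(l(u))_u$ lies in the kernel of the mutated exchange matrix at $v$. At the initial seed this holds by \eqref{eq-commute-Zk-interior}-type identities, since the vector $(l(u))_u$ is ${\bf k}_2$, which is balanced. It is preserved under mutation by Lemma~\ref{lem-commute} with $\mathbb F=\mathbb Z$. Since the mutated variable $\mu_{(j;j-1)}\cdots(A_{j,j-1})$ sits at $v_{j,j-1}$, its image acquires the factor $A''_{nj}$, which is the claim of (b).
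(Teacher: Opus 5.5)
Your treatment of (a) is fine and matches the paper: it only reads off $\mathcal V^1_v$ and $\mathcal V^2_v$ in the formula of Theorem~\ref{thm:splitU}.

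Part (b), however, has a genuine gap. Your inductive invariant, that every current cluster variable at $v_{k'l'}$ maps to itself times $A''_{nl'}$, is false as soon as a vertex has been mutated. Take the simplest instance $(i,j)=(3,2)$ of (b). The exchange relation there is the one displayed in the proof of Proposition~\ref{prop-Cij-bar-Cij}:
\[
\mu_{21}(A_{21})=[A_{10}\cdot A_{22}\cdot A_{31}\cdot A_{21}^{-1}]+[A_{11}\cdot A_{20}\cdot A_{32}\cdot A_{21}^{-1}].
\]
By (a), both terms acquire the factor $(A''_{n1})^{-1}A''_{n1}A''_{n2}=A''_{n2}$. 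So the new variable at $v_{21}$ carries $A''_{n2}$, not the $A''_{n1}$ your step asserting $M=A''_{nl}$ predicts.

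In general, once $v_{st}$ has been mutated in this sequence its variable carries $A''_{n,t+1}$, while not-yet-mutated variables at $v_{ab}$ still carry $A''_{nb}$. This two-type invariant is what the paper proves in Proposition~\ref{lem:imageofsplite2}(b), by induction on $(s,t)$, writing out each exchange relation from the intermediate quivers of Lemma~\ref{lem:mutation3}.

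Your last sentence already contradicts your own invariant: it would give $A''_{n,j-1}$ at $v_{j,j-1}$, not $A''_{nj}$. The invariant is not even self-consistent. For $j\ge 3$, consider the step $\mu_{i-1,2}$ right after $\mu_{i-1,1}$. One exchange monomial is $\mu_{i-1,1}(A_{i-1,1})\,A_{i-1,3}$, and the other is $A_{i-1,0}$ times variables of second indices $2$ and $3$. Under your invariant the two terms would carry $A''_{n1}A''_{n3}$ and $A''_{n2}A''_{n3}$, so the image would not be a single monomial multiple of the mutated variable.

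The justification you offer does not support the step either, for two reasons.

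\begin{enumerate}
\item The frozen factors are monomials in the distinct variables $A''_{n1},\dots,A''_{n,n-1}$. For the two exchange terms to carry the same factor you need, for each $t$ separately, that the signed count of neighbours carrying $A''_{nt}$ vanish. The single scalar identity $\sum_u Q'(u,v)\,l(u)=0$ does not give this; for example, $A''_{n1}A''_{n3}$ and $(A''_{n2})^2$ have equal index sums.
\item Lemma~\ref{lem-commute} does not keep the fixed vector $(l(u))_u$ in the kernel of the mutated matrix. It transports the exponent vector through the monomial part of the mutation, replacing its entry at the mutated vertex $k$ by $-t'_k+\sum_i[Q(i,k)]_+t'_i$. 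That replacement is precisely the shift $A''_{nl}\to A''_{n,l+1}$ you dropped.
\end{enumerate}

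To repair (b) you have two options. You can follow the paper and track the explicit neighbourhoods of each mutated vertex. Alternatively, you can carry out this tropical update for each exponent vector (one per $t$) along the whole sequence, check balance at each mutated vertex, and evaluate the result at $v_{j,j-1}$. Either way the answer $A''_{nj}$ comes from the shift, not from the original second index $j-1$.
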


We will prove Lemma \ref{lem:split_e2} in Appendix \ref{sec:proof of Lemmas}.

\begin{lemma}\label{lem-splitting-corner-arc}

We have
\[
\mathbb S^{U}_{e_2}\!\bigl(C_{ij}^\fS\bigr)
= C_{ij}\otimes 1
\;\in\;
\mathscr A_{\omega}(\mathbb P_3)\otimes \mathscr A_{\omega}(\fS').
\]
\end{lemma}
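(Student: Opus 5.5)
We prove the identity case by case, following the three types of formulas in Lemma~\ref{lem-Cij-bar-Cij} and Proposition~\ref{prop-Cij-bar-Cij}. The point is that $\mathbb S^U_{e_2}$ is an algebra homomorphism which is compatible with the Weyl-ordered product. It is reflection-invariant (Theorem~\ref{thm:splitU}), and it sends $\xi$-commuting elements to $\xi$-commuting elements with the same exponent $\Lambda$. So for any product $[x_1\cdots x_r]$ of pairwise quasi-commutative factors we get $\mathbb S^U_{e_2}([x_1\cdots x_r])=[\mathbb S^U_{e_2}(x_1)\cdots\mathbb S^U_{e_2}(x_r)]$. It therefore suffices to substitute the images given by Lemma~\ref{lem:split_e2} into the defining formula of $C^{\fS}_{ij}$ and check that every factor $A''_{n\ast}$ cancels.

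\textbf{Case $j=1$.} Here $C^{\fS}_{i1}=[A_{i1}A_{i0}^{-1}A_{11}^{-1}]$. By Lemma~\ref{lem:split_e2}(a) its image is $[A_{i1}A_{i0}^{-1}A_{11}^{-1}\otimes A''_{n1}(A''_{n0})^{-1}(A''_{n1})^{-1}]$. Since $A''_{n0}=1$, the right tensor factor is $1$, and the result is $C_{i1}\otimes 1$.

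\textbf{Case $j=i$.} Here $C^{\fS}_{ii}=[A_{i0}^{-1}A_{i-1,0}]$. The second tensor factor is $(A''_{n0})^{-1}A''_{n0}=1$, so the image is again $C_{ii}\otimes 1$.

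\textbf{Case $1<j<i$.} Lemma~\ref{lem:split_e2}(b) gives the image of the exchangeable variable as $[\mu_{(j;j-1)}\cdots\mu_{(i-1;j-1)}(A_{j,j-1})\otimes A''_{nj}]$. Lemma~\ref{lem:split_e2}(a) gives the images $[A_{i0}^{-1}\otimes 1]$ and $[A_{jj}^{-1}\otimes (A''_{nj})^{-1}]$. The $A''_{nj}$ factors cancel, leaving exactly the formula of Proposition~\ref{prop-Cij-bar-Cij} tensored with $1$.

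\textbf{Degenerate case and main obstacle.} The degenerate case $\fS'=\emptyset$ is immediate from the convention $A''_{nj}=1$. The main obstacle is justifying that $\mathbb S^U_{e_2}$ may be pushed inside the brackets $[\,\cdot\,]$ when one factor is a non-monomial cluster variable. For this I would note that the exchangeable variable is quasi-commutative with the frozen monomial $A_{i0}^{-1}A_{jj}^{-1}$; this was already used to make sense of the bracket in Proposition~\ref{prop-Cij-bar-Cij}. Because $\mathbb S^U_{e_2}$ is an injective algebra map, the same $\Lambda$-value persists after applying it. The bracket normalization is then the same power $\xi^{-\Lambda/2}$ on both sides, so the identity holds on the nose rather than merely up to a power of $\omega$.
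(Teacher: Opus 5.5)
Your proof is correct and is essentially the paper's argument: the same three cases $j=1$, $j=i$, $1<j<i$, each handled by substituting the images from Lemma~\ref{lem:split_e2} into the formulas of Lemma~\ref{lem-Cij-bar-Cij} and Proposition~\ref{prop-Cij-bar-Cij} and cancelling the $A''_{nj}$ factors. Your remark on pushing $\mathbb S^U_{e_2}$ through the Weyl brackets makes explicit a step the paper leaves implicit; it only uses that an injective algebra map preserves quasi-commutation exponents, so reflection invariance is not needed there.
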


\begin{proof}
    If $j=1$, then by Lemma \ref{lem-Cij-bar-Cij} and Lemma \ref{lem:split_e2}, 
    $$\mathbb S^{U}_{e_2}\!\bigl(C_{i1}^\fS\bigr)= \mathbb S^{U}_{e_2}\!\bigl([A_{i1}\cdot A_{i0}^{-1}\cdot A_{11}^{-1}]\bigr)=[A_{i1}\cdot A_{i0}^{-1}\cdot A_{11}^{-1}]\otimes 1=C_{i1}\otimes 1.$$

    If $j=i$, then by Lemma \ref{lem-Cij-bar-Cij} and Lemma \ref{lem:split_e2}, 
    $$\mathbb S^{U}_{e_2}\!\bigl(C_{ii}^\fS\bigr)= \mathbb S^{U}_{e_2}\!\bigl([A_{i0}^{-1}\cdot A_{i-1,0}]\bigr)=[A_{i0}^{-1}\cdot A_{i-1,0}]\otimes 1=C_{ii}\otimes 1.$$

    If $1<j<i$, then by Proposition \ref{prop-Cij-bar-Cij} and Lemma \ref{lem:split_e2},
 \begin{equation*}
    \begin{array}{rcl}
\mathbb S^{U}_{e_2}\!\bigl(C_{ij}^\fS\bigr)&=& \mathbb S^{U}_{e_2}\!\bigl([\mu_{(j;j-1)}\cdots \mu_{(i-2;j-1)} \mu_{(i-1;j-1)}(A_{j,j-1})\cdot A_{i0}^{-1}\cdot A_{jj}^{-1}]\bigr)\vspace{1mm}\\& =& 
[\mu_{(j;j-1)}\cdots \mu_{(i-2;j-1)} \mu_{(i-1;j-1)} (A_{j,j-1})\cdot A_{i0}^{-1}\cdot A_{jj}^{-1}]\otimes 1\vspace{1mm}\\& =& 
C_{ij}\otimes 1.
\end{array}
\end{equation*}

    The proof is complete.
\end{proof}

We can prove the following using the same technique as Lemma~\ref{lem-splitting-corner-arc}.

\begin{lemma}\label{lem-splitting-corner-bar-arc}

We have
\[
\mathbb S^{U}_{e_2}\!\bigl(\overline C_{ij}^\fS\bigr)
=\overline C_{ij}\otimes 1
\;\in\;
\mathscr A_{\omega}(\mathbb P_3)\otimes \mathscr A_{\omega}(\fS').
\]
\end{lemma}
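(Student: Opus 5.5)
The plan mirrors the proof of Lemma~\ref{lem-splitting-corner-arc}, with the barred vertex labelling and Proposition~\ref{prop-bar-Cij} replacing the unbarred data. First I establish the analogue of Lemma~\ref{lem:split_e2} for the barred labelling $\overline v_{ij}=(j,n-i,i-j)$. The edge $e_2$ of $\tau$ consists of the small vertices with first barycentric coordinate $0$, namely $\overline v_{0i}$, $1\le i\le n-1$. Let $\overline A''_{0i}$ denote the variable of the glued copy in $\fS'$, with $\overline A''_{00}=\overline A''_{0n}=1$, and all $\overline A''=1$ if $\fS'=\emptyset$. The analogue states:
\begin{enumerate}[label={\rm (\alph*)}]
\item $\mathbb S^U_{e_2}(\overline A_{ij})=[\overline A_{ij}\otimes \overline A''_{0i}]$;
\item $\mathbb S^U_{e_2}(\overline \mu_{(j;n-i)}\cdots\overline\mu_{(i-1;n-i)}(\overline A_{j,j+1}))=[\overline \mu_{(j;n-i)}\cdots\overline\mu_{(i-1;n-i)}(\overline A_{j,j+1})\otimes \overline A''_{0,j}]$, or with whatever frozen index the computation produces.
\end{enumerate}
Part (a) should follow from the formula in Theorem~\ref{thm:splitU}. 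Here $\mathcal V_{\rm split}$ is the set of vertices on $e_2$, and the multisets $\mathcal V^1_v,\mathcal V^2_v$ pick out exactly the split vertex lying on the same curve $c'_i$ or $c''_i$ as $v$. For a vertex of $\tau$ that curve is the line of constant second coordinate. It has to be checked that every vertex in $\tau$ receives exactly one factor $\overline A''_{0i}$ depending only on $i$, and no factor on the $\mathbb P_3$ side.

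Part (b) is the main obstacle. I would argue inductively along the mutation sequence using Corollary~\ref{cor:propotion}. At each mutation $\overline\mu_{k,l}$, the image under $\mathbb S^U_{e_2}$ of the new cluster variable equals the mutated variable in the split seed times a frozen correction. The task is to show this correction equals the constant-line factor $\overline A''$ assigned to that vertex. That reduces to tracking the arrows from the mutated vertex toward $e_2$ in the intermediate quivers, in the spirit of Lemmas~\ref{lem:quiver1} and~\ref{lem:mut2}. I expect this to hold because in the intermediate quivers the mutated vertices have balanced in- and out-degree against each constant-second-coordinate line, so the exponent of each $\overline A''$ is preserved. If this bookkeeping is unmanageable, I would instead invoke the symmetry of $\mathbb P_3$: an orientation-reversing reflection exchanging the two labellings carries the unbarred statement, Lemma~\ref{lem:split_e2}, to the barred one.

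With the analogue in hand, I split into the cases of Lemma~\ref{lem-Cij-bar-Cij}(c),(d) and Proposition~\ref{prop-bar-Cij}. For $\overline C^{\fS}_{ni}=[\overline A_{ii}^{-1}\overline A_{i-1,i}]$, the $\fS'$ factors are $\overline A''^{-1}_{0i}\overline A''_{0i}=1$, since both variables lie on the same constant line. For $\overline C^{\fS}_{ii}=[\overline A_{in}^{-1}\overline A_{i-1,n}]$, both variables have the same second coordinate $0$ and hence the same $\fS'$ factor, so the factors again cancel. For $1\le j<i<n$, the three factors $\overline\mu(\overline A_{j,j+1})$, $\overline A_{jj}^{-1}$ and $\overline A_{in}^{-1}$ contribute $\overline A''$ factors with total exponent zero. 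This yields $\overline C_{ij}\otimes 1$ after Weyl normalization. Normalization is harmless because $\mathbb S^U_{e_2}$ is an algebra map that respects $[\,\cdot\,]$, as it is reflection-invariant.
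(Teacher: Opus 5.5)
Your overall architecture matches the paper's. Its proof is ``the same technique as Lemma~\ref{lem-splitting-corner-arc}'': a barred analogue of Lemma~\ref{lem:split_e2}, followed by a case check against Lemma~\ref{lem-Cij-bar-Cij}(c),(d) and Proposition~\ref{prop-bar-Cij}.

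First, a smaller point: your formula (a) has the wrong subscript. Since $\overline v_{ab}=(a,n-b,b-a)=v_{n-a,n-b}$ and $\overline v_{0b}=v_{n,n-b}$, Lemma~\ref{lem:split_e2}(a) already gives $\mathbb S^U_{e_2}(\overline A_{ab})=[\overline A_{ab}\otimes \overline A''_{0b}]$. The factor is indexed by the \emph{second} subscript. Your checks of $\overline C_{ni}$ and $\overline C_{ii}$ silently use this correct version. With (a) as written they would give $(\overline A''_{0i})^{-1}\overline A''_{0,i-1}\neq 1$.

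The genuine gap is (b). You flag it as the main obstacle, but the mechanism you propose for it is false.
\begin{itemize}
\item Balance of the two exchange monomials against the constant-line factors says only that both monomials carry the same $\fS'$-monomial; this is what Corollary~\ref{cor:propotion} uses.
\item The new cluster variable carries that common monomial divided by the old factor. This is \emph{not} the constant-line factor of the mutated vertex, so exponents are not preserved.
\item Already the first mutation shows this. It is at $\overline v_{i-1,n-1}=(i-1,1,n-i)$, whose factor is $\overline A''_{0,n-1}$. Its exchange monomials are $\overline A_{i-1,n}\overline A_{i,n-1}\overline A_{i-2,n-2}$ and $\overline A_{i,n}\overline A_{i-1,n-2}\overline A_{i-2,n-1}$, both carrying $\overline A''_{0,n-1}\overline A''_{0,n-2}$.
\item Hence the new variable carries $\overline A''_{0,n-2}$: the index drops by one. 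This is the barred counterpart of the shift $A''_{nt}\to A''_{n,t+1}$ in Proposition~\ref{lem:imageofsplite2}(b).
\end{itemize}
This shift is exactly what the lemma needs. The final variable at $\overline v_{j,j+1}$ must carry $\overline A''_{0j}$, not its constant-line factor $\overline A''_{0,j+1}$, in order to cancel $\overline A_{jj}^{-1}$. Under your ``exponents preserved'' heuristic the last case would leave $\overline A''_{0,j+1}(\overline A''_{0j})^{-1}\neq 1$. The index you wrote in (b) happens to be right, but it contradicts your own justification.

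So (b) needs an honest induction through the intermediate quivers of $\overline\mu_{(j;n-i)}\cdots\overline\mu_{(i-1;n-i)}$, tracking the shift. Concretely, you need barred analogues of Lemma~\ref{lem:mutation3} and Proposition~\ref{lem:imageofsplite2}.

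Your reflection fallback does not replace this.
\begin{itemize}
\item The reflection realizing $v_{ij}\mapsto\overline v_{ji}$ is $(x,y,z)\mapsto(y,x,z)$, which sends $e_2$ to $e_3$.
\item The reflection fixing $e_2$ is $(x,y,z)\mapsto(x,z,y)$, which swaps $v_2$ and $v_3$. It carries the $\tau$-side splitting curves (lines of constant second coordinate, around $v_2$) to lines around $v_3$.
\end{itemize}
Neither reflection intertwines $\mathbb S^U_{e_2}$ with itself.
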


Under the assumptions of Lemma~\ref{lem-essential-arc},
let $\gamma$ be an essential arc that is not a corner arc (see the red arc in Figure~\ref{Fig;essential-P4}(A)).  
There are five ideal arcs $c_1.c_2,c_3,c_4,c_5$ nearby $\gamma$ as depicted in Figure~\ref{Fig;essential-P4}(A).
Let $\lambda$ be a triangulation of $\fS$ containing the ideal arcs $c_1,c_2,c_3,c_4,c_5$.
(note that it is possible that $c_1=c_3$).  
Define 
\[
f_\gamma \colon \mathbb P_4 \longrightarrow \fS
\]
to be a homeomorphism such that $f_\gamma$ maps each $c_i$ in Figure~\ref{Fig:P4} to the corresponding $c_i$ in Figure~\ref{Fig;essential-P4}, and $f_\gamma$ is injective on $\mathbb P_4 \setminus (c_1\cup c_3)$.  

For $1\leq i,j\leq n$, Theorem~\ref{thm-P4-Dij} shows that
\[
D_{ij}\in \mathscr A_{\omega}(\mathbb P_4)\qquad\text{(see Figure~\ref{Fig:P4})}.
\]
We may regard $D_{ij}$ as an element of $\mathscr A_{\omega}(\fS)$ via the map $f_\gamma$, since $f_\gamma$ injectively maps all vertices of $\mathbb P_4$ that define $D_{ij}$.
We denote this element by $D_{ij}^\fS\in \mathscr A_{\omega}(\fS)$ to distinguish it from $D_{ij}\in \mathscr A_{\omega}(\mathbb P_4)$.

Let $\mathbb P_4 \sqcup \fS''$ be the pb surface obtained from $\fS$ by cutting along $c_1$ and $c_3$.  
Note that $\fS''=\emptyset$ when $c_1=c_3$.
Let $\lambda''$ be the triangulation of $\fS''$ induced by $\lambda$ when $\fS''\neq\emptyset$, and let $\lambda_4=\{c_1,c_2,c_3,c_4,c_5\}$ denote the triangulation of $\mathbb P_4$.  
Recall the labeling of $A$–variables in $\mathcal A_\omega(\mathbb P_4,\lambda_4)$ introduced in \S\ref{Quadrilateral-case}.  
The map $f_\gamma$ induces the same labeling for the $A$–variables in  
$\mathcal A_\omega(\fS,\lambda)$ associated with the small vertices contained in  
$f_\gamma\bigl(\mathbb P_4 \setminus (c_1 \cup c_3)\bigr)$.  
Similarly, all mutations appearing in Theorem~\ref{thm-P4-Dij} are well defined in  
$\mathscr A_\omega(\fS)$ via $f_\gamma$.

For each $1 \le j \le n-1$, there is a small vertex  
$v_{nj}'' \in V_{\lambda''}$  
(resp.\ $\overline{v}_{0j}'' \in V_{\lambda''}$) that is identified with the small vertex  
$v_{nj} \in V_{\mathbb P_4}$  
(resp.\ $\overline v_{0j} \in V_{\mathbb P_4}$) when we glue $\fS''$ and $\mathbb P_4$ back together to recover $\fS$.  
We denote by $A_{nj}''$ (resp.\ $\overline A_{0j}''$) the element  
$A_{v_{nj}''} \in \mathcal A_\omega(\fS'',\lambda'')$  
(resp.\ $A_{\overline v_{0j}''} \in \mathcal A_\omega(\fS'',\lambda'')$).
Note that, when $\fS''=\emptyset$, we have 
$A_{nj}''=\overline A_{0j},
\overline A_{0j}''=A_{nj}
\in \mathcal A_\omega(\mathbb P_4,\lambda_4)$.

We use the convention that 
$A_{n0}''=A_{nn}''=\overline{A}_{00}''=
\overline{A}_{0n}''=1.$

\begin{lemma}\label{lem:spitP_4}
Recall the convention that $\widetilde{\cS}_\omega(\emptyset)=R$.
In $\mathscr{A}_\omega(\mathbb P_4)\otimes \mathscr{A}_\omega(\fS''
)$, we have the following (we regard ``$\otimes$" as the product in $\mathscr{A}_\omega(\mathbb P_4)$ when $\fS''=\emptyset$):

\begin{enumerate}[label={\rm (\alph*)}]\itemsep0,3em

\item
For $(i,j)\in\{0\leq i,j\leq n\}\setminus\{(0,0), (0,n), (n,0), (n,n)\}$, we have
$$\text{$\mathbb S^{U}_{c_1}\!\bigl(\mathbb S^{U}_{c_3}(A_{ij})\bigr)=[A_{ij}\otimes A''_{nj}\cdot \overline A''_{0i}]$ and $\mathbb S^{U}_{c_1}\!\bigl(\mathbb S^{U}_{c_3}(\overline A_{ij})\bigr)=[\overline A_{ij}\otimes A''_{ni}\cdot \overline A''_{0j}]$.}$$

\item For $j>1$, we have
 \begin{equation*}
    \begin{array}{rcl}
&&\mathbb S^{U}_{c_1}\!\bigl(\mathbb S^{U}_{c_3}(\overline \mu_{(1;n-j)}\cdots \overline \mu_{(j-2;n-j)} \overline \mu_{(j-1;n-j)} (\overline A_{12}))\bigr)\vspace{1mm}\\& =& 
[\overline \mu_{(1;n-j)}\cdots \overline \mu_{(j-2;n-j)} \overline \mu_{(j-1;n-j)} (\overline A_{12})\otimes A''_{nj}\cdot \overline A''_{01}].
\end{array}
\end{equation*}

\item For $1<j<i$, we have
\begin{equation*}
    \begin{array}{rcl}
&&\mathbb S^{U}_{c_1}\!\bigl(\mathbb S^{U}_{c_3}(\left(\mu_{j-1,j-1}\cdots\mu_{22}\mu_{11}\right)\circ \overline \mu^{\diamondsuit}_j\circ  \mu^{\diamondsuit}_{(i;j-1)}(A_{j-1,j-1})))\bigr)\vspace{1mm}\\& =& 
[\left(\mu_{j-1,j-1}\cdots\mu_{22}\mu_{11}\right)\circ \overline \mu^{\diamondsuit}_j\circ  \mu^{\diamondsuit}_{(i;j-1)}(A_{j-1,j-1})\otimes A''_{nj}\cdot \overline A''_{0i}].
\end{array}
\end{equation*}

\end{enumerate}
\end{lemma}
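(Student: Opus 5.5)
Part (a) follows directly from the defining assignment of the splitting homomorphism in Theorem~\ref{thm:splitU}, applied once for $c_3$ and once for $c_1$; since $\mathbb S^U_{c_1}$ and $\mathbb S^U_{c_3}$ are defined on disjoint edges, they commute and the order is irrelevant. For a vertex $v_{ij}$ or $\overline v_{ij}$ of the quadrilateral bounded by $c_1,\dots,c_4$, we identify the multisets $\mathcal V^1_v,\mathcal V^2_v$ of \eqref{def-V-split-skein} for each cut. Take the cut along $c_1$ (the edge containing the vertices $v_{nj}$). The curves $c'_t,c''_t$ of Figure~\ref{Fig;cutting-e} run through the triangles adjacent to $c_1$ along level sets of the barycentric coordinate measured from the two endpoints of $c_1$. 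Inside $\mathbb P_4$, the vertex $v_{ij}$ lies on exactly one such curve, namely the one ending at $v_{nj}$ (the second coordinate $j$ is constant along it), and it lies on none of the curves for $c_3$ other than the one through $\overline v_{0i}$. So the cut along $c_1$ contributes the factor $A''_{nj}$ and the cut along $c_3$ contributes $\overline A''_{0i}$, which gives $[A_{ij}\otimes A''_{nj}\cdot\overline A''_{0i}]$. The barred formula follows symmetrically, with the roles of $i$ and $j$ exchanged by the labeling $\overline v_{ji}=(j,n-i,i-j)$.

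The boundary cases also need care. When $v$ lies on $c_1$ or $c_3$ itself, the assignment is $[A_{v'}A_{v''}]$, which matches the formula under the conventions $A''_{n0}=A''_{nn}=\overline A''_{00}=\overline A''_{0n}=1$. When $\fS''=\emptyset$ (that is, $c_1=c_3$), both cuts coincide, and we instead cut once and read $A''_{nj}=\overline A_{0j}$ and $\overline A''_{0i}=A_{ni}$ inside $\mathbb P_4$.

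For (b) and (c), we use Corollary~\ref{cor:propotion} together with its iterated form. Because $\mathbb S^U$ is an algebra embedding between upper cluster algebras that is a splitting map, the image of a mutated cluster variable is determined recursively. Suppose that before mutating at $v$, every variable adjacent to $v$ maps to itself times frozen factors, and suppose these frozen factors are ``balanced'' at $v$. Balanced here means that the exponents of $A''_{n\ast}$ and $\overline A''_{0\ast}$ coming from the in-neighbours and from the out-neighbours of $v$ agree after the $v$ factor is removed, which is exactly the role played by condition (d) of Proposition~\ref{prop:split} in Lemma~\ref{lem:propotion}. Then the exchange relation shows that $\mu_v(A_v)$ maps to $\mu_v(A_v)$ times the frozen monomial (outgoing-neighbour factors)$\cdot$(factor of $A_v$)$^{-1}$.

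We therefore carry out an induction along the mutation sequence, in the same style as Lemma~\ref{lem:split_e2}. At each step we assign to every current cluster variable a frozen ``$c_1,c_3$-weight'' pair $(a,b)$, meaning that its image is $[X\otimes A''_{na}\overline A''_{0b}]$. The invariant to maintain is that, in each seed along the sequence $\mu^{\diamondsuit}_{(i;j-1)}$, then $\overline\mu^{\diamondsuit}_j$, then $\mu_{11},\dots,\mu_{j-1,j-1}$, the variable sitting at the position labelled $v_{st}$ has weight determined by a linear rule of the same shape as in (a). That is, its weights are inherited from those of its exchange neighbours. This works because each mutation there replaces a vertex by a combination in which, for each neighbour, the weights obey an additive rule; it suffices to check that the weight of the new variable equals $\mathrm{wt}(\text{neighbours on one side})-\mathrm{wt}(\text{old})$. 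The quiver descriptions in Lemmas~\ref{lem:quiver0}--\ref{lem:quiver2} identify these neighbours, and they yield the final weights $(j,1)$ in (b) and $(j,i)$ in (c).

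The main obstacle is verifying the balance condition at every intermediate mutation. This needs control of the in- and out-neighbours in the intermediate quivers, which we only have partially (Lemmas~\ref{lem:quiver1} and \ref{lem:quiver2}). We would first try to reduce to $\mathbb P_4$ alone. The variables frozen on $c_1,c_3$ enter the mutations only through vertices adjacent to those edges, and the weight function behaves like a linear functional that is \emph{invariant} under mutation whenever it pairs to zero with the relevant columns of $Q$, as in Lemma~\ref{lem-commute}. So it should suffice to show that the weight vector $\mathbf w$ (recording the $A''_{n\ast}$ and $\overline A''_{0\ast}$ contributions in (a)) satisfies $\mathbf w\,Q_{\lambda}(\ast,u)$ equal to the $\fS''$-side contribution for each mutable $u$, and then to propagate this identity through the mutations by the matrix rule of Lemma~\ref{lem-matrix-mul}.
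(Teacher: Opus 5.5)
Part (a) is fine. The $v_{nj}$ actually lie on $c_3$ and the $\overline v_{0i}$ on $c_1$, so your two cuts are swapped, but since both are applied this is harmless.

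The gap is in (b) and (c). Their entire content is \emph{which} frozen monomial appears, and your proposal never determines it.

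The invariant you propose to maintain is false: the weights are neither pairs nor given by the rule of (a) at the current position. Already the first mutation $\mu_{i-1,1}$ has exchange monomials $[A_{i-1,1}^{-1}A_{i-1,0}A_{i-2,1}A_{i2}]$ and $[A_{i-1,1}^{-1}A_{i-2,0}A_{i-1,2}A_{i1}]$. By (a), both acquire $A''_{n2}\overline A''_{0,i-2}\overline A''_{0,i}$, not the value $A''_{n1}\overline A''_{0,i-1}$ that (a) attaches to $v_{i-1,1}$. Likewise, by Lemma~\ref{lem:splitk1}(a), the variable at $v_{k,k-1}$ ends up with $A''_{nk}\overline A''_{0,k-1}\overline A''_{0,i}$.

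Your fallback reduction is correct as far as it goes. Balance at a non-split mutable $u$ is $\sum_v Q(v,u)\mathbf W(v)=0$, where $\mathbf W(s)$ is the $\fS''$-copy for a split vertex $s$. Mutation at $k$ replaces $\mathbf W$ by $E^{T}\mathbf W$ and $Q$ by $EQE^{T}$ with $E^2=I$ ($E$ as in Lemma~\ref{lem-matrix-mul} with $\epsilon=-$). Hence $Q'^{T}\mathbf W'=EQ^{T}\mathbf W$, and the vanishing at non-split mutable rows persists.

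But this only shows that each mutated variable is sent to its counterpart times \emph{some} frozen monomial. Identifying that monomial still requires running $\mathbf W\mapsto E^{T}\mathbf W$ along the whole sequence, i.e.\ knowing the column $Q^{(t)}(\ast,v_t)$ at every step. Lemmas~\ref{lem:quiver0}--\ref{lem:quiver2} describe only a few vertices at the ends of the subsequences. So your final weights $(j,1)$ and $(j,i)$ are asserted, not derived.

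The paper supplies exactly this missing computation.

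Part (b) is the case $k=1$ of Lemma~\ref{lem:splitk1}(b), using $A''_{n0}=1$. That lemma is proved like Proposition~\ref{lem:imageofsplite2}: induction along the sequence, applying the splitting maps to each explicit exchange relation. The neighbourhood of the vertex being mutated is supplied at every step by Lemma~\ref{lem:mutation3} and its barred analogue. This is your bookkeeping, done with general frozen monomials and complete intermediate quiver data.

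For (c), the paper avoids tracking the type $A$ stage $\mu_{11},\dots,\mu_{j-1,j-1}$ altogether:
\begin{itemize}
\item Corollary~\ref{cor:cv} expands the variable as a sum of $j$ Weyl-ordered terms in the cluster obtained by $\overline\mu^{\diamondsuit}_j\circ\mu^{\diamondsuit}_{(i;j-1)}$.
\item Lemma~\ref{lem:same} identifies the mutated variables occurring there with triangle-local ones.
\item Lemma~\ref{lem:splitk1} together with (a) gives their images.
\item One checks that every term picks up the same factor.
\end{itemize}
For the $k$-th term, at the level of exponents, one gets
\[
(A''_{n,k-1}\overline A''_{0,k-1})^{-1}(A''_{nk}\overline A''_{0k})^{-1}\cdot A''_{nk}\overline A''_{0,k-1}\overline A''_{0,i}\cdot A''_{nj}A''_{n,k-1}\overline A''_{0k}=A''_{nj}\overline A''_{0i}.
\]
The two remaining terms give the same factor, and this common factor is exactly the statement of (c).
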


We will prove Lemma \ref{lem:spitP_4} in Appendix \ref{sec:proof of Lemmas}.

\begin{lemma}\label{lem-splitting-essential-arc}
We have
\[
\mathbb S^{U}_{c_1}\!\bigl(\mathbb S^{U}_{c_3}(D_{ij}^\fS)\bigr)
= D_{ij}\otimes 1
\;\in\;
\mathscr A_{\omega}(\mathbb P_4)\otimes \mathscr A_{\omega}(\fS'').
\]
\end{lemma}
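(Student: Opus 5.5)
The plan is to mimic the proof of Lemma~\ref{lem-splitting-corner-arc}: split into the four cases of Theorem~\ref{thm-P4-Dij} (equivalently \eqref{into-eq-Dij}), write $D_{ij}^\fS$ as the Weyl-ordered product of an exchangeable cluster variable and a frozen monomial transported via $f_\gamma$, and then apply $\mathbb S^U_{c_1}\circ\mathbb S^U_{c_3}$ factor by factor. This is legitimate because the composite is an algebra homomorphism that is reflection invariant (Proposition~\ref{prop:split}, Theorem~\ref{thm:splitU}). It therefore sends Weyl-ordered products of quasi-commuting elements to Weyl-ordered products of their images. Each factor's image is read off from Lemma~\ref{lem:spitP_4}.

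\textbf{Case $i\ge j=1$.} Here $D_{i1}^\fS=[A_{i1}A_{i0}^{-1}\overline A_{1n}^{-1}]$. By Lemma~\ref{lem:spitP_4}(a) the images are $[A_{i1}\otimes A''_{n1}\overline A''_{0i}]$, $[A_{i0}\otimes A''_{n0}\overline A''_{0i}]$, and $[\overline A_{1n}\otimes A''_{nn}\overline A''_{01}]$. Combining these with the conventions $A''_{n0}=A''_{nn}=1$ leaves the $\fS''$-factor $A''_{n1}\overline A''_{0i}\cdot\overline A''^{-1}_{0i}\cdot\overline A''^{-1}_{01}$. Here I need to double-check which index pairs appear so that everything cancels. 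The intended pattern is that the exchangeable factor contributes $A''_{nj}\overline A''_{0i}$, and this is exactly killed by $A_{i0}^{-1}$ (contributing $\overline A''^{-1}_{0i}$) together with $\overline A_{jn}^{-1}$ (contributing $A''^{-1}_{nj}$, after the index identification in (a)). I will verify this bookkeeping carefully from the labelling in Figure~\ref{Fig;essential-P4}(B).

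\textbf{Case $j>i=1$.} Use Lemma~\ref{lem:spitP_4}(b) for $\overline\mu^{\diamondsuit}_j(\overline A_{12})$, which gives $\fS''$-factor $A''_{nj}\overline A''_{01}$. Then $A_{10}^{-1}$ gives $\overline A''^{-1}_{01}$ and $\overline A_{jn}^{-1}$ gives $A''^{-1}_{nj}$, so the product is $1$.

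\textbf{Cases $i\ge j>1$ and $j>i>1$.} Use Lemma~\ref{lem:spitP_4}(c), which gives $\fS''$-factor $A''_{nj}\overline A''_{0i}$, cancelled by the frozen factors $A_{i0}^{-1}\overline A_{jn}^{-1}$ exactly as above. For the case $i=j$ (not literally covered by (c), which is stated for $j<i$) and for $j>i>1$, I will argue that the same proof as Lemma~\ref{lem:spitP_4}(c) gives the analogous formula. That proof goes through Corollary~\ref{cor:proportion}: the cluster variable is obtained by mutations at vertices whose neighbourhoods do not meet $c_1,c_3$ in a way that changes the frozen correction. Failing that, I would appeal to the symmetry of the configuration.

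In all cases the $\mathscr A_\omega(\mathbb P_4)$-factor is exactly the right-hand side of Theorem~\ref{thm-P4-Dij}, i.e.\ $D_{ij}$. Because all $\fS''$-factors are frozen monomials that commute appropriately with the $\mathbb P_4$-factors, the Weyl normalisation yields exactly $D_{ij}\otimes 1$ with no stray power of $\omega$. When $\fS''=\emptyset$, reading $\otimes$ as multiplication in $\mathscr A_\omega(\mathbb P_4)$ gives the same conclusion, since the identified variables again cancel.

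The main obstacle is the exponent bookkeeping in the frozen variables: confirming that the indices produced by Lemma~\ref{lem:spitP_4}(a) for $A_{i0}$ and $\overline A_{jn}$ precisely cancel those attached to the exchangeable variable. It also requires extending Lemma~\ref{lem:spitP_4}(c) to the cases $i=j$ and $j>i>1$.
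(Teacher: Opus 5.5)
Your proposal is essentially the paper's proof: split into the cases of Theorem~\ref{thm-P4-Dij}, push each Weyl-ordered factor of $D_{ij}^\fS$ through $\mathbb S^U_{c_1}\circ\mathbb S^U_{c_3}$ using Lemma~\ref{lem:spitP_4}(a)--(c), and note that the $\fS''$-parts cancel, leaving $D_{ij}\otimes 1$. The one slip is in your first case: Lemma~\ref{lem:spitP_4}(a) actually gives $\overline A_{1n}\mapsto[\overline A_{1n}\otimes A''_{n1}\overline A''_{0n}]=[\overline A_{1n}\otimes A''_{n1}]$, not $A''_{nn}\overline A''_{01}$, and this is exactly the cancellation pattern you go on to describe; your caveat that (c) is stated only for $1<j<i$ and must be extended to $i=j$ and $j>i>1$ is a point the paper itself passes over.
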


\begin{proof}
    If $i\geq j=1$, then by Theorem \ref{thm-P4-Dij} and Lemma \ref{lem:spitP_4}(a), 
    $$\mathbb S^{U}_{c_1}\!\bigl(\mathbb S^{U}_{c_3}(D_{ij}^\fS)\bigr)=\mathbb S^{U}_{c_1}\!\bigl(\mathbb S^{U}_{c_3}([A_{i1}\cdot A_{i0}^{-1}\cdot \overline A_{1n}^{-1}])\bigr)=[A_{i1}\cdot A_{i0}^{-1}\cdot\overline A_{1n}^{-1}]\otimes 1=D_{ij}\otimes 1.$$

   If $j>i=1$, then by Theorem \ref{thm-P4-Dij} and Lemma \ref{lem:spitP_4}(a), (b), we have
     \begin{equation*}
    \begin{array}{rcl}
\mathbb S^{U}_{c_1}\!\bigl(\mathbb S^{U}_{c_3}(D_{ij}^\fS)\bigr)&=&\mathbb S^{U}_{c_1}\!\bigl(\mathbb S^{U}_{c_3}([\overline \mu_{(1;n-j)}\cdots \overline \mu_{(j-2;n-j)} \overline \mu_{(j-1;n-j)} (\overline A_{12}) \cdot A_{10}^{-1}\cdot \overline A_{jn}^{-1}])\bigr)\vspace{1mm}\\& =& 
[\overline \mu_{(1;n-j)}\cdots \overline \mu_{(j-2;n-j)} \overline \mu_{(j-1;n-j)} (\overline A_{12})\cdot A_{10}^{-1}\cdot \overline A_{jn}^{-1}]\otimes 1\vspace{1mm}\\& =& 
D_{ij}\otimes 1.
\end{array}
\end{equation*}

 If $i\geq j>1$, then by Theorem \ref{thm-P4-Dij} and Lemma \ref{lem:spitP_4}(a), (c), we have
$$\begin{array}{rcl}
\mathbb S^{U}_{c_1}\!\bigl(\mathbb S^{U}_{c_3}(D_{ij}^\fS)\bigr)&=&\mathbb S^{U}_{c_1}\!\bigl(\mathbb S^{U}_{c_3}([\left(\mu_{j-1,j-1}\cdots\mu_{22}\mu_{11}\right)\circ \overline \mu^{\diamondsuit}_j\circ  \mu^{\diamondsuit}_{(i;j-1)}(A_{j-1,j-1}) \cdot A_{i0}^{-1}\cdot \overline A_{jn}^{-1}])\bigr)\vspace{1mm}\\& =&
[\left(\mu_{j-1,j-1}\cdots\mu_{22}\mu_{11}\right)\circ \overline \mu^{\diamondsuit}_j\circ  \mu^{\diamondsuit}_{(i;j-1)}(A_{j-1,j-1}) \cdot A_{i0}^{-1}\cdot \overline A_{jn}^{-1}]\otimes 1\vspace{1mm}\\& =&
D_{ij}\otimes 1.
\end{array}$$

The proof is complete. 
\end{proof}

\begin{proof}[Proof of Theorem \ref{thm-skein-inclusion-A}]
It follows from Lemma~\ref{lem-essential-arc} that it suffices to show Equations~\eqref{intro-eq-Cij}, \eqref{intro-eq-bar-Cij}, and\eqref{into-eq-Dij}.

Equations~\eqref{intro-eq-Cij} and \eqref{intro-eq-bar-Cij} follow immediately from Lemmas \ref{lem-Cij-bar-Cij}, \ref{lem-splitting-corner-arc}, \ref{lem-splitting-corner-bar-arc}, Propositions \ref{prop-Cij-bar-Cij}, \ref{prop-bar-Cij}, and the injectivity and compatibility of the splitting homomorphisms (Theorem \ref{thm:splitU}).

Equation~\eqref{into-eq-Dij} follows immediately from Theorem \ref{thm-P4-Dij}, Lemma \ref{lem-splitting-essential-arc}, and the injectivity and compatibility of the splitting homomorphisms (Theorem \ref{thm:splitU}).

The proof is complete.
\end{proof}

\begin{appendices}
\section{Proof of Lemmas \ref{lem:quiver1}, \ref{lem:mut2}, \ref{lem:splitk}, \ref{lem:same}, \ref{lem:quiver2}, \ref{lem:split_e2}, and \ref{lem:spitP_4}}\label{sec:proof of Lemmas}

We follow the notation of \S\ref{sec-skein-inclusion-cluster} and do not distinguish between a quiver and its signed adjacency matrix.

The following four Lemmas follow directly from a computation of the quiver mutation.

\begin{lemma}\label{lem:mutation1}
    Let $Q$ be the quiver shown below.
    
\begin{center}
\begin{tikzpicture}[->,>=stealth, node distance=1.5cm, auto]
\node (A) at (0,0) {$30$};
\node (B) at (2,0) {$31$};
\node (C) at (4,0) {$32$};
\node (D) at (6,0) {$\cdots$};
\node (E) at (8,0) {$3,t-1$};
\node (F) at (10,0) {$3t$};
\node (G) at (12,0) {$3,t+1$};
\draw (G) -- node {} (F);
\draw (B) -- node {} (A);
\draw (C) -- node {} (B);
\draw (F) -- node {} (E);
\draw (5.55,0) -- (4.35,0);
\draw (7.4,0) -- (6.35,0);
\node (A') at (0,-1) {$20$};
\node (B') at (2,-1) {$21$};
\node (C') at (4,-1) {$22$};
\node (D') at (6,-1) {$\cdots$};
\node (E') at (8,-1) {$2,t-1$};
\node (F') at (10,-1) {$2t$};
\node (G') at (12,-1) {$2,t+1$};
\draw (G') -- node {} (F');
\draw (B') -- node {} (A');
\draw (C') -- node {} (B');
\draw (F') -- node {} (E');
\draw (5.55,-1) -- (4.35,-1);
\draw (7.4,-1) -- (6.35,-1);
\node (A'') at (0,-2) {$10$};
\node (B'') at (2,-2) {$11$};
\node (C'') at (4,-2) {$12$};
\node (D'') at (6,-2) {$\cdots$};
\node (E'') at (8,-2) {$1,t-1$};
\node (F'') at (10,-2) {$1t$};
\node (G'') at (12,-2) {$1,t+1$};
\draw (G'') -- node {} (F'');
\draw (B'') -- node {} (A'');
\draw (C'') -- node {} (B'');
\draw (F'') -- node {} (E'');
\draw (5.55,-2) -- (4.35,-2);
\draw (7.4,-2) -- (6.35,-2);
\draw (A) -- node {} (B');
\draw (A') -- node {} (B'');
\draw (B) -- node {} (C');
\draw (B') -- node {} (C'');
\draw (E) -- node {} (F');
\draw (E') -- node {} (F'');
\draw (B'') -- node {} (B');
\draw (B') -- node {} (B);
\draw (C'') -- node {} (C');
\draw (C') -- node {} (C);
\draw (F) -- node {} (G');
\draw (F') -- node {} (G'');
\draw (4.35,-0.1) -- (5.55,-0.9);
\draw (4.35,-1.1) -- (5.55,-1.9);
\draw (6.35,-0.1) -- (7.4,-0.9);
\draw (6.35,-1.1) -- (7.4,-1.9);
\draw (5.6,-0.8) -- (5.6,-0.2);
\draw (5.6,-1.8) -- (5.6,-1.2);
\draw (6.3,-0.8) -- (6.3,-0.2);
\draw (6.3,-1.8) -- (6.3,-1.2);
\draw (E'') -- node {} (E');
\draw (E') -- node {} (E);
\draw (F'') -- node {} (F');
\draw (F') -- node {} (F);
\draw (G'') -- node {} (G');
\draw (G') -- node {} (G);
\end{tikzpicture}
\end{center}
    
Then the full subquiver of $\mu_{2t}\cdots \mu_{22}\mu_{21}(Q)$ consisting of all vertices incident to at least one vertex in $\{31,32,\cdots, 3t\}$ is isomorphic to the quiver shown below.

    \begin{center}
\begin{tikzpicture}[->,>=stealth, node distance=1.5cm, auto]
\node (B) at (2,0) {$31$};
\node (C) at (4,0) {$32$};
\node (D) at (6,0) {$\cdots$};
\node (E) at (8,0) {$3,t-1$};
\node (F) at (10,0) {$3t$};
\node (G) at (12,0) {$3,t+1$};
\draw (G) -- node {} (F);
\draw (C) -- node {} (B);
\draw (F) -- node {} (E);
\draw (F') -- node {} (G');
\draw (5.55,0) -- (4.35,0);
\draw (7.4,0) -- (6.35,0);
\node (A') at (0,-1) {$11$};
\node (B') at (2,-1) {$21$};
\node (C') at (4,-1) {$22$};
\node (D') at (6,-1) {$\cdots$};
\node (E') at (8,-1) {$2,t-1$};
\node (F') at (10,-1) {$2t$};
\node (G') at (12,-1) {$2,t+1$};
\draw (B') -- node {} (A');
\draw (C') -- node {} (B');
\draw (F') -- node {} (E');
\draw (5.55,-1) -- (4.35,-1);
\draw (7.4,-1) -- (6.35,-1);
\draw (B) -- node {} (B');
\draw (C) -- node {} (C');
\draw (A') -- node {} (B);
\draw (B') -- node {} (C);
\draw (E') -- node {} (F);
\draw (4.35,-0.9) -- (5.55,-0.1);
\draw (6.35,-0.9) -- (7.4,-0.1);
\draw (5.6,-0.2) -- (5.6,-0.8);
\draw (6.3,-0.2) -- (6.3,-0.8);
\draw (E) -- node {} (E');
\draw (F) -- node {} (F');
\draw (G') -- node {} (G);
\end{tikzpicture}
\end{center}
\end{lemma}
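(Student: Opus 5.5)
The plan is to induct on the number of mutations, tracking the quiver after each step of $\mu_{2s}\cdots\mu_{21}(Q)$ for $s=0,1,\dots,t$. At each step I only record the arrows incident to the row-$3$ vertices $31,\dots,3t$ and to the row-$2$ vertices; every other arrow is irrelevant to the final full subquiver. I will use the mutation rule \eqref{eq-mutation-Q}. Under it, mutating at $k$ reverses the arrows at $k$, and for each path $u\to k\to w$ it adds $1$ to the arrow count $u\to w$, cancelling against opposite arrows.

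The inductive claim is the following. After $\mu_{2s}\cdots\mu_{21}$, for $1\le s\le t$, the local picture is as follows.
\begin{itemize}
\item On the already-mutated part (columns $1,\dots,s$), the arrows are exactly those of the target quiver. That is, $3m\to 2m$, $2,m-1\to 3m$ (with $11$ playing the role of $2,0$), and horizontal arrows $2m\to 2,m-1$ and $3m\to 3,m-1$.
\item There is one extra arrow $2s\to 3,s+1$, which replaces the original diagonal $3s\to 2,s+1$.
\item The arrow $2s\to 2,s+1$ is reversed.
\item The arrows from column $s+1$ onward are unchanged, except for the vertical arrow $2,s+1\to 3,s+1$.
\end{itemize}

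For the base case $s=1$, I check the neighbourhood of $21$ in $Q$. Its in-arrows are $11$, $30$ and $22$; its out-arrows are $20$, $31$ and $12$. After mutating at $21$:
\begin{itemize}
\item the horizontal arrow $31\to 30$ cancels against the new $30\to 31$;
\item the new arrows $11\to 31$ and $22\to 31$ appear;
\item the arrows $11\to 20$ and $22\to 20$ are created, but $20$ and $10$ lie outside the final subquiver except through $11$;
\item $22\to 12$ cancels the original $12\to 22$.
\end{itemize}
Then $11$ becomes the source into $31$, which matches the target picture. The induction step from $s-1$ to $s$ is the same local computation at $2s$. Its neighbours at that moment are $2,s-1$ (now with reversed arrow), $3,s-1\to 2s$, $2,s+1\to 2s$, $1s\to 2s$, $2s\to 3s$ and $2s\to 1,s+1$. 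Mutating produces the following changes:
\begin{itemize}
\item $2,s-1\to 3s$ appears, coming from the path $2,s-1\to 2s\to 3s$ after the earlier reversal;
\item the arrow $3s\to 3,s-1$ survives, because the created $3,s-1\to 3s$ cancels only the previously introduced extra arrow, leaving the horizontal arrow;
\item the arrow $2s\to 3,s+1$ is created via $2,s+1$;
\item the vertical arrow becomes $3s\to 2s$.
\end{itemize}

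The main obstacle is the cancellation bookkeeping at the diagonal and horizontal arrows between rows $2$ and $3$. A created arrow may cancel an original one, or may double up, and I must verify that the net counts match the pictured quiver. For this I would write the relevant $3\times 3$ block of $Q$ (rows $2$ and $3$, columns $s-1,s,s+1$) as explicit signed entries at each stage and apply \eqref{eq-mutation-Q} entrywise. At the final step $s=t$ I would check the boundary column $t+1$ separately: there the arrows $3,t+1\to 3t$, $2t\to 2,t+1$ and $2,t+1\to 3,t+1$ remain. Finally, I discard every vertex not adjacent to $\{31,\dots,3t\}$; these are the row-$1$ vertices other than $11$, together with $20$ and $30$. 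This yields the pictured quiver.
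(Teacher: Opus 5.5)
Your strategy of mutating at $21,22,\dots,2t$ in turn and tracking the neighbourhood of row $3$ is the direct computation the paper alludes to. However, the bookkeeping, which is the entire content of this lemma, goes wrong in the base case, and the inductive invariant you state is false. When you mutate at $21$, the composite $22\to 21\to 31$ does not make $22\to 31$ appear. It cancels the original diagonal $31\to 22$, so afterwards $22$ and $31$ are not joined at all (similarly, $11\to 20$ cancels the diagonal $20\to 11$). There is also no arrow $21\to 32$ after $\mu_{21}$: there is none in $Q$, and $\mu_k$ never creates an arrow incident to $k$. In general, after $\mu_{2s}\cdots\mu_{21}$ the diagonal $3s\to 2,s+1$ has been cancelled by $2,s+1\to 2s\to 3s$, but $2s\to 3,s+1$ does not exist yet. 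It is created only at the next step, from the path $2s\to 2,s+1\to 3,s+1$ when you mutate at $2,s+1$. So your assertion that $\mu_{2s}$ creates $2s\to 3,s+1$ ``via $2,s+1$'' cannot hold. Your invariant at $s=t$ would put an arrow $2t\to 3,t+1$ into the final quiver, and the pictured target contains no such arrow.

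The neighbourhood you use in the inductive step is wrong for the same reason. For $s\ge 2$, the arrows $3,s-1\to 2s$ and $1s\to 2s$ were already cancelled when you mutated at $2,s-1$, by $2s\to 2,s-1\to 3,s-1$ and $2s\to 2,s-1\to 1s$. You also omit the out-arrow $2s\to 20$ created at that step. If $3,s-1\to 2s$ were present, $\mu_{2s}$ would create $3,s-1\to 3s$ and destroy the horizontal arrow $3s\to 3,s-1$. Your remark that it ``cancels only the previously introduced extra arrow'' refers to no actual arrow.

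The invariant that does work is the following, after $\mu_{2s}\cdots\mu_{21}$:
\begin{itemize}
\item the arrows among $11$, the vertices $21,\dots,2s$ and the vertices $31,\dots,3s$ are those of the target;
\item at the frontier you have $2s\to 2,s+1$, $2,s+1\to 20$ and $2,s+1\to 3,s+1\to 3s$;
\item there is no arrow between $3s$ and $2,s+1$, none between $2s$ and $3,s+1$, and none between $1,s+1$ and $2,s+1$;
\item columns $\ge s+2$ are untouched.
\end{itemize}
Then $2,s+1$ has in-arrows from $2s$ and from $2,s+2$, and out-arrows to $20$, to $3,s+1$ and to $1,s+2$. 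The six composites are:
\begin{itemize}
\item $2s\to 20$, which cancels;
\item $2s\to 3,s+1$, which is new;
\item $2s\to 1,s+2$, which is new but involves row $1$ only;
\item $2,s+2\to 20$, which is new;
\item $2,s+2\to 3,s+1$, which cancels the diagonal;
\item $2,s+2\to 1,s+2$, which cancels the vertical.
\end{itemize}
This reproduces the invariant, and at $s=t$ it gives exactly the pictured quiver. In that final quiver $2,t+1$ is adjacent only to $2t$ and $3,t+1$, so the picture keeps it even though it is not literally incident to any of $31,\dots,3t$.
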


\begin{lemma}\label{lem:mutation2}
    Let $Q$ be the quiver shown below. 
    
     \begin{center}
\begin{tikzpicture}[->,>=stealth, node distance=1.5cm, auto]
\node (A'') at (0,1) {$30$};
\node (B'') at (2,1) {$31$};
\node (C'') at (4,1) {$32$};
\node (D'') at (6,1) {$\cdots$};
\node (E'') at (8,1) {$3,t-1$};
\node (F'') at (10,1) {$3t$};
\node (G'') at (12,1) {$3,t+1$};
\draw (G'') -- node {} (F'');
\draw (B'') -- node {} (A'');
\draw (C'') -- node {} (B'');
\draw (F'') -- node {} (E'');
\draw (5.55,1) -- (4.35,1);
\draw (7.4,1) -- (6.35,1);
\node (B) at (2,0) {$21$};
\node (C) at (4,0) {$22$};
\node (D) at (6,0) {$\cdots$};
\node (E) at (8,0) {$2,t-1$};
\node (F) at (10,0) {$2t$};
\node (G) at (12,0) {$2,t+1$};
\draw (G) -- node {} (F);
\draw (C) -- node {} (B);
\draw (F) -- node {} (E);
\draw (F') -- node {} (G');
\draw (5.55,0) -- (4.35,0);
\draw (7.4,0) -- (6.35,0);
\node (A') at (0,-1) {$10$};
\node (B') at (2,-1) {$11$};
\node (C') at (4,-1) {$12$};
\node (D') at (6,-1) {$\cdots$};
\node (E') at (8,-1) {$1,t-1$};
\node (F') at (10,-1) {$1t$};
\node (G') at (12,-1) {$1,t+1$};
\draw (B') -- node {} (A');
\draw (C') -- node {} (B');
\draw (F') -- node {} (E');
\draw (5.55,-1) -- (4.35,-1);
\draw (7.4,-1) -- (6.35,-1);
\draw (B) -- node {} (B');
\draw (C) -- node {} (C');
\draw (A') -- node {} (B);
\draw (B') -- node {} (C);
\draw (E') -- node {} (F);
\draw (4.35,-0.9) -- (5.55,-0.1);
\draw (6.35,-0.9) -- (7.4,-0.1);
\draw (5.6,-0.2) -- (5.6,-0.8);
\draw (6.3,-0.2) -- (6.3,-0.8);
\draw (E) -- node {} (E');
\draw (F) -- node {} (F');
\draw (G') -- node {} (G);
\draw (B) -- node {} (B'');
\draw (C) -- node {} (C'');
\draw (E) -- node {} (E'');
\draw (F) -- node {} (F'');
\draw (G) -- node {} (G'');
\draw (A'') -- node {} (B);
\draw (B'') -- node {} (C);
\draw (E'') -- node {} (F);
\draw (F'') -- node {} (G);
\draw (4.35,0.9) -- (5.55,0.1);
\draw (6.35,0.9) -- (7.4,0.1);
\draw (5.6,0.2) -- (5.6,0.8);
\draw (6.3,0.2) -- (6.3,0.8);
\end{tikzpicture}
\end{center}   
    
Then the full subquiver of $\mu_{2t}\cdots \mu_{22}\mu_{21}(Q)$ consisting of all vertices incident to at least one vertex in $\{31,32,\cdots, 3t\}$ is isomorphic to the quiver shown below.

    \begin{center}
\begin{tikzpicture}[->,>=stealth, node distance=1.5cm, auto]
\node (B) at (2,0) {$31$};
\node (C) at (4,0) {$32$};
\node (D) at (6,0) {$\cdots$};
\node (E) at (8,0) {$3,t-1$};
\node (F) at (10,0) {$3t$};
\node (G) at (12,0) {$3,t+1$};
\draw (G) -- node {} (F);
\draw (C) -- node {} (B);
\draw (F) -- node {} (E);
\draw (F') -- node {} (G');
\draw (5.55,0) -- (4.35,0);
\draw (7.4,0) -- (6.35,0);
\node (A') at (0,-1) {$10$};
\node (B') at (2,-1) {$21$};
\node (C') at (4,-1) {$22$};
\node (D') at (6,-1) {$\cdots$};
\node (E') at (8,-1) {$2,t-1$};
\node (F') at (10,-1) {$2t$};
\node (G') at (12,-1) {$2,t+1$};
\draw (B') -- node {} (A');
\draw (C') -- node {} (B');
\draw (F') -- node {} (E');
\draw (5.55,-1) -- (4.35,-1);
\draw (7.4,-1) -- (6.35,-1);
\draw (B) -- node {} (B');
\draw (C) -- node {} (C');
\draw (A') -- node {} (B);
\draw (B') -- node {} (C);
\draw (E') -- node {} (F);
\draw (4.35,-0.9) -- (5.55,-0.1);
\draw (6.35,-0.9) -- (7.4,-0.1);
\draw (5.6,-0.2) -- (5.6,-0.8);
\draw (6.3,-0.2) -- (6.3,-0.8);
\draw (E) -- node {} (E');
\draw (F) -- node {} (F');
\draw (G') -- node {} (G);
\end{tikzpicture}
\end{center}
\end{lemma}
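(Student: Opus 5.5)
The proof is a direct computation of quiver mutations. I would organize it as an induction on $s$, for $0\le s\le t$, with an explicit description of the quiver
\[
Q^{(s)}:=\mu_{2s}\cdots\mu_{22}\mu_{21}(Q),\qquad Q^{(0)}=Q .
\]
The point is to track only the arrows incident to the third row $31,\dots,3t$ and to the vertices $2s,2,s+1$ that are about to be mutated. I would use the matrix rule \eqref{eq-mutation-Q} and cancel opposite arrows at every step.

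The inductive hypothesis is a ``moving front'' picture of $Q^{(s)}$, which I expect to be the following.
\begin{itemize}
\item For $r\le s$, the vertices $3r$ and $2r$ already carry the arrows of the target quiver:
\begin{itemize}
\item the horizontal arrows $3r\to 3,r-1$ and $2r\to 2,r-1$;
\item the vertical arrow $3r\to 2r$;
\item the diagonal arrow $2,r-1\to 3r$, where for $r=1$ the role of $2,0$ is played by $10$.
\end{itemize}
\item The frontier vertex $2s$ has, in addition, arrows to and from $2,s+1$, $1,s+1$ and $3,s+1$.
\item For $r>s$, the picture coincides with the original $Q$ in columns $r\ge s+2$.
\end{itemize}

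The base case is the mutation at $21$. Its neighbours in $Q$ are $10$, $11$, $22$, $30$, $31$ and $32$. Applying the three-step rule creates the arrows $10\to 31$ (via $10\to21\to31$) and $11\to 22$-type compositions, and it cancels $30\to21$ against the reversed arrow. The check is that, after reversal, $21$ has exactly the in/out pattern asserted in the hypothesis.

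For the inductive step I mutate at $2,s+1$. I list its neighbours in $Q^{(s)}$: these lie in column $s$ (modified) and column $s+2$ (original) of rows $1$, $2$ and $3$. For each pair consisting of an incoming and an outgoing arrow I add the composite arrow and then cancel $2$-cycles. The computations to carry out are these:
\begin{itemize}
\item The composite arrows between row $3$ and row $1$ must all cancel against already existing opposite arrows. This is why $3,s+1$ ends up with no arrow to row $1$.
\item The arrow $2s\to 3,s+1$ appears.
\item The old diagonal $3s\to 2,s+1$ is reversed.
\end{itemize}
Together these give the target local configuration in column $s+1$ and reproduce the frontier description one step to the right.

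At $s=t$, restricting to the vertices adjacent to some $3r$ with $1\le r\le t$ yields exactly the second displayed quiver. This set consists of $10$, $2r$ for $1\le r\le t+1$, and $3,t+1$. In particular, the vertices $30$ and $1r$ become disconnected from row $3$.

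The main obstacle is the bookkeeping of cancellations. At each step one must verify that the new row-$3$-to-row-$1$ arrows are exactly cancelled, so that no weight-$2$ arrows or surviving long-range arrows remain. One must also handle the boundary columns correctly: $s=1$, where $30$ and $10$ are frozen, and $s=t$, where $3,t+1$ and $2,t+1$ are involved. I would first carry out the cases $t=1$ and $t=2$ by hand to pin down the exact frontier pattern, and then state it as the inductive hypothesis. Lemma~\ref{lem:mutation1} is the analogous computation without the top-row arrows into row $2$, and it can be used as a consistency check.
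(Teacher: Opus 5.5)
Your plan, an induction along row $2$ that tracks the neighbourhood of the next vertex to be mutated, is the direct computation the paper has in mind. However, the bookkeeping you commit to is wrong in several places, and an induction built on your stated hypothesis would not close. Write $(r,c)$ for the vertex labelled $rc$.

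\textbf{Base case.} In $Q$ the diagonals go $(3,c-1)\to(2,c)$ and $(1,c-1)\to(2,c)$. So $(2,1)$ has five neighbours, not six: in-arrows from $(1,0),(2,2),(3,0)$ and out-arrows to $(1,1),(3,1)$. The vertex $(3,2)$ is not adjacent. Mutating at $(2,1)$:
\begin{itemize}
\item the composites $(1,0)\to(1,1)$, $(2,2)\to(1,1)$, $(2,2)\to(3,1)$ and $(3,0)\to(3,1)$ cancel existing arrows;
\item $(1,0)\to(3,1)$ is created;
\item $(3,0)\to(2,1)$ is simply reversed, not cancelled.
\end{itemize}
It is the cancellation of $(3,1)\to(3,0)$ that detaches $(3,0)$. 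So your hypothesis ``$3r\to 3,r-1$ for $r\le s$'' is already false at $r=1$. Moreover, the composite $(3,0)\to(1,1)$ is a \emph{new} row-$3$-to-row-$1$ arrow that survives. It is harmless only because neither endpoint is adjacent to $(3,1),\dots,(3,t)$ afterwards. So what must be checked is that every leftover arrow has an endpoint outside the target vertex set, not that no long-range arrows remain.

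\textbf{Frontier.} Your frontier is off by one. In $Q^{(s)}$ the vertex $(2,s)$ is not adjacent to $(1,s+1)$ or $(3,s+1)$; those arrows are created only by $\mu_{2,s+1}$. The correct invariant is the following. For $2\le k\le t$, in $Q^{(k-1)}$ the vertex $(2,k)$ has exactly four neighbours, with arrows
\[
(2,k-1)\to(2,k),\quad (2,k+1)\to(2,k),\quad (2,k)\to(1,k),\quad (2,k)\to(3,k).
\]
The original diagonals $(1,k-1)\to(2,k)$ and $(3,k-1)\to(2,k)$ were already cancelled at $\mu_{2,k-1}$.

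\textbf{Inductive step.} Given this invariant, $\mu_{2k}$ produces no row-$3$/row-$1$ composites at all.
\begin{itemize}
\item It creates $(2,k-1)\to(3,k)$, which is the target diagonal, and $(2,k-1)\to(1,k)$, which is irrelevant.
\item The composites $(2,k+1)\to(3,k)$ and $(2,k+1)\to(1,k)$ cancel the original arrows $(3,k)\to(2,k+1)$ and $(1,k)\to(2,k+1)$. For $k=t$ the arrow $(1,t)\to(2,t+1)$ is absent, so $(2,t+1)\to(1,t)$ is new, but again irrelevant.
\item Reversal gives $(3,k)\to(2,k)$, $(2,k)\to(2,k\pm1)$ and $(1,k)\to(2,k)$.
\end{itemize}
Then $(2,k+1)$ inherits the same four-neighbour shape, which closes the induction.

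In particular, the ``old diagonal $3s\to 2,s+1$'' is cancelled at $\mu_{2s}$, not reversed at $\mu_{2,s+1}$. Reversing it would produce an arrow $(2,s+1)\to(3,s)$, which is not in the target. Likewise, $(2,t+1)$ is not adjacent to any $(3,r)$ with $r\le t$. It appears in the displayed quiver only through its arrows to $(2,t)$ and $(3,t+1)$, so your description of the final vertex set also needs adjusting.
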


\begin{lemma}\label{lem:mutation4}
\begin{enumerate}[label={\rm (\alph*)}]\itemsep0,3em

\item Let $Q$ be the quiver shown below.
    
    \begin{center}
\begin{tikzpicture}[->,>=stealth, node distance=1.5cm, auto]
\node (A) at (0,0) {$30$};
\node (B) at (2,0) {$31$};
\node (C) at (4,0) {$32$};
\node (D) at (6,0) {$\cdots$};
\node (E) at (8,0) {$3,t-1$};
\node (F) at (10,0) {$3t$};
\draw (B) -- node {} (A);
\draw (C) -- node {} (B);
\draw (F) -- node {} (E);
\draw (5.55,0) -- (4.35,0);
\draw (7.4,0) -- (6.35,0);
\node (A') at (0,-1) {$20$};
\node (B') at (2,-1) {$21$};
\node (C') at (4,-1) {$22$};
\node (D') at (6,-1) {$\cdots$};
\node (E') at (8,-1) {$2,t-1$};
\node (F') at (10,-1) {$2t$};
\node (G') at (12,-1) {$2,t+1$};
\draw (G') -- node {} (F');
\draw (B') -- node {} (A');
\draw (C') -- node {} (B');
\draw (F') -- node {} (E');
\draw (5.55,-1) -- (4.35,-1);
\draw (7.4,-1) -- (6.35,-1);
\node (A'') at (0,-2) {$10$};
\node (B'') at (2,-2) {$11$};
\node (C'') at (4,-2) {$12$};
\node (D'') at (6,-2) {$\cdots$};
\node (E'') at (8,-2) {$1,t-1$};
\node (F'') at (10,-2) {$1t$};
\node (G'') at (12,-2) {$1,t+1$};
\draw (G'') -- node {} (F'');
\draw (B'') -- node {} (A'');
\draw (C'') -- node {} (B'');
\draw (F'') -- node {} (E'');
\draw (5.55,-2) -- (4.35,-2);
\draw (7.4,-2) -- (6.35,-2);
\draw (A) -- node {} (B');
\draw (A') -- node {} (B'');
\draw (B) -- node {} (C');
\draw (B') -- node {} (C'');
\draw (E) -- node {} (F');
\draw (E') -- node {} (F'');
\draw (B'') -- node {} (B');
\draw (B') -- node {} (B);
\draw (C'') -- node {} (C');
\draw (C') -- node {} (C);
\draw (F) -- node {} (G');
\draw (F') -- node {} (G'');
\draw (4.35,-0.1) -- (5.55,-0.9);
\draw (4.35,-1.1) -- (5.55,-1.9);
\draw (6.35,-0.1) -- (7.4,-0.9);
\draw (6.35,-1.1) -- (7.4,-1.9);
\draw (5.6,-0.8) -- (5.6,-0.2);
\draw (5.6,-1.8) -- (5.6,-1.2);
\draw (6.3,-0.8) -- (6.3,-0.2);
\draw (6.3,-1.8) -- (6.3,-1.2);
\draw (E'') -- node {} (E');
\draw (E') -- node {} (E);
\draw (F'') -- node {} (F');
\draw (F') -- node {} (F);
\draw (G'') -- node {} (G');
\end{tikzpicture}
\end{center}
    
Then the full subquiver of $\mu_{2t}\cdots \mu_{22}\mu_{21}(Q)$ consisting of all vertices incident to at least one vertex in $\{31,32,\cdots, (3,t-1)\}$ is isomorphic to the quiver shown below.

   \begin{center}
\begin{tikzpicture}[->,>=stealth, node distance=1.5cm, auto]
\node (B) at (2,0) {$31$};
\node (C) at (4,0) {$32$};
\node (D) at (6,0) {$\cdots$};
\node (E) at (8,0) {$3,t-1$};
\node (F) at (10,0) {$3t$};
\draw (C) -- node {} (B);
\draw (F) -- node {} (E);
\draw (5.55,0) -- (4.35,0);
\draw (7.4,0) -- (6.35,0);
\node (A') at (0,-1) {$11$};
\node (B') at (2,-1) {$21$};
\node (C') at (4,-1) {$22$};
\node (D') at (6,-1) {$\cdots$};
\node (E') at (8,-1) {$2,t-1$};
\draw (B') -- node {} (A');
\draw (C') -- node {} (B');
\draw (5.55,-1) -- (4.35,-1);
\draw (7.4,-1) -- (6.35,-1);
\draw (B) -- node {} (B');
\draw (C) -- node {} (C');
\draw (A') -- node {} (B);
\draw (B') -- node {} (C);
\draw (E') -- node {} (F);
\draw (4.35,-0.9) -- (5.55,-0.1);
\draw (6.35,-0.9) -- (7.4,-0.1);
\draw (5.6,-0.2) -- (5.6,-0.8);
\draw (6.3,-0.2) -- (6.3,-0.8);
\draw (E) -- node {} (E');
\end{tikzpicture}
\end{center}

\item   Let $Q$ be the quiver shown below. 
    
     \begin{center}
\begin{tikzpicture}[->,>=stealth, node distance=1.5cm, auto]
\node (A'') at (0,1) {$30$};
\node (B'') at (2,1) {$31$};
\node (C'') at (4,1) {$32$};
\node (D'') at (6,1) {$\cdots$};
\node (E'') at (8,1) {$3,t-1$};
\node (F'') at (10,1) {$3t$};
\draw (B'') -- node {} (A'');
\draw (C'') -- node {} (B'');
\draw (F'') -- node {} (E'');
\draw (5.55,1) -- (4.35,1);
\draw (7.4,1) -- (6.35,1);
\node (B) at (2,0) {$21$};
\node (C) at (4,0) {$22$};
\node (D) at (6,0) {$\cdots$};
\node (E) at (8,0) {$2,t-1$};
\node (F) at (10,0) {$2t$};
\node (G) at (12,0) {$2,t+1$};
\draw (G) -- node {} (F);
\draw (C) -- node {} (B);
\draw (F) -- node {} (E);
\draw (5.55,0) -- (4.35,0);
\draw (7.4,0) -- (6.35,0);
\node (A') at (0,-1) {$10$};
\node (B') at (2,-1) {$11$};
\node (C') at (4,-1) {$12$};
\node (D') at (6,-1) {$\cdots$};
\node (E') at (8,-1) {$1,t-1$};
\node (F') at (10,-1) {$1t$};
\draw (B') -- node {} (A');
\draw (C') -- node {} (B');
\draw (F') -- node {} (E');
\draw (5.55,-1) -- (4.35,-1);
\draw (7.4,-1) -- (6.35,-1);
\draw (B) -- node {} (B');
\draw (C) -- node {} (C');
\draw (A') -- node {} (B);
\draw (B') -- node {} (C);
\draw (E') -- node {} (F);
\draw (4.35,-0.9) -- (5.55,-0.1);
\draw (6.35,-0.9) -- (7.4,-0.1);
\draw (5.6,-0.2) -- (5.6,-0.8);
\draw (6.3,-0.2) -- (6.3,-0.8);
\draw (E) -- node {} (E');
\draw (F) -- node {} (F');
\draw (F') -- node {} (G);
\draw (B) -- node {} (B'');
\draw (C) -- node {} (C'');
\draw (E) -- node {} (E'');
\draw (F) -- node {} (F'');
\draw (A'') -- node {} (B);
\draw (B'') -- node {} (C);
\draw (E'') -- node {} (F);
\draw (F'') -- node {} (G);
\draw (4.35,0.9) -- (5.55,0.1);
\draw (6.35,0.9) -- (7.4,0.1);
\draw (5.6,0.2) -- (5.6,0.8);
\draw (6.3,0.2) -- (6.3,0.8);
\end{tikzpicture}
\end{center}   
    
Then the full subquiver of $\mu_{2t}\cdots \mu_{22}\mu_{21}(Q)$ consisting of all vertices incident to at least one vertex in $\{31,32,\cdots, (3,t-1)\}$ is isomorphic to the quiver shown below.

 \begin{center}
\begin{tikzpicture}[->,>=stealth, node distance=1.5cm, auto]
\node (B) at (2,0) {$31$};
\node (C) at (4,0) {$32$};
\node (D) at (6,0) {$\cdots$};
\node (E) at (8,0) {$3,t-1$};
\node (F) at (10,0) {$3t$};
\draw (C) -- node {} (B);
\draw (F) -- node {} (E);
\draw (5.55,0) -- (4.35,0);
\draw (7.4,0) -- (6.35,0);
\node (A') at (0,-1) {$10$};
\node (B') at (2,-1) {$21$};
\node (C') at (4,-1) {$22$};
\node (D') at (6,-1) {$\cdots$};
\node (E') at (8,-1) {$2,t-1$};
\draw (B') -- node {} (A');
\draw (C') -- node {} (B');
\draw (5.55,-1) -- (4.35,-1);
\draw (7.4,-1) -- (6.35,-1);
\draw (B) -- node {} (B');
\draw (C) -- node {} (C');
\draw (A') -- node {} (B);
\draw (B') -- node {} (C);
\draw (E') -- node {} (F);
\draw (4.35,-0.9) -- (5.55,-0.1);
\draw (6.35,-0.9) -- (7.4,-0.1);
\draw (5.6,-0.2) -- (5.6,-0.8);
\draw (6.3,-0.2) -- (6.3,-0.8);
\draw (E) -- node {} (E');
\end{tikzpicture}
\end{center}

\end{enumerate}
\end{lemma}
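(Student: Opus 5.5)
This is a pure quiver-mutation computation, so I will track the arrows incident to the top row $\{31,\dots,3,t-1\}$ through the successive mutations $\mu_{21},\mu_{22},\dots,\mu_{2t}$. For part (a) I will proceed by induction along the row. The structure is a ``sliding'' one, essentially the same mechanism as in Lemma~\ref{lem:mutation1}. Mutating at $21$ uses the paths $20\to 21\to 11$ and $30\to 21\to 31$ (and similar ones); these create or cancel arrows, after which the arrows at $21$ reverse. The upshot is that $21$ inherits the local role previously played by the column below it, and the vertex $11$ (resp.\ $20$, $30$) becomes attached to $31$ in the way displayed. The key claim I will establish at step $s$ is that, after $\mu_{2s}\cdots\mu_{21}$, the following two configurations hold.

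\begin{itemize}
\item The arrows among $\{11,21,\dots,2s,31,\dots,3s\}$ already agree with the target picture.
\item The unmutated part $\{2,s+1\},\dots$ still looks like the original quiver $Q$, except for a single modified triangle at the interface: $2s\to 3,s+1$, together with $2s\to 2,s+1$ reversed.
\end{itemize}

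This reduces the inductive step to a check on a bounded neighbourhood of $2,s+1$. There the three-term mutation rule \eqref{eq-mutation-Q} is applied only to the vertices $1,s+1$, $1,s+2$, $3,s$, $3,s+1$, $2,s$, $2,s+2$.

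The difference from Lemma~\ref{lem:mutation1} is at the right end. In (a) the top row stops at $3t$ (there is no $3,t+1$), and the vertex $2,t+1$ has no arrow to the top row. Hence at the final mutation $\mu_{2t}$ the arrow $3t\to 2t$ reverses without producing an arrow $3t\to 2,t+1$, and $3t$ survives only through its arrow from $3,t-1$ and from $2,t-1$. This is why the lemma restricts to neighbours of $\{31,\dots,3,t-1\}$: I will verify that no new arrow touching $3,1,\dots,3,t-1$ is created at the last step. The only candidate is a path $3,t-1\to 2t\to x$, and I will check that it is cancelled by the existing arrow coming from the earlier step. Arrows between $1,j$ and the top row will be shown to cancel in pairs, via the $2$-cycles that mutation produces. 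This is what leaves only $11$ (respectively $10$ in (b)) on the lower side.

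Part (b) will run by the same induction, started from the second picture. There the middle row already carries the extra arrows to the top row that come from the configuration of Lemma~\ref{lem:mutation2}. The base case now involves $10\to 21$ instead of $20$, which is why $10$ rather than $11$ appears in the output.

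I expect the main obstacle to be the bookkeeping of $2$-cycle cancellations at the boundary column and at the truncated right end. There an arrow produced by one mutation must cancel one produced two steps earlier, and a miscount would leave a spurious arrow $3,t-1\to 2t$ or $1,s\to 3,s$. I will handle this by writing out the local quiver explicitly for $t=2,3$ and checking that the inductive invariant holds verbatim there, before running the general step.
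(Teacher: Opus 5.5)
Your plan is the same direct mutation computation the paper has in mind (it offers nothing more than that). However, the inductive invariant you propose to carry is false, so as written the induction does not close. Write $(r,c)$ for the vertex $rc$.

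In (a), just before $\mu_{2s}$ the vertex $(2,s)$ has in-neighbours $(2,s+1),(2,s-1)$ and out-neighbours $(3,s),(1,s+1),(2,0)$. For $s=1$ the in-neighbours are $(2,2),(1,1),(3,0)$ and the out-neighbours are $(2,0),(3,1),(1,2)$. The mutation $\mu_{2s}$ therefore does three things:
\begin{itemize}
\item it cancels \emph{both} in-arrows $(3,s)\to(2,s+1)$ and $(1,s+1)\to(2,s+1)$, via the paths $(2,s+1)\to(2,s)\to(3,s)$ and $(2,s+1)\to(2,s)\to(1,s+1)$;
\item it creates $(2,s-1)\to(3,s)$ (resp.\ $(1,1)\to(3,1)$) and $(2,s+1)\to(2,0)$;
\item it reverses the arrows at $(2,s)$.
\end{itemize}
So after $\mu_{2s}\cdots\mu_{21}$ there is no arrow $(2,s)\to(3,s+1)$; that arrow is created only by the next mutation. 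Moreover $(2,s+1)$ has in-neighbours exactly $(2,s+2),(2,s)$ and out-neighbours exactly $(3,s+1),(1,s+2),(2,0)$.

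Your interface description instead keeps $(3,s)\to(2,s+1)$ and already contains $(2,s)\to(3,s+1)$. Run literally, $\mu_{2,s+1}$ would then use $(3,s)\to(2,s+1)\to(3,s+1)$ to cancel the horizontal arrow $(3,s+1)\to(3,s)$. The path $(2,s)\to(2,s+1)\to(3,s+1)$ would also double $(2,s)\to(3,s+1)$. Both contradict the target, so your check for $t=2,3$ will not confirm your invariant ``verbatim''. The corrected invariant does propagate: $\mu_{2,s+1}$ then touches the top row only by reversing $(2,s+1)\to(3,s+1)$, creating $(2,s)\to(3,s+1)$, and cancelling $(3,s+1)\to(2,s+2)$.

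Several orientations in your sketch are also reversed, and they matter for the bookkeeping you flag as the main obstacle.

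The base path is $(1,1)\to(2,1)\to(3,1)$. Your path $(2,0)\to(2,1)\to(1,1)$ does not exist, since $(2,0)$ is an out-neighbour and $(1,1)$ an in-neighbour of $(2,1)$. The same step cancels $(3,1)\to(3,0)$ and $(3,1)\to(2,2)$, so only $(1,1)$, not $(2,0)$ or $(3,0)$, ends up attached to $(3,1)$.

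For $s\geq 2$ all in-neighbours of $(2,s)$ lie in the middle row. So no arrow between the bottom row and $\{(3,1),(3,2),\dots\}$ other than $(1,1)\to(3,1)$ ever appears, and there are no pairwise cancellations of that kind to organize.

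At the right end there is no candidate path $(3,t-1)\to(2,t)\to x$, because the arrow $(3,t-1)\to(2,t)$ was already cancelled by $\mu_{2,t-1}$. The last mutation reverses $(2,t)\to(3,t)$ (not $(3,t)\to(2,t)$), creates $(2,t-1)\to(3,t)$, and cancels $(3,t)\to(2,t+1)$, which is present in $Q$.

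In (b) the middle-to-top arrows are the unmutated ones. What differs from (a) is that the middle-to-bottom part is already of the form $(2,c)\to(1,c)$, $(1,c-1)\to(2,c)$, and there is no vertex $(2,0)$. The same invariant holds, with the out-neighbours of $(2,s+1)$ being $(3,s+1),(1,s+1)$ and the cancelled in-arrows being $(3,s)\to(2,s+1)$ and $(1,s)\to(2,s+1)$. The base mutation has in-neighbours $(2,2),(1,0),(3,0)$ and out-neighbours $(1,1),(3,1)$. So $(1,0)\to(3,1)$ comes from $(1,0)\to(2,1)\to(3,1)$, replacing $(1,1)\to(2,1)\to(3,1)$ of (a); $(2,0)$ plays no role. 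With these corrections your induction yields exactly the two pictures in the lemma.
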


\begin{lemma}\label{lem:mutation3}
Assume that $1<j<i$. 
\begin{enumerate}[label={\rm (\alph*)}]\itemsep0,3em

\item For any $k$ with $1<k\leq t$, in the quiver $Q'=\mu_{i-1,k-1}\cdots \mu_{i-1,2}\mu_{i-1,1}(Q_{\mathbb P_3})$, we have
    $$Q'(v_{i-1,k},v)=\begin{cases}
        1 &\mbox{ if } v=v_{i,k},v_{i-1,0},v_{i-2,k+1},\\
        -1 &\mbox{ if } v=v_{i-1,k-1},v_{i-1,k+1},\\
        0 &\mbox{ otherwise.}
    \end{cases}$$
    
\item For any $s$ with $j\leq s<i-1$, in the quiver $Q'=\mu_{(s+1;j-1)}\cdots \mu_{(i-2;j-1)} \mu_{(i-1;j-1)}(Q_{\mathbb P_3})$, we have
    $$Q'(v_{s1},v)=\begin{cases}
        1 &\mbox{ if } v=v_{s-1,1},v_{s+1,1},\\
        -1 &\mbox{ if } v=v_{i1},v_{s-1,0}, v_{s2},\\
        0 &\mbox{ otherwise.}
    \end{cases}$$

\item For any $(s,t)$ with $j\leq s<i-1$, $1<t\leq j-1$,
in the quiver $$Q'=(\mu_{s,t-1}\cdots \mu_{s2}\mu_{s1})\circ \mu_{(s+1;j-1)}\cdots \mu_{(i-2;j-1)} \mu_{(i-1;j-1)}(Q_{\mathbb P_3}),$$
we have
    $$Q'(v_{st},v)=\begin{cases}
        1 &\mbox{ if } v=v_{s-1,t},v_{s+1,t},\\
        -1 &\mbox{ if } v=v_{s,t-1},v_{s,t+1},\\
        0 &\mbox{ otherwise.}
    \end{cases}$$ 
\end{enumerate}
\end{lemma}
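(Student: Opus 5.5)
We will prove all three parts by direct bookkeeping of the mutation rule \eqref{eq-mutation-Q}, arranged as nested inductions that follow the order in which the mutations are performed. The starting point is the explicit local shape of $Q_{\mathbb P_3}$: every interior small vertex $v_{st}$ has exactly six neighbours, namely $v_{s\pm1,t}$, $v_{s,t\pm1}$, $v_{s+1,t+1}$ and $v_{s-1,t-1}$ (with appropriate index conventions). The arrows form oriented triangles whose orientation is dictated by Figure~\ref{Fig;coord_ijk}, and boundary arrows have weight $\tfrac12$. The key observation is that mutating at $v$ only changes arrows between pairs of neighbours of $v$, together with the arrows incident to $v$. Consequently, the neighbourhood of a vertex $w$ changes only when we mutate at $w$ itself or at a neighbour of $w$. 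Each inductive step therefore reduces to a finite local computation on at most two consecutive rows.

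\textbf{Part (a).} We induct on $k$.
\begin{itemize}
\item For the base case $k=2$, mutating at $v_{i-1,1}$ creates the composite arrows through $v_{i-1,1}$. The triangle with $v_{i-1,0}$ produces the new arrow $v_{i-1,0}\to v_{i-1,2}$. The two paths through $v_{i-1,1}$ involving $v_{i,2}$ and $v_{i-2,1}$ cancel against existing arrows of $Q_{\mathbb P_3}$. This leaves the stated neighbourhood of $v_{i-1,2}$.
\item For the inductive step, assume the neighbourhood of $v_{i-1,k-1}$ has the stated form with $k-1$ in place of $k$. Mutate at $v_{i-1,k-1}$ and read off the arrows at $v_{i-1,k}$. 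Relative to $v_{i-1,k}$, the arrow from $v_{i-1,0}$ composes with $v_{i-1,k-1}\to v_{i-1,k}$ and is transported to $v_{i-1,k}$. The arrow between $v_{i-2,k}$ and $v_{i-1,k}$ cancels with the path through $v_{i-1,k-1}$. The arrow with $v_{i-1,k-1}$ is reversed.
\end{itemize}
This yields exactly the list $v_{i,k},v_{i-1,0},v_{i-2,k+1}$ with sign $+1$ and $v_{i-1,k\pm1}$ with sign $-1$.

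\textbf{Part (b).} We use descending induction on $s$, starting from $s=i-2$. After the full row $\mu_{(i-1;j-1)}$ has been applied, part (a) (with $k=j-1$), together with Lemmas~\ref{lem:mutation1}, \ref{lem:mutation2} and \ref{lem:mutation4}, describes the subquiver spanned by rows $i-1$ and $i-2$. Reading off this subquiver gives $Q'(v_{i-2,1},\cdot)$. The distinguished feature is the arrow $v_{i-2,1}\to v_{i1}$, which arises from the $v_{i-1,0}$-type arrow of part (a) by composition. For the step from $s+1$ to $s$, we use that the row $\mu_{(s+1;j-1)}$ acts on rows $s+1$ and $s$ exactly as in the configuration of Lemma~\ref{lem:mutation2}. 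That lemma gives the new neighbourhood of $v_{s1}$: the arrow to $v_{i1}$ is handed down one row, and the frozen neighbour becomes $v_{s-1,0}$.

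\textbf{Part (c).} For fixed $s$ we induct on $t$, using part (b) as the base case $t=1$. The inductive step is the same one-row computation as in part (a), but now run inside row $s$. Because the preceding rows were already mutated, the mutation $\mu_{s,t-1}$ now removes the long-range arrows (to $v_{i1}$ and to $v_{s-1,0}$) instead of transporting them. This is why only the four neighbours $v_{s\pm1,t}$ and $v_{s,t\pm1}$ survive.

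We expect the main obstacle to be part (b). There one must keep careful track of the non-local arrow to $v_{i1}$ and of the half-weight arrows at the frozen vertices $v_{s-1,0}$, and check that they are neither cancelled nor doubled when passing from row $s+1$ to row $s$. We would first verify this case by hand for $n=5$, and then check that the local picture in Lemma~\ref{lem:mutation2} is exactly the one needed to make the induction uniform in $s$.
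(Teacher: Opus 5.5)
Your strategy (local bookkeeping of \eqref{eq-mutation-Q} in the order the mutations are performed, with the row pictures of Lemmas~\ref{lem:mutation1} and~\ref{lem:mutation2} driving the induction over rows) is the paper's own; the paper only says the lemma follows from those two lemmas by induction. The gap is part~(a): your computation does not yield the printed list, and no computation can, because (a) as printed is false.

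Your own locality principle shows this. The vertex $v_{i-2,k+1}$ is not adjacent to $v_{i-1,k}$ in $Q_{\mathbb P_3}$, and it never becomes adjacent to any mutated vertex $v_{i-1,1},\dots,v_{i-1,k-1}$; hence $Q'(v_{i-1,k},v_{i-2,k+1})=0$. Also, in your base case you correctly cancel the arrow between $v_{i-1,2}$ and $v_{i,2}$, yet you then claim $v_{i,2}$ survives with sign $+1$.

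Here is the step carried out honestly. Take the orientation forced by Lemma~\ref{lem:quiver0}: $v_{r,c+1}\to v_{r,c}$, $v_{r+1,c}\to v_{r,c}$, $v_{r,c}\to v_{r+1,c+1}$, with $Q(u,v)>0$ iff $u\to v$. Just before $\mu_{i-1,k-1}$, the vertex $v_{i-1,k}$ is an in-neighbour of $v_{i-1,k-1}$, and $v_{i-2,k-1},v_{i,k},v_{i-1,0}$ are out-neighbours. Composing through $v_{i-1,k-1}$ therefore:
\begin{itemize}
\item cancels the arrows of $v_{i-1,k}$ to $v_{i-2,k-1}$ and to $v_{i,k}$;
\item creates $v_{i-1,k}\to v_{i-1,0}$, not $v_{i-1,0}\to v_{i-1,k}$;
\item leaves the arrow to $v_{i-2,k}$ untouched, since $v_{i-2,k}$ is not adjacent to $v_{i-1,k-1}$ (contrary to your inductive step).
\end{itemize}
Hence $Q'(v_{i-1,k},\cdot)$ equals $1$ at $v_{i-2,k},v_{i,k+1},v_{i-1,0}$ and $-1$ at $v_{i-1,k\pm1}$. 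The printed $v_{i,k},v_{i-2,k+1}$ is an index slip. Two places in the paper confirm this: the $t=1$ exchange relation in the proof of Proposition~\ref{lem:imageofsplit1}, and Lemma~\ref{lem:quiver3} (the case $k=j-1$ followed by one more mutation). For instance, with $i=5$, $k=2$: after $\mu_{41}$ the vertex $v_{42}$ has arrows to $v_{32},v_{53},v_{40}$ and from $v_{41},v_{43}$. You asserted agreement with the target instead of checking it.

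Parts (b) and (c) are correct as printed, and your induction proves them, but the mechanics you describe need fixing.
\begin{itemize}
\item The long-range arrow is $v_{i1}\to v_{s1}$ (that is the $-1$), not the reverse.
\item It does not come from part~(a). It is created at the very first mutation $\mu_{i-1,1}$, as the composite $v_{i1}\to v_{i-1,1}\to v_{i-2,1}$ of two vertical arrows.
\item That same mutation cancels the arrows of $v_{i-2,1}$ to $v_{i-2,0}$ and to $v_{i-1,2}$. This is why the rest of row $i-1$ never touches $v_{i-2,1}$.
\item In the step $s+1\to s$, the vertex $v_{s+1,1}$ has in-neighbours $v_{s+1,2},v_{i1},v_{s0}$ and out-neighbours $v_{s1},v_{s+2,1}$. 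The composites through it hand $v_{i1}$ down to $v_{s1}$ and cancel the arrows of $v_{s1}$ to $v_{s0}$ and to $v_{s+1,2}$.
\end{itemize}
In (c), nothing is ``removed''. When $v_{s1}$ is mutated, $v_{s2},v_{i1},v_{s-1,0}$ are all in-neighbours of it, and $v_{s\pm1,1}$ are its only out-neighbours. So $v_{s2}$ merely loses its arrows from $v_{s-1,1}$ and from $v_{s+1,1}$ (the latter created at $\mu_{s+1,2}$), and its arrow to $v_{s1}$ is reversed. For $t\ge 3$, the vertex $v_{s,t-1}$ has only its four neighbours $v_{s\pm1,t-1}$, $v_{s,t-2}$, $v_{s,t}$ at the moment it is mutated.
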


Note that Lemma~\ref{lem:mutation3} also applies to $Q_\lambda$, where $\lambda$ is the triangulation in Figure~\ref{Fig:P4} for $\mathbb P_4$, by identifying $\mathbb P_3$ with the triangle bounded by $c_2,c_3,c_5$.

\begin{remark}
Indeed, Lemma \ref{lem:mutation3} follows from Lemmas \ref{lem:mutation1} and \ref{lem:mutation2} via induction.
\end{remark}


\begin{corollary}\label{cor:quiver1}
Let $\lambda$ be the triangulation in Figure~\ref{Fig:P4} for $\mathbb P_4$.
 Let $Q$ be the full subquiver formed by the vertices $v_{ij}$ of the quiver $Q_\lambda$, for $(i,j)\in\{0\leq j\leq i\leq n\}\setminus\{(0,0),(n,0),(n,n)\}$. Then in the quiver $Q'=\mu^{\diamondsuit}_{(i;j-1)}(Q)$ (see \eqref{sec7-def-mu-square-ij}), we have 
\begin{enumerate}[label={\rm (\alph*)}]\itemsep0,3em

\item The subquiver formed by the vertices $v_{j-1,j-1},\cdots, v_{22}, v_{11}$ is the type $A_{j-1}$ quiver with linear orientation.

\item for any $v\neq v_{11},v_{22},\cdots,v_{j-1,j-1}$, we have
$$Q'(v_{11},v)=\begin{cases}
    1 &  \mbox{ if $v=v_{21}$,}\\
    -1 & \mbox{ if $v=v_{i1}$,}\\
    0 & \mbox{ otherwise},
\end{cases}\hspace{5mm}
Q'(v_{j-1,j-1},v)=\begin{cases}
    1 &  \mbox{ if $v=v_{j,j-1}$,}\\
    -1 & \mbox{ if $v=v_{j-1,j-2}$,}\\
    0 & \mbox{ otherwise}.
\end{cases}$$
$$Q'(v_{kk},v)=\begin{cases}
    1 &  \mbox{ if $v=v_{k+1,k}$,}\\
    -1 & \mbox{ if $v=v_{k,k-1}$,}\\
    0 & \mbox{ otherwise,}
\end{cases}$$
for $1<k<j-1$.

\end{enumerate}
\end{corollary}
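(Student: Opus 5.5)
The plan is to track, mutation by mutation, only the arrows incident to the diagonal vertices $v_{11},\dots,v_{j-1,j-1}$. None of these vertices is ever mutated in $\mu^{\diamondsuit}_{(i;j-1)}$: every $\mu_{(k;t)}$ with $t<k$ only touches $v_{k1},\dots,v_{kt}$. Recall also that composition is read right to left, so $\mu_{(k;t)}=\mu_{kt}\cdots\mu_{k1}$ mutates $v_{k1}$ first. The sequence therefore splits into two phases.

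\emph{Phase one} is $\mu_{(j;j-1)}\mu_{(j+1;j-1)}\cdots\mu_{(i-1;j-1)}$. It mutates the rows $i-1,i-2,\dots,j$ completely in the columns $1,\dots,j-1$, starting from row $i-1$.

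\emph{Phase two} is $\mu_{(2;1)}\cdots\mu_{(j-1;j-2)}$. It mutates row $j-1$ in columns $1,\dots,j-2$, then row $j-2$ in columns $1,\dots,j-3$, and so on down to row $2$ in column $1$.

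For phase one I would invoke Lemma~\ref{lem:mutation3}(a)--(c), which already records the local quiver after each row $s$ with $j\le s\le i-1$ is processed. Since that lemma applies verbatim to the triangle bounded by $c_2,c_3,c_5$, restricting to the subquiver $Q$ is harmless. From it I would extract the configuration at the end of phase one.
\begin{itemize}
\item Row $j$ has been mutated.
\item The rows $j-1,\dots,1$ with their diagonal vertices still carry the original triangle arrows, apart from the modifications on row $j-1$ forced by the mutations in row $j$.
\item The arrow $v_{i1}\to v_{11}$ has been created; this is the $v_{i1}$ edge visible in Lemma~\ref{lem:mutation3}(b), propagated downward.
\end{itemize}
This gives the starting data for phase two. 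Up to relabelling it is exactly the configuration of Lemma~\ref{lem:mutation4}, with the row being mutated playing the role of row ``$2$'', the row above it playing the role of ``$3$'', and the row below it playing the role of ``$1$''.

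For phase two I would argue by descending induction on $k=j-1,j-2,\dots,2$. When row $k$ is mutated (columns $1,\dots,k-1$), I would apply Lemma~\ref{lem:mutation4} (or Lemma~\ref{lem:mutation2}/\ref{lem:mutation1} in the boundary cases) to read off the new arrows around rows $k$ and $k+1$. The inductive claim after row $k$ is processed has three parts.
\begin{itemize}
\item The arrows among $v_{kk},\dots,v_{j-1,j-1}$ form a linearly oriented path.
\item Each $v_{tt}$ with $t\ge k$ has exactly one further out-neighbour $v_{t+1,t}$ and one further in-neighbour $v_{t,t-1}$.
\item The lower rows are untouched except for the arrow into $v_{k-1,k-1}$ produced by the ``$11$'' vertex in Lemma~\ref{lem:mutation4}.
\end{itemize}
The essential point is that mutating $v_{k,k-1}$ composes the path $v_{k-1,k-1}\to v_{k,k-1}\to v_{kk}$ into the new diagonal arrow between $v_{k-1,k-1}$ and $v_{kk}$. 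It also cancels the old arrow $v_{kk}\to v_{k,k-1}$ and reverses the remaining arrow, which leaves $v_{k,k-1}$ as the in-neighbour of $v_{kk}$. At $k=2$ the remaining vertex $v_{11}$ keeps the arrow $v_{11}\to v_{21}$ and the arrow from $v_{i1}$ produced in phase one. This yields part (a) and the three formulas of part (b).

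The main obstacle is bookkeeping rather than any conceptual difficulty. One has to check that no stray arrows between a diagonal vertex $v_{tt}$ and vertices outside $\{v_{t+1,t},v_{t,t-1}\}$ survive; in particular, extra arrows to $v_{t0}$ or $v_{t+1,t+1}$ could a priori appear from 2-cycles that do not cancel. I would first verify the small cases $j=3,4$ by hand to fix the orientation conventions in Lemmas~\ref{lem:mutation1}--\ref{lem:mutation4}, and in particular to confirm that the frozen vertex appearing as ``$11$'' or ``$10$'' in those pictures is really $v_{k-1,k-1}$ and not a boundary vertex. After that, the general induction is a direct transcription.
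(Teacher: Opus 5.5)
Your route (phase one via Lemma~\ref{lem:mutation3}, i.e.\ Lemmas~\ref{lem:mutation1} and \ref{lem:mutation2}, then a descending induction over the rows of phase two via Lemma~\ref{lem:mutation4}) is the same as the paper's one-line argument. However, the bookkeeping you propose to carry through the induction is wrong exactly where it matters, so the induction cannot be run as you state it.

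First, for $j\ge 3$ there is no arrow between $v_{i1}$ and $v_{11}$ at the end of phase one. The neighbours of $v_{11}$ in $Q$ are $v_{10}$ and $v_{21}$, and phase one mutates only rows $\ge j$, so neither is touched. What Lemmas~\ref{lem:mutation3}(b), \ref{lem:mutation2} and \ref{lem:mutation4} record is that the arrow out of $v_{i1}$ is handed down column one: after row $s$ is mutated it points into $v_{s-1,1}$. It reaches $v_{11}$ only at the very last mutation $\mu_{21}$, via $v_{i1}\to v_{21}\to v_{11}$. Accordingly, the vertex drawn as $11$ (resp.\ $10$) in the output of Lemma~\ref{lem:mutation4}(a) (resp.\ (b)) is $v_{i1}$, attached to the column-one vertices $21$ and $31$. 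The diagonal vertex $v_{k-1,k-1}$ is the last vertex $3t$ of the top row. Your invariant must therefore carry the arrow $v_{i1}\to v_{k-1,1}$ and the modified arrows between rows $k$ and $k-1$, not ``lower rows untouched''.

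Second, your local step is backwards. Mutating $v_{k,k-1}$ reverses (never cancels) the arrows incident to it, so your path $v_{k-1,k-1}\to v_{k,k-1}\to v_{kk}$ would leave $v_{kk}\to v_{k,k-1}$. That is the opposite of $Q'(v_{kk},v_{k,k-1})=-1$. Just before $\mu_{k,k-1}$ the configuration is $v_{kk}\to v_{k,k-1}\to v_{k-1,k-1}$; for $i=5$, $j=4$ one finds $v_{33}\to v_{32}\to v_{22}$. This creates $v_{kk}\to v_{k-1,k-1}$ and gives the orientation $v_{j-1,j-1}\to\cdots\to v_{11}$.

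Third, the stray-arrow check you single out genuinely fails at $t=j-1$ when $i>j$. In $Q$ the vertices $v_{jj}$ and $v_{j-1,j-1}$ are not adjacent, since the half-arrows along $c_5$ cancel. Neither has a mutated neighbour before $\mu_{j,j-1}$. At that moment they lie on opposite sides of $v_{j,j-1}$ (Lemmas~\ref{lem:quiver3} and \ref{lem:quiver1}, read before the last mutation), so $\mu_{j,j-1}$ creates an arrow $v_{jj}\to v_{j-1,j-1}$. Afterwards the neighbours of $v_{jj}$ lie among $v_{j,j-1},v_{j+1,j},v_{j+1,j-1},v_{j0},v_{j-1,j-1}$, none of which is mutated in phase two. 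So the arrow survives in $\mu^{\diamondsuit}_{(i;j-1)}(Q)$: for $i=5$, $j=4$ one gets $Q'(v_{33},v_{44})=-1$, and for $j=2$ one gets $Q'(v_{11},v_{22})=-1$.

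Hence the clause ``$0$ otherwise'' for $v_{j-1,j-1}$ cannot be proved for $Q'$ as stated. Within $Q$ nothing removes this arrow; it can only disappear in the full quiver of Lemma~\ref{lem:quiver2}, where $\overline\mu^{\diamondsuit}_j$ also acts on the other side of $c_5$. A correct write-up must either list $v_{jj}$ among the neighbours of $v_{j-1,j-1}$ or argue directly in that setting.
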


\begin{proof}
    It follows from Lemmas~\ref{lem:mutation1},  \ref{lem:mutation2}, and \ref{lem:mutation4} via induction. 
\end{proof}

The following Lemma can be proved similarly.

\begin{lemma}\label{lem:quiver5}
We use the $\overline v_{ij}$ labeling for $V_{\mathbb P_3}$. Then for each $j>1$, we defined the mutation  $\overline\mu^{\diamondsuit}_j$ as in \eqref{sec7-def-mu-square-j}.
    In the quiver $Q'=\overline  \mu^{\diamondsuit}_j(Q_{\mathbb P_3})$, we have 
\begin{enumerate}[label={\rm (\alph*)}]\itemsep0,3em

\item The subquiver formed by the vertices $\overline v_{j-1,j-1},\cdots, \overline v_{22},\overline v_{11}$ is the type $A_{j-1}$ quiver with linear orientation.

\item For any $v\neq \overline v_{11},\overline v_{22},\cdots,\overline v_{j-1,j-1}$, we have
$$Q'(\overline v_{11},v)=\begin{cases}
    1 &  \mbox{ if $v=\bar v_{23}$,}\\
    -1 & \mbox{ if $v=\bar v_{12}$,}\\
    0 & \mbox{ otherwise}.
\end{cases}\hspace{5mm}
Q'(\overline v_{j-1,j-1},v)=\begin{cases}
    1 &  \mbox{ if $v=\bar v_{j-1,n}$,}\\
    -1 & \mbox{ if $v=\bar v_{j-1,j+1}$,}\\
    0 & \mbox{ otherwise}.
\end{cases}$$
$$Q'(\overline v_{kk},v)=\begin{cases}
    1 &  \mbox{ if $v=\bar v_{k+1,k+2}$,}\\
    -1 & \mbox{ if $v=\bar v_{k,k+1}$,}\\
    0 & \mbox{ otherwise,}
\end{cases}$$
for $1<k<j-1$.

\end{enumerate}
\end{lemma}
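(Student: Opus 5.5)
The statement to be proved is Lemma~\ref{lem:quiver5}. I would mirror the proof of Corollary~\ref{cor:quiver1} under the symmetry of $\mathbb P_3$ that swaps the two labelings. The labelings are $v_{ij}=(n-i,j,i-j)$ and $\overline v_{ji}=(j,n-i,i-j)$. Both are read off a fixed vertex ($v_1$ for the unbarred labeling, the vertex $(0,n,0)$ for the barred one), but with opposite handedness. The orientation-reversing reflection of $\mathbb P_3$ that exchanges these two corners sends $Q_{\mathbb P_3}$ to $-Q_{\mathbb P_3}$. Since $\mu_k(-Q)=-\mu_k(Q)$, a mutation computation in one labeling transfers to the other with all arrows reversed. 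So the plan is first to write down the explicit dictionary between the $v$-labels and the $\overline v$-labels, and to check that $\overline\mu_{(k;t)}=\overline\mu_{k,k+1}\cdots\overline\mu_{k,k+t}$ corresponds to a row-by-row sweep of the same shape as $\mu_{(k;j)}$, with rows indexed by the first barred coordinate.

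Second, I would redo the row induction using Lemmas~\ref{lem:mutation1}, \ref{lem:mutation2} and \ref{lem:mutation4}, applied to the reversed quivers. The composite $\overline\mu^{\diamondsuit}_j=\overline\mu_{(1;n-j)}\cdots\overline\mu_{(j-1;n-j)}$ acts first on row $j-1$, then on row $j-2$, and so on down to row $1$; each row $k$ is mutated at $\overline v_{k,k+1},\dots,\overline v_{k,k+n-j}$. After each row sweep, the relevant lemma describes the full subquiver on the vertices adjacent to the next row. The induction hypothesis to carry is: after sweeping rows $j-1,\dots,k$, the vertices $\overline v_{k-1,k-1},\dots,\overline v_{j-1,j-1}$ have exactly the neighbours listed in (b), restricted to already-processed rows, and they form a linearly oriented $A$-chain. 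The terminating index $k+n-j$ matters here: it ensures that after the sweep the diagonal vertex $\overline v_{kk}$ connects to $\overline v_{k,k+1}$ and to $\overline v_{k+1,k+2}$. The top diagonal vertex $\overline v_{j-1,j-1}$ connects instead to the boundary vertex $\overline v_{j-1,n}$ (via $\overline v_{j-1,j+1}$), because the sweep in row $j-1$ reaches the edge. This explains the special form of the $k=j-1$ case in (b).

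Third, I would verify the two endpoint cases separately. The case $\overline v_{11}$ uses that row $1$ is the last sweep, so $\overline v_{11}$ links to $\overline v_{12}$ and $\overline v_{23}$ only. The case $\overline v_{j-1,j-1}$ uses the boundary column. In both, I would check that the diagonal vertices receive no other arrows, using the statement in the lemmas that untouched vertices keep their zero entries.

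The main obstacle is bookkeeping: making sure the reflection dictionary matches the hypothesis quivers of Lemmas~\ref{lem:mutation1}--\ref{lem:mutation4} exactly, including orientation and the lengths $t=n-j$, rather than only up to isomorphism. If the reflection argument becomes messy, my fallback is to verify the hypothesis quiver for the first barred row directly from Figure~\ref{Fig;coord_ijk}, and then apply those lemmas verbatim, with arrows reversed, by induction on rows.
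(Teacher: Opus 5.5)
Your underlying strategy is an induction over the rows of the sweep using Lemmas~\ref{lem:mutation1}, \ref{lem:mutation2} and \ref{lem:mutation4}. That is what the paper means by ``proved similarly'' to Corollary~\ref{cor:quiver1}. The symmetry layer you put on top of it, however, is wrong in two concrete ways, and as written the matching of hypotheses would fail.

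\emph{The reflection does not carry this sweep to a row sweep.} The reflection $\tau$ exchanging $v_1$ and $v_2$ swaps the first two barycentric coordinates, so it satisfies $\tau(v_{ij})=\overline v_{ji}$. Hence a barred row $\{\overline v_{k,b}\}_b$ is the image of the unbarred \emph{column} $\{v_{b,k}\}_b$. Under your dictionary, $\overline\mu_{(k;t)}$ becomes $\mu_{k+1,k}\mu_{k+2,k}\cdots\mu_{k+t,k}$, not a row sweep of the shape of $\mu_{(k;j)}$. Nor is $\overline\mu^{\diamondsuit}_j$ the image of the sweep of Corollary~\ref{cor:quiver1} under any symmetry of $\mathbb P_3$, so nothing can be imported from it:
\begin{itemize}
\item $\overline\mu^{\diamondsuit}_j$ sweeps the lines of constant first coordinate $j-1,j-2,\dots,1$, moving away from $v_1$. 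Each segment has the same length $n-j$ and ends next to $e_1$.
\item The sweep of Corollary~\ref{cor:quiver1} also runs along lines parallel to $e_2$, but it moves towards $v_1$, and each segment starts next to $e_3$.
\item A symmetry matching the two would have to preserve the direction of $e_2$, hence fix $v_1$, hence preserve the direction of progression relative to $v_1$ — and here the two progressions are opposite.
\end{itemize}
So the check in your first step fails, and the induction has to be done afresh.

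\emph{No reflection is needed, and reversing arrows is wrong.} Read geometrically, each $\overline\mu_{(k;t)}$ already runs along a line parallel to $e_2$ towards $e_1$, just as $\mu_{(k;j)}$ does. In barred labels the arrows of $Q_{\mathbb P_3}$ are $\overline v_{a,b}\to\overline v_{a,b+1}$, $\overline v_{a,b}\to\overline v_{a-1,b-1}$ and $\overline v_{a,b}\to\overline v_{a+1,b}$. Identify the picture vertex $(r,m)$ of Lemma~\ref{lem:mutation1} with $\overline v_{a,\,a+n-j+1-m}$, where $a=k+2-r$. This matches the hypothesis with the \emph{original} orientation:
\begin{itemize}
\item Row $2$ is barred row $k$, swept as $\overline v_{k,k+n-j},\dots,\overline v_{k,k+1}$ against the arrows $\overline v_{k,b}\to\overline v_{k,b+1}$, exactly as $(2,1),\dots,(2,t)$ is swept against $(2,m+1)\to(2,m)$.
\item Row $3$ is the next row, $k-1$.
\item Position $t+1$ is the diagonal vertex $\overline v_{aa}$.
\item Position $0$ of row $j-1$ is $\overline v_{j-1,n}$.
\end{itemize}
With this identification, Lemmas~\ref{lem:mutation1}, \ref{lem:mutation2}, \ref{lem:mutation4} apply as stated, row by row. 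Your fallback of applying them ``with arrows reversed'' would make the sweep run along the row arrows, and no embedding would then match.

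\emph{The endpoint check is needed, and the statement has an index slip.} The claims about $\overline v_{j-1,j-1}$ and the boundary vertex $\overline v_{j-1,n}$ are not among the lemmas' conclusions, which only describe vertices adjacent to the next row. So your separate endpoint check is genuinely needed. Doing it, you will find the $-1$ at $\overline v_{j-1,j}$ rather than at $\overline v_{j-1,j+1}$. This already happens for $n=5$, $j=3$. For $n=j+1$ the printed statement would assign both signs to $\overline v_{j-1,n}$. The corrected entry agrees with the generic pattern $\overline v_{k,k+1}$.
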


\begin{proposition}\label{lem:imageofsplit1}
Let $e=c_5$, the ideal arc in $\mathbb P_4$ depicted in Figure~\ref{Fig:P4}.
Assume that $1<j<i$. For any $s$ with $j\leq s\leq i-1$ and $t$ with $1\leq t\leq j-1$, we have 
\begin{equation*}
    \begin{array}{rcl}
&&\mathbb S_e^U\left((\mu_{st}\cdots \mu_{s2}\mu_{s1})\circ \mu_{(s+1;j-1)}\cdots \mu_{(i-2;j-1)} \mu_{(i-1;j-1)} (A_{st})\right)\vspace{1mm}\\& =& 
[(\mu_{st}\cdots \mu_{s2}\mu_{s1})\circ \mu_{(s+1;j-1)}\cdots \mu_{(i-2;j-1)} \mu_{(i-1;j-1)} (A_{st})\otimes \overline A_{s-1,s-1}\cdot \overline A_{ii}].
\end{array}
\end{equation*}   
\end{proposition}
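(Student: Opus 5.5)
I would prove Proposition~\ref{lem:imageofsplit1} by induction along the mutation sequence, in the order in which the mutations are performed: first over the blocks $\mu_{(i-1;j-1)},\mu_{(i-2;j-1)},\dots$ (that is, $s$ decreasing from $i-1$), and within each block over $t=1,2,\dots,j-1$. The inductive claim is slightly stronger than the statement. At every intermediate seed, every mutable vertex $v_{s't'}$ of the triangle $\mathbb P_3^1$ satisfies
\[
\mathbb S_e^U(\text{current variable at } v_{s't'})=[\text{same variable}\otimes \overline A_{s'-1,s'-1}\cdot\overline A_{ii}]
\]
if it has already been mutated, and $\mathbb S_e^U(A_{s't'})=[A_{s't'}\otimes \overline A_{\cdot}]$ as in Theorem~\ref{thm:splitU} if it has not. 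For the unmutated vertices, the base case is read off from the splitting formula of Theorem~\ref{thm:splitU}, using the multisets $\mathcal V^1_v,\mathcal V^2_v$ from \eqref{def-V-split-skein} for $e=c_5$. Only the $''$-side (the triangle $\mathbb P_3^2$) contributes to vertices of $\mathbb P_3^1$, and for $v_{st}$ the factor produced is $\overline A_{s,s}$ (one vertex on $c_5$ along the curve $c''$). So the initial frozen factor for $v_{st}$ is $\overline A_{ss}$, and the mutations must shift it to $\overline A_{s-1,s-1}\overline A_{ii}$.

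For the inductive step I write the exchange relation at $v_{st}$ in the current seed. Its neighbourhood is given by Lemma~\ref{lem:mutation3}: part (a) for the first block $s=i-1$, part (b) for $t=1$, and part (c) for $t>1$. Each of these lists exactly the in- and out-neighbours. I apply $\mathbb S_e^U$, which is an algebra homomorphism, to both exchange monomials. By the inductive hypothesis each neighbour variable maps to itself tensored with an explicit frozen monomial in the $\overline A_{kk}$'s. The two exchange terms therefore acquire frozen factors $F_1$ and $F_2$ in $\mathscr U_\omega(\mathbb P_3^2)$.

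The heart of the step is checking two things. First, $F_1=F_2$, so that $F:=F_1=F_2$ factors out of the sum. Second, $F\cdot(\text{old factor at }v_{st})^{-1}=\overline A_{s-1,s-1}\overline A_{ii}$. Both reduce to bookkeeping with Lemma~\ref{lem:mutation3}. For $t>1$ the in-neighbours are $v_{s-1,t},v_{s+1,t}$ and the out-neighbours are $v_{s,t\pm1}$. The factors are $\overline A_{s-1,s-1}\overline A_{ii}$ or $\overline A_{s-2,s-2}$ for the row $s-1$, $\overline A_{s,s}\overline A_{ii}$ for the row $s+1$ (already mutated), and $\overline A_{s-1,s-1}\overline A_{ii}$ or $\overline A_{ss}$ in row $s$. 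Cancelling these yields the claim. The cases $t=1$ (neighbours $v_{i1},v_{s-1,0}$) and $s=i-1$ (neighbour $v_{i-1,0}$ and boundary vertices $v_{ik}$) are handled the same way, using the boundary conventions.

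Once the frozen factor has been pulled out, the exchange relation in $\mathscr U_\omega(\mathbb P_3^1)$ (where $v_{st}$ remains mutable after splitting) reassembles the first tensor factor into the mutated variable. Lemma~\ref{lem:propotion}, or equivalently Corollary~\ref{cor:propotion}, handles the quantum Weyl normalizations. It needs the $\Lambda$-equality, and this follows because $F$ commutes appropriately: all factors live in the second tensor factor, and they are frozen there.

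I expect the main obstacle to be the careful neighbour bookkeeping at the block boundaries, namely the transition from block $s+1$ to block $s$ and the boundary columns $t=1$ and $t=j-1$. At these places Lemma~\ref{lem:mutation3} must be combined with Lemmas~\ref{lem:mutation1} and~\ref{lem:mutation2} to know which neighbours have already been mutated. I would first verify the case $n=4$, $i=3$, $j=2$ by hand to fix the indexing before writing the general induction.
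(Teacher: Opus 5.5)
Your proposal follows the paper's proof. The paper inducts on $i-1-s$ and $t$ in mutation order, writes the exchange relation at $v_{st}$ using Lemma~\ref{lem:mutation3}(a), (b), (c), applies $\mathbb S_e^U$ to each term, and checks that both terms carry the same frozen factor. Your identification of the initial factor of $A_{st}$ as $\overline A_{ss}$ is also correct.

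One entry in your $t>1$ bookkeeping is wrong. The neighbour $v_{s-1,t}$ is never mutated, since only rows $s,\dots,i-1$ are. Its factor is therefore just $\overline A_{s-1,s-1}$, including when $t=s-1$, where it is the split vertex on $c_5$. It is not $\overline A_{s-1,s-1}\overline A_{ii}$ or $\overline A_{s-2,s-2}$ as you wrote.

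With your values the two exchange terms would not match. With the correct value, both terms carry $\overline A_{s-1,s-1}\overline A_{ss}\overline A_{ii}$, and dividing by $\overline A_{ss}$ gives the claimed $\overline A_{s-1,s-1}\overline A_{ii}$.
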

\begin{proof}
We proceed by induction on $i-1-s$ and $t$.

First, consider the case $s=i-1$. 

If $t=1$, then 
$$\mathbb S^U_e(\mu_{i-1,1}(A_{i-1,1}))=\mathbb S^U_e([A^{-1}_{i-1,1}\cdot A_{i-1,0}\cdot A_{i-2,1}\cdot A_{i2}]+[A^{-1}_{i-1,1}\cdot A_{i-2,0}\cdot A_{i-1,2}\cdot A_{i1}])=[\mu_{i-1,1}(A_{i-1,1})\otimes \overline A_{i-2,i-2}\cdot \overline A_{ii}].$$

Now suppose $t>1$. By Lemma \ref{lem:mutation3}(a) and the induction hypothesis on $t$, we have 

\begin{equation*}
    \begin{array}{rcl}
&&\mathbb S_e^U\left(\mu_{i-1,t}\cdots \mu_{i-1,2}\mu_{i-1,1}(A_{i-1,t})\right)\vspace{1mm}\\& =& 
\mathbb S^U_e([A^{-1}_{i-1,t}\cdot A_{it}\cdot A_{i-1,0}\cdot A_{i-2,t+1}]+[A^{-1}_{i-1,t}\cdot \mu_{i-1,t-1}\cdots \mu_{i-1,2}\mu_{i-1,1}(A_{i-1,t-1})\cdot A_{i-1,t+1}])\vspace{1mm}\\& =& 
[\mu_{i-1,t}\cdots \mu_{i-1,2}\mu_{i-1,1}(A_{i-1,t})\otimes \overline A_{i-2,i-2}\cdot \overline A_{ii}].
\end{array}
\end{equation*} 

This establishes the result for $s=i-1$.

The case $t=1$ (for general $s$) follows similarly, using Lemma~\ref{lem:mutation3}(b).

Next, consider the case $s<i-1$ and $t>1$.  Applying Lemma~\ref{lem:mutation3}(c) and the induction hypothesis on $i-1-s+t$, we compute

\begin{equation*}
    \begin{array}{rcl}
&&\mathbb S_e^U\left(\mu_{st}\cdots \mu_{s2}\mu_{s1})\circ \mu_{(s+1;j-1)}\cdots \mu_{(i-2;j-1)} \mu_{(i-1;j-1)}(A_{s,t})\right)\vspace{1mm}\\& =& 
\mathbb S^U_e([A^{-1}_{st}\cdot A_{s-1,t}\cdot  (\mu_{s+1,t}\cdots \mu_{s+1,2}\mu_{s+1,1})\circ \mu_{(s+2;j-1)}\cdots \mu_{(i-2;j-1)} \mu_{(i-1;j-1)}(A_{s+1,t})]
\vspace{1mm}\\& +& 
\mathbb S^U_e([A^{-1}_{st}\cdot (\mu_{s,t-1}\cdots \mu_{s2}\mu_{s1})\circ \mu_{(s+1;j-1)}\cdots \mu_{(i-2;j-1)} \mu_{(i-1;j-1)}(A_{s,t-1})\cdot A_{s,t+1}])\vspace{1mm}\\& =& 
[(\mu_{st}\cdots \mu_{s2}\mu_{s1})\circ \mu_{(s+1;j-1)}\cdots \mu_{(i-2;j-1)} \mu_{(i-1;j-1)} (A_{st})\otimes \overline A_{s-1,s-1}\cdot \overline A_{ii}].
\end{array}
\end{equation*} 
This completes the proof.
\end{proof}

\begin{proof}[Proof of Lemma \ref{lem:quiver1}]
By induction, and by repeatedly applying Lemmas~\ref{lem:mutation1} and \ref{lem:mutation2}, the result follows immediately.  
\end{proof}

\begin{proof}[Proof of Lemma \ref{lem:mut2}]
(a) 
The result is clear if $i=j+1$.

For $i>j+1$, repeatedly applying Lemmas~\ref{lem:mutation1} and ~\ref{lem:mutation2}, we have $Q'(v_{j+1,j-1},v)=Q'(v,v_{j+1,j-1})=0$ for all $v\in \{v_{jt}\mid 1\leq t\leq j-2\}$ in the quiver $Q'=\mu_{(j+1;j-2)}\mu_{(j+2;j-1)}\cdots  \mu_{(i-1;j-1)}(Q_{\mathbb P_3})$. 

Therefore, the mutation at $v_{j+1,j-1}$ commutes with the mutations $\mu_{(j;j-2)}$ for the quiver $Q'=\mu_{(j+1;j-2)}\mu_{(j+2;j-1)}\cdots  \mu_{(i-1;j-1)}(Q_{\mathbb P_3})$, and we obtain
\begin{equation*}
    \begin{array}{rcl}
&&\mu_{(j;j-1)}\mu_{(j+1;j-1)}\mu_{(j+2;j-1)}\cdots  \mu_{(i-1;j-1)} (A_{j,j-2})\vspace{1mm}\\& =& 
\mu_{(j;j-2)}\left(\mu_{j+1,j-1}\mu_{(j+1;j-2)}\right) \mu_{(j+2;j-1)}\cdots \mu_{(i-1;j-1)} (A_{j,j-2})\vspace{1mm}\\& =& 
\mu_{j+1,j-1}\mu_{(j;j-2)}\mu_{(j+1;j-2)} \mu_{(j+2;j-1)}\cdots \mu_{(i-1;j-1)} (A_{j,j-2})\vspace{1mm}\\& =& 
\mu_{(j;j-2)}\mu_{(j+1;j-2)}\mu_{(j+2;j-1)} \cdots \mu_{(i-1;j-1)} (A_{j,j-2}).
\end{array}
\end{equation*}

Repeatedly applying Lemma~\ref{lem:mutation1} in a similar manner, we further deduce that
$$\mu_{(j;j-1)}\cdots \mu_{(i-2;j-1)} \mu_{(i-1;j-1)} (A_{j,j-2})=\mu_{(j;j-2)}\cdots \mu_{(i-2;j-2)} \mu_{(i-1;j-2)} (A_{j,j-2}).$$

(b) The proof is analogous to that of (a), and is therefore omitted.
\end{proof}

\begin{proof}[Proof of Lemma \ref{lem:splitk}]
Part (a) follows directly as a corollary of Proposition \ref{lem:imageofsplit1}.

Part (b) can be established using a similar argument to that of part (a).
\end{proof}

\begin{proof}[Proof of Lemma \ref{lem:same}]
The proof of Lemma \ref{lem:same} is analogous to that of Lemma \ref{lem:mut2}, and is therefore omitted.    
\end{proof}

\begin{proof}[Proof of Lemma \ref{lem:quiver2}]
It follows by Corollary \ref{cor:quiver1} and Lemma \ref{lem:quiver5}.
\end{proof}

\begin{proposition}\label{lem:imageofsplite2}
With the notation in Lemma \ref{lem:split_e2}.
Assume that $1<j<i$. For any $s$ with $j\leq s\leq i-1$ and $t$ with $1\leq t\leq j-1$, we have 

\begin{enumerate}[label={\rm (\alph*)}]\itemsep0,3em
\item $\mathbb S^{U}_{e_2}(A_{ij})=
    [A_{ij}\otimes A''_{nj}]$ for $(i,j)\in\{0\leq j\leq i\leq n\}\setminus\{(0,0),(n,0),(n,n)\}$.

\item 
\begin{equation*}
    \begin{array}{rcl}
&&\mathbb S_{e_2}^U\left((\mu_{st}\cdots \mu_{s2}\mu_{s1})\circ \mu_{(s+1;j-1)}\cdots \mu_{(i-2;j-1)} \mu_{(i-1;j-1)} (A_{st})\right)\vspace{1mm}\\& =& 
[(\mu_{st}\cdots \mu_{s2}\mu_{s1})\circ \mu_{(s+1;j-1)}\cdots \mu_{(i-2;j-1)} \mu_{(i-1;j-1)} (A_{st})\otimes A''_{n,t+1}].
\end{array}
\end{equation*}  
\end{enumerate}
\end{proposition}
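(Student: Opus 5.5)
Part (a) comes directly from the splitting formula for $\mathbb S^U_{e_2}$ in Theorem~\ref{thm:splitU}. We cut along $e_2$, so $\mathcal V_{\rm split}$ consists of the small vertices on $e_2$, which are the $v_{nt}$ with $1\le t\le n-1$. For $v=v_{ij}$ in the triangle $\tau$, the multisets $\mathcal V^1_v,\mathcal V^2_v$ are read off from the curves $c'_t,c''_t$ of Figure~\ref{Fig;cutting-e}. In the triangle $\tau$, the relevant curve through $v_{ij}$ is the one whose barycentric first entry is fixed, and it meets $e_2$ exactly at $v_{nj}$. Hence $\mathbb S^U_{e_2}(A_{ij})=[A_{ij}\otimes A''_{nj}]$. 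For $i=n$, the vertex is itself a split vertex and the formula is $[A'_{nj}A''_{nj}]$, which is the same statement. The conventions $A''_{n0}=A''_{nn}=1$ take care of the vertices whose curve does not reach $e_2$.

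For (b) we induct exactly as in Proposition~\ref{lem:imageofsplit1}, on $i-1-s$ first and then on $t$. Each mutated variable is given by an exchange relation whose arrows are recorded in Lemma~\ref{lem:mutation3}; that lemma was stated for $Q_{\mathbb P_3}$, and it holds equally in $Q_\lambda$ because all the mutated vertices are interior to $\tau$ and their neighbourhoods are local. The variable $\mu_{st}\cdots\mu_{s1}\circ\cdots(A_{st})$ equals $A_{st}^{-1}$ times a sum of two monomials. Each monomial is built from initial variables $A_{ab}$ together with previously mutated variables, and the induction hypothesis gives the images of the latter.

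Base case $s=i-1$, $t=1$. Here $\mathbb S^U_{e_2}(\mu_{i-1,1}(A_{i-1,1}))$ is the image of $[A_{i-1,1}^{-1}A_{i-1,0}A_{i-2,1}A_{i2}]+[A_{i-1,1}^{-1}A_{i-2,0}A_{i-1,2}A_{i1}]$. By part (a) the frozen tensor factors are $A''_{n2}$ in the first term and $A''_{n1}\cdot A''_{n0}A''_{n2}/A''_{n1}=A''_{n2}$ in the second. The two agree, so the image is $[\mu_{i-1,1}(A_{i-1,1})\otimes A''_{n2}]$.

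Inductive step. Using Lemma~\ref{lem:mutation3}(a),(b),(c), we check in the same way that in each exchange relation the two monomials carry the same second-column exponent on $A''$. In each case the net factor is $A''_{n,t+1}$. Once the $A''$ factors agree, Lemma~\ref{lem:propotion} and the quasi-commutativity from Corollary~\ref{cor:quasi-com} let us pull the common factor out of the Weyl-ordered sum.

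The main obstacle is the bookkeeping of second indices in (c). For $t>1$ the neighbours are $v_{s\pm1,t}$ and $v_{s,t\pm1}$, and the mutated neighbours already carry factors $A''_{n,t+1}$ (from $s+1$) and $A''_{n,t}$ (from $t-1$). We have to check that both monomials give exactly $A''_{n,t+1}$:
\[
A''_{nt}A''_{n,t+1}/A''_{nt}=A''_{n,t+1},
\]
the first monomial being $A_{s-1,t}\cdot(\text{mutated }v_{s+1,t})$ and the second $(\text{mutated }v_{s,t-1})\cdot A_{s,t+1}$. The denominator contributes $A''_{nt}$ in both cases. Finally, specializing (b) to $s=j$, $t=j-1$, and combining with Lemma~\ref{lem:mut2}, yields Lemma~\ref{lem:split_e2}(b) with factor $A''_{nj}$.
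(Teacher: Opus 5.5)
Your proposal is correct and follows the paper's route: (a) is read off from the splitting formula of Theorem~\ref{thm:splitU}, and (b) uses the same induction on $i-1-s$ and $t$ as Proposition~\ref{lem:imageofsplit1}, driven by the exchange relations of Lemma~\ref{lem:mutation3}, checking that both monomials in each relation carry the net frozen factor $A''_{n,t+1}$. Two cosmetic points: along the curve through $v_{ij}$ the fixed coordinate is the second entry $j$ in $\tau$'s own coordinates (it is the \emph{first entry} only after re-choosing coordinates with $v_2=(n,0,0)$), and Lemma~\ref{lem:mut2} is not needed at the end, since Lemma~\ref{lem:split_e2}(b) is exactly the case $s=j$, $t=j-1$ of (b).
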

\begin{proof}
    Using Lemma~\ref{lem:mutation3}, the proof of Proposition~\ref{lem:imageofsplit1} works here.
\end{proof}

\begin{proof}[Proof of Lemma \ref{lem:split_e2}]
 Lemma \ref{lem:split_e2} is an immediate corollary of Proposition \ref{lem:imageofsplite2}.
\end{proof}

The following lemma can be proved similarly as Lemma \ref{lem:split_e2}.

\begin{lemma}\label{lem:splitk1}
  For any $i,j$ with $j\leq i$ and any $k$ with $2\leq k\leq j$, we have 

\begin{enumerate}[label={\rm (\alph*)}]\itemsep0,3em

\item
 \begin{equation*}
    \begin{array}{rcl}
&&\mathbb S^{U}_{c_1}\!\bigl(\mathbb S^{U}_{c_3}(\mu_{(k;k-1)}\cdots \mu_{(i-2;k-1)} \mu_{(i-1;k-1)} (A_{k,k-1})))\vspace{1mm}\\& =& 
[\mu_{(k;k-1)}\cdots \mu_{(i-2;k-1)} \mu_{(i-1;k-1)} (A_{k,k-1})\otimes A''_{n,k}\cdot \overline A''_{0,k-1}\cdot \overline A''_{0,i}].
\end{array}
\end{equation*} 

\item  
 \begin{equation*}
    \begin{array}{rcl}
&&\mathbb S^{U}_{c_1}\!\bigl(\mathbb S^{U}_{c_3}(\overline \mu_{(k;n-j)}\cdots \overline \mu_{(j-2;n-j)} \overline \mu_{(j-1;n-j)} (\overline A_{k,k+1})))\vspace{1mm}\\& =& 
[\overline \mu_{(k;n-j)}\cdots \overline \mu_{(j-2;n-j)} \overline \mu_{(j-1;n-j)} (\overline A_{k,k+1})\otimes A''_{nj}\cdot A''_{n,k-1}\cdot \overline A''_{0k}].
\end{array}
\end{equation*} 
\end{enumerate}  
\end{lemma}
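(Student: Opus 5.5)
Lemma~\ref{lem:splitk1} will be proved exactly as Proposition~\ref{lem:imageofsplite2} (and Proposition~\ref{lem:imageofsplit1}): by a double induction along the mutation sequence. The composite $\mathbb S^{U}_{c_1}\circ\mathbb S^{U}_{c_3}$ is a composite of two splitting homomorphisms in the sense of Proposition~\ref{prop:split}, by Theorem~\ref{thm:splitU}. The proof of Proposition~\ref{prop:split} (Lemma~\ref{lem:propotion} and Corollary~\ref{cor:propotion}) shows that it sends every quantum cluster variable attached to a vertex away from $c_1\cup c_3$ to that same variable, Weyl-multiplied by a Laurent monomial in the frozen variables of $\fS''$. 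So the whole task is to identify this frozen factor.

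\textbf{Base case.} Lemma~\ref{lem:spitP_4}(a) gives the factor for the initial variables: $A_{st}\mapsto[A_{st}\otimes A''_{nt}\cdot\overline A''_{0s}]$.

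\textbf{Inductive claim for (a).} I will prove, for $k\le s\le i-1$ and $1\le t\le k-1$, the stronger statement
\[
\mathbb S^{U}_{c_1}\bigl(\mathbb S^{U}_{c_3}(X_{st})\bigr)=\bigl[X_{st}\otimes A''_{n,t+1}\cdot\overline A''_{0,s-1}\cdot\overline A''_{0,i}\bigr],
\]
where $X_{st}=(\mu_{st}\cdots\mu_{s1})\circ\mu_{(s+1;k-1)}\cdots\mu_{(i-1;k-1)}(A_{st})$. The guessed shape merges two earlier patterns: the factor $A''_{n,t+1}$ appearing in Proposition~\ref{lem:imageofsplite2}, and the factor $\overline A_{s-1,s-1}\overline A_{ii}$ appearing in Proposition~\ref{lem:imageofsplit1}. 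Specialising to $s=k$, $t=k-1$, where $X_{k,k-1}=\mu_{(k;k-1)}\cdots\mu_{(i-1;k-1)}(A_{k,k-1})$, gives exactly the asserted formula (a).

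\textbf{Inductive step.} The induction runs on $i-1-s$ and then on $t$, in the same order as in the proof of Proposition~\ref{lem:imageofsplit1}.
\begin{itemize}
\item At each mutation step, Lemma~\ref{lem:mutation3}(a),(b),(c) gives the neighbours of the mutated vertex. This lemma applies to $Q_{\lambda}$ for the triangle bounded by $c_2,c_3,c_5$, as remarked after its statement. From these neighbours I get the exchange binomial for $X_{st}$ in terms of already treated variables and initial variables.
\item I apply the induction hypothesis and Lemma~\ref{lem:spitP_4}(a) to every factor of each of the two monomials.
\item I check that both monomials carry the same total frozen factor, divided by the factor of $A_{st}^{-1}$. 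This is a short exponent count on the labels $n,\cdot$ and $0,\cdot$. By Lemma~\ref{lem:propotion}, the common factor can then be pulled out of the sum, and the Weyl orderings are well defined.
\end{itemize}

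\textbf{Part (b).} This is the mirror argument in the barred labeling $\overline v_{ij}$. The neighbour lists come from the barred analogues of Lemmas~\ref{lem:mutation1}--\ref{lem:mutation3}, which underlie Lemma~\ref{lem:quiver5}. The roles of $c_1$ and $c_3$ are interchanged, which yields the factor $A''_{nj}\cdot A''_{n,k-1}\cdot\overline A''_{0k}$.

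\textbf{Main obstacle.} I expect the hard part to be the frozen-factor bookkeeping at the boundary steps:
\begin{itemize}
\item the steps with $t=1$, where frozen vertices $v_{s0}$ and $v_{s-1,0}$ enter the exchange relation;
\item the steps with $s=i-1$, where $v_{i\cdot}$ enters.
\end{itemize}
At these steps the two monomials receive frozen contributions from different edges, and one must verify that they agree. I would verify these cases first.

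\textbf{Degenerate case $c_1=c_3$.} When $\fS''=\emptyset$, the double-primed variables are variables of $\mathbb P_4$ itself, and $\otimes$ is the product. The same computation applies after this identification, since the formulas of Lemma~\ref{lem:spitP_4}(a) already encode it.
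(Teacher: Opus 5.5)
Your proposal is correct and is essentially the paper's argument. The paper only says that this lemma is proved like Lemma~\ref{lem:split_e2}, i.e.\ by rerunning the double induction of Propositions~\ref{lem:imageofsplit1} and~\ref{lem:imageofsplite2} with the neighbour lists of Lemma~\ref{lem:mutation3}. Your intermediate factor $A''_{n,t+1}\cdot\overline A''_{0,s-1}\cdot\overline A''_{0,i}$ for $X_{st}$ does make both exchange monomials agree in all three cases; it is the product of the two factors appearing in those propositions. So your induction itself yields the proportionality, which Proposition~\ref{prop:split} by itself only gives for a single mutation step, not for all cluster variables as your opening paragraph suggests.

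One scope remark: your specialisation $s=k$, $t=k-1$ needs $k\le i-1$. For $k=i$ (so $j=i$) the mutation sequence in (a) is empty, and the displayed factor $A''_{n,i}\cdot\overline A''_{0,i-1}\cdot\overline A''_{0,i}$ actually disagrees with the factor $A''_{n,i-1}\cdot\overline A''_{0,i}$ given by Lemma~\ref{lem:spitP_4}(a); the same happens in (b) for $k=j$. The lemma should therefore be read with $k<i$ in (a) and $k<j$ in (b), which is all that Lemma~\ref{lem:spitP_4} uses.
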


Note that Lemma \ref{lem:same} and Corollary \ref{cor:cv} also hold in $\mathscr U_\omega(\fS)$ for the same reason.

\begin{proof}[Proof of Lemma \ref{lem:spitP_4}]
Part (a) is immediate.

Part (b) is an immediate corollary of Lemma \ref{lem:splitk1}.

For part (c), by Lemma \ref{lem:splitk1}, Lemma \ref{lem:same} and Corollary \ref{cor:cv}, we have 
 \begin{equation*}
    \begin{array}{rcl}
&&\mathbb S^{U}_{c_1}\!\bigl(\mathbb S^{U}_{c_3}\left(\left(\mu_{j-1,j-1}\cdots\mu_{22}\mu_{11}\right)\circ \overline \mu^{\diamondsuit}_j\circ  \mu^{\diamondsuit}_{(i;j-1)}(A_{j-1,j-1})\right)\vspace{1mm}\\& =& \mathbb S^{U}_{c_1}\!\bigl(\mathbb S^{U}_{c_3}\left([A_{11}^{-1}\cdot A_{i1}\cdot \overline \mu^{\diamondsuit}_j\circ  \mu^{\diamondsuit}_{(i;j-1)}(\overline A_{12})]+[A_{j-1,j-1}^{-1}\cdot  \overline \mu^{\diamondsuit}_j\circ  \mu^{\diamondsuit}_{(i;j-1)}(A_{j,j-1})\cdot \overline A_{j-1,n}]\right) \vspace{1mm}\\
&+&\mathbb S^{U}_{c_1}\!\bigl(\mathbb S^{U}_{c_3}\left(\sum_{k=2}^{j-1}[A_{k-1,k-1}^{-1}\cdot A_{kk}^{-1}\cdot \overline \mu^{\diamondsuit}_j\circ  \mu^{\diamondsuit}_{(i;j-1)}(A_{k,k-1})\cdot \overline \mu^{\diamondsuit}_j\circ  \mu^{\diamondsuit}_{(i;j-1)}(\overline A_{k,k+1})]\right)\vspace{1mm}\\
&=& [\left(\mu_{j-1,j-1}\cdots\mu_{22}\mu_{11}\right)\circ \overline \mu^{\diamondsuit}_j\circ  \mu^{\diamondsuit}_{(i;j-1)}(A_{j-1,j-1})\otimes A''_{nj}\cdot \overline A''_{0i}].
\end{array}
\end{equation*}  
This completes the proof.
\end{proof}

\end{appendices}

\printbibliography

@article {S2,
    AUTHOR = {Seven, Ahmet I.},
     TITLE = {Cluster algebras and symmetric matrices},
   JOURNAL = {Proc. Amer. Math. Soc.},
  FJOURNAL = {Proceedings of the American Mathematical Society},
    VOLUME = {143},
      YEAR = {2015},
    NUMBER = {2},
     PAGES = {469--478},
}

@article {S3,
    AUTHOR = {Seven, Ahmet I. and \"Unal, \.Ibrahim},
     TITLE = {Congruence invariants of matrix mutation},
   JOURNAL = {J. Pure Appl. Algebra},
  FJOURNAL = {Journal of Pure and Applied Algebra},
    VOLUME = {229},
      YEAR = {2025},
    NUMBER = {3},
     PAGES = {Paper No. 107920, 14},
}

@article {M,
    AUTHOR = {Muller, Greg},
     TITLE = {The existence of a maximal green sequence is not invariant
              under quiver mutation},
   JOURNAL = {Electron. J. Combin.},
  FJOURNAL = {Electronic Journal of Combinatorics},
    VOLUME = {23},
      YEAR = {2016},
    NUMBER = {2},
     PAGES = {Paper 2.47, 23},
}

@article {S1,
    AUTHOR = {Seven, Ahmet I.},
     TITLE = {Mutation classes of skew-symmetrizable {$3\times 3$} matrices},
   JOURNAL = {Proc. Amer. Math. Soc.},
  FJOURNAL = {Proceedings of the American Mathematical Society},
    VOLUME = {141},
      YEAR = {2013},
    NUMBER = {5},
     PAGES = {1493--1504},   
   }

@article {ABBS,
    AUTHOR = {Assem, Ibrahim and Blais, Martin and Br\"ustle, Thomas and
              Samson, Audrey},
     TITLE = {Mutation classes of skew-symmetric {$3\times 3$}-matrices},
   JOURNAL = {Comm. Algebra},
  FJOURNAL = {Communications in Algebra},
    VOLUME = {36},
      YEAR = {2008},
    NUMBER = {4},
     PAGES = {1209--1220},
}

@article{Casals,
  author = {Casals, Roger},
  title  = {A binary invariant of matrix mutation},
  journal = {J. Comb. Algebra},
  year   = {2024},
  doi    = {10.4171/JCA/99}
}

@article{FWZ,
  author = {Fomin, Sergey and Williams, Lauren and Zelevinsky, Andrei},
  title = {Introduction to cluster algebras. Chapters 1--3},
  journal = {arXiv:1608.05735},
  year = {2016},
}

@article{FN,
  author = {Fomin, Sergey and Neville, Scott},
  title = {Cyclically ordered quivers},
  journal = {arXiv:2406.03604},
  year = {2024},
}

@article{F,
  author = {Fomin, Sergey},
  title = {Quiver mutations},
  journal = {Open Problems in Algebraic Combinatorics, slides at OPAC 2022 website},
 year = {2022},
}

@article{FN1,
  author = {Fomin, Sergey and Neville, Scott},
  title = {Long mutation cycles},
  journal = {arXiv:2304.11505},
  year = {2023},
}

@article{N,
  author = {Neville, Scott},
  title = {Mutation-acyclic quivers are totally proper},
  journal = {arXiv:2409.17832},
  year = {2024},
}

@article{FZ,
    author = {Fomin, Sergey and Zelevinsky, Andrei},
    title = {Cluster algebras. {I}. {F}oundations},
    journal = {J. Amer. Math. Soc.},
    fjournal = {Journal of the American Mathematical Society},
    volume = {15},
    year = {2002},
    number = {2},
    pages = {497--529},
}

@article{BBH,
    author = {Beineke, Andre and Br\"ustle, Thomas and Hille, Lutz},
    title = {Cluster-cyclic quivers with three vertices and the {M}arkov equation},
    note = {With an appendix by Otto Kerner},
    journal = {Algebr. Represent. Theory},
    fjournal = {Algebras and Representation Theory},
    volume = {14},
    year = {2011},
    number = {1},
    pages = {97--112},
}

@article{E,
  author = {Ervin, Tucker},
  title = {Unrestricted red size and sign-coherence},
  journal = {arXiv:2401.14958},
  year = {2024},
}

@article{E2,
  author = {Ervin, Tucker},
  title = {New hereditary and mutation-invariant properties arising from forks},
  journal = {The Electronic Journal of Combinatorics},
    volume = {31},
    year = {2024},
    number = {1},
    pages = {16},
 }
\end{document}